\theoremstyle{theorem}
\newtheorem{thm}{Theorem}[chapter]
\newtheorem*{thm*}{Theorem}
\newaliascnt{corollary}{thm}
\newtheorem{cor}[corollary]{Corollary}
\newaliascnt{lemma}{thm}
\newtheorem{lem}[lemma]{Lemma}
\newaliascnt{sublemma}{thm}
\newtheorem{slem}[sublemma]{Sublemma}
\theoremstyle{definition}
\newaliascnt{definition}{thm}
\newtheorem{defi}[definition]{Definition}
\newaliascnt{example}{thm}
\newtheorem{exa}[example]{Example}
\newaliascnt{remark}{thm}
\newtheorem{rmk}[remark]{Remark}
\theoremstyle{theorem}
\newtheorem{thmA}{Theorem}
\theoremstyle{theorem}
\newtheorem{thmI}{Theorem}
\newaliascnt{corI}{thmI}
\newtheorem{corI}[corI]{Corollary}
\theoremstyle{definition}
\patchcmd{\hyper@makecurrent}{%
	\ifx\Hy@param\Hy@chapterstring
	\let\Hy@param\Hy@chapapp
	\fi
}{%
	\iftoggle{inappendix}{%
		\@checkappendixparam{chapter}%
		\@checkappendixparam{section}%
		\@checkappendixparam{subsection}%
		\@checkappendixparam{subsubsection}%
		\@checkappendixparam{paragraph}%
		\@checkappendixparam{subparagraph}%
	}{}%
}{}{\errmessage{failed to patch}}
\newcommand*{\@checkappendixparam}[1]{%
	\def\@checkappendixparamtmp{#1}%
	\ifx\Hy@param\@checkappendixparamtmp
	\let\Hy@param\Hy@appendixstring
	\fi
}
\apptocmd{\appendix}{\toggletrue{inappendix}}{}{\errmessage{failed to patch}}
	\apptocmd{\appendices}{\toggletrue{inappendix}}{}{\errmessage{failed to patch}}
	\apptocmd{\subappendices}{\toggletrue{inappendix}}{}{\errmessage{failed to patch}}
\DeclareMathOperator*{\lip}{Lip}
\newcommand{\hol}{\mathrm{ Hol }}
\newcommand{\supp}{\mathrm{ supp }}
\newcommand{\graph}{\mathrm{Graph}}
\newcommand{\id}{\mathrm{id}}
\newcommand{\mymatrix}[4]{\left(\begin{array}{cc} #1 & #2 \\[0pt] #3 & #4\end{array}\right)}
\numberwithin{equation}{chapter}
\begin{document}

\title{Invariant manifolds of partially normally hyperbolic invariant manifolds in Banach spaces}

\author{Deliang Chen}
\address{College of Mathematics and Physics \\
	Wenzhou University \\
	Wenzhou 325035 \\
	People's Republic of China \\
	E-mail: chernde@wzu.edu.cn}

\abbrevauthors{D. Chen}
\abbrevtitle{Invariant manifolds}

\mathclass{Primary 37D10; Secondary 37D30, 37C05, 37L10, 58B99}

\keywords{invariant manifold, partially normal hyperbolicity, infinite-dimensional dynamical system, ill-posed differential equation, whiskered torus}

\thanks{The author would like to thank Prof. Yannick Sire for his helpful discussions and particularly drawing the author's attention to the application of the paper's results to the whiskered tori. The author also thanks Prof. Chongchun Zeng for his useful discussions when the paper was prepared. The author is deeply indebted to the referee and editors who have thoroughly revised the text and provided valuable comments. Part of this work was done at Shanghai Jiao Tong University, where the author benefited greatly from the guidance and encouragement of Prof. Dongmei Xiao. The research is supported by the National Natural Science Foundation of China (No. 12101461).}

\maketitledis





\setcounter{tocdepth}{2}
\tableofcontents

\begin{abstract}
	We investigate the existence and regularity of locally invariant manifolds near an approximately invariant set that satisfies a geometric hyperbolicity condition with respect to an abstract ``generalized" dynamical system in Banach spaces. This hyperbolicity framework, which we term partial normal hyperbolicity, bridges the gap between normal hyperbolicity and partial hyperbolicity--concepts previously studied in finite dimensions and specific PDE contexts. Our generalized dynamical system accommodates non-smooth, non-Lipschitz, and even ``non-mapping" dynamics, making it applicable to both well-posed and ill-posed differential equations. As an illustrative application, we employ our results to analyze the dynamics of whiskered tori.
\end{abstract}

\makeabstract

\chapter{Introduction}

\section{Motivation}

This paper is a sequel to our previous works \cite{Che18a}, which aims to expand the scope of invariant manifold theory making it applicable to both well-posed and ill-posed differential equations and abstract infinite-dimensional dynamical systems.
In \cite{Che18a}, we gave a detailed investigation of the existence and regularity of invariant graphs for (discrete) cocycles or \emph{bundle correspondences} (see \autoref{sec:corr}) in general bundles; this is a global version of invariant manifold theory.

In this article, we study the theory of \emph{partially normal hyperbolicity}, a geometric hyperbolicity condition between normal hyperbolicity and partial hyperbolicity. To provide motivation for this study, we first survey relevant existing results before presenting a detailed illustration of partially normal hyperbolicity.

It is well known that a (fully) normally hyperbolic invariant manifold (without boundary) is a generalization of a hyperbolic equilibrium. Such a manifold has many good properties; for instance, it is persistent under small $ C^1 $ perturbations, and its regularity could be the same as that of dynamic if some spectral gap condition holds. For more details, see e.g. \cite{Fen72, HPS77, Wig94, PS01, BLZ98, BLZ08}.

A partially normally hyperbolic invariant manifold now can be considered as a natural extension of a finite-dimensional non-hyperbolic equilibrium or an infinite-dimensional equilibrium admitting \emph{exponential trichotomy} (see \cite{CL99}).

In the case of an equilibrium, it is important to note that its normal bundle is \emph{trivial}, a property that enables the application of global invariant manifold theory. More generally, for a (partially) normally hyperbolic invariant manifold, if its normal bundle is either trivial or can be embedded into a trivial bundle (preserving the metric structure, i.e., the embedding is bi-Lipschitz in some sense), then the global version of invariant manifold theory can be employed to derive relevant results; see e.g. \cite{HPS77, Eld13} and \cite[Theorem 4.17 and Remark 4.18]{Che18a}.

In the context of normally hyperbolic manifolds with boundary, the \emph{inflowing} (or \emph{overflowing}) property of manifolds with respect to the dynamics is crucial for obtaining persistence results (see \cite{HPS77, Fen72, BLZ99}). This property implies that the boundary itself is not invariant. In fact, for a normally hyperbolic inflowing (or overflowing) manifold, the manifold may possess boundaries, corners, or other singularities, provided it exhibits uniform behavior away from these features; see also \autoref{sub:setup}.
However, if a normally hyperbolic invariant manifold has an invariant boundary, then the boundary cannot be normally hyperbolic with respect to the dynamic. To some extent, this is a special case of partially normal hyperbolicity.

Consider a continuous dynamic generated by some smooth tangent field and its periodic orbit with period $ T $. If the associated time-$ T $ solution map $ \mathcal{P} $ of its linearized dynamic along this orbit has a non-simple eigenvalue $ 1 $, then obviously the periodic orbit is not normally hyperbolic. It is well known that if $ \mathcal{P} $ possesses certain compactness properties (for instance, when the essential spectral radius of $ \mathcal{P} $ is strictly less than $ 1 $) or, more generally, admits exponential trichotomy, then center-(un)stable and center manifolds exist near this orbit. This result can be established by considering the first return map and applying global invariant manifold theory, or by employing direct methods such as the integral equation approach. For further details in the case of normally hyperbolic periodic orbits (i.e., non-degenerate cases), see \cite{HR13} and the references therein. It is worth mentioning that similar results were also obtained in earlier works for \emph{ill-posed} PDEs (see e.g. \cite{SS99}).

For a normally hyperbolic invariant manifold, it is known that its tangent space (approximately) coincides with the center direction of the partially hyperbolic splitting at the manifold. To illustrate this more precisely, consider a simple example: Let $ H: X \to X $ be a $ C^1 $ diffeomorphism and $ K $ a boundaryless, compact, smooth invariant submanifold of $ X $, where $ X $ is a Banach space. Suppose $ K $ admits a \emph{partially hyperbolic splitting} (or equivalently, \emph{exponential trichotomy}) $ X = X^{s} \oplus X^{c} \oplus X^{u} $, with three continuous bundles $ X^{\kappa} $ ($ \kappa = s, c, u $) over $ K $, such that $ DH $ is uniformly contractive in $ X^{s} $ and uniformly expanding in $ X^{u} $, while in $ X^{c} $ it is strictly weaker than the contraction in $ X^{s} $ and the expansion in $ X^{u} $. In this framework, the normal hyperbolicity of $ K $ corresponds to $ TK = X^{c} $. Clearly, $ K $ would \emph{not} be normally hyperbolic if $ TK \subset X^{c} $, in which case we say that $ K $ is \emph{partially normally hyperbolic} (with respect to $ H $).

In the context of differential equations, this scenario has been extensively studied by many authors. For example (the list is by no means exhaustive):

\begin{enumerate}[(i)]
	\item Chow, Liu, Yi \cite{CLY00} studied center-(un)stable and center manifolds of a boundaryless compact and smooth invariant submanifold $ K $ for ODEs in $ \mathbb{R}^{n} $;

	\item \label{it:ex2} Zeng \cite{Zen00} gave a locally invariant center-stable manifold (in $ H^{n} $) of a circle of equilibriums $ K $ for some nonlinear Schrödinger equations;

	\item Schlag \cite{Sch09} exhibited a (globally invariant) center-stable manifold near ground states $ K $ for some nonlinear Schrödinger equation in $ W^{1,1}(\mathbb{R}^3) \cap W^{1,2}(\mathbb{R}^3) $; later, Beceanu \cite{Bec12} improved this in the critical space $ \dot{H}^{1/2} (\mathbb{R}^3) $;

	\item Nakanishi and Schlag \cite{NS12} constructed a (globally invariant) center-stable manifold near a soliton manifold $ K $ for some nonlinear Klein–Gordon equation in $ \mathbb{R}^{d} $;

	\item Krieger, Nakanishi and Schlag \cite{KNS15} obtained (globally invariant) center-stable manifolds of ground states (invariant under scale and translation) and ground state solitons (invariant under scale, translation and Lorentz transform), respectively, for some critical wave equation in $ \dot{H}^1(\mathbb{R}^{d}) $;

	\item \label{it:ex6} Jin, Lin and Zeng \cite{JLZ17} build (locally invariant) center-(un)stable and center manifolds for a smooth manifold $ K $ arising from spatial translations of a certain traveling wave solution of the Gross--Pitaevskii equation; moreover, under appropriate non-degeneracy conditions, these manifolds become globally invariant and hence unique.
\end{enumerate}

\begin{rmk}\label{rmk:PDEs}
	We make the following remark about the above items \eqref{it:ex2}--\eqref{it:ex6}.
	\begin{enumerate}[$ \bullet $]
		\item The linear parts of the differential equations (in appropriate forms) exhibit Hamiltonian structures with finite Morse index, implying that they possess \emph{finite}-dimensional (un)stable spaces and \emph{infinite}-dimensional center spaces, and that they generate $ C_0 $ groups (see \cite{LZ17} for a detailed characterization);

		\item The manifold $ K $ is typically taken as equilibriums of the differential equations; its geometry resembles $ \mathcal{M} $, $ \mathbb{R}^{n} $, or $ \mathbb{R}^{n} \times \mathcal{M} $, where $ \mathcal{M} $ is a compact and smooth boundaryless submanifold, which naturally satisfies (H1)--(H4) in \autoref{sub:setup}; the exponential trichotomy of $ K $ is usually obtained from that of a point in $ K $ (due to the construction of $ K $ under certain symmetry structures of the differential equations, such as scale and spatial translation invariance);

		\item The non-linear parts of the differential equations are typically unbounded, making the analysis highly complex, which in turn makes it extremely challenging to verify the non-linear version of exponential trichotomy with respect to the original non-linear dynamics (i.e., the (A) (B) condition defined in \autoref{defAB} and \autoref{defi:ABk}).
	\end{enumerate}
\end{rmk}

The manifold $ K $ considered above is taken as a smooth embedding manifold, though this is not strictly necessary. In the 1970s, Fenichel \cite{Fen79} first considered invariant manifolds of a compact \emph{set} $ K $ consisting of equilibriums and applied this framework to study the geometric singular perturbation problem for ODEs in $ \mathbb{R}^{n} $. Approximately two decades later, Chow, Liu, and Yi \cite{CLY00a} generalized this to the case where $ K $ is a compact invariant set satisfying certain admissibility conditions. More recently, Bonatti and Crovisier \cite{BC16} obtained an analogous result for diffeomorphisms on Riemannian manifolds (see also \autoref{cor:compact} in the infinite-dimensional setting); in addition, they also obtained an equivalent characterization (see also \autoref{thm:eqInvariant}). It is worth mentioning that one can define the ``pre-tangent space'' of a set (in the sense of Whitney), so the notation $ TK \subset X^{c} $ remains meaningful even when $ K $ is a set; see e.g. \cite{Whi34, CLY00a, BC16} or our \autoref{def:tangent}.

The theory of invariant manifolds near a single equilibrium for \emph{abstract} differential equations including semi-linear and quasi-linear parabolic, hyperbolic PDEs, delay equations, etc., has been extensively investigated by many researchers, see e.g. \cite{Hen81, CL88, BJ89, DPL88, PS01, MR09a, Zel14}.
It seems that there is no general result parallel to \cite{CLY00} for (well-posed) PDEs or abstract differential equations in Banach spaces.

The differential equations discussed previously are \emph{well-posed}, i.e., they generate $ C_0 $ semiflows (or even flows). However, there are numerous differential equations that are \emph{ill-posed}, such as the (bad) Boussinesq equations, the elliptic problem on the cylinder, certain semi-linear wave equations (with non-vanished convective term), and the \emph{spatial dynamics} (in the sense of K. Kirchg\"assner) induced by reaction-diffusion equations (e.g., the Swift-Hohenberg equations or KdV equations) (see \cite{EW91,Gal93,SS99, LP08, dlLla09,ElB12,BCJ21}). Ill-posedness means that for most initial data, there is no local solution. Eckmann and Wayne \cite{EW91}, Gally \cite{Gal93}, de la Llave \cite{dlLla09}, and ElBialy \cite{ElB12} considered various types of invariant manifolds (e.g., (un)stable, center, center-(un)stable manifolds) of equilibriums for some classes of ill-posed PDEs; de la Llave and Sire \cite{dlLS19} and Cheng and de la Llave \cite{CdlL19} also investigated time-dependent invariant manifolds for potentially ill-posed PDEs. In \cite{Che18a}, we introduced the notions of \emph{(bundle) correspondence} and \emph{cocycle correspondence}, originally due to Akin \cite{Aki93} and Chaperon \cite{Cha08}, which can be viewed as generalizations of the concept of (hyperbolic) dynamical systems (e.g., maps, bundle maps, cocycles (skew-product semiflows), semiflows), and provided a unified treatment of global invariant manifold theory for ill-posed differential equations. However, there are significantly fewer results concerning the local invariant manifold theory for general manifolds or sets in the context of ill-posed differential equations, which constitutes the \emph{purpose} of this paper.

Turning now to abstract dynamical systems, various types of invariant manifolds (both local and global) associated with fixed points have been extensively studied for diffeomorphisms and $ C^1 $ maps. For instance, \cite[\S 5-5A]{HPS77} and \cite{Irw80, dlLW95, Cha04} provide comprehensive treatments; correspondences with generating maps are also considered in \cite{Cha08}. Furthermore, \cite{Che18a} extends the results of \cite{Cha08} to a more general setting than that of \cite[\S 5-6]{HPS77}. The theory of normally hyperbolic invariant manifolds has been thoroughly investigated for diffeomorphisms on compact Riemannian manifolds in \cite{HPS77}, and for $ C^1 $ maps in Banach spaces in \cite{BLZ98, BLZ08}; these results can be modified to the corresponding (semi)flow version (see \cite{HPS77, BLZ98, BLZ08}). For the partially hyperbolic case, the study was undertaken in \cite{BC16} for diffeomorphisms on Riemannian manifolds near compact sets.

Nowadays, invariant manifold theory serves as a fundamental tool for understanding non-linear dynamical systems, with broad theoretical and practical applications. In the context of partially normal hyperbolicity, it is very useful to find some special interesting orbits; see e.g. \cite{Zen00} and \cite{LLSY16} for the search of homoclinic and heteroclinic orbits in practical models. It is important to emphasize that this theory yields an extremely detailed characterization of global dynamical behaviors; see e.g. \cite{Sch09, Bec12, NS12, KNS15}. For some notable dynamical consequences in finite dimensions, see also \cite[Section 1.2]{BC16}.

Recently, the study of $ C^2 $ (non-invertible) maps in Banach spaces has received extensive attention, especially regarding their dynamical properties and ergodic theory, e.g., Lyapunov exponents, horseshoes, entropy, and SRB measures; see e.g. \cite{LL10, LY11, BY17}. In this paper, we aim to provide an infinite-dimensional extension of \cite{BC16} for both $ C^1 $ maps and correspondences in Banach spaces, with certain generalizations even in the finite-dimensional case.

In the 1960s, V. I. Arnol{\cprime}d \cite{Arn63, Arn64} introduced ``whiskered tori'' in symplectic systems, which are regarded as key geometric structures leading to the instability of nearly integrable systems. The existence of whiskered tori was further investigated by Fontich, de la Llave, and Sire \cite{FdlLS09, FdlLS15, dlLS19}. As an illustration of our main results, we continue to study the non-linear dynamical behaviors near whiskered tori, establishing stable-(un)stable manifolds near them (see \autoref{sec:application}). This analysis encompasses both maps on Hilbert spaces or lattices and correspondences generated by the $1$-D (bad) Boussinesq equation with periodic boundary conditions for discrete time $ t_0 > 0 $ (see \autoref{exa:map1}, \ref{exa:map2}, and \ref{exa:Bq}). It is worth mentioning that in general, the whiskered tori constructed in \cite{FdlLS09, FdlLS15, dlLS19} are \emph{partially normally hyperbolic} in the sense of this paper.

\section{A nontechnical overview of main results for $ C^1 $ maps}
In this subsection, we present our main results for $ C^1 $ \emph{maps} in a nontechnical and special version.
In \cite{BC16}, Bonatti and Crovisier obtained the following.
\begin{thm*}[\cite{BC16}]
	Let $ H $ be a $ C^1 $ diffeomorphism on a smooth Riemannian manifold $ X $ and $ K $ a partially hyperbolic invariant compact set such that $ TX|_{K} = X^{s} \oplus X^{c} \oplus X^{u} $ (see e.g. the assumptions in \autoref{sub:dyntan}). Then the following are equivalent.
	\begin{enumerate}[(1)]
		\item There is a $ C^1 $ submanifold $ \Sigma^{c} $ such that $ K $ belongs to the interior of $ \Sigma^{c} $, $ \Sigma^{c} $ is locally invariant under $ H $, and $ T\Sigma^{c}|_{K} = X^{c} $.
		\item The strong stable manifolds and strong unstable manifolds at $ m \in K $ intersect $ K $ only at $ m $.
		\item $ TK \subset X^{c} $.
	\end{enumerate}
\end{thm*}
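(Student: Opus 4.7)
The plan is to treat the three implications \((1)\Rightarrow(3)\), \((1)\Rightarrow(2)\), \((2)\Rightarrow(3)\), and then the substantial direction \((3)\Rightarrow(1)\). The first implication is essentially tautological: if \(\Sigma^{c}\) is a \(C^{1}\) submanifold containing \(K\) with \(T\Sigma^{c}|_{K}=X^{c}\), then for any \(m\in K\) and any sequence \(m_{n}\in K\setminus\{m\}\) converging to \(m\) the unit vectors \((m_{n}-m)/\|m_{n}-m\|\) (computed in a chart) accumulate inside \(T_{m}\Sigma^{c}=X^{c}_{m}\); hence the Whitney pre‑tangent space \(T_{m}K\) (as in \autoref{def:tangent}) is contained in \(X^{c}_{m}\).

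For \((1)\Rightarrow(2)\) I would combine local transversality with the invariance under iteration. Since \(T_{m}\Sigma^{c}=X^{c}_{m}\) is transverse to the strong stable bundle \(X^{s}_{m}\), and since the strong stable manifold \(W^{ss}(m)\) is tangent to \(X^{s}_{m}\) at \(m\), we have \(W^{ss}(m)\cap\Sigma^{c}=\{m\}\) in a small ball \(B_{r}(m)\). If \(p\in W^{ss}(m)\cap K\) with \(p\neq m\) then \(H^{n}(p)\to H^{n}(m)\) at the strong stable rate, while \(H^{n}(p)\in K\subset\Sigma^{c}\) by invariance of \(K\); for \(n\) large \(H^{n}(p)\in B_{r}(H^{n}(m))\), contradicting the local transversality above. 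The strong unstable side is symmetric under \(H^{-1}\).

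For \((2)\Rightarrow(3)\) I would argue by contrapositive, using the local product structure of partially hyperbolic sets. Suppose \(v\in T_{m}K\setminus X^{c}_{m}\), say with nonzero strong stable component \(v^{s}\); choose \(m_{n}\in K\setminus\{m\}\) with \((m_{n}-m)/\|m_{n}-m\|\to v\) in a chart aligned with \(X^{s}\oplus X^{c}\oplus X^{u}\). Using the locally invariant center‑unstable plaque \(W^{cu}_{\mathrm{loc}}(m)\) (tangent to \(X^{c}\oplus X^{u}\)) together with the strong stable foliation, I obtain a holonomy projection sending \(m_{n}\) to a point \(m_{n}'\in W^{ss}_{\mathrm{loc}}(m)\) with \(m_{n}-m_{n}'=O(\|m_{n}-m\|^{2})\); iterating \(H\) forward contracts the \(X^{s}\) separation exponentially and enlarges the \(X^{c}\oplus X^{u}\) separation at most polynomially in a way controlled by the spectral gap, so an orbit segment of \(m_{n}\) enters a uniform neighborhood of \(W^{ss}(H^{k}m)\) while remaining in \(K\) by invariance; a normal form / shadowing argument then produces a genuine point of \(W^{ss}(H^{k}m)\cap K\) distinct from \(H^{k}m\), contradicting (2). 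The unstable component is treated by considering \(H^{-1}\).

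The heart of the proof is \((3)\Rightarrow(1)\), which is also the main obstacle; I would attack it with the plaque‑family construction already used in \cite{HPS77, BC16}, now upgraded to the Banach‑space, correspondence setting developed in \cite{Che18a, Che18b}. First, for each \(m\in K\) the partially hyperbolic splitting together with the (A)(B) condition of \autoref{defAB}–\autoref{defi:ABk} yields a \(C^{1}\) local center plaque \(\mathcal{D}_{m}\) through \(m\), tangent to \(X^{c}_{m}\) and approximately \(H\)-invariant in the sense \(H(\mathcal{D}_{m})\cap B_{r}(Hm)\subset\mathcal{D}_{Hm}\); these are obtained by graph transform / Lyapunov–Perron applied to suitably cutoff linearized dynamics, as in \autoref{sec:corr}. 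Second, the condition \(TK\subset X^{c}\) (in the Whitney sense) is used quantitatively to show that for \(m,m'\in K\) close, the plaques \(\mathcal{D}_{m}\) and \(\mathcal{D}_{m'}\) are \(C^{0}\)- and even \(C^{1}\)-close on their common domain; the key estimate is that the defect \(m'-\pi^{c}_{m}(m')\) along \(X^{s}\oplus X^{u}\) is \(o(\|m-m'\|)\), so the graphs representing the plaques over \(X^{c}\) have derivatives that agree to first order at \(K\). Finally, I would glue the \(\mathcal{D}_{m}\)'s into a single \(C^{1}\) submanifold \(\Sigma^{c}\) covering a neighborhood of \(K\) by a locally finite partition of unity subordinate to a Vitali cover of \(K\), averaging the graph representatives in a fixed chart along \(X^{c}\); local invariance of the resulting \(\Sigma^{c}\) follows from the approximate invariance of the plaques plus the fact that \(K\) is \(H\)-invariant and sits in the interior of \(\Sigma^{c}\). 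The technical difficulties I anticipate are: (i) controlling the plaque family in a genuine Banach bundle where no compactness of \(K\) is assumed (handled by the uniform estimates of \autoref{sub:setup}); (ii) ensuring that the averaging/partition step preserves the \(C^{1}\) regularity and does not destroy local invariance, which requires the first‑order coherence estimate above; and (iii) transferring these steps from the map setting to the correspondence setting where ``\(H(\mathcal{D}_{m})\)'' must be interpreted through the graph‑transform formalism of \cite{Che18a}.
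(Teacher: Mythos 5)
Two of your implications are fine ((1)$\Rightarrow$(3) and (1)$\Rightarrow$(2)), but the other two contain genuine gaps, and in both cases the paper (following Bonatti--Crovisier; see \autoref{sub:dyntan} and Theorems \ref{thm:geo}, \ref{thm:whitney}, \ref{thm:eqInvariant}, \autoref{cor:mapT}) goes a different and sounder way. For (2)$\Rightarrow$(3) you iterate \emph{forward}: if the deviation $m_n-m$ is dominated by its strong stable component, forward iteration only drives $H^k(m_n)$ toward the orbit of $m$, so your final ``normal form / shadowing'' step, which is supposed to manufacture a point of $W^{ss}(H^km)\cap K$ at definite distance from $H^km$, has no justification (and the claimed holonomy defect $O(\|m_n-m\|^2)$ is also unsubstantiated -- stable holonomies are in general only H\"older). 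The paper's proof of this lemma iterates \emph{backward}: the cone/(B) condition preserves the $s$-domination under $H^{-1}$ while expanding the separation at rate at least $\gamma_s^{-1}$, so one runs $H^{-k_n}$ until the separation reaches a fixed size $\varepsilon/8$, then uses compactness of $K$ to extract limit points $m_1\neq m_2$ with $m_2\in W^{ss}(m_1)\cap K$, contradicting (2). Without the backward amplification and the compactness extraction your contrapositive does not close.

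For (3)$\Rightarrow$(1) the gap is in the gluing. Plaque families through distinct points of $K$ are not coherent in general, and averaging their graph representatives with a partition of unity destroys exactly the property you need: approximate invariance of each individual plaque does not yield local invariance of the averaged object, and your first-order coherence estimate only controls derivatives \emph{at} $K$, not on the overlaps where the average is taken. The paper separates the two issues: first it builds a single (non-invariant) $C^1$ manifold $\Sigma\supset K$ with $T\Sigma|_K=X^c$ by a geometric Whitney extension (\autoref{thm:whitney}, \autoref{thm:geo}: Whitney-extend $\omega_{m_0}$ over $\Pi^c_{m_0}(K\cap \mathbb{B}_r(m_0))$ and patch the local graphs; in the infinite-dimensional version this is where property (*) and the Jim\'enez-Sevilla--S\'anchez-Gonz\'alez theorem enter), and only then produces the locally invariant $\Sigma^c$ by running the graph transform with a bump-function cutoff in a tubular neighborhood of $\Sigma$ (\autoref{thm:invariant}, \autoref{thm:tri0}, \autoref{cor:mapT}); local invariance is the fixed-point property of that transform, not a by-product of gluing, and the resulting $\Sigma^c$ is accepted as non-unique. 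If you want to salvage your plan, you should replace the partition-of-unity average by this two-step scheme (Whitney extension first, then a cut-off graph transform), since otherwise difficulty (ii) that you flag is not merely technical but fatal.
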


This result also holds in infinite dimensions but with an additional assumption. See also \autoref{def:tangent} for the meaning of the symbol $ TK \subset X^{c} $ when $ X $ is a Banach space.
\begin{thmA}\label{thm:A}
	Suppose $ X $ is either a separable Hilbert space, or a Banach space in which the fibers of $ X^{c} $ over $ K $ are finite-dimensional. Then the above theorem remains valid.
\end{thmA}
See \autoref{sub:maps} and \autoref{sub:dyntan} for more details pertaining to \autoref{thm:A}.
The compactness assumption on $ K $ imposes limitations in practical applications. To address this constraint, we now proceed to examine more general situations.

(S) Consider a $ C^1 $ map $ H: X \to X $, where $ X $ is a Banach space and $ K \subset X $. Suppose that the derivative $ DH $ is (almost) uniformly continuous in a neighborhood of $ K $ in $ X $.

(Q1) The first situation is that $ K $ is a uniformly $ C^{0,1} $ immersed submanifold of $ X $ (see (H1)--(H4) in \autoref{sub:setup} in detail when $ K = \Sigma $) and \emph{invariant} under $ H $ (meaning $ H(K) \subset K $ and $ H^{-1}(K) \subset K $); in addition, $ H: K \to K $ is an invertible $ C^{0} $ (and almost uniformly continuous) map in the immersed topology of $ K $.
A description of the qualitative behavior of $ H $ in a neighborhood of $ K $ is needed. Assume $ X $ has a decomposition $ X = X^{s}_{m} \oplus X^{c}_{m} \oplus X^{u}_{m} $, $ m \in K $, such that
\begin{enumerate}[$ \bullet $]
	\item $ X^{\kappa}_{m} $ ($ \kappa = s, c, u $) are invariant under $ DH(m) $ for $ m \in K $;

	\item $ \sup_{m}|A^{s}_{m} \oplus A^{c}_{m}||(A^{u}_{m})^{-1}| < 1 $ and $ \sup_{m}|A^{s}_{m} ||(A^{u}_{m} \oplus A^{c}_{m})^{-1}| < 1 $ with 
	\[
	\sup_{m}\{|A^{s}_{m} \oplus A^{c}_{m}|,|(A^{u}_{m} \oplus A^{c}_{m})^{-1}|\} < \infty,
	\]
	where $ A^{\kappa}_{m} = DH(m)|_{X^{\kappa}_{m}} $, $ \kappa = s, c, u $;

	\item $ T_{m}K \subset X^{c}_{m} $ with $ T_{m}K \oplus X^{c_0}_{m} = X^{c}_{m} $ for $ m \in K $;

	\item $ K \to \mathbb{G}(X): m \mapsto X^{\kappa}_{m} $ ($ \kappa = s, c, u, c_0 $) are Lipschitz in the immersed topology of $ K $ where $ \mathbb{G}(X) $ is the Grassmann manifold of $ X $ (see \autoref{sub:Grassmann}).
\end{enumerate}

Heuristically, we have the following. We emphasize that $ \sup_{m}\{|A^{s}_{m}|,|(A^{u}_{m})^{-1}|\} < 1 $ is not assumed. Note also that in general, $ H $ is not Lipschitz.
\begin{thmA}\label{thm:B}
	Under (S) and (Q1), there are three $ C^{0,1} $ immersed submanifolds of $ X $, denoted by $ W^{cs}_{loc}(K) $, $ W^{cu}_{loc}(K) $ and $ \Sigma^{c} = W^{cs}_{loc}(K) \cap W^{cu}_{loc}(K) $, such that
	\begin{enumerate}[$ \bullet $]
		\item $ K \subset \Sigma^{c} $, $ T_{m}W^{cs}_{loc}(K) = X^{cs}_{m} $, $ T_{m}W^{cu}_{loc}(K) = X^{cu}_{m} $, and $ T_{m}\Sigma^{c} = X^{c}_{m} $, $ m \in K $;

		\item $ W^{cs}_{loc}(K) $, $ W^{cu}_{loc}(K) $ and $ \Sigma^{c} $ are locally positively invariant, locally negatively invariant and locally invariant under $ H $, respectively.

		\item If, in addition, $ K \in C^{1} $, $ m \mapsto X^{\kappa}_{m} $ ($ \kappa = s, c, u, c_0 $) are $ C^1 $ and the norm of $ X $ is $ C^1 $, then one can choose $ W^{cs}_{loc}(K) $, $ W^{cu}_{loc}(K) $, $ \Sigma^{c} $ to be $ C^1 $. In particular, if $ K \in C^1 $ and $ X $ is a separable Hilbert space, then $ W^{cs}_{loc}(K) $, $ W^{cu}_{loc}(K) $, $ \Sigma^{c} $ can be chosen to be $ C^1 $.
	\end{enumerate}
\end{thmA}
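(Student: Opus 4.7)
The plan is to work in tubular coordinates over $K$ and construct $W^{cs}_{loc}(K)$ and $W^{cu}_{loc}(K)$ separately as Lipschitz graphs over the complementary bundles, using the correspondence / invariant-section machinery from \cite{Che18a, Che18b}, and then define the center manifold as $\Sigma^c := W^{cs}_{loc}(K) \cap W^{cu}_{loc}(K)$.

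First I set up the tubular coordinates. Under the uniformly $C^{0,1}$-immersed hypothesis on $K$, the Lipschitz splitting $X = T_m K \oplus X^s_m \oplus X^{c_0}_m \oplus X^u_m$, and the almost uniform continuity of $DH$ in a positive neighborhood of $K$, the map $\Psi: E_\delta \to X$, where $E_m := X^s_m \oplus X^{c_0}_m \oplus X^u_m$ and $E_\delta$ is the $\delta$-neighborhood of the zero section in $E$, is a bi-Lipschitz immersion onto a positive neighborhood of $K$ for some $\delta>0$ uniform over $K$. Pulling $H$ back through $\Psi$ produces a bundle correspondence $\hat{H}$ over the base map $H|_K: K \to K$, whose linearization at the zero section has block-diagonal action $A^s_m \oplus A^{c_0}_m \oplus A^u_m$ on $E_m$. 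The spectral-gap, Lipschitz-bundle, and cocycle-continuity hypotheses in (Q1) are exactly what is needed to invoke the invariant-graph theorems from \cite{Che18b}.

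Next I produce $W^{cu}_{loc}(K)$. Write $E = E^{cs} \oplus X^u$ with $E^{cs}_m = X^s_m \oplus X^{c_0}_m$. The graph transform, applied with the spectral gap $\sup_m |A^s \oplus A^c|\,|(A^u)^{-1}| < 1$ -- the $TK$-part of $A^c$ corresponding in the chart to the action of $H|_K$ on the base $K$ -- yields a Lipschitz section $\sigma^{cu}$ from $\{(m,w) : m \in K,\ w \in B_{E^{cs}_m}(0,\delta')\}$ into $X^u$. Pushed forward by $\Psi$, its graph is the desired $W^{cu}_{loc}(K)$: it contains $K$, is locally negatively $\hat{H}$-invariant, and by a horizontal-cone argument has tangent $X^{cu}_m$ at each $m \in K$. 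A time-reversed version, using $\sup_m |A^s|\,|(A^u \oplus A^c)^{-1}| < 1$, produces $W^{cs}_{loc}(K)$ as a locally positively invariant Lipschitz graph of $X^s$ over the $X^{cu}$-direction. Since $X^{cs}_m \cap X^{cu}_m = X^c_m$ and the two graphs meet transversally along $K$, $\Sigma^c$ is a $C^{0,1}$ immersed submanifold with tangent $X^c_m$ at $m \in K$, and local invariance follows by combining the two one-sided invariances.

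For the $C^1$ upgrade I lift the graph transform to the Grassmann bundle $\mathbb{G}(E)$: under the additional hypothesis that $K$, the bundles $m \mapsto X^\kappa_m$, and the norm on $X$ are $C^1$, the induced action on $\mathbb{G}(E_m)$ is a genuine contraction onto the $cs$-plane (resp.\ $cu$-plane) by the same spectral gaps, and a fiber-contraction argument identifies the unique continuous tangent-plane field of the Lipschitz graph with its actual derivative, yielding $C^1$ regularity of $W^{cs}_{loc}(K)$, $W^{cu}_{loc}(K)$, and hence of $\Sigma^c$. In the separable-Hilbert case the norm is $C^\infty$ and Lipschitz bundles can be replaced by $C^1$ ones inside the same invariant cones, so the Hilbert statement reduces to the previous one. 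The hard part will be the tangent-space identification in this partially normally hyperbolic setting: in the fully normally hyperbolic case ($TK = X^c$) the natural cone for $W^{cs}$ is taken around $T_m K \oplus X^s_m$, but here $T_m K$ is a proper sub-bundle of $X^c_m$, so the cone must enclose the whole $X^{cs}_m = T_m K \oplus X^s_m \oplus X^{c_0}_m$, and one must verify that the spectral gap involving the \emph{full} $A^c$ (not merely $A^{c_0}$) dominates the non-linearity of $\hat{H}$ -- delicate because $DH$ is only almost uniformly continuous near $K$ and $H$ itself is not assumed Lipschitz, so the uniform-continuity modulus has to be tracked carefully against $\delta$ and the Lipschitz constants of the bundle.
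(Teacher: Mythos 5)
Your overall route is the same as the paper's (Theorem B is proved there via \autoref{thm:invariant} and \autoref{app:im}): tubular coordinates over $K$, a graph transform producing the $cs$- and $cu$-graphs, $\Sigma^c$ as their intersection, and a fiber-contraction on candidate tangent planes for the $C^1$ statement. But two essential ingredients are missing. First, existence: under \textbf{(Q1)} neither $\sup_m|A^s_m|<1$ nor $\sup_m|(A^u_m)^{-1}|<1$ is assumed, only the gaps. A graph transform on sections of the disc bundle is then \emph{not} a contraction in the plain sup metric (its factor is essentially $|(A^u_m)^{-1}|$ up to angle terms), so "the graph transform \dots yields a Lipschitz section" does not follow as stated. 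The paper exploits the invariance of $K$: sections vanish on the zero section and one contracts in the weighted metric $d_2(h_1,h)=\sup |h_1-h|/|\overline{x}^s|$ of \autoref{sec:invariant}, which turns the factor into roughly $|A^{cs}_m||(A^u_m)^{-1}|$; moreover, since $|A^{cs}_m|$ may exceed $1$ the transformed graph leaves the disc bundle, so the graph transform itself must be truncated (the paper cuts off the transform, not $H$). Invoking "the invariant-graph theorems from [Che18b]" does not cover this: those are the normally hyperbolic results, and the truncation of the transform is precisely the new difficulty in the partially normally hyperbolic case, which your sketch never performs.

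Second, the $C^1$ upgrade. The Grassmann-bundle fiber contraction is indeed how the paper identifies the tangent planes, but it does not use the $C^1$ norm at all; what needs it is the cut-off. To keep the fixed point $C^1$ one must truncate with a $C^{1}\cap C^{0,1}$ bump function (\autoref{def:C1Lbump}), and the hypotheses "the norm of $X$ is $C^1$" or "$X$ a separable Hilbert space" are exactly what guarantee such bumps in the $s$- (and $u$-) fiber directions (\autoref{exa:case1}, \autoref{cor:spaces}); separability is additionally what allows $C^1$ approximation of $m\mapsto X^{\kappa}_m$ (\autoref{cor:appLip}) so that hypothesis can be dropped in the Hilbert case. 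With only a Lipschitz cut-off the fixed point is in general merely differentiable at $K$ in the Whitney sense (\autoref{rmk:diff}), not a $C^1$ submanifold, so as written your argument does not deliver the third bullet. Two smaller points: your labels are swapped --- the graph over $X^s\oplus X^{c_0}$ into $X^u$ (gap $\sup_m|A^{cs}_m||(A^u_m)^{-1}|<1$) is the locally \emph{positively} invariant $W^{cs}_{loc}(K)$, and the graph into $X^s$ is the locally negatively invariant $W^{cu}_{loc}(K)$; and since $H$ is not assumed invertible near $K$, "a time-reversed version" should be replaced by the dual correspondence (\autoref{defi:dual}, \autoref{lem:mapAB}), which is how the paper treats the unstable side using only invertibility of $A^u_m$, $A^{cu}_m$.
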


For a more general version of \autoref{thm:B}, see \autoref{thm:invariant} and \autoref{app:im}.

\begin{rmk}
	Suppose $ \sup_{m}\{|A^{s}_{m}|,|(A^{u}_{m})^{-1}|\} < 1 $ and $ T_{m}K = X^{c}_{m} $. Then $ K $ becomes normally hyperbolic with respect to $ H $ in the sense of \cite{HPS77, BLZ98, BLZ08}. Consequently, we have $ \Sigma^c = K $ and the manifolds $ W^{cs}_{loc}(K), W^{cu}_{loc}(K), \Sigma^{c} $ automatically become $ C^1 $, even without the assumption that $ K \in C^{1} $; see also \cite{Che18b}.
\end{rmk}

In the second situation, we do not assume that $ K $ is a manifold or invariant. Instead, we consider the following assumptions.

(Q2) Let $ K \subset \Sigma $ where $ \Sigma $ is a uniformly $ C^{0,1} $ immersed submanifold of $ X $ near $ K $ (see (H1)--(H4) in \autoref{sub:setup} for details) and $ u: K \to K $ an invertible $ C^0 $ (and almost uniformly continuous) map (in the immersed topology of $ \Sigma $).
Assume that $ \sup_{m \in K}|H(m) - u(m)| $ is small and $ X $ has a decomposition $ X = X^{s}_{m} \oplus X^{c}_{m} \oplus X^{u}_{m} $, $ m \in K $, such that
\begin{enumerate}[$ \bullet $]
	\item $ X^{\kappa}_{m} $ ($ \kappa = s, c, u $) are approximately invariant under $ DH(m) $ (over $ u $) for $ m \in K $ (see also (IV$ ' $) (a$ ' $) in \autoref{sub:maps});

	\item $ DH(m) $ in $ X^{s}_{m} $ and in $ X^{u}_{m} $ is uniformly contractive and expanding, respectively, and in $ X^{c}_{m} $ is strictly weaker than the contraction in $ X^{s}_{m} $ and expansion in $ X^{u}_{m} $ for $ m \in K $;

	\item $ T_{m}\Sigma = X^{c}_{m} $ for $ m \in K $;

	\item $ K \to \mathbb{G}(X): m \mapsto X^{\kappa}_{m} $ ($ \kappa = s, c, u $) are Lipschitz (in the immersed topology of $ \Sigma $).
\end{enumerate}

Now, we can replace $ \Sigma $ by a locally invariant $ C^{0,1} $ submanifold $ \Sigma^c $ (or more intuitively, modify a neighborhood of $ K $ in $ \Sigma $ such that the new one is locally invariant). This result can be considered as a version of persistence result (see also \cite{CLY00, CLY00a, BC16}), but it is markedly different from normal hyperbolicity, since it is not uniquely persistent and the persistent manifold is only \emph{locally} invariant.

\begin{thmA}
	Under (S) and (Q2), there are three $ C^{0,1} $ immersed submanifolds in a small neighborhood of $ K $ in $ X $, denoted by $ W^{cs}_{loc}(K) $, $ W^{cu}_{loc}(K) $ and $ \Sigma^{c} = W^{cs}_{loc}(K) \cap W^{cu}_{loc}(K) $, such that
	\begin{enumerate}[$ \bullet $]
		\item $ W^{cs}_{loc}(K) $, $ W^{cu}_{loc}(K) $ and $ \Sigma^{c} $ are locally modeled on $ X^{cs}_{m}, X^{cu}_{m}, X^{c}_{m} $, $ m \in \Sigma $, respectively;

		\item $ W^{cs}_{loc}(K) $, $ W^{cu}_{loc}(K) $ and $ \Sigma^{c} $ are locally positively invariant, locally negatively invariant and locally invariant under $ H $, respectively.

		\item If, in addition, $ \Sigma \in C^1 $ admitting $ C^{1} \cap C^{0,1} $ bump functions (see also \autoref{def:C1Lbump}) and $ m \mapsto X^{\kappa}_{m} $ ($ \kappa = s, c, u $) are $ C^1 $, then one can choose $ W^{cs}_{loc}(K) $, $ W^{cu}_{loc}(K) $, $ \Sigma^{c} $ to be $ C^1 $. In particular, if $ \Sigma \in C^1 $ and if $ X $ is a separable Hilbert space or $ \Sigma $ is finite-dimensional, then $ W^{cs}_{loc}(K) $, $ W^{cu}_{loc}(K) $, $ \Sigma^{c} $ can be chosen to be $ C^1 $.
	\end{enumerate}
\end{thmA}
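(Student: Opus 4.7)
The plan is to reduce the result to the global bundle-correspondence invariant graph theorem of \cite{Che18a} by installing bundle coordinates in a positive neighborhood of $K$ and extending the restricted dynamic to a globally defined bundle correspondence over the base $(\Sigma, u)$. First I would use the uniformly $C^{0,1}$ immersed structure of $\Sigma$ around $K$ together with the Lipschitz choice $m \mapsto X^s_m \oplus X^u_m$ of transversals to identify a positive neighborhood of $K$ in $X$ with a neighborhood of the zero section in the bundle $\bigsqcup_{m \in \Sigma}(X^s_m \oplus X^u_m)$ via $(m, v^s, v^u) \mapsto m + v^s + v^u$; since $T_m\Sigma = X^c_m$, this is a bi-Lipschitz parametrization of a fixed-radius tubular neighborhood of $K$. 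In these coordinates $H$ becomes a correspondence whose projection onto the $\Sigma$-factor approximates $u$, by the hypothesis that $\sup_{m \in K}|H(m) - u(m)|$ is small and $DH$ is almost uniformly continuous, while the normal components are controlled by a uniform contraction on $X^s_m$ and a uniform expansion on $X^u_m$, with the center action strictly weaker than both.

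Next I would extend this local model to a globally defined bundle correspondence over $(\Sigma, u)$ by multiplying the nonlinear remainder by a radial cutoff in the $X^s_m \oplus X^u_m$ fibers; the almost uniform continuity of $DH$ forces the resulting Lipschitz constants to be small once the cutoff radius is taken small, so that the spectral gap in \textbf{(Q2)} yields the contraction-type conditions needed for the graph-transform theorem of \cite{Che18a}. Applying that theorem twice, once with the splitting $X^s \oplus (X^c \oplus X^u)$ and once with $X^u \oplus (X^c \oplus X^s)$, produces two Lipschitz invariant sections $W^{cu}_{loc}(K)$ and $W^{cs}_{loc}(K)$, graphs over the center bundle in the cutoff neighborhood, which are respectively locally negatively and locally positively invariant under $H$; their intersection is a Lipschitz graph tangent along $K$ to $X^c_m$ and locally invariant, giving $\Sigma^c$. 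The ``locally modeled on'' statement along $K$ follows from the tangent computation at the base using the approximate invariance of the splittings.

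For the $C^1$ upgrade I would run the standard fiber-contraction/spectral-gap argument on the bundle of linearizations (tangent fields along the graphs), which converges provided the extension preserves the $C^1$ structure. This is exactly where the $C^1 \cap C^{0,1}$ bump function assumption enters: without such bumps one cannot radially truncate a $C^1$ map while keeping both its Lipschitz constant controlled and its derivative continuous. In the separable Hilbert case one invokes the existence of $C^1$ bump functions on the Hilbert space, and in the finite-dimensional base case one uses ordinary smooth bumps supported in $C^1$ charts of $\Sigma$; together with $m \mapsto X^\kappa_m$ being $C^1$, the bundle charts are $C^1$ and the graph transform on $C^1$ sections converges under the spectral gap implied by \textbf{(Q2)}.

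The main obstacle I expect is the simultaneous handling of (i) the lack of invariance of $K$ under $H$ (only approximated by $u$) and (ii) the immersed, not embedded, nature of $\Sigma$, where uniform tubular neighborhood radii and the graph transform must be set up in the immersed topology rather than the topology of $X$. The resolution is to lift the entire construction to the pulled-back bundle over an abstract parameter manifold covering the immersion of $\Sigma$, perform the graph transform there, and descend; this is where the immersed Lipschitz assumptions and the almost uniform continuity of $u$ are used crucially, and where the bundle-correspondence framework of \cite{Che18a} pays off by not requiring the base dynamic $u$ or the ambient $H$ to be a genuine map on $\Sigma$.
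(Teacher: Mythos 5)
Your first step (bi-Lipschitz tubular coordinates over the immersed $\Sigma$, working on the pulled-back bundle over $\widehat{\Sigma}$) matches the paper's set-up, but the globalization step has a genuine gap. You propose to obtain a globally defined bundle correspondence ``over the base $(\Sigma,u)$'' by radially cutting off the nonlinear remainder in the $X^s_m\oplus X^u_m$ fibers and then to invoke the global invariant-graph theorem of \cite{Che18a}. Under \textbf{(Q2)}, however, the base map $u$, the splitting $X^s_m\oplus X^c_m\oplus X^u_m$ with its rates, and the approximate invariance of the splitting under $DH$ are given only at points of $K$; nothing beyond almost uniform continuity of $DH$ is assumed on $\Sigma\setminus K$, and $\Sigma$ itself is not (even approximately) invariant away from $K$. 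A cutoff performed only in the hyperbolic fiber directions leaves the center/base component of the dynamic untouched, and that component is neither globally defined over $\Sigma$ nor inflowing; so after your truncation there is still no bundle correspondence over $(\Sigma,u)$ to which \cite{Che18a} can be applied. To make the dynamic global you would have to extend $u$ and the hyperbolic data beyond $K$ and interpolate the dynamic in the center direction as well, and this is precisely the step that fails for a general approximately invariant set: the paper points out that the ``truncate the dynamic, then apply the global theory'' scheme works when $K$ consists of equilibria but not for general $K$, and its proof instead cuts off the \emph{graph transform}, not the dynamic — the bump $\Psi(\widehat m,\overline x^s)=\ell(\max\{\widehat d(\widehat m,\widehat K),|\overline x^s|\})$ in \autoref{sub:graph} depends on the base distance to $\widehat K$ exactly so that the transformed section is declared to be the default section away from $K$, the local fixed-point equations \eqref{equ:fixed}--\eqref{equ:local} are only ever posed over points of $K$, and no extension of $u$ or of the splitting is needed (this is the Bonatti--Crovisier idea \cite{BC16}; see \autoref{thm:tri0} and \autoref{cor:mapG}, via \autoref{lem:mapAB}, for how the statement you are proving is deduced).

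The smoothness part inherits the same problem: in the paper the $C^1\cap C^{0,1}$ bump of \autoref{def:C1Lbump} is used to smooth the cutoff of the graph transform (\autoref{sub:LipC1bm}), not to truncate $H$, so your plan of ``radially truncating a $C^1$ map'' again presupposes the globalization that is missing. Two smaller points: the radial retraction is only Lipschitz (constant up to $2$), which degrades spectral conditions in general — harmless here for the $s,u$ remainders because $\mathfrak A(\epsilon')$ is small, but the center-direction remainder is not small and cannot be cut off at all; and in the finite-dimensional-$\Sigma$ case the paper gets $C^1$ manifolds by combining the base bump with the strong $s$-contraction/$u$-expansion that is automatic for $C^1$ maps (\autoref{lem:mapAB}), a mechanism your sketch does not supply. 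As written, the proposal would prove the result only in the classical situation where $K=\Sigma$ is invariant and normally hyperbolic, not under \textbf{(S)(Q2)}.
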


See \autoref{thm:tri0} and \autoref{cor:mapG} for more details.

\begin{rmk}
	If $ K = \Sigma $, then $ K $ is approximately invariant and approximately normally hyperbolic with respect to $ H $ in the sense of \cite{BLZ08}. In this case, $ W^{cs}_{loc}(K), W^{cu}_{loc}(K), \Sigma^{c} $ automatically become $ C^1 $ (even without assuming $ \Sigma \in C^{1} $), and $ \Sigma^{c} $ is invariant and normally hyperbolic with respect to $ H $; see also \cite{BLZ08, Che18b}.
\end{rmk}

The general results are formulated in \autoref{sec:statement} after some preliminaries in \autoref{sec:PreI} are set up.

\section{A unified framework for differential equation applications}

\begin{exa}\label{exa:diff}
	Let $ X, Y $ be two Banach spaces. Let $ T(t): X \to X $, $ S(-t): Y \to Y $, $ t \geq 0 $, be $ C_0 $ semigroups with generators $ A, -B $, respectively, and $ |T(t)| \leq e^{\mu_s t} $, $ |S(-t)| \leq e^{-\mu_u t} $, $ \forall t \geq 0$; see \cite{Paz83}. Let $ F_1 : X \times Y \to X $, $ F_2 : X \times Y \to Y $ be Lipschitz maps with $ \lip F_i \leq \varepsilon_{i} $. Consider the following differential equation
	\[ \tag{DE} \label{equ:ill}
		\begin{cases}
			\dot{x} = A x + F_1 (x, y), \\
			\dot{y} = B y + F_2 (x, y),
		\end{cases}
	\]
	or its integral form (called \emph{variation of constants formula}),
	\[ \tag{$ \mathrm{DE}_{\mathrm{int}} $} \label{equ:integral}
		\begin{cases}
			x(t) = T(t - t_1)x_1 + \int_{t_1}^{t} T(t-s)F_1(x(s),y(s)) ~\mathrm{d} s, \\
			y(t) = S(t - t_2)y_2 - \int_{t}^{t_2} S(t-s)F_2(x(s),y(s)) ~\mathrm{d} s,
		\end{cases}
		t_1 \leq t \leq t_2.
	\]
	(Usually, the solutions of \eqref{equ:integral} are called the \emph{mild solutions} of \eqref{equ:ill}. Equation \eqref{equ:ill} is usually \emph{ill-posed}, meaning that for arbitrarily given $ (x_0, y_0) \in X \times Y $, there might be no (mild) solution $ (x(t), y(t)) $ satisfying \eqref{equ:ill} with $ x(0) = x_0, y(0) = y_0 $.)

	\begin{enumerate}[$ \bullet $]

		\item Note that if $ S(-\cdot) $ is a $ C_0 $ group, then equation \eqref{equ:ill} is well-posed and classical, and has been investigated at length by many authors; see e.g. \cite{Paz83}. In this case, there is a natural setting to which we can apply the discrete dynamics results in \autoref{sec:statement}; for example, the study of first return map defined on certain cross-section of some orbits of \eqref{equ:ill}.

		\item For some concrete examples of \eqref{equ:ill} where $ S(-\cdot) $ is not a $ C_0 $ group, see e.g. \cite{LP08}.

		\item By the standard argument, it is easy to see that for each $ (x_1, y_2) \in X \times Y $, \eqref{equ:integral} has a unique $ C^0 $ solution $ (x(t), y(t)) $, $ t_1 \leq t \leq t_2 $, with $ x(t_1) = x_1, y(t_2) = y_2 $ (see e.g. \cite{ElB12}); let
		      \[
			      F_{t_1, t_2}(x_1, y_2) = x(t_2),~ G_{t_1, t_2}(x_1, y_2) = y(t_1).
		      \]

		\item Our results in \autoref{sec:statement} will show that the maps $ F_{0,t_0}, G_{0,t_0} $ for fixed $ t_0 > 0 $ can reflect some dynamical properties of \eqref{equ:ill}.
	\end{enumerate}
\end{exa}

\begin{asparaenum}[(Step I).]
	\item From our perspective, invariant manifolds for both ill-posed and well-posed differential equations can be treated in a unified framework. Rather than viewing \eqref{equ:ill} as an initial value problem, we approach it by solving \eqref{equ:integral} with the boundary conditions $ x(t_1) = x_1 $ and $ y(t_2) = y_2 $. It is important to note that the integral formula \eqref{equ:integral} depends critically on the non-linear maps $ F_1 $ and $ F_2 $ (and hence on $ A $ and $ B $). In some instances, it may not be possible to express the non-linear map as $ F = (F_1, F_2) $; the familiar form \eqref{equ:integral} is presented here due to the simplifying assumptions on $ A $, $ B $, $ F_1 $, and $ F_2 $.
	
	Let $ H $ denote the \emph{continuous correspondence} induced by \eqref{equ:ill} (i.e., \eqref{equ:integral}), so that $ H(t) \sim (F_{0,t}, G_{0,t}) $ for all $ t \geq 0 $ (see \autoref{sec:corr} for details). Intuitively, $ H(t) $ defines a \emph{correspondence} between $ X \times Y $ and itself, where $ (x_1, y_1) \in H(t)(x_0, y_0) $ if and only if there is a solution $ (x(s), y(s)) $ ($ 0 \leq s \leq t $) of \eqref{equ:ill} satisfying $ (x(0), y(0)) = (x_{0}, y_{0}) $ and $ (x(t), y(t)) = (x_{1}, y_{1}) $.
	
	Let $K \subset X$. Assume that \eqref{equ:ill} induces a flow $ t $ on $K$, meaning that for each $ \omega \in K $, there is a unique orbit $ \{z(t)\}_{t \in \mathbb{R}} $ of \eqref{equ:ill} such that $ z(t) \in K $ for all $ t \in \mathbb{R} $ and $ z(0) = \omega $. The flow $ t $ is then defined by $ t(\omega) = z(t) $. We write $ t(\omega) $ as $ t \omega $ for simplicity.
	
	\item To analyze the dynamical properties of \eqref{equ:ill} near $ K $, assume $ g = (F_1, F_2) \in C^1 $ and linearize the equation along $ K $, yielding
	\[
	\dot{z}(t) = Cz(t) + L(t\omega)z(t),
	\]
	where $ C = A \oplus B $, $ Z = X \times Y $, and $ L(\omega) = Dg(\omega) \in L(Z, Z) $ for $ \omega \in K $. In certain cases, the \emph{uniform (exponential) trichotomy} (or \emph{uniform (exponential) dichotomy}) is known to hold (see, e.g., \cite{CL99} for well-posed differential equations and \cite{LP08} for ill-posed ones, as well as \cite{dlLS19, CdlL19}).
	
	\item To capture the ``dynamical behavior'' of \eqref{equ:ill}, we must further investigate the uniform (exponential) trichotomy (or dichotomy) of the non-linear equation
	\[
	\dot{z}(t) = Cz(t) + L(t\omega)z(t) + f(t\omega)z(t),
	\]
	where $ f(\omega)z = g(z + \omega) - L(\omega)z - g(\omega) $ for $ z \in Z $ and $ \omega \in K $. More precisely, we require knowledge of the \emph{(A) (B) condition} for the \emph{cocycle correspondence} generated by this equation, which has been verified in some cases in \cite{Che18c}. This provides the verification of (B3) (a) in \autoref{sub:tri} (or (A3) (a) in \autoref{subsec:main}).
	
	\item  We must also demonstrate that $ K $ possesses suitable geometric properties (i.e., (H1)--(H4) in \autoref{sub:setup}). Correspondingly, the continuity of the flow $ t $ (in the immersed topology) and its (almost) uniform continuity must be established (i.e., (B2) in \autoref{sub:tri} or (A2) in \autoref{subsec:main}).
	
	\item Finally, the geometric properties of $ K $ must be compatible with the uniform (exponential) trichotomy (or dichotomy) assumption (i.e., (B3) (b) (ii) in \autoref{sub:tri} or (A3) (b) (ii) in \autoref{subsec:main}). At this stage, we can attempt to apply the main results in \autoref{sec:statement} to $ H(t_0) $ for a fixed $ t_0 > 0 $.
	
		The perturbed equation of \eqref{equ:ill} can also be considered. For example,
		\[
		\dot{z} = Cz + \widetilde{g}(z),
		\]
		where $ |\widetilde{g} - g|_{C^{1}} $ is small. In this case, replace $ f $ in (Step III) with $ \widetilde{f}(\omega)z = g(z + \omega) - L(\omega)z - \widetilde{g}(\omega) $. Note that $ |\widetilde{g} - g|_{C^{0}} $ must be sufficiently small (i.e., (B3) (b) (i) in \autoref{sub:tri} or (A3) (b) (i) in \autoref{subsec:main}). The required smallness of $ |D\widetilde{g} - Dg|_{C^{0}} $ depends solely on the analysis in (Step II). In fact, $ |D\widetilde{g} - Dg|_{C^{0}} $ may not be as small as in the theory of \emph{inertial manifold} (see, e.g., \cite{MS88, Zel14}); this phenomenon also occurs in ``\emph{weak hyperbolicity}'' as discussed in \cite[Section 7.3]{Wig94} (see also \cite{Yan09}). Here, \autoref{thm:invariant} (along with \autoref{app:im}) gives such a generalization.
\end{asparaenum}

\begin{rmk}
	(Step II) is inherently connected to the spectral theory of differential equations (see, e.g., \cite{CL99, LP08, LZ17} and references therein), which falls outside the scope of this series of papers. We note that the general spectral theory for ill-posed differential equations remains underdeveloped.

	(Step I) and (Step III) rely heavily on the availability of a \emph{variation of constants formula} along with associated estimates (such as Strichartz estimates). The difficulty of these steps varies depending on the assumptions regarding linear and non-linear terms. In particular, for \emph{critical} non-linear perturbations, the analysis becomes highly complex (see, e.g., \cite{Sch09, Bec12}); however, we are confident that our results can be applied once an abstract formulation of such perturbations is established, as in \cite{Che18c}. 

	Given the generality of our assumptions on the submanifold, which encompass many interesting examples, (Step IV) is often not problematic, especially when $ K $ is a compact set (see \autoref{thm:geo}) or a (uniformly Lipschitz invariant) submanifold (see \autoref{sub:invariant}); refer to \autoref{rmk:geo} for details. In the context of partially normal hyperbolicity, (Step V) presents the main challenge, though it typically holds naturally in physical applications.
\end{rmk}

For a concrete application, see \autoref{sec:application} and, in particular, \autoref{exa:Bq}.

\section{Structure of this paper}

In \autoref{sec:PreI}, we introduce basic notions for ``generalized'' dynamical systems that are non-smooth, non-Lipschitz, and ``non-mapping'', following \cite{Che18a}. \autoref{sec:application} is an application of our main results to invariant whiskered tori. Our main results are stated in \autoref{sec:statement}. In \autoref{sec:facts}, we give essential background on Grassmann manifolds and uniform submanifolds in Banach spaces. \autoref{sec:existence} contains the proof of existence results and \autoref{thm:I}; the core step is the construction of graph transforms that combine ideas from \cite{Fen72, BLZ08, BC16} with new techniques from \cite{Che18a} to handle ``generalized'' dynamical systems. In \autoref{sec:smooth}, we prove regularity results and \autoref{thm:smooth} by combining methods from \cite{HPS77, BLZ08} and \cite{Che18a}. \autoref{sub:bump} includes a brief discussion on the existence of $ C^{0,1} \cap C^{1} $ bump functions. \autoref{sec:invariant} and \autoref{sec:tri} contain proofs of \autoref{thm:invariant} and \autoref{thm:tri0}, respectively. In \autoref{sec:whitney}, we give a geometric version of the Whitney extension theorem in infinite dimensions (see also \cite{CLY00a, BC16} for finite-dimensional cases) and prove \autoref{cor:compact}. Finally, in \autoref{sec:C1map}, we discuss invariant manifolds for $ C^1 $ maps.

\vspace{.5em}
\noindent{\textbf{Notations}}: Throughout this paper, the following notations will be used.
\begin{enumerate}[$ \bullet $]
	\item $\lip f$: the Lipschitz constant of $f$; $ \hol_{\theta} f $: the $ \theta $-H\"older constant of $f$. $ \lip f = \hol_{1} f $.

	\item $\mathbb{R}_{+} \triangleq \{x\in \mathbb{R}: x \geq 0\}$, $ \mathbb{N} = \{ 0, 1,2, \ldots \} $, $ \mathbb{Z} = \{ 0, \pm 1, \pm 2, \ldots \} $.

	\item $ \mathbb{B}_r(K) \triangleq O_{r}(K) \triangleq \{ x: d(x, K) < r \} $, $ \mathbb{B}_r(m) \triangleq \mathbb{B}_r(\{m\}) $, if $ (X, d) $ is a metric space with metric $ d $.

	\item For a normed space $ X $, write $ \mathbb{S}_X = \{ x: |x| = 1 \} $, $ \mathbb{B}_r = X(r) \triangleq \{ x: |x| < r \} $.

	\item For two normed spaces $ X $ and $ Y $, $ L(X, Y) $ denotes the space of all bounded linear operators from $ X $ to $ Y $, and $ L(X) \triangleq L(X, X) $.

	\item $R(L) = \{Lx:x \in X\}$, $\ker(L) = \{x:Lx=0\}$ where $L:X \to Y$ is a linear operator.

	\item $ \mathbb{G}(X) $: the Grassmann manifold of $ X $ (i.e., all complemented linear subspaces of $ X $); see \autoref{sub:Grassmann}.

	\item $ \overline{\Pi}(X) $: the set of projections of $ X $, if $ X $ is a Banach space.

	\item $ X_1 \oplus X_2 $: $ X_i $ ($ i = 1,2 $) are closed subspaces of a normed space with $ X_1 \cap X_2 = \{0\} $. Without explicit mentioned, the norm of $ X_1 \oplus X_2 \cong X_1 \times X_2 $ is taken as $ |(x_1, x_2)| = \max \{ |x_1|, |x_2| \} $ (when $ X_1 \oplus X_2 $ is close).

	\item $ \Pi_{X_2}(X_1) $: the projection with $ R(\Pi_{X_2}(X_1)) = X_1 $, $ \ker (\Pi_{X_2}(X_1)) = X_2 $ if $ X_1 \oplus X_2 $ is closed.

	\item For a correspondence $ H: X \to Y $ (defined in \autoref{sec:corr}),
	      \begin{asparaitem}
		      \item $ \graph H $, the graph of the correspondence $ H $,
		      \item $ H(x) \triangleq \{ y: \exists (x, y) \in \graph H \} $, $ H(A) \triangleq \bigcup_{x \in A} H(x) $,
		      \item $A \subset H^{-1}(B)$, if $ \forall x \in A $, $ \exists y \in B $ such that $ y \in H(x) $,
		      \item $ H^{-1}: Y \to X $, the inversion of $ H $ defined by $ (y,x) \in \graph H^{-1} \Leftrightarrow (x, y) \in \graph H $.
		      \item $ H \sim (F, G) $ means that the correspondence has a generating map $ (F, G) $.
	      \end{asparaitem}

	\item (A) (B) condition, (A)$({\alpha; \alpha', \lambda_u})$ (B)$({\beta; \beta', \lambda_s})$ condition, or (A$ ' $) (B) condition, etc., defined in \autoref{defAB}.

	\item Write $ f = O_{\epsilon}(1) $ if $ f \to 0 $ as $ \epsilon \to 0 $; $ f = o(\epsilon) $ if $ f/\epsilon = O_{\epsilon}(1) $; $ f = O(\epsilon) $ if $ |f/\epsilon| < \infty $ as $ \epsilon \to 0 $.

	\item $ D_1F(x,y) = D_x F(x,y) $ and $ D_2F(x,y) = D_y F(x,y) $: the derivatives of $ F $ with respect to $ x, y $, respectively, where $ F $ is differentiable at $ (x,y) \in X \times Y $.

	\item Let $ r_\varepsilon(x) $ be the \emph{radial retraction} of a Banach space, defined by
	      \begin{equation}\label{equ:radial}
		      r_\varepsilon(x) =
		      \begin{cases}
			      x,                   & ~ \text{if}~ |x| \leq \varepsilon, \\
			      \varepsilon x / |x|, & ~ \text{if}~ |x| \geq \varepsilon.
		      \end{cases}
	      \end{equation}
\end{enumerate}

\chapter{Basic notions: set-up for generalized dynamical systems}\label{sec:PreI}

To provide a unified treatment of differential equations in Banach spaces for both ill-posed and well-posed cases, we introduce the notion of correspondence with generating map, which can be regarded as ``generalized dynamical system''. We also use the (A) (B) condition (see \cite{Che18a}) to describe the hyperbolicity of correspondences.

Let us first recall some terminology related to \emph{bundle}.
A triple $ (X, M, \pi_1) $ (or simply $ X $) is called a (set) \emph{bundle} (over $ M $) if $ \pi_1: X \to M $ is a surjection. Typically, we call $ X_{m} = \pi_1^{-1}(m) $, $ m \in M $, the \emph{fibers} of $ X $, $ M $ the \emph{base space} of $ X $, and $ \pi_1 $ the \emph{projection}. Sometimes, the elements of $ X $ are written as $ (m,x) $ where $ x \in X_{m} $, $ m \in M $.
The Whitney sum $ X \times Y $ of two bundles $ X $ and $ Y $ over $ M $ is defined by
\[
X \times Y = \{ (m,x,y): x \in X_{m}, y \in Y_{m}, m \in M \}.
\]

Let $ (X, M_1, \pi_1) $ and $ (Y, M_2, \pi_2) $ be two bundles and $ u: M_1 \to M_2 $ a map. A map $ f: X \to Y $ is called a bundle map over $ u $ if $ f(X_{m}) \subset Y_{u(m)} $ for all $ m \in M_1 $; we write $ f(m,x) = (u(m), f_{m}(x)) $ and call $ f_{m}: X_{m} \to Y_{u(m)} $ a fiber map of $ f $.
A bundle is called a \emph{vector bundle} if each of its fibers is a linear space. A bundle map between two vector bundles is called a \emph{vector bundle map} if each fiber map is linear.

\section{Correspondence with generating map, dual correspondence} \label{sec:corr}

Let $X, Y$ be two sets. A \emph{correspondence} $H: X \rightarrow Y$ (see \cite{Cha08}) is defined to be a non-empty subset of $X \times Y$, called the graph of $ H $ and denoted by $ \graph H $; in this case, we write $ y \in H(x) $ if $(x, y) \in \graph H$.
Various operations between correspondences can be defined.
\begin{enumerate}[$ \bullet $]
	\item (Inversion) The inversion $ H^{-1} : Y \rightarrow X $ of a correspondence $ H: X \rightarrow Y $ is defined by
	\[
	(y,x) \in \graph H^{-1} \Leftrightarrow (x,y) \in \graph H.
	\]

	\item (Composition) For two correspondences $ H_1: X \rightarrow Y $ and $ H_2: Y \rightarrow Z $, define $H_2 \circ H_1: X \rightarrow Z$ by
	\[
	\graph H_2 \circ H_1 = \{ (x,z): \exists y \in Y ~\text{such that}~ (x,y) \in \graph H_1, (y,z) \in \graph H_2 \}.
	\]

	\item (Linear operation) If $X, Y$ are vector spaces and $H_1, H_2: X \rightarrow Y$ are correspondences, then $H_1 - H_2: X \rightarrow Y$ is defined by
	\[
	\graph (H_1 - H_2) = \{ (x,y): \exists (x, y_i) \in \graph H_i ~\text{such that}~ y = y_1-y_2 \}.
	\]
	In particular, $H_m \triangleq H(m+\cdot) - \widehat{m}: X \rightarrow Y$ is well defined, i.e.,
	\[
	\graph H_m = \{ (x, y - \widehat{m}) : \exists (x+m, y) \in \graph H \}.
	\]
\end{enumerate}

The following notations for a correspondence $ H: X \to Y $ will be used frequently.
\begin{enumerate}[$ \bullet $]
	\item $ H(x) \triangleq \{ y \in Y: \exists (x, y) \in \graph H \} $; here, $ H(x) $ may be empty; if $ H(x) = \{y\} $, we write $ H(x) = y $.
	\item $ H(A) \triangleq \bigcup_{x \in A} H(x) $, where $ A \subset X $.
	\item Thus, $ A \subset H^{-1}(B) $ means that $ \forall x \in A $, $ \exists y \in B $ such that $ y \in H(x) $ ($ \Leftrightarrow x \in H^{-1}(y) $).
	\item When $ X = Y $, we say $ A \subset X $ is \emph{invariant} under $ H $ if $ A \subset H^{-1}(A) $.
	\item If $ A \subset H^{-1}(B) $, then $ H: A \to B $ can be regarded as a correspondence $ H|_{A \to B} $ defined by
	\[
	(x, y) \in \graph H|_{A \to B} \Leftrightarrow y \in H(x) \cap B, x \in A.
	\]
	\item We say $ H: A \to B $ induces (or defines) a map, denoted by $ H|_{A \to B} $ (sometimes also written as $ H|_{A} $), if $ \forall x \in A $, $ H(x) \cap B $ contains exactly one element.
\end{enumerate}

In \cite{Aki93}, Akin used the term ``relation'' instead of ``correspondence'' and applied it to unify various properties of topological dynamical systems (e.g., recurrence, attractor, chain recurrence, etc.).

Let us focus on the description of $ \graph H $.
\begin{enumerate}[$ \bullet $]
	\item We say a correspondence $H: X_1 \times Y_1 \rightarrow X_2 \times Y_2$ has a \emph{generating map} $(F,G)$, and we write $H\sim(F,G)$, if there are maps $F: X_1 \times Y_2 \rightarrow X_2$ and $G: X_1 \times Y_2 \rightarrow Y_1$ such that
	\[
	(x_2,y_2) \in H(x_1,y_1) \Leftrightarrow y_1 = G(x_1,y_2), ~x_2 = F(x_1, y_2).
	\]
\end{enumerate}
Note that given such maps $ F, G $, one can determine a correspondence $ H $ such that $ H \sim (F, G) $.

The following type of maps induce correspondences with generating maps; see also \cite{Cha08}.

\begin{enumerate}[$ \bullet $]
	\item Let $H = (f,g): X_1 \times Y_1 \rightarrow X_2 \times Y_2$ be a map. Suppose for every $x_1 \in X_1$, $g_{x_1}(\cdot) \triangleq g(x_1, \cdot): Y_1 \rightarrow Y_2$ is a bijection. Let $G(x_1, y_2) = g^{-1}_{x_1}(y_2)$ and $F(x_1, y_2) = f(x_1, G(x_1, y_2))$. Then we have $H \sim (F,G)$.
\end{enumerate}

\emph{A map exhibiting hyperbolic behavior typically falls into this category}, regardless of whether the map is bijective or not; see \autoref{lem:mapAB} and \cite[Section 3]{Che18a} for more details.

\begin{defi}\label{defi:dcorr}
	Let $ (X, M, \pi_1), (Y, N, \pi_2)$ be two bundles, and $u: M \rightarrow N $ a map.
	$H: X \times Y \rightarrow X \times Y$ is called a \emph{bundle correspondence} over a map $u$, if $\graph H \triangleq \bigcup_{m \in M}(m, \graph H_m$), i.e.,
	\[
	(u(m), x_{u(m)}, y_{u(m)}) \in H(m, x_m, y_m) \Leftrightarrow (x_{u(m)}, y_{u(m)}) \in H_m(x_m, y_m),
	\]
	where $H_m: X_m \times Y_m \rightarrow X_{u(m)} \times Y_{u(m)}$ is a correspondence, $m \in M$.

	If for each $m \in M$, $H_m \sim (F_m, G_m)$, where $F_m: X_m \times Y_{u(m)} \rightarrow X_{u(m)}$ and $G_m: X_m \times Y_{u(m)} \rightarrow Y_m$ are maps, then we say $H$ has a \emph{generating bundle map} $(F,G)$ over $u$, denoted by $H \sim (F,G)$.

	Let $ M = N $. The \emph{kth bundle composition} $ H^{(k)} $ of $ H $ is defined by
	\[
	H^{(k)}_m = H_{u^{k-1}(m)} \circ H_{u^{k-2}(m)} \circ \cdots \circ H_m.
	\]
	This is a bundle correspondence over $ u^k $.
\end{defi}

\begin{defi}\label{defi:dual}
	Let $ H: X_1 \times Y_1 \to X_2 \times Y_2 $ be a correspondence with a generating map $ (F, G) $. The \emph{dual correspondence} of $ H $, denoted by $ \widetilde{H}: \widetilde{X}_1 \times \widetilde{Y}_1 \to \widetilde{X}_2 \times \widetilde{Y}_2 $, is defined as follows. Set $ \widetilde{X}_1 = Y_2 $, $ \widetilde{X}_2 = Y_1 $, $ \widetilde{Y}_1 = X_2 $, $ \widetilde{Y}_2 = X_1 $ and
	\[
	\widetilde{F}(\widetilde{x}_1, \widetilde{y}_2) = G(\widetilde{y}_2, \widetilde{x}_1), ~ \widetilde{G}(\widetilde{x}_1, \widetilde{y}_2) = F(\widetilde{y}_2, \widetilde{x}_1).
	\]
	Now $ \widetilde{H} $ is uniquely determined by $(\widetilde{x}_2,\widetilde{y}_2) \in \widetilde{H}(\widetilde{x}_1,\widetilde{y}_1) \Leftrightarrow \widetilde{y}_1 = \widetilde{G}(\widetilde{x}_1,\widetilde{y}_2), ~\widetilde{x}_2 = \widetilde{F}(\widetilde{x}_1, \widetilde{y}_2)$, i.e., $ \widetilde{H} \sim (\widetilde{F}, \widetilde{G}) $.
\end{defi}

Usually, $ \widetilde{H} $ can reflect some properties of $ H^{-1} $; but $ H^{-1} $ and $ H $ are not in duality. For example, if $ H $ satisfies the (A)$(\alpha; \alpha', \lambda_u)$ condition (see \autoref{defAB} below), then $ \widetilde{H} $ satisfies the (B)$(\alpha; \alpha', \lambda_u)$ condition. In particular, if we obtain the ``stable results'' with respect to $ H $, then we also get the ``unstable results'' of $ H $ by using the ``stable results'' of $ \widetilde{H} $.
We state all the results in the ``stable direction'' in this paper, leaving the corresponding statements of ``unstable results'' to the readers.

\section{Hyperbolicity and (A) (B) condition} \label{sub:AB}

We now present a method for describing the hyperbolicity of a correspondence (see \cite{Che18a}).
Let $X_i, Y_i$ ($i=1,2$) be metric spaces. \emph{For notational convenience, we denote metrics by $d(x,y) \triangleq |x-y|$}.

\begin{defi}\label{defAB}
	A correspondence $H: X_1 \times Y_1 \rightarrow X_2 \times Y_2$ is said to satisfy the \emph{(A) (B) condition}, or more specifically the \emph{(A)$(\alpha; \alpha', \lambda_u)$ (B)$(\beta; \beta', \lambda_s)$ condition}, if the following conditions hold for all $(x_1, y_1) \times (x_2, y_2), (x'_1, y'_1) \times (x'_2, y'_2) \in \graph H$:
	\begin{enumerate}[(A)]
		\item If $|x_1 - x'_1| \leq \alpha |y_1 - y'_1|$, then $|x_2 - x'_2| \leq \alpha' |y_2 - y'_2|$;

		\noindent if $|x_1 - x'_1| \leq \alpha |y_1 - y'_1|$, then $ |y_1 - y'_1| \leq \lambda_u |y_2 - y'_2|$;

		\item If $|y_2 - y'_2| \leq \beta |x_2 - x'_2| $, then $ |y_1 - y'_1| \leq \beta' |x_1 - x'_1|$;

		\noindent if $|y_2 - y'_2| \leq \beta |x_2 - x'_2| $, then $ |x_2 - x'_2| \leq \lambda_s |x_1 - x'_1|$.
	\end{enumerate}
	If $\alpha = \alpha'$ and $\beta = \beta'$, we also use the notation \emph{(A) ($\alpha, \lambda_u$) (B) ($\beta, \lambda_s$) condition}. If $ H \sim (F, G) $, we also say $ (F, G) $ satisfies the (A) (B) condition; in this case, $F$ and $G$ satisfy the following Lipschitz conditions:
	\begin{enumerate}[(A$'$)]
		\item $\sup_{x}\lip F(x,\cdot) \leq \alpha'$, $\sup_{x}\lip G(x,\cdot) \leq \lambda_u$.
		\item $\sup_{y}\lip G(\cdot,y) \leq \beta'$, $\sup_{y}\lip F(\cdot,y) \leq \lambda_s$.
	\end{enumerate}
	If $F, G$ satisfy the above Lipschitz conditions, then we say $H \sim (F, G)$ satisfies the \emph{(A$'$)($\alpha', \lambda_u$) (B$'$)($\beta', \lambda_s$) condition}, or simply the \emph{(A$'$) (B$'$) condition}. Similarly, one can define (A$'$) (B) condition, (A) (B$'$) condition, etc.
\end{defi}

Roughly speaking, in \autoref{defAB}, the numbers $ \lambda_{s}, \lambda_{u} $ and the spaces $ X_i, Y_i $ ($ i = 1,2 $) are related to \emph{Lyapunov numbers} and \emph{spectral spaces}, respectively; while the numbers $ \alpha, \alpha' $ and $ \beta, \beta' $ describe how the spaces $ X_i, Y_i $ ($ i = 1,2 $) are approximately invariant.
This connection can be intuitively understood in the context of differential equations (see \cite[Section 3]{Che18c}), where we established a relation between the \emph{uniform (exponential) dichotomy} (see \cite{CL99, LP08}) and the (A) (B) condition; see also \cite[Sections 4.2--4.3]{Che18a}. In what follows, we collect some useful facts about the (A) (B) condition; for the simple proofs of Lemmas \ref{lem:a4} and \ref{lem:c1} below, see \cite[Sections 4.2--4.3]{Che18a}.

It is straightforward to verify that if for each $n$, $H_n: X_n \times Y_n \rightarrow X_{n+1} \times Y_{n+1}$ satisfies the (B) ($\beta_{n}; \beta'_{n}, \lambda_{n, s}$) condition with $ \beta'_{n+1} \leq \beta_{n} $, then $H_n \circ \cdots \circ H_1$ satisfies the (B) ($\beta_{n}; \beta'_{1}, \lambda_{1,s} \cdots \lambda_{n,s}$) condition. This fact will be used in \autoref{sec:tri}.

\begin{lem}[Lipschitz condition $ \Rightarrow $ (A) (B) condition]\label{lem:a4}
	Let $ H \sim (F, G): X_1 \times Y_1 \to X_2 \times Y_2 $.
	\begin{enumerate}[(a)]
		\item \label{it:ab0} If for all $ x_1, x'_1 \in X_1$ and $ y_2, y'_2 \in Y_2 $,
		\begin{gather*}
		|F(x_1, y_2)-F(x'_1, y'_2)| \leq \max\{ \lambda_s |x_1 - x'_1|,~ \alpha |y_2 - y'_2| \}, \\
		|G(x_1, y_2)-G(x'_1, y'_2)| \leq \max\{ \beta |x_1 - x'_1|,~ \lambda_u |y_2 - y'_2| \},
		\end{gather*}
		and $ \alpha\beta < 1 $, $ \lambda_s \lambda_u < 1 $, then $ H $ satisfies the (A) ($\alpha, \lambda_u$) (B) ($\beta, \lambda_s$) condition. Moreover, if $ \alpha \beta < \lambda_s \lambda_u $, then $H$ satisfies the (A) ($c^{-1} \alpha; \alpha, \lambda_u$) (B) ($c^{-1} \beta; \beta, \lambda_s$) condition, where $ c = \lambda_s \lambda_u < 1 $.

		\item \label{it:ab1} If $ H $ satisfies the (A$'$) ($\widetilde{\alpha}, \widetilde{\lambda}_u$) (B$'$) ($\widetilde{\beta}, \widetilde{\lambda}_s$) condition with $ \widetilde{\lambda}_s \widetilde{\lambda}_u < c^2 $ and $ \widetilde{\alpha} \widetilde{\beta} < (c - \sqrt{\widetilde{\lambda}_s \widetilde{\lambda}_u})^2 $, where $ 0 < c \leq 1 $, then $H$ satisfies the (A)$(\alpha; c\alpha, \lambda_u)$ (B)$(\beta; c\beta, \lambda_s)$ condition, where
		\begin{gather*}
					\alpha = \frac{b-\sqrt{b^2 - 4c\widetilde{\alpha}\widetilde{\beta}} }{2\widetilde{\beta}}, \quad
			\beta = \frac{b-\sqrt{b^2 - 4c\widetilde{\alpha}\widetilde{\beta}} }{2\widetilde{\alpha}}, \\
			\lambda_s = \frac{\widetilde{\lambda}_s}{1-\alpha\widetilde{\beta}},\quad
			\lambda_u = \frac{\widetilde{\lambda}_u}{1-\beta\widetilde{\alpha}},\quad
			b = c - \widetilde{\lambda}_s \widetilde{\lambda}_u + \widetilde{\alpha}\widetilde{\beta}.
		\end{gather*}
	\end{enumerate}
\end{lem}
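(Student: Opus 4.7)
My plan is to unwrap the generating-map description of $ \graph H $: any two points $ (x_i, y_i, x_{i+1}, y_{i+1}) \in \graph H $ satisfy $ y_i = G(x_i, y_{i+1}) $ and $ x_{i+1} = F(x_i, y_{i+1}) $, so the Lipschitz-type hypotheses on $ F $ and $ G $ translate directly into cross-estimates between graph pairs. Both parts then reduce to pure inequality manipulation.

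For part (a), I work by case analysis on the max. Assume the (A)-hypothesis $ |x_1-x'_1| \leq \alpha |y_1-y'_1| $. The $ G $-bound gives $ |y_1-y'_1| \leq \max\{\beta|x_1-x'_1|,\lambda_u|y_2-y'_2|\} $. If the first branch attains the max, substitution gives $ |y_1-y'_1| \leq \alpha\beta|y_1-y'_1| $ and $ \alpha\beta<1 $ forces $ |y_1-y'_1|=0 $, collapsing the remaining claims trivially; if the second branch, one reads off $ |y_1-y'_1| \leq \lambda_u |y_2-y'_2| $ directly. For $ |x_2-x'_2| $ I split the $ F $-bound similarly: the $ \alpha|y_2-y'_2| $ branch hands the conclusion with $ \alpha'=\alpha $, while the $ \lambda_s|x_1-x'_1| $ branch chains through hypothesis and the just-obtained $ \lambda_u $ bound to $ \lambda_s\alpha\lambda_u|y_2-y'_2| \leq \alpha|y_2-y'_2| $ using $ \lambda_s\lambda_u < 1 $. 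Condition (B) follows by the mirror argument on the dual variables. The strengthened statement under $ \alpha\beta < c := \lambda_s\lambda_u $ uses the identical case analysis with hypothesis rescaled to $ c^{-1}\alpha $: the degenerate case now uses $ \alpha\beta/c < 1 $, and the nontrivial $ F $-chain becomes $ \lambda_s \cdot c^{-1}\alpha\cdot\lambda_u|y_2-y'_2| = \alpha|y_2-y'_2| $ exactly, yielding $ \alpha' = \alpha $.

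For part (b), triangle inequality turns the separate (A$'$) and (B$'$) Lipschitz bounds into the additive estimates $ |F(x_1,y_2)-F(x'_1,y'_2)| \leq \widetilde{\lambda}_s|x_1-x'_1| + \widetilde{\alpha}|y_2-y'_2| $ and the analogous estimate for $ G $. Under the (A)-hypothesis $ |x_1-x'_1| \leq \alpha|y_1-y'_1| $, the $ G $-estimate yields $ (1-\widetilde{\beta}\alpha)|y_1-y'_1| \leq \widetilde{\lambda}_u|y_2-y'_2| $; once $ \widetilde{\beta}\alpha < 1 $, this defines $ \lambda_u $ as stated. Substituting back into the $ F $-estimate bounds $ |x_2-x'_2| $ by $ \bigl[\widetilde{\lambda}_s\widetilde{\lambda}_u\alpha/(1-\widetilde{\beta}\alpha) + \widetilde{\alpha}\bigr]|y_2-y'_2| $, and requiring this to be $ \leq c\alpha|y_2-y'_2| $ reduces, after clearing the denominator, to a quadratic inequality in $ \alpha $. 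Choosing $ \alpha $ as the smaller positive root of the critical quadratic produces the stated formula; the (B) side is a completely symmetric computation yielding $ \beta $ with an identical discriminant. One then notices the consistency $ \widetilde{\beta}\alpha = \widetilde{\alpha}\beta $ forced by the formulas, so that the denominators $ 1-\widetilde{\beta}\alpha $ and $ 1-\widetilde{\alpha}\beta $ in $ \lambda_u,\lambda_s $ coincide and are unambiguously defined.

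The main obstacle is the algebraic bookkeeping in (b): verifying that the spectral hypotheses $ \widetilde{\lambda}_s\widetilde{\lambda}_u < c^2 $ and $ \widetilde{\alpha}\widetilde{\beta} < (c-\sqrt{\widetilde{\lambda}_s\widetilde{\lambda}_u})^2 $ guarantee both positivity of $ b $ and nonnegativity of the discriminant $ b^2 - 4c\widetilde{\alpha}\widetilde{\beta} $, so that the formulas define real positive constants. The first follows from $ \widetilde{\lambda}_s\widetilde{\lambda}_u < c^2 \leq c $ (using $ c\leq 1 $). The discriminant condition rewrites as $ (\sqrt{c}-\sqrt{\widetilde{\alpha}\widetilde{\beta}})^2 \geq \widetilde{\lambda}_s\widetilde{\lambda}_u $ or equivalently $ \sqrt{\widetilde{\alpha}\widetilde{\beta}} + \sqrt{\widetilde{\lambda}_s\widetilde{\lambda}_u} \leq \sqrt{c} $, which follows from the hypothesis $ \sqrt{\widetilde{\alpha}\widetilde{\beta}} + \sqrt{\widetilde{\lambda}_s\widetilde{\lambda}_u} < c \leq \sqrt{c} $. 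The denominators $ 1-\widetilde{\beta}\alpha $ are positive because the chosen root $ \alpha $ is the smaller one, keeping $ \widetilde{\beta}\alpha $ small relative to $ c $ by the same estimate.
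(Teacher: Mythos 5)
Your part (a) is correct and is the intended elementary argument: the case analysis on the two maxima, the collapse of the degenerate branch via $\alpha\beta<1$, the chain $\lambda_s\,\alpha\,\lambda_u\le\alpha$ (and, in the sharpened version, $\lambda_s\, c^{-1}\alpha\,\lambda_u=\alpha$), and the mirror argument for (B). Note the paper itself does not prove this lemma (it defers to \cite[Sections 4.2--4.3]{Che18a}), so the comparison here is purely with your own computation.

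In part (b), however, the decisive algebraic step does not work as written. Your chain gives, under $|x_1-x_1'|\le\alpha|y_1-y_1'|$, the bound $|x_2-x_2'|\le\bigl(\tfrac{\widetilde{\lambda}_s\widetilde{\lambda}_u\alpha}{1-\widetilde{\beta}\alpha}+\widetilde{\alpha}\bigr)|y_2-y_2'|$, and requiring this to be $\le c\alpha|y_2-y_2'|$ is, after clearing the denominator, the quadratic inequality $c\widetilde{\beta}\alpha^2-b\alpha+\widetilde{\alpha}\le 0$. Its smaller root is $\frac{b-\sqrt{b^2-4c\widetilde{\alpha}\widetilde{\beta}}}{2c\widetilde{\beta}}$, which coincides with the displayed $\alpha=\frac{b-\sqrt{b^2-4c\widetilde{\alpha}\widetilde{\beta}}}{2\widetilde{\beta}}$ only when $c=1$; for $c<1$ the displayed $\alpha$ lies strictly below the smaller root, so $c\widetilde{\beta}\alpha^2-b\alpha+\widetilde{\alpha}>0$ there and your required inequality fails. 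This is not a cosmetic slip: with the printed constants the conclusion itself is false for $c<1$. For instance take $c=1/2$, $\widetilde{\lambda}_s=\widetilde{\lambda}_u=0.1$, $\widetilde{\alpha}=\widetilde{\beta}=0.3$ (all hypotheses hold); the displayed formula gives $\alpha\approx 0.308$, hence $c\alpha\approx 0.154<\widetilde{\alpha}$, and for $F(x,y)=0.1x+0.3y$, $G(x,y)=0.3x+0.1y$ with two graph points having $x_1=x_1'$, the (A)-hypothesis holds trivially while $|x_2-x_2'|=0.3\,|y_2-y_2'|>c\alpha|y_2-y_2'|$. So you must either carry the factor $c$ through honestly, which yields $\alpha=\frac{b-\sqrt{b^2-4c\widetilde{\alpha}\widetilde{\beta}}}{2c\widetilde{\beta}}$ and $\beta=\frac{b-\sqrt{b^2-4c\widetilde{\alpha}\widetilde{\beta}}}{2c\widetilde{\alpha}}$ (with these, $\widetilde{\beta}\alpha=\widetilde{\alpha}\beta=\frac{b-\sqrt{b^2-4c\widetilde{\alpha}\widetilde{\beta}}}{2c}<1$ since $b<2c$, your discriminant and positivity checks go through verbatim, and the formulas reduce to the displayed ones at $c=1$), and flag the discrepancy with the statement as a typo, or accept that the sentence ``choosing $\alpha$ as the smaller positive root of the critical quadratic produces the stated formula'' is exactly the unverified point — and it is false as it stands.
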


Note that in \autoref{lem:a4} \eqref{it:ab1}, we also have $\alpha \beta < 1$ and $\lambda_s \lambda_u < 1$.

For a Banach space $ X $ and $ r > 0 $, we denote $ X(r) = \{ x \in X: |x| < r \} $.

\begin{lem}[Invariant cone condition $ \Rightarrow $ (A) (B) condition]\label{lem:c1}
	Let $ X_i, Y_i $ ($ i = 1, 2 $) be Banach spaces, and $ H \sim (F, G): X_1 \times Y_1 \rightarrow X_2 \times Y_2 $ a correspondence. Assume $F,G \in C^{1}$ and $\alpha \beta < 1$.
	Then the following statements are equivalent:
	\begin{enumerate}[(a)]
		\item $H$ satisfies the (A) ($\alpha; \alpha', \lambda_s$) condition and $\sup_{y} \lip G(\cdot, y) \leq \beta$.
		\item For every $(x_1,y_2) \in X_1 \times Y_2$, $(DF(x_1,y_2), DG(x_1, y_2))$ satisfies the (A) ($\alpha; \alpha', \lambda_u$) condition and
		\[
		\sup_{(x_1,y_2)}|D_1G(x_1,y_2)| \leq \beta.
		\]
	\end{enumerate}

	In particular, if $ H \sim (F,G): X_1(r_1) \times Y_1(r'_1) \rightarrow X_2(r_2) \times Y_2 (r'_2) $ satisfies the (A) ($\alpha; \alpha', \lambda_u$) condition with 
	\[
	\sup_{y_2}\lip G(\cdot, y_2) \leq \beta, \quad \alpha \beta < 1/2,
	\]
	then $(DF(x_1, y_2), DG(x_1, y_2))$ also satisfies (A) ($\alpha; \alpha', \lambda_u$) and $|D_1G(x_1, y_2)| \leq \beta$ for any $(x_1, y_2) \in X_1(r_1) \times Y_2(r'_2)$.
\end{lem}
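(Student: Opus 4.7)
The plan is to establish the equivalence in three stages: the Lipschitz-versus-derivative bookkeeping for $G$ in the first variable, the linearization direction (a) $\Rightarrow$ (b), and the integration direction (b) $\Rightarrow$ (a). The bookkeeping is routine: the equivalence $\sup_y \lip G(\cdot, y) \leq \beta$ if and only if $\sup_{(x,y)} |D_1 G(x,y)| \leq \beta$ is the standard mean value inequality for $C^1$ maps between Banach spaces, applied to $x_1 \mapsto G(x_1, y_2)$ with $y_2$ fixed.

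For (a) $\Rightarrow$ (b), I linearize by perturbation. Fix a base point $(x_1, y_2)$ and a tangent pair $(\xi_1, \eta_2)$ with the strict inequality $|\xi_1| < \alpha |DG(x_1, y_2)(\xi_1, \eta_2)|$, and apply the finite-difference (A) condition of (a) to the pair $(x_1, y_2)$ and $(x_1 + t\xi_1, y_2 + t\eta_2)$ for $t > 0$ small. A first-order Taylor expansion of $G$ converts the finite-difference hypothesis into $t|\xi_1| \leq \alpha t |DG(x_1, y_2)(\xi_1, \eta_2)| + o(t)$, which holds for all sufficiently small $t$; the two conclusions of (A) rescale linearly in $t$, so dividing by $t$ and letting $t \to 0^+$ yields the pointwise (A) in the strict case. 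The boundary case of equality follows by a closure argument: replace $(\xi_1, \eta_2)$ by $((1-\varepsilon)\xi_1, \eta_2)$ and let $\varepsilon \to 0$, using continuity of $DF, DG$. The same argument, restricted to points staying inside the specified balls, proves the ``in particular'' local clause, where the strengthened constraint $\alpha\beta < 1/2$ gives enough room for the $t$-perturbation to stay inside the domain.

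For (b) $\Rightarrow$ (a), I integrate along the straight-line path $\gamma(s) = (x_1 + s\Delta x, y_2 + s\Delta y)$, $s \in [0,1]$, with $\Delta x = x'_1 - x_1$, $\Delta y = y'_2 - y_2$, and set $f(s) = F(\gamma(s))$, $g(s) = G(\gamma(s))$. Partition $[0,1] = S \sqcup S^c$ with $S = \{s : |\Delta x| \leq \alpha|g'(s)|\}$: pointwise (A) at $\gamma(s) \in S$ gives the targeted bounds $|f'(s)| \leq \alpha'|\Delta y|$ and $|g'(s)| \leq \lambda_u|\Delta y|$, while on $S^c$ one has the direct strict bound $|g'(s)| < |\Delta x|/\alpha$. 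Splitting $\int_0^1 |g'(s)|\,ds$ over $S$ and $S^c$ and combining with the hypothesis $|\Delta x| \leq \alpha \int_0^1 |g'(s)|\,ds$ forces $m(S) > 0$ and $|\Delta x| \leq \alpha\lambda_u|\Delta y|$ (the degenerate case $m(S) = 0$ gives $\Delta x = 0$ by contradiction), whence $|G(x'_1, y'_2) - G(x_1, y_2)| \leq \lambda_u|\Delta y|$ drops out by substituting back into the integral split. The companion bound on $F$ requires an extra device: at $\gamma(s) \in S^c$ I perturb the test tangent vector from $(\Delta x, \Delta y)$ to $(\Delta x, \eta^*(s))$ with $|\eta^*(s)|$ minimal subject to $|\Delta x| \leq \alpha|DG(\gamma(s))(\Delta x, \eta^*(s))|$; using $|D_1 G| \leq \beta$, $|D_2 G| \leq \lambda_u$ and $\alpha\beta < 1$, the perturbation stays of size $O(|\Delta x|)$, and applying (A) to these perturbed vectors delivers an estimate on $|D_1 F(\gamma(s))\Delta x|$ that integrates to $|F(x'_1, y'_2) - F(x_1, y_2)| \leq \alpha'|\Delta y|$.

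The main obstacle is this last step: the pointwise (A) supplies no a priori bound on $|D_1 F|$, so the $F$-bound on the bad set $S^c$ cannot be obtained by naive integration of $|f'|$. Instead one must activate (A) at carefully chosen perturbed tangent vectors, and the gap $\alpha\beta < 1$ is precisely what keeps the perturbation (hence the resulting bound on $|D_1 F(\gamma(s))\Delta x|$) controlled. The ``in particular'' direction sidesteps this issue entirely, which is why a simple first-order perturbation argument—requiring only $\alpha\beta < 1/2$ to keep the perturbed points in the specified balls—suffices for the localized statement.
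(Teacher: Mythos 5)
The paper itself contains no proof of this lemma (it defers to \cite[Sections 4.2--4.3]{Che18a}), so I can only judge your argument on its own terms. The bookkeeping step, the direction (a)$\Rightarrow$(b) (including the ``in particular'' clause), and the $\lambda_u$/$G$-half of (b)$\Rightarrow$(a) via the $S/S^c$ splitting are correct. The genuine gap is exactly at the point you yourself flag as the main obstacle: the $F$-estimate on the bad set $S^c$, and the $\eta^*$-device you propose does not repair it. First, the minimal $\eta^*(s)$ with $|\Delta x|\leq \alpha|DG(\gamma(s))(\Delta x,\eta^*(s))|$ need not exist: on $S^c$ one may have $D_2G(\gamma(s))=0$, and then $|DG(\gamma(s))(\Delta x,\eta)|=|D_1G(\gamma(s))\Delta x|\leq\beta|\Delta x|<|\Delta x|/\alpha$ for every $\eta$, so no admissible perturbation exists. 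Second, the claim $|\eta^*(s)|=O(|\Delta x|)$ uses the hypotheses in the wrong direction: $|D_2G|\leq\lambda_u$ together with $|D_1G|\leq\beta$ yields only the \emph{lower} bound $|\eta^*|\geq \frac{(1-\alpha\beta)}{\alpha\,\|D_2G(\gamma(s))\|}\,|\Delta x|$ (after replacing $\lambda_u$ by the actual size of $D_2G$), which blows up where $D_2G$ is small; an upper bound on $|\eta^*|$ would require a lower bound on $D_2G$, which is not assumed. Third, even granting an $\eta^*$ of controlled size, applying the pointwise (A) condition to $(\Delta x,\eta^*)$ and passing back to the tangent $(\Delta x,\Delta y)$ produces a bound of the form $|f'(s)|\leq \alpha'|\eta^*(s)|+\alpha'|\Delta y-\eta^*(s)|$, which exceeds $\alpha'|\Delta y|$ unless $\eta^*(s)$ is a convex multiple $c\Delta y$, $c\in[0,1]$ --- and by the very definition of $S^c$ no such multiple lies in the cone there. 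Since the target inequality $|F(x_1',y_2')-F(x_1,y_2)|\leq\alpha'|\Delta y|$ carries no slack, these additive errors cannot be absorbed by the good-set contribution $\alpha'|\Delta y|\,m(S)$.

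The conclusion is that the straight segment is the wrong path for this direction: at its points the cone may degenerate (e.g.\ $D_2G=0$) and then $D_1F$ is completely uncontrolled, so no tangent-perturbation along that segment can recover the sharp constant $\alpha'$. A correct proof must use a different mechanism --- for instance, replacing the segment by a path whose $Y_2$-component is affine and whose $X_1$-component is steered so that the tangent vector lies in the pointwise cone at \emph{every} time (this is where $|\Delta x|\leq\alpha|G(x_1',y_2')-G(x_1,y_2)|$, $|D_1G|\leq\beta$, $|D_2G|\leq\lambda_u$ and $\alpha\beta<1$ must enter), after which the pointwise (A) condition integrates directly to both conclusions; the existence of such a path is itself a nontrivial step that your proposal does not address. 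As written, the direction (b)$\Rightarrow$(a) is therefore not established.
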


The condition (b) of \autoref{lem:c1}, which is satisfied by the linear operators $(DF(x_1,y_2)$, $DG(x_1, y_2))$ for $(x_1,y_2) \in X_1 \times Y_2$, is known as the \emph{invariant cone condition} (see, e.g., \cite{MS88, BLZ98, LYZ13}).

\begin{exa}
	Continuing with \autoref{exa:diff}, let $ H(t) $ be the continuous correspondence induced by \eqref{equ:ill}, i.e., $ H(t) \sim (F_{0,t}, G_{0,t}) $ for all $ t \geq 0 $, where $ F_{0,t}, G_{0,t} $ are given in \autoref{exa:diff}.
	If $ \mu_u - \mu_s - \varepsilon_1 - \varepsilon_2 > 0 $, then there are constants $ 0 < k_{1, t}, k_{2, t} < 1 $, $ \alpha, \beta $, and $ \lambda_s = e^{\mu_s + \varepsilon_1}, \lambda_u = e^{-\mu_u + \varepsilon_2} $ such that $ \alpha \beta < 1 $ and $ H(t) $ satisfies the (A)($ \alpha; k_{1, t}\alpha, \lambda^{t}_u $) (B)($ \beta; k_{2, t} \beta, \lambda^{t}_s $) condition with $ k_{i, t} \to 0 $ as $ t \to \infty $ and $ \varepsilon_{i} \to 0 $; for details, see \cite[Section 3]{Che18c}.
\end{exa}

\chapter{Applications to study dynamics of whiskered tori}\label{sec:application}

Center-stable, center-unstable, and center manifolds near ground states or ground state solitons for various Hamiltonian PDEs have been constructed by many authors; see e.g. \cite{Sch09, NS12, Bec12, KNS15, JLZ17} etc. and \autoref{rmk:PDEs}. In what follows, to emphasize our motivation, analogous to \cite[Theorem 1.2]{JLZ17}, we consider the existence of center-stable, center-unstable, and center manifolds near a \emph{whiskered torus} (i.e., a hyperbolic invariant torus). Heuristically, such results are direct consequences of our main results in \autoref{sec:statement} and the definition of a whiskered torus. The concept of whiskered torus was introduced by V. I. Arnol{\cprime}d \cite{Arn63, Arn64}.
The existence of invariant whiskered tori has been extensively studied in e.g. \cite{FdlLS09} (for exact symplectic maps and Hamiltonian flows), \cite{FdlLS15} (for coupled Hamiltonian systems on an infinite lattice), \cite{dlLS19} (for possibly ill-posed Hamiltonian PDEs), and \cite{CCdlL19} (for dissipative systems), etc. To provide applications to ill-posed PDEs (such as \cite{dlLS19}), we need to describe invariant whiskered tori for correspondences. Write $ \mathbb{T}^{d} = \mathbb{R}^{d} / \mathbb{Z}^{d} $.

\begin{defi}[Whiskered(-like) torus]\label{defi:torus}
	Given a correspondence $ H: X \to X $ where $ X $ is a Banach space, we say a $ C^1 $-smooth embedding $ K: \mathbb{T}^{d} \to X $ is an invariant \emph{whiskered(-like) torus} of $ H $ if $ K $ satisfies the following conditions.
	\begin{enumerate}[(a)]
		\item $ H \circ K = K \circ u_{\omega} $, where $ u_{\omega}: \mathbb{T}^{d} \to \mathbb{T}^{d} $ is a rotation with frequency $ \omega $, i.e., $ u_{\omega} (\theta) = \theta + \omega $. That is, $ H: R(K) \to R(K) $ induces a map conjugate to a rotation $ u_{\omega} $, where $ R(K) $ denotes the range of $ K $.

		\item (Uniform trichotomy) For each $ \theta \in \mathbb{T}^{d} $, there is a splitting
		\[
		T_{K(\theta)} X = X = X^s_{\theta} \oplus X^{c}_{\theta} \oplus X^{u}_{\theta},
		\]
		with associated projections $ \Pi^{s, c, u}_{\theta} $ such that

		\noindent (i) $ \theta \mapsto \Pi^{s, c, u}_{\theta} $ are $ C^1 $,

		\noindent (ii) $ X^{c}_{\theta} $ is finite-dimensional with $ \dim X^{c}_{\theta} \geq d $,

		\noindent (iii) For $ \kappa_1 = cs $, $ \kappa_2 = u $, $ \kappa = cs $, or $ \kappa_1 = s $, $ \kappa_2 = cu $, $ \kappa = cu $, we have
		\[
		H(\cdot - K(\theta)) - K(\theta + \omega) \sim (F^{\kappa}_{\theta}, G^{\kappa}_{\theta}): X^{\kappa_1}_{\theta} (r) \oplus X^{\kappa_2}_{\theta} (r_1) \to X^{\kappa_1}_{u_{\omega} (\theta)} (r_2) \oplus X^{\kappa_2}_{u_{\omega} (\theta)} (r),
		\]
		where $ r, r_1, r_2 > 0 $, such that

		(iii$ _1 $) $ F^{\kappa}_{\theta}(\cdot), G^{\kappa}_{\theta}(\cdot) \in C^{1} $ with $ DF^{\kappa}_{\theta}(\cdot), DG^{\kappa}_{\theta}(\cdot) $, $ \theta \in \mathbb{T}^{d} $, being equicontinuous,

		(iii$ _2 $) $ DF^{\kappa}_{\theta}(0)|_{X^{\kappa_2}_{u_{\omega} (\theta)}} = 0 $, $ DG^{\kappa}_{\theta}(0)|_{X^{\kappa_1}_{\theta}} = 0 $; and

		(iii$ _3 $) $ \sup_{\theta}|DF^{\kappa}_{\theta}(0)| < \lambda_{\kappa_1} $, $ \sup_{\theta}|DG^{\kappa}_{\theta}(0)| < \lambda_{\kappa_2} $ with $ \lambda_{s} $, $ \lambda_{u} $, $ \lambda_{\kappa_1} \lambda_{\kappa_2} < 1 $.

	\end{enumerate}

	Consider the case when $ H $ is a map. Then (b)(iii) is implied by the following classical assumption, which was used in e.g. \cite{FdlLS09, FdlLS15, CCdlL19}; see e.g. \autoref{lem:mapAB} for a proof.
	\begin{enumerate}[({iii}$ '_1 $)]
		\item $ H $ is a $ C^1 $ map in a neighborhood of $ K $ with $ DH $ uniformly continuous;
		\item $ A^{s, c, u}_{\theta} \triangleq DH(K(\theta))|_{X^{s, c, u}_{\theta}}:X^{s, c, u}_{\theta} \to X^{s, c, u}_{u_{\omega}(\theta)} $ with $ A^{c,u}_{\theta} $ invertible; and
		\item $ |A^{s}_{\theta}| < \lambda_{s} $, $ |(A^{u}_{\theta})^{-1}| < \lambda_{u} $, $ |A^{c}_{\theta}| < \lambda_{cs} $ and $ |(A^{c}_{\theta})^{-1}| < \lambda_{cu} $ with $ \lambda_{s} $, $ \lambda_{u} $, $ \lambda_{cs} \lambda_{u} $, $ \lambda_{cu} \lambda_{s} < 1 $.
	\end{enumerate}

	Note that, in this case, $ DF^{cs, cu}_{\theta}(0) = A^{cs, s}_{\theta} $ and $ DG^{cs, cu}_{\theta}(0) = (A^{u, cu}_{\theta})^{-1} $.
\end{defi}

\begin{rmk}
	\begin{asparaenum}[(a)]
		\item From \autoref{defi:torus} (a) and (b) (iii$ _2 $), combined with the conditions $ \lambda_{u} < 1 $ and $ \sup_{\theta}|DK(\theta)| < \infty $, it follows that $ DK(\theta) T_{\theta} \mathbb{T}^{d} \subset X^{cs}_{\theta} $; similarly, $ DK(\theta) T_{\theta} \mathbb{T}^{d} \subset X^{cu}_{\theta} $. This implies that $ DK(\theta) T_{\theta} \mathbb{T}^{d} \subset X^{c}_{\theta} $, and consequently, the invariant torus $ K $ is partially normally hyperbolic.
		
		\item If $ \dim X^{c}_{\theta} = d $, then the invariant torus $ K $ is normally hyperbolic; if $ \dim X^{c}_{\theta} = 2d $, then $ K $ corresponds to the whiskered torus studied in references such as \cite{FdlLS09, FdlLS15, dlLS19, CCdlL19}.
		
		\item The finite-dimensionality of $ X^{c}_{\theta} $ is not essential for \autoref{thm:torus}, except that without it we cannot guarantee the $ C^1 $ smoothness of $ W^{cs}_{loc}(K), W^{cu}_{loc}(K), \Sigma^c $ or the validity of \autoref{thm:torus} \eqref{it:toriLast}. In this case, the interpretation of \autoref{thm:torus} \eqref{it:tangent} is in the sense of Whitney, and the proof relies on \autoref{lem:selection}. Additionally, if the norm of $ X $ is $ C^1 $, then also $ W^{cs}_{loc}(K), W^{cu}_{loc}(K), \Sigma^c $ are $ C^1 $. Furthermore, if $ K \in C^{k+1} $ and the norm of $ X $ is $ C^{k+1} $, then \autoref{thm:torus} \eqref{it:toriLast} holds without assuming the finite-dimensionality of $ X^{c}_{\theta} $.
		
		\item The essential requirement in \autoref{defi:torus} (b) (iii$ _1 $) is that $ DF^{\kappa}_{\theta}(\cdot), DG^{\kappa}_{\theta}(\cdot) $, $ \theta \in \mathbb{T}^{d} $, are $ \xi $-almost equicontinuous with sufficiently small $ \xi $, which can be ensured by, for example, assuming that $ H $ is a $ C^1 $ map in a neighborhood of $ K $ (without requiring uniform continuity of $ DH $); this is a standard assumption. We adopt this condition because the existence of whiskered tori in the cited references requires it in a stronger form, such as when $ H $ is an analytic map. The condition in \autoref{defi:torus} (b) (iii$ _3 $) is also not the most general: (1) a more natural formulation would be, for instance, $ |(DF^{\kappa}_{\theta}(0))^{n}| \leq C\lambda^{n}_{\kappa_1} $ for some constant $ C $ independent of $ n $ and $ \theta $, but this can be achieved without loss of generality by choosing an appropriate metric (see \cite{Gou07}); (2) the independence of $ \lambda_{\kappa_1}, \lambda_{\kappa_2} $ from $ \theta $ implies that the hyperbolicity of $ K $ is described in an absolute sense (see \cite{HPS77}), which is related to Sacker--Sell spectral theory (see \cite[Section 6.3]{CL99}); note that in some concrete examples, such whiskered tori are constructed near an equilibrium (see e.g. \cite{dlLS19}). In \autoref{defi:torus} (b) (i), it suffices to assume that $ \theta \mapsto \Pi^{s, c, u}_{\theta} $ are $ C^0 $ (which implies that they are H\"older continuous; see also \cite[Section 6.4]{Che18a}). In fact, since $ K \in C^1 $ and $ \mathbb{T}^{d} $ is finite-dimensional, these projections can be approximated by $ C^1 $ ones; see \cite[Theorem 6.9]{BLZ08} or \autoref{cor:C1app}.
	\end{asparaenum}
\end{rmk}

In analogy with \cite[Theorem 1.2]{JLZ17}, we establish the existence of locally invariant manifolds near a torus. For brevity, if $ K: \mathbb{T}^{d} \to X $ is a $ C^1 $ embedding, we identify $ K $ with its range $ R(K) $.

\begin{thm}[Dynamical behavior near an invariant whiskered(-like) torus] \label{thm:torus}
	Let $ K: \mathbb{T}^{d} \to X $ be a $ C^1 $ invariant whiskered(-like) torus of a correspondence $ H: X \to X $ as defined in \autoref{defi:torus}, where $ X $ is a Banach space. Then there are $ C^1 $ embedded submanifolds $ W^{cs}_{loc}(K)$, $W^{cu}_{loc}(K)$, $\Sigma^c $ of $ X $ in a neighborhood of $ K $, called a \emph{local center-stable manifold}, a \emph{local center-unstable manifold}, and a \emph{local center manifold} of $ K $, respectively, satisfying the following properties:
	\begin{enumerate}[(1)]
		\item \label{it:toria} $ W^{cs}_{loc}(K), W^{cu}_{loc}(K), \Sigma^c $ contain $ K $ in their interiors; $ W^{cs}_{loc}(K) \cap W^{cu}_{loc}(K) = \Sigma^c $ is the transversal intersection of $ W^{cs}_{loc}(K) $ and $ W^{cu}_{loc}(K) $.

		\item \label{it:tangent} $ T_{K(\theta)}W^{cs}_{loc}(K) = X^{cs}_{\theta} $, $ T_{K(\theta)}W^{cu}_{loc}(K) = X^{cu}_{\theta} $, $ T_{K(\theta)}\Sigma^c = X^{c}_{\theta} $, $ \forall \theta \in \mathbb{T}^{d} $.

		\item $ W^{cs}_{loc}(K) $, $ W^{cu}_{loc}(K) $, and $ \Sigma^c $ are locally positively invariant, locally negatively invariant, and locally invariant under $ H $, respectively. That is, there are open sets $ \Omega_{cs} $, $ \Omega_{cu} $, and $ \Omega_{c} = \Omega_{cs} \cap \Omega_{cu} $ of $ W^{cs}_{loc}(K) $, $ W^{cu}_{loc}(K) $, and $ \Sigma^c $, respectively, containing $ K $ in their interiors, such that $ \Omega_{cs} \subset H^{-1}(W^{cs}_{loc}(K)) $, $ \Omega_{cu} \subset H(W^{cu}_{loc}(K)) $, and $ \Omega_{c} \subset H^{\pm 1}(\Sigma^{c}) $.

		\item $ H: \Omega_{cs} \to W^{cs}_{loc}(K) $ and $ H^{-1}: \Omega_{cu} \to W^{cu}_{loc}(K) $ induce Lipschitz maps, denoted by $ \mathcal{H}^{cs}_0 $ and $ \mathcal{H}^{-cu}_0 $, with Lipschitz constants less than $ \lambda_{cs} $ and $ \lambda_{cu} $, respectively; and $ H: \Omega_{c} \to \Sigma^{c} $ induces a bi-Lipschitz map, denoted by $ \mathcal{H}^{c}_0 $. Here, $ \mathcal{H}^{c}_0 = \mathcal{H}^{cs}_0|_{\Omega_{c}} $ and $ (\mathcal{H}^{c}_0)^{-1} = \mathcal{H}^{-cu}_0|_{\Omega_{c}} $. (See also \autoref{rmk:mapmeaning} \eqref{it:maps} for the precise meanings).

		\item \label{it:char} There are small constants $ \delta, \varepsilon > 0 $ such that the following hold. If 
		\[
		\{z_{n} = (K(\theta_{n}), x_{n})\}_{n \in \mathbb{N}} \subset X^{h}_{K}(\delta)
		\]
		(where $ X^{h}_{K}(\delta) $ is a tubular neighborhood of $ K $ in $ X $ with radius $ \delta $) is a forward orbit of $ H $ (i.e., $ z_{n+1} \in H(z_{n}) $ for all $ n \in \mathbb{N} $) satisfying $ |K(\theta_{n+1}) - K(\theta_{n} + \omega)| \leq \varepsilon $, then $ \{z_{n}\} \subset \Omega_{cs} $ and $ |x_{n+1}| \leq \lambda_{cs} |x_{n}| $; a similar statement holds for $ \Omega_{cu} $. In particular, if $ \{z_{n} = (K(\theta_{n}), x_{n})\}_{n \in \mathbb{Z}} \subset X^{h}_{K}(\delta) $ is an orbit of $ H $ (i.e., $ z_{n+1} \in H(z_{n}) $ for all $ n \in \mathbb{Z} $) satisfying $ |K(\theta_{n+1}) - K(\theta_{n} + \omega)| \leq \varepsilon $ and $ |K(\theta_{n-1}) - K(\theta_{n} - \omega)| \leq \varepsilon $ for all $ n \in \mathbb{Z} $, then $ \{z_{n}\} \subset \Omega_{c} $.

		\item \label{it:toriLast} Let $ k \in \mathbb{N} $ and $ r \in [0,1] $. If $ \lambda^{k+r}_{cs} \lambda_{u} $, $ \lambda^{k+r}_{cu} \lambda_{s} < 1 $, and $ F^{\kappa}_{\theta}(\cdot), G^{\kappa}_{\theta}(\cdot) \in C^{k, r} $ uniformly for $ \theta \in \mathbb{T}^{d} $ (see \autopageref{page:uniformdiff}), then we can take $ W^{cs}_{loc}(K), W^{cu}_{loc} (K), \Sigma^c \in C^{k,r} $; in particular, $ \theta \mapsto \Pi^{s, c, u}_{\theta} $ are $ C^{k-1,r} $.

		\item \label{it:lamination} There are two (H\"older) bundles $ \mathcal{W}^{ss}, \mathcal{W}^{uu} $ over $ \mathbb{T}^{d} $, called the strong stable and strong unstable laminations of $ K $, respectively, satisfying the following:
		\begin{enumerate}[(i)]
			\item $ \mathcal{W}^{ss} \subset \Omega_{cs} $, $ \mathcal{W}^{uu} \subset \Omega_{cu} $;
			\item each fiber $ W^{ss}_{\theta} $ (resp. $ W^{uu}_{\theta} $) of $ \mathcal{W}^{ss} $ (resp. $ \mathcal{W}^{uu} $) at $ \theta \in \mathbb{T}^{d} $, called the strong stable (resp. unstable) manifold of $ K(\theta) $, contains $ K(\theta) $ and is $ C^{1} $ with $ T_{K(\theta)} W^{ss}_{\theta} = X^{s}_{\theta} $ (resp. $ T_{K(\theta)} W^{uu}_{\theta} = X^{u}_{\theta} $);
			\item $ \mathcal{W}^{ss}, \mathcal{W}^{uu} $ are invariant under $ H $, i.e., $ \mathcal{H}^{cs}_0(\mathcal{W}^{ss}_{\theta}) \subset \mathcal{W}^{ss}_{\theta + \omega} $ and $ \mathcal{H}^{-cu}_0(\mathcal{W}^{uu}_{\theta}) \subset \mathcal{W}^{uu}_{\theta - \omega} $;
			\item Moreover, the following properties hold for all $ \theta \in \mathbb{T}^{d} $:

			(1) $ \mathcal{W}^{ss}_{\theta} \cap K = K(\theta) $ and $ \mathcal{W}^{uu}_{\theta} \cap K = K(\theta) $;

			(2) $ \mathcal{H}^{cs}_0|_{\mathcal{W}^{ss}_{\theta}}: \mathcal{W}^{ss}_{\theta} \to \mathcal{W}^{ss}_{\theta + \omega} $ (resp. $ \mathcal{H}^{-cu}_0|_{\mathcal{W}^{uu}_{\theta}}: \mathcal{W}^{uu}_{\theta} \to \mathcal{W}^{uu}_{\theta - \omega} $) is Lipschitz with Lipschitz constant less than $ \lambda_{s} $ (resp. $ \lambda_{u} $);

			(3) $ z \in W^{ss}_{\theta} $ if and only if $ z \in \Omega_{cs} $ and $ \sup_{n \geq 0} (\lambda^{n}_{s})^{-1} |(\mathcal{H}^{cs}_0)^{n} (z) - K(u^{n}_{\omega}(\theta))| < \infty $, and $ z \in W^{uu}_{\theta} $ if and only if $ z \in \Omega_{cu} $ and $ \sup_{n \geq 0} (\lambda^{n}_{u})^{-1} |(\mathcal{H}^{-cu}_0)^{n} (z) - K(u^{-n}_{\omega}(\theta))| < \infty $;

			\item \label{it:r} If, in addition, $ \lambda_{cs} \lambda_{s} \lambda_{cu} $, $ \lambda_{cu} \lambda_{u} \lambda_{cs} < 1 $, and $ F^{\kappa}_{\theta}(\cdot), G^{\kappa}_{\theta}(\cdot) \in C^{1,1} $ uniformly for $ \theta \in \mathbb{T}^{d} $, i.e.,
			\[
			\sup_{\theta} \{\lip_{z}D_{z}F^{\kappa}_{\theta}(z), \lip_{z}D_{z}G^{\kappa}_{\theta}(z): z \in X^{\kappa_1}_{\theta} (r) \times X^{\kappa_2}_{u_{\omega} (\theta)} (r) \} < \infty,
			\]
			(e.g., $ H $ is a $ C^2 $ map), then $ \mathcal{W}^{ss}, \mathcal{W}^{uu} $ are $ C^1 $ bundles.
		\end{enumerate}
	\end{enumerate}
\end{thm}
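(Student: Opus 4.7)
The plan is to reduce \autoref{thm:torus} to the general main results (e.g.\ \autoref{thm:invariant} and \autoref{thm:tri0}) by carefully verifying their hypotheses in this whiskered-torus setting. Since $K: \mathbb{T}^{d} \to X$ is a $C^{1}$ embedding with $\mathbb{T}^{d}$ compact, $K$ (identified with $R(K)$) trivially satisfies the uniformity conditions \textbf{(H1) $\sim$ (H4)} of \autoref{sub:setup}. The map $u_{\omega}$ induced by $H$ on $K$ is an isometry (a rotation), hence bi-Lipschitz and uniformly continuous in the immersed topology; by $H \circ K = K \circ u_{\omega}$, $K$ is exactly invariant, so the ``approximation'' term in \textbf{(A2)}/\textbf{(B2)} is zero. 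The continuous splitting $X = X^{s}_{\theta} \oplus X^{c}_{\theta} \oplus X^{u}_{\theta}$ with $C^{1}$ projections provides the bundle structure required by \textbf{(A3)(b)(ii)} (or \textbf{(B3)(b)(ii)}), and $T_{K(\theta)} K \subset X^{c}_{\theta}$ (so $K$ is partially normally hyperbolic) follows directly from $H \circ K = K \circ u_{\omega}$, the condition $DG^{cs}_{\theta}(0)|_{X^{cs}_{\theta}} = 0$, and $\lambda_{u} < 1$.

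Next I verify the (A)(B) condition. The generating pairs $(F^{\kappa}_{\theta}, G^{\kappa}_{\theta})$ satisfy $DF^{\kappa}_{\theta}(0)|_{X^{\kappa_{2}}} = 0$ and $DG^{\kappa}_{\theta}(0)|_{X^{\kappa_{1}}} = 0$, which is exactly the linearised form of an invariant cone splitting with spectral gaps $\lambda_{\kappa_{1}} \lambda_{\kappa_{2}} < 1$. Combined with equicontinuity of $DF^{\kappa}_{\theta}, DG^{\kappa}_{\theta}$ at $0$ uniformly in $\theta$, \autoref{lem:c1} (for the smooth case) and \autoref{lem:a4} (to convert Lipschitz bounds to (A)(B) with the desired cone angles) produce, on a sufficiently small uniform neighbourhood $X^{\kappa_{1}}_{\theta}(r_{0}) \oplus X^{\kappa_{2}}_{u_{\omega}\theta}(r_{0})$, the (A)$(\alpha; k_{1}\alpha, \lambda_{u}')$ (B)$(\beta; k_{2}\beta, \lambda_{cs}')$ condition with $k_{i} < 1$, $\alpha \beta < 1$, $\lambda_{cs}', \lambda_{u}' < 1$ arbitrarily close to $\lambda_{cs}, \lambda_{u}$; and analogously for the $cu$-case.

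With these hypotheses in force, \autoref{thm:tri0} (or the invariant version \autoref{thm:invariant}) yields immersed $C^{0,1}$ submanifolds $W^{cs}_{loc}(K), W^{cu}_{loc}(K)$ whose intersection $\Sigma^{c}$ has tangent space $X^{c}_{\theta}$ at $K(\theta)$, and gives the local positive/negative/full invariance in (3) as well as the Lipschitz induced maps $\mathcal{H}^{cs}_{0}, \mathcal{H}^{-cu}_{0}, \mathcal{H}^{c}_{0}$ of (4) with the stated Lipschitz constants. That $K \subset \Sigma^{c}$ is immediate because $K$ is $H$-invariant and its orbits stay in the tubular neighbourhood with $|K(\theta_{n+1}) - K(\theta_{n}+\omega)| = 0$; the characterisation (5) is then the standard backward/forward exponential argument that forces orbits with bounded fluctuation to lie on the graphs produced by the graph transform. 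For the regularity in (6), \autoref{thm:smooth} (and its appendix variants) applies because the spectral gap $\lambda^{k+r}_{cs}\lambda_{u} < 1$ (resp.\ $\lambda^{k+r}_{cu}\lambda_{s} < 1$) and uniform $C^{k,r}$ regularity of $F^{\kappa}_{\theta}, G^{\kappa}_{\theta}$ in $\theta$ are exactly the hypotheses required; the potentially problematic issue of $C^{1}\cap C^{0,1}$ bump functions on the Banach space $X$ is side-stepped because the cut-offs are applied in the $\theta$-parameter living on the finite-dimensional compact manifold $\mathbb{T}^{d}$, where smooth partitions of unity are available, and the ambient linear truncations only require Lipschitz cut-offs in the fibers.

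Finally, the laminations in (7) are produced by applying the invariant foliation results sketched in \autoref{app:foliations}: the dynamic $\mathcal{H}^{cs}_{0}$ on $W^{cs}_{loc}(K)$ has a uniformly contracting direction $X^{s}_{\theta}$ inside $X^{cs}_{\theta}$ with the rate-gap $\lambda_{s} < \lambda_{cs}$, which produces an $\mathcal{H}^{cs}_{0}$-invariant H\"older foliation whose leaf through $K(\theta)$ is $W^{ss}_{\theta}$, tangent to $X^{s}_{\theta}$; the characterisation $(z \in W^{ss}_{\theta}) \Leftrightarrow \sup_{n}\lambda_{s}^{-n}|(\mathcal{H}^{cs}_{0})^{n}z - K(u^{n}_{\omega}\theta)| < \infty$ comes from the usual cone-graph argument, and item (iv)(1)(2) then follow from this characterisation. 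Under the additional bunching gap $\lambda_{cs}\lambda_{s}\lambda_{cu} < 1$ (respectively for $cu$), the $C^{1}$ regularity of the leaves is the classical $C^{1}$-section theorem for the foliation bundle (cf.\ the bundle-correspondence framework of \cite{Che18a}). The hardest part of the argument is not any single step but the careful bookkeeping needed to translate the whiskered-torus hypotheses (especially the block-diagonal structure at $0$ and the uniform equicontinuity in $\theta$) into the precise quantitative form required by the (A)(B) framework so that the main theorems apply; the infinite-dimensional regularity in (6) and the $C^{1}$ smoothness of the lamination leaves in (7)(v) are the next most delicate points, both handled by the finite-dimensionality of $X^{c}_{\theta}$ and of the base $\mathbb{T}^{d}$.
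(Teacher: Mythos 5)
Your overall strategy -- verify \textbf{(H1)$\sim$(H4)} for the compact $C^1$ torus, check the (A)(B) condition via \autoref{lem:a4}/\autoref{lem:c1}, then quote the main theorems plus \autoref{app:foliations} and \cite{Che18a, Che18b} for item (7) -- follows the same template as the paper, but it omits the one step that actually makes the main theorems applicable here, and as written the reduction fails. The torus is only \emph{partially} normally hyperbolic: $DK(\theta)T_{\theta}\mathbb{T}^{d} \subset X^{c}_{\theta}$ with $\dim X^{c}_{\theta} \geq d$ (equal to $2d$ for a genuine whiskered torus), so the geometric projections furnished by \textbf{(H1)$\sim$(H4)} for $K$ have rank $d$ (they project onto $T_{K(\theta)}K$), while the dynamical center projections $\Pi^{c}_{\theta}$ have rank $\geq d$. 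Hence the compatibility condition \textbf{(A3)(b)(ii)}/\textbf{(B3)(b)(ii)}, $\sup_{\theta}|\widehat{\Pi}^{\kappa}_{\theta} - \Pi^{\kappa}_{\theta}| \leq \xi_2$ small, cannot hold with the splitting $X = X^{s}_{\theta}\oplus X^{c}_{\theta}\oplus X^{u}_{\theta}$ as given, and \autoref{thm:invariant} cannot be invoked with $K = \Sigma$ and $X^{c}$ as the modeled direction (the ball condition in \textbf{(H1)} also fails, since $\Pi^{c}_{\theta}(K - K(\theta))$ cannot cover a ball of $X^{c}_{\theta}$ when $\dim X^{c}_{\theta} > d$). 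Invoking \autoref{thm:tri0} instead would require an ambient $C^{0,1}$ manifold $\Sigma \supset K$ with $T\Sigma|_{K} = X^{c}$, which is not given in \autoref{defi:torus} and which you never construct (one could get it from \autoref{thm:geo}, but you do not say so).

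What the paper does, and what is missing from your argument, is the splitting of the center bundle: set $X^{c_0}_{\theta} = DK(\theta)T_{\theta}\mathbb{T}^{d}$, use compactness of $K$ and $\dim X^{c}_{\theta} < \infty$ together with \autoref{lem:selection} to choose a continuous complement $X^{h_0}_{\theta}$ with $X^{c_0}_{\theta}\oplus X^{h_0}_{\theta} = X^{c}_{\theta}$, and then $C^1$-approximate these subbundles via \autoref{cor:C1app}; after this, \autoref{lem:a4} transfers the (A)(B) condition to the refined splitting, and items (1)--(6) are corollaries of \autoref{thm:invariant} \textbf{case (2)} together with the trichotomy invariant case of \autoref{app:im} (with (6) via \autoref{cor:compact}), the extra center directions $X^{h_0}_{\theta}$ being absorbed into the $cs$/$cu$ blocks rather than into the geometry of $K$. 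Your treatment of item (7) (strong (un)stable laminations via the cone/graph-transform characterization, with $C^1$ leaves from the results of \cite{Che18a} and \cite{Che18b}) agrees with the paper's citations and is fine, but without the complement-selection and $C^1$-approximation step the core reduction for (1)--(6) does not go through as you describe.
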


\begin{rmk}
	\begin{asparaenum}[(a)]
		\item Unless the torus $ K $ is normally hyperbolic, the strong stable (resp. unstable) lamination $ \mathcal{W}^{ss} $ (resp. $ \mathcal{W}^{uu} $) of $ K $ is not open in $ W^{cs}_{loc} (K) $ (resp. $ W^{cu}_{loc} (K) $), and consequently, it does not generally constitute a foliation of $ W^{cs}_{loc} (K) $ (resp. $ W^{cu}_{loc} (K) $). However, one can construct a foliation $ \widetilde{\mathcal{W}}^{ss} $ (resp. $ \widetilde{\mathcal{W}}^{uu} $) of $ W^{cs}_{loc} (K) $ (resp. $ W^{cu}_{loc} (K) $) that contains $ \mathcal{W}^{ss} $ (resp. $ \mathcal{W}^{uu} $) and is locally invariant under $ \mathcal{H}^{cs}_0 $ (resp. $ \mathcal{H}^{-cu}_0 $); i.e., for the fibers $ \widetilde{\mathcal{W}}^{ss}(z) $ through $ z \in W^{cs}_{loc} (K) $, we have $ \mathcal{H}^{cs}_0(\widetilde{\mathcal{W}}^{ss}(z) \cap \Omega_{cs}) \subset \widetilde{\mathcal{W}}^{ss}(\mathcal{H}^{cs}_0(z)) $ whenever $ z \in \Omega_{cs} $. This construction provides a clear understanding of the dynamical behavior of $ H $ in $ W^{cs}_{loc}(K) $ and $ W^{cu}_{loc}(K) $.

		\item If $ H $ is merely a correspondence, the preceding theorem provides sufficient information about the nonlinear dynamics of $ H $ near the torus $ K $. Essentially, when $ H $ is not an invertible map, one cannot expect the existence of long-time orbits beyond $ W^{cs}_{loc}(K) $ and $ W^{cu}_{loc}(K) $. To fully characterize the dynamical behavior near the torus $ K $ when $ H $ is an \emph{invertible map}, one must additionally obtain locally invariant center-(un)stable(-like) foliations $ \widetilde{\mathcal{W}}^{cs}, \widetilde{\mathcal{W}}^{cu} $ in a neighborhood of $ K $ that contain $ W^{cs}_{loc}(K), W^{cu}_{loc}(K) $ as their leaves. This leads to a decoupling of $ H $ into a product structure of the form like ``$ \mathcal{H}^{cs}_0 \times (\mathcal{H}^{-cu})^{-1} $ over $ \mathcal{H}^{c}_0 $''; see \cite[Corollary 4.19]{Che18a} for a global version.

		\item Consider the characterization of $ W^{cs}_{loc} (K) $ in \autoref{thm:torus} \eqref{it:char}. Now assume $ H $ is a \emph{map}. Then $ H \in C^1 $ in a neighborhood of $ K $. As $ K $ is compact, $ H $ is Lipschitz near $ K $. In this case, if $ \{z_{n} = (K(\theta_{n}), x_{n})\}_{n \in \mathbb{N}} \subset X^{h}_{K}(\delta) $ is a forward orbit of $ H $, the condition $ |K(\theta_{n+1}) - K(\theta_{n} + \omega)| \leq \varepsilon $ becomes redundant for sufficiently small $ \delta $, as can be seen from the estimate:
		\begin{align*}
		|K(\theta_{n+1}) - K(\theta_{n} + \omega)| & = |H(K(\theta_{n}) + x_{n}) - x_{n+1} - H(K(\theta_{n}))| \\
		& \leq \lip H |x_{n}| + |x_{n+1}| \leq (\lip H + 1) \delta.
		\end{align*}
		However, for a correspondence $ H $ that is not a map, or for a map that is not Lipschitz, the condition $ |K(\theta_{n+1}) - K(\theta_{n} + \omega)| \leq \varepsilon $ is needed to characterize $ W^{cs}_{loc}(K) $; see \autoref{defi:orbit} for the general case. We note that our main results in \autoref{sec:statement} also address \emph{non-Lipschitz} maps.

		\item While it is possible to consider the global invariance of $ W^{cs}_{loc}(K), W^{cu}_{loc}(K), \Sigma^c $, the additional conditions required are highly restrictive and are not discussed here. However, we mention that if $ K $ is \emph{stable} in $ \Sigma^c $ (in both forward and backward directions), then $ W^{cs}_{loc}(K), W^{cu}_{loc}(K), \Sigma^c $ are globally invariant (and hence locally unique) by \autoref{thm:torus} \eqref{it:char}. Finally, we emphasize that \autoref{thm:torus} remains valid when the torus is only \emph{immersed}, which is also an interesting case.
	\end{asparaenum}
\end{rmk}

\begin{proof}[Proof of \autoref{thm:torus}]
	The conclusions in \autoref{thm:torus} \eqref{it:lamination} are direct consequences of \cite[Theorem 4.6]{Che18a} (for the existence and characterization of $ \mathcal{W}^{ss}, \mathcal{W}^{uu} $) and \cite[Section 6]{Che18a} (for the regularities of $ \mathcal{W}^{ss}, \mathcal{W}^{uu} $); for item \eqref{it:r}, see also \cite[Section 5.5.3]{Che18b}. So we omit the details here but refer to \cite[Theorem 3.1]{CdlL19} in the (complex) analyticity setting or \cite[Theorem 3.1]{FM00} in the differentiable maps setting.

	Write $ X^{c_0}_{\theta} = DK(\theta) T_{\theta} \mathbb{T}^{d} $. Since $ K \in C^1 $ is compact, one gets $ \theta \mapsto X^{c_0}_{\theta} \in \mathbb{G}(X) $ is $ C^0 $ where $ \mathbb{G}(X) $ denotes the Grassmann manifold of $ X $ (see \autoref{sub:Grassmann}), and then by $ \dim X^{c}_{\theta} < \infty $, one can find $ X^{h_0}_{\theta} \in \mathbb{G}(X) $ such that $ \theta \mapsto X^{h_0}_{\theta} $ is $ C^0 $ and $ X^{c_0}_{\theta} \oplus X^{h_0}_{\theta} = X^{c}_{\theta} $ (see e.g. \autoref{lem:selection}). Now by $ C^1 $ approximation of $ \theta \mapsto X^{c_0}_{\theta}, X^{h_0}_{\theta} $ (see e.g. \autoref{cor:C1app} or \cite[Theorem 6.9]{BLZ08}), without loss of generality, we assume $ \theta \mapsto X^{c_0}_{\theta}, X^{h_0}_{\theta} $ are $ C^1 $; note also that in this step, \autoref{thm:geo} can be applied. As $ X^{c_0}_{\theta} \subset X^{c}_{\theta} $, by \autoref{lem:a4}, we see that \autoref{thm:torus} \eqref{it:toria}--\eqref{it:toriLast} are corollaries of the main results in \autoref{sec:statement} (see \autoref{thm:invariant} case (2) and \autoref{app:im}); for a proof of \autoref{thm:torus} \eqref{it:toriLast}, see also \autoref{cor:compact} \eqref{it:ht}.
\end{proof}

In \cite{FdlLS09, FdlLS15, dlLS19}, the authors established the existence of whiskered(-like) tori as defined in \autoref{defi:torus}. We now present more specific details.

\begin{exa}[Exact symplectic analytic maps in Hilbert spaces]\label{exa:map1}
	Let $ H: U \subset \mathcal{M} \to \mathcal{M} $ be an \emph{exact symplectic} analytic map with respect to the exact symplectic form $ d \alpha $ for some $ 1 $-form $ \alpha \in T^{*}\mathcal{M} $, where $ \mathcal{M} = X^{su} \times \mathbb{R}^{2d} \triangleq X $ or $ \mathcal{M} = X^{su} \times \mathbb{R}^{d} \times \mathbb{T}^{d} \subset X $, $ U $ is an open connected subset of $ \mathcal{M} $, and $ X^{su} = X^{s} \times (X^{s})^{*} $ with $ X^s $ a Hilbert space.

	In \cite{FdlLS09}, Fontich, de la Llave and Sire gave conditions under which $ H $ admits a whiskered torus $ K \subset \mathcal{M} $ satisfying \autoref{defi:torus} for the maps case (i.e., \autoref{defi:torus} (a), (b) (i) (ii) (iii$ '_1 $)--(iii$ '_3 $)) (and in a certain sense is unique); see \cite[Theorem 3.11]{FdlLS09}. Although their work primarily addresses the case where $ X^{su} $ is finite-dimensional, the proof remains valid when $ X^{su} $ is a Hilbert space (see also \cite{dlLS19}).

	A standard example is provided in \cite[Section 7.6.1]{FdlLS09}. Let $ \mathcal{M} \triangleq X^{su} \times \mathbb{R}^{d} \times \mathbb{T}^{d} $ be endowed with a canonical exact symplectic form, and consider
	\[
	F(x^s, x^u, x^c, \theta) = (\mu A_{\theta}x^s, \mu^{-1}(A^{-1}_{\theta})^{*} x^u, x^c, \theta + \omega), \quad (x^s, x^u, x^c, \theta) \in \mathcal{M},
	\]
	where $ A_{\theta}: X^s \to X^s $ is an invertible linear operator, $ |A_{\theta}| < \mu^{-1} $, and $ \theta \mapsto A_{\theta} $ is analytic in $ \mathbb{T}^{d}_{\rho} = \{ z = (z_1, \ldots, z_{d}) \in \mathbb{C}^{d} / \mathbb{Z}^{d}: \sup_{i} |\mathrm{Im} z_{i}| < \rho \} $.
	If $ H $ is an exact symplectic, analytic map sufficiently close to $ F $ in a small complex neighborhood of $ \mathcal{M} $, then $ H $ admits a whiskered torus satisfying \autoref{defi:torus}. Note that in this case, for any $ k \in \mathbb{N} $, the center-(un)stable manifolds $ W^{cs}_{loc}(K), W^{cu}_{loc}(K) $ can be constructed as $ C^{k} $ manifolds; while it is unknown whether they can be $ C^{\infty} $, any center-(un)stable manifolds are $ C^{\infty} $ at $ K $ in the sense of Whitney.

	Moreover, $ H $ can be taken as the time-$ t $ solution map of an exact symplectic analytic vector field $ \mathcal{X} $ on $ \mathcal{M} $ (i.e., $ \mathcal{L}_{\mathcal{X}} \alpha = d W $ for some analytic function $ W $ on $ \mathcal{M} $); see also \cite[Section 8]{FdlLS09}.
\end{exa}

\begin{exa}[Exact symplectic analytic maps on lattices]\label{exa:map2}
	In \cite{FdlLS15}, Fontich, de la Llave and Sire studied the existence of whiskered tori for an exact symplectic analytic map $ H $ defined on a lattice $ M^{\mathbb{Z}^{N}} $, where $ M = \mathbb{R}^{2n - d} \times \mathbb{T}^{d} $; see Theorems 3.6 and 3.11 therein. Thus, \autoref{thm:torus} can be applied to such a map $ H $ to describe the dynamical behavior near the torus. Without delving into technical details, we present a special case: the so-called $ 1 $-D Klein–Gordon system described by the formal Hamiltonian
	\[
	\mathcal{H}(q, p) = \sum_{n = -\infty}^{\infty} \left(\frac{1}{2} p^2_n + W(q_{n}) + \frac{\gamma}{2}(q_{n+1} - q_{n})^2\right) ~\text{on}~ \mathcal{M},
	\]
	where $ \mathcal{M} = l^{\infty}(\mathbb{Z}^{N}, M) \subset M^{\mathbb{Z}^{N}} $ is endowed with the canonical exact symplectic form given by $ \Omega_{\infty} = \sum_{n \in \mathbb{Z}^{N}} \mathrm{d} q_{n} \wedge \mathrm{d} p_{n} $. Let $ X = l^{\infty}(\mathbb{Z}^{N}, \mathbb{R}^{2n}) $ (a Banach space). Note that $ \mathcal{M} \subset X $, and any functions defined on $ \mathcal{M} $ can be naturally considered as defined on $ X $; moreover, the descriptions of hyperbolicity in $ \mathcal{M} $ and in $ X $ are identical. Let $ \mathcal{X} \in T\mathcal{M} $ denote the Hamiltonian vector field associated with $ \mathcal{H} $, and let $ H: X \to X $ be the time-$ t $ solution map of this field $ \mathcal{X} $. Suppose
	\begin{enumerate}[$ \bullet $]
		\item $ W: M \to \mathbb{R} $ is analytic, and the system $ \ddot{q} + DW(q) = 0 $ admits a hyperbolic equilibrium;
		\item there exists a set $ \Xi_0 \subset \mathbb{R}^{d} $ of positive Lebesgue measure such that for each $ \omega \in \Xi_0 $, there exists a KAM torus with frequency $ \omega $ invariant under the flow of $ \ddot{q} + DW(q) = 0 $ and non-degenerate in the sense of standard KAM theory (twist condition).
	\end{enumerate}
	Then, for a sufficiently small $ \gamma_{*} > 0 $, there is a set $ \Xi_0(\gamma_{*}) \subset \Xi_0 $ such that if $ |\gamma| < \gamma_{*} $ and $ \omega \in \Xi_0(\gamma_{*}) $, there is a whiskered torus $ K: \mathbb{R}^{d} \to \mathcal{M} \subset X $ with frequency $ \omega $ satisfying \autoref{defi:torus} for $ H $; see \cite[Theorem 3.11]{FdlLS15}. Therefore, \autoref{thm:torus} can be applied to such $ H $ with $ K $.
\end{exa}

\begin{exa}[$ 1 $-D (bad) Boussinesq equation]\label{exa:Bq}
	In \cite{dlLS19}, de la Llave and Sire investigated the existence of whiskered tori for some possibly ill-posed Hamiltonian PDEs. As noted in \cite{Che18c, ElB12} (see also \autoref{exa:diff}), ill-posed differential equations are not evolution equations but can generate continuous correspondences. So \autoref{thm:torus} provides an effective tool for studying the dynamical behaviors of ill-posed differential equations near whiskered tori at fixed times $ t > 0 $, a result not found in the existing literature. \autoref{thm:torus} can be applied to the general abstract Hamiltonian PDEs studied in \cite[Theorem 3.5]{dlLS19}.

	For simplicity, we consider a specific example from \cite[Theorem 3.6]{dlLS19}, namely the one-dimensional (bad) Boussinesq equation subject to the periodic boundary condition, i.e.,
	\[\label{equ:bBou}
	\partial^2_t u  = \partial^2_x u + \mu\partial^4_x u + \partial^2_x (u^2), \quad t \in \mathbb{R}, ~ x \in \mathbb{T}, ~ \mu > 0. \tag{bBou}
	\]
	This equation describes two-dimensional flow of shallow-water waves with small amplitudes. 
	Following \cite{dlLla09}, let $ H^{\sigma, m} $ denote the space of analytic functions $ h $ in $ \mathbb{T}_{\sigma}  = \{ z \in \mathbb{C} / \mathbb{Z}: |\mathrm{Im} z| < \sigma \} $ such that
	\[
	|h|^2_{\sigma, m} = \sum_{k \in \mathbb{Z}} |h_{k}|^2 e^{4\pi \sigma |k|} (|k|^{2m} + 1)
	\]
	is finite, with the norm $ |h|^2_{\sigma, m} $, where $ \{ h_{k} \} $ are the Fourier coefficients of $ h $.
	
	Define $ X = H^{\sigma, m} \times H^{\sigma, m-2} $ for $ m > 5/2 $ and $ \sigma > 0 $, and its subspace $ X_0 $ consisting of symmetric functions with zero average (i.e., $ h \in X_0 $ if and only if $ h \in X $ satisfies $ h(x) = h(-x) $ for $ x \in \mathbb{T} $, and $ \int_{0}^{1} h (x) ~\mathrm{d}x = 0 $).
	We first rewrite \eqref{equ:bBou} in abstract form:
	\[\label{equ:acp}
	\dot{z} = Az + N(z), \quad z = (u, v) \in X, \tag{ACP}
	\]
	where
	\[
	A = \begin{pmatrix}
	0 & 1 \\
	\partial^2_x + \mu\partial^4_x & 0
	\end{pmatrix},
	\quad N(u, v)(x) = \left( 0, \partial^2_x (u^2)\right): X \to X.
	\]

	Note that $ N: X \to X $ is analytic with $ N(0) = 0 $ and $ DN(0) = 0 $, and $ D(A) = H^{\sigma, m+2} \times H^{\sigma, m} $. The spectrum of $ A $ in $ X $ is given by
	\[
	\sigma (A) = \left\{ \pm 2 \pi |k| \sqrt{\mu 4\pi^2 k^2 - 1} \right\}_{k \geq 1} \triangleq \{ \lambda_{\pm k}(\mu) \}_{k \geq 1},
	\]
	with corresponding eigenfunctions $ \phi_{\pm k} = (u_{\pm k}, v_{\pm k}) $ satisfying $ A \phi_{\pm k} = \lambda_{\pm k}(\mu) \phi_{\pm k} $ for $ k \geq 1 $, where
	\[
	(u_{\pm k}, v_{\pm k}) = (e^{\pm 2\pi i kx}, \lambda_{\pm k}(\mu) e^{\pm 2\pi i kx}), \quad k = 1,2,\ldots.
	\]
	Let $ X^{s}_{*} = \overline{\mathrm{span}\{ \phi_{k}: \mathrm{Re} \lambda_{k} (\mu) < 0 \}} $ (in $X$), $ X^{u}_{*} = \overline{\mathrm{span}\{ \phi_{k}: \mathrm{Re} \lambda_{k} (\mu) > 0 \}} $ (in $X$), and $ X^{c}_{*} = \mathrm{span}\{ \phi_{k}: \mathrm{Re} \lambda_{k} (\mu) = 0 \} $. Note that $ X^{c}_{*} $ is finite-dimensional. Since $ \partial^2_x + \mu\partial^4_x: H^{\sigma, m+2} \to H^{\sigma, m-2} $ is selfadjoint, we obtain the decomposition $ X = X^{s}_{*} \oplus X^{c}_{*} \oplus X^{u}_{*} $ with associated projections $ \Pi^{s, c, u}_{*} $. Define $ X^{s, c, u}_{0} = X^{s, c, u}_* \cap X_0 $, yielding $ X_{0} = X^{s}_{0} \oplus X^{c}_{0} \oplus X^{u}_{0} $ with projections $ \Pi^{s, c, u}_{0} $. For example,
	\[
	X^{c}_{0} = \mathrm{span}\{ (\cos(2\pi kx), \lambda_{k}(\mu) \cos(2\pi kx)): \mathrm{Re} \lambda_{k} (\mu) = 0 \}.
	\]

	Let $ A_{s, c, u} = A_{X^{s, c, u}_{*}} $ denote the part\footnote{The part of $ A $ in $ Y \subset X $, denoted by $ A_{Y} $, is defined by $ A_{Y} x = Ax $, $ x \in D(A_{Y}) = \{ x \in D(A) \cap Y: Ax \in Y \} $.} of $ A $ in $ X^{s, c, u}_{*} $. Note that $ \dim X^{c}_0 $ is even and $ 0 \notin \sigma(A_{X^{c}_0}) $. One can verify that there are $ 0 < \lambda_{s}, \lambda_{u} < 1 $ and $ \lambda_{c} > 1 $, with $ \lambda_{c} $ sufficiently close to $ 1 $, which depend only on $ \mu $, such that
	\begin{equation*}
	|e^{t A_{s}}| \leq \lambda^{t}_{s}, \quad |e^{-t A_{u}}| \leq \lambda^{t}_{u}, \quad \text{for } t \geq 0, \quad \text{and} \quad |e^{tA_{c}}| \leq \lambda^{t}_{c}, \quad \text{for } t \in \mathbb{R};
	\end{equation*}
	see also \cite[Section 6.1.3]{dlLla09}. In particular, mild solutions $ z(t) = (x_{s}(t), x_{c}(t), x_{u}(t)) \in X^{s}_{*} \oplus X^{c}_{*} \oplus X^{u}_{*} $ ($ t_1 \leq t \leq t_2 $) of \eqref{equ:acp} satisfy the following variation of constants formula:
	\[
	\begin{cases}
	x_{s, c}(t) = e^{(t - t_1) A_{s, c}}x_{s, c}(t_1) + \int_{t_1}^{t} e^{(t - r) A_{s, c}} \Pi^{s, c}_{*} N(z(r)) ~\mathrm{d} r, \\
	x_{c, u}(t) = e^{(t - t_2) A_{c, u}}x_{c, u}(t_2) - \int_{t}^{t_2} e^{(t - r) A_{c, u}} \Pi^{c, u}_{*} N(z(r)) ~\mathrm{d} r,
	\end{cases}
	\quad t_1 \leq t \leq t_2.
	\]

	For $ t > 0 $, define the \emph{correspondence} $ H(t): X \to X $ by $ (x_1, y_1) \in H(t)(x_0, y_0) $ if and only if there is a mild solution $ (x(s), y(s)) $ ($ 0 \leq s \leq t $) of \eqref{equ:acp} satisfying $ (x(0), y(0)) = (x_{0}, y_{0}) $ and $ (x(t), y(t)) = (x_{1}, y_{1}) $.

	For any $ t > 0 $, if $ r_{t} > 0 $ is sufficiently small, then for any $ (x_{c,s}, y_{u}) \in X^{c,s}_{*}(r_{t}) \times X^{u}_{*} (r_{t}) $ (or $ (x_{s}, y_{c, u}) \in X^{s}_{*}(r_{t}) \times X^{c, u}_{*} (r_{t}) $), the above equation has a unique solution 
	\[
	z(t') = (x_{s}(t'), x_{c}(t'), x_{u}(t')) \in X^{s}_{*}(r_{t}) \oplus X^{c}_{*}(r_{t}) \oplus X^{u}_{*}(r_{t}), \quad 0 \leq t' \leq t,
	\]
	with $ x_{s, c}(0) = x_{s, c}, x_{c}(t) = y_{u} $ (or $ x_{s}(0) = x_{s}, x_{c, u}(t) = y_{c, u} $); see e.g. \cite{ElB12}. Set
	\[
	F^{cs}_{*, t}(x_{s}, x_{c}, y_{u}) = (x_{s}(t), x_{c}(t)), \quad G^{cs}_{*, t} (x_{s}, x_{c}, y_{u}) = x_{u} (0),
	\]
	and
	\[
	F^{cu}_{*, t}(x_{s}, y_{c}, y_{u}) = x_{s}(t), \quad G^{cu}_{*, t} (x_{s}, y_{c}, y_{u}) = (x_{c} (0), x_{u} (0)).
	\]
	Then $ H(t) \sim (F^{cs}_{*, t}, G^{cs}_{*, t}): X^{cs}_{*} (r_{t}) \times X^{u}_{*}(r_{t}) \to X^{cs}_{*} (r_{t}) \times X^{u}_{*}(r_t) $ and $ H(t) \sim (F^{cu}_{*, t}, G^{cu}_{*, t}): X^{s}_{*} (r_{t}) \times X^{cu}_{*}(r_{t}) \to X^{s}_{*} (r_{t}) \times X^{cu}_{*}(r_t) $. Since $ e^{tA_{s}}: X^{s}_{0} \to X^{s}_{0} $ ($ t \geq 0 $) and similarly for $ e^{tA_{c, u}} $, and \eqref{equ:bBou} is symmetric in phase space and conserves the quantity $ \int_{0}^{t} \partial_{t} u(t, x) ~\mathrm{d} x $, these statements also hold when restricting $ H(t) $ to $ X_0 $ (i.e., replacing $ X^{s, c, u}_{*} $ with $ X^{s, c, u}_{0} $). Thus $ H(t): X_0 \to X_0 $.

	Now fix $ t_0 > 0 $ and $ r_0 = r_{t_0} $. Write $ H_0 = H(t_0) $ and $ u_{\omega} (\theta) = \theta + \omega $.

	By \cite[Theorem 3.5]{dlLS19}, for fixed $ d \in \mathbb{N} $, there are an open set $ I'_{d} \subset \mathbb{R}_{+} $ and a Cantor set $ \mathcal{C} \subset \mathbb{R}^{d} $ such that for each $ \omega \in \mathcal{C} $ and $ \mu \in I'_{d} $, $ \dim X^{c}_{0} = 2d $ and there is an analytic whiskered torus $ K_{\omega}: \mathbb{T}^{d} \to X_{0} \subset X $ with frequency $ \omega $, close to $ 0 $, solving $ (A + N) \circ K_{\omega} = DK_{\omega} \circ u_{\omega} $; that is, for any $ \theta \in \mathbb{T}^{d} $, $ z(t) = (u(t), \dot{u}(t)) = K_{\omega}(\omega t + \theta) $ is a quasi-periodic solution of \eqref{equ:acp} (i.e., $ u(\cdot) $ is a quasi-periodic solution of \eqref{equ:bBou}) and $ \sup_{\theta}|K_{\omega}(\theta)| < r_0  / 4 $ is sufficiently small (as follows from the construction in \cite[Theorem 3.5]{dlLS19}; see \cite[Section 10.6]{dlLS19}). In particular,
	\[
	H_0 \circ K_{\omega} = K_{\omega} \circ u_{\omega t_0}.
	\]

	Moreover, $ K_{\omega} $ is \emph{spectrally non-degenerate} in the sense of \cite[Definition 3.3]{dlLS19} (or equivalently satisfies uniform trichotomy), i.e., there are splittings $ X_{0} = X^{s}_{\theta} \oplus X^{c}_{\theta} \oplus X^{u}_{\theta} $ with associated projections $ \Pi^{s, c, u}_{\theta} $, $ \theta \in \mathbb{T}^{d} $, and three \emph{cocycles} $ U_{s,c,u} $ over $ u_{\omega} $ such that
	\begin{enumerate}[$ (\bullet 1) $]
		\item The projections $ \theta \mapsto \Pi^{s, c, u}_{\theta} $ are analytic;
		\item $ \sup_{\theta}|\Pi^{s, c, u}_{\theta} - \Pi^{s, c, u}_{0}| $ is small, so in particular $ \dim X^{c}_{\theta} = 2d $;
		\item $ U_{s}(t, \theta): X^{s}_{\theta} \to X^{s}_{\theta + \omega t} $, $ U_{u}(-t, \theta): X^{u}_{\theta} \to X^{u}_{\theta + \omega t} $ for $ t \geq 0 $, and $ U_{c}(t, \theta): X^{c}_{\theta} \to X^{c}_{\theta + \omega t} $ for $ t \in \mathbb{R} $;
		\item $ |U_{s}(t, \theta)| \leq (\lambda^{*}_{s})^t $, $ |U_{u}(-t, \theta)| \leq (\lambda^{*}_{u})^t $ for $ t \geq 0 $, and $ |U_{c}(t, \theta)| \leq (\lambda^{*}_{c})^t $ for $ t \in \mathbb{R} $, where $ \lambda^{*}_{s, c, u} = \lambda_{s, c, u} + \epsilon $ with $ \epsilon $ sufficiently small and depending only on $ \sup_{\theta}|K_{\omega}(\theta)| $;
		\item $U_{s, c, u}$ satisfy the following variational equations
		\[
		\begin{cases}
		U_{s}(t,\theta) = \id + \int_{0}^{t} A(\theta + \omega r) U_{s}(r, \theta) ~\mathrm{d}r, \quad t > 0, \\
		U_{u}(t,\theta) = \id + \int_{0}^{t} A(\theta + \omega r) U_{u}(r, \theta) ~\mathrm{d}r, \quad t < 0, \\
		U_{c}(t,\theta) = \id + \int_{0}^{t} A(\theta + \omega r) U_{c}(r, \theta) ~\mathrm{d}r, \quad t \in \mathbb{R},
		\end{cases}
		\]
		where $ A(\theta) = A + DN (K_{\omega}(\theta)) $; more precisely,
		\[
		\begin{cases}
		U_{s}(t,\theta)x = x + A \int_{0}^{t} U_{s}(r, \theta)x ~\mathrm{d}r + \int_{0}^{t} DN(\theta + \omega r) U_{s}(r, \theta)x ~\mathrm{d}r, ~~ t > 0, x \in X^{s}_{\theta}, \\
		U_{u}(t,\theta)x = x + A \int_{0}^{t} U_{u}(r, \theta)x ~\mathrm{d}r + \int_{0}^{t} DN(\theta + \omega r) U_{u}(r, \theta)x ~\mathrm{d}r, ~~ t < 0, x \in X^{u}_{\theta}, \\
		U_{c}(t,\theta)x = x + A \int_{0}^{t} U_{c}(r, \theta)x ~\mathrm{d}r + \int_{0}^{t} DN(\theta + \omega r) U_{c}(r, \theta)x ~\mathrm{d}r, ~~ t \in \mathbb{R}, x \in X^{c}_{\theta},
		\end{cases}
		\]
		where $ DN(\theta) = DN (K_{\omega}(\theta)) $.
		That is, for $ z(t) = (U_{cs}(t, \theta)x_{cs}, U_{u}(t - r, \theta + r\omega)y_{u}) $ or $ z(t) = (U_{s}(t, \theta)x_{s}, U_{cu}(t - r, \theta + r\omega)y_{cu}) $ ($ 0 \leq t \leq r $), it is a mild solution of the linearized equation along $ K_{\omega} $:
		\[
		\label{equ:Vareq}
		\dot{z}(t) = Az(t) + DN(K_{\omega}(\theta + t\omega))z(t). \tag{Vareq}
		\]
	\end{enumerate}

	Set
	\[
	f(\theta)z = N(K_{\omega}(\theta) + z) - DN(K_{\omega}(\theta))z - N(z).
	\]
	One can verify that $ z(t) $ is a mild solution of \eqref{equ:acp} if and only if $ w(t) = z(t) - K_{\omega}(\theta + t\omega) $ is a mild solution of the following equation in the cocycle form:
	\[
	\dot{w}(t) = Aw(t) + DN(K_{\omega}(\theta + t\omega))w(t) + f(\theta + t\omega) w(t).
	\]
	The mild solutions $ w(t) = (w_{s}(t), w_{c}(t), w_{u}(t)) \in X^{s}_{\theta+t\omega} \oplus X^{c}_{\theta+t\omega} \oplus X^{u}_{\theta+t\omega} $ ($ 0 \leq t \leq t_1 $) satisfy the following variation of constants formula: for $ 0 \leq t \leq t_1 $,
	\[
	\begin{cases}
	w_{s,c}(t) = U_{c,s}(t,\theta)w_{s,c}(0) + \int_{0}^{t} U_{s, c}(t - r,\theta+r\omega) f_{s, c}(\theta+r\omega) w(r) ~\mathrm{d}r, \\
	w_{c, u}(t) = U_{c, u}(t - t_1,\theta+t_1\omega)w_{c, u}(t_1) - \int_{t}^{t_1} U_{c, u}(t - r,\theta+r\omega) f_{c, u}(\theta+r\omega) w(r) ~\mathrm{d}r,
	\end{cases}
	\]
	where $ f_{s, c, u} (\theta) z = \Pi^{s, c, u}_{\theta} f(\theta) z $. For further details, we refer to \cite[Section 3.1]{Che18c}.

	Since $ \sup_{\theta} |K_{\omega}(\theta)| $ is small, $ \sup_{\theta}\lip f(\theta)(\cdot)|_{X(r)} $ is small when $ r > 0 $ is small (e.g., $ r < r_0 / 4 $). This implies that for $ t_1 = t_0 $, the above equation has a unique solution 
	\[
	w(t) = (w_{s}(t), w_{c}(t), w_{u}(t)) \in X^{s}_{\theta+t\omega} (r_0) \oplus X^{c}_{\theta+t\omega} (r_0) \oplus X^{u}_{\theta+t\omega} (r_0), \quad 0 \leq t \leq t_0,
	\]
	such that $ w_{s, c}(0) = x_{s, c} $, $ w_{u}(t_0) = y_{u} $ (or $ w_{s}(0) = x_{s} $, $ w_{c,u}(t_0) = y_{c,u} $).
	Therefore, for $ \kappa_1 = cs $, $ \kappa_2 = u $, $ \kappa = cs $, or $ \kappa_1 = s $, $ \kappa_2 = cu $, $ \kappa = cu $, we have
	\[
	H_0 - K(\theta + t_0\omega) \sim (F^{\kappa}_{\theta}, G^{\kappa}_{\theta}): X^{\kappa_1}_{\theta} (r_0) \oplus X^{\kappa_2}_{\theta} (r_0) \to X^{\kappa_1}_{\theta + t_0\omega} (r_0) \oplus X^{\kappa_2}_{\theta + t_0\omega} (r_0),
	\]
	with $ F^{\kappa}_{\theta}(\cdot), G^{\kappa}_{\theta}(\cdot) $ analytic,
	\[
	\sup_{\theta} \{|D^{2}_{z}F^{\kappa}_{\theta}(z)|, |D^{2}_{z}G^{\kappa}_{\theta}(z)|: z \in X^{\kappa_1}_{\theta} (r_0) \times X^{\kappa_2}_{\theta + t_0\omega} (r_0) \} < \infty,
	\]
	and
	\begin{gather*}
	DF^{cs}_{\theta}(0) = U_{s}(t_0, \theta) \oplus U_{c}(t_0, \theta), \quad DG^{cs}_{\theta}(0) = U_{u}(-t_0, t_0\theta),\\
	DF^{cu}_{\theta}(0) = U_{s}(t_0, \theta), \quad DG^{cu}_{\theta}(0) = U_{c}(-t_0, t_0\theta) \oplus U_{u}(-t_0, t_0\theta).
	\end{gather*}

	Applying \autoref{thm:torus} to $ H_0: X_0 \to X_0 $, we get the dynamical behaviors of $ H_0 $ in $ X_0 $ (i.e.,  \eqref{equ:bBou} for discrete time $ t_0 > 0 $ in $ X_0 $) near $ K_{\omega} $.

	The whiskered torus $ K_{\omega} $ of \eqref{equ:bBou} constructed in \cite{dlLS19} lies in $ X_0 $ (with no general result available for $ K_{\omega} \in X \setminus X_0 $). However, it is more natural to consider the dynamical behaviors of $ H_0 $ in $ X $ rather than $ X_0 $.

	The uniform trichotomy of $ K_{\omega} $ (i.e., ($ \bullet 1 $)--($ \bullet 5 $)) is constructed in $ X_0 $ as shown in \cite[Lemma 6.1]{dlLS19}. Similarly (see also \cite[Appendix A]{CdlL19}), one can construct splittings $ X = \widetilde{X}^{s}_{\theta} \oplus \widetilde{X}^{c}_{\theta} \oplus \widetilde{X}^{u}_{\theta} $ with associated projections $ \widetilde{\Pi}^{s, c, u}_{\theta} $, $ \theta \in \mathbb{T}^{d} $, and three \emph{cocycles} $ \widetilde{U}_{s,c,u} $ over $ u_{\omega} $ such that
	\begin{enumerate}[$ (\bullet 1' ) $]
		\item $ \theta \mapsto \widetilde{\Pi}^{s, c, u}_{\theta} $ are analytic;
		\item $ \sup_{\theta}|\widetilde{\Pi}^{s, c, u}_{\theta} - \widetilde{\Pi}^{s, c, u}_{*}| $ is small, so $ \dim \widetilde{X}^{c}_{\theta} = \dim X^{c}_{*} \geq 2d $;
		\item $ U_{s}(t, \theta): \widetilde{X}^{s}_{\theta} \to \widetilde{X}^{s}_{\theta + \omega t} $, $ U_{u}(-t, \theta): \widetilde{X}^{u}_{\theta} \to \widetilde{X}^{u}_{\theta + \omega t} $ for $ t \geq 0 $, and $ U_{c}(t, \theta): \widetilde{X}^{c}_{\theta} \to \widetilde{X}^{c}_{\theta + \omega t} $ for $ t \in \mathbb{R} $;
		\item $ |U_{s}(t, \theta)| \leq (\lambda^{*}_{s})^t $, $ |U_{u}(-t, \theta)| \leq (\lambda^{*}_{u})^t $ for $ t \geq 0 $, and $ |U_{c}(t, \theta)| \leq (\lambda^{*}_{c})^t $ for $ t \in \mathbb{R} $, where $ \lambda^{*}_{s, c, u} = \lambda_{s, c, u} + \epsilon $ with $ \epsilon $ sufficiently small depending only on $ \sup_{\theta}|K_{\omega}(\theta)| $;
		\item For $ z(t) = (U_{cs}(t, \theta)x_{cs}, U_{u}(t - r, \theta + r\omega)y_{u}) $ or $ z(t) = (U_{s}(t, \theta)x_{s}, U_{cu}(t - r, \theta + r\omega)y_{cu}) $ ($ 0 \leq t \leq r $), it is a mild solution of \eqref{equ:Vareq}.
	\end{enumerate}
	By uniqueness of the construction, we have $ X^{s, c, u}_{\theta} \subset \widetilde{X}^{s, c, u}_{\theta} $. This construction also follows from \cite[Theorem 4.13]{Che18c} applied to \eqref{equ:Vareq}. More precisely (considering the stable direction), since $ \sup_{\theta} |K_{\omega}(\theta)| $ is small, $ \sup_{\theta}|DN(K_{\omega}(\theta))| $ is small, and by \cite[Theorem 4.13]{Che18c}, there are unique $ h_{\theta}: X^{s}_{*} \to X^{cu}_{*} $ and $ x_{\theta, t}: X^{s}_{*} \to X^{s}_{*} $ with $ \lip h_{\theta}(\cdot) $ small and $ \lip x_{\theta, t}(\cdot) \leq (\lambda^{*}_{s})^t $ such that
	\[
	( x_{\theta, t}(x), h_{\theta + t\omega}( x_{\theta, t}(x) ) ) \in H(t) (x, h_{\theta}(x));
	\]
	since \eqref{equ:Vareq} is linear, $ h_{\theta}(\cdot) $ is linear; we can take $ \widetilde{X}^{s}_{\theta} = \graph h_{\theta} $ and 
	\[
	\widetilde{U}_{s}(t, \theta): (x, h_{\theta}(x)) \mapsto ( x_{\theta, t}(x), h_{\theta + t\omega}( x_{\theta, t}(x) ) );
	\]
	moreover, since $ \mathbb{T}^{d}_{\sigma} \to L(X, X): \theta \mapsto DN(K_{\omega}(\theta)) $ is analytic, the analyticity property of the projections follows easily (see e.g. \cite[Appendix A]{CdlL19}).

	\begin{enumerate}[$ \bullet $]
		\item Therefore, applying \autoref{thm:torus} to $ H_0: X \to X $, we get the dynamical behaviors of $ H_0 $ in $ X $ (i.e., equation \eqref{equ:bBou} for discrete time $ t_0 > 0 $ in $ X $) near $ K_{\omega} $.
	\end{enumerate}
	Note that in this case $ DK_{\omega}(\theta) T_{\theta} \mathbb{T}^{d} = X^{c}_{\theta} \subset \widetilde{X}^{c}_{\theta} $.
\end{exa}

\begin{rmk}
	\begin{asparaenum}[(a)]
		\item The local center-stable, center-unstable, and center manifolds of $ K_{\omega} $ constructed in this section exhibit dependence on the discrete time parameter $ t_0 $. This result possesses independent mathematical interest. Nevertheless, employing appropriate truncation techniques for the nonlinearity $ N $ (see, e.g., \cite{CLY00}), one can obtain local center-stable, center-unstable, and center manifolds of $ K $ that are independent of the time parameter.
		
		\item The center-(un)stable and center-stable manifolds of the equilibrium point $ 0 $ for the $ 1 $-D (bad) Boussinesq equation \eqref{equ:bBou} have been previously constructed in \cite{dlLla09} (see also \cite{ElB12}); our paper \cite{Che18a} contains further developments in this direction. To the best of our knowledge, the present work provides the first construction of center-(un)stable and center manifolds of $ K_{\omega} $ for \eqref{equ:bBou} with discrete time $ t_0 > 0 $.
		
		\item While we have primarily demonstrated how the main results in \autoref{sec:statement} apply to dynamics near invariant whiskered tori, in \cite{FdlLS09, FdlLS15, dlLS19}, the invariant whiskered tori are constructed through the posteriori ones (i.e., given tori solving the invariance equations approximately, true solutions exist nearby under some non-degeneracy conditions). Our results establish the existence of locally invariant manifolds in the vicinity of such tori (see \autoref{thm:tri0}). We observe that the invariant whiskered tori necessarily lie within the center manifold; the conditions required by our main results are usually weaker than the existence of invariant whiskered tori. So we hope that our findings will facilitate the discovery of different types of invariant whiskered tori under more relaxed conditions, analogous to the search for homoclinic and heteroclinic orbits as well as periodic orbits in practical models (see \cite{Zen00,LLSY16}); in addition, these results are expected to be valuable for identifying orbits that are homoclinic or heteroclinic to tori.
	\end{asparaenum}
\end{rmk}

\chapter{Approximately partially normal hyperbolicity: statements}\label{sec:statement}

\section{Set-up}\label{sub:setup}

(I) (\emph{Submanifold}).
We need the following hypotheses on the $ C^{0,1} $ (immersed) submanifold $ \Sigma $ of $ X $.
Let $ \widehat{\Sigma} $ be a $ C^0 $ manifold and $ \phi: \widehat{\Sigma} \to X $ a $ C^0 $ map with $ \phi(\widehat{\Sigma}) = \Sigma $.

\begin{enumerate}[$ \bullet $]
	\item For $ \epsilon > 0 $ and any $ \widehat{m} \in \widehat{\Sigma} $, let $ \widehat{U}_{\widehat{m}} (\epsilon) $ be the component of $ \phi^{-1} (\Sigma \cap \mathbb{B}_{\epsilon} (\phi(\widehat{m})) ) $ containing $ \widehat{m} $. Set $ \phi ( \widehat{U}_{\widehat{m}} (\epsilon) ) = U_{m,\gamma} (\epsilon) $, where $ m = \phi(\widehat{m}) $, $ \gamma \in \Lambda(m) \triangleq \phi^{-1}(m) $. The map $ \phi: \widehat{U}_{\widehat{m}}(\epsilon) \to U_{m,\gamma}(\epsilon) $ is a homeomorphism, and $ \widehat{U}_{\widehat{m}}(\epsilon) $ is open.

	\item There is a family of projections $ \{ \Pi^{\kappa}_{m}: m \in \Sigma, \kappa = s, c, u \} $ such that $ \Pi^{s}_{m} + \Pi^{c}_{m} + \Pi^{u}_{m} = \id $. Set $ \Pi^{h}_{m} = \id - \Pi^{c}_m $, and $ X^{\kappa}_{m} = R(\Pi^{\kappa}_{m}) $, $ \kappa = s, c, u, h $; also $ X^{\kappa_1\kappa_2}_{m} = X^{\kappa_1}_{m} \oplus X^{\kappa_2}_{m} $, where $ \kappa_1 \neq \kappa_2 \in \{ s,c, u \} $. For $ K \subset \Sigma $, $ \widehat{K}' \subset \widehat{\Sigma} $ and $ \kappa = s, c, u,h $, define
	\begin{equation}\label{equ:notationM}
	\begin{gathered}
	\widehat{K} = \phi^{-1}(K) \subset \widehat{\Sigma}, ~\widehat{K}_{\epsilon} = \bigcup_{\widehat{m} \in \widehat{K}} \widehat{U}_{\widehat{m}}(\epsilon), ~ K_{\epsilon} = \phi(\widehat{K}_{\epsilon}), \\
	{X}^{\kappa}_{\widehat{K}'} = \{ (\widehat{m}',x): x \in {X}^{\kappa}_{\phi(\widehat{m}')}, \widehat{m}' \in \widehat{K}' \}, ~
	{X}^{\kappa}_{\widehat{K}'} (r) = \{ (\widehat{m}',x): x \in {X}^{\kappa}_{\phi(\widehat{m}')}(r), \widehat{m}' \in \widehat{K}' \}.
	\end{gathered}
	\end{equation}
\end{enumerate}

We make the following assumptions on $ \Sigma $, which are essentially due to \cite{BLZ99, BLZ08}. 
\begin{enumerate}[(H1)]
	\item ($ C^{0,1} $ immersed submanifold). For any $ m \in \Sigma $, there exist $ \epsilon_m > 0 $ and $ \delta_0(m) > 0 $ such that
	\[
	\sup\left\{ \frac{|m_1 - m_2 - \Pi^c_{m}(m_1 - m_2)|}{|m_1 - m_2|} : m_1 \neq m_2 \in U_{m,\gamma} (\epsilon) \right\} \leq \chi_{m}(\epsilon) < 1,
	\]
	where $ \gamma \in \Lambda(m) $, $ 0 < \epsilon \leq \epsilon_{m} $, and $ \chi_{m}(\cdot) > 0 $ is increasing, and
	\[
	X^c_{m}(\delta_0(m)) \subset \Pi^c_{m} (U_{m,\gamma} (\epsilon_{m}) - m).
	\]
	That is, $ \Sigma $ is a \emph{$ C^{0,1} $ immersed submanifold} of $ X $ (cf. \cite{Pal66}) \emph{locally modeled on} $ X^{c}_{m} $, $ m \in \Sigma $.

	Take $ K \subset \Sigma $. Suppose $ \inf_{m \in K} \epsilon_{m} > \epsilon_1 > 0 $.

	\item ($ \{\Pi^{\kappa}_m\} $). There are constants $ L > 0, \widetilde{M} > 0 $ such that 
	\begin{enumerate}[(i)]
		\item $ \sup_{m \in K}|\Pi^{\kappa}_m| \leq \widetilde{M} $,
		\item $|\Pi^{\kappa}_{m_1} - \Pi^{\kappa}_{m_2}| \leq L|m_1 - m_2|$ for all $ m_1, m_2 \in U_{m,\gamma} (\epsilon_1) $, $\gamma \in \Lambda(m), m \in K $, $ \kappa = s, c, u $.
	\end{enumerate}

	\item (Almost uniformly differentiable at $ K $). $ \sup_{m \in K} \chi_{m}(\epsilon) \leq \chi(\epsilon) < 1 $ if $ 0 < \epsilon \leq \epsilon_1 $, where $ \chi(\cdot) $ is an increasing function.

	\item (Uniform size neighborhood at $ K $). There is a constant $ \delta_0 > 0 $ such that 
		\[
		\inf_{m \in K} \delta_0(m) > \delta_0.
		\]
\end{enumerate}
Usually, we say that $ ( \Sigma, K, \{ \Pi^c_m \} , \{U_{m,\gamma}(\epsilon)\} ) $ or $ \Sigma $ with $ K $ satisfies (H1)--(H4), and call $ \Sigma $ a \emph{uniformly $ C^{0,1} $ immersed submanifold} near $ K $; in addition, if $ \Sigma = K $, we often call $ \Sigma $ a \emph{uniformly $ C^{0,1} $ immersed submanifold} of $ X $.
We usually identify $ \widehat{\Sigma} $ with $ \Sigma $ when $ \Sigma $ is endowed with the immersed topology.

Some examples are the following:
\begin{enumerate}[$ \bullet $]
	\item Finite-dimensional cylinders (or $ \mathbb{T}^{n}, \mathbb{S}^n $) in $ X $;
	\item $ \Sigma $ is an open set of a complemented closed subspace of $ X $ and $ K \subset \Sigma $ with $ d(K, \partial \Sigma) > 0 $;
	\item Any $ C^1 $ compact embedding submanifold $ \Sigma $ of $ X $ and $ K \subset \Sigma $ with $ d(K, \partial \Sigma) > 0 $, where $ \partial \Sigma $ is the boundary of $ \Sigma $;
	\item $ \phi: \widehat{\Sigma} \to X $ is a $ C^1 $ immersion with $ 1 $-order tangency at self intersections (i.e., 
	\[
	D\phi(\widehat{m}) T_{\widehat{m}} \widehat{\Sigma} = D\phi(\widehat{m}') T_{\widehat{m}'} \widehat{\Sigma}
	\]
	if $ \phi(\widehat{m}) = \phi(\widehat{m}') $), where $ \widehat{\Sigma} $ is boundaryless and compact;
	\item Or more generally, $ \phi: \widehat{\Sigma} \to X $ is a $ C^1 $ \emph{leaf immersion} (see \cite[Section 6]{HPS77}) with $ \widehat{\Sigma} $ boundaryless.
	\item If $ K_i \subset \Sigma_i \subset X_i $ and $ \Sigma_i $ with $ K_i $ satisfies (H1)--(H4), $ i = 1,2 $, then $ \Sigma_1 \times \Sigma_2 $ with $ K_1 \times K_2 $ satisfies (H1)--(H4).
\end{enumerate}
See also \cite[Example 4.6]{Che18b} and \cite[Figure 2.1]{BLZ99}.

Since in the following results, the most useful information is near $ K $, without loss of generality, we take $ \widehat{K}_{\epsilon} $ and $ K_{\epsilon} $ for small $ \epsilon $ instead of $ \widehat{\Sigma} $ and $ \Sigma $, respectively.

\vspace{.5em}
\noindent{\emph{Convention}}.
Let $ \phi_1 : X^{h}_{\widehat{\Sigma}} \to X, (\widehat{m},x) \mapsto \phi(\widehat{m}) + x $; this map is usually not injective even when $ |x| $ is small, due to $ \Sigma $ being only immersed.
For brevity, for $ A_1, B_1 \subset X^{h}_{\widehat{\Sigma}} $ and a correspondence $ H: X \to X $,
\begin{enumerate}[$ \bullet $]
	\item for $ A_0 \subset X $ and $ B_1 \subset X^{h}_{\widehat{\Sigma}} $, $ A_0 \subset B_1 $ is understood as $ A_0 \subset \phi_1(B_1) $;
	\item $ A_1 \subset H^{-1}(B_1) $ means $ \phi_1(A_1) \subset H^{-1}(\phi_1(B_1)) $;
	\item for $ z = (\widehat{m}, x) \in X^{h}_{\widehat{\Sigma}} $, $ H(z) $ stands for $ H(\phi_1(z)) $.
\end{enumerate}
\vspace{.5em}

\begin{defi}[Almost uniform continuity]\label{def:almost}
	Let $ \mathfrak{M} $ be a metric space with metric $ d $.
	We say a function $ g : \Sigma \to \mathfrak{M} $ is \emph{$ \xi $-almost uniformly continuous} at $ K $ (in the immersed topology) if $ \mathfrak{B}_{K, g} \leq \xi $, where $ \mathfrak{B}_{K, g} $ is the \emph{amplitude} of $ g $ \emph{at} $ K \subset \Sigma $, defined by
	\begin{equation*}
	\mathfrak{B}_{K,g} \triangleq \lim_{\epsilon \to 0}\sup \{ d(g(m), g(m_0)): m \in U_{m_0,\gamma}(\epsilon) \cap K, m_0 \in K, \gamma \in \Lambda(m_0) \}.
	\end{equation*}
\end{defi}
Clearly, if $ g $ is $ 0 $-almost uniformly continuous at $ K $, then $ g $ is uniformly continuous at $ K $; but for $ \xi > 0 $, $ \xi $-almost uniform continuity does not even imply continuity. Note also that a function sufficiently close to a uniformly continuous function is almost uniformly continuous, but it may not be uniformly continuous (or even continuous).

(II) (\emph{Base map}). Assume $ u : K \to K \subset \Sigma $ is a $ C^0 $ map in the immersed topology, which means there exists a $ C^0 $ map $ \widehat{u}: \widehat{K} \to \widehat{K} \subset \widehat{\Sigma} $ such that $ \phi \circ \widehat{u} = u \circ \phi $.

(III) (\emph{Decomposition}). Let $ \widehat{\Pi}^\kappa_{m} $, $ \kappa = s, c, u $, be projections such that $ \widehat{\Pi}^s_{m} + \widehat{\Pi}^c_{m} + \widehat{\Pi}^u_{m} = \id $, $ m \in K $. Set $ \widehat{X}^{\kappa}_m = R(\widehat{\Pi}^{\kappa}_m) $ and
$ \widehat{X}^{\kappa_1 \kappa_2}_m = \widehat{X}^{\kappa_1}_m \oplus \widehat{X}^{\kappa_2}_m $, $ \kappa_1 \neq \kappa_2 $, $ \kappa, \kappa_1, \kappa_2 \in \{ s, c, u \} $.

(IV) (\emph{Correspondence}). Let $ H: X \to X $ be a correspondence.
Set $ \widehat{H}_{m} = H(\cdot + m) - u(m) $, i.e., $ \graph \widehat{H}_{m} = \graph H_{m} - (m, u(m)) \subset X \times X $ (see \autoref{sec:corr}).

For convenience, the following terminologies are used to describe partially normal hyperbolicity. 
If $\kappa_1, \kappa_2$ are one or more letters from $\{c, s, u\}$, then $\kappa_2 - \kappa_1$ denotes the characters remaining after removing from $\kappa_2$ those that are in $\kappa_1$; for example, $csu - cs$ is $u$, and $csu - s$ is $cu$.
\begin{defi}\label{defi:ABk}
	Let $ \kappa \in \{ cs, s \} $, $ \kappa_1 = csu - \kappa $.
	We say $ \widehat{H} \approx (\widehat{F}, \widehat{G}) $ satisfies the \emph{(A$ ' $) ($ \alpha_1, \lambda_1 $)} (resp. \emph{(A) ($ \alpha_1, \lambda_1 $)} or \emph{(A) ($ \alpha; \alpha_1, \lambda_1 $)}) \emph{condition in $ \kappa $-direction at $ K $} if there are constants $ r, r_1, r_2 > 0 $ such that for each $ m \in K $,
	\[
	\widehat{H}_{m} \sim (\widehat{F}_{m}, \widehat{G}_{m}): \widehat{X}^{\kappa}_{m}(r) \oplus \widehat{X}^{\kappa_1}_{m}(r_1) \to \widehat{X}^{\kappa}_{u(m)}(r_2) \oplus \widehat{X}^{\kappa_1}_{u(m)} (r),
	\]
	satisfies the (A$ ' $) $(\alpha_1(m)$, ${\lambda}_1(m))$ (resp. (A) $(\alpha_1(m)$, ${\lambda}_1(m))$ or (A) $(\alpha(m)$; $ \alpha_1(m) $, ${\lambda}_1(m))$) condition (see \autoref{defAB}).
	Similarly, \emph{(B$ ' $) ($ \beta_1, \lambda_2 $)} (resp. \emph{(B) ($ \beta_1, \lambda_2 $)} or \emph{(B) ($ \beta; \beta_1, \lambda_2 $)}) \emph{condition in $ \kappa $-direction at $ K $} can be defined.
\end{defi}

To clarify the dependence of constants on parameters, we adopt the following notations:
\begin{enumerate}[$ \bullet $]
	\item $ f = O_{\epsilon}(1) $ if $ f \to 0 $ as $ \epsilon \to 0 $;

	\item $ f = o(\epsilon) $ if $ f/\epsilon = O_{\epsilon}(1) $;

	\item $ f = O(\epsilon) $ if $ |f/\epsilon| < \infty $ as $ \epsilon \to 0 $;

	\item unless otherwise specified, the norm on $ X_1 \times X_2 \cong X_1 \oplus X_2 $ is taken as the maximum norm, i.e., $ |(x_1, x_2)| = \max \{ |x_1|, |x_2| \} $.
\end{enumerate}

\section{Main results I: dichotomy case} \label{subsec:main}

\begin{enumerate}[({A}1)]
	\item (Immersed submanifold) Let $ \Sigma $ with $ K $ satisfy (H1)--(H4).

	\item There is a small $ \xi_1 > 0 $ such that $ u: K \to X $ (in (II)) is $ \xi_1 $-almost uniformly continuous at $ K $ (in the immersed topology); see \autoref{def:almost}.

	\item (Approximately partially $ cs $-normal hyperbolicity) Assume the correspondence $ H: X \to X $ (from (IV)) satisfies the following conditions.

	(a) ((A$ ' $)(B) condition) Suppose $ \widehat{H} \approx (\widehat{F}^{cs}, \widehat{G}^{cs}) $ satisfies either ($ \bullet 1 $) the (A$ ' $)($ \alpha $, $ \lambda_{u} $) (B)($ \beta; \beta', \lambda_{cs} $) condition \emph{or} ($ \bullet 2 $) the (A)($ \alpha $, $ \lambda_{u} $) (B)($ \beta; \beta', \lambda_{cs} $) condition in $ cs $-direction at $ K $ (see \autoref{defi:ABk}). There exists a constant $ \varsigma_0 \geq 2 $ such that:

	(i) (Angle condition) 
	\[
	\sup_{m\in K} \alpha(m) \beta'(u(m)) < 1/(2\varsigma_0), \quad \inf_{m\in K} \{\beta(m) - \varsigma_0 \beta'(u(m))\} > 0;
	\]

	(ii) (Spectral condition) $ \sup_{m\in K} \lambda_{u}(m) \vartheta(m) < 1 $, where
	\begin{equation}\label{equ:v0}
	\begin{cases}
	\vartheta(m) = (1 - \varsigma_0\alpha(m) \beta'(u(m)))^{-1}, & \text{($ \bullet 1 $) case},  \\
	\vartheta(m) = 1, & \text{($ \bullet 2 $) case}.
	\end{cases}
	\end{equation}

	(iii) The functions in the (A$ ' $)(B) condition are \emph{bounded} and $ \xi_1 $-almost uniformly continuous at $ K $ (in the immersed topology); see \autoref{def:almost}.

	(a$ ' $) ((A$ ' $)(B) condition) Assume (a) (i)--(iii) hold with $ \varsigma_0 \geq 1 $; in addition,
	\begin{enumerate}[(iv$ ' $)]
		\item there is a small $ \gamma_0 > 0 $ such that for all $ m \in K $,
		\[
		|\widehat{G}^{cs}_{m}(x^{cs}, 0)| \leq \gamma_0|x^{cs}| + |\widehat{G}^{cs}_{m}(0, 0)|.
		\]
	\end{enumerate}

	(b) There are small $ \eta > 0 $ and $ \xi_2 > 0 $ such that
	\begin{enumerate}[(i)]
		\item (Approximation)
		\[
		\sup_{m \in K} |\widehat{F}^{cs}_m(0,0)| \leq \eta, \quad
		\sup_{m \in K} |\widehat{G}^{cs}_m(0,0)| \leq \eta,
		\]
		\item (Compatibility) $ \sup_{m \in K}|\widehat{\Pi}^\kappa_m - \Pi^\kappa_m| \leq \xi_2 $, for $ \kappa = s, c, u $.
	\end{enumerate}

\end{enumerate}

\begin{thmI}[Existence of center-stable manifold]\label{thm:I}
	There are $ 0 < c_{*} < 1 $ and small positive constants $ \xi_{1,*}, \xi_{2,*}, \epsilon_{*} $, with $ \chi_{0,*}, \gamma_{0,*} = O_{\epsilon_{*}}(1) $ and $ \eta_* = o(\epsilon_{*}) $, such that if either of the following cases holds:
	\begin{enumerate}[$ \bullet $]
		\item Case (1): (A1), (A2), (A3) (a) (b) are satisfied with the constants obeying $ \xi_{i} \leq \xi_{i,*} $ ($ i = 1,2 $), $ \chi(\epsilon) \leq \chi_{0,*} $, $ \epsilon \leq \epsilon_{*} $, and $ \eta \leq \eta_* $;

		\item Case (2): (A1), (A2), (A3) (a$ ' $) (b) are satisfied with the constants obeying $ \xi_{i} \leq \xi_{i,*} $ ($ i = 1,2 $), $ \chi(\epsilon) \leq \chi_{0,*} $, $ \epsilon \leq \epsilon_{*} $, $ \eta \leq \eta_* $, and $ \gamma_0 \leq \gamma_{0,*} $,
	\end{enumerate}
	then there exist $ \varepsilon = O(\epsilon_*) $ and $ \sigma, \varrho = o(\epsilon_*) $ with $ \varepsilon_0 = c_{*} \sigma $ such that the following statements hold.

	\begin{enumerate}[(1)]
		\item \label{censta} In $ X^s_{\widehat{\Sigma}}(\sigma) \oplus X^u_{\widehat{\Sigma}} (\varrho) $, there is a set $ W^{cs}_{loc}(K) $, called a \emph{local center-stable manifold} of $ K $ (which may not be unique), satisfying the following properties:
		\begin{enumerate}[(i)]
			\item \label{it:i11} (Partial characterization) Any $ (\varepsilon_{0}, \varepsilon, \varepsilon_{0}, \varrho) $-type forward orbit $ \{z_{k}\}_{k \geq 0} $ of $ H $ near $ K $ (see \autoref{defi:orbit}) lies entirely in $ W^{cs}_{loc}(K) $, i.e., $ \{z_{k}\}_{k \geq 0} \subset W^{cs}_{loc}(K) $.
			\item \label{it:i12} (Local invariance) Let $ \Omega = W^{cs}_{loc}(K) \cap \{X^s_{\widehat{K}_{\varepsilon_0}}(\varepsilon_0) \oplus X^u_{\widehat{K}_{\varepsilon_0}} (\varrho)\} $. Then 
			\[
			\Omega \subset H^{-1}(W^{cs}_{loc}(K)),
			\]
			and $ \Omega $ is an open subset of $ W^{cs}_{loc}(K) $; moreover, if $ \eta = 0 $, then $ K \subset \Omega $.
			\item (Representation) \label{it:h0} The manifold $ W^{cs}_{loc}(K) $ admits a representation as the graph of a Lipschitz map near $ K $. That is, there is a map $ h_0 $ such that $ h_0(\widehat{m}, \cdot): X^s_{\phi(\widehat{m})} (\sigma) \to X^u_{\phi(\widehat{m})}(\varrho) $ for $ \widehat{m} \in \widehat{\Sigma} $, and
			\[
			W^{cs}_{loc}(K) = \graph h_0 \triangleq \{ (\widehat{m}, x^s, h_0(\widehat{m}, x^s)): x^s \in X^{s}_{\phi(\widehat{m})}(\sigma), \widehat{m} \in \widehat{\Sigma} \}.
			\]
			In addition, there is a function $ \mu(\cdot) $ with $ \mu(m) = (1+\chi_{*}) \beta'(m) + \chi_{*} $, where $ \chi_{*} = O_{\epsilon_{*}}(1) $,
			such that for every $ m \in K $, 
			\begin{equation}\label{equ:minlip}
			|\Pi^u_{m}( h_0(\widehat{m}_1, x^s_1) - h_0(\widehat{m}_2, x^s_2)  )| \leq \mu(m) \max \{| \Pi^c_{m}(  \phi(\widehat{m}_1) - \phi(\widehat{m}_2)  ) |, | \Pi^s_{m}( x^s_1 - x^s_2 ) |\},
			\end{equation}
			where $ \widehat{m}_i \in \widehat{U}_{\widehat{m}}(\varepsilon_0) $, $ x^{s}_i \in X^s_{\phi(\widehat{m}_i)} (\varepsilon_0) $ for $ i = 1,2 $, and $ \widehat{m} \in \phi^{-1}(m) $.

		\end{enumerate}

		\item \label{map} The correspondence $ H $ induces a map on $ \Omega $: for any $ z_0 = (\widehat{m}, x^s, x^u) \in \Omega $, there is a unique $ z_1 = (\widehat{m}_1, x^s_1, x^u_1) \triangleq \phi(\widehat{m}_1) + x^s_1 + x^u_1 \in W^{cs}_{loc}(K) $ such that $ z_1 \in H(z_0) \triangleq H(\phi(\widehat{m}) + x^s + x^u) $, where $ \widehat{m}_1 \in \widehat{U}_{\widehat{u}(\widehat{m}_0)} (\varepsilon) $, $ x^\kappa_i \in X^{\kappa}_{\phi(\widehat{m}_i)} $ for $ \widehat{m}_i \in \widehat{\Sigma} $, $ i = 0,1 $, $ \kappa = s, u $, and $ \widehat{m}_0 \in \widehat{K} $ is chosen such that $ \widehat{m} \in \widehat{U}_{\widehat{m}_0} (\varepsilon_0) $.
	\end{enumerate}
\end{thmI}

In some cases, the manifold $ W^{cs}(K) $ can be uniquely characterized; see \autoref{rmk:inflowing}. To establish the smoothness results, we introduce the following assumptions.

\begin{enumerate}[(A4)]
	\item (Smoothness conditions) 
	\begin{enumerate}[(i)]
		\item For every $ m \in K $, assume that $ \widehat{F}^{cs}_m(\cdot) $ and $ \widehat{G}^{cs}_m(\cdot) $ are $ C^1 $.
		\item (Spectral gap condition) $ \sup_{m\in K} \lambda_{cs}(m) \lambda_u(m) \vartheta(m) < 1 $.
		\item (Smoothness of the ambient space) Assume that $ \Sigma \in C^1 $ and $ m \mapsto \Pi^{\kappa}_{m} $ is $ C^1 $ (in the immersed topology) for $ \kappa = s, c, u $.
		\item (Smoothness of $ cs $-spaces) Assume there exists a \emph{$ C^{1} $ and $ C_1 $-Lipschitz bump function} in $ X^s_{\widehat{\Sigma}} $ near $ K $, where $ C_1 \geq 1 $ (see \autoref{def:C1Lbump} and \autoref{cor:spaces} for examples of spaces admitting such bump functions: e.g., (a) $ X $ is a Hilbert space; (b) $ (X^{cs}_{m})^* $, $ m \in \Sigma $, are separable; in particular, (b1) $ X^* $ is separable or (b2) $ (X^{cs}_{m})^* $, $ m \in \Sigma $, are finite-dimensional; in case (b2) we can take $ C_1 = 1 $, and in case (b1) $ C_1 = 3 $).
	\end{enumerate}
\end{enumerate}

\begin{thmI}[Smoothness of center-stable manifold]\label{thm:smooth}
	Assume the hypotheses of \autoref{thm:I} hold, with either case (1) and $ \varsigma_0 \geq C_1 + 1 $ (in (A3) (a)), or case (2) with $ \gamma_0 $ sufficiently small (in (A3) (a$ ' $) (iv$ ' $)). If, in addition, (A4) is satisfied, then the center-stable manifold $ W^{cs}_{loc}(K) $ in \autoref{thm:I} can be chosen to be a $ C^1 $ immersed submanifold of $ X $.

	If only (A4) (i) (ii) are assumed, and further $ \eta = 0 $ with $ \widehat{X}^{cs} $ invariant under $ DH $ (i.e., $ D_{x^{cs}}\widehat{G}^{cs}_{m}( 0, 0, 0 ) = 0 $ for all $ m \in K $), then $ T_{\widehat{m}} W^{cs}_{loc}(K) = \widehat{X}^{cs}_{\phi(\widehat{m})} $ (see \autoref{def:tangent}) for all $ \widehat{m} \in \widehat{K} $.
\end{thmI}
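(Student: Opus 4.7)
My plan is to upgrade the Lipschitz graph $h_0$ obtained in Theorem I to a $C^1$ graph via a fiber-contraction argument applied to a linearized graph transform. The rough idea is that since $h_0$ was produced as the fixed point of a contraction $\mathcal{T}$ on a space of Lipschitz sections, its (candidate) derivative $Dh_0$ should be the fixed point of a naturally associated transformation $\mathcal{T}^{(1)}$ acting on a space of linear sections over $\graph h_0$; the spectral-gap assumption \textbf{(A4)(ii)}, namely $\sup_m \lambda_{cs}(m)\lambda_u(m)\vartheta(m)<1$, is precisely what makes $\mathcal{T}^{(1)}$ a fiberwise contraction.

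First I would perform a smooth truncation to put everything in a globally defined form. Using the $C^1 \cap C^{0,1}$ bump function in $X^s_{\widehat{\Sigma}}$ provided by \textbf{(A4)(iv)} together with the $C^1$ submanifold and projection assumption \textbf{(A4)(iii)}, I extend $(\widehat{F}^{cs}_m,\widehat{G}^{cs}_m)$ from the local domain $\widehat{X}^{cs}_m(r)\oplus\widehat{X}^u_m(r_1)$ to a globally defined $C^1$ generating pair whose $C^0$ size remains of order $\eta$ and whose Lipschitz data still satisfies the (A$'$)(B) (or (A)(B)) condition with the same quantitative constants, only slightly perturbed. The extra Lipschitz constant $C_1$ of the bump function is what forces the requirement $\varsigma_0 \geq C_1+1$ in case (1) (resp.\ a smaller $\gamma_0$ in case (2)); this is how one absorbs the truncation error into the angle condition. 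After this step the graph transform $\mathcal{T}$ of Theorem I acts globally on maps $h(\widehat m,\cdot)\colon X^s_{\phi(\widehat m)}\to X^u_{\phi(\widehat m)}$, and $h_0$ is its unique fixed point.

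Next I set up the tangent-bundle graph transform. Over $\graph h_0 \subset X^s_{\widehat{\Sigma}}\oplus X^u_{\widehat{\Sigma}}$ I consider the space $\mathcal{L}$ of bounded continuous sections $L$ assigning to each $(\widehat m,x^s)$ a bounded linear map $L(\widehat m,x^s)\colon X^s_{\phi(\widehat m)}\oplus T_{\widehat m}\widehat\Sigma \to X^u_{\phi(\widehat m)}$ satisfying the pointwise bound derived from \eqref{equ:minlip}. The formal differentiation of the fixed-point equation $h_0 = \mathcal{T}(h_0)$ yields a transformation $L\mapsto \mathcal{T}^{(1)}(L)$ defined by implicit differentiation of the relation $x^u = \widehat G^{cs}_m(x^s, (x^u)')$ together with $(x^s)' = \widehat F^{cs}_m(x^s, (x^u)')$ along the orbit of $u$, composed with the tangent map of the base dynamic on $\widehat\Sigma$. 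A direct estimate, almost identical to the one carried out in the Lipschitz case but now in the linear fibers, shows that the contraction constant of $\mathcal{T}^{(1)}$ is bounded by $\lambda_{cs}(m)\lambda_u(m)\vartheta(m) + O_{\epsilon_*}(1)$, so \textbf{(A4)(ii)} produces a uniform contraction. Applying the fiber contraction principle (in the spirit of \cite{HPS77} and the variants used in \cite{BLZ08} and \cite{Che18a}) to the pair $(\mathcal{T},\mathcal{T}^{(1)})$, whose base contraction is $\mathcal{T}$, gives a unique continuous fixed section $L_0 \in \mathcal{L}$.

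The main obstacle is then to identify $L_0$ with $Dh_0$ and deduce $C^1$-smoothness of $h_0$ rather than mere Gateaux differentiability; this is where the detailed choice of section space matters. I handle it by showing, through the standard telescoping argument along the sequence $\mathcal{T}^{n}(\widetilde h)\to h_0$ starting from an arbitrary $C^1$ initial $\widetilde h$, that $D\mathcal{T}^{n}(\widetilde h) \to L_0$ uniformly on compact sets; combined with continuity of $L_0$ and the (A$'$)(B) estimates this forces $h_0 \in C^1$ with $Dh_0 = L_0$, hence $W^{cs}_{loc}(K)=\graph h_0$ is a $C^1$ immersed submanifold. Finally, for the supplementary statement assuming only \textbf{(A4)(i)(ii)}, $\eta=0$, and $D_{x^{cs}}\widehat G^{cs}_m(0,0,0)=0$, I observe that under these hypotheses the zero section over $K$ (viewed in appropriate coordinates adapted to $\widehat X^{cs}$) satisfies $\mathcal{T}^{(1)}$ exactly, so the fixed point $L_0$ restricted to $\widehat K$ coincides with the inclusion $\widehat X^{cs}_{\phi(\widehat m)}\hookrightarrow X$; this gives $T_{\widehat m}W^{cs}_{loc}(K) = \widehat X^{cs}_{\phi(\widehat m)}$ in the Whitney sense of \autoref{def:tangent}, without requiring the global $C^1$ ambient structure of \textbf{(A4)(iii)(iv)}.
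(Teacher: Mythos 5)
Your core strategy---solve the formally differentiated fixed-point equation by a contraction whose constant is governed by $\sup_m\lambda_{cs}(m)\lambda_u(m)\vartheta(m)<1$, then identify the fixed linear section with $Dh_0$---is exactly what the paper does (the space $E_K$ and the transform $\varGamma_0$ in \autoref{sub:pre}, then \autoref{lem:C1smooth}; your telescoping $D\mathcal{T}^n(\widetilde h)\to L_0$ is replaced there by a direct estimate of the remainder quotient, which is a legitimate variant). The genuine gap is your first step. You propose to extend/truncate $(\widehat F^{cs}_m,\widehat G^{cs}_m)$ into a globally defined $C^1$ dynamic, but the generating maps are given only for base points $m\in K$ over $u:K\to K$; over $\Sigma\setminus K$ there is no dynamic to extend, and the problem the cutoff must solve is not globalizing the dynamic but extending the transformed section $\widetilde h=\widetilde\varGamma(h)$, which is defined only on $X^s_{\widehat K_{\sigma^1_*}}(\sigma^1_*)$, to the whole bundle $X^s_{\widehat\Sigma}(\sigma_*)$. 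The paper multiplies the \emph{section} by $\Psi$ (cutting off the graph transform, not the dynamic---the introduction explicitly warns that truncating the dynamic fails in this non-Lipschitz setting), so that outside a neighborhood of $K$ the candidate manifold coincides with $X^s_{\widehat\Sigma}(\sigma_*)$ and the candidate derivative is $Dg^{cs}_{\widehat m_0}$; this is precisely where \textbf{(A4)(iii)} enters, and where $\varsigma_0\geq C_1+1$ is actually used: in \autoref{lem:belong} and \autoref{lem:KK0} the cutoff inflates the Lipschitz/derivative bound by a factor of order $C_1(\varsigma'_1-1)+1$, which is not a ``slight perturbation'' of the (A$'$)(B) constants but must be absorbed by the angle condition. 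Moreover the interface between $h_0=\Psi\cdot\widetilde h_0$ and $\widetilde h_0$, which your sketch skips, is encoded in the maps $F^1_{\widehat m_0},G^1_{\widehat m_0}$ of \eqref{equ:extension} and the estimates of \autoref{lem:est000}; without these the linearized equations \eqref{equ:vlocal}--\eqref{equ:vex} that your $\mathcal{T}^{(1)}$ is supposed to solve cannot even be formulated, and your $\mathcal{T}^{(1)}$ has no well-defined action on sections over base points away from $K$.

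The second claim is also not covered by your argument as written. You obtain $T_{\widehat m}W^{cs}_{loc}(K)=\widehat X^{cs}_{\phi(\widehat m)}$ by restricting the global fixed section $L_0$ to $\widehat K$, but $L_0$ only exists once the bump-function and smooth-ambient machinery of \textbf{(A4)(iii)(iv)} is in place, whereas the statement assumes only \textbf{(A4)(i)(ii)}. The paper treats this case separately (\autoref{rmk:diff}): with $\eta=0$ one writes the linearized equation at the points of $K$ alone, solves it uniquely with $|K^0_{\widehat m_0}(0)|\leq\beta'(m_0)$, and proves Whitney differentiability of $f^0_{\widehat m_0}$ at $0$ directly by the same recursive remainder estimate; only then does $D_{x^{cs}}\widehat G^{cs}_m(0,0,0)=0$ force $K^0_{\widehat m_0}(0)=0$. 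You need such a self-contained argument for differentiability at $K$, not an appeal to the fixed point of the global fiber contraction.
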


We note that, to obtain regularity results, the constants $ \xi_1, \xi_2, \eta $, and $ \chi(\epsilon) $ (together with $ \gamma_0 $ in case (2)) may need to be smaller than those required for the existence results; moreover, the constants $ \varepsilon_0, \varepsilon, \sigma, \varrho $ may also be smaller (i.e., $ \epsilon_{*} $ may be further reduced).

\begin{rmk}
	\begin{enumerate}[(a)]
		\item In \autoref{thm:I}, if $ H $ additionally satisfies the \emph{strong $ s $-contraction} condition (see assumption ($ \star\star $) in \autoref{sub:limited}), then $ (\varepsilon_{0}, \varepsilon, \varepsilon_{0}, \varrho) $-type (in item \eqref{it:i11}) can be taken as $ (\varepsilon_{0}, \varepsilon, \sigma, \varrho) $-type, and $ \Omega = W^{cs}_{loc}(K) \cap \{X^s_{\widehat{K}_{\varepsilon_0}}(\sigma) \oplus X^u_{\widehat{K}_{\varepsilon_0}} (\varrho)\} $ (in item \eqref{it:i12}). Moreover, the Lipschitz estimate \eqref{equ:minlip} in item \eqref{it:h0} remains valid for $ x^{s}_i \in X^s_{\phi(\widehat{m}_i)} (\sigma) $.

		\item The smoothness result in \autoref{thm:smooth} continues to hold when (A4)(iv) is replaced by the following weaker condition:
		\begin{enumerate}
			\item[(iv$ ' $)] Assume there exists a \emph{$ C^{1} $ and $ C_1 $-Lipschitz bump function} in $ \widehat{\Sigma} $ near $ K $ with $ C_1 \geq 1 $ (see \autoref{def:C1Lbump} and \autoref{exa:case2}) and that $ H $ satisfies the \emph{strong $ s $-contraction} (see assumption ($ \star\star $) in \autoref{sub:limited}).
		\end{enumerate}

		\item The $ C^1 $ smoothness condition on the projection mappings $ m \mapsto \Pi^{\kappa}_{m} $ ($ \kappa = s, c, u $) in (A4)(iii) is sometimes redundant. In many cases, one can construct a $ C^{1} \cap C^{0,1} $ approximation of these maps; see, e.g., \autoref{cor:appLip}.
	\end{enumerate}
\end{rmk}

For higher order smoothness of $ W^{cs}_{loc}(K) $, see \autoref{lem:C1a}, \autoref{lem:udiff} and \autoref{lem:high}.

\section{Main results II: invariant case} \label{sub:invariant}

The assumption (A3) (b) (i) in \autoref{thm:I} implies that $ K $ is an ``\emph{approximately invariant set}'' of $ H $. We now examine the special case of exact invariance, i.e, $ \eta = 0 $.

\begin{thmI}\label{thm:invariant}
	Assume all conditions in \autoref{thm:I} hold, but with the following modifications: $ K = \Sigma $, $ \eta = 0 $, and the spectral condition in (A3) (a) (ii) is replaced by the stronger gap condition in (A4)(ii) (where $ \vartheta(\cdot) $ is defined as in \eqref{equ:v0}).
	
	Then there exist constants $ \varepsilon = O(\epsilon_*) $ and $ \varepsilon_0, \sigma, \varrho = o(\epsilon_*) $ with $ \varepsilon_0 < \sigma $, and a set $ W^{cs}_{loc}(\Sigma) \subset X^s_{\widehat{\Sigma}}(\sigma) \oplus X^u_{\widehat{\Sigma}} (\varrho) $, called a \emph{local center-stable manifold} of $ \Sigma $, which satisfies the graph representation in \autoref{thm:I} \eqref{it:h0} and contains $ \Sigma $ (i.e., $ \Sigma \subset W^{cs}_{loc}(\Sigma) $). Moreover, the following statements hold:
		\begin{enumerate}[(1)]
			\item (Local invariance) The manifold $ W^{cs}_{loc}(\Sigma) $ is locally invariant under $ H $, i.e., for $ \Omega = W^{cs}_{loc}(\Sigma) \cap \{X^s_{\widehat{\Sigma}}(\varepsilon_0) \oplus X^u_{\widehat{\Sigma}} (\varrho)\} $, we have $ \Omega \subset H^{-1}(W^{cs}_{loc}(\Sigma)) $, and $ H: \Omega \to W^{cs}_{loc}(\Sigma) $ induces a map, analogous to the construction in \autoref{thm:I} \eqref{map}.

			\item \label{it:invPartial} (Partial characterization) Let $ \{z_{k} = (\widehat{m}_{k}, x^{s}_{k}, x^{u}_{k})\}_{k \geq 0} \subset X^s_{\widehat{\Sigma}}(\varepsilon_0) \oplus X^u_{\widehat{\Sigma}} (\varrho) $ be a forward orbit satisfying:
			\begin{enumerate}[$ \bullet $]
				\item $ z_{k} \in H(z_{k-1}) $ with $ \widehat{m}_{k} \in \widehat{U}_{\widehat{u}(\widehat{m}_{k-1})} (\varepsilon) $ for all $ k \geq 0 $; and
				\item either $ |x^{u}_{k}| \leq \widetilde{\beta}_{0}(\widehat{m}_{k-1}) |x^{s}_{k}| $ for all $ k \geq 0 $, or 
				\[
				\sup_{k}\{\varepsilon_{s}(\widehat{m}_0) \varepsilon_{s}(\widehat{m}_1) \cdots \varepsilon_{s}(\widehat{m}_{k-1})\}^{-1} (|x^{s}_{k}| + |x^{u}_{k}|) < \infty.
				\]
			\end{enumerate}
			Then $ \{z_{k}\}_{k \geq 0} \subset W^{cs}_{loc}(\Sigma) $ and $ |x^{s}_{k+1}| \leq (\lambda_{cs}(\phi(\widehat{m}_{k})) + \hat{\chi}) |x^{s}_{k}| $. Here, the functions $ \widetilde{\beta}_{0}(\cdot), \varepsilon_{s}(\cdot): \widehat{\Sigma} \to \mathbb{R}_+ $ satisfy, for all $ \widehat{m} \in \widehat{\Sigma} $,
			\[
			\beta'(u(\phi(\widehat{m}))) + \hat{\chi} < \widetilde{\beta}_{0}(\widehat{m}) < \beta(\phi(\widehat{m})) - \hat{\chi}, \quad
			0 < \varepsilon_{s}(\widehat{m}) < \lambda^{-1}_{u} (\phi(\widehat{m})) \vartheta(\phi(\widehat{m})) - \hat{\chi},
			\]
			where $ \hat{\chi} > 0 $ is a small constant depending on $ \xi_1 $ and $ \epsilon_{*} $.

			\item (Differentiability) If (A4)(i) holds, then $ W^{cs}_{loc}(\Sigma) $ is differentiable at every $ \widehat{m} \in \widehat{\Sigma} $ in the sense of Whitney  (see \autoref{def:tangent}) with the tangent map $ \widehat{\Sigma} \to \mathbb{G}(X): \widehat{m} \mapsto T_{\widehat{m}} W^{cs}_{loc}(\Sigma) $ continuous. 
			
			Furthermore, if $ \widehat{X}^{cs} $ is invariant under $ DH $ (i.e., $ D_{x^{cs}}\widehat{G}^{cs}_{m}( 0, 0, 0 ) = 0 $ for all $ m \in \Sigma $), then $ T_{\widehat{m}} W^{cs}_{loc}(\Sigma) = \widehat{X}^{cs}_{\phi(\widehat{m})} $ (see \autoref{def:tangent}) for all $ \widehat{m} \in \widehat{\Sigma} $.

			\item (Smoothness) Under the same conditions as in \autoref{thm:smooth}, the local center-stable manifold $ W^{cs}_{loc}(\Sigma) $ can be chosen to be a $ C^1 $ immersed submanifold of $ X $.
		\end{enumerate}
\end{thmI}

For a discussion of (A4)(iv) in the case $ \Sigma = K $, see also \autoref{exa:case1}.

When $ \Sigma = \{ m_0 \} $, \autoref{thm:invariant} essentially generalizes the classical theory of \emph{local} invariant manifolds near an equilibrium. Depending on $ \lambda_{cs} $ and $ \lambda_{u} $ (whether they are greater than or less than $ 1 $), this construction yields center-stable or pseudo-stable (weak-stable) manifolds. We do not know whether a similar local result holds when $ K $ is a proper subset of $ \Sigma $.

\section{Main results III: trichotomy case} \label{sub:tri}

\begin{enumerate}[(B1)]
	\item (Immersed submanifold) Assume $ \Sigma $ with $ K $ satisfies (H1)--(H4).

	\item (Base map) Assume $ u $ and $ \widehat{u} $ (in (II)) are invertible. In addition, there is a small $ \xi_1 > 0 $ such that $ u $ and $ u^{-1}: K \to X $ are $ \xi_1 $-almost uniformly continuous at $ K $ (in the immersed topology); see \autoref{def:almost}.

	\item (Approximately partially normal hyperbolicity) Assume the correspondence $ H: X \to X $ (in (IV)) satisfies the following conditions.

	(a) ((A)(B) condition) Let $ \kappa_1 = cs $, $ \kappa_2 = u $, $ \kappa = cs $, or $ \kappa_1 = s $, $ \kappa_2 = cu $, $ \kappa = cu $. Suppose $ \widehat{H} \approx (\widehat{F}^{\kappa}, \widehat{G}^{\kappa}) $ satisfies the (A)($ {\alpha}_{\kappa_2} $; ${\alpha}_{\kappa_2}' $, $ {\lambda}_{\kappa_2} $) (B)($ {\beta}_{\kappa_1} $; $ {\beta}_{\kappa_1}' $, $ {\lambda}_{\kappa_1} $) condition in $ \kappa_1 $-direction at $ K $ (see \autoref{defi:ABk}). Moreover, there exists a constant $ \varsigma_0 \geq 2 $ such that

	(i) (Angle condition) 
	\begin{gather*}
	\sup_m \alpha'_{\kappa_2}(m) \beta'_{\kappa_1}(u(m)) < 1/(2\varsigma_0), \quad \inf_m\{ \alpha_{cu}(u(m)) - \varsigma_0\alpha'_{cu}(m) \} > 0 \\
	\inf_m\{ \beta_{cs}(m) - \varsigma_0\beta'_{cs}(u(m)) \} > 0, \quad \sup_m \alpha_{cu}(m) \beta_{cs}(m) < 1;
	\end{gather*}

	(ii) (Spectral condition) $ \sup_m \lambda_{s}(m) < 1 $ and $ \sup_m \lambda_{u}(m) < 1 $;

	(iii) the functions in the (A)(B) condition are \emph{bounded} and $ \xi_1 $-almost uniformly continuous at $ K $ (in the immersed topology); see \autoref{def:almost}.

	(a$ ' $) ((A)(B) condition) Assume (a) (i)--(iii) hold with $ \varsigma_0 \geq 1 $; in addition,

	(iv$ ' $) there is a small $ \gamma_0 > 0 $ such that for all $ m \in K $,
	\[
	|\widehat{G}^{cs}_{m}(x^{cs}, 0)| \leq \gamma_0|x^{cs}| + |\widehat{G}^{cs}_{m}(0, 0)|, \quad |\widehat{F}^{cu}_{m}(0, x^{cu})| \leq \gamma_0|x^{cu}| + |\widehat{F}^{cu}_{m}(0, 0)|.
	\]

	(b) There are small $ \eta > 0 $ and $ \xi_2 > 0 $ such that
	\begin{enumerate}[(i)]
		\item (Approximation) For $ \kappa = cs, cu $,
		\begin{gather*}
			\sup_{m \in K} |\widehat{F}^{\kappa}_m(0,0)| \leq \eta, \quad
			\sup_{m \in K} |\widehat{G}^{\kappa}_m(0,0)| \leq \eta,
		\end{gather*}
		\item (Compatibility) $ \sup_{m \in K}|\widehat{\Pi}^\kappa_m - \Pi^\kappa_m| \leq \xi_2 $ for $ \kappa = s, c, u $.
	\end{enumerate}

	\item (Smoothness conditions) 
	\begin{enumerate}[(i)]
		\item For every $ m \in K $, assume that $ \widehat{F}^{\kappa}_m(\cdot) $ and $ \widehat{G}^{\kappa}_m(\cdot) $ are $ C^1 $, where $ \kappa = cs, cu $.

		\item (Spectral gap condition) 
		\[
		\sup_{m \in K} \lambda_{cs}(m) \lambda_{u}(m) < 1 \quad \text{and} \quad \sup_{m \in K} \lambda_{cu}(m) \lambda_{s}(m) < 1.
		\]

		\item (Smoothness of the ambient space) Assume $ \Sigma \in C^1 $ and $ m \mapsto \Pi^{\kappa}_{m} $ is $ C^1 $ (in the immersed topology) for $ \kappa = s, c, u $.

		\item (Smoothness of $ cs, cu $-spaces) Assume there exist \emph{$ C^{1} $ and $ C_1 $-Lipschitz bump functions} in $ X^s_{\widehat{\Sigma}} $ and in $ X^u_{\widehat{\Sigma}} $ near $ K $, respectively, where $ C_1 \geq 1 $ (see \autoref{def:C1Lbump} and \autoref{cor:spaces}).

		\item (Smoothness of $ c $-spaces) There exists a $ C^{1} $ and $ C_1 $-Lipschitz bump function in $ \widehat{\Sigma} $ near $ K $ (see \autoref{def:C1Lbump} and \autoref{exa:case2}).
	\end{enumerate}
\end{enumerate}

\begin{thmI}[Trichotomy case]\label{thm:tri0}
	There are a constant $ 0 < c_{*} < 1 $ and small positive constants $ \xi_{1,*}, \xi_{2,*}, \epsilon_{*} $, with $ \chi_{0,*}, \gamma_{0,*} = O_{\epsilon_{*}}(1) $ and $ \eta_* = o(\epsilon_{*}) $, such that if either of the following cases holds:
	\begin{enumerate}[$ \bullet $]
		\item Case (1): (B1), (B2), (B3) (a) (b) are satisfied with the constants obeying $ \xi_{i} \leq \xi_{i,*} $ ($ i = 1,2 $), $ \chi(\epsilon) \leq \chi_{0,*} $, $ \epsilon \leq \epsilon_{*} $, and $ \eta \leq \eta_* $;

		\item Case (2): (B1), (B2), (B3) (a$ '$) (b) are satisfied with the constants obeying $ \xi_{i} \leq \xi_{i,*} $ ($ i = 1,2 $), $ \chi(\epsilon) \leq \chi_{0,*} $, $ \epsilon \leq \epsilon_{*} $, $ \eta \leq \eta_* $, and $ \gamma_0 \leq \gamma_{0,*} $,
	\end{enumerate}
	then there exist $ \varepsilon = O(\epsilon_*) $ and $ \sigma, \varrho = o(\epsilon_*) $ with $ \varepsilon_0 = c_{*}\sigma $ such that the following statements hold.

	\begin{enumerate}[(1)]
		\item \label{it:tri1} (Existence of invariant manifolds) There are $ W^{cs}_{loc}(K) \subset X^s_{\widehat{\Sigma}}(\sigma) \oplus X^u_{\widehat{\Sigma}} (\varrho) $, $ W^{cu}_{loc}(K) \subset X^s_{\widehat{\Sigma}}(\varrho) \oplus X^u_{\widehat{\Sigma}} (\sigma) $, and $ \Sigma^c = W^{cs}_{loc}(K) \cap W^{cu}_{loc}(K) $, called a \emph{local center-stable manifold}, a \emph{local center-unstable manifold}, and a \emph{local center manifold} of $ K $, respectively. These manifolds satisfy the following properties:
		\begin{enumerate}[(i)]
			\item \label{it:tri011} (Partial characterization) Any $ (\varepsilon_{0}, \varepsilon, \varepsilon_{0}, \varrho) $-type forward (resp. backward) orbit $ \{z_{k}\}_{k \geq 0} $ (resp. $ \{z_{k}\}_{k \leq 0} $) of $ H $ near $ K $ (see \autoref{defi:orbit}) lies entirely in $ W^{cs}_{loc}(K) $ (resp. $ W^{cu}_{loc}(K) $). Moreover, if $ \{z_{k}\}_{k \in \mathbb{Z}} $ is a $ (\varepsilon_{0}, \varepsilon, \varepsilon_{0}, \varrho) $-type orbit of $ H $ near $ K $, then $ \{z_{k}\}_{k \in \mathbb{Z}} \subset \Sigma^c $.

			\item (Local invariance) The manifolds $ W^{cs}_{loc}(K) $, $ W^{cu}_{loc}(K) $, and $ \Sigma^c $ are locally positively invariant, locally negatively invariant, and locally invariant under $ H $, respectively. That is, for
			\[
			\Omega_{c\kappa} = W^{c\kappa}_{loc}(K) \cap \{X^{\kappa}_{\widehat{K}_{\varepsilon_0}}(\varepsilon_0) \oplus X^{su - \kappa}_{\widehat{K}_{\varepsilon_0}} (\varrho)\}, \quad \kappa = s, u,
			\]
			we have $ \Omega_{cs} \subset H^{-1}(W^{cs}_{loc}(K)) $ and $ \Omega_{cu} \subset H(W^{cu}_{loc}(K)) $. In particular, $ \Omega_{c} = \Omega_{cs} \cap \Omega_{cu} $ is an open subset of $ \Sigma^c $ and satisfies $ \Omega_{c} \subset H^{\pm 1} (\Sigma^c) $.

			\item (Graph representation) The manifolds $ W^{cs}_{loc}(K) $, $ W^{cu}_{loc}(K) $, and $ \Sigma^c $ admit representations as graphs of Lipschitz maps near $ K $. That is, there are maps $ h^{\kappa}_0 $, $ \kappa = cs, cu, c $, such that for $ \widehat{m} \in \widehat{\Sigma} $,
			\begin{gather*}
			h^{cs}_0(\widehat{m}, \cdot): X^s_{\phi(\widehat{m})} (\sigma) \to X^u_{\phi(\widehat{m})}(\varrho), \\
			h^{cu}_0(\widehat{m}, \cdot): X^u_{\phi(\widehat{m})} (\sigma) \to X^s_{\phi(\widehat{m})}(\varrho), \\
			h^{c}_0 (\widehat{m}) \in X^s_{\phi(\widehat{m})}(\sigma) \oplus X^u_{\phi(\widehat{m})}(\sigma),
			\end{gather*}
			with $ W^{cs}_{loc}(K) = \graph h^{cs}_0 $, $ W^{cu}_{loc}(K) = \graph h^{cu}_0 $, and $ \Sigma^{c} = \graph h^{c}_0 $. Furthermore, there are functions $ \mu_{\kappa}(\cdot) $, $ \kappa = cs, cu, c $, satisfying 
			\[
			\mu_{cs}(m) = (1+\chi_{*}) \beta'_{cs}(m) + \chi_{*}, \mu_{cu}(m) = (1+\chi_{*}) \alpha'_{cu}(m) + \chi_{*}, \mu_{c} = \max\{ \mu_{cs}, \mu_{cu} \},
			\]
			where $ \chi_{*} = O_{\epsilon_{*}}(1) $. For every $ m \in K $ and $ \kappa = s, u $, the following Lipschitz estimates hold:
			\begin{multline*}
				|\Pi^{su-\kappa}_{m}( h^{c\kappa}_0(\widehat{m}_1, x^\kappa_1) - h^{c\kappa}_0(\widehat{m}_2, x^\kappa_2)  )| \\
				\leq \mu_{c\kappa}(m) \max \{| \Pi^c_{m}(  \phi(\widehat{m}_1) - \phi(\widehat{m}_2)  ) |, | \Pi^\kappa_{m}( x^\kappa_1 - x^\kappa_2 ) |\},
			\end{multline*}
			\begin{multline*}
			\max\{|\Pi^u_{m}( h^{c}_0(\widehat{m}_1) - h^{c}_0(\widehat{m}_2)  )|, |\Pi^s_{m}( h^{c}_0(\widehat{m}_1) - h^{c}_0(\widehat{m}_2) )|\} \\
			 \leq \mu_{c}(m) | \Pi^c_{m}(  \phi(\widehat{m}_1) - \phi(\widehat{m}_2)  ) |,
			\end{multline*}
			where $ \widehat{m}_i \in \widehat{U}_{\widehat{m}}(\varepsilon_0) $, $ x^{\kappa}_i \in X^\kappa_{\phi(\widehat{m}_i)} (\varepsilon_0) $ for $ i = 1,2 $, and $ \widehat{m} \in \phi^{-1}(m) $.

			\item (Strong contraction/expansion case) If, in addition, $ H $ satisfies the \emph{strong $ s $-contraction} and \emph{strong $ u $-expansion} (see assumption ($ \star\star $) in \autoref{sub:limited}; e.g., when $ \sup_{m} \alpha'_{cu} (m) $ and $ \sup_{m} \beta'_{cs}(m) $ are sufficiently small), then $ (\varepsilon_{0}, \varepsilon, \varepsilon_{0}, \varrho) $-type can be $ (\varepsilon_{0}, \varepsilon, \sigma, \varrho) $-type, and $ \Omega_{c\kappa} = W^{c\kappa}_{loc}(K) \cap \{X^{\kappa}_{\widehat{K}_{\varepsilon_0}}(\sigma) \oplus X^{su - \kappa}_{\widehat{K}_{\varepsilon_0}} (\varrho)\} $ for $ \kappa = s, u $.
		\end{enumerate}

		\item \label{it:Tmaps} (Induced dynamics) The correspondence $ H $ induces  maps: a map on $ \Omega_{cs} $, an invertible map on $ \Omega_{c} $, and a map via $ H^{-1} $ on $ \Omega_{cu} $. (See \autoref{thm:I} \eqref{map} for the meaning.)

		\item \label{it:tri0smooth} (Smoothness) Suppose one of the following conditions holds:
		\begin{enumerate}[(i)]
			\item Under case (1) with $ \varsigma_0 \geq C_1 + 1 $ (in (B3) (a)) or case (2) with $ \varsigma_0 \geq 1 $ and $ \gamma_0 $ (in (B3) (a$ '$)) further reduced, (B4) (i)--(iv) are satisfied;

			\item Under case (1) with $ \varsigma_0 \geq C_1 + 1 $ (in (B3) (a)) or case (2) with $ \varsigma_0 \geq 1 $ and $ \gamma_0 $ (in (B3) (a$ '$)) further reduced, (B4) (i)--(iii) and (iv$ '$) hold, and $ H $ satisfies the \emph{strong $ s $-contraction} and \emph{strong $ u $-expansion} (see assumption ($ \star\star $) in \autoref{sub:limited}; e.g., when $ \sup_{m} \alpha'_{cu} (m) $ and $ \sup_{m} \beta'_{cs}(m) $ are sufficiently small).
		\end{enumerate}
		Then  $ W^{cs}_{loc}(K) $, $ W^{cu}_{loc}(K) $ and $ \Sigma^c $ can be chosen to be $ C^1 $ immersed submanifolds of $ X $.

		\item \label{it:tri04} (Tangent space characterization) If (B4) (i) (ii) are assumed, and further $ \eta = 0 $ with
		\begin{equation*}
		D_{x^{cs}}\widehat{G}^{cs}_{m}( 0, 0, 0 ) = 0, \quad D_{x^{cu}}\widehat{F}^{cu}_{m}( 0, 0, 0 ) = 0, \quad \forall m \in K,
		\end{equation*}
		which implies that $ \widehat{X}^{cs} $ and $ \widehat{X}^{cu} $ are invariant under $ DH $, then $ K \subset \Sigma^c $ and the tangent spaces satisfy
		\begin{equation}\label{equ:tangent}
		T_{\widehat{m}} W^{cs}_{loc}(K) = \widehat{X}^{cs}_{\phi(\widehat{m})}, \quad T_{\widehat{m}} W^{cu}_{loc}(K) = \widehat{X}^{cu}_{\phi(\widehat{m})}, \quad T_{\widehat{m}}\Sigma^c = \widehat{X}^{c}_{\phi(\widehat{m})}, \quad \forall \widehat{m} \in \widehat{K},
		\end{equation}
		(see \autoref{def:tangent} for the meaning).
	\end{enumerate}
\end{thmI}

\begin{rmk}\label{rmk:mapmeaning}
	\begin{enumerate}[(a)]
		\item \autoref{thm:tri0} demonstrates that the set $ \Sigma $, which is ``approximately invariant'' and ``approximately partially normally hyperbolic at $ K $ with respect to $ H $'', can be replaced by a genuine locally invariant $ C^{0,1} $ immersed manifold $ \Sigma^c $ that is ``normally hyperbolic'' with respect to $ H $ (in the sense of \cite{Che18b}). Furthermore, under the conditions of \autoref{thm:tri0} \eqref{it:tri04}, the manifold $ \Sigma^c $ is differentiable at $ K $ in the sense of Whitney.

		\item From \autoref{thm:tri0} \eqref{it:tri011}, we observe that if $ \eta = 0 $, then the \emph{strong-stable lamination} (respectively, the \emph{strong-unstable lamination}) of $ K $ for $ H $ is contained in $ W^{cs}_{loc}(K) $ (respectively, $ W^{cu}_{loc}(K) $). For a comprehensive discussion on the existence and regularity of strong (un)stable laminations, we refer to \cite{HPS77} or \cite[Section 7.2]{Che18a}, where general results applicable to this context can be found.

		\item \label{it:maps} In \autoref{thm:tri0} \eqref{it:Tmaps}, we define $ \mathcal{H}^{cs}_{0} = H|_{\Omega_{cs}}: \Omega_{cs} \to W^{cs}_{loc}(K) $ and $ \mathcal{H}^{-cu}_{0} = H^{-1}|_{\Omega_{cu}} : \Omega_{cu} \to W^{cu}_{loc}(K) $. To clarify the invertibility of $ \mathcal{H}^{c}_{0} $, we note that $ \mathcal{H}^{c}_{0} = \mathcal{H}^{cs}_{0}|_{\Omega_{c}} : \Omega_{c} \to \Sigma^c $ and $ \mathcal{H}^{-c}_{0} = \mathcal{H}^{-cu}_{0}|_{\Omega_{c}} : \Omega_{c} \to \Sigma^c $. The composition $ \mathcal{H}^{-c}_{0} \circ \mathcal{H}^{c}_{0} (x) = \mathcal{H}^{c}_{0} \circ \mathcal{H}^{-c}_{0} (x) $ holds whenever $ \mathcal{H}^{c}_{0} (x) \in \Omega_{c} $ and $ \mathcal{H}^{-c}_{0} (x) \in \Omega_{c} $, thereby establishing the meaning of $ (\mathcal{H}^{c}_{0})^{-1} = \mathcal{H}^{-c}_{0} $.

		\item Results analogous to those in \autoref{thm:invariant} also hold for the trichotomy case. For details, see \autoref{app:im}.

	\end{enumerate}
\end{rmk}

\section{Corollaries I: approximately invariant sets} \label{sub:sets}

A special case where (A1) in \autoref{subsec:main} holds is the following.

\begin{enumerate}[({Aa}1)]
\item Let $ K \subset X $ be \emph{compact} and $ \{ \Pi^{\kappa}_{m}: m \in K, \kappa = s, c, u \} $ a family of projections such that $ \Pi^{s}_{m} + \Pi^{c}_{m} + \Pi^{u}_{m} = \id $, $ m \in K $. Set $ \Pi^{h}_{m} = \id - \Pi^{c}_m $ and $ X^{\kappa}_{m} \triangleq R(\Pi^{\kappa}_{m}) $, $ \kappa = c, s, u, h $, also $ X^{\kappa_1\kappa_2}_{m} = X^{\kappa_1}_{m} \oplus X^{\kappa_1}_{m} $, where $ \kappa_1 \neq \kappa_2 \in \{ s,c, u \} $. Suppose

(i) $ m \mapsto \Pi^{\kappa}_{m} $ is continuous, $ \kappa = c, s, u $, and $ T_{m} K \subset X^{c}_{m} $ (see \autoref{def:tangent}), $ m \in K $; and

(ii) for each $ m \in K $, the pair $ (X^c_{m}, X^{h}_{m}) $ has property (*) (see \autoref{def:p*} and \autoref{exa:p*}; for examples (a) $ X $ is a Hilbert space with $ X^c_{m} $ separable for each $ m \in K $, or (b) $ X^c_{m} $ is finite-dimensional for each $ m \in K $).
\end{enumerate}

\begin{thm}\label{thm:geo}
If (Aa1) holds, then for any small $ \varepsilon > 0 $, there are a $ C^1 $ submanifold $ \Sigma $ of $ X $ and projections $ \{ \widetilde{\Pi}^{\kappa}_{m}: m \in \Sigma, \kappa = s, c, u \} $ such that
\begin{enumerate}[(i)]
\item $ K \subset \Sigma $ and $ T_{m} \Sigma = X^{c}_{m} $, $ m \in K $;

\item $ \sup_{m \in K}|\widetilde{\Pi}^{\kappa}_{m} - \Pi^{\kappa}_{m}| \leq \varepsilon $, and $ \Sigma \to L(X): m \mapsto \widetilde{\Pi}^{\kappa}_{m} $ is $ C^1 $, $ \kappa = s, c, u $;

\item $ (\Sigma, K, \{ \widetilde{\Pi}^c_{m} \}, \{ \Sigma \cap \mathbb{B}_{\epsilon}(m) \}) $ satisfies (H1)--(H4) in \autoref{sub:setup}.
\end{enumerate}
\end{thm}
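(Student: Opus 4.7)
\smallskip
\noindent\textbf{Proof proposal.}
The plan is to reduce \autoref{thm:geo} to two ingredients: a $C^1$ geometric Whitney extension theorem (in the spirit of \cite{BC16, CLY00a} in finite dimension, adapted to Banach spaces as announced in \autoref{sub:geometric}) to build $\Sigma$, and a $C^1$-approximation of the continuous projection field on $K$ to build the $\widetilde{\Pi}^{\kappa}$. Compactness of $K$ together with property (*) of the pairs $(X^c_m, X^h_m)$ is precisely what makes both tools available; in particular, \autoref{exa:p*} provides the admissible situations (Hilbert $X$ with separable $X^c_m$, or finite-dimensional $X^c_m$).

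\smallskip
\noindent\textbf{Step 1 (the submanifold $\Sigma$).}
First I would invoke the geometric $C^1$ Whitney extension theorem. The data on $K$ are the inclusion $K \hookrightarrow X$ together with the continuous field of ``tangent planes'' $m \mapsto X^c_m$, which by \textbf{(Aa1)(i)} is compatible with $K$ in the Whitney sense ($TK \subset X^c$, cf.\ \autoref{def:tangent}). Property (*) supplies the $C^1$-smooth bump functions needed to glue local first-order Taylor polynomials into a global $C^1$ object. The output is a $C^1$ submanifold $\Sigma \subset X$ with $K \subset \Sigma$ and $T_m \Sigma = X^c_m$ for every $m \in K$; locally around each $m_0 \in K$, $\Sigma$ is the graph of a $C^1$ map $V \subset X^c_{m_0} \to X^h_{m_0}$ whose derivative at the origin is zero.

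\smallskip
\noindent\textbf{Step 2 (the projections $\widetilde{\Pi}^{\kappa}$).}
Since $K$ is compact and $m \mapsto \Pi^{\kappa}_m \in L(X)$ is continuous, I would first extend $\Pi^{\kappa}$ continuously to a neighborhood of $K$ in $\Sigma$ (Tietze extension in the tubular coordinate $X^c$). Then I would apply the $C^1$-approximation of continuous maps from a $C^1$ manifold admitting $C^1$-smooth bump functions into a Banach space (as in \autoref{cor:C1app} or \cite[Theorem~6.9]{BLZ08}) to get $C^1$ maps $m \mapsto \widetilde{\Pi}^{\kappa}_m$ on $\Sigma$ with $\sup_{m \in K}|\widetilde{\Pi}^{\kappa}_m - \Pi^{\kappa}_m| \leq \varepsilon/2$. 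Because the set of projections is open in $L(X)$, for $\varepsilon$ small enough the $\widetilde{\Pi}^{\kappa}_m$ remain projections; the normalization $\widetilde{\Pi}^{s} + \widetilde{\Pi}^{c} + \widetilde{\Pi}^{u} = \id$ is enforced by redefining $\widetilde{\Pi}^{c}_m := \id - \widetilde{\Pi}^{s}_m - \widetilde{\Pi}^{u}_m$, at the cost of at most a constant factor in the error.

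\smallskip
\noindent\textbf{Step 3 (verifying \textbf{(H1)}$\sim$\textbf{(H4)}) and main obstacle.}
Taking $U_{m,\gamma}(\epsilon) = \Sigma \cap \mathbb{B}_\epsilon(m)$ and using $T_m\Sigma = X^c_m$ on $K$, the $C^1$-regularity of $\Sigma$ together with compactness of $K$ makes $\chi_m(\epsilon)$ uniformly small for small $\epsilon_1$; the uniform size condition $X^c_m(\delta_0) \subset \Pi^c_m(U_{m,\gamma}(\epsilon_m) - m)$ then follows from a uniform implicit function argument along $K$, giving \textbf{(H1)} and \textbf{(H4)}. Item \textbf{(H2)} reduces to $C^1$-regularity of $\widetilde{\Pi}^{\kappa}$ on the compact $K$. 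Item \textbf{(H3)}, the uniform smallness of the first-order remainder of the chart, follows again from $C^1$ plus compactness. The genuinely delicate step, and the main obstacle, is the geometric Whitney extension in Step~1: in an infinite-dimensional Banach space one cannot freely use partitions of unity, and property (*) is exactly the hypothesis that allows the local graphs produced by Whitney's scheme to be smoothly assembled into a single $C^1$ submanifold; once $\Sigma$ is in hand, Steps~2 and~3 are essentially routine on the compact set $K$.
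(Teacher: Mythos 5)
Your overall route is the same as the paper's: Step 1 is exactly the geometric Whitney extension theorem (\autoref{thm:whitney}), and Steps 2--3 (continuous extension of the projection field, $C^1$ approximation, and compactness of $K$ to get \textbf{(H1)}$\sim$\textbf{(H4)}) mirror the paper's argument in \autoref{sub:geo}. However, Step 2 contains a genuine error: the set of projections $\overline{\Pi}(X)$ is \emph{not} open in $L(X)$ (already for $X=\mathbb{R}$ the idempotents are $\{0,1\}$, and any neighborhood of $1$ contains non-idempotents), so a $C^1$ map $m\mapsto \widetilde{\Pi}^{\kappa}_m$ that is merely $\varepsilon$-close in $L(X)$ to $\Pi^{\kappa}_m$ need not consist of projections, no matter how small $\varepsilon$ is. In the paper $\overline{\Pi}(X)$ is a closed $C^\infty$ submanifold of $L(X)$ (\autoref{lem:proj}), and the approximation is performed as maps \emph{into this manifold}: one uses the smooth Lipschitz retraction of an $L(X)$-neighborhood of a bounded set of projections onto $\overline{\Pi}(X)$ (\autoref{lem:Piret}) together with the manifold-valued approximation result \autoref{cor:C1app} (this is the argument of \autoref{thm:whitney}~(2), which the proof of \autoref{thm:geo} invokes). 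So your step can be repaired by composing your $C^1$ approximants with that retraction, or by approximating directly in $\overline{\Pi}(X)$; as written, the justification fails.

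A related, smaller issue is your normalization $\widetilde{\Pi}^{c}_m:=\id-\widetilde{\Pi}^{s}_m-\widetilde{\Pi}^{u}_m$: even if $\widetilde{\Pi}^{s}_m,\widetilde{\Pi}^{u}_m$ are genuine projections, this operator need not be idempotent unless the two are compatible (e.g.\ $\widetilde{\Pi}^{s}_m\widetilde{\Pi}^{u}_m=\widetilde{\Pi}^{u}_m\widetilde{\Pi}^{s}_m=0$), so the triple has to be produced compatibly (for instance from the approximated ranges via \autoref{lem:gram2}, or by approximating into the appropriate manifold of splittings) rather than corrected a posteriori. Finally, a cosmetic point: in Step 1 property (*) is not primarily a source of bump functions; its role is to make the vector-valued $C^1$ Whitney extension theorem (\autoref{thm:JS}) available, which is what \autoref{thm:whitney} rests on (bump functions on the fibers $X^c_m$ only enter in the finite gluing step). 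None of this changes the architecture of your proof, which is the paper's, but the openness claim is the step that must be replaced by \autoref{lem:Piret}.
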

For a proof of \autoref{thm:geo}, see \autoref{sub:geo}.
In particular, if, in addition, (A2), (A3) (a$ ' $) (b) hold, then \autoref{thm:I} holds (for case (2)), i.e., there is a local center-stable manifold of $ K $, which is a $ C^{0,1} $ submanifold of $ X $ and is locally modeled on $ X^{cs}_{m} $, $ m \in K $. For the regularity result, if we further assume (A4) (i) (ii) hold and $ X^s_{m} = 0 $ for $ m \in K $, then the local center-stable manifold of $ K $ can be taken as $ C^1 $.

Now we give a detailed statement of the trichotomy case under (Aa1).
For $ k \in \mathbb{N} \cup \{0\} $, $ r \in [0,1] $ and for $ h_{m} : Z'_m \to Z''_m $ where $ Z'_m, Z''_m $ are open sets of Banach spaces, we say $ h_{m}(\cdot) \in C^{k,r} $ \emph{uniformly for} $ m \in K $ if $ \sup_{m \in K} \hol_{k,r} h_{m} (\cdot) < \infty $, where if $ f \in C^{k, r} $ ($ C^{k,0} = C^{k} $), $ \hol_{k,r} f $ is defined by, \label{page:uniformdiff}
\[
\hol_{k,r} f = \begin{cases}
\max\{\lip D^{i} f(\cdot): i = 1,2, \ldots, k-1\} = \hol_{k-1,1} f, & r = 0,\\
\max\{ \hol_{r} D^{k} f(\cdot), \lip D^{i} f(\cdot): i = 1,2, \ldots, k-1 \}, & r \in (0, 1].
\end{cases}
\]

\begin{corI}\label{cor:compact}
	Case (1). Assume (Aa1), (B2), (B3) (a) (b) hold, with $ \xi_1 $, $ \xi_2 $, and $ \eta $ small.

	Case (2). Assume (Aa1), (B2), (B3) (a$ ' $) (b) hold, with $ \xi_1 $, $ \xi_2 $, $ \eta $, and $ \gamma_0 $ small.

	Then the following statements hold.
	\begin{enumerate}[(1)]
		\item In a neighborhood of $ K $, there are three $ C^{0,1} $ submanifolds: $ W^{cs}_{loc}(K) $, $ W^{cu}_{loc}(K) $, and $ \Sigma^c $, called a \emph{local center-stable manifold}, a \emph{local center-unstable manifold}, and a \emph{local center manifold} of $ K $, respectively. These manifolds are locally modeled on $ X^{cs}_{m} $, $ X^{cu}_m $, and $ X^{c}_{m} $ (for $ m \in K $), respectively. They exhibit the following invariance properties: $ W^{cs}_{loc}(K) $ is locally positively invariant, $ W^{cu}_{loc}(K) $ is locally negatively invariant, and $ \Sigma^c $ is locally invariant under $ H $. That is, $ H^{-1}(W^{cs}_{loc}(K)) $ and $ H(W^{cu}_{loc}(K)) $ contain open subsets $ \Omega_{cs} \subset W^{cs}_{loc}(K) $ and $ \Omega_{cu} \subset W^{cu}_{loc}(K) $, respectively. Moreover, $ \Omega_{c} = \Omega_{cs} \cap \Omega_{cu} \subset H^{\pm 1} (\Sigma^c) $ is an open subset of $ \Sigma^c $. If $ \eta = 0 $, then $ K \subset \Omega_c $.

		\item Furthermore, the conclusion of \autoref{thm:tri0} \eqref{it:tri04} holds.

		\item  
		Suppose $ H $ satisfies the \emph{strong $ s $-contraction} and \emph{strong $ u $-expansion} (see assumption ($ \star\star $) in \autoref{sub:limited}; e.g., $ \sup_{m} \alpha'_{cu} (m) $ and $ \sup_{m} \beta'_{cs}(m) $ are sufficiently small).

		\begin{enumerate}[(i)]
			\item Assume (B4) (i) (ii) hold. Then $ W^{cs}_{loc}(K) $, $ W^{cu}_{loc}(K) $, and $ \Sigma^c $ can be chosen to be $ C^1 $ manifolds (possibly in a smaller neighborhood of $ K $).

			\item \label{it:ht} Let $ k \in \mathbb{N} $ and $ r \in [0,1] $. Assume the following conditions:
			\begin{enumerate}[(a)]
				\item $ \widehat{F}^{\kappa}_m(\cdot) $ and $ \widehat{G}^{\kappa}_m(\cdot) $ are uniformly $ C^{k,r} $ for $ m \in K $, where $ \kappa = cs, cu $;
				\item $ \sup_{m \in K} \lambda^{k + r}_{cs}(m) \lambda_{u}(m) < 1 $ and $ \sup_{m \in K} \lambda^{k + r}_{cu}(m) \lambda_{s}(m) < 1 $; and
				\item For each $ m \in K $, $ X^c_{m} $ is finite-dimensional.
			\end{enumerate}
			
		\end{enumerate}
		Then $ W^{cs}_{loc}(K) $, $ W^{cu}_{loc}(K) $, and $ \Sigma^c $ can be chosen to be (uniformly) $ C^{k, r} $ manifolds (provided that $ \gamma_0 $ can be further reduced in case (2), and $ \varsigma_0 $ (in (B3) (a)) can be larger (depending only on $ \{X^c_{m}\} $) in case (1)).
	\end{enumerate}
\end{corI}

\autoref{cor:compact} also provides a revisited version of the theorem for normally hyperbolic invariant compact manifolds \cite{HPS77, Fen72}, which may have boundaries in the infinite-dimensional setting; see also \cite{Jon95}. In the dichotomy case, it further provides relevant conclusions about normally hyperbolic positively or negatively invariant compact manifolds that lack the strongly inflowing or overflowing property with respect to the dynamics. 

\section{Corollaries II: $ C^1 $ maps case} \label{sub:maps}

A very special case where (A3) (or (B3)) holds is when $ H $ (in (IV) of \autoref{sub:setup}) is a $ C^1 $ map, as described below. This is a highly classical way to describe the hyperbolicity of $ K $ with respect to $ H $, which loses some generality (see also \cite{Yan09}).

(IV$ ' $) Suppose $ H $ is a map from $ X $ to $ X $ that is $ C^1 $ in $ \mathbb{B}_{\epsilon'} (K) $ for some small $ \epsilon' > 0 $. Assume there is a small $ \eta_* > 0 $ such that
\[
\sup_{m \in K}|H(m) - u(m)| \leq \eta_*,
\]
where $ u: K \to K $. Set
\[
\mathfrak{A} (\epsilon') \triangleq \sup \{ |DH(m_1) - DH(m_2)|: m_1, m_2 \in \mathbb{B}_{\epsilon'} (K), |m_1 - m_2| \leq \epsilon' \}.
\]
Under the decomposition (III) in \autoref{sub:setup}, consider the following two cases.
\begin{enumerate}[(a)]
	\item \emph{(Dichotomy)} For each $ m \in K $, assume $ \widehat{\Pi}^u_{u(m)}DH(m): \widehat{X}^u_{m} \to \widehat{X}^u_{u(m)} $ is invertible, and write
	\begin{enumerate}[(i$ _{\text{d}} $)]
		\item $ \|\widehat{\Pi}^{cs}_{u(m)}DH(m)|_{\widehat{X}^{cs}_m}\| = \lambda'_{cs}(m) $, $ \| (\widehat{\Pi}^u_{u(m)}DH(m)|_{\widehat{X}^u_m})^{-1} \| = \lambda'_u(m) $,
		\item $ \| \widehat{\Pi}^{\kappa_1}_{u(m)} DH(m)\widehat{\Pi}^{\kappa_2}_m \| \leq \xi_0 $, $ \kappa_1 \neq \kappa_2 $, $ \kappa_1, \kappa_2 \in \{cs,u\} $.
		\item Suppose $ \sup_{m}\lambda'_{cs}(m)\lambda'_{u}(m) < 1 $, $ \sup_{m}\lambda'_{u}(m) < 1 $, $ \sup_{m}\lambda'_{cs}(m) < \infty $.
	\end{enumerate}

	\item[(a$ ' $)] \emph{(Trichotomy)} For each $ m \in K $, assume $ \widehat{\Pi}^{\kappa}_{u(m)}DH(m): \widehat{X}^{\kappa}_{m} \to \widehat{X}^{\kappa}_{u(m)} $ is invertible, $ \kappa \in \{ cu, u \} $, and write
	\begin{enumerate}[(i$ _{\text{t}} $)]
		\item $ \|\widehat{\Pi}^{\kappa_1}_{u(m)}DH(m)|_{\widehat{X}^{\kappa_1}_m}\| = \lambda'_{\kappa_1}(m) $, $ \kappa_1 \in \{ cs, s \} $, $ \| (\widehat{\Pi}^{\kappa_2}_{u(m)}DH(m)|_{\widehat{X}^{\kappa_2}_m})^{-1} \| = \lambda'_{\kappa_2}(m) $, $ \kappa_2 \in \{ cu, u \} $.
		\item $ \| \widehat{\Pi}^{\kappa_1}_{u(m)} DH(m)\widehat{\Pi}^{\kappa_2}_m \| \leq \xi_0 $, $ \kappa_1 \neq \kappa_2 $, $ \kappa_1, \kappa_2 \in \{s, c, u\} $.
		\item Suppose $ \sup_{m}\lambda'_{\kappa}(m)\lambda'_{\kappa_1}(m) < 1 $, $ \sup_{m}\lambda'_{\kappa_1}(m) < 1 $, $ \sup_{m}\lambda'_{\kappa}(m) < \infty $, $ \kappa \in \{cs, cu\} $, $ \kappa_1 = csu - \kappa $.
	\end{enumerate}
\end{enumerate}

\begin{lem}\label{lem:mapAB}
	Under (IV$ ' $) with $ \sup_{m \in K} |\widehat{\Pi}^{\kappa}_{m} | < \infty $, $ \kappa = s, c, u $, we have the following.
	\begin{enumerate}[(a)]
		\item Let (IV$ ' $) (a) hold. If $ \eta_*, \mathfrak{A}(\epsilon'), \xi_0 $ are small, then, except for (A3) (a) (iii), we have that (A3) (a$ ' $) (b) (i) are satisfied with $ \eta = K_3\eta_* $ for some constant $ K_3 > 0 $, $ \varsigma_0 = 1 $ and $ \gamma_0 \to 0 $ as $ \mathfrak{A}(\epsilon'), \xi_0 \to 0 $. In addition,
		\begin{enumerate}[(i)]
			\item if $ \sup_{m}\|\widehat{\Pi}^{s}_{u(m)}DH(m)|_{\widehat{X}^{s}_m}\| < 1 $, and $ \| \widehat{\Pi}^{\kappa_1}_{u(m)} DH(m)\widehat{\Pi}^{\kappa_2}_m \| \leq \xi_0 $, for $ \kappa_1 \neq \kappa_2 $, $ \kappa_1, \kappa_2 \in \{s, c\} $, then $ H $ has the strong $ s $-contraction (see assumption ($ \star\star $) in \autoref{sub:limited});
			\item under (I) (in \autoref{sub:setup}) and (A2), if $ m \mapsto \widehat{\Pi}^{cs}_{m} $ is $ \xi_* $-almost uniformly continuous at $ K $ (in the immersed topology) (see \autoref{def:almost}), then (A3) (a) (iii) holds with $ \xi_1 $ replaced by $ C_* (\xi_1\xi_* + \xi_* + \mathfrak{A} (\epsilon')) $ for some $ C_* > 1 $.
		\end{enumerate}

		\item Let (IV$ ' $) (a$ ' $) hold. If $ \eta_*, \mathfrak{A}(\epsilon'), \xi_0 $ are small, then (B3) (a$ ' $) (b) (i) hold with $ \eta = K_3\eta_* $ for some constant $ K_3 > 0 $ and $ \gamma_0 \to 0 $ as $ \mathfrak{A}(\epsilon'), \xi_0 \to 0 $; in addition, $ H $ has the strong $ s $-contraction and strong $ u $-expansion (see assumption ($ \star\star $) in \autoref{sub:limited}). Moreover, under (I) (in \autoref{sub:setup}) and (B2), if $ m \mapsto \widehat{\Pi}^{\kappa_1}_{m} $ ($ \kappa_1 = s, c, u $) are $ \xi_* $-almost uniformly continuous at $ K $ (in the immersed topology) (see \autoref{def:almost}), then (B3) (a) (iii) holds with $ \xi_1 $ replaced by $ C_* (\xi_1\xi_* + \xi_* + \mathfrak{A} (\epsilon')) $ for some $ C_* > 1 $.
	\end{enumerate}
\end{lem}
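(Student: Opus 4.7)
The plan is to extract the generating map $(\widehat{F}^{cs}_m, \widehat{G}^{cs}_m)$ (and in part (b) also $(\widehat{F}^{cu}_m, \widehat{G}^{cu}_m)$) directly from the $C^1$ map $H$ by a uniform implicit function theorem. In the dichotomy case, I would solve the equation $y^u = \widehat{\Pi}^u_{u(m)} \widehat{H}_m(x^{cs}, x^u)$ for $x^u \in \widehat{X}^u_m$ on a uniform-size ball. The hypothesis (i$_{\text{d}}$) provides the invertibility of $\widehat{\Pi}^u_{u(m)} DH(m)|_{\widehat{X}^u_m}$ with inverse norm $\lambda'_u(m)$ uniformly bounded in $m$, and the smallness of $\mathfrak{A}(\epsilon')$ ensures that the nonlinear perturbation $DH - DH(m)$ is small relative to this inverse, so a standard Banach fixed-point argument produces $\widehat{G}^{cs}_m$ on a $m$-independent ball; then $\widehat{F}^{cs}_m$ is obtained by substitution into $\widehat{\Pi}^{cs}_{u(m)} \widehat{H}_m$. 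Part (b) is analogous but one does this twice, also solving for $x^s \in \widehat{X}^s_m$ to obtain the $cu$-generating map.

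\textbf{Paragraph 2 (Extracting the constants).} Differentiating the defining equations at $(x^{cs}, y^u) = (0,0)$ gives
\[
D_2 \widehat{G}^{cs}_m(0,0) = \bigl(\widehat{\Pi}^u_{u(m)} A_m|_{\widehat{X}^u_m}\bigr)^{-1}, \quad D_1 \widehat{G}^{cs}_m(0,0) = - D_2 \widehat{G}^{cs}_m(0,0) \cdot \widehat{\Pi}^u_{u(m)} A_m|_{\widehat{X}^{cs}_m},
\]
with $A_m = DH(m)$. By (i$_{\text{d}}$) and (ii$_{\text{d}}$) these have norms $\lambda'_u(m)$ and $O(\xi_0 \lambda'_u(m))$ respectively, and the analogous computation for $\widehat{F}^{cs}_m$ gives Lipschitz bounds $\lambda'_{cs}(m) + O(\xi_0)$ in $x^{cs}$ and $O(\xi_0 \lambda'_u(m))$ in $y^u$. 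Absorbing the nonlinear remainder (controlled by $\mathfrak{A}(\epsilon')$ on the ball $\mathbb{B}_{\epsilon'}(K)$), one obtains (A$'$)(B$'$) condition with $\lambda_u = \lambda'_u + o(1)$, $\lambda_{cs} = \lambda'_{cs} + o(1)$, $\alpha' = O(\xi_0 \lambda'_u + \mathfrak{A}(\epsilon'))$ and $\gamma_0 = O(\xi_0 \lambda'_u + \mathfrak{A}(\epsilon'))$, and then \autoref{lem:a4} \eqref{it:ab1} upgrades to the (A)(B) form with $\varsigma_0 = 1$. The approximation bound $\eta = K_3 \eta_*$ in \textbf{(b)(i)} follows from $|\widehat{H}_m(0)| = |H(m) - u(m)| \leq \eta_*$ via the same fixed-point iteration seeded at $0$, with $K_3$ depending only on $\sup_m \lambda'_u(m)$ and $\sup_m |\widehat{\Pi}^{\kappa}_m|$. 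The strong $s$-contraction clause reduces to the strict bound $\|\widehat{\Pi}^s_{u(m)} A_m|_{\widehat{X}^s_m}\| < 1$ together with cross-term smallness, after further splitting $\widehat{X}^{cs}_m = \widehat{X}^c_m \oplus \widehat{X}^s_m$ and running the same linearized estimates on the $s$-subblock; and the trichotomy case (b) is entirely parallel, with the $u$-expansion clause handled symmetrically on the $cu$-side.

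\textbf{Paragraph 3 (Main obstacle: almost uniform continuity).} The delicate remaining step is the addendum verifying \textbf{(A3) (a) (iii)} / \textbf{(B3) (a) (iii)} with amplitude bound $C_*(\xi_1 \xi_* + \xi_* + \mathfrak{A}(\epsilon'))$ under $\xi_*$-almost uniform continuity of $m \mapsto \widehat{\Pi}^{\kappa}_m$ and $\xi_1$-almost uniform continuity of $u$. Since the Lipschitz data of $(\widehat{F}^{cs}_m, \widehat{G}^{cs}_m)$ are polynomial in $\widehat{\Pi}^{\kappa}_m$, $\widehat{\Pi}^{\kappa}_{u(m)}$ and $DH(m)$ (via the implicit-function formulas plus a nonlinear remainder), the amplitude $\mathfrak{B}_{K, \cdot}$ of this data in the immersed topology decomposes by the triangle inequality into three contributions: the amplitude of $m \mapsto \widehat{\Pi}^{\kappa}_m$, bounded by $\xi_*$; the amplitude of $m \mapsto \widehat{\Pi}^{\kappa}_{u(m)}$, bounded by $\xi_* + O(\xi_1 \xi_*)$ since precomposition with $u$ inflates the amplitude by a factor governed by the Lipschitz behavior of $u$ (quantified by $\xi_1$); and the amplitude of $m \mapsto DH(m)$ and $m \mapsto DH(u(m))$, bounded by $\mathfrak{A}(\epsilon')$ on an $\epsilon'$-neighborhood, using $|u(m') - u(m)| \leq |m' - m| + O(\xi_1)$ for nearby $m', m$. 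Summing these and absorbing uniform bounds on $|\widehat{\Pi}^{\kappa}_m|$ and $\lambda'_{\kappa}(m)$ into $C_*$ gives the claimed estimate, which is the main technical difficulty since everything else is a routine implicit-function computation.
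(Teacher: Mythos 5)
Your proposal is correct and follows essentially the same route as the paper's proof (which inverts the $u$-component of $\widehat{H}_m$ by a uniform contraction argument to produce the generating map, reads off (A$'$)(B$'$) constants close to $\lambda'_{cs},\lambda'_u$ with small off-diagonal terms and $\eta=K_3\eta_*$, upgrades via \autoref{lem:a4}, gets strong $s$-contraction from \autoref{rmk:conexp}, and verifies the almost uniform continuity by the same triangle-inequality decomposition of $\widehat{\Pi}^{cs}_{u(m)}DH(m)\widehat{\Pi}^{cs}_{m}$ yielding $C_*(\xi_1\xi_*+\xi_*+\mathfrak{A}(\epsilon'))$). One small correction: in the trichotomy case the $cu$-generating map is obtained by inverting the $cu$-component, i.e.\ solving for $x^{cu}\in\widehat{X}^{cu}_m$ using the assumed invertibility of $\widehat{\Pi}^{cu}_{u(m)}DH(m)|_{\widehat{X}^{cu}_m}$, not by solving for $x^s$.
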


\begin{proof}
	See \autoref{app:mapAB}.
\end{proof}

Using \autoref{lem:mapAB}, we have the following consequences for the trichotomy case; the dichotomy case is left to the readers.

\begin{corI}\label{cor:mapT}
	Under (Aa1) (in \autoref{sub:sets}), let $ u: K \to K $ be $ C^0 $ and $ H $ as in (IV$ ' $), and assume (IV$ ' $) (a$ ' $) holds and $ \sup_{m \in K}|\widehat{\Pi}^\kappa_m - \Pi^\kappa_m| \leq \xi_2 $, $ \kappa = s, c, u $.
	If $ \eta_*, \xi_0, \xi_2 $ are small, then for a small $ \epsilon' > 0 $, in $ \mathbb{B}_{\epsilon'}(K) $, there are three $ C^{0,1} \cap C^1 $ submanifolds $ W^{cs}_{loc}(K) $, $ W^{cu}_{loc}(K) $, $ \Sigma^c $ called a \emph{local center-stable manifold}, a \emph{local center-unstable manifold}, and a \emph{local center manifold} of $ K $, respectively, which are locally modeled on $ X^{cs}_{m} $, $ X^{cu}_m $, $ X^{c}_{m} $, $ m \in K $, respectively, and are locally positively invariant, locally negatively invariant, locally invariant under $ H $, respectively. That is, $ H^{-1}(W^{cs}_{loc}(K)) $ and $ H(W^{cu}_{loc}(K)) $ contain open sets $ \Omega_{cs} $ and $ \Omega_{cu} $ in $ W^{cs}_{loc}(K) $ and $ W^{cu}_{loc}(K) $, respectively; in addition $ \Omega_{c} = \Omega_{cs} \cap \Omega_{cu} \subset H^{\pm 1} (\Sigma^c) $ is open in $ \Sigma^c $; if $ \eta_* = 0 $, then $ K \subset \Omega_c $.
	
	Moreover, the following hold.
	\begin{enumerate}[(1)]
		\item If $ \eta_* = 0, \xi_0 = 0 $, then $ T_{m}W^{\kappa}_{loc}(K) = \widehat{X}^{\kappa}_{m} $, $ \kappa = cs, cu $, $ T_{m} \Sigma^c = \widehat{X}^{c}_{m} $, $ m \in K $.
		
		\item \label{it:mapT2} In addition, let $ k, r \in \mathbb{N} $ and $ 1 < r \leq k $, and suppose
		\begin{enumerate}[(i)]
			\item $ H $ is $ C^{k} $ in $ \mathbb{B}_{\epsilon'} (K) $;
			\item $ \sup_{m \in K} (\lambda'_{cs}(m))^{r}\lambda'_{u}(m) < 1 $ and $ \sup_{m \in K} (\lambda'_{cu}(m))^{r} \lambda'_{s}(m) < 1 $; 
			\item for each $ m \in K $, $ X^c_{m} $ is finite-dimensional.
		\end{enumerate}
		
	\end{enumerate}
	
	Then $ W^{cs}_{loc}(K) $, $ W^{cu}_{loc}(K) $, $ \Sigma^c $ can be chosen such that they are (uniformly) $ C^{r} \cap C^{r-1,1} $ (if $ \eta_*, \xi_0 $ are possibly further reduced).
\end{corI}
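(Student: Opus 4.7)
The plan is to realize \autoref{cor:mapT} as a corollary of \autoref{thm:geo}, \autoref{lem:mapAB}, \autoref{thm:tri0}, and \autoref{cor:compact}, by first building an ambient $C^1$ submanifold $\Sigma \supset K$ with the desired tangent structure and then translating the linear trichotomy hypothesis \textbf{(IV$'$)(a$'$)} into the non-linear \textbf{(A)(B)} framework of \autoref{sec:statement}.

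First, applying \autoref{thm:geo} to $K$ under \textbf{(Aa1)}, for any small $\varepsilon > 0$ I obtain a $C^1$ submanifold $\Sigma \supset K$ with $T_m\Sigma = X^c_m$ for $m \in K$, together with $C^1$-smooth projections $\widetilde{\Pi}^\kappa_m$ satisfying $\sup_{m \in K}|\widetilde{\Pi}^\kappa_m - \Pi^\kappa_m| \leq \varepsilon$, such that $(\Sigma, K, \{\widetilde{\Pi}^c_m\}, \{\Sigma \cap \mathbb{B}_\epsilon(m)\})$ fulfills \textbf{(H1)--(H4)}; this is \textbf{(B1)} and also \textbf{(B4)(iii)}. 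The compatibility $\sup_{m \in K}|\widehat{\Pi}^\kappa_m - \widetilde{\Pi}^\kappa_m| \leq \xi_2 + \varepsilon$ can be made arbitrarily small, giving \textbf{(B3)(b)(ii)}. Compactness of $K$ and uniform continuity of $DH$ on $\mathbb{B}_{\epsilon'}(K)$ force $\mathfrak{A}(\epsilon') \to 0$ as $\epsilon' \to 0$, so by \autoref{lem:mapAB}(b) the hypothesis \textbf{(IV$'$)(a$'$)} yields \textbf{(B3)(a$'$)(b)(i)} with $\eta = K_3 \eta_*$ arbitrarily small and $\gamma_0$ arbitrarily small, and $H$ satisfies strong $s$-contraction and strong $u$-expansion in the sense of $(\star\star)$; uniform continuity of the $C^1$ maps $m \mapsto \widetilde{\Pi}^\kappa_m$ on the compact $K$ gives \textbf{(B3)(a)(iii)} with $\xi_1 \to 0$. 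Invertibility of $\widehat{\Pi}^u_{u(m)} DH(m)|_{\widehat{X}^u_m}$ and $\widehat{\Pi}^{cu}_{u(m)} DH(m)|_{\widehat{X}^{cu}_m}$, combined with $\sup_m |H(m) - u(m)| \leq \eta_*$ and compactness of $K$, produces the invertibility and uniform continuity of $u$ required by \textbf{(B2)}.

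With all hypotheses verified, I invoke \autoref{thm:tri0} \textbf{Case (2)} to obtain the three sets $W^{cs}_{loc}(K), W^{cu}_{loc}(K), \Sigma^c$ with the stated local invariance, Lipschitz regularity, and local modeling on $\widetilde{X}^\kappa_m$, hence on $X^\kappa_m$ since $\widetilde{\Pi}^\kappa_m$ is close to $\Pi^\kappa_m$; if $\eta_* = 0$ then $K \subset \Omega_c$. The $C^1$ conclusion follows from \autoref{thm:tri0} \eqref{it:tri0smooth}(ii): \textbf{(B4)(i)(ii)} are given, \textbf{(B4)(iv$'$)} (a $C^1 \cap C^{0,1}$ bump function on $\widehat{\Sigma}$ around the compact $K$) follows from a finite partition-of-unity argument on $\widehat{\Sigma}$ near $K$ via \autoref{exa:case2}, and strong contractions have been secured. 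When in addition $\eta_* = 0$ and $\xi_0 = 0$, the off-diagonal derivatives $D_{x^{cs}}\widehat{G}^{cs}_m(0,0,0)$ and $D_{x^{cu}}\widehat{F}^{cu}_m(0,0,0)$ vanish (they are controlled by $\xi_0$ in the construction of \autoref{lem:mapAB}), so \autoref{thm:tri0} \eqref{it:tri04} delivers $T_m W^{c\kappa}_{loc}(K) = \widehat{X}^{c\kappa}_m$ and $T_m \Sigma^c = \widehat{X}^c_m$. For part \eqref{it:mapT2}, $H \in C^k$ on $\mathbb{B}_{\epsilon'}(K)$ combined with the implicit function theorem applied to the defining relation of $(\widehat{F}^\kappa_m, \widehat{G}^\kappa_m)$ gives $\widehat{F}^\kappa_m, \widehat{G}^\kappa_m \in C^{k-1,1}$ uniform for $m \in K$; the hypothesis $\sup_m (\lambda'_{cs}(m))^r \lambda'_u(m) < 1$ translates, after possibly shrinking $\xi_0$ and $\epsilon'$, to $\sup_m \lambda_{cs}(m)^r \lambda_u(m) < 1$ in the \textbf{(A)(B)} constants produced by \autoref{lem:mapAB}, and symmetrically for the unstable side; with $X^c_m$ finite-dimensional and strong contractions in hand, \autoref{cor:compact}(3)\eqref{it:ht} gives $C^{r-1,1}$ (hence $C^r \cap C^{r-1,1}$ for integer $r$) regularity.

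The main obstacle is the bookkeeping of the numerical estimates: the \textbf{(A)(B)} constants $\lambda_{cs}, \lambda_u, \alpha, \beta, \alpha', \beta'$ produced by \autoref{lem:mapAB} differ from the linear rates $\lambda'_{cs}, \lambda'_u$ by quantities depending on $\xi_0$ and $\mathfrak{A}(\epsilon')$, so one must fix a target gap $\sup_m (\lambda'_{cs}(m))^r \lambda'_u(m) + \rho < 1$ with $\rho > 0$, then choose $\epsilon'$ small (using uniform continuity of $D^jH$ on $\mathbb{B}_{\epsilon'}(K)$) and $\xi_0, \xi_2, \eta_*$ small in the right order so that the perturbed gap remains strictly below $1$ while $\gamma_0$ in \textbf{(B3)(a$'$)(iv$'$)} is below the \textbf{Case (2)} threshold; the finite-dimensionality of $X^c_m$ in part \eqref{it:mapT2} ensures the bump function requirement in the higher-regularity step is met uniformly across the finite cover of $K$.
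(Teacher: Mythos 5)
Your proposal is correct and follows essentially the same route as the paper: verify \textbf{(B2)} from continuity of $u$ and compactness of $K$, use \autoref{lem:mapAB} (with $\mathfrak{A}(\epsilon')\to 0$ by compactness) to convert \textbf{(IV$'$)(a$'$)} into \textbf{(B3)(a$'$)(b)} with strong $s$-contraction/$u$-expansion, and then conclude via \autoref{cor:compact} (your explicit unpacking through \autoref{thm:geo} and \autoref{thm:tri0} is exactly how \autoref{cor:compact} is proved), with part (2) obtained, as in the paper, from $H\in C^k$ implying the generating maps are $C^{k-1,1}$ uniformly on the compact $K$ together with \autoref{cor:compact}(3)(ii).
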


In \autoref{cor:mapT} \eqref{it:mapT2}, $ r $ can be non-integer. Note also that $ \mathfrak{A}(\epsilon') \to 0 $ as $ \epsilon' \to 0 $ since $ K $ is compact. See also \autoref{sub:dyntan} for a dynamical characterization of (Aa1) (i) when $ K $ is invariant under $ H $, originally due to Bonatti and Crovisier \cite{BC16}.

\begin{corI}\label{cor:mapG}
	Under (B1) and (B2), let $ H $ be as in (IV$ ' $), and assume (IV$ ' $) (a$ ' $) holds and $ \sup_{m \in K}|\widehat{\Pi}^\kappa_m - \Pi^\kappa_m| \leq \xi_2 $, $ \kappa = s, c, u $. If $ \mathfrak{A}(\epsilon'), \eta_*, \xi_0, \xi_1, \xi_2 $ are small and $ \chi(\epsilon) $ is small when $ \epsilon $ is small, then the conclusions (1) (2) in \autoref{thm:tri0} all hold with $ \mu_{cs}(m) \equiv \mu_* \to 0 $ as $ \mathfrak{A}(\epsilon'), \xi_0 \to 0 $.

	In addition, if (B4) (iii) (iv$ ' $) hold, then $ W^{cs}_{loc}(K) $, $ W^{cu}_{loc}(K) $, and $ \Sigma^c $ can be chosen such that they are $ C^1 $ immersed submanifolds of $ X $; if $ \eta_* = 0, \xi_0 = 0 $, then \eqref{equ:tangent} holds.
\end{corI}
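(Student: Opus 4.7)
The plan is to reduce to \autoref{thm:tri0} (\textbf{case (2)}) by verifying its hypotheses via \autoref{lem:mapAB}.

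First I would verify the almost uniform continuity of the splitting. From \textbf{(H2)} in \textbf{(B1)} the projections $\Pi^{\kappa}_m$ are uniformly Lipschitz on each $U_{m,\gamma}(\epsilon_1)$, hence $0$-almost uniformly continuous at $K$ in the immersed topology. The compatibility bound $\sup_{m\in K}|\widehat{\Pi}^{\kappa}_m-\Pi^{\kappa}_m|\leq \xi_2$ then gives that $m\mapsto \widehat{\Pi}^{\kappa}_m$ is $\xi_*$-almost uniformly continuous at $K$ with $\xi_*=2\xi_2$. Plugging this into \autoref{lem:mapAB}\,(b), all of \textbf{(B3) (a$'$) (b) (i)} follow with $\eta=K_3\eta_*$, with a constant $\gamma_0$ that tends to $0$ as $\mathfrak{A}(\epsilon'),\xi_0\to 0$, and with the $\xi_1$ in \textbf{(B3) (a) (iii)} replaced by $C_*(\xi_1\xi_* + \xi_* + \mathfrak{A}(\epsilon'))$; the lemma also yields strong $s$-contraction and strong $u$-expansion. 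The compatibility assumption supplies \textbf{(B3) (b) (ii)} directly. Thus all the hypotheses of \autoref{thm:tri0} \textbf{case (2)} are satisfied once the smallness parameters are chosen small enough, and conclusions (1) and (2) of \autoref{thm:tri0} apply, producing $W^{cs}_{loc}(K),W^{cu}_{loc}(K),\Sigma^{c}$ as $C^{0,1}$ immersed submanifolds.

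For the uniform bound on $\mu_{cs}$, note that in the application of \autoref{lem:mapAB} the coefficient $\beta'_{cs}$ in the (A)(B) condition is controlled by a constant $\beta'_*$ that tends to $0$ as $\mathfrak{A}(\epsilon'),\xi_0\to 0$ (it arises from the off-diagonal entries of $DH$ and their modulus of continuity). Hence, by the representation in \autoref{thm:tri0}\,(1)(iii), $\mu_{cs}(m)\leq (1+\chi_*)\beta'_*+\chi_*\triangleq\mu_*$, which can be made arbitrarily small by first fixing $\epsilon_*$ small (to control $\chi_*$) and then shrinking $\mathfrak{A}(\epsilon'),\xi_0$.

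For the $C^1$ conclusion I apply \autoref{thm:tri0}\,\eqref{it:tri0smooth}(ii): the strong $s$-contraction and strong $u$-expansion have been established above, \textbf{(B4) (iii) (iv$'$)} are assumed, and it remains to check \textbf{(B4) (i) (ii)}. Item (ii) (the spectral gap $\lambda_{cs}\lambda_u<1$, $\lambda_{cu}\lambda_s<1$) is immediate from \textbf{(IV$'$) (a$'$) (iii)} together with the identification of $\lambda_{\kappa}$ with $\lambda'_{\kappa}$ up to a small correction coming from $\mathfrak{A}(\epsilon')$ and $\xi_0$. Item (i) — that the generating maps $\widehat{F}^{\kappa}_m(\cdot),\widehat{G}^{\kappa}_m(\cdot)$ are $C^1$ — follows from the implicit function theorem: the invertibility of $\widehat{\Pi}^{u}_{u(m)}DH(m)|_{\widehat{X}^u_m}$ (resp.\ $\widehat{\Pi}^{cu}_{u(m)}DH(m)|_{\widehat{X}^{cu}_m}$) assumed in \textbf{(IV$'$) (a$'$)}, combined with $H\in C^1$ near $K$, implies that the fiber equation defining $\widehat{G}^{\kappa}_m$ can be solved smoothly in a neighborhood of the origin. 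Finally, when $\eta_*=0$ and $\xi_0=0$ the splitting $\widehat{X}^{s}\oplus\widehat{X}^{c}\oplus\widehat{X}^{u}$ is exactly invariant under $DH$, which translates into $D_{x^{cs}}\widehat{G}^{cs}_m(0,0,0)=0$ and $D_{x^{cu}}\widehat{F}^{cu}_m(0,0,0)=0$, so conclusion \eqref{it:tri04} of \autoref{thm:tri0} gives the tangent space identities \eqref{equ:tangent}.

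The main technical obstacle I expect is the verification of \textbf{(B4) (i)} — namely writing the generating maps explicitly via the implicit function theorem and showing that their Lipschitz and $C^1$ data are uniform in $m\in K$; in particular one must check that the constants appearing in the implicit function theorem depend only on the uniform bounds provided by \textbf{(IV$'$) (a$'$)}, and that the resulting bounds for $|\widehat{F}^{\kappa}_m(0,0)|,|\widehat{G}^{\kappa}_m(0,0)|$ and the moduli of continuity of $D\widehat{F}^{\kappa}_m,D\widehat{G}^{\kappa}_m$ are uniformly small under the smallness assumptions. This is essentially the content of \autoref{lem:mapAB}, so once that lemma is available the corollary is a direct application of \autoref{thm:tri0}.
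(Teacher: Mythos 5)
Your proposal is correct and follows essentially the same route as the paper, which disposes of this corollary in one line as "a consequence of \autoref{lem:mapAB} and \autoref{thm:tri0}": you verify the almost uniform continuity of $\widehat{\Pi}^{\kappa}_m$ from \textbf{(H2)} and the $\xi_2$-compatibility, feed this into \autoref{lem:mapAB}\,(b) to get \textbf{(B3) (a$'$) (b)} together with strong $s$-contraction/$u$-expansion, and then invoke \autoref{thm:tri0}, with the $C^1$ and tangent-space claims handled exactly as the paper intends (smallness of the perturbed $\lambda$'s and $\beta'$'s, $C^1$ generating maps from fiberwise inversion of the $C^1$ map $H$, and $D_{x^{cs}}\widehat{G}^{cs}_m(0,0)=0$, $D_{x^{cu}}\widehat{F}^{cu}_m(0,0)=0$ when $\eta_*=\xi_0=0$). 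Your write-up merely makes explicit the details the paper leaves implicit; no gap.
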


In general, if $ K $ is not compact, then $ H $ in \autoref{cor:mapG} may not satisfy $ \mathfrak{A}(\epsilon') \to 0 $ as $ \epsilon' \to 0 $.

\begin{rmk}[Precise asymptotics of constants]
	Consider the choice of constants in \autoref{thm:I}; similarly for \autoref{thm:invariant}, \autoref{thm:tri0} as well as \autoref{app:im}. In ``Choice of constants'' of \autoref{sub:preparation}, the reader can find more details.
	
	Let $ f \sim_{\epsilon} g $ denote $ \limsup_{\epsilon \to 0}|f/g| < \infty $ and $ \limsup_{\epsilon \to 0}|g/f| < \infty $, where $ f, g $ are functions of $ \epsilon $.
	
	\begin{enumerate}[$ \bullet $]
		\item There are $ 0 < c_* < 1 $ and small $ \xi_{1,*}, \xi_{2,*} > 0 $ such that the following statements are true.
		
		Case (1). For any $ \hat{\chi}_0 = O_{\hat{\epsilon}} (1) $, there are $ \epsilon_{*} > 0 $ and $ \hat{\eta}_0 = o(\hat{\chi}_0\hat{\epsilon}) $ such that if (A1), A(2), (A3) (a) (b) hold with $ \chi(\hat{\epsilon}) \leq \hat{\chi}_0(\hat{\epsilon}) $, $ \eta \leq \hat{\eta}_0(\hat{\epsilon}) $, $ \xi_{i} \leq \xi_{i,*} $, then there are $ \varepsilon = O(\hat{\epsilon}) \leq \epsilon_{*} $, $ \sigma,\varrho = O(\hat{\chi}_0\hat{\epsilon}) $, $ \varepsilon_{0} = c_{*} \sigma $ with $ \sigma \sim_{\hat{\epsilon}} \varrho $ and $ \hat{\eta}_0 / \varrho = O_{\hat{\epsilon}} (1) $, such that \autoref{thm:I} holds.
		
		Case (2). For any $ \hat{\chi}_0, \hat{\gamma}_0 = O_{\hat{\epsilon}} (1) $, there are $ \epsilon_{*} > 0 $ and $ \hat{\eta}_0 = o(\hat{\chi}_0\hat{\epsilon}) $ such that if (A1), A(2), (A3)(a$ '$)(b) hold with $ \chi(\hat{\epsilon}) \leq \hat{\chi}_0(\hat{\epsilon}) $, $ \gamma_0 \leq \hat{\gamma}_0(\hat{\epsilon}) $, $ \eta \leq \hat{\eta}_0(\hat{\epsilon}) $, $ \xi_{i} \leq \xi_{i,*} $, then there are $ \varepsilon = O(\hat{\epsilon}) \leq \epsilon_{*} $, $ \sigma,\varrho = o(\hat{\epsilon}) $, $ \varepsilon_{0} = c_{*} \sigma $ with $ \varrho / \sigma = O_{\hat{\epsilon}} (1) $, $ \hat{\gamma}_0\sigma / \varrho = O_{\hat{\epsilon}} (1) $, $ \hat{\eta}_0 / \varrho = O_{\hat{\epsilon}} (1) $, such that \autoref{thm:I} holds.
	\end{enumerate}
	The choice of small $ \xi_{1,*}, \xi_{2,*}, \epsilon_{*} $ (and thus $ \chi(\epsilon) $, $ \eta $, $ \gamma_0 $) depends only on (A3) (a) and $ \widetilde{M}, L, \delta_0 $ (in (H2) (H4)). Let $ \gamma' = \sup_m \alpha(m) \beta'(u(m)) $, $ c'(\varsigma_0) = \inf_m \{\beta(m) - \varsigma_0\beta'(u(m))\} $, $ \lambda' = \sup_m \lambda_{u}(m) \vartheta(m) $. Then $ \xi_{1,*}, \xi_{2,*}, \epsilon_{*} $ decrease with $ \gamma' $, $ 1/c'(\varsigma_0) $, $ \lambda' $, $ \varsigma_0 $, $ 1/\delta_0 $, $ \widetilde{M} $, $ L $, $ \sup_{m} \alpha(m) $, $ \sup_{m} \beta(m) $, $ \sup_{m} \lambda_{cs}(m) $, $ \sup_{m} \lambda_{u} (m) $; and $ c_{*} $ depends only on $ \sup_{m} \lambda_{cs}(m) $ and $ \varsigma_0 $.
\end{rmk}

\begin{rmk}
	Since we only assume that $ \Sigma $ is an immersed submanifold, the local center-(un)stable and center manifolds constructed in the above results are generally immersed. However, if $ \Sigma $ is actually an embedding, i.e., in (H1) (see \autoref{sub:setup}), we have $ U_{m,\gamma}(\epsilon_{m}) = \Sigma \cap \mathbb{B}_{\epsilon_{m}}(m) $ for all $ m \in \Sigma $ and $ \gamma \in \Lambda(m) $ (in particular, $ \phi: \widehat{\Sigma} \to \Sigma $ is injective), then the local center-(un)stable and center manifolds obtained above can be made embedding by restricting them to a sufficiently small neighborhood of $ K $. For example, consider the tubular neighborhood $ X^s_{\widehat{K}_{\varepsilon}}(\rho) \oplus X^u_{\widehat{K}_{\varepsilon}} (\rho) $ of $ K_{\varepsilon} \subset \Sigma $ in $ X $ with $ \rho = \min\{ \sigma,\varrho \} $; in this case, the restriction of the manifolds to this neighborhood yields embedding submanifolds, as ensured by our construction in \autoref{sec:submanifold}.
\end{rmk}

\begin{rmk}[Verification of geometric assumptions (A1) (B1)]\label{rmk:geo}
	Although the uniform geometric (or more precisely, the metric) structure assumption on $ \Sigma $ near $ K $ (i.e., (A1) in \autoref{subsec:main} or (B1) in \autoref{sub:tri}) can often be trivially satisfied in physical applications, unlike the normal hyperbolicity condition (see \cite{Che18b}), in concrete applications one typically only knows the set $ K $ together with the uniform trichotomy assumption on $ K $ and the associated splittings $ X = X^{s}_{m} \oplus X^{c}_{m} \oplus X^{u}_{m} $, $ m \in K $; the manifold $ \Sigma $ itself is generally unknown a priori. Roughly speaking, the partially normal hyperbolicity of $ K $ implies that $ T_{m} K \subset X^{c}_{m} $ for all $ m \in K $, and this condition should be used to construct $ \Sigma $.
	\begin{enumerate}[$ \bullet $]
		\item As demonstrated in \autoref{thm:geo}, when $ K $ is a compact set and the pair $ (X^c_{m}, X^{h}_{m}) $ possesses a good approximation property (i.e., \emph{property (*)} as defined in \autoref{def:p*}), one can employ the Whitney extension theorem to obtain the desired manifold $ \Sigma $. If the Whitney extension theorem is not applicable, then the existence of such a manifold $ \Sigma $ cannot be expected. Since property (*) has not been extensively studied to date (though see also \autoref{exa:p*}), its direct verification presents a challenging task, particularly when $ X $ is not a Hilbert space and $ X^{c}_{m} $ is infinite-dimensional.

		\item For the invariant case (i.e., \autoref{thm:invariant} and \autoref{app:im}), the explicit existence of $ \Sigma $ is not required. Instead, we assume that $ K $ itself is a uniform manifold (i.e., it satisfies (H1)--(H4) in \autoref{sub:setup} with $ K = \Sigma $). In practical applications, the maps $ m \mapsto T_{m}K $ and $ m \mapsto X^{c}_{m} $ are typically well understood. What is needed is the information about $ m \mapsto X^{c_1}_{m} $ such that $ T_{m}K \oplus X^{c_1}_{m} = X^{c}_{m} $ (see, e.g., \autoref{sec:application}). When $ X $ is a Hilbert space, one can choose $ X^{c_1}_{m} = (T_{m}K)^{\perp} \cap X^{c}_{m} $, where $ (T_{m}K)^{\perp} $ denotes the orthogonal complement of $ T_{m}K $; typically, the Lipschitz continuity of $ m \mapsto X^{c_1}_{m} $ inherits from that of $ m \mapsto T_{m}K $ and $ m \mapsto X^{c}_{m} $. When $ K $ is a $ C^1 $ compact submanifold of $ X $ without boundary, by virtue of \autoref{lem:selection} and \autoref{cor:C1app}, there is no difficulty in applying \autoref{thm:invariant} and \autoref{app:im}, regardless of whether $ X^{c}_{m} $ is finite-dimensional or not.

		\item A situation that avoids the use of the Whitney extension theorem occurs when $ K $ is a uniform manifold (i.e., it satisfies (H1)--(H4) in \autoref{sub:setup} with $ K = \Sigma $), which is similar to the previous case. In this case, a natural construction of $ \Sigma $ utilizes the \emph{tubular neighborhood} of $ K $ in $ X $ (see \autoref{sec:submanifold}). So we need to find $ \{X^{c_1}_{m}\}_{m \in K} $ satisfying $ T_{m}K \oplus X^{c_1}_{m} = X^{c}_{m} $ with $ m \mapsto X^{c_1}_{m} $ being Lipschitz continuous. Then we can take $ \Sigma = X^{c_1}_{K} (\sigma) $ for sufficiently small $ \sigma > 0 $ (see \autoref{sec:submanifold} for details). To ensure the smoothness of $ X^{c_1}_{K} (\sigma) $, a smooth and Lipschitz approximation of $ m \mapsto T_{m}K, X^{c_1}_{m} $ is required, for which the results in \autoref{app:general} are applicable.
	\end{enumerate}
	In infinite-dimensional settings, we do no know whether there exist any results addressing invariant manifolds near $ K $ when $ K $ is merely a compact set; in the existing literature, the case where $ K $ is a uniform manifold has been comprehensively studied, as mentioned in \autoref{rmk:PDEs}.
\end{rmk}

\chapter{Preliminaries}\label{sec:facts}

\section{Grassmann manifolds: review}\label{sub:Grassmann}

Throughout this section, $ X $ is assumed to be a Banach space. We briefly present some basic facts about the Grassmann manifold of $ X $.

Write $ X_1 \oplus X_2 $ if $ X_1, X_2 $ are closed linear subspaces of $ X $ with $ X_1 \cap X_2 = \{0\} $. If $ X_1 \oplus X_2 = X $, then we say $ X_i $ is complemented in $ X $ for $ i = 1, 2 $, and $ X_1 $ is a complemented space of $ X_2 $.
Set
\begin{align*}
\mathbb{K}(X) & = \{ X_1: X_1 \text{ is a closed linear subspace of } X \}, \\
\mathbb{G}(X) & = \{ X_1 \in \mathbb{K}(X): \exists X_2 \in \mathbb{K}(X) \text{ such that } X = X_1 \oplus X_2 \}, \\
\overline{\Pi}(X) & = \{ \Pi: \Pi \in L(X) \text{ is a projection, i.e., } \Pi^2 = \Pi \}.
\end{align*}
For $ X_1, X_2 \in \mathbb{K}(X) $, let
\[
\Pi_{X_2}(X_1): X_1 \oplus X_2 \to X_1
\]
denote the projection with $ R(\Pi_{X_2}(X_1)) = X_1 $ and $ \ker (\Pi_{X_2}(X_1)) = X_2 $.
Set
\begin{gather*}
d(X_1, X_2) = \sup_{x \in \mathbb{S}_{X_1}} d(x, \mathbb{S}_{X_2}) \quad (\leq 2), \\
\delta(X_1, X_2) = \sup_{x \in \mathbb{S}_{X_1}} d(x, X_2), \\
\alpha(X_1, X_2) = \inf_{x \in \mathbb{S}_{X_1}} d(x, X_2) = \inf_{\substack{x \in \mathbb{S}_{X_1} \\ x_2 \in X_2}} |x - x_2| \quad (\leq 1), \\
\widehat{d}(X_1, X_2) = \max \{ d(X_1, X_2), d(X_2, X_1) \}, \\
\widehat{\delta}(X_1, X_2) = \max \{ \delta(X_1, X_2), \delta(X_2, X_1) \},
\end{gather*}
where $ \mathbb{S}_{X_1} = \{ x \in X_1: |x| = 1 \} $. Obviously,
\begin{gather*}
\alpha(X_1, X_2) \leq \delta(X_1, X_2) \leq d(X_1, X_2) \leq 2 \delta(X_1, X_2),\\
\alpha(X_2, X_1) \leq 2 \alpha(X_1, X_2), \quad \alpha(X_1, X_2)^{-1} \leq 1 + \alpha(X_2, X_1)^{-1}, \\
\widehat{\delta}(X_1, X_2) \leq \widehat{d}(X_1, X_2) \leq 2 \widehat{\delta}(X_1, X_2).
\end{gather*}

Note that $ \widehat{d}(X_1, X_2) = d_{H}(\mathbb{S}_{X_1}, \mathbb{S}_{X_2}) $ is the Hausdorff metric of $ \mathbb{S}_{X_1}, \mathbb{S}_{X_2} $. So $ (\mathbb{K}(X), \widehat{d}) $ is a complete metric space.
In general, $ \widehat{\delta} $ is not a metric on $ \mathbb{K}(X) $, but it is commonly used to characterize the convergence in $ \mathbb{K}(X) $. The following two lemmas \ref{lem:gram1} and \ref{lem:gram2} are well known; see also the Appendix of \cite{BY17}.

\begin{lem}\label{lem:gram1}
	\begin{enumerate} [(1)]
		\item (About $ \alpha $) $ X_1 \oplus X_2 $ is closed if and only if $ \alpha(X_1, X_2) > 0 $; in this case, $ \alpha(X_1, X_2) = |\Pi_{X_2}(X_1)|^{-1} $.
		\item (About $ \delta $) If $ Y \subset X_1 $ is a closed subspace, then $ \delta(X_1, Y) < 1 $ if and only if $ X_1 = Y $.
		\item \label{gramc} (About $ d $) $ d(X_2, \overline{X_1 + X_3}) \leq d(X_2, X_3) $, and if $ X_1 \oplus X_2 $ is closed, then
		\[
		\delta(X_1 \oplus X_2, \overline{X_1 + X_3}) \leq |\Pi_{X_1}(X_2)| \delta(X_2, \overline{X_1 + X_3}) \leq |\Pi_{X_1}(X_2)| d(X_2, X_3).
		\]
		\item \label{gramd} $ \alpha(X_1, X_2) \leq d(X_3, X_1) + \alpha(X_3, X_2) $, in particular,
		\[
		|\alpha(X_1, X_2) - \alpha(X_3, X_2)| \leq \widehat{d}(X_3, X_1),
		\]
		i.e., $ \alpha(\cdot, X_2): \mathbb{K}(X) \to \mathbb{R}_+ $ is Lipschitz with Lipschitz constant less than $ 1 $.
		\item \label{grame} If $ X_2 \oplus X_3 $ is closed and $ X_1 \subset X_2 \oplus X_3 $, then
		\[
		|\Pi_{X_3}(X_2)|_{X_1}| \triangleq \sup_{x_1 \in \mathbb{S}_{X_1} } | \Pi_{X_3}(X_2) x_1 | \leq |\Pi_{X_3}(X_2)| \delta(X_1, X_3) \leq |\Pi_{X_3}(X_2)| d(X_1, X_3).
		\]
		\item \label{gramf} If $ \Pi_1, \Pi_2 \in \overline{\Pi}(X) $, then $ \widehat{d}(R(\Pi_1), R(\Pi_2)) \leq 2 |\Pi_1 - \Pi_2| $.
	\end{enumerate}
\end{lem}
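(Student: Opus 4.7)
\smallskip
\noindent\textbf{Proof plan for Lemma \ref{lem:gram1}.} The whole statement is a catalog of six essentially standard facts about complemented subspaces; my plan is to verify them one by one, leaning mainly on the projection identity $\Pi_{X_{2}}(X_{1})(x_{1}+x_{2})=x_{1}$ and a little Riesz lemma, since nothing deeper is needed.

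For part (1), the key formula is $|\Pi_{X_{2}}(X_{1})|=\sup\{|x_{1}|/|x_{1}+x_{2}|:0\neq x_{1}+x_{2}\in X_{1}\oplus X_{2}\}$, which after normalizing $|x_{1}|=1$ becomes $|\Pi_{X_{2}}(X_{1})|=\alpha(X_{1},X_{2})^{-1}$; this gives the equality whenever $X_{1}\oplus X_{2}$ is closed (so that $\Pi_{X_{2}}(X_{1})$ is bounded by the open mapping theorem). For the converse, assume $\alpha(X_{1},X_{2})>0$ and take a Cauchy sequence $z_{n}=x_{1,n}+x_{2,n}\to z$ in $X_{1}\oplus X_{2}$: from $|x_{1,n}-x_{1,m}|\leq\alpha^{-1}|z_{n}-z_{m}|$ both components are Cauchy, so $z\in X_{1}\oplus X_{2}$. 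For part (2), if $Y\subsetneq X_{1}$ is closed, Riesz's lemma produces $x\in\mathbb{S}_{X_{1}}$ with $d(x,Y)\geq 1-\varepsilon$ for every $\varepsilon>0$, forcing $\delta(X_{1},Y)\geq 1$; the reverse direction is trivial.

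For parts (3)--(5) I will just unfold the definitions. In (3), the first inequality follows because $\mathbb{S}_{X_{3}}\subset\mathbb{S}_{\overline{X_{1}+X_{3}}}$, and the second from the computation $d(x_{1}+x_{2},\overline{X_{1}+X_{3}})\leq d(x_{2},\overline{X_{1}+X_{3}})\leq|\Pi_{X_{1}}(X_{2})z|\,\delta(X_{2},\overline{X_{1}+X_{3}})$ applied to $z=x_{1}+x_{2}\in\mathbb{S}_{X_{1}\oplus X_{2}}$, together with $\delta(X_{2},\overline{X_{1}+X_{3}})\leq\delta(X_{2},X_{3})\leq d(X_{2},X_{3})$. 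In (4), for any $x_{3}\in\mathbb{S}_{X_{3}}$ pick $x_{1}\in\mathbb{S}_{X_{1}}$ with $|x_{3}-x_{1}|\leq d(X_{3},X_{1})+\varepsilon$, then $\alpha(X_{1},X_{2})\leq d(x_{1},X_{2})\leq|x_{1}-x_{3}|+d(x_{3},X_{2})$ and take the infimum over $x_{3}$ and $\varepsilon\to 0$; swapping roles of $X_{1},X_{3}$ gives the Lipschitz bound. In (5), for $x_{1}\in\mathbb{S}_{X_{1}}$ and any $x_{3}\in X_{3}$ one has $\Pi_{X_{3}}(X_{2})x_{1}=\Pi_{X_{3}}(X_{2})(x_{1}-x_{3})$, so taking infimum over $x_{3}\in X_{3}$ yields $|\Pi_{X_{3}}(X_{2})x_{1}|\leq|\Pi_{X_{3}}(X_{2})|\,d(x_{1},X_{3})\leq|\Pi_{X_{3}}(X_{2})|\,\delta(X_{1},X_{3})$.

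The only genuine, but still minor, case analysis is in part (6). For $y_{1}\in\mathbb{S}_{R(\Pi_{1})}$ I note $|y_{1}-\Pi_{2}y_{1}|=|(\Pi_{1}-\Pi_{2})y_{1}|\leq|\Pi_{1}-\Pi_{2}|$. If $\Pi_{2}y_{1}\neq 0$, comparing $y_{1}$ with $\Pi_{2}y_{1}/|\Pi_{2}y_{1}|\in\mathbb{S}_{R(\Pi_{2})}$ costs at most $|y_{1}-\Pi_{2}y_{1}|+\bigl||\Pi_{2}y_{1}|-1\bigr|\leq 2|\Pi_{1}-\Pi_{2}|$; if $\Pi_{2}y_{1}=0$, then $|\Pi_{1}-\Pi_{2}|\geq 1$ and any unit vector in $R(\Pi_{2})$ is within distance $2$ of $y_{1}$. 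Swapping $\Pi_{1},\Pi_{2}$ gives the same bound in the other direction, yielding $\widehat{d}(R(\Pi_{1}),R(\Pi_{2}))\leq 2|\Pi_{1}-\Pi_{2}|$. I anticipate no real obstacle: the only mildly delicate step is the normalization trick in (6), and all other parts are direct from the definitions plus the Hahn--Banach/open mapping machinery already invoked in (1).
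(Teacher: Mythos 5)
Your proposal is correct and follows essentially the same route as the paper's own (very terse) proof: the projection-norm identity $|\Pi_{X_2}(X_1)|=\alpha(X_1,X_2)^{-1}$ plus a Cauchy-component argument for (1), Riesz's lemma for (2), direct unfolding of the definitions for (3)--(5), and the $\delta$-to-$\widehat d$ normalization trick for (6). No gaps; you merely spell out details the paper dismisses as trivial.
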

\begin{proof}
	See \autoref{app:GramBasic}.
\end{proof}

\begin{lem}[Perturbation of complemented subspaces]\label{lem:gram2}
	If $ X_1 \oplus X_2 = X $, $ X_3 \in \mathbb{K}(X) $ and
	\[
	\widehat{d}(X_3, X_1) < \alpha(X_1, X_2) = |\Pi_{X_2}(X_1)|^{-1},
	\]
	then
	\begin{enumerate}[(1)]
		\item $ X_3 \oplus X_2 = X $, so $ \mathbb{G}(X) $ is open in $ \mathbb{K}(X) $;
		\item \label{gram2b} $ |\Pi_{X_2}(X_3)| \leq \frac{|\Pi_{X_2}(X_1)|}{1 - |\Pi_{X_2}(X_1)|d(X_3, X_1)} $ ($ \Leftrightarrow \alpha(X_1, X_2) \leq d(X_3, X_1) + \alpha(X_3, X_2) $);
		\item \label{gram2c} $ \Pi_{X_2}(X_1): X_3 \to X_1 $ is invertible and $ (\Pi_{X_2}(X_1)|_{X_3})^{-1} = \Pi_{X_2}(X_3)|_{X_1} $.
		Moreover, 
		\begin{align*}
			|\Pi_{X_2}(X_3)|_{X_1}| & \leq |\Pi_{X_2}(X_3)\Pi_{X_2}(X_1)| = |\Pi_{X_2}(X_1) - \Pi_{X_2}(X_3)| \\
			& \leq C(X_1, X_2, X_3) \delta(X_1, X_3),
		\end{align*}
		where
		\[
		C(X_1, X_2, X_3) = |\Pi_{X_2}(X_1)| \left(1 + \frac{|\Pi_{X_2}(X_1)|}{1 - |\Pi_{X_2}(X_1)|d(X_3, X_1)}\right),
		\]
		and in particular, $ \Pi_{X_2}(\cdot) $ is locally Lipschitz in $ \mathbb{G}(X) $.
	\end{enumerate}
\end{lem}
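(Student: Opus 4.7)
The plan hinges on analyzing the restriction $T := \Pi_{X_2}(X_1)|_{X_3} : X_3 \to X_1$, the natural candidate for an isomorphism identifying $X_3$ with $X_1$ modulo $X_2$. Once $T$ is shown to be a bijection, all three statements follow in short order. First I would invoke \autoref{lem:gram1}(4), which combined with the hypothesis $\widehat{d}(X_3, X_1) < \alpha(X_1, X_2)=|\Pi_{X_2}(X_1)|^{-1}$ gives $\alpha(X_3, X_2) \geq \alpha(X_1, X_2) - d(X_3, X_1) > 0$. By \autoref{lem:gram1}(1) this already forces $X_3 \oplus X_2$ to be a closed direct sum with $|\Pi_{X_2}(X_3)| = \alpha(X_3, X_2)^{-1}$, which rearranges to the bound claimed in (2). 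The only remaining question for (1) is whether $X_3+X_2 = X$.

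For $x_3 \in X_3$, the decomposition $x_3 = Tx_3 + \Pi_{X_1}(X_2)x_3$ with $\Pi_{X_1}(X_2)x_3 \in X_2$ shows $|Tx_3| \geq d(x_3, X_2) \geq \alpha(X_3, X_2)|x_3|$, so $T$ is bounded below, hence injective with closed range. To promote this to surjectivity I would approximate: for $x_1 \in X_1 \setminus \{0\}$ and any $\varepsilon>0$, the definition of $d(X_1, X_3)$ produces $x_3 \in X_3$ with $|x_1 - x_3| \leq (d(X_1, X_3)+\varepsilon)|x_1|$, so using $\Pi_{X_2}(X_1)x_1 = x_1$,
\[
|Tx_3 - x_1| = |\Pi_{X_2}(X_1)(x_3 - x_1)| \leq |\Pi_{X_2}(X_1)|(d(X_1, X_3)+\varepsilon)|x_1| =: r|x_1|,
\]
with $r<1$ for $\varepsilon$ small, since $d(X_1, X_3) \leq \widehat{d}(X_3, X_1) < |\Pi_{X_2}(X_1)|^{-1}$. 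A Neumann-type iteration then constructs $x_3^{(n)} \in X_3$ with residuals $|y_n| := |x_1 - \sum_{k\leq n} Tx_3^{(k)}| \leq r^n|x_1|$ and $|x_3^{(n)}| \leq \alpha(X_3,X_2)^{-1}(1+r)r^{n-1}|x_1|$; since $X_3$ is closed, $x_3^* := \sum x_3^{(n)} \in X_3$ and $Tx_3^* = x_1$. Thus $T$ is onto, and given any $x = x_1 + x_2 \in X_1 \oplus X_2$, the choice $x_3 = T^{-1}x_1$ satisfies $x_3 - x_1 \in X_2$ so $x \in X_3 + X_2$; combined with $\alpha(X_3, X_2)>0$ this proves $X_3 \oplus X_2 = X$, completing (1) and (2).

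For (3), the inverse of $T$ is $\Pi_{X_2}(X_3)|_{X_1}$: for $x_1 \in X_1$ one has $\Pi_{X_2}(X_3)x_1 \in X_3$ and $x_1 - \Pi_{X_2}(X_3)x_1 = \Pi_{X_3}(X_2)x_1 \in X_2$, so $T(\Pi_{X_2}(X_3)x_1) = x_1$. The operator identity $\Pi_{X_2}(X_1) - \Pi_{X_2}(X_3) = \Pi_{X_3}(X_2)\Pi_{X_2}(X_1)$ follows by decomposing any $x \in X$ in both splittings $X_1 \oplus X_2$ and $X_3 \oplus X_2$ and comparing: if $x = x_1 + x_2 = x_3' + x_2'$, then $\Pi_{X_3}(X_2)x_1 = x_2' - x_2 = x_1 - x_3'$. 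Since $x_1 = \Pi_{X_2}(X_1)x_1$ for $x_1 \in X_1$, this gives $|\Pi_{X_3}(X_2)|_{X_1}| \leq |\Pi_{X_3}(X_2)\Pi_{X_2}(X_1)|$; \autoref{lem:gram1}(5) supplies $|\Pi_{X_3}(X_2)|_{X_1}| \leq |\Pi_{X_3}(X_2)|\delta(X_1, X_3)$; and $|\Pi_{X_3}(X_2)| \leq 1 + |\Pi_{X_2}(X_3)|$ combined with (2) produces exactly the constant $C(X_1, X_2, X_3)$. Local Lipschitz continuity of $\Pi_{X_2}(\cdot)$ on $\mathbb{G}(X)$ is then immediate from this estimate.

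The main obstacle I anticipate is the quantitative Neumann step: the approximation constant $r$ must genuinely be less than $1$, which is why the hypothesis is phrased via the symmetric $\widehat{d}$ rather than the one-sided $d(X_3, X_1)$ (so that $d(X_1, X_3) \leq \widehat{d}(X_3, X_1) < |\Pi_{X_2}(X_1)|^{-1}$ is available). Once this is in hand, the convergence of $\sum x_3^{(n)}$ in the closed space $X_3$ is automatic, and everything else is routine projection algebra combined with the elementary estimates of \autoref{lem:gram1}.
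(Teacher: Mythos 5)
Your proof is correct, and it diverges from the paper's argument in one essential place: how you establish $X_3 + X_2 = X$ in part (1). The paper handles this in one line by a gap argument: having shown $X_3\oplus X_2$ is closed (via \autoref{lem:gram1}(1)(4), exactly as you do), it invokes \autoref{lem:gram1}(3) to get $\delta(X, X_3\oplus X_2)=\delta(X_1\oplus X_2, X_3\oplus X_2)\leq |\Pi_{X_2}(X_1)|\,\delta(X_1,X_3)<1$, and then the Riesz-type criterion \autoref{lem:gram1}(2) forces $X_3\oplus X_2=X$. You instead build the isomorphism $T=\Pi_{X_2}(X_1)|_{X_3}:X_3\to X_1$ directly: bounded below by $\alpha(X_3,X_2)$, and surjective by the Neumann-type iteration with ratio $r=|\Pi_{X_2}(X_1)|(d(X_1,X_3)+\varepsilon)<1$, which is legitimate precisely because the symmetric $\widehat{d}$ controls $d(X_1,X_3)$ as you note. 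Your route is longer but buys something the paper leaves implicit: it proves the invertibility claim in (3) (that $\Pi_{X_2}(X_1)|_{X_3}$ is a bijection with inverse $\Pi_{X_2}(X_3)|_{X_1}$) as a byproduct, whereas the paper's proof never spells this out — though once $X_3\oplus X_2=X$ is known, surjectivity of $T$ is also immediate by decomposing $x_1\in X_1$ in $X_3\oplus X_2$, so the iteration is only genuinely needed if one wants (1) this way. Parts (2) and (3) in your write-up — the identification $|\Pi_{X_2}(X_3)|=\alpha(X_3,X_2)^{-1}$ plus \autoref{lem:gram1}(4), the operator identity $\Pi_{X_2}(X_1)-\Pi_{X_2}(X_3)=\Pi_{X_3}(X_2)\Pi_{X_2}(X_1)$, \autoref{lem:gram1}(5), and $|\Pi_{X_3}(X_2)|\leq 1+|\Pi_{X_2}(X_3)|$ — coincide with the paper's computation.
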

\begin{proof}
	See \autoref{app:GramBasic}.
\end{proof}

Let
\[
\Omega(X) = \{ (X_1, X_2) \in \mathbb{K}(X)^2: \alpha(X_1, X_2) > 0 \},
\]
and define
\begin{gather*}
\varGamma: \Omega(X) \to \mathbb{K}(X),~ (X_1, X_2) \mapsto X_1 \oplus X_2, \\
\Upsilon: \varGamma^{-1}(X) \to \overline{\Pi}(X),~ (X_1, X_2) \mapsto \Pi_{X_2}(X_1).
\end{gather*}

The following lemma was also proved in \cite[Lemma 5.3]{LL10} for a special case.
\begin{lem}
	\begin{enumerate}[(1)]
		\item $ \varGamma $ is continuous; in particular, $ \varGamma^{-1}(X) \subset \mathbb{G}(X)^2 $ is closed in $ \mathbb{K}(X)^2 $.

		\item $ \Upsilon $ is a homeomorphism. In fact, $ \Upsilon^{-1} $ is Lipschitz and $ \Upsilon $ is locally Lipschitz. Therefore, $ \overline{\Pi}(X) $ can be regarded as a closed subset of $ \mathbb{K}(X)^2 $.
	\end{enumerate}
\end{lem}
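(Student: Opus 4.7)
The plan is to first upgrade Lemma 4.1(4) to joint Lipschitz continuity of the gap function $\alpha$ on $\mathbb{K}(X)^2$: the bound $|\alpha(X_1,X_2)-\alpha(X_3,X_2)|\leq \widehat{d}(X_1,X_3)$ is already in the lemma, and a direct argument using $d(x,X_2)\leq |x|\leq 1$ and the definition of $\delta$ will give $|\alpha(X_1,X_2)-\alpha(X_1,X_2')|\leq 2\widehat{\delta}(X_2,X_2')$. Consequently, on $\Omega(X)$ the identity $|\Pi_{X_2}(X_1)|=\alpha(X_1,X_2)^{-1}$ (Lemma 4.1(1)) stays locally bounded under small perturbations, and by Lemma 4.1(4) so does $|\Pi_{X_1}(X_2)|$.

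For (1), fix $(X_1,X_2)\in\Omega(X)$ and take $(X_{1,n},X_{2,n})\to (X_1,X_2)$ in $\mathbb{K}(X)^2$. The above gives $\alpha(X_{1,n},X_{2,n})\geq c>0$ eventually, so $X_{1,n}\oplus X_2$ and $X_1\oplus X_{2,n}$ are closed with uniformly bounded associated projections. The estimate
\[
\widehat{\delta}(X_{1,n}\oplus X_{2,n},\,X_1\oplus X_2)\leq \widehat{\delta}(X_{1,n}\oplus X_{2,n},\,X_{1,n}\oplus X_2)+\widehat{\delta}(X_{1,n}\oplus X_2,\,X_1\oplus X_2),
\]
combined with two applications of Lemma 4.1\eqref{gramc} (swapping the roles of the summands as needed) bounds each right-hand term by a uniformly bounded projection norm times $d(X_{2,n},X_2)$ or $d(X_{1,n},X_1)$, both of which tend to $0$. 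This proves continuity. The inclusion $\varGamma^{-1}(X)\subset\mathbb{G}(X)^2$ is immediate from the definition of $\mathbb{G}(X)$, and the closedness statement is interpreted inside $\Omega(X)$: it follows from continuity of $\varGamma$ since $\{X\}$ is closed in $\mathbb{K}(X)$.

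For (2), the inverse of $\Upsilon$ is $\Pi\mapsto (R(\Pi),\ker\Pi)=(R(\Pi),R(\id-\Pi))$; this is well defined because a bounded projection automatically splits $X$ as $R(\Pi)\oplus\ker\Pi$, and a direct check gives $\Upsilon\circ\Upsilon^{-1}=\mathrm{id}$ and $\Upsilon^{-1}\circ\Upsilon=\mathrm{id}$. Lipschitzness of $\Upsilon^{-1}$ is immediate from Lemma 4.1\eqref{gramf}: both $\widehat{d}(R(\Pi_1),R(\Pi_2))\leq 2|\Pi_1-\Pi_2|$ and $\widehat{d}(\ker\Pi_1,\ker\Pi_2)=\widehat{d}(R(\id-\Pi_1),R(\id-\Pi_2))\leq 2|\Pi_1-\Pi_2|$. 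For local Lipschitzness of $\Upsilon$, I split
\[
|\Pi_{X_{2}'}(X_{1}')-\Pi_{X_2}(X_1)|\leq |\Pi_{X_{2}'}(X_{1}')-\Pi_{X_{2}'}(X_1)|+|\Pi_{X_{2}'}(X_1)-\Pi_{X_2}(X_1)|,
\]
apply Lemma 4.2\eqref{gram2c} to the first summand (with $X_3=X_{1}'$), and rewrite the second summand as $|\Pi_{X_1}(X_2)-\Pi_{X_1}(X_{2}')|$ (using $\Pi_{X_2}(X_1)=\id-\Pi_{X_1}(X_2)$) to apply Lemma 4.2\eqref{gram2c} again with the roles swapped; the hypothesis of Lemma 4.2 is satisfied once $(X_1',X_2')$ is close enough to $(X_1,X_2)$, using the joint continuity of $\alpha$ from the opening paragraph. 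For the concluding sentence, the Lipschitz homeomorphism $\Upsilon^{-1}$ identifies $\overline{\Pi}(X)$ (a closed subset of $L(X)$, hence complete in operator norm) with its image $\varGamma^{-1}(X)$ in $\mathbb{K}(X)^2$.

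The only delicate step is the joint continuity of $\alpha$, since Lemma 4.1(4) only asserts Lipschitzness in the first argument; once that is in place, everything else is triangle-inequality bookkeeping against the bounds from Lemma 4.1\eqref{gramc} and Lemma 4.2\eqref{gram2c}.
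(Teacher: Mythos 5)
Your route is essentially the paper's: continuity of $ \varGamma $ from the estimates of \autoref{lem:gram1} after securing a uniform lower bound on the relevant gaps, Lipschitzness of $ \Upsilon^{-1} $ from \autoref{lem:gram1} \eqref{gramf}, and local Lipschitzness of $ \Upsilon $ from \autoref{lem:gram2} \eqref{gram2c} (your splitting with $ \Pi_{X_2}(X_1) = \id - \Pi_{X_1}(X_2) $ is the right way to treat the kernel perturbation, and your direct estimate $ |\alpha(X_1,X_2)-\alpha(X_1,X_2')| \leq 2\widehat{\delta}(X_2,X_2') $ is a correct substitute for the paper's swap $ \alpha(X_1,Y_0) \leq 2\alpha(Y_0,X_1) $ followed by Lipschitzness in the first argument). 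One step, however, is not justified as written: the displayed triangle inequality for $ \widehat{\delta} $. The paper explicitly notes that $ \widehat{\delta} $ is in general not a metric — it is precisely the triangle inequality that can fail — which is why its proof runs the triangle inequality in $ \widehat{d} $ (the Hausdorff metric of the unit spheres) and only then passes to $ \widehat{\delta} $ via $ \widehat{\delta} \leq \widehat{d} \leq 2\widehat{\delta} $. Your argument survives with that substitution, or with the quasi-inequality $ \delta(A,C) \leq \delta(A,B) + 2\delta(B,C) $ obtained by the same normalization trick ($ |b| \leq 2 $ for a near-minimizer) that you use for $ \alpha $, since only convergence is needed; but as stated the step is a gap, and \autoref{lem:gram1} \eqref{gramc} alone does not repair it.

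On the closedness clauses you are right to be cautious, and your reading is the defensible one. Continuity of $ \varGamma $ only yields that $ \varGamma^{-1}(X) $ is closed in $ \Omega(X) $, and the homeomorphism $ \Upsilon^{-1} $ only identifies $ \overline{\Pi}(X) $ with $ \varGamma^{-1}(X) $; neither gives closedness in all of $ \mathbb{K}(X)^2 $ (completeness is not transported, since $ \Upsilon $ is only locally Lipschitz). Indeed, pairs of complementary lines in $ \mathbb{R}^2 $ whose angle degenerates converge in $ \mathbb{K}(X)^2 $ to a non-complementary pair, so the set of complementary pairs is not closed there. The paper's own proof does not argue these clauses at all, so your reinterpretation — closedness inside $ \Omega(X) $, and mere identification of $ \overline{\Pi}(X) $ with its image — is the appropriate way to state what the argument actually delivers.
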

\begin{proof}
	(1). Let $ X_{n} \to X_0 $ and $ Y_{n} \to Y_0 $ in $ \mathbb{G}(X) $ with $ \alpha(X_0, Y_0) > 0 $. Then $ \alpha(X_n, Y_n) \geq \varepsilon_0 $, $ \alpha(X_n, X_0) \geq \varepsilon_0 $ and $ \alpha(Y_n, Y_0) \geq \varepsilon_0 $ for all large $ n $ and some $ \varepsilon_0 > 0 $. For example, by \autoref{lem:gram1} \eqref{gramd},
	\begin{align*}
	\alpha(X_0, Y_0) \leq & \widehat{d}(X_n, X_0) + \alpha(X_n, Y_0) \leq \widehat{d}(X_n, X_0) + 2\alpha(Y_0, X_n) \\
	\leq & \widehat{d}(X_n, X_0) + 2\widehat{d}(Y_n, Y_0) + 2\alpha(Y_n, X_n),
	\end{align*}
	i.e., $ \alpha(Y_n, X_n) \geq \frac{1}{2}(\alpha(X_0, Y_0) - \widehat{d}(X_n, X_0) - 2\widehat{d}(Y_n, Y_0)) \geq \varepsilon_0 > 0 $ for all large $ n $. By \autoref{lem:gram1} \eqref{gramc}, one gets
	\begin{align*}
	\widehat{d}(X_n \oplus Y_n, X_0 \oplus Y_0) \leq & \widehat{d}(X_n \oplus Y_n, X_n \oplus Y_0) + \widehat{d}(X_n \oplus Y_0, X_0 \oplus Y_0) \\
	\leq & 2(\widehat{\delta}(X_n \oplus Y_n, X_n \oplus Y_0) + \widehat{\delta}(X_n \oplus Y_0, X_0 \oplus Y_0)) \\
	\leq & 2\varepsilon^{-1}_0(\widehat{d}(Y_n, Y_0)+\widehat{d}(X_n, X_0)) \to 0.
	\end{align*}

	(2). That $ \Upsilon $ is locally Lipschitz follows from \autoref{lem:gram2} \eqref{gram2c}. Also, by \autoref{lem:gram1} \eqref{gramf}, we know $ \Upsilon^{-1} $ is Lipschitz.
\end{proof}

Take $ X_i \in \mathbb{G}(X) $, $ X = X_1 \oplus X_2 $. Consider
\[
U_{X_1, X_2} = \{ X'_1: X'_1 \oplus X_2 = X \} \cong \{ \Pi_{X_2}(X'_1): X'_1 \oplus X_2 = X \},
\]
a neighborhood of $ X_1 $, and define a local chart $ \varphi_{X_1, X_2}: U_{X_1, X_2} \to L(X_1, X_2) $ as
\begin{equation}\label{equ:localGrass}
\varphi_{X_1, X_2}(X'_1) = \Pi_{X_1}(X_2) \Pi_{X_2}(X'_1)|_{X_1}: X_1 \to X_2.
\end{equation}
Note that
\[
\varphi_{X_1, X_2}^{-1} (f) = \graph f \triangleq \{ x_1 + f(x_1): x_1 \in X_1 \},
\]
where $ f \in L(X_1, X_2) $, and
\[
\Pi_{X_2}(X'_1) = \mymatrix{\id}{\varphi_{X_1, X_2}(X'_1)}{0}{0}: X_1 \oplus X_2 \to X_1 \oplus X_2.
\]
Now $ \{ ( U_{X_1, X_2}, \varphi_{X_1, X_2} ) \}_{X_1 \oplus X_2 = X} $ gives a $ C^{\infty} $ differential structure on $ \mathbb{G}(X) $ (see \cite[Example 3.1.8 G]{AMR88}) which is locally modeled on $ L(X_1, X_2) (\subset L(X)) $. Moreover, we have
\begin{lem}\label{lem:granlip}
	The topology induced by the differential structure of $ \mathbb{G}(X) $ coincides with the metric topology induced by the metric $ \widehat{d} $. In fact, for $ X_1 \oplus X_2 = X $, $ \varphi_{X_1, X_2} $ is bi-Lipschitz and
	\begin{enumerate}[(1)]
		\item if $ \varepsilon < \alpha(X_1, X_2) / 2 $, then $ \varphi^{-1}_{X_1, X_2}: \mathbb{B}_{\varepsilon} (0) (\subset L(X_1, X_2)) \to \mathbb{B}_{\rho} (X_1) (\subset U_{X_1, X_2}) $, where $ \rho = \varepsilon / (1 - \varepsilon) \leq 2\varepsilon $; $ \lip \varphi^{-1}_{X_1, X_2}|_{\mathbb{B}_{\varepsilon}(0)} \leq 1/(1 - \varepsilon) $;
		\item \label{granlip} if $ \rho < \alpha(X_1, X_2) / 4 $, then $ \varphi_{X_1, X_2}: \mathbb{B}_\rho (X_1) (\subset U_{X_1, X_2}) \to \mathbb{B}_{\varepsilon}(0) (\subset L(X_1, X_2)) $, where $ \varepsilon = 4 \alpha(X_1, X_2)^{-2} \rho $; $ \lip \varphi_{X_1, X_2} |_{\mathbb{B}_{\rho}(X_1)} \leq  12 \alpha(X_1, X_2)^{-2} $ if $ \rho < \alpha(X_1, X_2)^2 / 8 $.
	\end{enumerate}
\end{lem}
\begin{proof}
	(1). First note that if $ f \in L(X_1, X_2) $ and $ |f| < 1 $, then $ \widehat{d}(\graph f, X_1) \leq \frac{|f|}{1-|f|} $. (Obviously, $ \delta (X_1, \graph f) \leq |f| $, and $ d (x_1 + f(x_1), X_1) \leq |f||x_1| $ if $ |x_1+f(x_1)| \leq 1 $, which yields $ |f||x_1| \leq \frac{|f|}{1-|f|} $.) Thus, if $ f \in \mathbb{B}_{\varepsilon} (0) $ (so $ |f| < \alpha(X_1, X_2) / 2 $ ($ \leq 1/2 $)), then $ \widehat{d}(\graph f, X_1) < \alpha(X_1, X_2) $ and $ \varphi^{-1}_{X_1, X_2} (f) \in \mathbb{B}_{\rho} (X_1) $. Similarly, if $ f_1, f_2 \in \mathbb{B}_{\varepsilon}(0) $, then $ \widehat{d}(\graph f_1, \graph f_2) \leq |f_1 - f_2|/(1 -\varepsilon) $, i.e., $ \lip \varphi^{-1}_{X_1, X_2}|_{\mathbb{B}_{\varepsilon}(0)} \leq 1/(1 - \varepsilon) $.

	(2). Take $ X_3 \in \mathbb{B}_\rho (X_1) $. Then by \autoref{lem:gram1} \eqref{grame} and \autoref{lem:gram2} \eqref{gram2c}, we have
	\begin{align*}
	& |\varphi_{X_1, X_2}(X_3)| \leq |\Pi_{X_1}(X_2)|_{X_3}||\Pi_{X_2}(X_3)|_{X_1}| \\
	\leq & |\Pi_{X_1}(X_2)| \widehat{d}(X_3, X_1) |(\Pi_{X_2}(X_3) - \Pi_{X_2}(X_1) + \id)\Pi_{X_2}(X_1)| \\
	\leq & |\Pi_{X_1}(X_2)| ((1 + 2|\Pi_{X_2}(X_1)|) \widehat{d}(X_3, X_1) + 1 ) |\Pi_{X_2}(X_1)| \widehat{d}(X_3, X_1) \\
	\leq & 4 \alpha(X_1, X_2)^{-2} \rho.
	\end{align*}
	The Lipschitz continuity of $ \varphi_{X_1, X_2} $ can be deduced from the (local) Lipschitz continuity of $ \Pi_{X_2}(\cdot) $; for example, if $ \rho < \alpha(X_1, X_2)^2 / 8 $ (so $ \varepsilon \leq 1/2 $ and $ 2\rho(1+\varepsilon) < 1/2 $), then for $ X_3', X_3'' \in \mathbb{B}_\rho (X_1) $, we have
	\[
	|\Pi_{X_2}(X'_3)| \leq 1 + |\varphi_{X_1, X_2}(X'_3)| \leq 1 + \varepsilon,
	\]
	and
	\begin{align*}
	&~	|\varphi_{X_1, X_2}(X_3') - \varphi_{X_1, X_2}(X_3'')| \\
	 \leq &~  |\Pi_{X_1}(X_2)||\Pi_{X_2}(X_1)| |\Pi_{X_2}(X_3') - \Pi_{X_2}(X_3'')| \\
	\leq &~  2\alpha(X_1, X_2)^{-2}|\Pi_{X_2}(X'_3)|\left(1+\frac{|\Pi_{X_2}(X'_3)|}{1 - |\Pi_{X_2}(X'_3)|\widehat{d}(X_3', X_3'')}\right)\widehat{d}(X_3', X_3'') \\
	\leq &~  2 (1 + \varepsilon) (1 + 2(1+\varepsilon)) \alpha(X_1, X_2)^{-2} \widehat{d}(X_3', X_3''),
	\end{align*}
	i.e., $ \lip \varphi_{X_1, X_2} |_{\mathbb{B}_{\rho}(X_1)} \leq  12 \alpha(X_1, X_2)^{-2} $. The proof is complete.
\end{proof}

Using $ \Upsilon^{-1} $, one can naturally endow $ \overline{\Pi}(X) $ with a $ C^{\infty} $ differential structure induced by $ \mathbb{G}(X)^2 $.
\begin{lem}\label{lem:proj}
	$ \overline{\Pi}(X) $ is a smooth embedded submanifold of $ L(X) $ without boundary, locally modeled on $ L(X_1, X_2) \times L(X_2, X_1) $, where $ X_1 \oplus X_2 = X $.
\end{lem}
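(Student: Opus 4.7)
The plan is to produce an explicit local submanifold chart of $L(X)$ around each $\Pi_0 \in \overline{\Pi}(X)$ by extending the parametrization already implicit in the local chart of $\mathbb{G}(X)^2$. Fix $\Pi_0 = \Pi_{X_2}(X_1)$ with $X = X_1 \oplus X_2$ and use the block identification $L(X) \cong L(X_1) \oplus L(X_2,X_1) \oplus L(X_1,X_2) \oplus L(X_2)$ induced by $\Pi_0$, writing $T = \mymatrix{T_{11}}{T_{12}}{T_{21}}{T_{22}}$. For $(f,g) \in L(X_1,X_2)\times L(X_2,X_1)$ with $\|f\|\|g\|<1$ the subspaces $\graph f$ and $\graph g$ are complementary, so one can define $\Psi(f,g) = \Pi_{\graph g}(\graph f) \in \overline{\Pi}(X)$. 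A direct calculation (solving $x_1 = y_1 + g y_2$, $x_2 = f y_1 + y_2$) gives the explicit formula
\[
\Psi(f, g) \;=\; \mymatrix{(I - gf)^{-1}}{-(I - gf)^{-1} g}{f(I - gf)^{-1}}{-f(I - gf)^{-1} g},
\]
which is real-analytic in $(f,g)$ near the origin.

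Next I would extend $\Psi$ to an ambient map
\[
\tilde\Psi(f, g, h_1, h_2) \;=\; \Psi(f, g) + \mymatrix{h_1}{0}{0}{h_2}
\]
defined on a neighborhood of $0$ in $L(X_1,X_2)\times L(X_2,X_1)\times L(X_1)\times L(X_2)$. Linearizing the explicit formula at $(f,g,h_1,h_2)=0$ gives
\[
D\tilde\Psi(0)(f, g, h_1, h_2) \;=\; \mymatrix{h_1}{-g}{f}{h_2},
\]
which is a continuous linear isomorphism onto $L(X)$ under the block identification. By the inverse function theorem, $\tilde\Psi$ is a smooth local diffeomorphism from a neighborhood of $0$ onto a neighborhood $\mathcal U$ of $\Pi_0$ in $L(X)$.

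It then remains to show that, in the chart $\tilde\Psi^{-1}$, the set $\overline{\Pi}(X)$ corresponds exactly to the linear slice $\{h_1=0,\ h_2=0\}$. One inclusion is automatic: every $\Psi(f,g)$ is a projection. For the converse, any projection $\Pi$ sufficiently close to $\Pi_0$ in $L(X)$ has $R(\Pi)$ and $\ker(\Pi)$ close to $X_1$ and $X_2$ respectively (use Lemma~\ref{lem:gram1} \eqref{gramf} together with the already-established homeomorphism $\Upsilon$ and the local charts $\varphi_{X_1,X_2}$, $\varphi_{X_2,X_1}$ from \eqref{equ:localGrass}); hence $R(\Pi)=\graph f$ and $\ker(\Pi)=\graph g$ for uniquely determined small $(f,g)$, and therefore $\Pi = \Psi(f,g)$. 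Consequently $\overline{\Pi}(X)\cap \mathcal U = \tilde\Psi\bigl(L(X_1,X_2)\times L(X_2,X_1)\times\{0\}\times\{0\}\bigr)$, which exhibits $\overline{\Pi}(X)$ as a $C^\infty$ embedded submanifold of $L(X)$ (without boundary) locally modeled on $L(X_1,X_2)\times L(X_2,X_1)$.

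There is no serious obstacle here: the content is an application of the inverse function theorem to the explicit formula for $\Psi$. The only mildly delicate point is recognizing that the block decomposition of $L(X)$ induced by $\Pi_0$ provides a natural \emph{transversal} (the diagonal blocks $L(X_1)\oplus L(X_2)$) to the tangent directions of $\overline{\Pi}(X)$ at $\Pi_0$ (the off-diagonal blocks), so that the extension $\tilde\Psi$ is genuinely a local diffeomorphism of the ambient space and not merely an immersion. An alternative, essentially equivalent route would be to apply the implicit function theorem to $\Phi(T)=T^2-T$ and to verify that $D\Phi(\Pi_0)(T)=\Pi_0 T + T\Pi_0 - T$ has kernel $L(X_2,X_1)\oplus L(X_1,X_2)$ and splits, but the parametrization above is more direct and also identifies the model space.
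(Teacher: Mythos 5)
Your proposal is correct. In substance it uses the same parametrization as the paper: the paper's proof reduces the lemma to showing that $\Upsilon:(X_1',X_2')\mapsto \Pi_{X_2'}(X_1')$ is $C^\infty$, and its local representation $\widetilde{\Upsilon}(f_1,f_2)$ in the Grassmannian charts \eqref{equ:localGrass} is exactly your $\Psi(f,g)=\Pi_{\graph g}(\graph f)$, written there through the closed formula $\Pi_{X_2}(X_1)=\Pi_{\tilde X_0}(X_0)(\id+F(f_1,f_2))^{-1}+f_1\circ\Pi_{\tilde X_0}(X_0)(\id+F(f_1,f_2))^{-1}$ rather than through your block expression with $(I-gf)^{-1}$. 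Where you genuinely diverge is in how the embedded-submanifold conclusion is extracted: the paper leans on the previously proved facts that $\Upsilon$ is a homeomorphism with locally Lipschitz inverse (via \autoref{lem:gram1} and \autoref{lem:granlip}) and then only checks smoothness of the parametrization, leaving the slice/transversality picture implicit (the tangent space is computed separately in \autoref{lem:Pitangent}); you instead extend the parametrization by the diagonal blocks to $\tilde\Psi$, verify $D\tilde\Psi(0)(f,g,h_1,h_2)=\mymatrix{h_1}{-g}{f}{h_2}$ is an isomorphism of $L(X)$, and invoke the inverse function theorem to get an ambient chart in which $\overline{\Pi}(X)$ is the linear slice $\{h_1=h_2=0\}$. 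Your route is somewhat longer but more self-contained on the point that actually matters for "embedding submanifold": it exhibits a slice chart and identifies a complement (the diagonal blocks) to the tangent directions at $\Pi_0$, whereas the paper's argument buys brevity by reusing the Grassmannian machinery and leaves the splitting implicit. The only step you should make fully explicit is the shrinking of the neighborhood $\mathcal U$: one needs $|\Pi-\Pi_0|$ small enough that, via \autoref{lem:gram1} \eqref{gramf} applied to $\Pi,\Pi_0$ and to $\id-\Pi,\id-\Pi_0$ together with \autoref{lem:granlip}, the resulting $(f,g)$ lands inside the domain on which $\tilde\Psi$ is injective, so that $\tilde\Psi^{-1}(\Pi)$ indeed has vanishing diagonal components; this is routine and does not affect correctness.
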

\begin{proof}
	It suffices to show that $ \Upsilon: \varGamma^{-1}(X) \to \overline{\Pi}(X), (X_1, X_2) \mapsto \Pi_{X_2}(X_1) $, is $ C^{\infty} $. Consider the local representation of $ \Upsilon $. Let $ X_0 \oplus \tilde{X}_0 = X $ and take a sufficiently small $ \varepsilon > 0 $ (for example, $ \varepsilon < \min \{\alpha(X_0, \tilde{X}_0), \alpha(\tilde{X}_0, X_0)\} / 4 $). Now the local representation of $ \Upsilon $ can be taken as
	\begin{align}
	\widetilde{\Upsilon}: \quad & \mathbb{B}_{\varepsilon}(0) \times \mathbb{B}_{\varepsilon}(0) \subset L(X_0,\tilde{X}_0) \times L(\tilde{X}_0,X_0) \to \varGamma^{-1}(X) \to L(X), \notag\\
	&  (f_1, f_2) \mapsto (\varphi^{-1}_{X_0, \tilde{X}_0}(f_1) \triangleq X_1, \varphi^{-1}_{\tilde{X}_0, X_0}(f_2) \triangleq X_2) \mapsto \Pi_{X_2}(X_1). \label{equ:Pilocal}
	\end{align}
	Define
	\[
	F(f_1, f_2)(x_0, \tilde{x}_0) \triangleq f_1(x_0) + f_2(\tilde{x}_0) : X_0 \oplus \tilde{X}_0 \to X_0 \oplus \tilde{X}_0.
	\]
	Note that $ F: (f_1, f_2) \mapsto F(f_1, f_2) $ is linear and so $ C^{\infty} $, and
	\[
	A: \mathbb{B}_1(0) \subset L(X) \to L(X), B \mapsto (\id + B)^{-1},
	\]
	is $ C^{\infty} $. This shows that
	\[
	A \circ F: \mathbb{B}_{\varepsilon}(0) \times \mathbb{B}_{\varepsilon}(0) \subset L(X_0,\tilde{X}_0) \times L(\tilde{X}_0,X_0) \to L(X),
	\]
	is $ C^{\infty} $ if $ \varepsilon $ is small. Notice that
	\[
	\Pi_{X_2}(X_1) = \Pi_{\tilde{X}_0}(X_0) A \circ F (f_1, f_2) + f_1 \circ \Pi_{\tilde{X}_0}(X_0) A \circ F (f_1, f_2),
	\]
	yielding that $ \widetilde{\Upsilon} $ is $ C^{\infty} $. The proof is complete.
\end{proof}

\section{Uniform submanifolds in Banach spaces: review} \label{sec:submanifold}

Throughout this section, we make the following assumption.

\vspace{.5em}
\noindent{Assumptions}.
Let $ \Sigma $ and $ K $ be as in \autoref{sub:setup} (I) and assume that (H1)--(H4) in \autoref{sub:setup} hold.
\vspace{.5em}

In the following, let us review some important geometric properties of $ \Sigma $ near $ K $, which are taken from \cite[Section 4.3]{Che18b} (see also  \cite[Section 3]{BLZ08} and \cite[Section 3]{BLZ99} under the unnecessary condition $ \Sigma \in C^1 $); note also that (H4) is equivalent to \cite[Definition 2.1 (2)]{BLZ08}.

Recall the notations in \eqref{equ:notationM}: $ \widehat{K}_{\epsilon} $ and $ {X}^{\kappa}_{\widehat{K}'} (r) $ ($ \kappa = s, c, u, h $). We also use the following notations: For $ m_0 \in K $, $ \gamma \in \Lambda(m_0) $, let
\begin{align*}
m_0 + X^c_{m_0}(\epsilon) \oplus X^h_{m_0}(\varrho) & = \{ m_0 + x^c_0 + x^h_0: x^c_0 \in X^c_{m_0}(\epsilon), x^h_0 \in X^h_{m_0}(\varrho) \}, 
\end{align*}
and define $ \chi_{m_0,\gamma} $ by
\begin{gather*}
	\chi_{m_0,\gamma}: U_{m_0,\gamma}(\epsilon_1) \to X^c_{m_0}: m' \mapsto \Pi^c_{m}(m' - m_0),
\end{gather*}
the inverse of $ \chi_{m_0,\gamma} $ by
\[
\omega_{m_0,\gamma}(\cdot) = \chi_{m_0,\gamma}^{-1} : X^c_{m_0}(\delta_0) \to U_{m_0,\gamma}(\epsilon_1),
\]
and $ \chi^h_{m_0,\gamma} $ by
\[
\chi^h_{m_0,\gamma}: X^c_{m_0}(\delta_0) \to X^h_{m_0}: x^c_0 \mapsto \Pi^h_{m_0} (\omega_{m_0,\gamma}(x^c_0) - m_0).
\]
Now for all $ m_1, m_2 \in U_{m_0,\gamma}(\epsilon_1) $, by (H1),
\[
\frac{1}{1 + \chi(\epsilon_1)} |\chi_{m_0,\gamma}(m_1) - \chi_{m_0,\gamma}(m_2)| \leq |m_1 - m_2| \leq \frac{1}{1 - \chi(\epsilon_1)} |\chi_{m_0,\gamma}(m_1) - \chi_{m_0,\gamma}(m_2)|.
\]
Therefore, 
\[
X^c_{m_0}(c_1(\epsilon_1)\varepsilon) \xrightarrow{\omega_{m_0,\gamma}} U_{m_0,\gamma}(\varepsilon) \xrightarrow{\chi_{m_0,\gamma}} X^c_{m_0}(c_2(\epsilon_1) \varepsilon),
\]
where
\[
\varepsilon < \min \{ \delta_0 / c_1(\epsilon_1), \epsilon_1 \},\quad c_1(\epsilon_1) = 1 - \chi(\epsilon_1), \quad c_2(\epsilon_1) = 1 + \chi(\epsilon_1).
\]
For $ \kappa = s, u $, define a local chart of $ \mathbb{G}(X) $ at $ X^\kappa_{m_0} $ by
\begin{equation*}
	(\varphi^{c,\kappa}_{m_0} (\overline{m}), \varphi^\kappa_{m_0} (\overline{m}))
	= \Pi_{X^\kappa_{m_0}} (X^{csu-\kappa}_{m_0}) \Pi_{X^{csu-\kappa}_{m_0}} (X^\kappa_{\overline{m}}) |_{X^\kappa_{m_0}} :
	X^\kappa_{m_0} \to X^{c}_{m_0} \oplus X^{su-\kappa}_{m_0};
\end{equation*}
see, e.g., \cite[p. 145]{AMR88}. Take $ \epsilon_* > 0 $ such that
\[
\epsilon_* < \min\left\{ \frac{1}{8\widetilde{M}L}, \epsilon_1, \frac{\delta_0}{1 - \chi(\epsilon_1)} \right\}.
\]
Let $ c_1 = c_1(\epsilon_*) = 1 - \chi(\epsilon_*) $, and $ c_2 = c_2(\epsilon_*) = 1 + \chi(\epsilon_*) $.
Note that $ \alpha(X^{c}_m, X^{\kappa}_m) \geq \widetilde{M}^{-1} $ if $ m \in K $, $ \kappa = s, u, h $.
By (H2), for every $ m_1, m_2 \in U_{m_0,\gamma} (\epsilon_*) $, we have
\[
|\Pi^{\kappa}_{m_1} - \Pi^{\kappa}_{m_2}| \leq L|m_1 - m_2| \leq 2L\epsilon_*.
\]
So $ X^\kappa_{\overline{m}} \in \mathbb{B}_{(2\widetilde{M})^{-1}} (X^\kappa_{m_0}) $, $ \kappa = s, c, u $, if $ \overline{m} \in U_{m_0,\gamma} (\epsilon_*) $. 
Now by \autoref{lem:gram1} \eqref{grame}, for $ \kappa = s, u $,
\begin{align*}
	X^h_{m_0} (e_1\varrho) \xrightarrow{ \Pi_{X^c_{m_0}}(X^h_{\overline{m}}) }  X^h_{\overline{m}} (\varrho) \xrightarrow{ \Pi_{X^c_{m_0}}(X^h_{m_0}) } X^h_{m_0} (e_2\varrho), \\
	X^{\kappa}_{m_0} (e_1\varrho) \xrightarrow{ \Pi_{X^{scu-\kappa}_{m_0}}(X^{\kappa}_{\overline{m}}) }  X^{\kappa}_{\overline{m}} (\varrho) \xrightarrow{ \Pi_{X^{scu-\kappa}_{m_0}}(X^{\kappa}_{m_0}) } X^{\kappa}_{m_0} (e_2\varrho),
\end{align*}
where $ e_1 = e_1(\epsilon_*) = (1+4\widetilde{M}L\epsilon_*)^{-1} $, $ e_2 = e_2(\epsilon_*) = 1 + \widetilde{M}L\epsilon_* $. 

The tubular neighborhood of $ K $ can be constructed through the following map:
\begin{align}\label{equ:tub}
 X^c_{m_0}(c_1\varepsilon) \times X^s_{m_0}(e_1\varrho) \times X^u_{m_0}(e_1\sigma) &~ \to X^c_{m_0} \times X^s_{m_0} \times X^u_{m_0}, \notag \\
\Phi_{m_0,\gamma} :  (x^c_0, y^s, y^u) & ~ \mapsto (\widehat{x}^c, \widehat{x}^s, \widehat{x}^u),
\end{align}
where $ m_0 \in K $, $ \gamma \in \phi^{-1}(m_0) $, and
\begin{equation*}
\begin{cases}
\widehat{x}^c = x^c_0 + \varphi^{c,s}_{m_0} (\overline{m}) y^s + \varphi^{c,u}_{m_0} (\overline{m}) y^u,  \quad\overline{m} = \omega_{m_0,\gamma} (x^c_0),\\
\widehat{x}^\kappa = \chi^{\kappa}_{m_0,\gamma} (x^c_0) + \varphi^{su-\kappa}_{m_0} (\overline{m}) y^{su-\kappa} + y^{\kappa}, \quad \kappa = s, u,
\end{cases}
\end{equation*}
that is, $ \overline{m} + \overline{x}^s + \overline{x}^u = m_0 + \widehat{x}^c + \widehat{x}^s + \widehat{x}^u $, and
\begin{equation}\label{equ:bundle}
\begin{cases}
\overline{m} = m_0 + x^c_0 + \chi^{h}_{m_0,\gamma} (x^c_0) \in U_{m_0,\gamma} (\varepsilon), \\
\overline{x}^s = \Pi_{X^{cu}_{m_0}} (X^s_{\overline{m}}) y^s \in X^s_{\overline{m}} (\varrho), \\
\overline{x}^u = \Pi_{X^{cs}_{m_0}} (X^{u}_{\overline{m}}) y^u \in X^u_{\overline{m}} (\sigma).
\end{cases}
\end{equation}
From \autoref{lem:granlip} \eqref{granlip}, (H2) and \autoref{lem:gram1} \eqref{gramf}, we see that $ \varphi^{c,\kappa}_{m_0} (\cdot) $ and $ \varphi^\kappa_{m_0} (\cdot) $ are Lipschitz with Lipschitz constants bounded by a fixed constant. Using this fact and (H1), it is not hard to see that 
\begin{equation}\label{equ:lipT}
	\sup_{m_0 \in K, \gamma} \lip (\Phi_{m_0,\gamma} - I) \to 0 ~\text{as}~ \varepsilon, \chi(\varepsilon), \sigma, \rho \to 0,
\end{equation}
(see also \cite[Section 4.3]{Che18b} or \cite[Lemma 3.6]{BLZ08}). We should also mention that if $ \Sigma $ is $ C^1 $ with $ U_{m_0,\gamma} \to L(X), m' \mapsto \Pi^{\kappa}_{m'} $ being $ C^1 $ ($ \kappa = s, c, u $), then $ \Phi_{m_0,\gamma}(\cdot) \in C^1 $. In particular, we obtain

\begin{lem}\label{lem:tub}
	Assume $ \epsilon_{*}, \chi(\epsilon_{*}) $ are sufficiently small. Then there is a small $ \chi_{*} < 1/4 $ depending on $ \epsilon_{*}, \chi(\epsilon_{*}) $ such that if $ 0 < \varepsilon \leq \epsilon_{*} $ and $ \chi_{*} \varepsilon \leq \varrho, \sigma \leq \varepsilon $, and $ \widehat{m}_0 \in \widehat{K}, m_0 = \phi(\widehat{m}_0) $, then
	\begin{multline*}
		m_0 + X^c_{m_0}(c_1\varepsilon) \oplus X^s_{m_0}(e_1\varrho) \times X^u_{m_0}(e_1\sigma) \subset X^s_{\widehat{U}_{\widehat{m}_0}(\varepsilon)} (\sigma) \oplus X^u_{\widehat{U}_{\widehat{m}_0}(\varepsilon)} (\varrho) \\ \subset m_0 + X^c_{m_0}(c_2\varepsilon) \oplus X^s_{m_0}(e_2\varrho) \times X^u_{m_0}(e_2\sigma),
	\end{multline*}
	where $ c_{i}, e_{i} \to 1 $ and $ \chi_{*} \to 0 $ as $ \epsilon_{*}, \chi(\epsilon_{*}) \to 0 $.
\end{lem}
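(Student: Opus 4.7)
The plan is to build a single comparison parametrization $\Phi_{m_0,\gamma}: X^c_{m_0} \times X^h_{m_0} \to X$ of the set $X^h_{U_{m_0,\gamma}(\varepsilon)}(\varrho) - m_0$, show that $\Phi_{m_0,\gamma}$ is a small Lipschitz perturbation of the identity $I(y,w) = y + w$, and then read off both inclusions from the standard ``near-identity image'' principle.

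First, \textbf{(H1)} implies $|(I - \Pi^c_{m_0})(m_1 - m_2)| \leq \chi(\varepsilon)|m_1 - m_2|$ on $U_{m_0,\gamma}(\varepsilon)$, so $\Pi^c_{m_0}$ is bi-Lipschitz on $U_{m_0,\gamma}(\varepsilon) - m_0$; this lets me represent $U_{m_0,\gamma}(\varepsilon) - m_0$ as the graph of a Lipschitz map $g_{m_0,\gamma}: D \to X^h_{m_0}$ satisfying $g_{m_0,\gamma}(0) = 0$, $|g_{m_0,\gamma}(y)| \leq \chi(\varepsilon)|y|/(1-\chi(\varepsilon))$, and $\lip g_{m_0,\gamma} \leq \chi(\varepsilon)/(1-\chi(\varepsilon))$. \textbf{(H4)} combined with the norm bound $\widetilde M$ from \textbf{(H2)} gives $X^c_{m_0}(c_0\varepsilon) \subset D \subset X^c_{m_0}(\widetilde M \varepsilon)$ with $c_0 \to 1$ as $\chi(\varepsilon) \to 0$. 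Writing $\overline{m} := m_0 + y + g_{m_0,\gamma}(y)$ for $y \in D$, the estimate $|\overline{m} - m_0| \leq \varepsilon$ and \textbf{(H2)} yield $|\Pi^h_{\overline{m}} - \Pi^h_{m_0}| \leq L\varepsilon$, so Lemma~\ref{lem:gram2} produces the invertible operator $\iota_y := (\Pi^h_{m_0}|_{X^h_{\overline{m}}})^{-1}: X^h_{m_0} \to X^h_{\overline{m}}$ with $|\iota_y(w) - w| \leq O(L\varepsilon)|w|$ and $y \mapsto \iota_y$ Lipschitz (combining Lemma~\ref{lem:gram2}\eqref{gram2c} with the Lipschitz continuity of $y \mapsto \overline{m}$ and of $\overline{m} \mapsto X^h_{\overline{m}}$). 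I then define
\[
\Phi_{m_0,\gamma}(y,w) := y + g_{m_0,\gamma}(y) + \iota_y(w),
\]
which uniquely parametrizes $X^h_{U_{m_0,\gamma}(\varepsilon)}(\varrho) - m_0$ over the domain $\{(y,w) : y \in D,\ |w| < (1 - O(L\varepsilon))\varrho\}$ and satisfies the key estimate $\lip(\Phi_{m_0,\gamma} - I) \leq O(\chi(\varepsilon) + L\varepsilon) =: \zeta$.

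With $\zeta$ made small by shrinking $\epsilon_*$ and $\chi(\epsilon_*)$, the inclusions follow. The right inclusion is immediate from $|\Phi_{m_0,\gamma}(y,w)| \leq (1+\zeta)\max\{|y|,|w|\}$, yielding $c_2, e_2 = 1 + O(\zeta)$. For the left inclusion, given $(x^c_0, x^h_0) \in X^c_{m_0}(c_1\varepsilon) \times X^h_{m_0}(e_1\varrho)$, I would solve $\Phi_{m_0,\gamma}(y,w) = x^c_0 + x^h_0$; splitting by $\Pi^c_{m_0}$ and $\Pi^h_{m_0}$ gives the decoupled system $w = x^h_0 - g_{m_0,\gamma}(y)$ and $y = x^c_0 - \Pi^c_{m_0}(\iota_y(w) - w)$, the latter being a Banach contraction in $y$ with constant $O(\zeta)$. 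The resulting fixed point satisfies $|y| \leq c_1\varepsilon + O(\zeta\varrho)$ and $|w| \leq e_1\varrho + O(\chi(\varepsilon)\varepsilon)$, which lie in the admissible domain precisely when $c_1, e_1 = 1 - O(\zeta)$ and $\varrho \geq \chi_*\varepsilon$ with $\chi_* = O(\chi(\varepsilon) + L\varepsilon)$; the condition $\varrho \geq \chi_*\varepsilon$ is precisely what absorbs the $g_{m_0,\gamma}$-correction of size $O(\chi(\varepsilon)\varepsilon)$ into the $\varrho$-ball. The main obstacle is the simultaneous uniform bookkeeping of $\chi(\varepsilon)$, $L\varepsilon$, and $\chi_*$, ensuring $c_i, e_i \to 1$ uniformly in $m_0 \in K$ and $\gamma \in \Lambda(m_0)$; once this is arranged via \textbf{(H3)} (which guarantees $\chi(\varepsilon) \to 0$ as $\varepsilon \to 0$ uniformly over $K$), both inclusions drop out.
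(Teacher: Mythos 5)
Your proposal is essentially the paper's own route: the paper gives no independent proof of \autoref{lem:tub} beyond reducing it (with a citation to \cite{Che18b, BLZ08}) to the fact that the chart $\Phi_{m_0,\gamma}$ of \eqref{equ:tub} — your $\Phi_{m_0,\gamma}(y,w)=y+g_{m_0,\gamma}(y)+\iota_y(w)$ with the $s$- and $u$-directions lumped into $h$ and $\iota_y=(\Pi^h_{m_0}|_{X^h_{\overline{m}}})^{-1}$ — is a uniformly small Lipschitz perturbation of the identity, and your graph representation plus the contraction argument for the left inclusion is exactly that reduction carried out. One calibration remark: since the graph correction $|g_{m_0,\gamma}(y)|\lesssim \chi(\epsilon_*)\varepsilon$ must be absorbed into the $\varrho$-ball with $\varrho\geq\chi_*\varepsilon$ while keeping $e_1\to 1$, you need $\chi(\epsilon_*)/\chi_*\to 0$ (e.g. $\chi_*\sim(\chi(\epsilon_*)+L\epsilon_*)^{1/2}$), so your choice $\chi_*=O(\chi(\varepsilon)+L\varepsilon)$ should be relaxed accordingly; this is a harmless adjustment since $\chi_*$ is at your disposal.
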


We call $ X^s_{\widehat{U}_{\widehat{m}_0}(\varepsilon)} (\sigma) \oplus X^u_{\widehat{U}_{\widehat{m}_0}(\varepsilon)} (\varrho) $ a local (immersed) tubular neighborhood of $ \Sigma $ in $ X $ near $ m_0 \in K $, and $ X^s_{\widehat{K}_{\varepsilon}} (\sigma) \oplus X^u_{\widehat{K}_{\varepsilon}} (\varrho) $ an (immersed) tubular neighborhood of $ \Sigma $ in $ X $ near $ K $.

Let $ \overline{m}_i \in U_{m_0,\gamma} (\varepsilon) $, $ \overline{x}^s_i \in X^s_{\overline{m}_i} (\sigma) $, $ \overline{x}^u_i \in X^h_{\overline{m}_i} (\varrho) $, or $ \widehat{x}^c_i \in X^c_{m_0} (c_1\varepsilon) $, $ \widehat{x}^s_i \in X^s_{m_0} (e_1\sigma) $, $ \widehat{x}^u_i \in X^u_{m_0} (e_1\varrho) $, and consider
\begin{equation}\label{equ:pre}
\begin{split}
\overline{m}_i + \overline{x}^s_i + \overline{x}^u_i & = m_0 + x^c_i + \chi^h_{m_0,\gamma} (x^c_i) + \sum_{\kappa = s,u}\{\varphi^{c,\kappa}_{m_0}(\overline{m}_i) y^\kappa_i + \varphi^{\kappa}_{m_0}(\overline{m}_i) y^\kappa_i + y^\kappa_i\},\\
& = m_0 + \widehat{x}^c_i + \widehat{x}^s_i + \widehat{x}^u_i,
\end{split}
\end{equation}
where $ \overline{m}_i \in U_{m_0,\gamma} $, $ \overline{x}^\kappa_i \in X^h_{\overline{m}_i} $, $ x^c_i \in X^c_{m_0} $, $ y^\kappa_i \in X^\kappa_{m_0} $, $ \kappa = s, u $, $ \widehat{x}^{\kappa_1}_i \in X^{\kappa_1}_{m_0} $, $ \kappa_1 = s, c, u $, $ i = 1,2 $.
Then we have $ \Phi_{m_0,\gamma} (x^c_i,y^s_i, y^u_i) = (\widehat{x}^c_i, \widehat{x}^s_i, \widehat{x}^u_i) $. Here, note that from \eqref{equ:bundle}, we have 
\[
\Pi^\kappa_{m_0} (\overline{x}^\kappa_1 - \overline{x}^\kappa_2) = y^{\kappa}_1 - y^{\kappa}_2,~\kappa = s, u,~\text{and}~\Pi^c_{m_0} (\overline{m}_1 - \overline{m}_2) = x^c_1 - x^c_2.
\]

The following notation $ \{ s, c, u \} - \kappa $ means the collection obtained by deleting the letter $ \kappa $ from $ \{ s, c, u \} $; for instance, if $ \kappa = su~ (= h) $, then $ \{ s, c, u \} - \kappa = \{ s \} $. 

\begin{lem}\label{lem:lip2}
	For sufficiently small $ \epsilon_{*,*} > 0 $, if $ \max\{\epsilon_{*}, \chi(\epsilon_{*})\} \leq \epsilon_{*,*} $, then there is a small $ \chi_{*} < 1 / 4 $ depending on $ \epsilon_{*}, \chi(\epsilon_{*}) $ such that $ \chi_{*} \to 0 $ as $ \epsilon_{*}, \chi(\epsilon_{*}) \to 0 $ and the following hold. Let $ \varepsilon \leq \epsilon_{*} $, $ \chi_{*} \varepsilon \leq \sigma, \varrho \leq \varepsilon $ and assume \eqref{equ:pre} holds.

	\begin{enumerate}[(1)]
		\item For $ \kappa = c, s, u, h $, the following estimates hold:
		\begin{equation}\label{equ:estimates}
		\left\{
		\begin{split}
		|\widehat{x}^\kappa_1 - \widehat{x}^\kappa_2| & \leq (1 + \chi_{*}) |x^\kappa_1 - x^\kappa_2| + \chi_{*} \sum_{\kappa' \in \{ s, c, u \} - \kappa } |x^{\kappa'}_1 - x^{\kappa'}_2|, \\
		|{x}^\kappa_1 - {x}^\kappa_2| & \leq (1 + \chi_{*}) |\widehat{x}^\kappa_1 - \widehat{x}^\kappa_2| + \chi_{*} \sum_{\kappa' \in \{ s, c, u \} - \kappa } |\widehat{x}^{\kappa'}_1 - \widehat{x}^{\kappa'}_2|,
		\end{split}
		\right.
		\end{equation}
		where $ \Pi^\kappa_{m_0} (\overline{x}^\kappa_1 - \overline{x}^\kappa_2) \triangleq x^\kappa_1 - x^{\kappa}_2 $, $ \kappa = s, u, h (= su) $.
		Moreover, for $ \widehat{m}_0 \in \widehat{K} $, $ m_0 = \phi(\widehat{m}_0) $,
		\begin{multline} \label{equ:nnn}
		m_0 + X^c_{m_0}(c_1\varepsilon) \oplus X^s_{m_0}(e_1\sigma) \oplus X^u_{m_0}(e_1 \varrho) \subset X^s_{\widehat{U}_{\widehat{m}_0}(\varepsilon)}(\sigma) \oplus X^u_{\widehat{U}_{\widehat{m}_0}(\varepsilon)}(\varrho) \\
		\subset m_0 + X^c_{m_0}(c_2\varepsilon) \oplus X^s_{m_0}(e_2\sigma) \oplus X^u_{m_0}(e_2 \varrho).
		\end{multline}
		The constants $ c_i, e_i $, $ i = 1,2 $, are given as in \autoref{lem:tub}.
		\item Moreover, given $ \mu_* > 0 $ and $ \mu \leq \mu_* $, the constant $ \chi_{*} $ can be chosen small enough (depending on $ \mu_* $) such that $ \mu_* \chi_{*} < 1/4 $ and the following hold. For $ \kappa = s, u $,
		\begin{multline*}
		|\Pi^{\kappa}_{m_0} (\overline{x}^{\kappa}_1 - \overline{x}^{\kappa}_2)| \leq \mu \max\{ |\Pi^c_{m_0} (\overline{m}_1 - \overline{m}_2) |, |\Pi^{su - \kappa}_{m_0} (\overline{x}^{su - \kappa}_1 - \overline{x}^{su - \kappa}_2)| \} \\
		\Rightarrow |\widehat{x}^{\kappa}_1 - \widehat{x}^{\kappa}_2| \leq \mu_1 \max\{|\widehat{x}^c_1 - \widehat{x}^c_2|, |\widehat{x}^{su - \kappa}_1 - \widehat{x}^{su - \kappa}_2| \},
		\end{multline*}
		and
		\begin{multline*}
		|\widehat{x}^{\kappa}_1 - \widehat{x}^{\kappa}_2| \leq \mu \max\{|\widehat{x}^c_1 - \widehat{x}^c_2|, |\widehat{x}^{su - \kappa}_1 - \widehat{x}^{su - \kappa}_2| \}  \\
		\Rightarrow |\Pi^{\kappa}_{m_0} (\overline{x}^{\kappa}_1 - \overline{x}^{\kappa}_2)| \leq \mu_1 \max\{ |\Pi^c_{m_0} (\overline{m}_1 - \overline{m}_2) |, |\Pi^{su - \kappa}_{m_0} (\overline{x}^{su - \kappa}_1 - \overline{x}^{su - \kappa}_2)| \},
		\end{multline*}
		where $ \mu_1 = (1 + \chi_*) \mu + \chi_{*} $.
	\end{enumerate}
\end{lem}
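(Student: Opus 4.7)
The plan is to reduce both parts of the lemma to a single quantitative near-identity bound for the change-of-coordinates map $\Phi_{m_0,\gamma}$ from \eqref{equ:tub}, namely
\[
\lip(\Phi_{m_0,\gamma} - I) \leq \chi_{*}, \qquad \chi_{*} \to 0 \ \text{as}\ \max\{\epsilon_{*},\chi(\epsilon_{*}),\sigma/\varepsilon,\varrho/\varepsilon\}\to 0,
\]
with $\chi_{*}$ independent of $m_0 \in K$ and $\gamma \in \Lambda(m_0)$. Once this is in hand, \eqref{equ:estimates} is an immediate unpacking of the explicit formulas for $(\widehat x^{c},\widehat x^{s},\widehat x^{u})$; the sandwich \eqref{equ:nnn} is the image/preimage of a standard box under $\Phi_{m_0,\gamma}$ combined with \autoref{lem:tub}; and part (2) is elementary algebra.

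To establish the near-identity bound, I write the components of $\Phi_{m_0,\gamma}-I$ as
\begin{align*}
\widehat x^{c} - x^{c}_{0} &= \varphi^{c,s}_{m_0}(\overline m)\,y^{s} + \varphi^{c,u}_{m_0}(\overline m)\,y^{u}, \\
\widehat x^{s} - y^{s} &= \chi^{s}_{m_0,\gamma}(x^{c}_{0}) + \varphi^{u}_{m_0}(\overline m)\,y^{u}, \\
\widehat x^{u} - y^{u} &= \chi^{u}_{m_0,\gamma}(x^{c}_{0}) + \varphi^{s}_{m_0}(\overline m)\,y^{s},
\end{align*}
and bound each piece. From \textbf{(H1)} and \textbf{(H3)} one gets $\lip \omega_{m_0,\gamma} \leq (1-\chi(\varepsilon))^{-1}$ on $X^{c}_{m_0}(\delta_{0})$ and therefore $\lip \chi^{h}_{m_0,\gamma}(\cdot) \leq \chi(\varepsilon)/(1-\chi(\varepsilon))$, together with the pointwise bound $|\chi^{h}_{m_0,\gamma}(x^{c}_{0})| \leq \chi(\varepsilon)/(1-\chi(\varepsilon))\cdot|x^{c}_{0}|$. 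From \textbf{(H2)} combined with \autoref{lem:gram1}, $\widehat d(X^{\kappa}_{\overline m},X^{\kappa}_{m_0}) \leq 2L|\overline m-m_0| \leq 4L\varepsilon$, and then \autoref{lem:granlip} supplies
\[
|\varphi^{c,\kappa}_{m_0}(\overline m)| + |\varphi^{\kappa}_{m_0}(\overline m)| = O(L\varepsilon),
\]
while the local Lipschitz constants of $\overline m \mapsto \varphi^{c,\kappa}_{m_0}(\overline m),\,\varphi^{\kappa}_{m_0}(\overline m)$ are bounded by a multiple of $\alpha(X^{\kappa}_{m_0},X^{csu-\kappa}_{m_0})^{-2} \leq \widetilde M^{2}$. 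Because $\widetilde M,L,\delta_{0}$ and the modulus $\chi(\cdot)$ are uniform in $m_0 \in K$ by \textbf{(H2)--(H4)}, combining these estimates with $|y^{\kappa}_{i}| \leq \max\{\sigma,\varrho\} \leq \varepsilon$ yields the asserted $\chi_{*}$.

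With $\chi_{*}$ in hand, the forward inequality of \eqref{equ:estimates} reads off directly: the diagonal contribution from $x^{\kappa}$ carries the factor $(1+\chi_{*})$ (the extra $\chi_{*}$ absorbing the $\overline m$-Lipschitz dependence of the $\varphi$'s via $|x^{c}_{1}-x^{c}_{2}|$), while every off-diagonal contribution is bounded by $\chi_{*}$ times a difference of another variable. The reverse inequality is the same statement applied to $\Phi_{m_0,\gamma}^{-1}$, which exists and satisfies $\lip(\Phi^{-1}_{m_0,\gamma} - I) \leq \chi_{*}/(1-\chi_{*})$ by the standard perturbation-of-identity argument; and \eqref{equ:nnn} follows by applying $\Phi_{m_0,\gamma}$ (resp.\ $\Phi^{-1}_{m_0,\gamma}$) to the box neighborhoods provided by \autoref{lem:tub}. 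For part (2), substituting the cone hypothesis $|y^{\kappa}_{1}-y^{\kappa}_{2}|\leq \mu\max(|x^{c}_{1}-x^{c}_{2}|,|y^{su-\kappa}_{1}-y^{su-\kappa}_{2}|)$ into the forward bound, and then using the reverse bound to re-express $|x^{c}_{1}-x^{c}_{2}|$ and $|y^{su-\kappa}_{1}-y^{su-\kappa}_{2}|$ in terms of the $\widehat x$-variables, yields a single linear inequality in $|\widehat x^{\kappa}_{1}-\widehat x^{\kappa}_{2}|$; solving it and shrinking $\chi_{*}$ so that $\mu_{*}\chi_{*}<1/4$ absorbs the $O(\chi_{*}^{2})$ cross terms and delivers the stated constant $\mu_{1}=(1+\chi_{*})\mu+\chi_{*}$. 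The second implication in (2) is the symmetric statement obtained the same way through $\Phi^{-1}_{m_0,\gamma}$.

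No step is individually deep; the delicate point is the \emph{uniformity} of $\chi_{*}$ over $m_0 \in K$ and $\gamma\in\Lambda(m_0)$. This rests on having the angle $\alpha(X^{\kappa}_{m_0},X^{csu-\kappa}_{m_0})\geq \widetilde M^{-1}$ uniformly bounded below (from \textbf{(H2)}), the modulus $\chi(\cdot)$ and local-chart size $\delta_{0}$ from \textbf{(H3)--(H4)} independent of the base point, and the uniform bound $L$ on the Lipschitz constant of $m \mapsto \Pi^{\kappa}_{m}$ from \textbf{(H2)}. Tracking the explicit dependence of $\chi_{*}$ on $\epsilon_{*}$ and $\chi(\epsilon_{*})$—rather than merely asserting its smallness—is also what allows $\chi_{*}$ to be further shrunk in part (2) for any prescribed $\mu_{*}$.
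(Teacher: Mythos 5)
Your overall route is exactly the one the paper itself indicates: the paper offers no proof of this lemma, remarking only that everything in this subsection follows from $\sup_{m_0\in K,\gamma}\lip(\Phi_{m_0,\gamma}-I)\to 0$ and deferring the details to \cite{Che18b} (cf.\ \cite{BLZ08}). Your componentwise decomposition of $\Phi_{m_0,\gamma}-I$, the uniform bounds via \textbf{(H1)--(H4)}, \autoref{lem:gram1} and \autoref{lem:granlip}, the derivation of \eqref{equ:estimates} (with $\chi_*$ renamed to absorb the inverse-map and $O(\chi_*^2)$ corrections), and the treatment of part (2) are all sound in substance.

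There is, however, one genuine gap: you identify $\chi_*$ with (essentially) $\lip(\Phi_{m_0,\gamma}-I)$, which is of size $O(\chi(\epsilon_*)+L\epsilon_*)$, and claim \eqref{equ:nnn} then follows by pushing boxes through $\Phi_{m_0,\gamma}^{\pm1}$ together with \autoref{lem:tub}. That does not deliver \eqref{equ:nnn} with the constants $c_i,e_i$ of \autoref{lem:tub}, i.e.\ constants tending to $1$, uniformly over the whole admissible range $\chi_*\varepsilon\le\sigma,\varrho\le\varepsilon$. At the lower extreme $\sigma=\varrho=\chi_*\varepsilon$, the additive errors in, say, the $s$-slot --- $|\Pi^{s}_{m_0}(\overline m-m_0)|\le\widetilde M\chi(\varepsilon)\varepsilon$ from the deviation of $\Sigma$ from $m_0+X^{c}_{m_0}$, and $|\Pi^{s}_{m_0}\overline x^{u}|\le C\widetilde M L\varepsilon\varrho$ from the tilting of the fibers (this cross-radius term is not covered by \autoref{lem:tub}, which uses a single radius for the whole $h$-direction) --- are then \emph{comparable} to $\sigma$ itself, so one only gets $e_2$ bounded away from $1$, not $e_2\to1$ as $\epsilon_*,\chi(\epsilon_*)\to0$. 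The missing idea is precisely the reason the hypothesis $\chi_*\varepsilon\le\sigma,\varrho$ appears (you never use it): $\chi_*$ must be chosen to \emph{dominate} the raw error scales ratio-wise, e.g.\ $\chi_*\sim\sqrt{\chi(\epsilon_*)}+\sqrt{L\epsilon_*}$, which still tends to $0$ but makes $\chi(\varepsilon)\varepsilon/\sigma$ and $L\varepsilon\varrho/\sigma$ of order $\chi(\epsilon_*)/\chi_*+\epsilon_*/\chi_*\to0$; any fixed multiple of the Lipschitz constant cannot achieve this, since it yields a fixed $e_2>1$. Since your Lipschitz estimates in \eqref{equ:estimates} only require $\chi_*$ to be an \emph{upper} bound for the near-identity constants, the corrected (larger, dominating) choice of $\chi_*$ is compatible with the rest of your argument, which then goes through.
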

\begin{proof}
	Note that \eqref{equ:estimates} is a restatement of \eqref{equ:lipT}. (2) is a direct consequence of \eqref{equ:estimates} by simple computations.
\end{proof}

We now describe the graph of a bundle map $ h: X^s_{\widehat{K}_{\varepsilon}} (\sigma) \to X^u_{\widehat{K}_{\varepsilon}} (\varrho) $ over $ \id $.

\begin{defi}\label{def:lip}
	Let $ \widehat{m}_0 \in \widehat{K} $ and $ m_0 = \phi(\widehat{m}_0) $.
	A set $ \mathcal{G}_{\widehat{m}_0} \subset X^s_{\widehat{U}_{\widehat{m}_0}(\varepsilon)} (\sigma) \oplus X^u_{\widehat{U}_{\widehat{m}_0}(\varepsilon)} (\varrho) $ is said to be \emph{$ \mu $-Lip in $ u $-direction} if for any $ (\widehat{m}_i, \overline{x}^s_i) \in X^s_{\widehat{U}_{\widehat{m}_0}(\varepsilon)} (\sigma) $, there is $ \overline{x}^u_i \in X^{u}_{\phi(\widehat{m}_i)} $ such that $ (\widehat{m}_i, \overline{x}^s_i, \overline{x}^u_i) \in \mathcal{G}_{\widehat{m}_0} $, $ i = 1,2 $, and they satisfy
	\[
	|\Pi^u_{m_0}(\overline{x}^u_1 - \overline{x}^u_2)| \leq \mu  \max \{|\Pi^c_{m_0}(\phi(\widehat{m}_1) - \phi(\widehat{m}_2))|, |\Pi^s_{m_0}(\overline{x}^s_1 - \overline{x}^s_2)|\} .
	\]
	A set $ \mathcal{G} \subset X^s_{\widehat{K}_{\varepsilon}} (\sigma) \oplus X^u_{\widehat{K}_{\varepsilon}} (\varrho) $ is said to be \emph{$ \mu $-Lip in $ u $-direction} near $ \widehat{K} $, where $ \mu: \widehat{K} \to \mathbb{R}_+ $ (or $ \mu: K \to \mathbb{R}_+ $), if every $ \mathcal{G}_{\widehat{m}_0} \triangleq \mathcal{G} \cap X^s_{\widehat{U}_{\widehat{m}_0}(\varepsilon)} (\sigma) \oplus X^u_{\widehat{U}_{\widehat{m}_0}(\varepsilon)} (\varrho) $ is $ \mu(\widehat{m}_0) $-Lip in $ u $-direction for $ \widehat{m}_0 \in \widehat{K} $.
\end{defi}

For example, by \autoref{lem:lip2}, the set $ X^s_{\widehat{K}_{\epsilon_*}} (\sigma) $ is $ \chi_{*} $-Lip in $ u $-direction if $ \chi_{*}\epsilon_* \leq \sigma \leq \epsilon_{*} $.
A similar notion of \emph{$ \mu $-Lip in $ s $-direction} can be defined.

\begin{cor}\label{lem:represent2}
	Let $ \epsilon_{*}, \chi(\epsilon_{*}), \chi_{*}, \mu_* > 0 $ be given as in \autoref{lem:lip2}. Take $ 0 < \varepsilon \leq \epsilon_{*} $, $ \chi_{*} \varepsilon \leq \sigma, \varrho \leq \varepsilon $.
	Let $ \mathcal{G} \subset X^s_{\widehat{K}_{\varepsilon}} (\sigma) \oplus X^u_{\widehat{K}_{\varepsilon}} (\varrho) $ be a $ \mu $-Lip in $ u $-direction set with $ \sup_{\widehat{m}_0 \in \widehat{K}}\mu(\widehat{m}_0) \leq \mu_* $.
	\begin{enumerate}[(1)]
		\item $ \mathcal{G} $ is the graph of a bundle map $ h: X^s_{\widehat{K}_{\varepsilon}} (\sigma) \to X^u_{ \widehat{K}_{\varepsilon} } (\varrho), (\widehat{m}, \overline{x}^s) \mapsto h(\widehat{m}, \overline{x}^s) \in X^u_{\phi(\widehat{m})}(\varrho) $ over $ \id $, i.e.,
		\[
		\left\{ (\widehat{m}, \overline{x}^s, h(\widehat{m}, \overline{x}^s)) \triangleq \phi(\widehat{m}) + \overline{x}^s + h(\widehat{m}, \overline{x}^s): (\widehat{m}, \overline{x}^s) \in X^{s}_{\widehat{K}_{\varepsilon}} (\sigma) \right\} = \mathcal{G}.
		\]

		\item Let $ \widehat{m}_0 \in \widehat{K} $, $ m_0 = \phi(\widehat{m}_0) $. Then there is a unique function $ f_{\widehat{m}_0}: X^{c}_{m_0}(c_1\varepsilon) \oplus X^{s}_{m_0}(e_1\sigma) \to X^{u}_{m_0}(e_2\varrho) $ such that
		\begin{multline*}
		\graph f_{\widehat{m}_0} |_{X^{c}_{m_0}(c_1\varepsilon) \oplus X^{s}_{m_0}(e_1\sigma)} \\
		\triangleq \{ m_0 + {x}^c + {x}^s + f_{\widehat{m}_0} ({x}^c, {x}^s): ({x}^c, {x}^s) \in X^{c}_{m_0}(c_1\varepsilon) \oplus X^{s}_{m_0}(e_1\sigma) \} \subset \mathcal{G}_{\widehat{m}_0}.
		\end{multline*}
		Moreover, $ f_{\widehat{m}_0} $ is Lipschitz with Lipschitz constant less than $ \mu_1(\widehat{m}_0) $, where $ \mu_1 = (1 + \chi_*) \mu + \chi_{*} $.
		The constants $ c_i, e_i $, $ i = 1,2 $, are given as in \autoref{lem:tub}.
	\end{enumerate}
\end{cor}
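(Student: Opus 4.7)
The plan is to unpack the definition of \emph{$\mu$-Lip in $u$-direction} and then convert it into the bundle-graph representation via the chart map $\Phi_{m_0,\gamma}$ from \eqref{equ:tub}, invoking \autoref{lem:lip2} to control Lipschitz constants.

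For part (1), the existence portion of the definition already furnishes, for each $(\widehat{m},\overline{x}^s) \in X^s_{\widehat{K}_\varepsilon}(\sigma)$, at least one $\overline{x}^u \in X^u_{\phi(\widehat{m})}(\varrho)$ with $(\widehat{m},\overline{x}^s,\overline{x}^u)\in \mathcal{G}$. For uniqueness I would take two such choices $\overline{x}^u_1,\overline{x}^u_2 \in X^u_{\phi(\widehat{m})}$ with $\widehat{m}_1=\widehat{m}_2=\widehat{m}$, $\overline{x}^s_1=\overline{x}^s_2$, and apply the $\mu$-Lip inequality to get $|\Pi^u_{m_0}(\overline{x}^u_1-\overline{x}^u_2)|=0$. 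Since $\widehat{m}\in \widehat{U}_{\widehat{m}_0}(\varepsilon)$ and $\varepsilon,\chi(\varepsilon)$ are small, \textbf{(H2)} together with \autoref{lem:gram2} ensures $\Pi^u_{m_0}\colon X^u_{\phi(\widehat{m})}\to X^u_{m_0}$ is a linear isomorphism, hence $\overline{x}^u_1=\overline{x}^u_2$. This defines the bundle map $h(\widehat{m},\overline{x}^s)\triangleq \overline{x}^u$ and establishes $\mathcal{G}=\graph h$.

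For part (2), I would work in the trivializing chart $\Phi_{m_0,\gamma}$. Given $(x^c,y^s)\in X^c_{m_0}(c_1\varepsilon)\oplus X^s_{m_0}(e_1\sigma)$, set $\overline{m}=\omega_{m_0,\gamma}(x^c)\in U_{m_0,\gamma}(\varepsilon)$ and $\overline{x}^s=\Pi_{X^{cu}_{m_0}}(X^s_{\overline{m}})y^s\in X^s_{\overline{m}}(\sigma)$ as in \eqref{equ:bundle}; taking the unique lift $\widehat{m}\in\widehat{U}_{\widehat{m}_0}(\varepsilon)$ corresponding to $\gamma$, part (1) produces the unique $\overline{x}^u=h(\widehat{m},\overline{x}^s)\in X^u_{\phi(\widehat{m})}(\varrho)$. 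Writing out $\Phi_{m_0,\gamma}(x^c,y^s,y^u)=(\widehat{x}^c,\widehat{x}^s,\widehat{x}^u)$ and solving for $y^u$ (which is possible because $\Pi^u_{m_0}$ is an isomorphism on $X^u_{\overline{m}}$, so $\overline{x}^u$ determines $y^u$ uniquely), I obtain the required $f_{\widehat{m}_0}(\widehat{x}^c,\widehat{x}^s)\triangleq \widehat{x}^u$. The inclusion $\graph f_{\widehat{m}_0}|_{X^{c}_{m_0}(c_1\varepsilon)\oplus X^{s}_{m_0}(e_1\sigma)}\subset \mathcal{G}_{\widehat{m}_0}$ and the codomain bound $\widehat{x}^u\in X^u_{m_0}(e_2\varrho)$ both follow from the two-sided inclusions \eqref{equ:nnn}.

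The Lipschitz estimate is the second implication in \autoref{lem:lip2}(2): the hypothesis
\[
|\Pi^u_{m_0}(\overline{x}^u_1-\overline{x}^u_2)|\le \mu(\widehat{m}_0)\max\{|\Pi^c_{m_0}(\phi(\widehat{m}_1)-\phi(\widehat{m}_2))|,\,|\Pi^s_{m_0}(\overline{x}^s_1-\overline{x}^s_2)|\}
\]
translates, after passing through $\Phi_{m_0,\gamma}$, exactly to
\[
|\widehat{x}^u_1-\widehat{x}^u_2|\le \mu_1(\widehat{m}_0)\max\{|\widehat{x}^c_1-\widehat{x}^c_2|,\,|\widehat{x}^s_1-\widehat{x}^s_2|\},
\]
with $\mu_1=(1+\chi_*)\mu+\chi_*$, which gives $\lip f_{\widehat{m}_0}\le \mu_1(\widehat{m}_0)$. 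Uniqueness of $f_{\widehat{m}_0}$ on its domain follows from uniqueness of $h$ combined with the bijectivity of $\Phi_{m_0,\gamma}$ (the smallness of $\chi_*$ relative to $\mu_*$ arranged in \autoref{lem:lip2} guarantees the chart is defined on a large enough ball). The only subtle point — which I expect to be the main bookkeeping obstacle — is matching domains/codomains: verifying that every $(x^c,y^s)\in X^c_{m_0}(c_1\varepsilon)\oplus X^s_{m_0}(e_1\sigma)$ indeed produces $(\widehat{m},\overline{x}^s)\in X^s_{\widehat{U}_{\widehat{m}_0}(\varepsilon)}(\sigma)$ to which the definition of $\mu$-Lip applies, and conversely that the resulting $\widehat{x}^u$ lies in $X^u_{m_0}(e_2\varrho)$, so that everything happens inside the region where \autoref{lem:lip2} is valid; both checks reduce to \eqref{equ:nnn} and the choice $\chi_*\varepsilon\le\sigma,\varrho\le\varepsilon$.
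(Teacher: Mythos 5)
Your part (1) is essentially fine: existence of a fiber value over each base point is built into \autoref{def:lip}, and uniqueness follows from the $\mu$-Lip inequality applied with $\widehat{m}_1=\widehat{m}_2$, $\overline{x}^s_1=\overline{x}^s_2$ together with the near-injectivity of $\Pi^u_{m_0}$ on $X^u_{\phi(\widehat{m})}$ coming from \textbf{(H2)} and the smallness of $L\varepsilon$. (The paper gives no written proof of this corollary; it is review material resting on \autoref{lem:tub} and \autoref{lem:lip2}, so the only comparison available is with the machinery you invoke and with the proof of the companion \autoref{lem:represent3}.)

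The genuine gap is in part (2), at the existence (surjectivity) step. You parametrize points of $\mathcal{G}_{\widehat{m}_0}$ by the pre-chart data $(x^c,y^s)$ and then set $f_{\widehat{m}_0}(\widehat{x}^c,\widehat{x}^s):=\widehat{x}^u$; but $(\widehat{x}^c,\widehat{x}^s)$ depends on $y^u=y^u(x^c,y^s)$ through the chart distortion, so what you have defined is a function on the image of the nonlinear, graph-dependent map $(x^c,y^s)\mapsto(\widehat{x}^c,\widehat{x}^s)$, and nothing in your argument shows that this image contains the ball $X^{c}_{m_0}(c_1\varepsilon)\oplus X^{s}_{m_0}(e_1\sigma)$ on which the corollary asserts $f_{\widehat{m}_0}$ is defined. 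The inclusion \eqref{equ:nnn} does not settle this: given $(\widehat{x}^c,\widehat{x}^s)$ in that ball, \eqref{equ:nnn} lets you decompose $m_0+\widehat{x}^c+\widehat{x}^s+\widehat{x}^u$ in tubular coordinates for any prescribed $\widehat{x}^u$ with $|\widehat{x}^u|\le e_1\varrho$, but the decomposed point belongs to $\mathcal{G}_{\widehat{m}_0}$ only if its $u$-coordinate equals $h$ at the decomposed base, and replacing the $u$-coordinate by the graph value perturbs $(\widehat{x}^c,\widehat{x}^s)$ again. Hence producing a graph point with exactly the prescribed $cs$-coordinates is a fixed-point problem: one must show that $(\widehat{m},\overline{x}^s)\mapsto\Pi^{cs}_{m_0}(\phi(\widehat{m})+\overline{x}^s+h(\widehat{m},\overline{x}^s)-m_0)$ (equivalently, your pre-chart map with $y^u$ slaved to $h$) is a Lipschitz-small perturbation of the identity, hence injective with image covering the stated ball with the constants $c_1,e_1$ of \autoref{lem:tub} — exactly the bi-Lipschitz invertibility argument the paper uses in the proof of \autoref{lem:represent3} for the map built from $\Phi^{-c}_{m_0,\gamma}$, $\Phi^{-s}_{m_0,\gamma}$ composed with the graph. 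Your closing remark that the domain bookkeeping ``reduces to \eqref{equ:nnn}'' skips precisely this step, and the parenthetical about inverting $\Pi^u_{m_0}$ on $X^u_{\overline{m}}$ to solve for $y^u$ addresses a non-issue. Once existence over each $(\widehat{x}^c,\widehat{x}^s)$ is established, your uses of \autoref{lem:lip2}(2) for single-valuedness, for the Lipschitz bound $\mu_1(\widehat{m}_0)$, and of \eqref{equ:nnn} for the codomain bound $e_2\varrho$ are correct.
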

\begin{proof}
	(1) The $ \mu $-Lip property of $ \mathcal{G} $ implies that for any $ (\widehat{m}, \overline{x}^s) \in X^s_{\widehat{K}_{\varepsilon}} (\sigma) $, there is only \emph{one} $ \overline{x}^u \in X^{u}_{\phi(\widehat{m})} $ such that $ (\widehat{m}, \overline{x}^s, \overline{x}^u) \in \mathcal{G} $.
	
	(2) Take $ (\widehat{m}, \overline{x}^s, \overline{x}^u) \in \mathcal{G}_{\widehat{m}_0} $. By \autoref{lem:tub}, we have $ \phi(\widehat{m}) + \overline{x}^s + \overline{x}^u = \phi(\widehat{m}_0) + \widehat{x}^c + \widehat{x}^s + \widehat{x}^u $, where $ \widehat{x}^c \in X^c_{m_0} (c_2\varepsilon) $, $ \widehat{x}^s \in X^s_{m_0} (e_2\sigma) $, $ \widehat{x}^u \in X^u_{m_0} (e_2\rho) $. Due to \autoref{lem:lip2}, we further see that $ \widehat{x}^u $ is uniquely determined by $ (\widehat{x}^c, \widehat{x}^s) $, and thus let $ f_{\widehat{m}_0} (\widehat{x}^c, \widehat{x}^s) = \widehat{x}^u $.
\end{proof}
\begin{cor}\label{lem:represent3}
	Let $ \epsilon_{*}, \chi(\epsilon_{*}), \chi_{*}, \mu_* > 0 $ be given as in \autoref{lem:lip2}. Take $ \varepsilon \leq \epsilon_{*} $, $ \chi_{*} \varepsilon \leq \sigma, \varrho \leq \varepsilon $, and $ \sup_{\widehat{m}_0 \in \widehat{K}}\mu(\widehat{m}_0) \leq \mu_* $. Let $ \widehat{m}_0 \in \widehat{K} $, $ m_0 = \phi(\widehat{m}_0) $. Assume there is a Lipschitz function 
	\[
	f_{\widehat{m}_0}: X^{c}_{m_0}(c_2\varepsilon) \oplus X^{s}_{m_0}(e_2\sigma) \to X^{u}_{m_0}(e_2\varrho)
	\]
	with $ \lip f_{\widehat{m}_0} \leq \mu_1(\widehat{m}_0) $, where $ \mu_1 = (1 + \chi_*) \mu + \chi_{*} $ and the constants $ c_i, e_i $, $ i = 1,2 $, are given as in \autoref{lem:tub}. Then for any $ (\widehat{m}, \overline{x}^s) \in X^s_{\widehat{U}_{\widehat{m}_0}(\varepsilon)} (\sigma) $, there is a unique $ \overline{x}^u \in X^{u}_{\phi(\widehat{m})} (\varrho) $ such that $ (\widehat{m}, \overline{x}^s, \overline{x}^u) \in \graph f_{\widehat{m}_0} |_{X^{c}_{m_0}(c_2\varepsilon) \oplus X^{s}_{m_0}(e_2\sigma)} $.
\end{cor}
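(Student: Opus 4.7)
The plan is to reduce the statement to a contraction-mapping problem in the tubular coordinates provided by the diffeomorphism $\Phi_{m_0,\gamma}$ defined in \eqref{equ:tub}. Given $\widehat{m}_0 \in \widehat{K}$, $m_0=\phi(\widehat{m}_0)$, and a target pair $(\widehat{m},\overline{x}^s) \in X^s_{\widehat{U}_{\widehat{m}_0}(\varepsilon)}(\sigma)$, I will set $x^c_0 = \chi_{m_0,\gamma}(\overline{m})$ and $y^s = \Pi^s_{m_0}\overline{x}^s$; these are fixed. The unknown is $y^u \in X^u_{m_0}$, or equivalently $\overline{x}^u = \Pi_{X^{cs}_{m_0}}(X^u_{\overline{m}}) y^u \in X^u_{\phi(\widehat{m})}$, and the required graph condition $\widehat{x}^u = f_{\widehat{m}_0}(\widehat{x}^c,\widehat{x}^s)$ rewrites, using the explicit formulas for $\Phi_{m_0,\gamma}$, as the fixed-point equation $y^u = T(y^u)$, where
\[
T(y^u) = f_{\widehat{m}_0}\!\left(\widehat{x}^c(x^c_0,y^s,y^u),\widehat{x}^s(x^c_0,y^s,y^u)\right) - \chi^u_{m_0,\gamma}(x^c_0) - \varphi^s_{m_0}(\overline{m})\, y^s.
\]

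First I would verify that $T$ is a contraction on a suitable closed ball $B \subset X^u_{m_0}$ of radius comparable to $\varrho$. From the formulas for $\Phi_{m_0,\gamma}$, the dependence of $\widehat{x}^c$ and $\widehat{x}^s$ on $y^u$ is linear with coefficients $\varphi^{c,u}_{m_0}(\overline{m})$ and $\varphi^{u}_{m_0}(\overline{m})$, each of operator norm at most $\chi_{*}$ (up to further shrinking $\epsilon_{*}$). Composing with $f_{\widehat{m}_0}$, whose Lipschitz constant is bounded by $\mu_{1}(\widehat{m}_0) \leq (1+\chi_*)\mu_* + \chi_*$, yields $\lip T \leq \mu_{1}\chi_{*} < 1$ from the standing requirement $\mu_{*}\chi_{*} < 1/4$ in \autoref{lem:lip2}.

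Next I would check that $T$ sends $B$ into itself: the range of $f_{\widehat{m}_0}$ sits in $X^u_{m_0}(e_2\varrho)$, while the correction terms $\chi^u_{m_0,\gamma}(x^c_0)$ and $\varphi^s_{m_0}(\overline{m}) y^s$ are each $O(\chi_{*}\varepsilon)$, so their sum stays below the deficit $(e_1 - e_2)\varrho + (\text{slack})$ once $\epsilon_{*}$ is small enough that all of $c_i, e_i$ are sufficiently close to $1$. Banach's fixed-point theorem then supplies a unique $y^u \in B$ with $T(y^u)=y^u$; the associated $\overline{x}^u$ lies in $X^u_{\overline{m}}(\varrho)$ by the inclusions of \autoref{lem:tub}, and by construction the point $(\widehat{m},\overline{x}^s,\overline{x}^u)$ lies in $\graph f_{\widehat{m}_0}|_{X^{c}_{m_0}(c_2\varepsilon)\oplus X^{s}_{m_0}(e_2\sigma)}$. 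Uniqueness of $\overline{x}^u$ follows because any other candidate would give a second fixed point of $T$, contradicting the contraction.

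The main technical hurdle is Step 3, the self-mapping property: one must coordinate the constants $c_1,c_2,e_1,e_2$ (and the permissible range of $y^u$) with the smallness of $\chi_{*}$, so that the image of $T$ genuinely stays inside the ball where the contraction is defined, while also matching the domain of $f_{\widehat{m}_0}$. All the relevant perturbative estimates, however, are already encapsulated in \autoref{lem:tub} and \autoref{lem:lip2}, so the argument reduces to bookkeeping with constants that all tend to $1$ (respectively $0$) as $\epsilon_{*},\chi(\epsilon_{*}) \to 0$.
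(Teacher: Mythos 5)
Your argument is correct and belongs to the same family as the paper's: both reduce the claim to inverting a Lipschitz-small perturbation of the identity in the tubular coordinates $\Phi_{m_0,\gamma}$ of \eqref{equ:tub}, with all quantitative input drawn from \autoref{lem:tub} and \autoref{lem:lip2}. The difference is which variable you solve for. The paper inverts in the base variables: it notes that $(x^c,x^s)\mapsto\bigl(\Phi^{-c}_{m_0,\gamma}(x^c,x^s,f_{\widehat m_0}(x^c,x^s)),\Phi^{-s}_{m_0,\gamma}(x^c,x^s,f_{\widehat m_0}(x^c,x^s))\bigr)$ is bi-Lipschitz, so every admissible pair $(x^c_0,y^s)$ — equivalently every $(\widehat m,\overline x^s)$ — has a preimage, from which $\overline x^u$ is read off; uniqueness is then obtained separately from $\lip f_{\widehat m_0}\le\mu_1(\widehat m_0)$ together with \autoref{lem:lip2}(2). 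You instead freeze $(x^c_0,y^s)$ and solve the single fiber equation $y^u=T(y^u)$; this buys uniqueness for free from the contraction and avoids appealing to surjectivity of a bi-Lipschitz self-map of a product of balls, at the price of the self-mapping bookkeeping you flag in Step 3. One slip there: your ``deficit'' $(e_1-e_2)\varrho$ is negative (since $e_1<1<e_2$), so as written the inclusion $T(B)\subset B$ and the final bound $|\overline x^u|<\varrho$ do not close from $|f_{\widehat m_0}|\le e_2\varrho$ plus $O(\chi_*\varepsilon)$ corrections alone; the true margin comes from how $c_i,e_i,\chi_*$ are coupled in \autoref{lem:tub} and \autoref{lem:lip2} (and, in the actual application, from the sharper range bound $X^u_{m_0}(e_1\varrho_*)$ of \autoref{lem:estR}) — which is exactly the same bookkeeping the paper's terse bi-Lipschitz claim silently absorbs.
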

\begin{proof}
	The uniqueness of $ \overline{x}^u $ follows from the Lipschitz continuity of $ f_{\widehat{m}_0} $ and \autoref{lem:lip2} (2). Write $ \Phi^{-1}_{m_0,\gamma} = (\Phi^{-c}_{m_0,\gamma}, \Phi^{-s}_{m_0,\gamma}, \Phi^{-u}_{m_0,\gamma}) $. Also, the map 
	\[ 
	( \Phi^{-c}_{m_0,\gamma}(\cdot, \cdot, f_{\widehat{m}_0}(\cdot,\cdot)), \Phi^{-s}_{m_0,\gamma}(\cdot, \cdot, f_{\widehat{m}_0}(\cdot,\cdot)) ): X^c_{m_0}(c_2\varepsilon) \times X^s_{m_0}(e_2\sigma) \to X^c_{m_0}(c_2\varepsilon) \times X^s_{m_0}(e_2\sigma) 
	\] 
	is bi-Lipschitz. This gives the existence of $ \overline{x}^u $.
\end{proof}

\section{Grassmann manifolds: continued}\label{sub:granTwo}

Let us compute the tangent bundle and the normal bundle of $ \overline{\Pi}(X) $ in $ L(X) $. Let $ X_0 \oplus \tilde{X}_0 = X $ and $ m_0 = \Pi_{\tilde{X}_0} (X_0) \in \overline{\Pi}(X) $. Define
\[
\Pi_{m_0}: L(X) \to L(X), ~ L \mapsto \Pi_{\tilde{X}_0} (X_0) L \Pi_{X_0} (\tilde{X}_0) + \Pi_{X_0} (\tilde{X}_0) L \Pi_{\tilde{X}_0} (X_0).
\]
It is straightforward to verify that $ \Pi_{m_0} $ is a projection and $ m_0 \mapsto \Pi_{m_0} $ is smooth.

\begin{lem}\label{lem:Pitangent}
	$ R(\Pi_{m_0}) $ is the tangent space of $ \overline{\Pi}(X) $ at $ m_0 $, i.e.,
	\[
	\frac{|m - m_0 - \Pi_{m_0}(m - m_0)|}{|m - m_0|} \to 0 \quad \text{as} \quad m \in \overline{\Pi}(X) \text{ and } m \to m_0.
	\]
\end{lem}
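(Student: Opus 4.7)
The plan is to exploit the explicit local chart for $\overline{\Pi}(X)$ near $m_0$ coming from the proof of \autoref{lem:proj}, namely
\[
\widetilde{\Upsilon}(f_1, f_2) = \Pi_{X_2}(X_1), \quad X_1 = \graph f_1, \quad X_2 = \graph f_2,
\]
for $(f_1, f_2) \in \mathbb{B}_\varepsilon(0) \times \mathbb{B}_\varepsilon(0) \subset L(X_0, \tilde{X}_0) \times L(\tilde{X}_0, X_0)$. Since $\widetilde{\Upsilon}$ is $C^\infty$ with $\widetilde{\Upsilon}(0,0) = m_0$, the tangent space to $\overline{\Pi}(X)$ at $m_0$ inside $L(X)$ is the range of $D\widetilde{\Upsilon}(0,0)$, and verifying the Whitney tangency reduces to identifying this range with $R(\Pi_{m_0})$.

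First I would compute $\widetilde{\Upsilon}(f_1, f_2)$ in block form relative to $X = X_0 \oplus \tilde{X}_0$. For $x_0 \in X_0$, the decomposition $x_0 = (y + f_1(y)) + (f_2(z) + z)$ into $X_1 \oplus X_2$ yields $y + f_2(z) = x_0$, $f_1(y) + z = 0$, hence $y = (\id - f_2 f_1)^{-1} x_0$ and
\[
\widetilde{\Upsilon}(f_1, f_2)(x_0) = (\id + f_1)(\id - f_2 f_1)^{-1} x_0.
\]
An analogous computation for $\tilde{x}_0 \in \tilde{X}_0$ gives
\[
\widetilde{\Upsilon}(f_1, f_2)(\tilde{x}_0) = -(\id + f_1)\, f_2\, (\id - f_1 f_2)^{-1} \tilde{x}_0.
\]
Expanding $(\id - f_2 f_1)^{-1}$ and $(\id - f_1 f_2)^{-1}$ as Neumann series, I obtain the first order expansion
\[
\widetilde{\Upsilon}(f_1, f_2) - m_0 = f_1 \circ \Pi_{\tilde{X}_0}(X_0) - f_2 \circ \Pi_{X_0}(\tilde{X}_0) + O\bigl(\|f_1\|\,\|f_2\|\bigr),
\]
where $f_1 \in L(X_0,\tilde{X}_0)$ and $f_2 \in L(\tilde{X}_0, X_0)$ are extended to operators on $X$ via the respective projections.

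Next I would identify the linear part with $\Pi_{m_0}$. By definition,
\[
\Pi_{m_0}(L) = \Pi_{\tilde{X}_0}(X_0)\, L\, \Pi_{X_0}(\tilde{X}_0) + \Pi_{X_0}(\tilde{X}_0)\, L\, \Pi_{\tilde{X}_0}(X_0),
\]
so $R(\Pi_{m_0})$ consists exactly of those $L \in L(X)$ that are block off-diagonal, i.e.\ $L(X_0) \subset \tilde{X}_0$ and $L(\tilde{X}_0) \subset X_0$. The first order term above is manifestly such an off-diagonal operator, and conversely every off-diagonal $L$ arises as $D\widetilde{\Upsilon}(0,0)(g_1, g_2)$ with $g_1 = \Pi_{X_0}(\tilde{X}_0) L|_{X_0}$ and $g_2 = -\Pi_{\tilde{X}_0}(X_0) L|_{\tilde{X}_0}$, so $R(D\widetilde{\Upsilon}(0,0)) = R(\Pi_{m_0})$.

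Finally, applying $\Pi_{m_0}$ to the expansion and using that $\Pi_{m_0}$ annihilates the diagonal components while preserving off-diagonal ones, I get
\[
\Pi_{m_0}(m - m_0) = f_1 \circ \Pi_{\tilde{X}_0}(X_0) - f_2 \circ \Pi_{X_0}(\tilde{X}_0) + O\bigl(\|(f_1, f_2)\|^2\bigr)
\]
for $m = \widetilde{\Upsilon}(f_1, f_2)$, whence $m - m_0 - \Pi_{m_0}(m - m_0) = O(\|(f_1,f_2)\|^2)$. Combined with the bi-Lipschitz character of $\widetilde{\Upsilon}$ near $0$, which implies $\|m - m_0\| \sim \|(f_1, f_2)\|$, this gives the required limit. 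The only delicate point is to keep the Neumann-series remainders uniformly bounded by $C\|(f_1, f_2)\|^2$ on a fixed small neighborhood, which is straightforward from $\|(\id - f_i f_j)^{-1}\| \le (1 - \varepsilon^2)^{-1}$; I expect this bookkeeping, rather than any conceptual point, to be the only real work.
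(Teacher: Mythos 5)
Your proof is correct and takes essentially the same route as the paper: both work in the local chart $\widetilde{\Upsilon}$ of $\overline{\Pi}(X)$ from \autoref{lem:proj} and show that $m - m_0$ deviates from its image under $\Pi_{m_0}$ only by terms quadratic in $(f_1, f_2)$. The paper manipulates the representation $\Pi_{\tilde{X}_0}(X_0)(\id + f_1 + f_2)^{-1} + f_1 \circ \Pi_{\tilde{X}_0}(X_0)(\id + f_1 + f_2)^{-1}$ directly, whereas you re-derive the equivalent block formulas $(\id + f_1)(\id - f_2 f_1)^{-1}$ and $-(\id + f_1) f_2 (\id - f_1 f_2)^{-1}$ and expand by Neumann series; your explicit use of the local bi-Lipschitz character of the chart to control the denominator $|m - m_0|$ is a detail the paper leaves implicit.
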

\begin{proof}
	Consider the local representation at $ m_0 $; see the proof of \autoref{lem:proj} where the notations are also used here. We can write
	\[
	m = \Pi_{\tilde{X}_0}(X_0) (\id + f_1 + f_2)^{-1} + f_1 \circ \Pi_{\tilde{X}_0}(X_0) (\id + f_1 + f_2)^{-1},
	\]
	where $ (f_1, f_2) \in \mathbb{B}_{\varepsilon}(0) \times \mathbb{B}_{\varepsilon}(0) \subset L(X_0,\tilde{X}_0) \times L(\tilde{X}_0,X_0) $. Then
	\begin{multline*}
	m - m_0 - \Pi_{m_0}(m - m_0) = \Pi_{\tilde{X}_0}(X_0) (\id + f_1 + f_2)^{-1} - \Pi_{\tilde{X}_0}(X_0) (\id + f_2)^{-1} \\
	+ f_1(\Pi_{\tilde{X}_0}(X_0) (\id + f_1 + f_2)^{-1} - \Pi_{\tilde{X}_0}(X_0) (\id + f_1)^{-1}),
	\end{multline*}
	which yields
	\[
	m - m_0 - \Pi_{m_0}(m - m_0) = O(\varepsilon^2) + f_1 \circ f_2 + f_1(O(\varepsilon^2)).
	\]
	This gives the proof.
\end{proof}

If $ \mathcal{K} \subset \overline{\Pi}(X) $ is a bounded set, then $ (\overline{\Pi}(X), \mathcal{K}, \{ \Pi_{m_0} \}, \{\overline{\Pi}(X) \cap \mathbb{B}_{m_0}(\varepsilon)\}) $ satisfies (H1)--(H4) in \autoref{sub:setup} for small $ \varepsilon $ depending only on $ \sup_{\Pi \in \mathcal{K}} |\Pi| $. Indeed, let $ \sup_{\Pi \in \mathcal{K}} |\Pi| \leq C_0 < \infty $. Then for any $ \Pi \in \mathcal{K} $, we have
\[
\min \{\alpha(R(\Pi), R(\id - \Pi)), \alpha(R(\id - \Pi), R(\Pi))\} = \min \{|\Pi|^{-1}, |\id - \Pi|^{-1}\}  \geq (1 + C_0)^{-1} > 0.
\]
From \autoref{lem:Pitangent}, (H1) is satisfied. (H4) holds by \autoref{lem:granlip} and \eqref{equ:Pilocal}. From the computation in \autoref{lem:Pitangent} and the fact that for $ (f_1, f_2) \in L(X_0,\tilde{X}_0) \times L(\tilde{X}_0,X_0) $,
\[
|f_1 + f_2| \leq |f_1| + |f_2| \leq (1 + |\Pi_{\tilde{X}_0}(X_0)|) |f_1 + f_2|,
\]
we see that (H3) is satisfied. Finally, by the definition of $ \Pi_{m} $, together with \autoref{lem:granlip} and \eqref{equ:Pilocal}, (H2) is fulfilled.

In particular, by \autoref{lem:tub} and \autoref{lem:lip2}, we obtain the following.
\begin{lem}\label{lem:Piret}
	If $ \mathcal{K} \subset \overline{\Pi}(X) $ is a bounded set, then there are $ \varepsilon, \varrho > 0 $ and a smooth Lipschitz map $ r: \mathbb{B}_{\varrho}(\mathcal{K}) \to O_{\varepsilon}(\mathcal{K}) \triangleq \mathbb{B}_{\varepsilon}(\mathcal{K}) \cap \overline{\Pi}(X) $ such that $ r|_{O_{\varepsilon}(\mathcal{K})} = \id $, where $ \mathbb{B}_{\varrho}(\mathcal{K}) = \{ L \in L(X): d(L, \mathcal{K}) < \varrho \} $. The natural embedding of $ O_{\varepsilon}(\mathcal{K}) $ into $ L(X) $ is $ C^{\infty} \cap C^{0,1} $.
\end{lem}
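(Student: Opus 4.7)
The plan is to derive the lemma as a direct corollary of the tubular-neighborhood construction of \autoref{lem:tub} and \autoref{lem:lip2}, applied to the uniform $C^{0,1}$ submanifold $\overline{\Pi}(X) \subset L(X)$ with distinguished bounded set $\mathcal{K}$. As observed in the paragraph preceding the statement, the tangent projection at $m_0 \in \overline{\Pi}(X)$ is exactly $\Pi_{m_0}$ (by \autoref{lem:Pitangent}), and when $\mathcal{K}$ is bounded, $(\overline{\Pi}(X), \mathcal{K}, \{\Pi_{m_0}\}, \{\overline{\Pi}(X) \cap \mathbb{B}_{m_0}(\varepsilon)\})$ satisfies \textbf{(H1)~$\sim$~(H4)} with constants depending only on $C_0 \triangleq \sup_{\Pi \in \mathcal{K}}|\Pi|$.

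In this setting the decomposition is purely dichotomy: the tangent bundle $R(\Pi_{m_0})$ and the normal bundle $R(\id - \Pi_{m_0})$, with no $s, u$ split. For each $m_0 \in \mathcal{K}$ and admissible $\gamma$, the map $\Phi_{m_0, \gamma}$ of \autoref{sec:submanifold}, applied in this dichotomy form, becomes a bi-Lipschitz homeomorphism from the local tubular product $\{(\overline{m}, \overline{y}^h): \overline{m} \in U_{m_0, \gamma}(\varepsilon),\ \overline{y}^h \in R(\id - \Pi_{\overline{m}})(\varrho)\}$ onto a neighborhood of $m_0$ in $L(X)$; projecting onto the first coordinate gives a local retraction $L \mapsto \overline{m}$. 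For $\varepsilon, \varrho$ sufficiently small and chosen uniformly in $\mathcal{K}$ (using the bound $C_0$ and \autoref{lem:granlip}), \autoref{lem:tub} and \autoref{lem:lip2} guarantee that any $L \in \mathbb{B}_\varrho(\mathcal{K})$ admits a unique decomposition $L = \overline{m} + \overline{y}^h$ with $\overline{m} \in O_\varepsilon(\mathcal{K})$ and $\overline{y}^h \in R(\id - \Pi_{\overline{m}})$, $|\overline{y}^h| < \varrho$, so the local retractions patch into a globally well-defined map $r(L) = \overline{m}$ fixing $O_\varepsilon(\mathcal{K})$.

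Lipschitz continuity of $r$ is then the bi-Lipschitz estimate of \autoref{lem:lip2}. For smoothness, I would invoke the smoothness of the projection bundle $m \mapsto \Pi_m$ (\autoref{lem:proj}), which makes $\Phi_{m_0, \gamma}$ smooth, and apply the implicit function theorem to invert it. The final claim that the natural embedding $O_\varepsilon(\mathcal{K}) \hookrightarrow L(X)$ is $C^\infty \cap C^{0,1}$ follows from \autoref{lem:proj} (for $C^\infty$) together with the uniform Lipschitz bounds of \autoref{lem:granlip} (for the uniform $C^{0,1}$ part). The main technical point, and the only genuine thing to check beyond citing the prior lemmas, is the uniform choice of tubular radii $\varepsilon, \varrho$ across the possibly non-compact $\mathcal{K}$; this uniformity is extracted from $\sup_{\Pi \in \mathcal{K}}|\Pi| \leq C_0$ via the estimates of \autoref{lem:gram1}, \autoref{lem:gram2}, and \autoref{lem:granlip}, exactly as they were applied in the paragraph establishing \textbf{(H1)~$\sim$~(H4)} for $\overline{\Pi}(X)$.
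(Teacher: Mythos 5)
Your proposal is correct and follows essentially the same route as the paper: the paper obtains this lemma precisely as an immediate consequence of \autoref{lem:tub} and \autoref{lem:lip2}, applied to $(\overline{\Pi}(X), \mathcal{K}, \{\Pi_{m_0}\}, \{\overline{\Pi}(X)\cap\mathbb{B}_{m_0}(\varepsilon)\})$ after the preceding paragraph's verification of \textbf{(H1)}~$\sim$~\textbf{(H4)}, with smoothness coming from \autoref{lem:proj} and the uniform radii from $\sup_{\Pi\in\mathcal{K}}|\Pi|\leq C_0$. The details you supply (the decomposition $L=\overline{m}+\overline{y}^h$, patching the local retractions, and the Lipschitz/smoothness bookkeeping) are exactly the intended, if unwritten, content of the paper's one-line derivation.
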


Such a map $ r $ is often called a \emph{retraction}.

To summarize, we have the following facts:
\begin{enumerate}[(1)]
	\item $ \mathbb{K}(X) $ is a complete metric space with respect to the metric $ \widehat{d} $.
	\item $ \mathbb{G}(X) $ is a $ C^{\infty} $ paracompact Banach manifold locally modeled on $ L(X_1, X_2) $ (where $ X_1 \oplus X_2 = X $) with the metric $ \widehat{d} $. Moreover, $ \mathbb{G}(X) $ is an open subset of $ \mathbb{K}(X) $. The boundary $ \partial \mathbb{G}(X) = \emptyset $ if and only if $ X $ admits a Hilbert inner product (a well known result due to Lindenstrauss and Tzafriri).
	\item $ \overline{\Pi}(X) $ is a $ C^{\infty} $ closed submanifold of $ L(X) $; in addition, for any bounded set $ \mathcal{K} \subset \overline{\Pi}(X) $, there is a smooth and Lipschitz retraction that maps a neighborhood of $ \mathcal{K} $ in $ L(X) $ into a neighborhood of $ \mathcal{K} $ in $ \overline{\Pi}(X) $.
\end{enumerate}

Finally, let us consider the continuous choice of complemented spaces; for the (strong) measurability version in separable Banach spaces, see e.g. \cite[Chapter 7]{LL10}. In general, one cannot expect such a choice to be uniformly continuous or Lipschitz; however, we seek conditions under which the following statement holds: if $ m \mapsto X_{m} $ is Lipschitz, then there is a Lipschitz selection of complemented spaces $ X^{h}_{m} $ such that $ X_{m} \oplus X^{h}_{m} = X $. Note that the following lemma becomes trivial when $ Y_{m} $ admits a Hilbert inner product $ \langle\cdot,\cdot\rangle_{m}: Y_{m} \times Y_{m} \to \mathbb{R} $ (or $ \mathbb{C} $) with $ m \mapsto \langle\cdot,\cdot\rangle_{m} $ continuous (or Lipschitz for the Lipschitz choice case), for example, when $ X $ is a Hilbert space or $ Y_{m} $ is finite-dimensional; in such cases, the condition that $ X_{m} $ is finite-dimensional is not required.
\begin{lem}\label{lem:selection}
	Let $ \mathcal{N} $ be a paracompact (Hausdorff) topological space. Assume $ m \mapsto X_{m}, Y_{m}: \mathcal{N} \to \mathbb{G}(X) $ are continuous, with the dimension of $ X_{m} $ equal to $ n $ and $ X_{m} \subset Y_{m} $ for all $ m \in \mathcal{N} $. Then there is a continuous map $ m \mapsto X^{h}_{m}: \mathcal{N} \to \mathbb{G}(X) $ such that $ X_{m} \oplus X^{h}_{m} = Y_{m} $ with $ |\Pi_{X^{h}_{m}} (X_{m})| \leq \beta $, where $ \beta $ is a constant depending only on $ n $.
\end{lem}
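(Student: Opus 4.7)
The plan is to build a continuous family $\tilde P(m) \in L(X)$ of projections of $X$ onto $X_m$ whose operator norm is bounded by a constant $\beta = \beta(n)$; then $X_m^{h} := \ker \tilde P(m) \cap Y_m$ complements $X_m$ in $Y_m$, and the projection $\Pi_{X^h_m}(X_m)$ coincides with $\tilde P(m)|_{Y_m}$, whose norm is at most $\beta$. The construction proceeds by an Auerbach-type local model plus a partition-of-unity gluing.

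For the local ingredient, at each $m_0 \in \mathcal{N}$ I would apply Auerbach's lemma inside the $n$-dimensional space $X_{m_0}$ to obtain biorthogonal pairs $(e_i, e_i^*)$ with $|e_i|=|e_i^*|=1$, and extend each $e_i^*$ via Hahn-Banach to $\tilde e_i^* \in X^*$ with the same norm. Using a closed complement $\tilde X_0$ of $X_{m_0}$ (which exists because $\dim X_{m_0} < \infty$), \autoref{lem:gram2} implies that the restricted projection $\Pi_{\tilde X_0}(X_{m_0})|_{X_m}: X_m \to X_{m_0}$ is an isomorphism for $m$ near $m_0$ and depends continuously on $m$, so $e_i(m) := (\Pi_{\tilde X_0}(X_{m_0})|_{X_m})^{-1}(e_i) \in X_m$ provides a continuous frame in $X_m$ with $e_i(m_0) = e_i$. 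The matrix $A(m) := (\tilde e_i^*(e_j(m)))_{i,j}$ equals $I$ at $m_0$; on a smaller neighborhood $U_{m_0}$, it is invertible with $B(m) := A(m)^{-1}$ close to $I$, and
\[
\tilde P_{m_0}(m)(y) := \sum_{i,j=1}^n B(m)_{ij}\,\tilde e_j^*(y)\, e_i(m), \qquad y \in X,
\]
is, by the identity $BA=I$, a linear map $X \to X_m$ that restricts to $\mathrm{id}_{X_m}$; hence it is a projection of $X$ onto $X_m$, depends continuously on $m \in U_{m_0}$ in $L(X)$, and (by Auerbach, $|\tilde P_{m_0}(m_0)|\le n$) can be arranged to satisfy $|\tilde P_{m_0}(m)|_{L(X)} \le n+1$ after shrinking $U_{m_0}$.

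Next I would globalize: by paracompactness choose a locally finite refinement $\{V_\alpha\}$ of $\{U_{m_0}\}_{m_0}$ with subordinate continuous partition of unity $\{\rho_\alpha\}$ and $V_\alpha \subset U_{m_{0,\alpha}}$, and set
\[
\tilde P(m) := \sum_\alpha \rho_\alpha(m)\, \tilde P_{m_{0,\alpha}}(m) \in L(X).
\]
Since each summand restricts to $\mathrm{id}_{X_m}$ on $X_m$, so does the convex combination; hence $\tilde P(m)$ is again a projection of $X$ onto $X_m$ with $|\tilde P(m)|_{L(X)} \le n+1 =: \beta$, and $m \mapsto \tilde P(m)$ is continuous in $L(X)$ by local finiteness. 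To verify $m \mapsto X_m^h$ is continuous in $\mathbb{G}(X)$, for any $m_k \to m_0$ set $\delta_k := \widehat d(Y_{m_k}, Y_{m_0})$ and $\eta_k := |\tilde P(m_k) - \tilde P(m_0)|_{L(X)}$, both tending to $0$. For $x \in \mathbb{S}_{X_{m_0}^h}$ I would pick $y_k \in Y_{m_k}$ with $|y_k - x| \le 2\delta_k$ (possible since $\delta(Y_{m_0}, Y_{m_k}) \le \delta_k$); then $z_k := y_k - \tilde P(m_k) y_k$ lies in $Y_{m_k} \cap \ker \tilde P(m_k) = X_{m_k}^h$, and
\[
|z_k - x| \le |y_k - x| + |\tilde P(m_k)|\cdot|y_k - x| + |\tilde P(m_k) - \tilde P(m_0)|\cdot|x| \le 2(1+\beta)\delta_k + \eta_k,
\]
uniformly in $x$. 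The symmetric estimate (using $\delta(Y_{m_k}, Y_{m_0}) \le \delta_k$ to pull back to $Y_{m_0}$) gives $\delta(X_{m_k}^h, X_{m_0}^h) \to 0$, so $\widehat d(X_{m_k}^h, X_{m_0}^h) \to 0$.

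The main technical delicacy will be securing a \emph{universal} norm bound $\beta = \beta(n)$, independent of the ambient geometry of $X$, $X_m$, and $Y_m$: this is exactly what the finite-dimensional Auerbach lemma delivers, and it survives the gluing precisely because an affine combination of idempotents that all restrict to $\mathrm{id}_{X_m}$ is again an idempotent with the same range and the same norm bound. A secondary subtlety is that the natural projections $P(m): Y_m \to X_m$ live on varying domains, which would obstruct any direct operator-norm continuity argument; extending via the formula above to projections on \emph{all of} $X$ bypasses this and lets one use ordinary $L(X)$-continuity to deduce continuity of the kernel intersected with $Y_m$.
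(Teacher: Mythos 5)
Your proof is correct, and it takes a genuinely different route from the paper's. The paper reduces the lemma to Michael's continuous selection theorem: it first establishes (again via Michael selection, following Kalton) a continuous rank-one projection selector $ \Theta $ on $ \mathbb{S}_X $, and then applies Michael's theorem a second time to the set-valued map $ m \mapsto \Phi(m) $ of projections of $ X $ onto $ X_m $ with norm at most $ n\gamma $, whose lower semicontinuity is verified by an inductive frame-and-complement construction. You instead build the local sections explicitly — Auerbach basis of $ X_{m_0} $, Hahn--Banach extensions of the coordinate functionals, and the moving frame $ e_i(m) = \Pi_{\tilde X_0}(X_m)e_i $ furnished by \autoref{lem:gram2} — and glue them with a partition of unity; the gluing is legitimate precisely because a convex combination of idempotents with common range $ X_m $ is again an idempotent with that range and the averaged norm bound, which is the same convexity the paper feeds into Michael's theorem. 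Both arguments use paracompactness (Michael's theorem on one side, partitions of unity on the other) and both finish identically by setting $ X^{h}_{m} = \ker \tilde P(m) \cap Y_m $, so that $ \Pi_{X^h_m}(X_m) = \tilde P(m)|_{Y_m} $. What your route buys: it avoids the selection theorem and the Kalton sublemma entirely, is fully constructive, yields the explicit constant $ \beta = n+1 $, and gives a quantitative continuity estimate $ \widehat{\delta}(X^h_m, X^h_{m_0}) \leq 2(1+\beta)\,\widehat{d}(Y_m, Y_{m_0}) + |\tilde P(m) - \tilde P(m_0)| $ which, being uniform over the unit sphere, covers continuity on a general (not necessarily first countable) paracompact space without any sequential argument. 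What the paper's route buys: the Kalton sublemma is a reusable statement of independent interest, and the selection-theorem formulation needs only lower semicontinuity of the admissible-projection sets rather than honest local sections — extra flexibility that is not needed here because $ \dim X_m = n < \infty $ makes local sections easy, exactly as your Auerbach construction shows.
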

\begin{proof}
	We do not aim to find the optimal constant $ \beta $, but note that $ \beta > n $ suffices for our purposes. The following fact is straightforward:

	If $ X_1 \oplus X_2 = X $ and $ X_1 \subset Y \subset X $, then $ Y = X_1 \oplus (X_2 \cap Y) $ with $ \Pi_{X_2 \cap Y} (X_1) = \Pi_{X_2} (X_1)|_{Y} $.

	The following result is due to N. J. Kalton (see \cite[Proposition 2.4 (ii)]{Kal08}) whose proof is also presented here for the convenience of the readers.
	\begin{slem}
		Given $ \gamma > 1 $, there is a continuous map $ \Theta: \mathbb{S}_{X} \to L(X) $ such that $ \Theta(x): X \to \mathrm{span} \{ x \} $ is a projection onto $ \mathrm{span} \{ x \} $ with $ |\Theta(x)| \leq \gamma $ and $ \Theta(\alpha x) = \Theta(x) $ where $ |\alpha| = 1 $.
	\end{slem}
	\begin{proof}
		This is a standard application of the Michael continuous selection (see \cite{Mic56}). Fix $ \gamma > 1 $. For each $ x \in \mathbb{S}_{X} $, define
		\[
		\widehat{\Phi}(x) = \{ \Pi: \Pi \text{ is a projection onto } \mathrm{span} \{ x \} \text{ with } |\Pi| \leq \gamma \}.
		\]
		It is rapidly seen that $ \widehat{\Phi}(x) \neq \emptyset $ is closed and convex. Let us show $ x \mapsto \widehat{\Phi}(x) $ is lower-semicontinuous. Given an open set $ U \subset L(X) $ such that $ \widehat{\Phi}(x) \cap U \neq \emptyset $ and a sufficiently small $ \varepsilon > 0 $, it suffices to show that if $ |y| = 1 $ and $ |y - x| < \varepsilon $, then $ \widehat{\Phi}(y) \cap U \neq \emptyset $. Take $ x^* \in X^{*} $ such that $ x^{*}(x) = 1 $ and $ |x^*| = 1 $, and write $ P = \Pi_{\ker x^*} (\mathrm{span}\{x\}) $; note that $ |P| = 1 $. Take $ \widetilde{P} \in \widehat{\Phi}(x) \cap U $. Then $ P_1 = a \widetilde{P} + (1 - a) P \in \widehat{\Phi}(x) \cap U $ and $ |P_1| < \gamma $ if $ a \in (0, 1) $ is sufficiently close to $ 1 $. Define $ Lz = x^{*}(z) (x - y) $ for $ z \in X $. Then $ \id - L $ is invertible and maps $ \mathrm{span}\{ x \} $ onto $ \mathrm{span}\{ y \} $. Set $ \Pi = (\id - L) P_1 (\id - L)^{-1} $. This yields $ \Pi \in \widehat{\Phi}(y) \cap U $ for sufficiently small $ \varepsilon $.

		Therefore, by the Michael continuous selection (see \cite{Mic56}), there is a continuous map $ \Theta_0: \mathbb{S}_{X} \to L(X) $ such that $ \Theta_0(x) \in \widehat{\Phi}(x) $ for each $ x \in \mathbb{S}_{X} $. Define
		\[
		\Theta(x) = \begin{cases}
		(\Theta_0(x) + \Theta_0(-x)) / 2, & \text{$ X $ is real}, \\
		(2\pi)^{-1}\int_{0}^{2\pi}\Theta_0(e^{i\theta}x) ~\mathrm{d} \theta, & \text{$ X $ is complex}.
		\end{cases}
		\]
		The map $ \Theta $ satisfies the desired properties.
	\end{proof}

	If $ n = 1 $, then $ \Theta(\mathbb{S}_{X} \cap X_{m}) $ is well defined since $ \Theta(\alpha x) = \Theta(x) $ for $ |\alpha| = 1 $, and hence $ m \mapsto \Theta(\mathbb{S}_{X} \cap X_{m}) $ is continuous. So we can take $ X^{h}_{m} = \ker (\Theta(\mathbb{S}_{X} \cap X_{m})) \cap Y_{m} = (\id - \Theta(\mathbb{S}_{X} \cap X_{m})) Y_{m} $, which yields a continuous map $ m \mapsto X^{h}_{m} $ such that $ X_{m} \oplus X^{h}_{m} = Y_{m} $ and $ \Pi_{X^{h}_{m}}(X_{m}) \leq \gamma $.

	Consider the general case $ n $. Fix $ \gamma > \gamma_1 > 1 $. Define
	\[
	\Phi(m) = \{ \Pi: \Pi \text{ is a projection of $ X $ onto $ X_{m} $ with } |\Pi| \leq n\gamma \}.
	\]
	$ \Phi(m) $ is closed, convex, and nonempty. 
	
	Next, we show that $ m \mapsto \Phi(m) $ is lower-semicontinuous. Take any open set $ U \subset L(X) $ such that $ \Phi(m_0) \cap U \neq \emptyset $, and let $ P_0 \in \Phi(m_0) \cap U $.

	(1) In a neighborhood $ O_{m_0} \subset \mathcal{N} $ of $ m_0 $, there is a continuous map $ m \mapsto e_{1}(m) \in X_{m} $ with $ |e_{1}(m)| = 1 $. By the case $ n = 1 $, we have $ \mathrm{span}\{e_{1}(m)\} \oplus \widehat{X}^1_{m} = X $ with $ m \mapsto \widehat{X}^1_{m} $  continuous and $ |\Pi_{\widehat{X}^1_{m}} (\mathrm{span}\{e_{1}(m)\})| \leq \gamma_1 $ for $ m \in O_{m_0} $.

	(2) Similarly, for $ \widehat{X}^1_{m} \cap X_{m} $, if $ O_{m_0} $ is ``small'', there is a continuous map $ m \mapsto e_{2}(m) \in \widehat{X}^1_{m} \cap X_{m} $ with $ |e_{2}(m)| = 1 $, and $ \mathrm{span}\{e_{2}(m)\} \oplus \widehat{X}^2_{m} = \widehat{X}^1_{m} $, where $ m \mapsto \widehat{X}^2_{m} $ is continuous and $ |\Pi_{\widehat{X}^2_{m}} (\mathrm{span}\{e_{2}(m)\})| \leq \gamma_1 $ for $ m \in O_{m_0} $.

	(3) Proceeding inductively, we obtain a continuous map $ m \mapsto e_{n}(m) \in \widehat{X}^{n-1}_{m} \cap X_{m} $ with $ |e_{n}(m)| = 1 $, and $ \mathrm{span}\{e_{n}(m)\} \oplus \widehat{X}^{n}_{m} = \widehat{X}^{n-1}_{m} $, where $ m \mapsto \widehat{X}^{n}_{m} $ is continuous and $ |\Pi_{\widehat{X}^{n}_{m}} (\mathrm{span}\{e_{n}(m)\})| \leq \gamma_1 $ for $ m \in O_{m_0} $.

	Thus, we obtain $ X_{m} = \mathrm{span} \{ e_1(m), e_2(m), \ldots, e_{n}(m) \} $, and
	\[
	X = \mathrm{span}\{e_{1}(m)\} \oplus \widehat{X}^1_{m} = \mathrm{span}\{e_{1}(m)\} \oplus \mathrm{span}\{e_{2}(m)\} \oplus \widehat{X}^2_{m} = \cdots = X_{m} \oplus \widehat{X}^n_{m}.
	\]
	Moreover, $ |\Pi_{\widehat{X}^n_{m}} (X_{m})| \leq n\gamma_1 $.
	Note that $ \Pi_{\ker P_{0}} (X_{m}) \to P_{0} $ as $ m \to m_0 $. Hence, there is $ \epsilon_{m} > 0 $ such that $ |\Pi_{\ker P_{0}} (X_{m})| \leq n\gamma + \epsilon_{m} $ with $ \epsilon_{m} \to 0 $ as $ m \to m_0 $. Set
	\[
	P_{m} = \epsilon^{1/2}_{m} \Pi_{\widehat{X}^n_{m}} (X_{m}) + (1 - \epsilon^{1/2}_{m}) \Pi_{\ker P_{0}} (X_{m}).
	\]
	Then $ |P_{m}| \leq \epsilon^{1/2}_{m} n\gamma_1 + (1 - \epsilon^{1/2}_{m}) (n\gamma + \epsilon_{m}) \leq n\gamma $ for $ m $ close to $ m_0 $, due to $ \gamma - \gamma_1 > 0 $ and $ P_{m} \to P_{m_0} $. That is, $ P_{m} \in \Phi(m) \cap U $ if $ m $ is close to $ m_0 $.

	Therefore, by the Michael continuous selection (see \cite{Mic56}), we get $ \Pi_{m} \in \Phi(m) $ with $ m \mapsto \Pi_{m} $ continuous. Define $ X^{h}_{m} = \ker \Pi_{m} \cap Y_{m} $ for $ m \in \mathcal{N} $ and complete the proof.
\end{proof}

\chapter{Existence of a center-stable manifold: proof of \autoref{thm:I}} \label{sec:existence}

\section{Preparation for proofs}\label{sub:preparation}

We begin with some preliminary observations. The following fact shows that the property of $H$ remains preserved under small perturbations.
\begin{slem}\label{lem:qq}
	Suppose $ \widehat{H} \approx (\widehat{F}^{cs}, \widehat{G}^{cs}) $ satisfies the (A$ '$)($ \alpha $, $ \lambda_{u} $) (B)($ \beta; \beta', \lambda_{cs} $) condition in $cs$-direction at $K$ (see \autoref{defi:ABk}).
	Then there is a small $ \epsilon'_2 > 0 $ such that for all $ m \in K $, if $ |m' - u(m)| < \epsilon'_2 $ and $ \widetilde{\Pi}_1^\kappa, \widetilde{\Pi}_2^\kappa \in \overline{\Pi}(X) $ satisfy $ |\widetilde{\Pi}^{\kappa}_1 - \widehat{\Pi}^{\kappa}_{m}| \leq \epsilon'_2 $, $ |\widetilde{\Pi}^{\kappa}_2 - \widehat{\Pi}^{\kappa}_{u(m)}| \leq \epsilon'_2 $ for $ \kappa = cs, u $, then for $ r_* < r / 4 $,
	\[
	H(m + \cdot) - m' \sim (\widehat{F}'_{m, m'}, \widehat{G}'_{m, m'}): \widetilde{X}^{cs}_1(r_*) \oplus \widetilde{X}^{u}_1(r_1) \to \widetilde{X}^{cs}_{2}(r_2) \oplus \widetilde{X}^{u}_{2}(r_*)
	\]
	satisfies the (A$ '$)($ \widetilde{\alpha}(m), \widetilde{\lambda}_u(m) $) (B)($ \widetilde{\beta}(m); \widetilde{\beta}'(m), \widetilde{\lambda}_{cs}(m) $) condition, where
	\begin{enumerate}[(i)]
		\item $ \widetilde{X}^{\kappa}_j = R(\widetilde{\Pi}^{\kappa}_j) $ for $ \kappa = cs, u $, $ j = 1,2 $;
		\item $ \widetilde{\alpha}(m) = (1+\epsilon'_0)\alpha(m) + \epsilon'_0 $, $ \widetilde{\beta}'(m) = (1+\epsilon'_0)\beta'(m) + \epsilon'_0 $, $ \widetilde{\lambda}_\kappa(m) = (1+\epsilon'_0)\lambda_\kappa(m) + \epsilon'_0 $ for $ \kappa = cs, u $, and $ \widetilde{\beta}(m) = \frac{1-\epsilon'_0\beta_0}{1+\epsilon'_0}\beta(m) $ for some $ 0 < \epsilon'_{0} < \beta_0^{-1} $ where $ \beta_0 = \sup_{m} \beta(m) $;
		\item If, in addition, $ |\widehat{F}^{cs}_{m}(0, 0)| \leq \eta $, $ |\widehat{G}^{cs}_{m}(0,0)| \leq \eta $, then $ |\widehat{F}'_{m, m'}(0,0)| \leq (\epsilon'_0 + 1)\eta $, $ |\widehat{G}'_{m, m'}(0,0)| \leq (\epsilon'_0 + 1)\eta $;
		\item Moreover, $ \epsilon'_{0} \to 0 $ as $ \epsilon'_2 \to 0 $.
	\end{enumerate}
\end{slem}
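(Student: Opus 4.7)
The plan is to split the sublemma into two independent perturbations: (i) the translation by $c = m' - u(m)$ with $|c| \leq \epsilon'_2$ at the target, and (ii) the change from the old projections $\Pi^{\kappa}_{m}, \Pi^{\kappa}_{u(m)}$ to the perturbed projections $\widetilde{\Pi}^{\kappa}_1, \widetilde{\Pi}^{\kappa}_2$. Step (i) is elementary: writing $P', Q'$ for the original $cs$- and $u$-projections at $u(m)$, the generating map for $H(m+\cdot) - m'$ in the \emph{old} decomposition is $\widetilde F(x_1, y_2) = \widehat F^{cs}_m(x_1, y_2 + Q'c) - P'c$ and $\widetilde G(x_1, y_2) = \widehat G^{cs}_m(x_1, y_2 + Q'c)$. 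This transformation only shifts the arguments, preserves all Lipschitz and cone properties of $(\widehat F^{cs}_m, \widehat G^{cs}_m)$, and changes the values at $(0,0)$ by an amount $O(\epsilon'_2)$. All the substance therefore lies in step (ii).

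Before attacking (ii), I would extract honest Lipschitz bounds from (A$'$)(B). The (A$'$)$(\alpha, \lambda_u)$ hypothesis directly gives $\mathrm{Lip}_{y_2} \widehat F^{cs}_m \leq \alpha(m)$ and $\mathrm{Lip}_{y_2} \widehat G^{cs}_m \leq \lambda_u(m)$. Applying (B)$(\beta; \beta', \lambda_{cs})$ with $y_2 = y_2'$ (trivially in the cone $|y_2 - y_2'| \leq \beta|x_2 - x_2'|$) yields $\mathrm{Lip}_{x_1} \widehat F^{cs}_m \leq \lambda_{cs}(m)$ and $\mathrm{Lip}_{x_1} \widehat G^{cs}_m \leq \beta'(m)$. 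Thus $\widehat F^{cs}_m, \widehat G^{cs}_m$ are genuinely Lipschitz in both arguments with bounded constants, which is the key technical input for step (ii).

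For step (ii), I would parameterise the new subspaces as graphs over the old ones: $\widetilde X^{cs}_1 = \{(e, g(e)) : e \in X^{cs}_m\}$ and $\widetilde X^u_1 = \{(h(f), f) : f \in X^u_m\}$ with $\|g\|,\|h\| = O(\epsilon'_2)$, and analogously $g', h'$ at the target. Given $(\widetilde x_1, \widetilde y_2) = ((e, g(e)), (h'(f'), f'))$, the condition that the graph of the correspondence passes through $(\widetilde x_1 + \widetilde y_1, \widetilde x_2 + \widetilde y_2)$ with $\widetilde y_1 = (h(f), f)$ and $\widetilde x_2 = (e', g'(e'))$ becomes the fixed-point system
\[
\begin{cases}
f = \widetilde G(e + h(f),\, g'(e') + f') - g(e), \\
e' = \widetilde F(e + h(f),\, g'(e') + f') - h'(f'),
\end{cases}
\]
in the unknowns $(f, e') \in X^u_m \times X^{cs}_{u(m)}$. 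The Jacobian in $(f, e')$ is controlled by products of $\|h\|, \|g'\| = O(\epsilon'_2)$ with the Lipschitz constants of $\widehat F^{cs}_m, \widehat G^{cs}_m$ from paragraph two, so for $\epsilon'_2$ small enough this is a uniform contraction and has a unique solution inside the prescribed balls $r_*, r_1, r_2$ (provided $r_* < r/4$, which gives the domain/range accounting).

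It remains to verify (A$'$)(B) for the resulting $(\widehat F'_{m,m'}, \widehat G'_{m,m'})$; implicit differentiation through the fixed-point equations yields Lipschitz bounds $\widetilde\alpha, \widetilde\lambda_u, \widetilde\beta'$ of the stated form $(1 + \epsilon'_0)\cdot(\text{old}) + \epsilon'_0$, with $\epsilon'_0 = O(\epsilon'_2)$. The main obstacle—and the only truly delicate step—is the (B) cone condition: a cone $|\widetilde y_2 - \widetilde y_2'| \leq \widetilde\beta |\widetilde x_2 - \widetilde x_2'|$ in the new coordinates translates, via the graph parametrisation, to a cone $|\eta_2 - \eta_2'| \leq \beta'' |\xi_2 - \xi_2'|$ in the old coordinates with $\beta'' \leq (\widetilde\beta + \|g'\|)/(1 - \|g'\|)$; requiring $\beta'' \leq \beta$ to apply the original (B) forces exactly the formula $\widetilde\beta = \frac{1-\epsilon'_0\beta_0}{1+\epsilon'_0}\beta$ (with $\beta_0 = \sup_m\beta$). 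The reverse transfer of the conclusions of (B) back to the new coordinates, combined with the Lipschitz estimates just derived, produces $\widetilde\beta'$ and $\widetilde\lambda_{cs}$ as stated. The $\eta$-estimate in item (iii) then follows by combining the constant-term shift from step (i) (which is $O(\epsilon'_2)$, absorbed into the multiplicative factor once $\epsilon'_2$ is taken small enough that $\epsilon'_2 = O(\epsilon'_0 \eta)$ in the relevant regime) with the Lipschitz control of $\widehat F^{cs}_m, \widehat G^{cs}_m$. Throughout, one must keep track of how every perturbation is driven by $\epsilon'_2$ so that the single small parameter $\epsilon'_0 \to 0$ as $\epsilon'_2 \to 0$ captures all of them uniformly over $m \in K$.
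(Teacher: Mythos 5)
Your proposal is correct and is essentially the paper's own (implicit) route: the paper dismisses this sublemma with the remark that $\widetilde{X}^{\kappa}_1 \cong \widehat{X}^{\kappa}_{m}$ and $\widetilde{X}^{\kappa}_2 \cong \widehat{X}^{\kappa}_{u(m)}$ once $\epsilon'_2 < 1/(2\widetilde{M})$ (with $\widetilde{M}$ from \textbf{(H2)}) plus a citation to an external lemma, and your graph parametrisation of the new subspaces over the old ones together with the fixed-point solve for the new generating map is precisely the worked-out form of that near-identity change of coordinates. The only point to keep explicit is that the single parameter $\epsilon'_0$ (and the purely multiplicative shrinkage in $\widetilde{\beta}$) is obtained uniformly in $m\in K$ by using the boundedness of the functions in the (A$'$)(B) condition, which is in force wherever the sublemma is invoked.
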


\begin{proof}
	By \autoref{lem:gram2}, $ \widetilde{X}^{\kappa}_1, \widetilde{X}^{\kappa}_2 $ are close to $  \widehat{X}^{\kappa}_{m}, \widehat{X}^{\kappa}_{u(m)} $ as $\epsilon'_2 \to 0$ (provided $ \epsilon'_2 < 1/(2\widetilde{M}) $ where $ \widetilde{M} $ is the constant from (H2) (i)). Write
	\[
	u(m) = m' + x_0 + y_0,
	\]
	where $x_0 \in \widetilde{X}^{cs}_2$, $y_0 \in \widetilde{X}^{u}_2$. For $ \widetilde{x}_1 \in \widetilde{X}^{cs}_1(r_*)$ and $ \widetilde{y}_2 \in \widetilde{X}^{u}_2(r_*) $, set
	\begin{gather*}
		x_1 = \widehat{\Pi}^{cs}_m \widetilde{x}_1, \quad y_2 = \widehat{\Pi}^{u}_{u(m)} (\widetilde{y}_2 - y_0),\\
		\widetilde{x}_2 = \widetilde{\Pi}^{cs}_{2} (\widehat{F}^{cs}_m(x_1, y_2) + y_2) + x_0, \quad
		\widetilde{y}_1 = \widetilde{\Pi}^{u}_{1} (\widehat{G}^{cs}_m(x_1, y_2) + x_1).
	\end{gather*}
	Define $ \widehat{F}'_{m, m'}(\widetilde{x}_1, \widetilde{y}_2) = \widetilde{x}_2 $ and $ \widehat{G}'_{m, m'}(\widetilde{x}_1, \widetilde{y}_2) = \widetilde{y}_1 $. Then $ H(m + \cdot) - m' \sim (\widehat{F}'_{m, m'}, \widehat{G}'_{m, m'}) $. As $ \widetilde{X}^{\kappa}_1, \widetilde{X}^{\kappa}_2 $ are close to $  \widehat{X}^{\kappa}_{m}, \widehat{X}^{\kappa}_{u(m)} $ and $|x_0|, |y_0|$ are small when $|m' - u(m)|$ is small, there is $ \epsilon'_0 > 0 $ such that:
	\begin{gather*}
		(1 + \epsilon'_0)^{-1} |\widetilde{x}_1| \leq |x_1| \leq (1 + \epsilon'_0) |\widetilde{x}_1|, \quad
		(1 + \epsilon'_0)^{-1} |\widetilde{y}_2| \leq |y_2| \leq (1 + \epsilon'_0) |\widetilde{y}_2|, \\
		|\widetilde{x}_2| \leq (1 + \epsilon'_0) |x_2| + \epsilon'_0 |y_2|, \quad
		|\widetilde{y}_1| \leq (1 + \epsilon'_0) |y_1| + \epsilon'_0 |x_1|.
	\end{gather*}
	The conclusion now follows from the above estimates.
\end{proof}

Hereafter, when referring to assumption (A3) (a) in \autoref{subsec:main}, we always consider the ($ \bullet 1 $) case, i.e.,
\begin{center}
	$ \widehat{H} \approx (\widehat{F}^{cs}, \widehat{G}^{cs}) $ satisfies the (A$ '$)($ \alpha $, $ \lambda_{u} $) (B)($ \beta; \beta', \lambda_{cs} $) condition in $cs$-direction at $K$.
\end{center}

In this chapter, we consider the following two distinct cases.

\vspace{.5em}
\noindent{Assumptions}.
Case (1): Assume (A1), (A2), (A3) (a) (i) (iii) (excluding (A2) (a) (ii)) and (A3) (b) in \autoref{subsec:main} hold.

Case (2): Assume (A1), (A2), (A3) (a) (i) (iii) with $ \varsigma_0 \geq 1 $ and (A3) (b) in \autoref{subsec:main} hold; in addition, there are small $ \gamma_0 > 0 $ and $ 0 < \gamma^{*}_{u} < 1 $ such that for all $ m \in K $ and $ (x^{cs}, x^{u}) \in \widehat{X}^{cs}_{m}(r) \times \widehat{X}^{u}_{u(m)}(r) $,
\begin{equation}\label{equ:est}
|\widehat{G}^{cs}_{m}(x^{cs}, x^{u})| \leq \gamma_0|x^{cs}| + \gamma^{*}_{u}|x^{u}| + \eta.
\end{equation}
Note that under (A3) (a$ '$), the inequality \eqref{equ:est} holds (with $ \gamma^{*}_{u} = \sup_{m} \lambda_{u} (m) < 1 $).
\vspace{.5em}

In what follows, we fix a function (depending on $ \epsilon_{*} $)
\begin{equation}\label{equ:o(1)}
\epsilon_{0,*} = \epsilon_{0,*}(\epsilon_{*}) ~\text{such that}~ \epsilon_{0,*} \to 0 ~\text{as}~ \epsilon_{*} \to 0,
\end{equation}
i.e., $ \epsilon_{0,*} = O_{\epsilon_{*}}(1) $, e.g., $ \epsilon_{0,*} = \epsilon_{*} $.

\begin{asparaenum}[({Observation} I).]
	\item \label{OI} Let $ \epsilon'_2 > 0 $ and $ \widetilde{\alpha}, \widetilde{\beta}, \widetilde{\beta}', \widetilde{\lambda}_{u}, \widetilde{\lambda}_{cs} $ be as in \autoref{lem:qq} such that if $ \alpha, \beta, \beta', \lambda_{u}, \lambda_{cs} $ satisfy (A3) (a) (or (A3) (a$ '$)), then so do $ \widetilde{\alpha}, \widetilde{\beta}, \widetilde{\beta}', \widetilde{\lambda}_{u}, \widetilde{\lambda}_{cs} $.
	Let
	\[
	\hat{\beta}'' = \inf_{m \in K} \widetilde{\beta}'(m) > 0, ~\text{and}~ r_0 = r_* / 8  ~(\text{where $ r_* $ is given in \autoref{lem:qq}}).
	\]

	\item \label{OII} By \autoref{lem:qq}, we may assume without loss of generality that $ \xi_2 = 0 $ (in (A3) (b) (ii)), i.e., $ \widehat{\Pi}^\kappa_m = \Pi^\kappa_m $ for $ m \in K $ and $ \kappa = s, c, u $.

	\item \label{OIII} In the following, $ \epsilon_2 $ will be taken even smaller; first require that $ \epsilon_2 $ satisfies (provided $ \chi(\epsilon_2) $ is small)
	\[
	L\epsilon_2 \leq \epsilon'_2 < 1, (\widetilde{C}' + 1) \chi(\epsilon_{2}) < 1, ~\text{where}~  \widetilde{C}' = \sup_{m_0 \in K}\{ \widetilde{\alpha}(m_0) + \widetilde{\lambda}_{cs}(m_0) \}.
	\]

	\item \label{OIV} In case (1), since $ \inf_{m \in K} \{ \beta(m) - \varsigma_0\beta'(u(m)) \} > 0 $ (from (A3) (a) (i)), there is $ \varsigma > 2 $ close to $ 2 $ such that $ \inf_{m \in K}\{ \beta(m) - \varsigma \beta'(u(m)) \} > 0 $.
	Similarly, in case (2), we can let the following \eqref{equ:pp} holds with $ \varsigma > 1 $ but close to $ 1 $; \emph{we fix $ \varsigma > 1 $ when considering case (2)}. In (A3) (a) (iii), assume $ \xi_1 $ (and $ \epsilon_2 $) is small such that
	\begin{equation}\label{equ:pp}
	\inf_{\widehat{m} \in \widehat{K}} \left\{ \widetilde{\beta}(\phi(\widehat{m})) -  \varsigma\max\{ \widetilde{\beta}'(\phi(\widehat{m}')): \widehat{m}' \in \widehat{U}_{\widehat{u}(\widehat{m})}(\epsilon_2) \cap \widehat{K} \}  \right\} > 0,
	\end{equation}
	and for all $ \widehat{m} \in \widehat{K} $,
	\begin{equation}\label{equ:ppb}
	\max\left\{ \widetilde{\beta}'(\phi(\widehat{m}')): \widehat{m}' \in \widehat{U}_{\widehat{m}}(\epsilon_2) \cap \widehat{K} \right\} < \hat{\varsigma} \widetilde{\beta}'(\phi(\widehat{m})),
	\end{equation}
	where $ 1 < \hat{\varsigma} < \frac{\varsigma-1}{\varsigma' - 1} $ and $ \varsigma' = \frac{\varsigma + 2}{2} $ in case (1) and $ \varsigma' = \frac{\varsigma + 1}{2} $ in case (2). Note that \eqref{equ:ppb} can hold due to $ \hat{\varsigma} > 1 $ and $ \inf_{m \in K} \widetilde{\beta}'(m) > 0 $ (in Observation \eqref{OI}).

	\item \label{OV} Let $ \epsilon_{*}, \chi(\epsilon_{*}) $ be sufficiently small such that \autoref{lem:lip2} and \autoref{lem:represent2} hold. Let $ \chi_{*} < 1/16 $ and take $ c_i, e_i $ ($ i = 1,2 $) as in \autoref{lem:lip2}. Note that $ \chi_{*} \to 0 $ and $ c_i, e_i \to 1 $ as $ \epsilon_{*}, \chi(\epsilon_{*}) \to 0 $; for example, $ e_1 = c_1 \leq 1 - 1/(2^{10}) $ and $ e_2 = c_2 = e^{-1}_1 $. In what follows, $ \epsilon_{*}, \chi(\epsilon_{*}) $ will be taken even smaller; first require $ \epsilon_{*} < \min\{r_0, \epsilon_2\} / 8 $. In (A2) (ii), assume $ \xi_1 $ is small such that
	\[
	\sup \{ |u(m) - u(m_0)|: m \in U_{m_0,\gamma}(2\epsilon_*) \cap K, m_0 \in K, \gamma \in \Lambda(m_0) \} \leq 2\xi_1 < \epsilon_2 / 2.
	\]

	\item \label{OVI} Let $ \hat{\alpha}, \hat{\beta} > 0 $ such that
	\[
	\sup_m \widetilde{\alpha}(m) \leq \hat{\alpha} < \infty, \quad \sup_m \widetilde{\beta}(m) \leq \hat{\beta} < \infty.
	\]
	Take functions $ \mu, \mu_1: \widehat{K} \to \mathbb{R}_+ $ as
	\[
	\mu_1 = (1 + \chi_*) \mu + \chi_{*}, \quad \mu = (1 + \chi_*) \varsigma\widetilde{\beta}' \circ \phi + \chi_{*}.
	\]
	Let $ \epsilon_{*}, \chi(\epsilon_{*}) $ be further reduced such that $ \chi_{*} $ is small enough to satisfy the following:
	\begin{enumerate}[(i)]
		\item (due to \eqref{equ:pp})
		\[
		\varsigma\widetilde{\beta}'(\phi(\widehat{m}')) <  \mu(\widehat{m}') < \mu_1(\widehat{m}') < \widetilde{\beta}(\phi(\widehat{m})) < \beta(\phi(\widehat{m})), \widehat{m} \in \widehat{K}, \widehat{m}' \in \widehat{U}_{\widehat{m}}(\epsilon_2) \cap \widehat{K};
		\]
		\item $ \chi_{*} \hat{\beta} < 1 / 8 $ and $ \chi_{*} < \hat{\beta}'' $.
	\end{enumerate}

	\item \label{OVII} Set
	\[
	\gamma \triangleq \sup_{\widehat{m} \in \widehat{K}} \alpha(\phi(\widehat{m})) \mu_1(\widehat{u}(\widehat{m})), \quad \overline{\lambda}_u \triangleq \sup_{\widehat{m} \in \widehat{K}}\frac{\lambda_{u}(\phi(\widehat{m}))}{1 - \alpha(\phi(\widehat{m})) \mu_1(\widehat{u}(\widehat{m}))}.
	\]
	As $ \sup_{m \in K} \alpha(m) \beta'(u(m)) < 1/(2\varsigma_0) $ (in (A3) (a) (i)), we can ensure that $ \gamma < 1 / 2 $ (provided $ \chi_{*}, \xi_1 $ are sufficiently small and $ \varsigma $ is close to $ 2 $ in case (1) and close to $ 1 $ in case (2)); moreover, if (A3) (a) (ii) holds, then we can assume $ \overline{\lambda}_u < 1 $.
\end{asparaenum}

To simplify notation, for $ \kappa_1 \neq \kappa_2 \in \{s, c, u\} $, set
\[
X^{\kappa_1\kappa_2}_{\widehat{K}'} (\sigma, \varrho) \triangleq X^{\kappa_1}_{\widehat{K}'} (\sigma) \oplus X^{\kappa_2}_{\widehat{K}'} (\varrho), X^{\kappa_1\kappa_2}_{\widehat{K}'} (\sigma) = X^{\kappa_1\kappa_2}_{\widehat{K}'} (\sigma, \sigma), \widehat{K}' \subset \widehat{\Sigma},
\]
and
\[
\overline{X^u_{\widehat{\Sigma}} (\varrho_*)} = \{ (\widehat{m}, x^u): x^u \in X^u_{\phi(\widehat{m})}, |x^u| \leq \varrho_*, \widehat{m} \in \widehat{\Sigma} \}.
\]

\vspace{.5em}
\noindent{\textbf{Choice of constants.}}
Let $ \hat{\chi}_{*} $ be any function depending on $ \epsilon_{*}, \chi(\epsilon_{*}) $ such that
\[
\chi_{*} \leq \hat{\chi}_{*}, \quad \text{and} \quad \hat{\chi}_{*} = \hat{\chi}_{*}(\epsilon_{*}, \chi(\epsilon_{*})) \to 0 \quad \text{as} \quad \epsilon_{*}, \chi(\epsilon_{*}) \to 0.
\]
Let us take $ \sigma_{*}, \varrho_{*}, \eta_0 = o(\epsilon_{*}) $; in addition, in case (2), require that $ \varrho_{*} / \sigma_{*} \to 0 $ and $ \gamma_0 \to 0 $ such that $ (\gamma_0\sigma_{*}) / \varrho_{*} \to 0 $. For example,
\[
\begin{cases}
\eta_0 = \hat{\chi}_{*}\epsilon_{*} / 16 \leq \epsilon_{*} / 8 \leq r_0 / 64, \\
\varrho_* = e^{-1}_1( \eta_0 + \max\{2\hat{\beta} , 1\} \hat{\chi}_{*}\epsilon_{*}) \leq \epsilon_{*} / 16;
\end{cases}
\]
and
\[
\sigma_* = \begin{cases}
2\hat{\chi}_{*}\epsilon_{*}, &\text{in case (1)}, \\
2\hat{\chi}^{1/2}_{*}\epsilon_{*}, &\text{in case (2)}, \\
\end{cases} 
\]
(so that $ \sigma_* \leq \min\{\epsilon_{*}, \min\{\hat{\beta}^{-1}, 1\} (r_0 - \eta_0), \epsilon_2 \} / 8 $), with $ \gamma_0 \leq \hat{\chi}^{1.1/2}_{*} $ in case (2) where $ \gamma_0 $ is defined in \eqref{equ:est}.

Note that $ \chi_{*} \epsilon_{*} \leq \sigma_*, \varrho_* \leq \sigma_* + \varrho_* \leq  \epsilon_{*} / 4 $. The primary motivation for choosing these constants is to ensure that \eqref{equ:nnn} and the subsequent inequalities \eqref{equ:mmm} and \eqref{equ:mm11} hold. In what follows, \emph{$ \epsilon_{*}, \chi(\epsilon_{*}) $ (and also $ \eta \leq \epsilon_{0, *} \eta_0 $) will be taken even smaller.} 

For the reader's convenience, we list below other constants that will be used in the proofs:

\begin{enumerate}[$ \bullet $]
	\item $ K_2 $ (see \eqref{equ:KKK}) and $ K'_1 $ (see \eqref{equ:K00}), which depend on the constant $ K_1 > 1 $;

	\item $ \sigma_{0} = e_1 \sigma_{*} $ (see \eqref{equ:sss}), $ \sigma^c_{*} $ (see \eqref{equ:csss}), and $ \sigma^1_{*} $ (see \eqref{equ:s111});

	\item $ \eta_1, \eta_2 $ in the definition of the bump function $ \Psi $ (see \eqref{equ:cutoff} and \autoref{lem:belong});

	\item $ \widehat{\lambda}_{u} < 1 $ in \autoref{lem:contractive};

	\item $ \varepsilon_{0} = \min\{e^{-1}_2,c^{-1}_2\} e_0 \eta_2 $, where $ e_0 = \min\{ e_1, c_1 \} $ (in \autoref{lem:existence});

	\item $ \varpi_1^* > 1 $: constants such that $ \varpi_1^* \to 1 $ as $ \epsilon_{*}, \chi(\epsilon_{*}) \to 0 $;

	\item $ \varpi^*_0 = \varpi_1^* - 1 > 0 $: constants such that $ \varpi^*_0 \to 0 $ as $ \epsilon_{*}, \chi(\epsilon_{*}) \to 0 $;

	\item $ \widetilde{C} $: universal constants independent of (small) $ \epsilon_{*}, \chi(\epsilon_{*}), \eta $.
\end{enumerate}

\section{Construction of graph transform} \label{sub:graph}

In the following, we focus on two distinct cases: the unlimited case (see \autoref{sub:unlimited}) and the limited case (see Sections \ref{sub:limited}--\ref{sub:limited*}), both of which are essential for our subsequent analysis.

\subsection{Unlimited case}\label{sub:unlimited}
For any $ K_1 > 1 $, define
\begin{equation}\label{equ:KKK}
K_2 = K_2(K_1) \triangleq \frac{\hat{\alpha} K_1 + 1}{1 - 2\gamma}.
\end{equation}
Let $ \eta $ be small such that e.g. case (1) (taking $ \gamma^{*}_{u} = 0 $),
\begin{equation}\label{equ:small000}
(1 - \gamma^{*}_{u})^{-1}(\hat{\beta} + 1)(K_1 + K_2 + \overline{\lambda}_u ( \hat{\beta} + K_1 ) + 1) \eta \leq \eta_0 / 2,
\end{equation}
which ensures that the following inequalities \eqref{**1}, \eqref{**2}, and \eqref{**3} are satisfied, and
\begin{equation}\label{equ:sss}
\sigma_{0} = e_1\sigma_{*} < c_1\epsilon_{*} < \min\{ \hat{\beta}^{-1} (r_0 - \eta_0) / 2, r_0/2 \}.
\end{equation}
Recall the definition of $ \mu $-Lip in $ u $-direction near $ \widehat{K} $ (see \autoref{def:lip}). Define
\begin{multline}\label{equ:space0}
\varSigma_{\mu, K_1, \epsilon_{*}, \sigma_{*}, \varrho_* } = \{ h: X^s_{\widehat{\Sigma}} (\sigma_*) \to \overline{X^u_{\widehat{\Sigma}} (\varrho_*)} \text{ is a bundle map over } \id: \\
\sup_{\widehat{m} \in \widehat{K}} |h(\widehat{m}, 0)| \leq K_1\eta,
 \graph h \cap X^{su}_{\widehat{K}_{\epsilon_*}} (\sigma_*, \varrho_*) \text{ is $ \mu $-Lip in $ u $-direction near $ \widehat{K} $}\}.
\end{multline}

Take $ h \in \varSigma_{\mu, K_1, \epsilon_{*}, \sigma_{*}, \varrho_* } $. For any $ \widehat{m}_0 \in \widehat{K} $ with $ m_0 = \phi(\widehat{m}_0) $, let $ f_{\widehat{m}_0} $ be the local representation of $ \graph h \cap X^{su}_{\widehat{K}_{\epsilon_*}} (\sigma_*, \varrho_*) $ at $ \widehat{m}_0 $. By \autoref{lem:represent2},
\[
f_{\widehat{m}_0}: X^{c}_{m_0}(c_1\epsilon_*) \oplus X^{s}_{m_0}(e_1\sigma_*) \to X^{u}_{m_0}(e_2\varrho_*),
\]
with $ \lip f_{\widehat{m}_0}(\cdot) \leq \mu_1(\widehat{m}_0) $, such that
\begin{equation}\label{equ:loc00}
\graph f_{\widehat{m}_0}|_{X^{c}_{m_0}(c_1\epsilon_{*}) \oplus X^{s}_{m_0}(e_1\sigma_*)} \subset \graph h \cap X^{su}_{\widehat{U}_{\widehat{m}_0}(\epsilon_{*})} (\sigma_*, \varrho_*).
\end{equation}
Since $ |h(\widehat{m}_0, 0)| \leq K_1\eta $, we have $ |f_{\widehat{m}_0}(0)| \leq K_1\eta $.
Define
\[
f^1_{\widehat{m}_0}(z) = f_{\widehat{m}_0} (r_{\sigma_0}(z)),
\]
where $ r_{\sigma_0}(\cdot) $ is the radial retraction (see \eqref{equ:radial}). By the choice of $ \sigma_{0} $, $ f^1_{\widehat{m}_0}(X^{cs}_{{m}_0}) \subset X^{u}_{{m}_0}(r_0) $.

Take $ \widehat{m} \in \widehat{K} $ and set $ \widehat{u}(\widehat{m}) = \widehat{m}_1 $, $ \phi(\widehat{m}_1) = m_1 $ ($ \in K $ as $ u(K) \subset K $), and $ \phi(\widehat{m}) = m $, as considered until before \autoref{lem:first2}.
Consider the following fixed point equation:
\begin{equation}\label{equ:fixed}
\widehat{F}^{cs}_{m}( x^{cs}, f^1_{\widehat{m}_1}(z) ) = z, \quad z \in X^{cs}_{m_1},
\end{equation}
where $ x^{cs} \in X^{cs}_{m} (r_0) $. Because $ \sup_{\widehat{m} \in \widehat{K}} \alpha(\phi(\widehat{m})) \mu_1(\widehat{u}(\widehat{m})) = \gamma < 1 / 2 $ (in Observation \eqref{OVII}), there is a unique point
\[
x_{\widehat{m}} (x^{cs}) = (x^c_{\widehat{m}} (x^{cs}), x^s_{\widehat{m}} (x^{cs}) ) \in X^{cs}_{m_1},
\]
satisfying equation \eqref{equ:fixed}. Since $ | f^{1}_{\widehat{m}_1} ( 0, 0 ) | = |f_{\widehat{m}_1} ( 0, 0 )| \leq K_1\eta $ and $ | \widehat{F}^{cs}_m( 0, 0 ) | \leq \eta $, we have
\[
|f^{1}_{\widehat{m}_1} (x_{\widehat{m}}(0))| \leq |f^{1}_{\widehat{m}_1} (x_{\widehat{m}}(0)) - f^{1}_{\widehat{m}_1} (0) | + |f^{1}_{\widehat{m}_1} (0)| \leq 2\mu_1(\widehat{m}_1) |x_{\widehat{m}}(0)| + K_1\eta,
\]
and so
\begin{align*}
|x_{\widehat{m}}(0)| & \leq |\widehat{F}^{cs}_{m}(0,f^1_{\widehat{m}_1}(x_{\widehat{m}}(0))) - \widehat{F}^{cs}_{m}(0,0)| + |\widehat{F}^{cs}_{m}(0,0)| \\
& \leq \alpha(m)|f^1_{\widehat{m}_1}(x_{\widehat{m}}(0))| + \eta \\
& \leq 2\alpha(m) \mu_1(\widehat{m}_1) |x_{\widehat{m}}(0)| + \alpha(m) K_1\eta + \eta,
\end{align*}
yielding
\[ \label{**1} \tag{**1}
\begin{cases}
|x_{\widehat{m}}(0)| \leq \frac{\alpha(m)K_1\eta + \eta}{1 - 2\gamma} \leq K_2 \eta < \sigma_0, \\
|f^{1}_{\widehat{m}_1} (x_{\widehat{m}}(0))| = |f_{\widehat{m}_1} (x_{\widehat{m}}(0))| \leq \mu_1(\widehat{m}_1) |x_{\widehat{m}}(0)| + K_1\eta \leq \hat{\beta} K_2\eta + K_1\eta < r_0.
\end{cases}
\]

Next, let us show
\begin{lem}\label{lem:first1}
	Let $ \sigma^c_{*} > 0 $ such that $ \max\{\sup_{m}\{ \lambda_{cs}(m) \}, 1\} \sigma^c_{*} + K_2 \eta < \sigma_{0} $. Then 
	\[
	x_{\widehat{m}}(X^{cs}_{m}(\sigma^c_{*})) \subset X^{cs}_{m_1}(\sigma_{0}), \quad \text{and} \quad \lip x_{\widehat{m}}(\cdot)|_{X^{cs}_{m}(\sigma^c_{*})} \leq \lambda_{cs}(m).
	\]
\end{lem}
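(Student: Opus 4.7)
The plan is to read off both conclusions directly from the (B) condition satisfied by $\widehat{H}_m$, using the fixed point equation to exhibit two pairs in $\graph \widehat{H}_m$ whose second-factor outputs are linked by $f^1_{\widehat{m}_1}$. The Lipschitz estimate for $x_{\widehat{m}}(\cdot)$ will then follow by verifying the hypothesis of (B); the range estimate will then be an immediate consequence of this Lipschitz bound combined with the a priori estimate \eqref{**1} at the origin.

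More precisely, first I would fix $x^{cs}_1, x^{cs}_2 \in X^{cs}_m(\sigma^c_*)$ and set $z_i = x_{\widehat{m}}(x^{cs}_i) \in X^{cs}_{m_1}$, $y^u_i = f^1_{\widehat{m}_1}(z_i) \in X^u_{m_1}$, and $y^{u,\mathrm{in}}_i = \widehat{G}^{cs}_m(x^{cs}_i, y^u_i)$. The fixed-point equation \eqref{equ:fixed} and the definition of the generating map give $(z_i, y^u_i) \in \widehat{H}_m(x^{cs}_i, y^{u,\mathrm{in}}_i)$; note that all inputs lie in the valid domain since $\sigma^c_* < \sigma_0 < r_0 \ll r$ and $f^1_{\widehat{m}_1}$ takes values in $X^u_{m_1}(r_0)$ by the cut-off. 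Now, since $\lip f^1_{\widehat{m}_1}(\cdot) \leq \lip f_{\widehat{m}_1}(\cdot) \leq \mu_1(\widehat{m}_1)$, we have
\[
|y^u_1 - y^u_2| \leq \mu_1(\widehat{m}_1)|z_1 - z_2|.
\]
Taking $\widehat{m}' = \widehat{m}_1 = \widehat{u}(\widehat{m})$ in the chain of inequalities from observation \eqref{OVI} (so that $\widehat{m}' \in \widehat{U}_{\widehat{m}_1}(\epsilon_2) \cap \widehat{K}$ trivially) yields $\mu_1(\widehat{m}_1) \leq \widetilde{\beta}(\phi(\widehat{m})) \leq \beta(m)$, which verifies the hypothesis $|y^u_1 - y^u_2| \leq \beta(m)|z_1 - z_2|$ of the (B)($\beta; \beta', \lambda_{cs}$) condition.

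The conclusion of (B) then gives $|z_1 - z_2| \leq \lambda_{cs}(m)|x^{cs}_1 - x^{cs}_2|$, which is precisely $\lip x_{\widehat{m}}(\cdot)|_{X^{cs}_m(\sigma^c_*)} \leq \lambda_{cs}(m)$. Combining this with \eqref{**1} gives, for any $x^{cs} \in X^{cs}_m(\sigma^c_*)$,
\[
|x_{\widehat{m}}(x^{cs})| \leq |x_{\widehat{m}}(x^{cs}) - x_{\widehat{m}}(0)| + |x_{\widehat{m}}(0)| \leq \lambda_{cs}(m)|x^{cs}| + K_2\eta < \sigma_0,
\]
using the standing hypothesis $\max\{\sup_m \lambda_{cs}(m), 1\}\sigma^c_* + K_2\eta < \sigma_0$, which closes the proof. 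The only nontrivial step is the verification that $\mu_1(\widehat{m}_1) \leq \beta(m)$, and this is already packaged into the bookkeeping of observation \eqref{OVI}; once that is in hand the result is immediate from the (B) condition, so I do not expect any serious obstacle beyond this bookkeeping.
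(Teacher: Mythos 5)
Your argument hinges on the claim that $\lip f^1_{\widehat{m}_1}(\cdot) \leq \lip f_{\widehat{m}_1}(\cdot) \leq \mu_1(\widehat{m}_1)$, and this is where the proof breaks down in the paper's setting. Recall $f^1_{\widehat{m}_1}(z) = f_{\widehat{m}_1}(r_{\sigma_0}(z))$ with $r_{\sigma_0}$ the radial retraction \eqref{equ:radial}; in a general Banach space the radial retraction is only $2$-Lipschitz, so all one can say a priori is $\lip f^1_{\widehat{m}_1} \leq 2\mu_1(\widehat{m}_1)$. The paper uses exactly this factor $2$ in \eqref{**1} (note the terms $2\mu_1(\widehat{m}_1)|x_{\widehat{m}}(0)|$ and $2\alpha(m)\mu_1(\widehat{m}_1)$ there, and the requirement $\gamma < 1/2$ rather than $\gamma<1$ in observation \eqref{OVII}). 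Since the construction only arranges $\mu_1(\widehat{u}(\widehat{m})) < \widetilde{\beta}(\phi(\widehat{m})) < \beta(m)$, there is no margin guaranteeing $2\mu_1(\widehat{m}_1) \leq \beta(m)$, so the hypothesis $|y^u_1 - y^u_2| \leq \beta(m)|z_1 - z_2|$ of the (B) condition is not verified by your estimate for general $x^{cs}\in X^{cs}_m(\sigma^c_*)$. The hypothesis \emph{is} available once you know $x_{\widehat{m}}(x^{cs}) \in X^{cs}_{m_1}(\sigma_0)$, because there the truncation is inactive and the fixed-point equation involves the untruncated $f_{\widehat{m}_1}$ with $\lip f_{\widehat{m}_1}\leq\mu_1(\widehat{m}_1)<\beta(m)$ — but that containment is precisely one of the two conclusions you are trying to prove, so as written the argument is circular.

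The paper closes this circle with a bootstrap/continuation argument that your proposal is missing: from the contraction construction one first gets the crude bound $\lip x_{\widehat{m}}|_{X^{cs}_m(r_0)} \leq \overline{\lambda}_{cs}/(1-2\gamma)$ (valid with the truncated $f^1$); then, on any radius $r'$ for which $x_{\widehat{m}}(X^{cs}_m(r'))\subset X^{cs}_{m_1}(\sigma_0)$, the truncation drops out and the (B) condition upgrades the Lipschitz constant to $\lambda_{cs}(m)$; finally, letting $\sigma_1$ be the supremum of such radii $\leq\sigma^c_*$, the sharp bound together with \eqref{**1} and the standing inequality $\max\{\overline{\lambda}_{cs},1\}\sigma^c_*+K_2\eta<\sigma_0$ shows the containment persists on a slightly larger ball (the increment controlled by the crude constant $\overline{\lambda}_{cs}/(1-2\gamma)$), forcing $\sigma_1=\sigma^c_*$. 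If you insert this continuation step your proof becomes essentially the paper's; without it (or without restricting to spaces where the radial retraction is $1$-Lipschitz, e.g.\ Hilbert spaces, which the paper does not assume), the direct appeal to (B) on all of $X^{cs}_m(\sigma^c_*)$ is not justified.
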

\begin{proof}
	Set $ \overline{\lambda}_{cs} = \sup_{m} \lambda_{cs}(m) $. By the construction of $ x_{\widehat{m}}(\cdot) $, we have $ \lip x_{\widehat{m}}(\cdot)|_{X^{cs}_{m}(r_0)} \leq \frac{\overline{\lambda}_{cs}}{1 - 2\gamma} $.
	If $ r' > 0 $ is such that $ x_{\widehat{m}}(X^{cs}_{m}(r')) \subset X^{cs}_{m_1}(\sigma_{0}) $, then
	\[
	\widehat{F}^{cs}_{m} ( x^{cs}, f_{\widehat{m}_1} ( x_{\widehat{m}} (x^{cs}) ) ) = x_{\widehat{m}} (x^{cs}).
	\]
	By the (B) condition, $ \lip x_{\widehat{m}}(\cdot,\cdot)|_{X^{cs}_{m}(r')} \leq \lambda_{cs}(m) $. Let
	\[
	\sigma_1 = \sup \{ \sigma \leq \sigma^c_{*}: \lip x_{\widehat{m}}(\cdot)|_{X^{cs}_{m}(\sigma)} \leq \lambda_{cs}(m) \}.
	\]
	Note that $ \sigma_1 > 0 $. If $ \sigma_1 < \sigma^c_{*} $, then $  \lip x_{\widehat{m}}(\cdot)|_{\overline{X^{cs}_{m}(\sigma_1)}} \leq \lambda_{cs}(m) $ and $ \overline{\lambda}_{cs}\sigma_1 + K_2\eta < \sigma_0 $. So we can choose a small $ \varepsilon > 0 $ such that $ \overline{\lambda}_{cs}\sigma_1 + K_2\eta + \frac{\overline{\lambda}_{cs}}{1 - 2\gamma} \varepsilon < \sigma_0 $, which implies that $ x_{\widehat{m}}(X^{cs}_{m}(\sigma_1 + \varepsilon)) \subset X^{cs}_{m_1}(\sigma_{0}) $ and thus $ \lip x_{\widehat{m}}(\cdot)|_{X^{cs}_{m}(\sigma_1 + \varepsilon)} \leq \lambda_{cs}(m) $, contradicting the definition of $ \sigma_1 $. Therefore, $ \sigma_1 = \sigma^c_{*} $. The proof is complete.
\end{proof}

Let
\begin{equation}\label{equ:csss}
\sigma^c_{*} = (\max\{\sup_{m}\{ \lambda_{cs}(m) \}, 1\})^{-1} (\sigma_{0} - \eta_0) / 2.
\end{equation}
Consider the following equation:
\begin{equation}\label{equ:local}
\begin{cases}
\widehat{F}^{cs}_{m} ( x^{cs}, f_{\widehat{u}(\widehat{m})} ( x_{\widehat{m}} (x^{cs}) ) ) = x_{\widehat{m}} (x^{cs}), \\
\widehat{G}^{cs}_{m} ( x^{cs}, f_{\widehat{u}(\widehat{m})} ( x_{\widehat{m}} (x^{cs}) ) ) \triangleq \widetilde{f}_{\widehat{m}} (x^{cs}),
\end{cases}
\quad x^{cs} \in X^{cs}_{m}(\sigma^c_{*}),
\end{equation}
where $ m = \phi(\widehat{m}), \widehat{m} \in \widehat{K} $.
\begin{lem} \label{lem:first2}
	We have
	$ |\widetilde{f}_{\widehat{m}} (0)| \leq K'_1 \eta $, where
	\begin{equation}\label{equ:K00}
	K'_1 = \overline{\lambda}_u ( \hat{\beta} + K_1 ) + 1,
	\end{equation}
	and $ \lip \widetilde{f}_{\widehat{m}}(\cdot)|_{X^{cs}_{m}(\sigma^c_{*})} \leq \beta'(m) $. In particular, if $ \overline{\lambda}_u < 1 $ and $ K_1 = \frac{ \overline{\lambda}_u\hat{\beta} + 1 }{1 - \overline{\lambda}_u} $, then we can take $ K'_1 = K_1 $.
\end{lem}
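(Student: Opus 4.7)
The strategy is to prove the two estimates independently: the value at $0$ follows from the (A$'$) condition combined with the bound on $|x_{\widehat m}(0)|$ from \eqref{**1}, while the Lipschitz bound on $X^{cs}_m(\sigma^c_*)$ is obtained by reducing to the (B) condition via \autoref{lem:first1}.

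First, for $|\widetilde{f}_{\widehat{m}}(0)|$, I would set $x := x_{\widehat{m}}(0)$ and $y_0 := f_{\widehat{m}_1}(x)$. By \eqref{**1}, $|x| \leq K_2\eta < \sigma_0$, so $r_{\sigma_0}$ acts as the identity on $x$, $f^{1}_{\widehat m_1}(x) = y_0$, and the fixed point equation \eqref{equ:fixed} reduces to $\widehat F^{cs}_m(0,y_0) = x$; consequently $\widetilde f_{\widehat m}(0) = \widehat G^{cs}_m(0,y_0)$. The (A$'$) condition on $\widehat F^{cs}_m(0,\cdot)$ together with $|\widehat F^{cs}_m(0,0)| \leq \eta$ gives $|x| \leq \alpha(m)|y_0| + \eta$, while $\lip f_{\widehat m_1} \leq \mu_1(\widehat m_1)$ and $|f_{\widehat m_1}(0)| \leq K_1\eta$ yield $|y_0| \leq \mu_1(\widehat m_1)|x| + K_1\eta$. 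Solving these coupled inequalities gives
\[
|y_0| \leq \frac{\mu_1(\widehat m_1) + K_1}{1 - \alpha(m)\mu_1(\widehat m_1)}\,\eta \leq \frac{\hat\beta + K_1}{1 - \alpha(m)\mu_1(\widehat m_1)}\,\eta,
\]
and combining with $|\widetilde f_{\widehat m}(0)| \leq \lambda_u(m)|y_0| + \eta$ and the definition of $\overline\lambda_u$ in observation (VII) produces $|\widetilde f_{\widehat m}(0)| \leq \overline\lambda_u(\hat\beta + K_1)\eta + \eta = K'_1\eta$. The particular case is the algebraic identity $K'_1 = K_1 \Longleftrightarrow K_1 = (\overline\lambda_u\hat\beta + 1)/(1 - \overline\lambda_u)$.

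Second, for the Lipschitz bound on $X^{cs}_m(\sigma^c_*)$, I would fix $x^{cs}_j \in X^{cs}_m(\sigma^c_*)$ and set $x_j := x_{\widehat m}(x^{cs}_j)$, $y_j := f_{\widehat m_1}(x_j)$, and $u_j := \widetilde f_{\widehat m}(x^{cs}_j) = \widehat G^{cs}_m(x^{cs}_j,y_j)$ for $j = 1,2$. By \autoref{lem:first1}, $x_j \in X^{cs}_{m_1}(\sigma_0)$, so $r_{\sigma_0}$ is again trivial and each pair $((x^{cs}_j,u_j),(x_j,y_j))$ lies on $\graph \widehat H_m$. The Lipschitz bound on $f_{\widehat m_1}$ immediately gives $|y_1 - y_2| \leq \mu_1(\widehat m_1)|x_1 - x_2|$. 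By \eqref{equ:pp} together with the explicit form $\mu_1 = (1+\chi_*)\mu + \chi_*$ from observation (VI), one has $\mu_1(\widehat u(\widehat m)) < \widetilde\beta(m) \leq \beta(m)$ provided $\chi_*$ is sufficiently small, so $|y_1 - y_2| \leq \beta(m)|x_1 - x_2|$ and the (B) condition applies, yielding $|u_1 - u_2| \leq \beta'(m)|x^{cs}_1 - x^{cs}_2|$.

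The main obstacle lies in the first step: the denominator $1 - \alpha(m)\mu_1(\widehat m_1)$ produced by solving the coupled inequalities must be recognized as exactly the factor absorbed into the uniform constant $\overline\lambda_u$ defined in observation (VII). This is what makes the bound $K'_1 = \overline\lambda_u(\hat\beta + K_1) + 1$ both uniform in $\widehat m$ and consistent with the particular case; estimating $|y_0|$ only through the crude bound $\hat\beta K_2\eta + K_1\eta$ coming from \eqref{**1} would yield an inferior constant involving $K_2$ that would not reduce to $K_1$ in the special case. Everything else is bookkeeping against the (A$'$) and (B) conditions, the key Lipschitz inequality $\mu_1(\widehat u(\widehat m)) < \beta(m)$ having been secured already by the preparation in observations (IV)--(VI).
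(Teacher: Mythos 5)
Your proof is correct and follows essentially the same route as the paper: the coupled inequalities from the (A$'$) condition and $\lip f_{\widehat m_1}\leq\mu_1(\widehat m_1)$ give $|f_{\widehat m_1}(x_{\widehat m}(0))|\leq\frac{(\mu_1(\widehat m_1)+K_1)\eta}{1-\alpha(m)\mu_1(\widehat m_1)}$, which with the definition of $\overline\lambda_u$ yields $K'_1=\overline\lambda_u(\hat\beta+K_1)+1$, and the Lipschitz bound follows from the (B) condition since $\mu_1(\widehat u(\widehat m))<\beta(m)$. Your extra remarks (that $r_{\sigma_0}$ acts as the identity by \eqref{**1} and \autoref{lem:first1}, and that the crude bound via $K_2$ would spoil the special case $K'_1=K_1$) are accurate but only make explicit what the paper leaves implicit.
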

\begin{proof}
	Since $ | f_{\widehat{m}_1} ( 0, 0 ) | \leq K_1\eta $, $ | \widehat{F}^{cs}_m( 0, 0 ) | \leq \eta $, $ | \widehat{G}^{cs}_m( 0, 0 ) | \leq \eta $, and
	\begin{gather*}
	\begin{cases}
	|x_{\widehat{m}}(0)| \leq |\widehat{F}^{cs}_{m}(0,f_{\widehat{m}_1}(x_{\widehat{m}}(0))) - \widehat{F}^{cs}_{m}(0,0)| + |\widehat{F}^{cs}_{m}(0,0)|
	\leq \alpha(m)|f_{\widehat{m}_1}(x_{\widehat{m}}(0))| + \eta, \\
	|f_{\widehat{m}_1} (x_{\widehat{m}}(0))| \leq |f_{\widehat{m}_1} (x_{\widehat{m}}(0)) - f_{\widehat{m}_1} (0) | + |f_{\widehat{m}_1} (0)| \leq \mu_1(\widehat{m}_1) |x_{\widehat{m}}(0)| + K_1\eta,
	\end{cases}
	\\
	\Rightarrow |f_{\widehat{m}_1} (x_{\widehat{m}}(0))| \leq \frac{\mu_1(\widehat{m}_1)\eta + K_1\eta}{1 - \alpha(m)\mu_1(\widehat{m}_1)},
	\end{gather*}
	it follows that
	\begin{align*}
	| \widetilde{f}_{\widehat{m}} (0)  | & \leq |\widehat{G}^{cs}_{m} ( 0, f_{\widehat{m}_1} ( x_{\widehat{m}} (0) ) ) - \widehat{G}^{cs}_{m} ( 0, 0 ) | + |\widehat{G}^{cs}_{m} ( 0, 0 )| \\
	& \leq \lambda_{u}(m)|f_{\widehat{m}_1} (x_{\widehat{m}} (0))| + \eta
	\leq \lambda_u(m) \frac{\mu_1(\widehat{m}_1)\eta + K_1\eta}{1 - \alpha(m)\mu_1(\widehat{m}_1)} + \eta \\
	& \leq \overline{\lambda}_u ( \hat{\beta} \eta + K_1\eta ) + \eta = K'_1\eta.
	\end{align*}
	Since $ \lip f_{\widehat{m}_1} (\cdot) \leq \mu_1(\widehat{m}_1) < \beta(m) $, by the (B) condition, we have $ \lip \widetilde{f}_{\widehat{m}}(\cdot)|_{X^{cs}_{m}(\sigma^c_{*})} \leq \beta'(m) $. 
	The proof is complete.
\end{proof}

\begin{lem}\label{lem:estR}
	In both cases, we have $ \widetilde{f}_{\widehat{m}} (X^{cs}_{m}(\sigma^c_{*})) \subset X^{u}_{m}(e_1\varrho_{*}) $.
\end{lem}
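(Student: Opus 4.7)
My plan is to split the argument along the two cases and in each case combine the base-point estimate $|\widetilde{f}_{\widehat{m}}(0)|\le K_1'\eta$ of \autoref{lem:first2} with a controlled growth bound on $X^{cs}_m(\sigma^c_*)$, then verify the resulting quantity fits inside the $e_1\varrho_*$-ball by the ``Choice of constants'' paragraph.

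First, in \textbf{case (1)} the desired bound is purely Lipschitz. From \autoref{lem:first2} we have $\lip \widetilde{f}_{\widehat m}(\cdot)|_{X^{cs}_{m}(\sigma^c_*)}\le \beta'(m)\le \hat\beta$, whence
\[
|\widetilde{f}_{\widehat m}(x^{cs})|\le K_1'\eta+\hat\beta\,\sigma^c_*.
\]
Using \eqref{equ:small000} (which in case (1) asserts $K_1'\eta\le\eta_0/2$) together with the choice $\sigma^c_*\le (\sigma_0-\eta_0)/2\le \tfrac12 e_1\sigma_*= e_1\hat\chi_*\epsilon_*$ and $\sigma_*=2\hat\chi_*\epsilon_*$, one obtains
\[
|\widetilde{f}_{\widehat m}(x^{cs})|\le \tfrac{\eta_0}{2}+\hat\beta\,\hat\chi_*\epsilon_* \le \eta_0+\max\{2\hat\beta,1\}\hat\chi_*\epsilon_* = e_1\varrho_*,
\]
which is the required inclusion.

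Second, in \textbf{case (2)} the Lipschitz bound $\beta'(m)$ is no longer small enough to close directly (since $\beta'$ can be comparable to $\hat\beta$, while $\sigma^c_*$ scales like $\hat\chi_*^{1/2}\epsilon_*$ and $\varrho_*$ only like $\hat\chi_*\epsilon_*$). Instead I will use the structural estimate \eqref{equ:est}, which gives
\[
|\widetilde{f}_{\widehat m}(x^{cs})|=|\widehat G^{cs}_m(x^{cs},f_{\widehat m_1}(x_{\widehat m}(x^{cs})))| \le \gamma_0|x^{cs}|+\gamma^*_u\,|f_{\widehat m_1}(x_{\widehat m}(x^{cs}))|+\eta.
\]
Controlling the middle term with $\lip f_{\widehat m_1}\le\mu_1(\widehat m_1)\le\hat\beta$ and $|f_{\widehat m_1}(0)|\le K_1\eta$, together with \autoref{lem:first1} and \eqref{**1} (yielding $|x_{\widehat m}(x^{cs})|\le K_2\eta+\lambda_{cs}(m)\sigma^c_*$), leads to
\[
|\widetilde{f}_{\widehat m}(x^{cs})|\le \gamma_0\sigma^c_*+\gamma^*_u\bigl(\hat\beta K_2\eta+\hat\beta\lambda_{cs}(m)\sigma^c_*+K_1\eta\bigr)+\eta.
\]
Since \eqref{equ:small000} guarantees $(1-\gamma^*_u)^{-1}(\hat\beta+1)(K_1+K_2+\overline\lambda_u(\hat\beta+K_1)+1)\eta\le \eta_0/2$, the $\eta$-terms collapse into at most $\eta_0/2$. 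The remaining pieces involve $\sigma^c_*\le e_1\hat\chi_*^{1/2}\epsilon_*$, so using $\gamma_0\le\hat\chi_*^{1.1/2}$ and $\gamma^*_u\hat\beta\lambda_{cs}(m)\le\hat\beta\sup_m\lambda_{cs}(m)$, we bound them by a quantity $\bm{o}(\hat\chi_*\epsilon_*)$ as $\epsilon_*,\chi(\epsilon_*)\to 0$, so after possibly shrinking $\epsilon_*$ we land in $\eta_0+\max\{2\hat\beta,1\}\hat\chi_*\epsilon_*=e_1\varrho_*$.

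The main subtlety I foresee is bookkeeping in case (2): one must verify that the scale mismatch between $\sigma_*\sim\hat\chi_*^{1/2}\epsilon_*$ and $\varrho_*\sim\hat\chi_*\epsilon_*$ is absorbed by the smallness $\gamma_0\le\hat\chi_*^{1.1/2}$ and by the factor $\lambda_{cs}(m)\sigma^c_*$ appearing through \autoref{lem:first1}; the inequality $(\gamma_0\sigma_*)/\varrho_*=O(\hat\chi_*^{0.05})\to 0$ is exactly what has been prepared in the ``Choice of constants'' paragraph, and no further hypotheses are needed.
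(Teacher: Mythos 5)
Your case (1) argument is exactly the paper's: Lipschitz bound $\hat\beta\sigma^c_*$ plus $|\widetilde f_{\widehat m}(0)|\le K_1'\eta\le\eta_0/2$, absorbed into $e_1\varrho_*$ by the choice of constants. That part is fine.

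In case (2), however, there is a genuine gap. You estimate the middle term by $|f_{\widehat u(\widehat m)}(x_{\widehat m}(x^{cs}))|\le \hat\beta\,|x_{\widehat m}(x^{cs})|+K_1\eta\le \hat\beta\bigl(K_2\eta+\lambda_{cs}(m)\sigma^c_*\bigr)+K_1\eta$, and then claim the non-$\eta$ pieces are $o(\hat\chi_*\epsilon_*)$. But the term $\gamma^*_u\hat\beta\lambda_{cs}(m)\sigma^c_*$ is \emph{not} small in that sense: in case (2) one has $\sigma^c_*\sim\hat\chi_*^{1/2}\epsilon_*$ (indeed $\lambda_{cs}(m)\sigma^c_*$ can be as large as $(\sigma_0-\eta_0)/2\approx e_1\hat\chi_*^{1/2}\epsilon_*$), while $e_1\varrho_*=\eta_0+\max\{2\hat\beta,1\}\hat\chi_*\epsilon_*\sim\hat\chi_*\epsilon_*$; the prefactors $\gamma^*_u<1$, $\hat\beta$, $\sup_m\lambda_{cs}(m)$ are fixed constants that do not tend to $0$ as $\epsilon_*,\chi(\epsilon_*)\to0$. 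So your bound is of order $\hat\chi_*^{1/2}\epsilon_*\gg\hat\chi_*\epsilon_*$ and the inclusion does not close — the smallness of $\gamma_0\le\hat\chi_*^{1.1/2}$ only handles the term $\gamma_0\sigma^c_*$, not this one. The correct route (and the paper's) is not to expand $f_{\widehat u(\widehat m)}$ by its Lipschitz constant at all: since $h\in\varSigma_{\mu,K_1,\epsilon_*,\sigma_*,\varrho_*}$ takes values in $\overline{X^u_{\widehat\Sigma}(\varrho_*)}$, its local representation satisfies $|f_{\widehat u(\widehat m)}(\cdot)|\le e_2\varrho_*$ on its whole domain (\autoref{lem:represent2}), so \eqref{equ:est} gives directly
\begin{equation*}
|\widetilde f_{\widehat m}(x^{cs})|\le \gamma_0|x^{cs}|+\eta+\gamma^*_u e_2\varrho_*\le e_1\varrho_*,
\end{equation*}
the last step using $\gamma^*_u<1$, $e_1,e_2\to1$, $\gamma_0\sigma^c_*/\varrho_*\to0$ and $\eta/\varrho_*\to0$.
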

\begin{proof}
	In case (1), by the choice of $ \varrho_{*} $, one gets
	\[\tag{**2} \label{**2}
	|\widetilde{f}_{\widehat{m}} (x^{cs})| \leq \hat{\beta} |x^{cs}| + |\widetilde{f}_{\widehat{m}} (0)| \leq \hat{\beta}\sigma^c_{*} + K'_1\eta \leq e_1\varrho_{*},~ x^{cs} \in X^{cs}_{m}(\sigma^c_{*}).
	\]
	
	In case (2), from \eqref{equ:est} and the choice of $ \gamma_0 $, we see, for $ x^{cs} \in X^{cs}_{m}(\sigma^c_{*}) $,
	\begin{align*}
	|\widetilde{f}_{\widehat{m}} (x^{cs})| & = | \widehat{G}^{cs}_{m} ( x^{cs}, f_{\widehat{u}(\widehat{m})} ( x_{\widehat{m}} (x^{cs}) ) ) | \\
	& \leq \gamma_0 |x^{cs}| + \eta + \gamma^{*}_u |f_{\widehat{u}(\widehat{m})} ( x_{\widehat{m}} (x^{cs}) )| \\
	& \leq \gamma_0 |x^{cs}| + \eta + \gamma^{*}_u e_2\varrho_{*} \\
	& \leq e_1\varrho_{*}, \label{**3} \tag{**3}
	\end{align*}
	where the last inequality is a consequence of $ \gamma_0 \sigma^{c}_{*} / \varrho_* \to 0 $, $ \gamma^{*}_{u} < 1 $, and $ e_1, e_2 \to 1 $. The proof is complete.
\end{proof}

Take $ \widehat{m}_i \in \widehat{K} $, $ m_i = \phi(\widehat{m}_i) $, and $ x^{cs}_i \in X^{cs}_{m_i}(\sigma^c_{*}) $, $ i = 1,2 $.
By \autoref{lem:lip2}, write
\[
m_i + x^{cs}_i + \widetilde{f}_{\widehat{m}_i} (x^{cs}_i) = \overline{m}_i + \overline{x}^s_i + \overline{x}^{u}_i,
\]
where $ \widehat{\overline{m}}_i \in \widehat{U}_{\widehat{m}_i} (\sigma^c_{*}+\varrho_{*}) \subset \widehat{U}_{\widehat{m}_i} (\epsilon_{*}) $, $ \overline{m}_i = \phi(\widehat{\overline{m}}_i) $, $ \overline{x}^s_i \in X^{s}_{\overline{m}_i} (e^{-1}_1\sigma^c_{*}) \subset X^{s}_{\overline{m}_i}(\sigma_{*}) $ and $ \overline{x}^u_i \in X^{u}_{\overline{m}_i} (\varrho_{*}) $.

\begin{lem}\label{lem:locLip}
	Let $ \overline{m}_i, \overline{x}^s_i, \overline{x}^{u}_i $ be as given above. Let $ \widehat{m}_0 \in \widehat{K} $, $ m_0 = \phi(\widehat{m}_0) $, and
	\[
	\overline{m}_i + \overline{x}^s_i + \overline{x}^{u}_i = m_0 + \widehat{x}^s_i + \widehat{x}^c_i + \widehat{x}^{u}_i, \quad i = 1,2,
	\]
	where $ \widehat{x}^{\kappa}_i \in X^{\kappa}_{m_0} $, $ \kappa = s, c, u $, $ i = 1,2 $. If $ \widehat{\overline{m}}_1, \widehat{\overline{m}}_2 \in \widehat{U}_{\widehat{m}_0}(\epsilon_{*}) $, then
	\begin{equation}\label{equ:lip}
	|\widehat{x}^u_1 - \widehat{x}^u_2| \leq \widetilde{\beta}'(m_0) \max \{|\widehat{x}^c_1 - \widehat{x}^c_2|, |\widehat{x}^s_1 - \widehat{x}^s_2|\}.
	\end{equation}
\end{lem}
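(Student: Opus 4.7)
The plan is to verify \eqref{equ:lip} by rewriting the pair $(z_i, z_i') \in \graph H$ with
\[
z_i = m_i + x^{cs}_i + \widetilde{f}_{\widehat{m}_i}(x^{cs}_i), \qquad z_i' = u(m_i) + x_{\widehat{m}_i}(x^{cs}_i) + f_{\widehat{u}(\widehat{m}_i)}(x_{\widehat{m}_i}(x^{cs}_i))
\]
(which is a graph point of $H$ by construction of $x_{\widehat{m}_i}$ and $\widetilde{f}_{\widehat{m}_i}$ via \eqref{equ:local}) in the splittings at $m_0$ and $u(m_0)$, and then invoking the transferred (B) condition for $H(m_0 + \cdot) - u(m_0)$ supplied by \autoref{lem:qq}. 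The Lipschitz estimate on $\widetilde{f}_{\widehat{m}_i}$ produced in \autoref{lem:first2} is only relative to the splitting at $m_i$; the discrepancy is absorbed into the inflated constants $\widetilde{\alpha}, \widetilde{\beta}, \widetilde{\beta}'$ of \autoref{lem:qq} together with the Lipschitz inflation in \autoref{lem:lip2}~(2).

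First, since $|m_i - m_0| \leq \epsilon_* + \sigma^c_* + \varrho_* < \epsilon_2$ (as $\widehat{\overline{m}}_i \in \widehat{U}_{\widehat{m}_0}(\epsilon_*)$) and $|u(m_i) - u(m_0)| \leq 2\xi_1 < \epsilon_2/2$ by observation \eqref{OV}, hypothesis \textbf{(H2)} with observation \eqref{OIII} gives $|\Pi^\kappa_{m_i} - \Pi^\kappa_{m_0}| \leq L\epsilon_2 \leq \epsilon'_2$ and similarly at $u(m_0)$. Thus \autoref{lem:qq} applies and yields that $H(m_0 + \cdot) - u(m_0)$ satisfies (A$'$) $(\widetilde{\alpha}(m_0), \widetilde{\lambda}_u(m_0))$ (B) $(\widetilde{\beta}(m_0); \widetilde{\beta}'(m_0), \widetilde{\lambda}_{cs}(m_0))$ condition for the decomposition $X = X^{cs}_{m_0} \oplus X^u_{m_0}$. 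Writing $z_i - m_0 = \widehat{x}^c_i + \widehat{x}^s_i + \widehat{x}^u_i$ and $z_i' - u(m_0) = \widehat{y}^c_i + \widehat{y}^s_i + \widehat{y}^u_i$ in the respective splittings, the conclusion \eqref{equ:lip} will follow by the (B) condition once one shows
\[
|\widehat{y}^u_1 - \widehat{y}^u_2| \leq \widetilde{\beta}(m_0) \max\{|\widehat{y}^c_1 - \widehat{y}^c_2|, |\widehat{y}^s_1 - \widehat{y}^s_2|\}.
\]

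For this, observe that $y^u_i := f_{\widehat{u}(\widehat{m}_i)}(x_{\widehat{m}_i}(x^{cs}_i))$ lies on $\graph f_{\widehat{u}(\widehat{m}_i)}$, which by \autoref{lem:represent2} is $\mu_1(\widehat{u}(\widehat{m}_i))$-Lipschitz relative to the splitting at $u(m_i)$. Applying the second implication of \autoref{lem:lip2}~(2) at the base point $u(m_0)$ converts this into a Lipschitz estimate with constant $(1 + \chi_*) \mu_1(\widehat{u}(\widehat{m}_i)) + \chi_*$ relative to the splitting at $u(m_0)$. Using \eqref{equ:ppb} to bound $\widetilde{\beta}'(\phi(\widehat{u}(\widehat{m}_i))) \leq \hat{\varsigma}\widetilde{\beta}'(u(m_0))$, together with the definition $\mu_1 = (1+\chi_*)\mu + \chi_*$, $\mu = (1+\chi_*)\varsigma\widetilde{\beta}' \circ \phi + \chi_*$ from observation \eqref{OVI}, this constant is bounded above by $\widetilde{\beta}(m_0)$ provided $\chi_*$ is small enough; the latter is ensured by the slack $\hat{\varsigma} < (\varsigma - 1)/(\varsigma' - 1)$ combined with \eqref{equ:pp}. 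Plugging this into the (B) condition from the previous paragraph yields \eqref{equ:lip}.

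The main technical obstacle is the compounded bookkeeping of Lipschitz inflation through two successive coordinate changes, from the splitting at $u(m_i)$ to the tubular-type coordinates near $\overline{m}_i$ via \autoref{lem:lip2}~(1), and then from those to the splitting at $u(m_0)$ via \autoref{lem:lip2}~(2). Each passage inflates the Lipschitz constant by a factor depending on $\chi_*$, and the accumulated inflation must still lie strictly below $\widetilde{\beta}(m_0)$. This is precisely what the carefully tuned chain $\varsigma\widetilde{\beta}'(\phi(\widehat{m}')) < \mu(\widehat{m}') < \mu_1(\widehat{m}') < \widetilde{\beta}(\phi(\widehat{m}))$ set up in observation \eqref{OVI} is designed to guarantee, and where one sees the necessity of the choice $\varsigma > 2$ in \textbf{case (1)} (resp. $\varsigma > 1$ in \textbf{case (2)}) made in observation \eqref{OIV}.
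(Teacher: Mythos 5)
Your overall strategy (express the two graph points in a fixed pair of splittings and feed them into the (B) condition supplied by \autoref{lem:qq}) is the same skeleton as the paper's, but the step where you verify the hypothesis of the (B) condition has a genuine gap. The two image points lie on \emph{different} local representations: $z_1'$ on $\graph f_{\widehat{u}(\widehat{m}_1)}$ and $z_2'$ on $\graph f_{\widehat{u}(\widehat{m}_2)}$. The Lipschitz bound $\lip f_{\widehat{u}(\widehat{m}_i)} \leq \mu_1(\widehat{u}(\widehat{m}_i))$ from \autoref{lem:represent2} only controls differences of two points on the \emph{same} local graph, and \autoref{lem:lip2}~(2) only transfers such an estimate between coordinates over a \emph{common} plaque; neither compares a point of one local graph with a point of another. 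To compare $z_1'$ and $z_2'$ one must show they lie in a single plaque $X^{su}_{\widehat{U}_{\widehat{u}(\widehat{m}_2)}(\epsilon_*)}(\sigma_*,\varrho_*)$ of $\graph h$ around $\widehat{K}$, i.e.\ that $|u(m_1)-u(m_2)|$ is small compared with $\epsilon_*$. The almost uniform continuity of $u$ (observation (OV)) only yields $|u(m_1)-u(m_2)| \leq 4\xi_1 < \epsilon_2$, and $\epsilon_2$ is fixed \emph{before} $\epsilon_*$ (indeed $\epsilon_* < \epsilon_2/8$), so this is nowhere near enough; note also that $u$ is not assumed Lipschitz, so closeness of $m_1,m_2$ does not transfer quantitatively to their images.

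The paper closes exactly this gap, and that closing is the real content of the lemma: it first reduces to the case \eqref{equ:case} (when $\widetilde{\beta}'(m_0)\max\{|\widehat{x}^c_1-\widehat{x}^c_2|,|\widehat{x}^s_1-\widehat{x}^s_2|\} \leq |\widehat{x}^u_1|+|\widehat{x}^u_2| \leq 2e_2\varrho_*$, otherwise \eqref{equ:lip} is trivial), and then uses the generating-map equations \eqref{equ:H00}, the Lipschitz bound \eqref{equ:00} and the estimate \eqref{equ:11} to prove the refined bound \eqref{equ:mmm}, $|u(m_1)-u(m_2)| \lesssim \sigma_*+\varrho_* < \epsilon_*/2$; the reduction to \eqref{equ:case} is indispensable here because it converts $|\widehat{x}^c_1-\widehat{x}^c_2|+|\widehat{x}^s_1-\widehat{x}^s_2|$ into a quantity of order $\varrho_*$. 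Only after \eqref{equ:mmm} can one invoke the $\mu$-Lip-in-$u$-direction property of $h$ at the common base point $\widehat{u}(\widehat{m}_2)\in\widehat{K}$, and then conclude with the (B) condition of $H(m_0+\cdot)-u(m_2)$ (your choice of $u(m_0)$ as the reference image point is a harmless variant, again via \autoref{lem:qq}). Your proposal omits the case reduction and the estimate \eqref{equ:mmm} entirely, so the comparison of $z_1'$ with $z_2'$ — and hence the hypothesis of the (B) condition — is not established; the constant bookkeeping you identify as the main obstacle is in fact the routine part.
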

\begin{proof}
	By \autoref{lem:lip2} (1), we have $ |\widehat{x}^{u}_i| \leq e_2\varrho_{*} < r_0/2 $, $ |\widehat{x}^{c}_i| \leq c_2\epsilon_{*} < r_0/2 $, $ |\widehat{x}^{s}_i| \leq e_2\sigma_{*} < r_0/2 $, $ i = 1,2 $. It suffices to consider the case
	\begin{equation}\label{equ:case}
	\widetilde{\beta}'(m_0) \max \{|\widehat{x}^c_1 - \widehat{x}^c_2|, |\widehat{x}^s_1 - \widehat{x}^s_2|\} \leq |\widehat{x}^u_1 | + | \widehat{x}^u_2| \leq 2 e_2\varrho_*,
	\end{equation}
	as the inequality \eqref{equ:lip} is already true otherwise.
	Write
	\begin{align*}
	& u(m_1) + x_{\widehat{m}_1} (x^{cs}_1) + {f}_{\widehat{u}(\widehat{m}_1)} ( x_{\widehat{m}_1} (x^{cs}_1) ) \\
	  = ~& u(m_2) + x_{\widehat{m}_2} (x^{cs}_2) + {f}_{\widehat{u}(\widehat{m}_2)} ( x_{\widehat{m}_2} (x^{cs}_2) ) + \widetilde{x}^c_1 + \widetilde{x}^s_1 + \widetilde{x}^u_1,
	\end{align*}
	where $ \widetilde{x}^\kappa_1 \in X^\kappa_{u(m_2)} $, $ \kappa = s, c, u $.

	Since $ \widehat{\overline{m}}_2 \in \widehat{U}_{\widehat{m}_2} (\epsilon_{*}) $ and $ \widehat{m}_2 \in \widehat{U}_{\widehat{m}_0}(\epsilon_{*}) $ ($ \Rightarrow \widehat{\overline{m}}_2 \in \widehat{U}_{\widehat{m}_0}(2\epsilon_{*}) $), by Observation \eqref{OV}, one gets
	\[
	|u(m_0) - u(m_2)| \leq 2 \xi_1  < \epsilon_2 / 2,
	\]
	and so $ |u(m_1) - u(m_2)| \leq 4 \xi_1 < \epsilon_2 $. Hence, we also have $ \widetilde{x}^\kappa_1 \in X^\kappa_{u(m_2)} (r_0 / 2) $, $ \kappa = s, c, u $; also note that
	\[
	\max\{  |x_{\widehat{m}_i} (x^{cs}_i)|, |{f}_{\widehat{u}(\widehat{m}_i)} ( x_{\widehat{m}_i} (x^{cs}_i) )|: i = 1,2 \} < r_0 /2.
	\]
	Thus, $ H(m_0 + \cdot) - u({m}_2) \sim (\widetilde{F}^{cs}, \widetilde{G}^{cs}) $ satisfies the (A$ ' $) $( \widetilde{\alpha}(m_0), \widetilde{\lambda}_u(m_0) )$ (B) ($ \widetilde{\beta}(m_0) $; $ \widetilde{\beta}'(m_0) $, $ \widetilde{\lambda}_{cs}(m_0) $) condition by Observation \eqref{OI}, and
	\begin{equation}\label{equ:H00}
	\begin{cases}
	u(m_2) + x_{\widehat{m}_2} (x^{cs}_2) + {f}_{\widehat{u}(\widehat{m}_2)} ( x_{\widehat{m}_2} (x^{cs}_2) ) + \widetilde{x}^c_1 + \widetilde{x}^s_1 + \widetilde{x}^u_1 \in H(m_0 + \widehat{x}^s_1 + \widehat{x}^c_1 + \widehat{x}^{u}_1),\\
	u(m_2) + x_{\widehat{m}_2} (x^{cs}_2) + {f}_{\widehat{u}(\widehat{m}_2)} ( x_{\widehat{m}_2} (x^{cs}_2) ) \in H(m_0 + \widehat{x}^s_2 + \widehat{x}^c_2 + \widehat{x}^{u}_2),
	\end{cases}
	\end{equation}
	i.e.,
	\[
	\begin{cases}
	\widetilde{F}^{cs}(\widehat{x}^s_1 + \widehat{x}^c_1, {f}_{\widehat{u}(\widehat{m}_2)} ( x_{\widehat{m}_2} (x^{cs}_2) ) + \widetilde{x}^u_1) = x_{\widehat{m}_2} (x^{cs}_2) + \widetilde{x}^c_1 + \widetilde{x}^s_1, \\
	\widetilde{G}^{cs}(\widehat{x}^s_1 + \widehat{x}^c_1, {f}_{\widehat{u}(\widehat{m}_2)} ( x_{\widehat{m}_2} (x^{cs}_2) ) + \widetilde{x}^u_1) = \widehat{x}^{u}_1,\\
	\widetilde{F}^{cs}(\widehat{x}^s_2 + \widehat{x}^c_2, {f}_{\widehat{u}(\widehat{m}_2)} ( x_{\widehat{m}_2} (x^{cs}_2)) )= x_{\widehat{m}_2} (x^{cs}_2), \\
	\widetilde{G}^{cs}(\widehat{x}^s_2 + \widehat{x}^c_2, {f}_{\widehat{u}(\widehat{m}_2)} ( x_{\widehat{m}_2} (x^{cs}_2)) )= \widehat{x}^{u}_2.
	\end{cases}
	\]

	Due to the Lipschitz continuity of $ \widetilde{F}^{cs} $, we get
	\begin{equation}\label{equ:00}
	|\widetilde{x}^c_1| + |\widetilde{x}^s_1| \leq \widetilde{C}' (|\widehat{x}^{c}_1 - \widehat{x}^{c}_2| + |\widehat{x}^{s}_1 - \widehat{x}^{s}_2| + |\widetilde{x}^u_1|),
	\end{equation}
	where $ \lip \widetilde{F}^{cs} \leq \widetilde{C}' = \sup_{m_0 \in K}\{   \widetilde{\alpha}(m_0) + \widetilde{\lambda}_{cs}(m_0) \} $.
	Compute
	\begin{align}
	|\widetilde{x}^u_1| = & |\Pi^u_{u({m}_2)} (u({m}_1) - u({m}_2)) + \widetilde{f}_{\widehat{u}(\widehat{m}_2)} (x_{\widehat{m}_2} (x^{cs}_2)) \notag\\
	& + \Pi^u_{u({m}_2)} ( x_{\widehat{m}_1} (x^{cs}_1) + \widetilde{f}_{\widehat{u}(\widehat{m}_1)} ( x_{\widehat{m}_1} (x^{cs}_1) ) ) | \notag\\
	\leq & \chi(\epsilon_2) |u({m}_1) - u({m}_2)| + e_2\varrho_* + L\epsilon_2 \sigma_* + (L\epsilon_2 + 1) e_2\varrho_*  \notag\\
	\leq & \chi(\epsilon_2) |u({m}_1) - u({m}_2)| + 4\varrho_* +\sigma_*. \label{equ:11}
	\end{align}
	(Note that $ L\epsilon_2 \leq 1 $ and $ \sigma_{0} \leq \sigma_{*} $.)
	Thus,
	\begin{align*}
	|u(m_1) - u(m_2)| \leq & \sum_{\kappa}|\widetilde{x}^{\kappa}_1| + \sum_{i} (|x_{\widehat{m}_i} (x^{cs}_i)| + | \widetilde{f}_{\widehat{u}(\widehat{m}_i)} (x_{\widehat{m}_i} (x^{cs}_i)) |) \\
	\leq & 2 (e_2\varrho_* + \sigma_*) + |\widetilde{x}^u_1| + |\widetilde{x}^c_1| + |\widetilde{x}^s_1| \\
	\leq & 4 (\varrho_* + \sigma_*) + (\widetilde{C}' + 1)|\widetilde{x}^u_1| + 4\widetilde{C}' e_2\varrho_* / \hat{\beta}'' \\
	\leq & \widetilde{C}_1 (\varrho_* + \sigma_*) + (\widetilde{C}' + 1) \chi(\epsilon_2) |u(\overline{m}_1) - u(\overline{m}_2)|,
	\end{align*}
	where \eqref{equ:case}, \eqref{equ:00} and \eqref{equ:11} are used, $ \widetilde{C}_1 = 8 (\widetilde{C}' + 1) + 4\widetilde{C}'e_2/\hat{\beta}''  $ is a constant, and $ \hat{\beta}'' > 0 $ is defined in Observation \eqref{OI}. It follows that
	\begin{equation}\label{equ:mmm}
	|u({m}_1) - u({m}_2)| \leq \frac{\widetilde{C}_1 (\varrho_* + \sigma_* )}{1 - (\widetilde{C}' + 1) \chi(\epsilon_{2})} < \epsilon_* / 2,
	\end{equation}
	if $ \epsilon_{*}, \chi(\epsilon_{*}) $ (and hence $ \eta $) are small; here Observation \eqref{OIII} is used and $ (\varrho_* + \sigma_*) / \epsilon_{*} \to 0 $.

	By \eqref{equ:loc00}, we write
	\[
	u({m}_i) + x_{\widehat{m}_i} (x^{cs}_i) + {f}_{\widehat{u}(\widehat{m}_i)} (x_{\widehat{m}_i} (x^{cs}_i))
	= \overline{m}'_i + {\overline{x}'}^{s}_i + h(\widehat{\overline{m}}'_i, {\overline{x}'}^{s}_i),
	\]
	where $ \widehat{\overline{m}}'_i \in \widehat{U}_{\widehat{u}(\widehat{m}_i)} (\epsilon_*) $, $ {\overline{x}'}^{s}_i \in X^s_{\overline{m}'_i} (\sigma_*) $ and $ \phi(\widehat{\overline{m}}'_i) = \overline{m}'_i $.
	Moreover, by \autoref{lem:lip2} (1), we have
	\[
	|\overline{m}'_i - u(m_i) | \leq \sigma_{*} + \varrho_{*}.
	\]
	So
	\[
	|\overline{m}'_1 - u(m_2)| \leq |\overline{m}'_1 - u(m_1)| + |u(m_1) - u(m_2)| \leq  \sigma_* + \varrho_* + \epsilon_* / 2 < \epsilon_*,
	\]
	and $ \widehat{\overline{m}}'_i \in \widehat{U}_{\widehat{u}(\widehat{m}_2)} (\epsilon_*) $, $ i = 1,2 $. Observing that $ \widehat{u}(\widehat{m}_2) \in \widehat{K} $ and $ h \in \varSigma_{\mu, K_1, \epsilon_{*}, \sigma_{*}, \varrho_* } $, we know
	\[
	|\widetilde{x}^u_1| \leq \mu_1(\widehat{u}(\widehat{m}_2)) \max \{|\widetilde{x}^s_1|, |\widetilde{x}^c_1|\}.
	\]
	Thus, by the (B) condition for $ H(m_0 + \cdot) - u(m_2) $ (i.e., \eqref{equ:H00} and Observation \eqref{OI}), we finally get \eqref{equ:lip}; here note also that $ \mu_1(\widehat{u}(\widehat{m}_2)) < \widetilde{\beta}(m_0) $ by Observation \eqref{OVI}. The proof is complete.
\end{proof}

By \autoref{lem:represent3}, for every $ \widehat{m}_0 \in \widehat{K} $ and every $ (\widehat{m}, \overline{x}^s) \in X^s_{ \widehat{U}_{\widehat{m}_0} (\sigma^1_{*}) }(\sigma^1_{*}) $, where
\begin{equation}\label{equ:s111}
\sigma^1_{*} = \min\{c^{-1}_2, e^{-1}_2\}\sigma^c_{*},
\end{equation}
there is a unique $ \overline{x}^{u} \in X^{u}_{m}(\varrho_{*}) $, where $ m = \phi(\widehat{m}) $, such that $ (\widehat{m}, \overline{x}^s, \overline{x}^u) \in \graph \widetilde{f}_{\widehat{m}_0} $. Furthermore, by \autoref{lem:locLip}, for every $ \widehat{m} \in \widehat{K}_{\sigma^1_{*}} $ and $ \overline{x}^s \in X^s_{m} (\sigma^1_{*}) $, where $ m = \phi(\widehat{m}) $, there is a unique $ \overline{x}^{u} \in X^{u}_{m}(\varrho_{*}) $ such that
\[
(\widehat{m}, \overline{x}^s, \overline{x}^u) \in \bigcup_{\widehat{m}_0 \in \widehat{K}}\graph \widetilde{f}_{\widehat{m}_0} \subset X^{h}_{\widehat{\Sigma}}.
\]
Define $ \widetilde{h} $ by
\[
\widetilde{h}: X^{s}_{\widehat{K}_{\sigma^1_{*}}} (\sigma^1_{*}) \to X^{u}_{\widehat{K}_{\sigma^1_{*}}} (\varrho_{*}),~ (\widehat{m}, \overline{x}^s) \mapsto \overline{x}^u.
\]
We denote this construction process by
\begin{equation}\label{equ:graph}
\widetilde{\varGamma}: \varSigma_{\mu, K_1, \epsilon_{*}, \sigma_{*}, \varrho_* } \to \varSigma_{\mu, K'_1, \sigma^1_{*}, \sigma^1_{*}, \varrho_* }, \quad h \mapsto \widetilde{h}.
\end{equation}

Since $ \widetilde{h} $ is not defined on all of $ X^s_{\widehat{\Sigma}} (\sigma_*) $, a truncation is needed. The construction can be as follows. Take $ \ell \in C^{\infty}(\mathbb{R}_+, [0,1]) $ such that
\begin{equation}\label{equ:ct}
\ell(t) = \begin{cases}
1, & t \leq \eta_2, \\
0, & t \geq \eta_1,
\end{cases}
\quad \text{and} \quad \lip\ell \leq \frac{1}{\eta_1 - \eta_2},
\end{equation}
where $ 0 < \eta_2 < \eta_1 < \sigma^1_{*} $ ($ < \sigma_{*} $). Let $ 0 < \eta_1 < \sigma^1_{*} $ but close to $ \sigma^1_{*} $ (e.g., $ \eta_1 = (1 - \epsilon_{*}) \sigma^1_{*} $). Set\footnote{In general, this does not cause confusion with the same symbol defined in \autoref{sub:Grassmann} (i.e., the metric in $ \mathbb{G}(X) $).}
\begin{equation}\label{equ:dP}
\widehat{d}(\widehat{m}, \widehat{K})
= \begin{cases}
\inf \{ |\phi(\widehat{m}) - \phi(\widehat{m}_0)|: \widehat{m}_0 \in \widehat{K}, \widehat{m} \in \widehat{U}_{\widehat{m}_0} \}, & \text{if this set is not empty},\\
\eta_1, & \text{otherwise}.
\end{cases}
\end{equation}
Obviously, if $ \widehat{m}_1, \widehat{m}_2 \in \widehat{U}_{\widehat{m}_0}(\epsilon_{*}) $ and $ \widehat{m}_0 \in \widehat{K} $, then
\[
|\widehat{d}(\widehat{m}_1, \widehat{K}) - \widehat{d}(\widehat{m}_2, \widehat{K}) | \leq |\phi(\widehat{m}_1) - \phi(\widehat{m}_2)| \leq \varpi^*_1|\Pi^c_{\phi(\widehat{m}_0)}(\phi(\widehat{m}_1) - \phi(\widehat{m}_2))|,
\]
where $ \varpi^*_1 \to 1 $ ($ \varpi^*_1 > 1 $) as $ \epsilon_{*}, \chi(\epsilon_*),\eta \to 0 $.
Let
\begin{equation}\label{equ:cutoff}
\Psi: X^s_{\widehat{\Sigma}} (\sigma_*) \to \mathbb{R}_+, \quad (\widehat{m}, \overline{x}^s) \mapsto \ell(\max\{ \widehat{d}(\widehat{m}, \widehat{K}), |\overline{x}^s| \}),
\end{equation}
and define
\[
\widehat{h}: X^s_{\widehat{\Sigma}} (\sigma_*) \to X^u_{\widehat{\Sigma}} (\varrho_{*}), \quad (\widehat{m}, \overline{x}^s) \mapsto \Psi(\widehat{m}, \overline{x}^s) \widetilde{h}(\widehat{m}, \overline{x}^s).
\]
Note that $ \widehat{h} $ is well defined, since
\[
\widehat{h}(\widehat{m}, \overline{x}^s) = \begin{cases}
\widetilde{h}(\widehat{m}, \overline{x}^s), & (\widehat{m}, \overline{x}^s) \in X^s_{\widehat{K}_{\eta_2}} (\eta_2), \\
0, & (\widehat{m}, \overline{x}^s) \in X^s_{\widehat{\Sigma}} (\sigma_*) \setminus X^s_{\widehat{K}_{\eta_1}} (\eta_1).
\end{cases}
\]

Note that in case (1), $ \varsigma > 2 $ is taken such that $ \inf_{m \in K}\{ \beta(m) - \varsigma \beta'(u(m)) \} > 0 $ (in Observation \eqref{OIV}). The function $ \epsilon_{0,*} $ is defined in \eqref{equ:o(1)}. Since $ \epsilon_{0,*} \to 0 $ as $ \epsilon_{*} \to 0 $, we know if $ \eta \leq \epsilon_{0,*} \eta_0 $, then \eqref{equ:small000} can always hold (when $ \epsilon_{*} $ is small). Note that $ \eta_1 $ is close to $ \sigma^1_{*} $ (e.g., $ \eta_1 = (1 - \epsilon_{*}) \sigma^1_{*} $).
\begin{lem}\label{lem:belong}
	In case (1), let $ \eta_2 = \frac{\varsigma' - 2}{\varsigma' - 1} \eta_1 $ and $ \eta \leq \epsilon_{0,*} \eta_0 $ (e.g., $ \eta \leq \epsilon_{*} \eta_0 $) where $ \varsigma' = (\varsigma + 2) / 2 $. Then $ \widehat{h} \in \varSigma_{\mu, K'_1, \epsilon_{*}, \sigma_{*}, \varrho_* } $. 

	In case (2), let $ \eta_2 = (1/2) \eta_1 $ and $ \eta \leq \epsilon_{0,*} \eta_0 $. Then $ \widehat{h} \in \varSigma_{\mu, K'_1, \epsilon_{*}, \sigma_{*}, \varrho_* } $.
\end{lem}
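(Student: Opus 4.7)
The plan is to verify each of the three defining conditions for membership of $\widehat{h} = \Psi \cdot \widetilde{h}$ in $\varSigma_{\mu, K'_1, \epsilon_*, \sigma_*, \varrho_*}$. That $\widehat{h}$ is a bundle map over $\id$ is built into its construction, and its image lies in $\overline{X^u_{\widehat{\Sigma}}(\varrho_*)}$ since $\Psi \in [0,1]$ and $\widetilde{h}$ takes values in $X^u_{\widehat{K}_{\sigma^1_*}}(\varrho_*)$ on its domain (by \autoref{lem:estR} through the local representation), while $\widehat{h} \equiv 0$ outside $X^s_{\widehat{K}_{\eta_1}}(\eta_1)$. For the base point bound, any $\widehat{m} \in \widehat{K}$ satisfies $\widehat{d}(\widehat{m}, \widehat{K}) = 0$, so $\Psi(\widehat{m}, 0) = 1$ and $\widehat{h}(\widehat{m}, 0) = \widetilde{f}_{\widehat{m}}(0)$, whose norm is at most $K'_1 \eta$ by \autoref{lem:first2}.

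The real work is the $\mu$-Lip property of $\graph \widehat{h} \cap X^{su}_{\widehat{K}_{\epsilon_*}}(\sigma_*, \varrho_*)$. Fix $\widehat{m}_0 \in \widehat{K}$, $m_0 = \phi(\widehat{m}_0)$, and take $(\widehat{m}_i, \overline{x}^s_i) \in X^s_{\widehat{U}_{\widehat{m}_0}(\epsilon_*)}(\sigma_*)$ for $i = 1, 2$. I would start from the identity
\[
\widehat{h}(\widehat{m}_1, \overline{x}^s_1) - \widehat{h}(\widehat{m}_2, \overline{x}^s_2) = \Psi_1\bigl[\widetilde{h}(\widehat{m}_1, \overline{x}^s_1) - \widetilde{h}(\widehat{m}_2, \overline{x}^s_2)\bigr] + (\Psi_1 - \Psi_2)\,\widetilde{h}(\widehat{m}_2, \overline{x}^s_2),
\]
reducing at once to the case where $\max_i\{\widehat{d}(\widehat{m}_i, \widehat{K}), |\overline{x}^s_i|\} \leq \eta_1$ (otherwise both $\widehat{h}$ values vanish). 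In that regime the two points lie in the local chart at $\widehat{m}_0$ and on the local graphs $\widetilde{f}_{\widehat{m}'_{0,i}}$ for some $\widehat{m}'_{0,i} \in \widehat{K}$; translating to common coordinates at $\widehat{m}_0$ and invoking \autoref{lem:locLip} controls the first bracket, after projection to $X^u_{m_0}$, by $\widetilde{\beta}'(m_0)\max\{|\Pi^c_{m_0}(\phi(\widehat{m}_1) - \phi(\widehat{m}_2))|,\,|\Pi^s_{m_0}(\overline{x}^s_1 - \overline{x}^s_2)|\}$.

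For the cutoff term, $|\Psi_1 - \Psi_2| \leq \lip\ell \cdot \max\{|\widehat{d}(\widehat{m}_1, \widehat{K}) - \widehat{d}(\widehat{m}_2, \widehat{K})|, \bigl|\,|\overline{x}^s_1| - |\overline{x}^s_2|\,\bigr|\}$, and the comparisons noted after \eqref{equ:dP} together with the near-isometry consequences of \autoref{lem:lip2} yield a cutoff coefficient bounded by $\varpi^*_1 \lip\ell \cdot |\widetilde{h}(\widehat{m}_2, \overline{x}^s_2)|$ against the same maximum. In \textbf{case (1)} I plan to use $|\widetilde{h}(\widehat{m}_2, \overline{x}^s_2)| \leq \widetilde{\beta}'(\phi(\widehat{m}'_0))\eta_1 + K'_1\eta$ with $\widehat{m}'_0 \in \widehat{K}$ near $\widehat{m}_2$, the identity $(\lip\ell)\eta_1 \leq \varsigma' - 1$ coming from $\eta_2 = \frac{\varsigma' - 2}{\varsigma' - 1}\eta_1$, and \eqref{equ:ppb}, to bound the cutoff coefficient by $(\varsigma' - 1)\hat{\varsigma}\widetilde{\beta}'(m_0) + O(\eta/\eta_1)$. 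In \textbf{case (2)} the finer estimate \eqref{equ:est} together with \eqref{**3} gives $|\widetilde{h}(\widehat{m}_2, \overline{x}^s_2)| \leq \gamma_0 \eta_1 + \eta + \gamma^*_u e_2 \varrho_*$, so with $\lip\ell \leq 2/\eta_1$ the cutoff coefficient collapses to $O(\gamma_0) + O(\varrho_*/\eta_1) + O(\eta/\eta_1)$, all small because $\gamma_0 \leq \hat{\chi}^{1.1/2}_*$, $\varrho_*/\sigma^1_* \to 0$, and $\eta \leq \epsilon_{0,*}\eta_0$.

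The principal obstacle will be the algebraic reconciliation in \textbf{case (1)}: summing the two contributions I need $1 + (\varsigma' - 1)\hat{\varsigma} < \varsigma$, which is precisely $\hat{\varsigma} < (\varsigma - 1)/(\varsigma' - 1)$ secured in observation \eqref{OIV}. The $\varpi^*_1$ factor and the $O(\eta/\eta_1)$ remainder are absorbed by the $\chi_*$ slack in $\mu(\widehat{m}_0) = (1 + \chi_*)\varsigma \widetilde{\beta}'(m_0) + \chi_*$ once $\epsilon_*$ and $\chi(\epsilon_*)$ are further shrunk; case~(2) is algebraically immediate since only the leading $\widetilde{\beta}'(m_0) \leq \mu(\widehat{m}_0)/\varsigma$ survives, already compatible with $\varsigma \geq 1$.
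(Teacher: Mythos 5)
Your proposal is correct and follows essentially the same route as the paper: the splitting of $\widehat{h}_1-\widehat{h}_2$ into a cutoff-difference term (bounded by $\lip\ell$ times the size of $\widetilde{h}$, using $\eta_1/(\eta_1-\eta_2)=\varsigma'-1$ in case (1) and $|\widetilde{h}|\lesssim\varrho_*$ with $\varrho_*/\eta_1\to 0$ in case (2)) plus a graph-difference term controlled by \autoref{lem:locLip}, concluded via $\varsigma'<\varsigma$ and the $\chi_*$-slack in $\mu$. Your explicit bookkeeping of $\hat{\varsigma}$ through \eqref{equ:ppb} is exactly the justification the paper leaves implicit when it replaces $\widetilde{\beta}'(m'_0)$ by $\widetilde{\beta}'(m_0)$, so there is no substantive difference.
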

\begin{proof}
	In case (2), let $ \varsigma' = (\varsigma + 1) / 2 $.
	By \autoref{lem:estR}, all we need to show is that $ \graph \widehat{h} \cap X^{su}_{\widehat{K}_{\epsilon_*}} (\sigma_*, \varrho_*) $ is $ \mu $-Lip in $ u $-direction near $ \widehat{K} $. Let $ \widehat{m}_0 \in \widehat{K} $, $ m_0 = \phi(\widehat{m}_0) $, and $ (\widehat{m}_i, \overline{x}^s_i) \in X^s_{\widehat{U}_{\widehat{m}_0}(\epsilon_*)} (\sigma_*) $, $ i = 1,2 $; write $ {m}_{i} = \phi(\widehat{m}_i) $. Without loss of generality, assume $ (\widehat{m}_1, \overline{x}^s_1) \in X^s_{\widehat{K}_{\eta_1}} (\eta_1) $ and so further assume there is $ \widehat{m}'_0 \in \widehat{K} $ such that $ \widehat{m}_1 \in \widehat{U}_{\widehat{m}'_0}(\eta_1) $; write $ m'_0 = \phi(\widehat{m}'_0) $. Now, 
	\begin{align*}
	|\Pi^{u}_{m_0} \widetilde{h}(\widehat{m}_1, \overline{x}^s_1)| & \leq |(\Pi^{u}_{m_0} - \Pi^{u}_{m'_0}) \widetilde{h}(\widehat{m}_1, \overline{x}^s_1)| + |\Pi^{u}_{m'_0} \widetilde{h}(\widehat{m}_1, \overline{x}^s_1)| \\
	& \leq L|m_0 - m'_0| \varrho_{*} + |\Pi^{u}_{m'_0} \widetilde{h}(\widehat{m}_1, \overline{x}^s_1)| \\
	& \leq \begin{cases}
	2L \epsilon_{*} \varrho_{*} + \varpi^*_1 (\widetilde{\beta}'(m_0)\eta_1 + K_1\eta), & \text{case (1)},\\
	2L \epsilon_{*} \varrho_{*} + \varrho_{*}, & \text{case (2)},
	\end{cases}
	\end{align*}
	where in case (1), \autoref{lem:locLip} is used, and in case (2), \autoref{lem:estR} is used; here $ \varpi^*_1 \to 1 $ as $ \epsilon_{*}, \chi(\epsilon_*) \to 0 $. 

	Consider
	\begin{align}
	&~ |\Pi^{u}_{m_0}(\widehat{h}(\widehat{m}_1, \overline{x}^s_1) - \widehat{h}(\widehat{m}_2, \overline{x}^s_2))| = |\Pi^{u}_{m_0}\{ \Psi(\widehat{m}_1, \overline{x}^s_1)\widetilde{h}(\widehat{m}_1, \overline{x}^s_1) - \Psi(\widehat{m}_2, \overline{x}^s_2)\widetilde{h}(\widehat{m}_2, \overline{x}^s_2)\}| \notag\\
	\leq &~ |\{ \Psi(\widehat{m}_1, \overline{x}^s_1) - \Psi(\widehat{m}_2, \overline{x}^s_2) \} \Pi^{u}_{m_0} \widetilde{h}(\widehat{m}_1, \overline{x}^s_1) | + | \Psi(\widehat{m}_2, \overline{x}^s_2) \Pi^{u}_{m_0}\{ \widetilde{h}({m}_1, \overline{x}^s_1) - \widetilde{h}({m}_2, \overline{x}^s_2) \}| \notag\\
	\leq &~ \frac{\varpi^*_1}{\eta_1 - \eta_2} \max\{ |\Pi^c_{m_0} ({m}_1 - {m}_2) |, |\Pi^s_{m_0} (\overline{x}^s_1 - \overline{x}^s_2)| \} |\Pi^{u}_{m_0} \widetilde{h}(\widehat{m}_1, \overline{x}^s_1)| \notag\\
	& \quad + |\Pi^{u}_{m_0}\{ \widetilde{h}(\widehat{m}_1, \overline{x}^s_1) - \widetilde{h}(\widehat{m}_2, \overline{x}^s_2) \}| \notag\\
	\leq &~ P_* \max\{ |\Pi^c_{m_0} ({m}_1 - {m}_2) |, |\Pi^s_{m_0} (\overline{x}^s_1 - \overline{x}^s_2)| \} \notag \\
	\leq &~ \varpi^*_1 \cdot \varsigma'\widetilde{\beta}'(m_0) \max\{ |\Pi^c_{m_0} ({m}_1 - {m}_2) |, |\Pi^s_{m_0} (\overline{x}^s_1 - \overline{x}^s_2)| \} \label{equ:mm11}\\
	\leq &~ \mu(\widehat{m}_0) \max\{ |\Pi^c_{m_0} ({m}_1 - {m}_2) |, |\Pi^s_{m_0} (\overline{x}^s_1 - \overline{x}^s_2)| \}, \label{equ:mm22}
	\end{align}
	where
	\[
	P_* = \begin{cases}
	\frac{\varpi^*_1 (2L \epsilon_{*} \varrho_{*} + \widetilde{\beta}'(m_0)\eta_1 + K_1\eta) }{\eta_1 - \eta_2}  + \widetilde{\beta}'(m_0)(1+\chi_{*}) + \chi_{*}, & \text{case (1)},\\
	\frac{2\varpi^*_1 (2L \epsilon_{*} \varrho_{*} + \varrho_{*})}{\eta_1} + \widetilde{\beta}'(m_0)(1+\chi_{*}) + \chi_{*}, & \text{case (2)}.
	\end{cases}
	\]
	In case (2), \eqref{equ:mm11} holds due to $ \varrho_{*} / \eta_1 \to 0 $ (by the choice of $ \varrho_{*}, \sigma_{*} $ such that $ \varrho_{*} / \sigma_{*} \to 0 $); note also that $ \frac{\epsilon_{*} \varrho_{*}}{\eta_1 - \eta_2} \to 0 $ and $ \frac{\eta}{\eta_1 - \eta_2} \to 0 $ as $ \epsilon_{*}, \chi(\epsilon_*) \to 0 $. \eqref{equ:mm22} holds since $ \varsigma' < \varsigma $.
	This gives the proof.
\end{proof}

We define the \emph{graph transform} to be
\[
\varGamma: \varSigma_{\mu, K_1, \epsilon_{*}, \sigma_{*}, \varrho_* } \to \varSigma_{\mu, K'_1, \epsilon_{*}, \sigma_{*}, \varrho_* }, ~ h \mapsto \widehat{h}.
\]

\subsection{Limited case I: $ s $-contraction and $ u $-expansion}\label{sub:limited}
In order not to truncate the $ s $-direction, in addition, assume $ H $ satisfies the following \emph{strong $ s $-contraction assumption} (see also \cite{Che18b}); others are the same as in \autoref{sub:preparation}. Recall $ \widehat{H}_{m} = H(m + \cdot) - u(m) $.
\begin{enumerate}[($ \star\star $)]
	\item If $ (\hat{x}^{c}_i, \hat{x}^s_i, \hat{x}^{u}_i) \times (\tilde{x}^{c}_i, \tilde{x}^s_i, \tilde{x}^{u}_{i}) \in \graph \widehat{H}_{m} \cap \{ \{\widehat{X}^{cs}_{m}(r) \oplus \widehat{X}^{u}_{m}(r_{1}) \} \times \{ \widehat{X}^{cs}_{u(m)}(r_{2}) \oplus \widehat{X}^{u}_{u(m)} (r)\} \} $, $ i = 1,2 $, $ m \in K $, and $ |\tilde{x}^{u}_{1} - \tilde{x}^{u}_{2}| \leq B (|\hat{x}^c_1 - \hat{x}^c_2| + |\hat{x}^s_1 - \hat{x}^s_2|) $, then
	\[
	|\tilde{x}^{s}_{1} - \tilde{x}^{s}_{2}| \leq \lambda^*_s  (|\hat{x}^c_1 - \hat{x}^c_2| + |\hat{x}^s_1 - \hat{x}^s_2|),
	\]
	where $ B > \sup_{m \in K}\lambda_{cs}(m) \beta(m) $ is some constant and $ \lambda^*_s < 1 $. In fact, the case $ \hat{x}^{c}_1 = \hat{x}^{c}_2 $ is enough.
\end{enumerate}
Similarly, one can consider the \emph{strong $ u $-expansion assumption} (i.e., the dual of $ \widehat{H}_{m}: \widehat{X}^{s}_{m}(r) \oplus \widehat{X}^{cu}_{m}(r_1) \to \widehat{X}^{s}_{u(m)}(r_2) \oplus \widehat{X}^{cu}_{u(m)} (r) $ (see \autoref{defi:dual}) satisfies assumption ($ \star\star $)).

\begin{rmk}\label{rmk:conexp}
	A special case where $ H $ satisfies the strong $ s $-contraction assumption is the following.
	\begin{enumerate}[$\bullet$]
		\item If $ (\hat{x}^{c}_i, \hat{x}^s_i, \hat{x}^{u}_i) $, $ (\tilde{x}^{c}_i, \tilde{x}^s_i, \tilde{x}^{u}_{i}) $ are given in assumption ($ \star\star $), then
		\[
		|\tilde{x}^s_1 - \tilde{x}^s_2| \leq \zeta(|\tilde{x}^c_1 - \tilde{x}^c_2| + |\tilde{x}^u_1 - \tilde{x}^u_2|) + \lambda^*_s |\hat{x}^s_1 - \hat{x}^s_2|,
		\]
		where $ \zeta > 0 $ is sufficiently small and $ \lambda^*_s < 1 $.
	\end{enumerate}
	For example, $ \widehat{H}_{m} \sim (\widehat{F}^s_m, \widehat{G}^s_m) $ where
	\[
	\widehat{F}^s_m: \widehat{X}^{s}_{m}(r_{1}) \times \widehat{X}^{cu}_{u(m)} (r'_{2}) \to \widehat{X}^{s}_{u(m)}(r'_{1}), ~
	\widehat{G}^s_m: \widehat{X}^{s}_{m}(r_{1}) \times \widehat{X}^{cu}_{u(m)} (r'_{2}) \to \widehat{X}^{cu}_{m}(r_{2}),
	\]
	and $ \sup_{x^s}\lip\widehat{F}^s_m(x^s, \cdot) $ is sufficiently small and $ \sup_{x^{cu}}\lip\widehat{F}^s_m(\cdot, x^{cu}) < 1 $.
	\begin{proof}[Proof of \autoref{rmk:conexp}]
		Write $ \tilde{x}^{\kappa_1} = \tilde{x}^{\kappa_1}_1 - \tilde{x}^{\kappa_1}_2 $, $ \hat{x}^{\kappa_1} = \hat{x}^{\kappa_1}_1 - \hat{x}^{\kappa_1}_2 $, $ \kappa_1 = s, c, u $.
		Let $ |\tilde{x}^u| \leq B (|\hat{x}^c| + |\hat{x}^s|) $. By the above assumption, $ |\tilde{x}^s| \leq \lambda^*_s |\hat{x}^s| + \zeta (|\tilde{x}^c| + |\tilde{x}^u|) $, and by (A3) (a), $ |\tilde{x}^c| \leq \lambda_{cs}(m) (|\hat{x}^c| + |\hat{x}^s|) + \alpha (m) |\tilde{x}^{u}| $, which yields
		\[
		|\tilde{x}^s| \leq \lambda^*_s |\hat{x}^s| + \{\zeta ( \lambda_{cs}(m) + \alpha(m)B + B )\} (|\hat{x}^c| + |\hat{x}^s|).
		\]
		If $ \zeta $ is small, then $ \lambda^*_s + \zeta ( \sup_{m}\lambda_{cs}(m) + \sup_{m}\alpha(m)B + B ) < 1 $. The proof is complete.
	\end{proof}
\end{rmk}

We can assume $ \lambda^*_s e_2 < 1 $ (as $ e_2 \to 1 $ when $ \epsilon_{*}, \chi(\epsilon_{*}) \to 0 $).
Similar to the proof of \autoref{lem:first1}, for the unique point $ x_{\widehat{m}} (x^{cs}) = (x^c_{\widehat{m}} (x^{cs}), x^s_{\widehat{m}} (x^{cs}) ) \in X^{cs}_{m_1} $ of \eqref{equ:fixed}, we have
\begin{lem}\label{lem:ss}
	Under ($ \star\star $), let $ \sigma^c_{*} > 0 $ such that $ \max\{\sup_{m}\{ \lambda_{cs}(m) \}, 1\} \sigma^c_{*} + K_2 \eta < (1 - \lambda^*_s e_2) \sigma_{0} $, then
	\[
	\lip x^s_{\widehat{m}}(x^c, \cdot)|_{X^{s}_{m}(e_2\sigma_*)} \leq \lambda^*_s, ~ x^c \in X^{c}_{m}(\sigma^c_{*}), ~\lip x_{\widehat{m}}(\cdot)|_{X^{c}_{m}(\sigma^c_{*}) \oplus X^{s}_{m}(e_2\sigma_*)} \leq \lambda_{cs}(m),
	\]
	and $ x_{\widehat{m}}(X^{c}_{m}(\sigma^c_{*}) \oplus X^{s}_{m}(e_2\sigma_*)) \subset X^{cs}_{m_1}(\sigma_{0}) $.
\end{lem}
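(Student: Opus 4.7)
The plan is to mimic the bootstrap (continuity in radius) argument of \autoref{lem:first1} but track two Lipschitz bounds simultaneously, the global one $\lambda_{cs}(m)$ on $X^{c}_{m}(\sigma^c_{*}) \oplus X^{s}_{m}(e_{2}\sigma_{*})$ and the refined one $\lambda^{*}_{s}$ in the pure $s$-direction. The only new ingredient compared with \autoref{lem:first1} is the use of the strong $s$-contraction assumption $(\star\star)$.

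First I would observe that, as in \autoref{lem:first1}, the estimate $|x_{\widehat m}(0)|\leq K_{2}\eta$ from \eqref{**1} is already available, and whenever $x_{\widehat m}(x^{cs})$ remains in $X^{cs}_{m_{1}}(\sigma_{0})$ the defining equation \eqref{equ:fixed} reduces to $\widehat{F}^{cs}_{m}(x^{cs},f_{\widehat m_{1}}(x_{\widehat m}(x^{cs})))=x_{\widehat m}(x^{cs})$; since $\lip f_{\widehat m_{1}}\leq \mu_{1}(\widehat m_{1})<\widetilde\beta(\phi(\widehat m))<\beta(\phi(\widehat m))$ (observation \eqref{OVI}), the (B) condition immediately yields the global Lipschitz bound $\lip x_{\widehat m}(\cdot)\leq \lambda_{cs}(m)$ on any subdomain on which the image stays in $X^{cs}_{m_{1}}(\sigma_{0})$. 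This is the direct analogue of the first conclusion of \autoref{lem:first1}.

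Next, to extract the sharper $s$-direction bound, I would fix $x^{c}$ and take two points $(x^{c},x^{s}_{i})$, $i=1,2$. Setting $\hat x^{c}_{i}=x^{c}$, $\hat x^{s}_{i}=x^{s}_{i}$, $\hat x^{u}_{i}=f_{\widehat m_{1}}(x_{\widehat m}(x^{c},x^{s}_{i}))$ and the corresponding image point on $\graph \widehat{H}_{m}$, the provisional global bound $\lip x_{\widehat m}\leq \lambda_{cs}(m)$ together with $\lip f_{\widehat m_{1}}\leq \mu_{1}$ gives
\[
|\tilde x^{u}_{1}-\tilde x^{u}_{2}|\leq \mu_{1}\lambda_{cs}(m)|x^{s}_{1}-x^{s}_{2}|\leq B\,|x^{s}_{1}-x^{s}_{2}|,
\]
because $B>\sup_{m}\lambda_{cs}(m)\beta(m)\geq \mu_{1}\lambda_{cs}(m)$. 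Hence the hypothesis of $(\star\star)$ is met (with $\hat x^{c}_{1}=\hat x^{c}_{2}$), and one concludes $|x^{s}_{\widehat m}(x^{c},x^{s}_{1})-x^{s}_{\widehat m}(x^{c},x^{s}_{2})|\leq \lambda^{*}_{s}|x^{s}_{1}-x^{s}_{2}|$, i.e.\ the second conclusion. The image containment then follows by splitting
\[
|x^{s}_{\widehat m}(x^{c},x^{s})|\leq |x^{s}_{\widehat m}(x^{c},x^{s})-x^{s}_{\widehat m}(x^{c},0)|+|x^{s}_{\widehat m}(x^{c},0)-x^{s}_{\widehat m}(0,0)|+|x^{s}_{\widehat m}(0,0)|,
\]
applying $\lambda^{*}_{s}$ in the pure $s$-direction, $\lambda_{cs}(m)$ in the pure $c$-direction, and \eqref{**1} at the origin; the $c$-component of $x_{\widehat m}$ is still controlled by the global Lipschitz bound, so both coordinates fit within $\sigma_{0}$ thanks to the standing hypothesis $\max\{\sup_{m}\lambda_{cs}(m),1\}\sigma^{c}_{*}+K_{2}\eta<(1-\lambda^{*}_{s}e_{2})\sigma_{0}$.

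Finally, to make this rigorous one runs the usual bootstrap: define, for each $\sigma\leq e_{2}\sigma_{*}$, the supremum $\sigma_{1}$ of radii for which $x_{\widehat m}$ maps $X^{c}_{m}(\sigma^{c}_{*})\oplus X^{s}_{m}(\sigma)$ into $X^{cs}_{m_{1}}(\sigma_{0})$ and the two Lipschitz bounds hold, and show that if $\sigma_{1}<e_{2}\sigma_{*}$ then a small open extension is still admissible, contradicting maximality. The main technical obstacle is precisely this bootstrap: one must verify that the $s$-direction bound $\lambda^{*}_{s}$ and the global bound $\lambda_{cs}(m)$ reinforce each other at every intermediate radius so that the image stays strictly inside $X^{cs}_{m_{1}}(\sigma_{0})$. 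This is exactly what the quantitative gap $1-\lambda^{*}_{s}e_{2}$ in the hypothesis is designed to absorb, so the extension step reduces to the same elementary continuity argument used at the end of the proof of \autoref{lem:first1}.
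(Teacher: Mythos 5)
Your proposal is correct and takes essentially the same route as the paper's own (very terse) proof: the paper likewise obtains the refined $s$-direction bound from $(\star\star)$ on any radius where the image stays in $X^{cs}_{m_1}(\sigma_0)$ — the hypothesis of $(\star\star)$ being met exactly as you verify it, via $\lip f_{\widehat{m}_1}\leq \mu_1(\widehat{m}_1)<\beta(m)$ together with the $(B)$-condition bound $\lambda_{cs}(m)$ — and then closes with the same continuity/bootstrap argument of \autoref{lem:first1} that you run. The radius bookkeeping you compress at the end (absorbing the $e_2\sigma_*$ versus $\sigma_0=e_1\sigma_*$ discrepancy into the slack $1-\lambda^*_s e_2$ and the smallness of $\sigma^c_*,\eta$) is glossed at exactly the same level of detail in the paper's proof, so no further comparison is needed.
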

\begin{proof}
	If $ r' > 0 $ is such that $ x_{\widehat{m}}(X^{cs}_{m}(r')) \subset X^{cs}_{m_1}(\sigma_{0}) $, then by ($ \star\star $), for $ x^c \in X^{c}_{m}(r') $, we have $ \lip x^s_{\widehat{m}}(x^c, \cdot)|_{X^{s}_{m}(r')} \leq \lambda^*_s $. So arguing as in the proof of \autoref{lem:first1}, we see for $ x^c \in X^{c}_{m}(\sigma^c_{*}) $, $ \lip x^s_{\widehat{m}}(x^c, \cdot)|_{X^{s}_{m}(e_2\sigma_*)} \leq \lambda^*_s $ and then $ x_{\widehat{m}}(X^{c}_{m}(\sigma^c_{*}) \oplus X^{s}_{m}(e_2\sigma_*)) \subset X^{cs}_{m_1}(\sigma_{0}) $. The proof is complete.
\end{proof}

Under the above assumption ($ \star\star $), using the second equation of \eqref{equ:local}, we can define $ \widetilde{f}_{\widehat{m}} (x^{cs}) $ for all $ x^{cs} = (x^c, x^s) \in X^{c}_{m}(\sigma^c_{*}) \oplus X^{s}_{m}(e_2\sigma_{*}) $, where $ \sigma^c_{*} $ is now taken as
\[
\sigma^c_{*} = (\max\{\sup_{m}\{ \lambda_{cs}(m) \}, 1\})^{-1} ((1 - \lambda^*_s e_2)\sigma_{0} - \eta_0) / 2;
\]
here, $ \eta_0 $ is chosen to be even smaller, e.g., $ \eta_0 = (1 - \lambda^*_s e_2)\chi_{*}\epsilon_{*} / 16 $.
In this case, we only need to truncate the $ c $-direction. More precisely, the definition of $ \Psi $ (see \eqref{equ:cutoff}) is now replaced by
\[
\Psi: \widehat{\Sigma} \to \mathbb{R}_+, \quad \widehat{m} \mapsto \ell( \widehat{d}(\widehat{m}, \widehat{K}) ),
\]
where $ \ell(\cdot) $ is given by \eqref{equ:ct}. The graph transform $ \varGamma $ also satisfies
\begin{equation}\label{equ:graphss}
\varGamma: \varSigma_{\mu, K_1, \epsilon_{*}, \sigma_{*}, \varrho_* } \to \varSigma_{\mu, K'_1, \epsilon_{*}, \sigma_{*}, \varrho_* }, \quad h \mapsto \widehat{h} \triangleq \Psi \cdot \widetilde{\varGamma}(h).
\end{equation}

\subsection{Limited case II: strictly inflowing}\label{sub:limited0}

We will add an assumption (called the \emph{strictly inflowing assumption}) on $ H $ such that in the construction of the graph transform, the truncation is not needed. (In many situations, it is not so easy to verify this condition.)
\begin{enumerate}[ ($ \bullet\bullet $) ]
	\item There exists a positive constant $ c < \min\{ c_1, e_1, c^{-1}_2, e^{-1}_2 \} $ such that for all $ m \in K $,
	\[
	\begin{cases}
	\widehat{F}^{cs}_{m}(X^{c}_{m} (c^{-1}\epsilon_{*}) \oplus X^{s}_{m}(c^{-1}\sigma_{*}), X^{u}_{u(m)}(c^{-1}\varrho_{*})) \subset X^{c}_{u(m)} (c\epsilon_{*}) \oplus X^{s}_{u(m)}(c\sigma_{*}), \\
	\widehat{G}^{cs}_{m}(X^{c}_{m} (c^{-1}\epsilon_{*}) \oplus X^{s}_{m}(c^{-1}\sigma_{*}), X^{u}_{u(m)}(c^{-1}\varrho_{*})) \subset X^{u}_{u(m)}(c\varrho_{*}).
	\end{cases}
	\]
\end{enumerate}

Under the above assumption ($ \bullet\bullet $) and the Assumptions Case (1) (in \autoref{sub:preparation}) with $ \varsigma_0 \geq 1 $ (in (A3) (a)), the construction of the graph transform is much simpler. The angle condition $ \sup_{m\in K} \alpha(m) \beta'(u(m)) < 1 / 2 $ in (A3) (a) (i) can be replaced by $ \sup_{m\in K} \alpha(m) \beta'(u(m)) < 1 $.
\begin{enumerate}[(1)]
	\item Note that $ \sigma_{0} = e_1\sigma_{*} > c\sigma_{*} $.

	\item By assumption ($ \bullet\bullet $), for the following fixed point equation,
	\[
	\widehat{F}^{cs}_{m}( x^{cs}, f_{\widehat{m}_1}(x_{\widehat{m}} (x^{cs})) ) = x_{\widehat{m}} (x^{cs}), \quad x^{cs} = (x^c, x^s) \in X^{c}_{m}(c^{-1}\epsilon_{*}) \oplus X^{s}_{m}(c^{-1}\sigma_{*}),
	\]
	we have $ x_{\widehat{m}} (x^{cs}) = (x^c_{\widehat{m}} (x^{cs}), x^s_{\widehat{m}} (x^{cs}) ) \in X^{c}_{m_1} (c\epsilon_{*}) \oplus X^{s}_{m_1}(c\sigma_{*}) $ (\autoref{lem:first1} is not needed anymore); here note that as $ h \in \varSigma_{\mu, K_1, \epsilon_{*}, \sigma_{*}, \varrho_* } $, one has $ f_{\widehat{m}_1} (X^{c}_{u(m)}(c\epsilon_{*}) \oplus X^{s}_{u(m)}(c\sigma_{*})) \subset X^{u}_{u(m)}(c^{-1}\varrho_{*}) $.
	\item Again, using the second equation of \eqref{equ:local}, we can define $ \widetilde{f}_{\widehat{m}} (x^{cs}) $ for all $ x^{cs} = (x^c, x^s) \in X^{c}_{m}(c^{-1}\epsilon_{*}) \oplus X^{s}_{m}(c^{-1}\sigma_{*}) $.

	\item \autoref{lem:first2} also holds. More precisely,
	$ |\widetilde{f}_{\widehat{m}} (0)| \leq K'_1 \eta $, where $ K'_1 = \overline{\lambda}_u ( \hat{\beta} + K_1 ) + 1 $, and
	\[
	\lip \widetilde{f}_{\widehat{m}}(\cdot)|_{X^{c}_{m}(c^{-1}\epsilon_{*}) \oplus X^{s}_{m}(c^{-1}\sigma_{*})} \leq \beta'(m), \quad \widetilde{f}_{\widehat{m}} (X^{c}_{m}(c^{-1}\epsilon_{*}) \oplus X^{s}_{m}(c^{-1}\sigma_{*})) \subset X^{u}_{m}(c\varrho_{*}),
	\]
	where the above second formula is a consequence of assumption ($ \bullet \bullet $) on $ \widehat{G}^{cs}_{m}(\cdot) $ and $ h \in \varSigma_{\mu, K_1, \epsilon_{*}, \sigma_{*}, \varrho_* } $.

	\item \label{it:limited5} Similar to \autoref{lem:locLip}, for $ \widehat{m}_j \in \widehat{K} $, $ m_j = \phi(\widehat{m}_j) $, $ j = 0,1,2 $, and $ x^{cs}_i \in X^{c}_{m_i}(c^{-1}\epsilon_{*}) \oplus X^{s}_{m_i}(c^{-1}\sigma_{*}) $, $ i = 1,2 $, write
	\[
	m_i + x^{cs}_i + \widetilde{f}_{\widehat{m}_i} (x^{cs}_i) = \overline{m}_i + \overline{x}^s_i + \overline{x}^{u}_i = m_0 + \widehat{x}^s_i + \widehat{x}^c_i + \widehat{x}^{u}_i, ~i = 1,2,
	\]
	where $ \widehat{x}^{\kappa} \in X^{\kappa}_{m_0} $, $ \kappa = s, c, u $, $ \widehat{\overline{m}}_i \in \widehat{U}_{\widehat{m}_i} (c_{1}^{-1}c^{-1}\epsilon_{*}) $, $ \overline{m}_i = \phi(\widehat{\overline{m}}_i) $, $ \overline{x}^s_i \in X^{s}_{\overline{m}_i} (e^{-1}_1c^{-1}\sigma_{*}) $ and $ \overline{x}^u_i \in X^{u}_{\overline{m}_i} (\varrho_{*}) $. If $ \widehat{\overline{m}}_1, \widehat{\overline{m}}_2 \in \widehat{U}_{\widehat{m}_0}(\epsilon_{*}) $, then \eqref{equ:lip} also holds.

	\item \label{it:limited6} From \eqref{it:limited5} and \autoref{lem:represent3}, for every $ \widehat{m} \in \widehat{K}_{\epsilon_{*}} $ and $ \overline{x}^s \in X^s_{m} (\sigma_{*}) $, where $ m = \phi(\widehat{m}) $, there is a unique $ \overline{x}^{u} \in X^{u}_{m}(\varrho_{*}) $ such that
	\[
	(\widehat{m}, \overline{x}^s, \overline{x}^u) \in \bigcup_{\widehat{m}_0 \in \widehat{K}}\graph \widetilde{f}_{\widehat{m}_0} \subset X^{h}_{\widehat{\Sigma}};
	\]
	so we can define $ \widetilde{h}(\widehat{m}, \overline{x}^s) = \overline{x}^u $ and the \emph{graph transform} as
	\[
	\widetilde{\varGamma}: \varSigma_{\mu, K_1, \epsilon_{*}, \sigma_{*}, \varrho_* } \to \varSigma_{\mu, K'_1, \epsilon_{*}, \sigma_{*}, \varrho_* }, h \mapsto \widetilde{h}.
	\]
	In this case, without loss of generality, assume $ \Sigma = K_{\epsilon_{*}} $.
\end{enumerate}

\subsection{Limited case III: parameter-dependent correspondences}\label{sub:limited*}
We now consider the following Assumption (CS) for parameter-dependent correspondences $ \{ H^{\delta} \} $, which serves as a preparation for differential equations.

\vspace{.5em}
\noindent{Assumption (CS)}.
Assume (A1), (A2) (in \autoref{subsec:main}) hold.
Write
\[
\widehat{H}^{\delta}_{m} \triangleq \widehat{H}^{\delta}(m+\cdot) - u(m) \sim (\widehat{F}^{cs, \delta}_{m}, \widehat{G}^{cs, \delta}_{m}) : \widehat{X}^{cs}_{m}(r) \oplus \widehat{X}^{u}_{m}(r_1) \to \widehat{X}^{cs}_{u(m)}(r_2) \oplus \widehat{X}^{u}_{u(m)} (r).
\]
Let (A3) (b) (in \autoref{subsec:main}) hold for $ \widehat{F}^{cs, \delta}_{m}, \widehat{G}^{cs, \delta}_{m} $ instead of $ \widehat{F}^{cs}_{m}, \widehat{G}^{cs}_{m} $. Take a small $ \epsilon_{00} > 0 $, a constant $ 0 < c < 1 $, a large constant $ C_{\#} > 1 $, and a sufficiently small $ \zeta_{00} > 0 $. For each $ m \in K $, suppose
\[
\widehat{H}^{\delta}_{m} \sim (\widehat{F}^{cs, \delta}_{m}, \widehat{G}^{cs, \delta}_{m}) : \widehat{X}^{cs}_{m}(C_{\#}\epsilon_{00}) \oplus \widehat{X}^{u}_{m}(r_1) \to \widehat{X}^{cs}_{u(m)}(r_2) \oplus \widehat{X}^{u}_{u(m)} (\zeta_{00}C_{\#}\epsilon_{00} + \eta)
\]
satisfies the (A$ ' $)($ \alpha(m) $, $ \lambda_{u}(m) $) (B)($ \beta(m); \beta'(m), \lambda_{cs}(m) $) condition. Also assume the functions $ \alpha(\cdot), \beta(\cdot), \beta'(\cdot) $ and $ \lambda_{u}(\cdot), \lambda_{cs}(\cdot) $ satisfy (A3) (a) (i) (iii) with only $ \varsigma_0 \geq 1 $ (excluding (A3) (a) (ii)) in \autoref{subsec:main}. In addition, assume the following conditions (i)--(iii):
\begin{enumerate}[(i)]
\item \emph{(Very strong $ s $-contraction)} For
\begin{multline*}
	(\hat{x}^{c}_i, \hat{x}^s_i, \hat{x}^{u}_i) \times (\tilde{x}^{c}_i, \tilde{x}^s_i, \tilde{x}^{u}_{i}) \\ \in \graph \widehat{H}^{\delta}_{m} \cap \{\{\widehat{X}^{cs}_{m}(C_{\#}\epsilon_{00}) \oplus \widehat{X}^{u}_{m}(r_{1})\} \times \{\widehat{X}^{cs}_{u(m)}(r_{2}) \oplus \widehat{X}^{u}_{u(m)} (\zeta_{00}C_{\#}\epsilon_{00} + \eta)\}\},
\end{multline*}
$ i = 1,2 $, $ m \in K $, if $ |\tilde{x}^{u}_{1} - \tilde{x}^{u}_{2}| \leq B (|\hat{x}^c_1 - \hat{x}^c_2| + |\hat{x}^s_1 - \hat{x}^s_2|) $, then
\[
|\tilde{x}^{s}_{1} - \tilde{x}^{s}_{2}| \leq \zeta_{00} |\hat{x}^c_1 - \hat{x}^c_2| + \lambda^*_s |\hat{x}^s_1 - \hat{x}^s_2|,
\]
where $ B > \sup_{m \in K}\lambda_{cs}(m) \beta(m) $ is some constant and $ \lambda^*_s < 1 $.

\item \emph{($ c $-direction strictly inflowing)} For all $ \delta $,
\[
\widehat{\Pi}^{c}_{u(m)}\widehat{F}^{cs, \delta}_{m}(\widehat{X}^{c}_{m} (c^{-1}\epsilon_{00}) \oplus \widehat{X}^{s}_{m}(c^{-1}\epsilon_{00}), \widehat{X}^{u}_{u(m)}(c^{-1}\epsilon_{00})) \subset \widehat{X}^{c}_{u(m)} (c\epsilon_{00}).
\]

\item There is a positive constant $ \gamma^*_{u} < 1 $ such that for all $ m \in K $ and $ (x^{cs}, x^{u}) \in \widehat{X}^{cs}_{m} (C_{\#}\epsilon_{00}) \times \widehat{X}^{u}_{u(m)} (\zeta_{00}C_{\#}\epsilon_{00} + \eta) $,
\[
|\widehat{G}^{cs, \delta}_{m}(x^{cs}, x^{u})| \leq \zeta_{00} |x^{cs}| + \gamma^*_{u} |x^{u}| + \eta.
\]
\end{enumerate}
\vspace{.5em}

We emphasize that the (A$ ' $) (B) condition for $ \widehat{H}^{\delta}_{m} $ is defined on the domain
\[
\{\widehat{X}^{cs}_{m}(C_{\#}\epsilon_{00}) \oplus \widehat{X}^{u}_{m}(r_1)\} \times \{\widehat{X}^{cs}_{u(m)}(r_2) \oplus \widehat{X}^{u}_{u(m)} (\zeta_{00}C_{\#}\epsilon_{00} + \eta)\}
\]
(not on $ \{\widehat{X}^{cs}_{m}(r) \oplus \widehat{X}^{u}_{m}(r_1)\} \times \{\widehat{X}^{cs}_{u(m)}(r_2) \oplus \widehat{X}^{u}_{u(m)} (r)\} $).

Under the above assumptions, the truncation is not needed, and the construction of the graph transform becomes significantly simpler.

\begin{enumerate}[(1)]
 	\item To determine how small the constants $ \epsilon_{00}, \zeta_{00} $ can be chosen, let $ \epsilon_{00} = \epsilon_{*} $ and consider $ \zeta_{00} $ as a function of $ \epsilon_{*} $ such that $ \zeta_{00} \to 0 $ as $ \epsilon_{*} \to 0 $.
 	Take $ \hat{\chi}_{*} = \max \{ \chi_{*}, 16c^{-1}(1 - \lambda^*_s)^{-1}\zeta_{00}, 16c^{-1}(1 - \gamma^*_{u})^{-1}\zeta_{00} \} $ (in ``Choice of constants'' in \autoref{sub:preparation}). Choose the constant $ C_{\#} $ large such that $ C_{\#} > 2^{20}c^{-1}\max\{ (1 - \lambda^*_s)^{-1}, (1 - \gamma^*_{u})^{-1} \} $ to ensure that the construction makes sense in the domain
	\[
	\widehat{X}^{cs}_{m}(C_{\#}\epsilon_{00}) \oplus \widehat{X}^{u}_{m}(r_1) \times \widehat{X}^{cs}_{u(m)}(r_2) \oplus \widehat{X}^{u}_{u(m)} (\zeta_{00}C_{\#}\epsilon_{00} + \eta).
	\]
	A sufficient condition is $ \zeta_{00} \leq \frac{c}{16}\min\{ 1 - \lambda^*_s, 1 - \gamma^*_{u} \} \chi_{*} $ (which allows us to take $ \hat{\chi}_{*} = \chi_{*} $). All constants from \autoref{sub:preparation} and \autoref{sub:unlimited} will be used here. Assuming $ \epsilon_{*} $ is small such that $ c < \min\{ c_1, e_1, c^{-1}_2, e^{-1}_2 \} $, we may take $ \Sigma = K_{\epsilon_{*}} $ without loss of generality.

	\item Consider $ h \in \varSigma_{\mu, K_1, \epsilon_{*}, \sigma_{*}, \varrho_* } $ and let $ f_{\widehat{m}_0} $ denote the local representation of $ \graph h \cap X^{su}_{\widehat{K}_{\epsilon_*}} (\sigma_*, \varrho_*) $ at $ \widehat{m}_0 \in \widehat{K} $. Note that $ f_{\widehat{m}_0} (X^{c}_{m_0}(c_1\epsilon_{*}) \oplus X^{s}_{m_0}(e_1\sigma_{*})) \subset X^{u}_{m_0}(e_2\varrho_{*}) $, where $ m_0 = \phi(\widehat{m}_0) $. We now replicate nearly all steps in \autoref{sub:unlimited} but replacing $ (\widehat{F}^{cs}_{m}, \widehat{G}^{cs}_{m}) $ by $ (\widehat{F}^{cs, \delta}_{m}, \widehat{G}^{cs, \delta}_{m}) $.

	\item Let $ f^1_{\widehat{m}_0}(x^c, x^s) = f_{\widehat{m}_0} (x^c, r_{e_1\sigma_{*}}(x^s)) $. Consider the following fixed point equation (see also \eqref{equ:fixed})
	\[
	\widehat{F}^{cs, \delta}_{m}( x^{cs}, f^1_{\widehat{m}_1}(z) ) = z, \quad z \in X^{c}_{m_1}(c_1\epsilon_{*}) \oplus X^{u}_{m_1}.
	\]
	Let $ x_{\widehat{m}} (x^{cs}) = (x^c_{\widehat{m}} (x^{cs}), x^s_{\widehat{m}} (x^{cs}) ) \in X^{c}_{m_1}(c_1\epsilon_{*}) \oplus X^{u}_{m_1} $ be the unique fixed point satisfying this equation. Then
	\[
	\widehat{\Pi}^{c}_{m_1}\widehat{F}^{cs, \delta}_{m}(x^{cs}, f^1_{\widehat{m}_1}(x_{\widehat{m}} (x^{cs}))) = x^c_{\widehat{m}} (x^{cs}), \quad
	\widehat{\Pi}^{s}_{m_1}\widehat{F}^{cs, \delta}_{m}(x^{cs}, f^1_{\widehat{m}_1}(x_{\widehat{m}} (x^{cs}))) = x^s_{\widehat{m}} (x^{cs}).
	\]

	\item Under (CS) (i)(ii) with $ \eta \leq \epsilon_{0, *} \eta_0 $ small, analogous to \autoref{lem:ss}, we have
	\[
	\lip x^s_{\widehat{m}}(\cdot, \cdot)|_{X^{c}_{m}(c_2\epsilon_{*}) \oplus X^{s}_{m}(e_2\sigma_*)} \leq \lambda^*_s, \quad \lip x_{\widehat{m}}(\cdot)|_{X^{c}_{m}(c_2\epsilon_{*}) \oplus X^{s}_{m}(e_2\sigma_*)} \leq \lambda_{cs}(m),
	\]
	and $ x_{\widehat{m}}(X^{c}_{m}(c_2\epsilon_{*}) \oplus X^{s}_{m}(e_2\sigma_*)) \subset X^{c}_{m_1}(c_1\epsilon_{*}) \oplus X^{s}_{m_1}(e_1\sigma_{*}) $.
	\begin{proof}
		If $ r' > 0 $ is such that $ x_{\widehat{m}}(X^{cs}_{m}(r')) \subset X^{c}_{m_1}(c_1\epsilon_{*}) \oplus X^{s}_{m_1}(e_1\sigma_{*}) $, then by (CS) (i), for any $ (x^c_i, x^s_{i}) \in X^{cs}_{m}(r') $, one gets
		\[
		|x^s_{\widehat{m}}(x^c_1, x^s_1) - x^s_{\widehat{m}}(x^c_2, x^s_2)| \leq \zeta_{00}|x^c_1 - x^c_2| + \lambda^*_{s} |x^s_1 - x^s_2|.
		\]
		Applying the same reasoning as in \autoref{lem:first1}, for
		\[
		\epsilon_{1,1} = \sup \left\{ \epsilon \leq \epsilon_{*}: x_{\widehat{m}}(X^{c}_{m}(c_2\epsilon) \oplus X^{s}_{m}(e_2\sigma_*)) \subset X^{c}_{m_1}(c_1\epsilon_{*}) \oplus X^{s}_{m_1}(e_1\sigma_{*}) \right\},
		\]
		we have $ \epsilon_{1,1} = \epsilon_{*} $, which implies that $ \lip x_{\widehat{m}}(\cdot)|_{X^{c}_{m}(c_2\epsilon_{*}) \oplus X^{s}_{m}(e_2\sigma_*)} \leq \lambda_{cs}(m) $.
	\end{proof}

	\item Under (CS) (iii) with $ \eta \leq \epsilon_{0,*} \eta_0 $ small, for $ x^{cs} \in X^{cs}_{m} (c_2\epsilon_{*}) $ and $ x^u \in X^{u}_{u(m)} (e_2\varrho_{*}) $,
	\begin{align*}
	|\widehat{G}^{cs, \delta}_{m}(x^{cs}, x^u)| & \leq \gamma^*_{u} |x^u| + \zeta_{00} |x^{cs}| + |\widehat{G}^{cs, \delta}_{m}(0, 0)| < e_1 \varrho_{*}.
	\end{align*}
	Now we can define $ \widetilde{f}_{\widehat{m}} (x^{cs}) $ for all $ x^{cs} = (x^c, x^s) \in X^{c}_{m}(c_2\epsilon_{*}) \oplus X^{s}_{m}(e_2\sigma_{*}) $ by (see also \eqref{equ:local})
	\[
	\widehat{G}^{cs, \delta}_{m} ( x^{cs}, f_{\widehat{u}(\widehat{m})} ( x_{\widehat{m}} (x^{cs}) ) ) \triangleq \widetilde{f}_{\widehat{m}} (x^{cs});
	\]
	moreover, $ |\widetilde{f}_{\widehat{m}} (x^{cs})| \leq e_1 \varrho_{*} $ and $ \lip \widetilde{f}_{\widehat{m}}(\cdot) \leq \beta'(m) $.

	\item Following the approach in \autoref{sub:limited0} \eqref{it:limited5} \eqref{it:limited6}, for sufficiently small $ \delta > 0 $, the \emph{graph transform} $ \widetilde{\varGamma}^{\delta} $ (defined for $ H^{\delta} $) satisfies
	\[
	\widetilde{\varGamma}^{\delta}: \varSigma_{\mu, K_1, \epsilon_{*}, \sigma_{*}, \varrho_* } \to \varSigma_{\mu, K'_1, \epsilon_{*}, \sigma_{*}, \varrho_* }, \quad h \mapsto \widetilde{h}.
	\]

	\item For any $ \delta_1, \delta_2 $, if $ H^{\delta_1} \circ H^{\delta_2} = H^{\delta_2} \circ H^{\delta_1} $, then $ \widetilde{\varGamma}^{\delta_1} \circ \widetilde{\varGamma}^{\delta_2} = \widetilde{\varGamma}^{\delta_2} \circ \widetilde{\varGamma}^{\delta_1} $ on $ X^s_{\widehat{K}_{\epsilon_{*}}} (\sigma_{*}) $.
\end{enumerate}

\section{Existence of center-stable manifold and proof of \autoref{thm:I}}

In this section, we further assume (A3) (a) (ii), which implies $ \overline{\lambda}_{u} < 1 $ (as stated in Observation \eqref{OVII}). So we take
\[
K'_1 = K_1 = \frac{ \overline{\lambda}_u\hat{\beta} + 1 }{1 - \overline{\lambda}_u}.
\] Let $ 0 < \eta_1 < \sigma^1_{*} $ close to $ \sigma^1_{*} $ (e.g., $ \eta_1 = (1 - \epsilon_{*}) \sigma^1_{*} $), and $ \eta \leq \epsilon_{0,*} \eta_0 $.
\begin{enumerate}[$ \bullet $]
	\item In case (1), take $ \eta_2 = \frac{\varsigma' - 2}{\varsigma' - 1} \eta_1 $, where $ \varsigma' = (\varsigma + 2) / 2 $; and

	\item in case (2), take $ \eta_2 = (1/2) \eta_1 $.
\end{enumerate}
Let
\begin{multline}\label{equ:space1}
\varSigma_{lip, \mu, K_1} = \{ h: X^s_{\widehat{\Sigma}} (\sigma_*) \to \overline{X^u_{\widehat{\Sigma}} (\varrho_*)} ~\text{is a bundle map over}~ \id: h|_{X^s_{\widehat{\Sigma}} (\sigma_*) \setminus X^s_{\widehat{K}_{\eta_1}} (\eta_1)} = 0,
\\ \sup_{\widehat{m} \in \widehat{K}} |h(\widehat{m}, 0)| \leq K_1\eta,
\graph h \cap X^{su}_{\widehat{K}_{\epsilon_*}} (\sigma_*, \varrho_*) ~\text{is $ \mu $-Lip in $ u $-direction near $ \widehat{K} $}\},
\end{multline}
and define a metric on $ \varSigma_{lip, \mu, K_1} $ by
\[
d_1(h_1, h_2) = \sup\{ |h_1(\widehat{m},\overline{x}^s) - h_2(\widehat{m},\overline{x}^s)|: (\widehat{m},\overline{x}^s) \in X^s_{\widehat{\Sigma}} (\sigma_*) \}.
\]

\begin{lem}
	$ \varSigma_{lip, \mu, K_1} \neq \emptyset $ and $ (\varSigma_{lip, \mu, K_1}, d_1) $ is a complete metric space.
\end{lem}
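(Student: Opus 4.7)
The plan is to verify the two assertions in turn, both of which are essentially formal.

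For non-emptiness, I would simply check that the zero bundle map $h \equiv 0$ belongs to $\varSigma_{lip, \mu, K_1}$. It trivially vanishes on $X^s_{\widehat{\Sigma}}(\sigma_*) \setminus X^s_{\widehat{K}_{\eta_1}}(\eta_1)$, satisfies $|h(\widehat{m},0)| = 0 \leq K_1\eta$, and its graph is a bundle of $0$-sections, which is $\mu$-Lip in $u$-direction around $\widehat{K}$ (since $0 \leq \mu(\widehat{m}) \cdot (\text{anything})$). So $\varSigma_{lip, \mu, K_1} \neq \emptyset$.

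For completeness, let $\{h_n\} \subset \varSigma_{lip, \mu, K_1}$ be Cauchy with respect to $d_1$. Because the target fibers $\overline{X^u_{\phi(\widehat{m})}(\varrho_*)}$ are closed balls in the Banach space $X^u_{\phi(\widehat{m})}$, uniform Cauchyness forces pointwise convergence to a bundle map $h: X^s_{\widehat{\Sigma}}(\sigma_*) \to \overline{X^u_{\widehat{\Sigma}}(\varrho_*)}$ over $\id$ (with $|h(\widehat{m},\overline{x}^s)| \leq \varrho_*$), and the convergence $h_n \to h$ is uniform so $d_1(h_n,h) \to 0$.

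It remains to verify that $h \in \varSigma_{lip, \mu, K_1}$. The vanishing condition $h|_{X^s_{\widehat{\Sigma}}(\sigma_*) \setminus X^s_{\widehat{K}_{\eta_1}}(\eta_1)} = 0$ and the bound $|h(\widehat{m},0)| \leq K_1\eta$ for $\widehat{m} \in \widehat{K}$ pass to the pointwise limit immediately. For the $\mu$-Lip condition around $\widehat{K}$: for each $\widehat{m}_0 \in \widehat{K}$, and each pair $(\widehat{m}_i,\overline{x}^s_i) \in X^s_{\widehat{U}_{\widehat{m}_0}(\epsilon_*)}(\sigma_*)$, $i=1,2$, each $h_n$ satisfies
\[
|\Pi^u_{m_0}(h_n(\widehat{m}_1,\overline{x}^s_1) - h_n(\widehat{m}_2,\overline{x}^s_2))| \leq \mu(\widehat{m}_0) \max\{|\Pi^c_{m_0}(\phi(\widehat{m}_1)-\phi(\widehat{m}_2))|,\ |\Pi^s_{m_0}(\overline{x}^s_1-\overline{x}^s_2)|\},
\]
and passing $n \to \infty$ with the continuity of the bounded projection $\Pi^u_{m_0}$ preserves this inequality for $h$. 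Hence $\graph h \cap X^{su}_{\widehat{K}_{\epsilon_*}}(\sigma_*,\varrho_*)$ is $\mu$-Lip in $u$-direction around $\widehat{K}$, so $h \in \varSigma_{lip, \mu, K_1}$. There is no genuine obstacle here; the only thing to be careful about is that the target is a closed ball rather than an open one, which is exactly why the definition uses $\overline{X^u_{\widehat{\Sigma}}(\varrho_*)}$ — otherwise the $\varrho_*$-bound could fail to pass to the limit.
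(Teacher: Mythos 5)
Your proposal is correct and matches the paper's (very brief) proof: the paper also exhibits the zero map, i.e. $X^s_{\widehat{\Sigma}}(\sigma_*)\in\varSigma_{lip,\mu,K_1}$, as the witness for non-emptiness, and dismisses completeness as obvious — which is exactly the standard uniform-limit argument you spelled out, with the closed ball $\overline{X^u_{\widehat{\Sigma}}(\varrho_*)}$ ensuring the limit stays admissible and the $\mu$-Lip and boundary conditions passing to the pointwise limit. No gaps.
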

\begin{proof}
	Note that $ X^s_{\widehat{\Sigma}} (\sigma_*) \in \varSigma_{lip, \mu, K_1} $ (by \autoref{lem:lip2} and the choice of $ \mu $ in Observation \eqref{OVI}), and so $\varSigma_{lip, \mu, K_1} \neq \emptyset$. The rest is obvious.
\end{proof}

By \autoref{lem:first2} and \autoref{lem:belong}, the graph transform $ \varGamma $ maps $ \varSigma_{lip, \mu, K_1} $ into $ \varSigma_{lip, \mu, K_1} $ if $ \epsilon_{*} $, $ \chi(\epsilon_{*}) $ are sufficiently small.
In this context, we will show $ \lip \varGamma < 1 $.

Take $ h^i \in \varSigma_{lip, \mu, K_1} $ and set $ \widehat{h}^i = \varGamma(h^i) $, $ \widetilde{h}^i = \widetilde{\varGamma}(h^i) $ (see \eqref{equ:graph}), $ i = 1,2 $. Since
\[
\sup_{(\widehat{m}, \overline{x}^s) \in X^s_{\widehat{\Sigma}} (\sigma_*)}|\widehat{h}^1(\widehat{m}, \overline{x}^s) - \widehat{h}^2(\widehat{m}, \overline{x}^s)| \leq \sup_{(\widehat{m}, \overline{x}^s) \in X^s_{\widehat{K}_{\eta_1}}(\eta_1)}|\widetilde{h}^1(\widehat{m}, \overline{x}^s) - \widetilde{h}^2(\widehat{m}, \overline{x}^s)|,
\]
we only need to show the following.
\begin{lem}\label{lem:contractive}
	There is a positive constant $ \widehat{\lambda}_{u} < 1 $ such that
	\[
	\sup_{(\widehat{m}, \overline{x}^s) \in X^s_{\widehat{K}_{\eta_1}}(\eta_1)}|\widetilde{h}^1(\widehat{m}, \overline{x}^s) - \widetilde{h}^2(\widehat{m}, \overline{x}^s)| \leq \widehat{\lambda}_{u} \sup_{(\widehat{m}, \overline{x}^s) \in X^s_{\widehat{\Sigma}} (\sigma_*)}|{h}^1(\widehat{m}, \overline{x}^s) - {h}^2(\widehat{m}, \overline{x}^s)|.
	\]
\end{lem}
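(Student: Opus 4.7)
The goal is to show that the graph transform $\varGamma$ is a contraction in the $d_1$-metric, with a rate strictly less than $1$ coming from the spectral condition $\overline{\lambda}_u<1$ secured by observation \eqref{OVII} and assumption \textbf{(A3)(a)(ii)}. My plan is to work locally: pick $(\widehat{m},\overline{x}^s)\in X^s_{\widehat{K}_{\eta_1}}(\eta_1)$, choose $\widehat{m}_0\in\widehat{K}$ with $\widehat{m}\in\widehat{U}_{\widehat{m}_0}(\eta_1)\subset\widehat{U}_{\widehat{m}_0}(\epsilon_*)$, and translate everything through the tubular chart $\Phi_{m_0,\gamma}$ of \eqref{equ:tub}. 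Writing $m=\phi(\widehat{m})$, $\widehat{m}_1=\widehat{u}(\widehat{m}_0)$, $m_1=\phi(\widehat{m}_1)$, and letting $(x^c,x^s)\in X^c_{m_0}\oplus X^s_{m_0}$ be the local coordinates of $(\widehat{m},\overline{x}^s)$, the construction \eqref{equ:local} gives $\widetilde{h}^i(\widehat{m},\overline{x}^s)$ as the $u$-component (in the bundle sense) of $\widetilde{f}^i_{\widehat{m}_0}(x^c,x^s)$. By \autoref{lem:lip2} the change-of-coordinates between the local $y^{u,i}=\widetilde{f}^i_{\widehat{m}_0}(x^c,x^s)$ and the bundle value $\widetilde{h}^i(\widehat{m},\overline{x}^s)$ is $(1+\chi_*)$-bi-Lipschitz when the base and stable data agree, which they do for fixed $(x^c,x^s)$; hence the statement reduces to estimating $|\widetilde{f}^1_{\widehat{m}_0}(x^c,x^s)-\widetilde{f}^2_{\widehat{m}_0}(x^c,x^s)|$ in terms of $\sup\|f^1_{\widehat{m}_1}-f^2_{\widehat{m}_1}\|_\infty$ evaluated over the chart-admissible domain.

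Next, I set $z^i=x^i_{\widehat{m}_0}(x^{cs})\in X^{cs}_{m_1}(\sigma_0)$ (the containment coming from \autoref{lem:first1} since $|x^{cs}|\leq \eta_1<\sigma^c_*$), so $f^{1,i}_{\widehat{m}_1}(z^i)=f^i_{\widehat{m}_1}(z^i)$ and the fixed-point equation reads $\widehat{F}^{cs}_m(x^{cs},f^i_{\widehat{m}_1}(z^i))=z^i$. Subtracting and applying (A$'$) gives $|z^1-z^2|\leq\alpha(m)\,|f^1_{\widehat{m}_1}(z^1)-f^2_{\widehat{m}_1}(z^2)|$; splitting the right-hand side as
\[
|f^1_{\widehat{m}_1}(z^1)-f^2_{\widehat{m}_1}(z^2)|\leq \mu_1(\widehat{m}_1)|z^1-z^2|+\|f^1_{\widehat{m}_1}-f^2_{\widehat{m}_1}\|_\infty
\]
and using $\alpha(m)\mu_1(\widehat{m}_1)\leq\gamma<1/2$ from observation \eqref{OVII} yields
\[
|f^1_{\widehat{m}_1}(z^1)-f^2_{\widehat{m}_1}(z^2)|\leq \frac{1}{1-\alpha(m)\mu_1(\widehat{m}_1)}\|f^1_{\widehat{m}_1}-f^2_{\widehat{m}_1}\|_\infty.
\]
Applying (A$'$) in the $\widehat{G}^{cs}$ slot of \eqref{equ:local} then gives
\[
|\widetilde{f}^1_{\widehat{m}_0}(x^{cs})-\widetilde{f}^2_{\widehat{m}_0}(x^{cs})|\leq \frac{\lambda_u(m)}{1-\alpha(m)\mu_1(\widehat{m}_1)}\|f^1_{\widehat{m}_1}-f^2_{\widehat{m}_1}\|_\infty\leq \overline{\lambda}_u\,\|f^1_{\widehat{m}_1}-f^2_{\widehat{m}_1}\|_\infty.
\]

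Finally, I convert the local sup norms back to the global $d_1$-distance. Using \autoref{lem:lip2} once more for the inverse direction (same base, same stable part, only the $u$-coordinate differs), we have $\|f^1_{\widehat{m}_1}-f^2_{\widehat{m}_1}\|_\infty\leq(1+\chi_*)\,d_1(h^1,h^2)$, where only evaluations at points of $X^s_{\widehat{\Sigma}}(\sigma_*)$ enter because of the truncation $h^i|_{X^s_{\widehat{\Sigma}}(\sigma_*)\setminus X^s_{\widehat{K}_{\eta_1}}(\eta_1)}=0$ (note $\widehat{U}_{\widehat{m}_1}(\epsilon_*)\subset\widehat{\Sigma}$). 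Combining the three estimates produces
\[
|\widetilde{h}^1(\widehat{m},\overline{x}^s)-\widetilde{h}^2(\widehat{m},\overline{x}^s)|\leq (1+\chi_*)^2\overline{\lambda}_u\,d_1(h^1,h^2),
\]
and since $\overline{\lambda}_u<1$ by observation \eqref{OVII} and $\chi_*\to 0$ as $\epsilon_*,\chi(\epsilon_*)\to 0$, one can shrink the parameters so that $\widehat{\lambda}_u\triangleq(1+\chi_*)^2\overline{\lambda}_u<1$, giving the claim. The main obstacle is bookkeeping: one has to verify carefully that the local representation $f^i_{\widehat{m}_1}$ is evaluated only at $z^i\in X^{cs}_{m_1}(\sigma_0)$ (so that the radial truncation $r_{\sigma_0}$ is inactive and the point really corresponds to a value of $h^i$ on $X^s_{\widehat{\Sigma}}(\sigma_*)$), and that the change-of-chart factors arising from $\Phi_{m_0,\gamma}$ are uniformly $(1+\chi_*)$-close to identity; both are handled by the uniform estimates of Lemmas \ref{lem:tub}--\ref{lem:represent3} once the constants in the \textbf{Choice of constants} are fixed as specified.
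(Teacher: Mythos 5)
Your skeleton does follow the paper's route: compare the two graphs through the fixed-point equation \eqref{equ:local}, use (A$'$) in the $\widehat{F}^{cs}$-slot together with $\alpha(m)\mu_1(\widehat{u}(\widehat{m}_0))\leq\gamma<1/2$ to absorb the drift of the evaluation point, use (A$'$) in the $\widehat{G}^{cs}$-slot to pick up $\lambda_u/(1-\alpha\mu_1)$, and convert chart sup-norms back to $d_1$ with factors $1+O(\chi_*)$; your final constant $(1+\chi_*)^2\overline{\lambda}_u$ matches the paper's $(1+\varpi^*_0)\overline{\lambda}_u$. But there is a genuine gap at both coordinate-conversion steps. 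You claim that for a fixed bundle point $(\widehat{m},\overline{x}^s)$ the two values $\widetilde{h}^1(\widehat{m},\overline{x}^s)$ and $\widetilde{h}^2(\widehat{m},\overline{x}^s)$ correspond to the \emph{same} chart coordinates $(x^c,x^s)$, and symmetrically that for a fixed chart point $\widetilde{x}^1$ the two values $f^1_{\widehat{m}_1}(\widetilde{x}^1)$, $f^2_{\widehat{m}_1}(\widetilde{x}^1)$ have bundle representations with ``same base, same stable part, only the $u$-coordinate differs''. Both claims are false: the tubular map $\Phi_{m_0,\gamma}$ of \eqref{equ:tub} mixes directions (the terms $\varphi^{c,u}_{m_0}(\overline{m})y^u$ and $\varphi^{u}_{m_0}(\overline{m})y^u$), so changing only the $u$-component in one coordinate system changes the base and stable components in the other. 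Comparing at the same bundle point forces $x^{cs}_1\neq x^{cs}_2$; comparing at the same chart point forces $(\widehat{\overline{m}}_1,\overline{x}^s_1)\neq(\widehat{\overline{m}}_2,\overline{x}^s_2)$. You cannot have both agree, so the ``$(1+\chi_*)$-bi-Lipschitz with agreeing base and stable data'' justification does not apply.

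Concretely, the inequality $\|f^1_{\widehat{m}_1}-f^2_{\widehat{m}_1}\|_\infty\leq(1+\chi_*)\,d_1(h^1,h^2)$ is the step that fails as stated. Writing $m_1+\widetilde{x}^1+f^i_{\widehat{m}_1}(\widetilde{x}^1)=\overline{m}_i+\overline{x}^s_i+h^i(\widehat{\overline{m}}_i,\overline{x}^s_i)$ with distinct $(\widehat{\overline{m}}_i,\overline{x}^s_i)$, the mismatch terms $\square_1=|\Pi^u_{m_1}(h^2(\widehat{\overline{m}}_1,\overline{x}^s_1)-h^2(\widehat{\overline{m}}_2,\overline{x}^s_2))|$ and $\square_2=|\Pi^u_{m_1}((\overline{m}_1+\overline{x}^s_1)-(\overline{m}_2+\overline{x}^s_2))|$ appear, and they are of the same order as the quantity $|f^1_{\widehat{m}_1}(\widetilde{x}^1)-f^2_{\widehat{m}_1}(\widetilde{x}^1)|$ being estimated. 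Controlling them requires the $\mu$-Lip-in-$u$-direction property of $\graph h^2$ (i.e.\ the membership $h^2\in\varSigma_{lip,\mu,K_1}$) together with the $\chi_*$-Lip property of $X^s_{\widehat{\Sigma}}(\sigma_*)$ from \autoref{lem:lip2}, followed by absorbing these terms into the left-hand side; nowhere in your conversion do you invoke these Lipschitz properties. The same absorption (via $\lip\widetilde{f}^2_{\widehat{m}_0}\leq\mu_1$ and $|x^{cs}_1-x^{cs}_2|\leq\varpi^*_0|\widetilde{f}^1_{\widehat{m}_0}(x^{cs}_1)-\widetilde{f}^2_{\widehat{m}_0}(x^{cs}_2)|$) is needed in your opening reduction. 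With these ingredients added the constants you quote are correct, but as written the proposal skips precisely the part of the argument that the non-flatness of the tubular chart makes nontrivial.
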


\begin{proof}
Take $ \widehat{m} \in \widehat{K}_{\eta_1} \subset \widehat{K}_{\epsilon_{*}} $ and $ \overline{x}^s \in X^s_{m}(\eta_1) \subset X^s_{m}(\sigma_{*}) $, where $ m = \phi(\widehat{m}) $.
Take $ \widehat{m}_0 \in \widehat{K} $ such that $ \widehat{m} \in \widehat{U}_{\widehat{m}_0}(\eta_1) $. Set $ m_0 = \phi(\widehat{m}_0) $, $ \widehat{m}_1 = \widehat{u}(\widehat{m}_0) $, and $ m_1 = \phi(\widehat{m}_1) $ ($ \in K $).
Let $ f^i_{\widehat{m}_1}, \widetilde{f}^i_{\widehat{m}_0} $ be the local representations of $ \graph h^{i} \cap X^{su}_{\widehat{K}_{\epsilon_*}} (\sigma_*, \varrho_*) $ and $ \graph \widetilde{h}^{i} \cap X^{su}_{\widehat{K}_{\epsilon_*}} (\sigma_*, \varrho_*) $ at $ \widehat{m}_1 $ and $ \widehat{m}_0 $, respectively. By the construction of $ \widetilde{h}^{i} $, we have
\begin{equation}\label{equ:abc}
m + \overline{x}^{s} + \widetilde{h}^{i}(\widehat{m}, \overline{x}^{s}) = m_0 + x^{cs}_i + \widetilde{f}^i_{\widehat{m}_0}(x^{cs}_i), \quad i = 1,2,
\end{equation}
where $ x^{cs}_i \in X^{cs}_{m_0} (\sigma^{c}_{*}) \subset X^{cs}_{m_0} (\sigma_{*}) $ and $ |\widetilde{f}^i_{\widehat{m}_0}(x^{cs}_i)| \leq e_1\varrho_{*} $. Furthermore,
\begin{equation}\label{equ:zz}
m_1 + x^{i}_{\widehat{m}_0}(x^{cs}_i) + f^{i}_{\widehat{m}_1}(x^{i}_{\widehat{m}_0}(x^{cs}_i)) \in H(m_0 + x^{cs}_i + \widetilde{f}^i_{\widehat{m}_0}(x^{cs}_i)),
\end{equation}
i.e.,
\[
\begin{cases}
\widehat{F}^{cs}_{m_0} ( x^{cs}_i, f^i_{\widehat{m}_1} ( \widetilde{x}^i ) ) = x^i_{\widehat{m}_0} (x^{cs}_i), \\
\widehat{G}^{cs}_{m_0} ( x^{cs}_i, f^i_{\widehat{m}_1} ( \widetilde{x}^i ) ) = \widetilde{f}^i_{\widehat{m}_0}(x^{cs}_i),
\end{cases}
\]
where $ \widetilde{x}^i = x^{i}_{\widehat{m}_0}(x^{cs}_i) \in X^{cs}_{m_1}(e_1\sigma_{*}) \subset X^{cs}_{m_1}(\sigma_{*}) $; also $ |f^i_{\widehat{m}_1} ( \widetilde{x}^i ) | \leq e_2\varrho_{*} $.

In the following, we use $ \varpi^*_0 $, which may vary from line to line, to denote positive constants that approach $ 0 $ as $ \epsilon_{*}, \chi(\epsilon_{*}) \to 0 $.
By \eqref{equ:abc}, we have (see also \autoref{lem:lip2})
\[
|x^{cs}_1 - x^{cs}_2| \leq \varpi^*_0 |\widetilde{f}^1_{\widehat{m}_0}(x^{cs}_1) - \widetilde{f}^2_{\widehat{m}_0}(x^{cs}_2)|.
\]
By the (A$ ' $) condition ((A3) (a)), we get
\begin{align*}
| \widetilde{f}^1_{\widehat{m}_0}(x^{cs}_1) - \widetilde{f}^2_{\widehat{m}_0}(x^{cs}_1) | & \leq \lambda_{u}(m_0)|f^1_{\widehat{m}_1} ( \widetilde{x}^1 ) - f^2_{\widehat{m}_1} ( x^{2}_{\widehat{m}_0}(x^{cs}_1) )|, \\
|\widetilde{x}^1 - x^{2}_{\widehat{m}_0}(x^{cs}_1)| & \leq \alpha(m_0) |f^1_{\widehat{m}_1} ( \widetilde{x}^1 ) - f^2_{\widehat{m}_1} ( x^{2}_{\widehat{m}_0}(x^{cs}_1) )|,
\end{align*}
and so
\begin{align*}
|f^1_{\widehat{m}_1} ( \widetilde{x}^1 ) - f^2_{\widehat{m}_1} ( x^{2}_{\widehat{m}_0}(x^{cs}_1) )| & \leq |f^1_{\widehat{m}_1} ( \widetilde{x}^1 ) - f^2_{\widehat{m}_1} ( \widetilde{x}^1 )| + |f^2_{\widehat{m}_1} ( \widetilde{x}^1 ) - f^2_{\widehat{m}_1} ( x^{2}_{\widehat{m}_0}(x^{cs}_1) )|\\
& \leq |f^1_{\widehat{m}_1} ( \widetilde{x}^1 ) - f^2_{\widehat{m}_1} ( \widetilde{x}^1 )| + \mu_1(\widehat{m}_1) |\widetilde{x}^1 - x^{2}_{\widehat{m}_0}(x^{cs}_1)| \\
& \leq |f^1_{\widehat{m}_1} ( \widetilde{x}^1 ) - f^2_{\widehat{m}_1} ( \widetilde{x}^1 )| \\
& \quad + \alpha(m_0) \mu_1(\widehat{m}_1) |f^1_{\widehat{m}_1} ( \widetilde{x}^1 ) - f^2_{\widehat{m}_1} ( x^{2}_{\widehat{m}_0}(x^{cs}_1) )|,
\end{align*}
which yields
\[
|f^1_{\widehat{m}_1} ( \widetilde{x}^1 ) - f^2_{\widehat{m}_1} ( x^{2}_{\widehat{m}_0}(x^{cs}_1) )| \leq \frac{1}{1 - \alpha(m_0) \mu_1(\widehat{m}_1)} |f^1_{\widehat{m}_1} ( \widetilde{x}^1 ) - f^2_{\widehat{m}_1} ( \widetilde{x}^1 )|.
\]
Thus,
\begin{align*}
&~| \widetilde{f}^1_{\widehat{m}_0}(x^{cs}_1) - \widetilde{f}^2_{\widehat{m}_0}(x^{cs}_2) | \\
 \leq &~ |\widetilde{f}^1_{\widehat{m}_0}(x^{cs}_1) - \widetilde{f}^2_{\widehat{m}_0}(x^{cs}_1)| + |\widetilde{f}^2_{\widehat{m}_0}(x^{cs}_1) - \widetilde{f}^2_{\widehat{m}_0}(x^{cs}_2)| \\
 \leq &~ \frac{\lambda_{u}(m_0)}{1 - \alpha(m_0) \mu_1(\widehat{m}_1)} |f^1_{\widehat{m}_1} ( \widetilde{x}^1 ) - f^2_{\widehat{m}_1} ( \widetilde{x}^1 )| + \mu_1(\widehat{m}_1) |x^{cs}_1 - x^{cs}_2|\\
 \leq &~ \frac{\lambda_{u}(m_0)}{1 - \alpha(m_0) \mu_1(\widehat{m}_1)} |f^1_{\widehat{m}_1} ( \widetilde{x}^1 ) - f^2_{\widehat{m}_1} ( \widetilde{x}^1 )| + \mu_1(\widehat{m}_1) \varpi^*_0 |\widetilde{f}^1_{\widehat{m}_0}(x^{cs}_1) - \widetilde{f}^2_{\widehat{m}_0}(x^{cs}_2)|,
\end{align*}
and
\begin{equation}\label{equ:estM}
| \widetilde{f}^1_{\widehat{m}_0}(x^{cs}_1) - \widetilde{f}^2_{\widehat{m}_0}(x^{cs}_2) | \leq \frac{(1+\varpi^*_0)\lambda_{u}(m_0)}{1 - \alpha(m_0) \mu_1(\widehat{m}_1)} |f^1_{\widehat{m}_1} ( \widetilde{x}^1 ) - f^2_{\widehat{m}_1} ( \widetilde{x}^1 )|.
\end{equation}

By \eqref{equ:abc}, we further obtain (see \eqref{equ:estimates})
\begin{equation}\label{equ:estM0}
|\widetilde{h}^{1}(\widehat{m}, \overline{x}^{s}) - \widetilde{h}^{2}(\widehat{m}, \overline{x}^{s})| \leq (1 + \varpi^*_0) |\widetilde{f}^1_{\widehat{m}_0}(x^{cs}_1) - \widetilde{f}^2_{\widehat{m}_0}(x^{cs}_2)|.
\end{equation}
Rewrite
\[
\begin{cases}
m_1 + \widetilde{x}^1 + f^{1}_{\widehat{m}_1}(\widetilde{x}^1) = \overline{m}_1 + \overline{x}^{s}_1 + h^{1}(\widehat{\overline{m}}_1, \overline{x}^{s}_1), \\
m_1 + \widetilde{x}^1 + f^{2}_{\widehat{m}_1}(\widetilde{x}^1) = \overline{m}_2 + \overline{x}^{s}_2 + h^{2}(\widehat{\overline{m}}_2, \overline{x}^{s}_2),
\end{cases}
\]
where $ \widehat{\overline{m}}_i \in \widehat{U}_{\widehat{m}_1}(\epsilon_{*}) $, $ \overline{m}_i = \phi(\widehat{\overline{m}}_i) $, and $ \overline{x}^s_i \in X^{s}_{\overline{m}_i} (\sigma_{*}) $.
Compute
\begin{equation}\label{equ:estM1}
\begin{split}
& |f^1_{\widehat{m}_1} ( \widetilde{x}^1 ) - f^2_{\widehat{m}_1} ( \widetilde{x}^1 )| \\
\leq & | \Pi^u_{m_1} ( h^1 (\widehat{\overline{m}}_1, \overline{x}^s_1) - h^2 (\widehat{\overline{m}}_2, \overline{x}^s_2) ) | + |\Pi^u_{m_1} ( (\overline{m}_1 + \overline{x}^s_1) - (\overline{m}_2 + \overline{x}^s_2) )| \\
\leq & | \Pi^u_{m_1} ( h^1 (\widehat{\overline{m}}_1, \overline{x}^s_1) - h^2 (\widehat{\overline{m}}_1, \overline{x}^s_1) ) | + |\Pi^u_{m_1} ( h^2 (\widehat{\overline{m}}_1, \overline{x}^s_1) - h^2 (\widehat{\overline{m}}_2, \overline{x}^s_2) )| + \square_2 \\
\leq & (1 + \varpi^*_0) |h^1 (\widehat{\overline{m}}_1, \overline{x}^s_1) - h^2 (\widehat{\overline{m}}_1, \overline{x}^s_1)| + \square_1 + \square_2,
\end{split}
\end{equation}
where
\[
\square_1  \triangleq |\Pi^u_{m_1} ( h^2 (\widehat{\overline{m}}_1, \overline{x}^s_1) - h^2 (\widehat{\overline{m}}_2, \overline{x}^s_2) )| \quad \text{and} \quad
\square_2 \triangleq |\Pi^u_{m_1} ( (\overline{m}_1 + \overline{x}^s_1) - (\overline{m}_2 + \overline{x}^s_2) )|.
\]
As $ \graph h^2 $ is $ \mu $-Lip in $ u $-direction (i.e., $ h^2 \in \varSigma_{lip, \mu, K_1} $) and by \eqref{equ:estimates}, one gets
\begin{equation}\label{equ:estM2}
\begin{split}
\square_1 & \triangleq |\Pi^u_{m_1} ( h^2 (\widehat{\overline{m}}_1, \overline{x}^s_1) - h^2 (\widehat{\overline{m}}_2, \overline{x}^s_2) )| \leq \mu(\widehat{m}_1) \max\{ |\Pi^c_{m_1}(\overline{m}_1 - \overline{m}_2)|, |\Pi^s_{m_1}(\overline{x}^s_1 - \overline{x}^s_2)| \} \\
& \leq \mu_1(\widehat{m}_1) \varpi^*_0 |f^1_{\widehat{m}_1} ( \widetilde{x}^1 ) - f^2_{\widehat{m}_1} ( \widetilde{x}^2 )|;
\end{split}
\end{equation}
furthermore, since $ X^s_{\widehat{\Sigma}} (\sigma_*) $ is $ \chi_{*} $-Lip in $ u $-direction, by \eqref{equ:estimates}, one obtains
\begin{equation}\label{equ:estM3}
\begin{split}
\square_2 & \triangleq |\Pi^u_{m_1} ( (\overline{m}_1 + \overline{x}^s_1) - (\overline{m}_2 + \overline{x}^s_2) )|  \leq \varpi^*_0 \max\{ |\Pi^c_{m_1}(\overline{m}_1 - \overline{m}_2)|, |\Pi^s_{m_1}(\overline{x}^s_1 - \overline{x}^s_2)| \} \\
& \leq \varpi^*_0 |f^1_{\widehat{m}_1} ( \widetilde{x}^1 ) - f^2_{\widehat{m}_1} ( \widetilde{x}^2 )|.
\end{split}
\end{equation}
Combining the above inequalities \eqref{equ:estM}--\eqref{equ:estM3}, we finally get
\[
|\widetilde{h}^1(\widehat{m}, \overline{x}^s) - \widetilde{h}^2(\widehat{m}, \overline{x}^s)| \leq \widehat{\lambda}_{u} |{h}^1(\widehat{\overline{m}}_1, \overline{x}^s_1) - {h}^2(\widehat{\overline{m}}_1, \overline{x}^s_1)|,
\]
where $ \widehat{\lambda}_{u} \triangleq (1+\varpi^*_0)\sup_{\widehat{m}_0 \in \widehat{K}}\frac{\lambda_{u}(\phi(\widehat{m}_0))}{1 - \alpha(\phi(\widehat{m}_0)) \mu_1(\widehat{u}(\widehat{m}_0))} = (1+\varpi^*_0) \overline{\lambda}_{u} < 1 $. The proof is complete.
\end{proof}

By \autoref{lem:contractive}, we have $ \lip \varGamma < 1 $, and so we prove the following result.
\begin{lem}[Existence of the center-stable manifold]\label{lem:existence}
	There is a unique $ h_0 \in \varSigma_{lip, \mu, K_1} $ such that $ \varGamma h_0 = h_0 $. In particular, if $ f^{0}_{\widehat{m}} $ is the \emph{local representation} of $ \graph h_0 \cap X^{su}_{\widehat{K}_{\epsilon_*}} (\sigma_*, \varrho_*) $ at $ \widehat{m} \in \widehat{K} $, then
	\begin{equation}\label{equ:main}
	\begin{cases}
	\widehat{F}^{cs}_{m_0} ( x^{cs}, f^{0}_{\widehat{m}_1} ( x_{\widehat{m}} (x^{cs}) ) ) = x_{\widehat{m}} (x^{cs}), \\
	\widehat{G}^{cs}_{m_0} ( x^{cs}, f^{0}_{\widehat{m}_1} ( x_{\widehat{m}} (x^{cs}) ) ) = f^0_{\widehat{m}_0}(x^{cs}),
	\end{cases}
	x^{cs} \in X^{cs}_{m_0}(e_0 \eta_2),
	\end{equation}
	where $ \widehat{m}_0 \in \widehat{K} $, $ \widehat{m}_1 = \widehat{u}(\widehat{m}_0) $, $ m_i = \phi(\widehat{m}_i) $, $ i = 0,1 $, and $ e_0 = \min\{e_1, c_1\} $. Set $ \varepsilon_{0} = \min\{e^{-1}_2,c^{-1}_2\} e_0 \eta_2 $,
	\[
	W^{cs}_{loc}(K) = \graph h_0 \triangleq \{ (\widehat{m}, x^s, h_0(\widehat{m}, x^s)): (\widehat{m}, x^s) \in X^s_{\widehat{\Sigma}}(\sigma_{*}) \} \subset X^s_{\widehat{\Sigma}}(\sigma_{*}) \oplus X^u_{\widehat{\Sigma}} (\varrho_{*}),
	\]
	and
	\begin{multline*}
	\Omega = \graph h_0|_{X^{s}_{\widehat{K}_{\varepsilon_{0}}} (\varepsilon_{0})} \\
	\triangleq \{ (\widehat{m}, x^s, h_0(\widehat{m}, x^s)): (\widehat{m}, x^s) \in X^{s}_{\widehat{K}_{\varepsilon_{0}}} (\varepsilon_{0}) \}  
	\subset  \bigcup_{\widehat{m}_0 \in \widehat{K}}\graph f^0_{\widehat{m}_0}|_{X^{cs}_{m_0}(e_0 \eta_2)} \\
	\triangleq \{ \phi(\widehat{m}_0) + x^{cs} + f^0_{\widehat{m}_0}(x^{cs}): x^{cs} \in X^{cs}_{\phi(\widehat{m}_0)}(e_0 \eta_2), \widehat{m}_0 \in \widehat{K} \}.
	\end{multline*}
	Then $ \Omega $ is an open subset of $ W^{cs}_{loc}(K) $ and $ \Omega \subset H^{-1} (W^{cs}_{loc}(K)) $.
\end{lem}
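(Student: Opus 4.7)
The plan is to obtain $h_0$ as the unique fixed point of the graph transform $\varGamma$ by the Banach contraction principle, and then to read off the local equation \eqref{equ:main} and the local invariance of $\Omega$ directly from the construction of $\varGamma$. First I would observe that $(\varSigma_{lip, \mu, K_1}, d_1)$ is a complete metric space (already established before the statement), that $\varGamma$ sends $\varSigma_{lip, \mu, K_1}$ into itself by combining the amplitude bound $|\widetilde{f}_{\widehat{m}}(0)| \leq K'_1 \eta = K_1 \eta$ from \autoref{lem:first2} (using $\overline{\lambda}_u < 1$, hence the choice $K_1 = (\overline{\lambda}_u \hat{\beta}+1)/(1-\overline{\lambda}_u)$), the support-in-$\widehat{K}_{\eta_1}$ property built into $\Psi$, and the $\mu$-Lip in $u$-direction property verified in \autoref{lem:belong}. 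Finally, \autoref{lem:contractive} gives $\lip \varGamma \leq \widehat{\lambda}_u < 1$, so the Banach fixed point theorem yields a unique $h_0 \in \varSigma_{lip, \mu, K_1}$ with $\varGamma h_0 = h_0$.

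Next I would identify the local equation. For $\widehat{m}_0 \in \widehat{K}$, let $f^0_{\widehat{m}_0}$ denote the local representation of $\graph h_0 \cap X^{su}_{\widehat{K}_{\epsilon_*}}(\sigma_*, \varrho_*)$ at $\widehat{m}_0$ provided by \autoref{lem:represent2}. By the choice $\eta_1 = (1-\epsilon_*)\sigma^1_*$ and $\eta_2 = c\eta_1$ with $c=(\varsigma'-2)/(\varsigma'-1)$ or $1/2$ according to the case, and by the translation estimates \eqref{equ:estimates} in \autoref{lem:lip2}, any $x^{cs}\in X^{cs}_{m_0}(e_0\eta_2)$ (with $e_0=\min\{e_1,c_1\}$) corresponds in bundle coordinates to some $(\widehat{m},\overline{x}^s) \in X^{s}_{\widehat{U}_{\widehat{m}_0}(\eta_2)}(\eta_2) \subset X^{s}_{\widehat{K}_{\eta_2}}(\eta_2)$, so $\Psi(\widehat{m},\overline{x}^s)=1$ and therefore $h_0(\widehat{m},\overline{x}^s) = \widetilde{h}_0(\widehat{m},\overline{x}^s)$. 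Translating this back into hat coordinates at $\widehat{m}_0$, it means that on $X^{cs}_{m_0}(e_0\eta_2)$, the map $f^0_{\widehat{m}_0}$ coincides with the image $\widetilde{f}_{\widehat{m}_0}$ defined from $f^0$ via \eqref{equ:local} with $f = f^0$. Since $\widetilde{\varGamma}(h_0)$ is built via \eqref{equ:fixed} and \eqref{equ:local} using $f^0_{\widehat{m}_1}$ (as $\widehat{m}_1=\widehat{u}(\widehat{m}_0)\in\widehat{K}$), this is exactly \eqref{equ:main}.

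For the openness of $\Omega$ in $W^{cs}_{loc}(K)$, note that $\Omega$ is the image under the continuous graph map $(\widehat{m},x^s)\mapsto(\widehat{m},x^s,h_0(\widehat{m},x^s))$ of the set $X^{s}_{\widehat{K}_{\varepsilon_0}}(\varepsilon_0)$, which is open in $X^{s}_{\widehat{\Sigma}}(\sigma_*)$. To verify $\Omega\subset H^{-1}(W^{cs}_{loc}(K))$, take $z_0=(\widehat{m},x^s,h_0(\widehat{m},x^s))\in\Omega$, pick $\widehat{m}_0\in\widehat{K}$ with $\widehat{m}\in\widehat{U}_{\widehat{m}_0}(\varepsilon_0)$, and rewrite $z_0=m_0+x^{cs}+f^0_{\widehat{m}_0}(x^{cs})$ for some $x^{cs}\in X^{cs}_{m_0}(e_0\eta_2)$ (this is exactly the inclusion $\Omega\subset\bigcup_{\widehat{m}_0}\graph f^0_{\widehat{m}_0}|_{X^{cs}_{m_0}(e_0\eta_2)}$, which follows from the same coordinate conversion as above, taking into account the constant $\varepsilon_0 = \min\{e^{-1}_2,c^{-1}_2\}e_0\eta_2$). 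Then \eqref{equ:main} says precisely that $z_1 \triangleq \phi(\widehat{m}_1)+x_{\widehat{m}_0}(x^{cs})+f^0_{\widehat{m}_1}(x_{\widehat{m}_0}(x^{cs}))\in H(z_0)$, where $\widehat{m}_1=\widehat{u}(\widehat{m}_0)\in\widehat{K}$, and $z_1$ lies in $\graph f^0_{\widehat{m}_1}\subset\graph h_0 = W^{cs}_{loc}(K)$ because $|x_{\widehat{m}_0}(x^{cs})|\leq \sigma_0$ by \autoref{lem:first1}.

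The main obstacle, I expect, is not the contraction argument itself (already packaged in \autoref{lem:contractive}) but the bookkeeping between the two coordinate systems: tracking how $X^{cs}_{m_0}(e_0\eta_2)$ in hat-coordinates relates to $X^s_{\widehat{K}_{\eta_2}}(\eta_2)$ in bundle coordinates (so that $\Psi\equiv 1$ there), and then running this in reverse to relate $\varepsilon_0$ in bundle coordinates to $e_0\eta_2$ in hat-coordinates. The constants $c_1,c_2,e_1,e_2$ from \autoref{lem:tub} and the cross terms $\chi_*$ in \eqref{equ:estimates} must be matched to the definitions of $e_0$ and $\varepsilon_0$, which is a routine but careful inequality chase.
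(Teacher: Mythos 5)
Your proposal is correct and follows essentially the same route as the paper: the paper also obtains $h_0$ from the Banach fixed point theorem (completeness of $\varSigma_{lip,\mu,K_1}$, self-mapping via Lemmas \ref{lem:first2} and \ref{lem:belong} with $K'_1=K_1$, contraction via Lemma \ref{lem:contractive}), and then reads \eqref{equ:main}, the inclusion $\Omega\subset\bigcup_{\widehat{m}_0}\graph f^0_{\widehat{m}_0}|_{X^{cs}_{m_0}(e_0\eta_2)}$ and the invariance $\Omega\subset H^{-1}(W^{cs}_{loc}(K))$ off the construction of $\varGamma$, using that $\Psi\equiv 1$ on $X^s_{\widehat{K}_{\eta_2}}(\eta_2)$ and the coordinate conversions of Lemma \ref{lem:lip2} with the constants $e_0$ and $\varepsilon_0$. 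Your bookkeeping of $e_0=\min\{e_1,c_1\}$, $\varepsilon_0=\min\{e_2^{-1},c_2^{-1}\}e_0\eta_2$ and the use of Lemma \ref{lem:first1} to keep $x_{\widehat{m}_0}(x^{cs})$ inside $X^{cs}_{m_1}(\sigma_0)$ is exactly what the paper's construction relies on.
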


Note that, in general, the existence of $ h_0 $ depends on the choice of $ \Psi $.

\begin{lem}\label{lem:unique}
	$ H : \Omega \to W^{cs}_{loc}(K) $ induces a map, i.e., \autoref{thm:I} \eqref{map} holds.
\end{lem}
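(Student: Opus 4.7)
The plan is to reduce the uniqueness assertion to the uniqueness of the fixed point of equation \eqref{equ:main} via the local representation of $W^{cs}_{loc}(K)$ around $\widehat{u}(\widehat{m}_0)$. First I would fix $z_0 = (\widehat{m}, x^s, x^u) \in \Omega$ and the base point $\widehat{m}_0 \in \widehat{K}$ with $\widehat{m} \in \widehat{U}_{\widehat{m}_0}(\varepsilon_0)$; by the inclusion $\Omega \subset \bigcup_{\widehat{m}_0 \in \widehat{K}}\graph f^0_{\widehat{m}_0}|_{X^{cs}_{m_0}(e_0\eta_2)}$ established in \autoref{lem:existence}, there exists $x^{cs} \in X^{cs}_{m_0}(e_0\eta_2)$ with $z_0 = m_0 + x^{cs} + f^0_{\widehat{m}_0}(x^{cs})$. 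Setting $\widehat{m}_1' = \widehat{u}(\widehat{m}_0)$, $m_1' = u(m_0)$, equation \eqref{equ:main} then produces the distinguished point
\[
z_1^* \triangleq m_1' + x_{\widehat{m}_0}(x^{cs}) + f^0_{\widehat{m}_1'}(x_{\widehat{m}_0}(x^{cs})),
\]
which lies in $W^{cs}_{loc}(K)$ and, by the generating-map relation $\widehat{H}_{m_0}\sim(\widehat{F}^{cs}_{m_0},\widehat{G}^{cs}_{m_0})$, satisfies $z_1^* \in H(z_0)$.

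Next I would address uniqueness. Suppose $z_1 = \phi(\widehat{m}_1) + x^s_1 + h_0(\widehat{m}_1, x^s_1) \in W^{cs}_{loc}(K) \cap H(z_0)$ with $\widehat{m}_1 \in \widehat{U}_{\widehat{u}(\widehat{m}_0)}(\varepsilon)$. Choosing $\varepsilon = O(\epsilon_*)$ smaller than $\epsilon_*$ and invoking \autoref{lem:represent2} applied at the base point $\widehat{u}(\widehat{m}_0)$, the portion of $W^{cs}_{loc}(K)$ above $\widehat{U}_{\widehat{u}(\widehat{m}_0)}(\varepsilon)$ is the graph of the local representation $f^0_{\widehat{m}_1'}$ over $X^{cs}_{m_1'}$. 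Hence there is a unique $\xi^{cs} \in X^{cs}_{m_1'}$ (in the appropriate domain) with
\[
z_1 = m_1' + \xi^{cs} + f^0_{\widehat{m}_1'}(\xi^{cs}), \qquad \xi^u \triangleq f^0_{\widehat{m}_1'}(\xi^{cs}).
\]
From $z_1 \in H(z_0)$ and the generating-map structure of $\widehat{H}_{m_0}$ we have $\xi^{cs} = \widehat{F}^{cs}_{m_0}(x^{cs}, \xi^u)$ and $f^0_{\widehat{m}_0}(x^{cs}) = \widehat{G}^{cs}_{m_0}(x^{cs}, \xi^u)$. Substituting the first identity into $\xi^u = f^0_{\widehat{m}_1'}(\xi^{cs})$ gives
\[
\widehat{F}^{cs}_{m_0}(x^{cs}, f^0_{\widehat{m}_1'}(\xi^{cs})) = \xi^{cs},
\]
which is exactly the fixed-point equation \eqref{equ:fixed} (equivalently the first line of \eqref{equ:main}) with the unknown $\xi^{cs}$ in place of $x_{\widehat{m}_0}(x^{cs})$. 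Because $\sup_{m\in K}\alpha(m)\mu_1(\widehat{u}(\widehat{m})) = \gamma < 1/2$ (observation \eqref{OVII}), this equation has a unique solution in $X^{cs}_{m_1'}(\sigma_0)$, namely $\xi^{cs} = x_{\widehat{m}_0}(x^{cs})$, so $z_1 = z_1^*$.

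The main obstacle I anticipate is purely a bookkeeping one, namely verifying that $\xi^{cs}$ and $\xi^u$ produced from any admissible $z_1$ lie in the precise balls on which the local representation $f^0_{\widehat{m}_1'}$ is defined and on which the contraction argument of \eqref{equ:fixed} is available. This requires choosing $\varepsilon$ (the radius defining $\widehat{U}_{\widehat{u}(\widehat{m}_0)}(\varepsilon)$ in the statement of \autoref{thm:I} \eqref{map}) small enough, comparable to $\epsilon_*$, so that via \autoref{lem:lip2} and \autoref{lem:represent3} every $z_1 \in W^{cs}_{loc}(K)$ above $\widehat{U}_{\widehat{u}(\widehat{m}_0)}(\varepsilon)$ with $|x^s_1| < \sigma_*, |h_0|<\varrho_*$ admits a local representative $\xi^{cs}$ inside the contraction domain $X^{cs}_{m_1'}(\sigma_0)$; this is where the estimates \eqref{equ:mmm} and \eqref{equ:nnn} come in. Once the domain issues are pinned down, the uniqueness conclusion follows immediately from the contraction constant $\gamma < 1/2$ and completes the proof.
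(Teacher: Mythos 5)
Your proposal is correct, and its existence half is identical to the paper's: represent $z_0$ via $f^0_{\widehat{m}_0}$ and read off the image from \eqref{equ:main}. The only real difference is the uniqueness mechanism. The paper compares the competitor $z_2$ and the constructed image directly as two elements of $\graph \widehat{H}_{\phi(\widehat{m}_0)}$ with the \emph{same} input $x^{cs} + f^0_{\widehat{m}_0}(x^{cs})$: since both outputs lie on the $\mu_1$-Lipschitz (in $u$-direction) graph over $\widehat{u}(\widehat{m}_0)$ and $\mu_1 < \beta$, the (B) condition forces the $cs$-components of the outputs to differ by at most $\lambda_{cs}\cdot 0 = 0$. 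You instead feed the competitor's representation back into the generating map and invoke uniqueness of the fixed point of \eqref{equ:fixed}, i.e. the contraction constant $\alpha(m_0)\mu_1(\widehat{u}(\widehat{m}_0)) \leq \gamma < 1/2$ coming from the (A$'$) condition. Both are one-line arguments under the standing hypotheses of \autoref{sub:preparation}; the paper's version has the small advantage that it only uses the implication form of the (B) condition, so it transfers verbatim to the ($\bullet 2$) case where only (A) (not (A$'$)) is assumed, whereas your route as phrased needs $\lip \widehat{F}^{cs}_{m_0}(x^{cs},\cdot) \leq \alpha$ (though it can be rephrased under (A) alone, since the two points share the same input). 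On the bookkeeping issue you flag: you can bypass the question of whether $\xi^{cs}$ lies in the domain of $f^0_{\widehat{u}(\widehat{m}_0)}$ and inside the contraction ball $X^{cs}(\sigma_0)$ altogether by not using the function at all — pair the competitor with the constructed image and use the $\mu$-Lip-in-$u$-direction property of $\graph h_0 \cap X^{su}_{\widehat{U}_{\widehat{u}(\widehat{m}_0)}(\epsilon_*)}(\sigma_*,\varrho_*)$ to get $|\xi^u - f^0_{\widehat{u}(\widehat{m}_0)}(x_{\widehat{m}_0}(x^{cs}))| \leq \mu_1 |\xi^{cs} - x_{\widehat{m}_0}(x^{cs})|$, and then close the loop with the $\alpha$-estimate; this is exactly the level of domain care the paper's own proof implicitly takes (it only needs everything to stay inside the $r_0$-balls on which the (A$'$)(B) condition holds, which is guaranteed since $\epsilon_* \ll r_0$).
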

\begin{proof}
	Let $ z_0 = (\widehat{m}, x^s, x^u) \in \Omega $. By the construction of $ \Omega $, there is a point $ \widehat{m}_0 \in \widehat{K} $ such that $ \widehat{m} \in \widehat{U}_{\widehat{m}_0} (\varepsilon_{0}) $. Now $ \phi(\widehat{m}) + x^s + x^u = \phi(\widehat{m}_0) + x^{cs} + f^0_{\widehat{m}_0}(x^{cs}) $, $ x^{cs} \in X^{cs}_{\phi(\widehat{m}_0)}(e_0 \eta_2) $. Write
	\[
	u(\phi(\widehat{m}_0)) + x_{\widehat{m}_0} (x^{cs}) + f^{0}_{\widehat{u}(\widehat{m}_0)} ( x_{\widehat{m}_0} (x^{cs}) ) = \phi(\widehat{m}'_1) + \overline{x}^s_1 + \overline{x}^u_1 \triangleq z_1 \in W^{cs}_{loc}(K),
	\]
	where $ \widehat{m}'_1 \in \widehat{U}_{\widehat{u}(\widehat{m}_0)}(\epsilon_{*}) $, and $ \overline{x}^\kappa_1 \in X^{\kappa}_{\phi(\widehat{m}'_1)} $, $ \kappa = s, u $. By \eqref{equ:main}, we have $ z_1 \in H(z_0) $.

	Let us show $ z_1 $ is the unique point in the sense that if $ z_2 = (\widehat{m}'_2, \overline{x}^s_2, \overline{x}^u_2) \in W^{cs}_{loc}(K) $, $ \widehat{m}'_2 \in \widehat{U}_{\widehat{u}(\widehat{m}_0)}(\epsilon_{*}) $, and $ z_2 \in H(z_0) $, then $ z_2 = z_1 $. Here note that $ z_2 = u(\phi(\widehat{m}_0)) + \tilde{x}^{cs}_1 + f^{0}_{\widehat{u}(\widehat{m}_0)} ( \tilde{x}^{cs}_1 ) $ for some $ \tilde{x}^{cs}_1 \in X^{cs}_{u(\phi(\widehat{m}_0))} (r_0) $. From
	\[
	\begin{cases}
	x_{\widehat{m}_0} (x^{cs}) + f^{0}_{\widehat{u}(\widehat{m}_0)} ( x_{\widehat{m}_0} (x^{cs}) ) \in \widehat{H}_{\phi(\widehat{m}_0)} (x^{cs} + f^0_{\widehat{m}_0}(x^{cs})),\\
	\tilde{x}^{cs}_1 + f^{0}_{\widehat{u}(\widehat{m}_0)} ( \tilde{x}^{cs}_1 ) \in \widehat{H}_{\phi(\widehat{m}_0)} (x^{cs} + f^0_{\widehat{m}_0}(x^{cs})),
	\end{cases}
	\]
	and the (B) condition for 
	\[
	\widehat{H}_{\phi(\widehat{m}_0)}: \widehat{X}^{cs}_{\phi(\widehat{m}_0)}(r_{0}) \oplus \widehat{X}^{u}_{\phi(\widehat{m}_0)}(r_{0}) \to \widehat{X}^{cs}_{u(\phi(\widehat{m}_0))}(r_{0}) \oplus \widehat{X}^{u}_{u(\phi(\widehat{m}_0))} (r_{0})
	\]
	(i.e., (A3) (a)), we see $ x_{\widehat{m}_0} (x^{cs}) = \tilde{x}^{cs}_1 $. The proof is complete.
\end{proof}

To characterize $ W^{cs}_{loc}(K) $, we introduce a special class of orbits in a neighborhood of $ K $; this is necessary even when $ H $ is a map but not Lipschitz.
\begin{defi}[Orbit]\label{defi:orbit}
	Let $ \{z_{k} = (\widehat{m}_{k}, \overline{x}^s_{k}, \overline{x}^{u}_{k})\}_{k \geq 0} \subset X^{s}_{\widehat{K}_{\varepsilon_0}} (\sigma) \oplus X^{u}_{\widehat{K}_{\varepsilon_0}} (\varrho) $. We say $ \{z_{k}\}_{k \geq 0} $ is a \emph{$ (\varepsilon_{0}, \varepsilon_{1}, \sigma, \varrho) $-type forward orbit of $ H $ near $ K $} if $ z_{k+1} \in H(z_{k}) $ and there is a sequence $ \{ \widehat{m}^0_{k} \}_{k \geq 0} \subset \widehat{K} $ such that $ \widehat{m}_{k} \in \widehat{U}_{\widehat{m}^0_{k}}(\varepsilon_{0}) $ and $ \widehat{m}_{k+1} \in \widehat{U}_{\widehat{u}(\widehat{m}^0_{k})}(\varepsilon_{1}) $ for all $ k \in \mathbb{N} $. A similar notion of \emph{$ (\varepsilon_{0}, \varepsilon_{1}, \sigma, \varrho) $-type backward orbit} of $ H $ near $ K $ can be defined if $ u: K \to K $ is invertible; $ \{ z_{k} \}_{k \in \mathbb{Z}} $ is called a \emph{$ (\varepsilon_{0}, \varepsilon_{1}, \sigma, \varrho) $-type orbit} of $ H $ near $ K $ if $ \{ z_{k} \}_{k \geq 0} $ is a $ (\varepsilon_{0}, \varepsilon_{1}, \sigma, \varrho) $-type forward orbit and $ \{ z_{k} \}_{k \leq 0} $ is a $ (\varepsilon_{0}, \varepsilon_{1}, \sigma, \varrho) $-type backward orbit.
\end{defi}

\begin{figure}[!htp]
	\centering
	\includegraphics[height=0.3\linewidth]{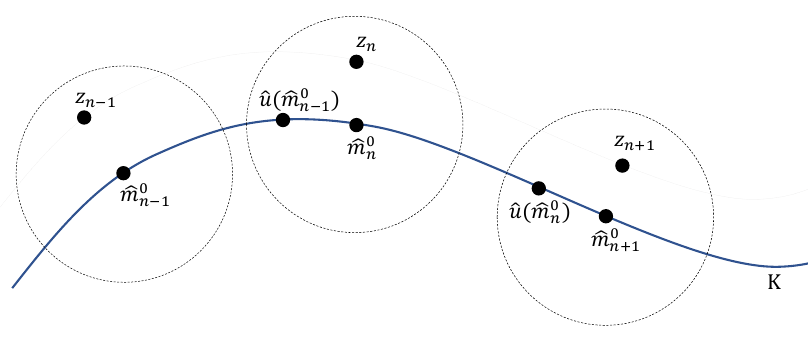}
	\caption{$ (\varepsilon_{0}, \varepsilon_{1}, \sigma, \varrho) $-type forward orbit of $ H $ near $ K $}
	\label{fig:doc1}
\end{figure}

\begin{lem}\label{lem:partial}
	Any $ (\varepsilon_{0}, \epsilon_{*}, \varepsilon_{0}, \varrho_*) $-type forward orbit $ \{z_{k}\}_{k \geq 0} $ of $ H $ near $ K $ belongs to $ W^{cs}_{loc}(K) $, i.e., $ \{z_{k}\}_{k \geq 0} \subset W^{cs}_{loc}(K) $. In particular, if $ \eta = 0 $, then $ K \subset \Omega $.
\end{lem}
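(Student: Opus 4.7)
The plan is to show that any orbit $\{z_k\}_{k\geq 0}$ of the required type lies on $W^{cs}_{loc}(K)=\graph h_0$ by proving that, away from the graph, the $u$-displacement expands forward with rate essentially $\lambda_u^{-1}>1$. Since $\{z_k\}$ is confined to the thin $u$-tube of radius $\varrho_*$, this expansion is incompatible with a nonzero initial displacement.

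More precisely, for each $k$, fix $\widehat{m}^0_k\in\widehat{K}$ as in \autoref{defi:orbit} and pass to the tubular chart at $\widehat{m}^0_k$ (\autoref{lem:lip2}) to write $z_k-\phi(\widehat{m}^0_k)=\widehat{x}^{cs}_k+\widehat{x}^u_k$ with $\widehat{x}^{cs}_k\in X^{cs}_{\phi(\widehat{m}^0_k)}$. By the calibration $\varepsilon_0=\min\{c_2^{-1},e_2^{-1}\}e_0\eta_2$ and \eqref{equ:estimates}, $\widehat{x}^{cs}_k$ lies in the domain $X^{cs}_{\phi(\widehat{m}^0_k)}(e_0\eta_2)$ of $f^0_{\widehat{m}^0_k}$ and $|\widehat{x}^u_k|\leq e_2\varrho_*$, so
\[
\Delta_k\;:=\;\widehat{x}^u_k-f^0_{\widehat{m}^0_k}(\widehat{x}^{cs}_k)\in X^u_{\phi(\widehat{m}^0_k)},\qquad |\Delta_k|\leq 2e_2\varrho_*.
\]
It suffices to show $\Delta_0=0$; the general case $\Delta_k=0$ follows by applying the conclusion to the tail $\{z_{j+k}\}_{j\geq 0}$, which is again $(\varepsilon_0,\epsilon_*,\varepsilon_0,\varrho_*)$-type.

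To estimate $\Delta_k$ in terms of $\Delta_{k+1}$, set $w_k:=\phi(\widehat{m}^0_k)+\widehat{x}^{cs}_k+f^0_{\widehat{m}^0_k}(\widehat{x}^{cs}_k)\in W^{cs}_{loc}(K)$. Equation \eqref{equ:main} at $\widehat{m}^0_k$ produces
\[
w'_{k+1}\;:=\;\phi(\widehat{u}(\widehat{m}^0_k))+x_{\widehat{m}^0_k}(\widehat{x}^{cs}_k)+f^0_{\widehat{u}(\widehat{m}^0_k)}\bigl(x_{\widehat{m}^0_k}(\widehat{x}^{cs}_k)\bigr)\in H(w_k)\cap W^{cs}_{loc}(K).
\]
Reading $z_{k+1}\in H(z_k)$ and $w'_{k+1}\in H(w_k)$ through the generating map $(\widehat{F}^{cs},\widehat{G}^{cs})$ of $\widehat{H}_{\phi(\widehat{m}^0_k)}$, and using the (A$'$)-part $\sup_{x^{cs}}\lip\widehat{G}^{cs}_m(x^{cs},\cdot)\leq\lambda_u(m)$ of \textbf{(A3)(a)}, one obtains, with $\tilde{x}^u_k$ the $u$-coordinate of $z_{k+1}-\phi(\widehat{u}(\widehat{m}^0_k))$ in the chart at $\widehat{u}(\widehat{m}^0_k)$,
\[
|\Delta_k|\;\leq\;\lambda_u(\phi(\widehat{m}^0_k))\;\bigl|\tilde{x}^u_k-f^0_{\widehat{u}(\widehat{m}^0_k)}(x_{\widehat{m}^0_k}(\widehat{x}^{cs}_k))\bigr|.
\]
The quantity on the right differs from $|\Delta_{k+1}|$ only because it is computed in the chart at $\widehat{u}(\widehat{m}^0_k)$ rather than at $\widehat{m}^0_{k+1}$; since $\widehat{m}_{k+1}\in\widehat{U}_{\widehat{u}(\widehat{m}^0_k)}(\epsilon_*)\cap\widehat{U}_{\widehat{m}^0_{k+1}}(\varepsilon_0)$ and $W^{cs}_{loc}(K)$ is $\mu_1$-Lipschitz in $u$-direction in both charts, \autoref{lem:lip2} gives a chart-change distortion of $1+\varpi^*_0$ with $\varpi^*_0\to 0$ as $\epsilon_*,\chi(\epsilon_*)\to 0$. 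Combining yields $|\Delta_k|\leq\lambda_u(\phi(\widehat{m}^0_k))(1+\varpi^*_0)|\Delta_{k+1}|$, and by \textbf{(A3)(a)(ii)} the constant $\lambda^*:=\sup_m\lambda_u(m)(1+\varpi^*_0)<1$ after shrinking $\epsilon_*,\chi(\epsilon_*)$.

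Iterating, $|\Delta_0|\leq(\lambda^*)^N\cdot 2e_2\varrho_*\to 0$, so $\Delta_0=0$ and $z_0\in W^{cs}_{loc}(K)$; the shift argument then gives $\{z_k\}\subset W^{cs}_{loc}(K)$. For the second assertion, when $\eta=0$ one has $\widehat{F}^{cs}_m(0,0)=\widehat{G}^{cs}_m(0,0)=0$, so $u(m)\in H(m)$ for every $m\in K$, and $z_k=u^k(m)$ is trivially a $(\varepsilon_0,\epsilon_*,\varepsilon_0,\varrho_*)$-type forward orbit (with $\overline{x}^s_k=\overline{x}^u_k=0$ and $\widehat{m}^0_k=\widehat{m}_k=\widehat{u}^k(\widehat{m})$); hence $m=z_0\in W^{cs}_{loc}(K)\cap X^{s}_{\widehat{K}_{\varepsilon_0}}(\varepsilon_0)=\Omega$. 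The main obstacle is the \emph{chart-change} step: one must transport the difference $\tilde{x}^u_k-f^0_{\widehat{u}(\widehat{m}^0_k)}(x_{\widehat{m}^0_k}(\widehat{x}^{cs}_k))$ from the chart at $\widehat{u}(\widehat{m}^0_k)$ into the chart at $\widehat{m}^0_{k+1}$ while losing only a factor $1+\varpi^*_0$, keeping the compound factor $\lambda_u(1+\varpi^*_0)$ strictly below $1$; this uses \autoref{lem:lip2} together with the $\mu_1$-Lipschitz graph property of $W^{cs}_{loc}(K)$ already built into $\varSigma_{lip,\mu,K_1}$.
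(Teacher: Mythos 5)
Your overall strategy is exactly the paper's: measure the $u$-distance from the orbit point to $\graph h_0$ and show it contracts backward along the orbit, so that the a priori bound $2e_2\varrho_*$ forces it to vanish; the paper does this by re-running (a one-sided version of) the computation in \autoref{lem:contractive}, and your $\eta=0$ argument for $K\subset\Omega$ is also fine.

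However, there is a genuine gap in your key per-step estimate. After the (A$'$) step you correctly arrive at $|\Delta_k|\le\lambda_u(\phi(\widehat{m}^0_k))\,\bigl|\tilde{x}^u_k-f^0_{\widehat{u}(\widehat{m}^0_k)}\bigl(x_{\widehat{m}^0_k}(\widehat{x}^{cs}_k)\bigr)\bigr|$, but your claim that this right-hand side "differs from $|\Delta_{k+1}|$ only because it is computed in the chart at $\widehat{u}(\widehat{m}^0_k)$ rather than at $\widehat{m}^0_{k+1}$" is false: it also differs because $f^0_{\widehat{u}(\widehat{m}^0_k)}$ is evaluated at $x_{\widehat{m}^0_k}(\widehat{x}^{cs}_k)$, the $cs$-coordinate of the comparison point produced by \eqref{equ:main} over $w_k$, and not at the $cs$-coordinate $\tilde{x}^{cs}_k$ of $z_{k+1}$ in that chart. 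Closing this mismatch requires the $\alpha$-part of (A$'$) ($|\tilde{x}^{cs}_k-x_{\widehat{m}^0_k}(\widehat{x}^{cs}_k)|\le\alpha(m)\,|\tilde{x}^u_k-f^0(x_{\widehat{m}^0_k}(\widehat{x}^{cs}_k))|$) together with $\lip f^0_{\widehat{u}(\widehat{m}^0_k)}\le\mu_1$, and the resulting self-referential bound produces the factor $\bigl(1-\alpha(m)\mu_1(\widehat{u}(\widehat{m}^0_k))\bigr)^{-1}$, which is not of the form $1+\varpi^*_0$: under \textbf{(A3) (a) (i)} the product $\alpha\mu_1$ is only controlled below $1/2$ (roughly $\varsigma\alpha\beta'$), so this factor can be close to $2$. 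Hence your claimed rate $\lambda_u(1+\varpi^*_0)$ is not justified and can fail; the correct rate is $\widehat{\lambda}_u=(1+\varpi^*_0)\sup_{\widehat{m}_0}\frac{\lambda_u(\phi(\widehat{m}_0))}{1-\alpha(\phi(\widehat{m}_0))\mu_1(\widehat{u}(\widehat{m}_0))}$, exactly as in \autoref{lem:contractive}, and it is $<1$ because the spectral condition \textbf{(A3) (a) (ii)} is formulated with the correction factor $\vartheta$ (see observation \eqref{OVII}), not merely because $\sup_m\lambda_u(m)<1$. With this correction inserted (and the chart change handled as you indicate, via \autoref{lem:lip2} and the $\mu$-Lip-in-$u$-direction property at $\widehat{u}(\widehat{m}^0_k)$, which is what the paper's estimates there actually do), your iteration and the shift argument go through and the lemma follows.
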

\begin{proof}
	Employing the proof of \autoref{lem:contractive}, we establish the following fact:
	If 
	\begin{enumerate}[(i)]
		\item $ \widetilde{h} = \widetilde{\varGamma}(h) $, where $ h \in \varSigma_{lip, \mu, K_1} $,
		\item $ \phi(\widehat{m}'_{1}) + \overline{x}^s_{1} + \overline{x}^{u}_{1} \in H(\phi(\widehat{m}'_{0}) + \overline{x}^s_{0} + \overline{x}^{u}_{0}) $, with $ (\widehat{m}'_{i}, \overline{x}^s_{i}, \overline{x}^u_{i}) \in X^{su}_{\widehat{K}_{\epsilon_*}} (\sigma_*, \varrho_*) $ for $ i = 0,1 $, $ \overline{x}^s_{0} \in X^{s}_{\phi(\widehat{m}'_{0})} (\varepsilon_{0}) $,
		\item $ \widehat{m}'_{0} \in \widehat{U}_{\widehat{m}^0_0}(\varepsilon_{0}) $ and $ \widehat{m}'_{1} \in \widehat{U}_{\widehat{u}(\widehat{m}^0_{0})}(\epsilon_{*}) $, where $ \widehat{m}^0_0 \in \widehat{K} $,
	\end{enumerate}
	then
	\[
	|\widetilde{h}(\widehat{m}'_{0}, \overline{x}^s_0) - \overline{x}^{u}_{0}| \leq \widehat{\lambda}_{u} |h(\widehat{m}'_1, \overline{x}^s_1) - \overline{x}^{u}_{1}|.
	\]
	\begin{proof}[Proof of the fact]
		Replace $ \widetilde{h}^2 $ and $ h^2 $ with $ \widetilde{h} $ and $ h $, respectively, in the argument of \autoref{lem:contractive}. The proof proceeds as follows:
		
		By (ii) (iii), we can write
		\[
		\phi(\widehat{m}'_{0}) + \overline{x}^s_{0} + \overline{x}^{u}_{0} = \phi(\widehat{m}^0_0) + \widetilde{x}^{cs}_1 + \widetilde{x}^{u}_1, \quad
		\phi(\widehat{m}'_{1}) + \overline{x}^s_{1} + \overline{x}^{u}_{1} = \phi(\widehat{u}(\widehat{m}^0_0)) + {x}^{cs}_1 + {x}^{u}_1,
		\]
		where $ \widetilde{x}^{\kappa}_1 \in X^{\kappa}_{\phi(\widehat{m}^0_0)} $ and $ {x}^{\kappa}_1 \in X^{\kappa}_{\phi(\widehat{u}(\widehat{m}^0_0))} $, $ \kappa = cs, u $. Also, let
		\[
		\phi(\widehat{m}'_{0}) + \overline{x}^s_{0} + \widetilde{h}(\widehat{m}'_{0}, \overline{x}^s_{0}) = \phi(\widehat{m}^0_0) + \widetilde{x}^{cs}_2 + \widetilde{x}^{u}_2,
		\]
		where $ \widetilde{x}^{\kappa}_2 \in X^{\kappa}_{\phi(\widehat{u}(\widehat{m}^0_0))} $, $ \kappa = cs, u $, and let $ f_{\widehat{m}^0_0} $ and $ \widetilde{f}_{\widehat{u}(\widehat{m}^0_0)} $ be the local representations of $ \graph h \cap X^{su}_{\widehat{K}_{\epsilon_*}} (\sigma_*, \varrho_*) $ and $ \graph \widetilde{h} \cap X^{su}_{\widehat{K}_{\epsilon_*}} (\sigma_*, \varrho_*) $ at $ \widehat{m}_0 $ and $ \widehat{u}(\widehat{m}^0_0) $, respectively. Note that $ \widetilde{x}^{u}_2 = \widetilde{f}_{\widehat{m}^0_0} (\widetilde{x}^{cs}_2) $.

		Following the estimates in \autoref{lem:contractive}, we obtain:
		\begin{enumerate}[(1)]
			\item $ |\widetilde{x}^u_1 - \widetilde{x}^u_2| \leq \frac{\varpi^*_1\lambda_{u}(\phi(\widehat{m}^0_0))}{1 - \alpha(\phi(\widehat{m}^0_0)) \mu_1(\phi(\widehat{u}(\widehat{m}^0_0)))} |x^u_1 - \widetilde{f}_{\widehat{u}(\widehat{m}^0_0)}(x^{cs}_1)| $,

			\item $ |\overline{x}^{u}_{0} - \widetilde{h}(\widehat{m}'_{0}, \overline{x}^s_0)| \leq \varpi^*_1 |\widetilde{x}^u_1 - \widetilde{x}^u_2| $, and

			\item $ |x^u_1 - {f}_{\widehat{u}(\widehat{m}^0_0)}(x^{cs}_1)| \leq \varpi^*_1 |\overline{x}^{u}_{1} - h(\widehat{m}'_1, \overline{x}^s_1)| $.
		\end{enumerate}
		This gives the proof of the fact.
	\end{proof}

	Write $ z_{k} = (\widehat{m}_{k}, \overline{x}^s_{k}, \overline{x}^{u}_{k}) $. Note that by the choice of $ z_{k} $, we have $ \widetilde{h}(\widehat{m}_{k}, \overline{x}^s_k) = {h}(\widehat{m}_{k}, \overline{x}^s_k) $. Then by the above fact, we see
	\[
	|{h}(\widehat{m}_{0}, \overline{x}^s_0) - \overline{x}^{u}_{0}| \leq (\widehat{\lambda}_{u})^{n} |h(\widehat{m}_n, \overline{x}^s_n) - \overline{x}^{u}_{n}| \leq (\widehat{\lambda}_{u})^{n} \cdot 2\varrho_{*} \to 0,
	\]
	as $ n \to \infty $. The proof of \autoref{lem:partial} is complete.
\end{proof}

\begin{rmk}[Limited case]\label{rmk:inflowing}
	\begin{enumerate}[(a)]
		\item \label{it:i1} ($ s $-contraction) Under assumption ($ \star\star $) in \autoref{sub:limited} with (A1)--(A3), the graph transform $ \varGamma $ given by \eqref{equ:graphss} can be defined on $ \varSigma_{0, \mu, K_1} $, where
		\begin{multline*}
		\varSigma_{0, \mu, K_1} = \left\{ h: X^s_{\widehat{\Sigma}} (\sigma_*) \to \overline{X^u_{\widehat{\Sigma}} (\varrho_*)} ~\text{is a bundle map over}~ \id: \right. \\ 
		\left. h|_{X^s_{\widehat{\Sigma}} (\sigma_*) \setminus X^s_{\widehat{K}_{\eta_1}} (\sigma_*)} = 0, \right.
		 \sup_{\widehat{m} \in \widehat{K}} |h(\widehat{m}, 0)| \leq K_1\eta, \\
		\left. \graph h \cap X^{su}_{\widehat{K}_{\epsilon_*}} (\sigma_*, \varrho_*) ~\text{is $ \mu $-Lip in $ u $-direction near $ \widehat{K} $} \right\}.
		\end{multline*}
		We have a unique $ h_0 \in \varSigma_{0, \mu, K_1} $ such that $ \varGamma h_0 = h_0 $. Write $ W^{cs}_{loc}(K) = \graph h_0 \cap X^{su}_{\widehat{K}_{\epsilon_*}} (\sigma_*, \varrho_*) $. In addition, we have the following partial characterization of $ W^{cs}_{loc}(K) $: $ z \in W^{cs}_{loc}(K) $ if there is a $ (\varepsilon_{0}, \epsilon_{*}, \sigma_{*}, \varrho_*) $-type forward orbit $ \{z_{k}\}_{k \geq 0} $ of $ H $ near $ K $ such that $ z_0 = z $. Set
		\[
		\Omega = \graph h_0|_{X^{s}_{\widehat{K}_{\varepsilon_{0}}} (\sigma_*)}.
		\]
		Then we have $ \Omega \subset H^{-1}(W^{cs}_{loc}(K)) $.

		\item \label{it:i2} (Strictly inflowing) Under assumption ($ \bullet\bullet $) in \autoref{sub:limited0} and (A1)--(A3) with only $ \varsigma_0 \geq 1 $ (in (A3) (a)), if $ \epsilon_{*}, \chi(\epsilon_{*}), \eta $ are small, then the graph transform $ \widetilde{\varGamma} $ (in \autoref{sub:limited0}) satisfies $ \widetilde{\varGamma} \varSigma_{\mu, K_1, \epsilon_{*}, \sigma_{*}, \varrho_* } \subset \varSigma_{\mu, K_1, \epsilon_{*}, \sigma_{*}, \varrho_* } $ and $ \lip\widetilde{\varGamma} < 1 $. In this case, we have a unique $ h_0 \in \varSigma_{\mu, K_1, \epsilon_{*}, \sigma_{*}, \varrho_* } $ such that $ \widetilde{\varGamma}h_0 = h_0 $. That is, for
		\[
		W^{cs}_{loc}(K) = \graph h_0 \cap X^{su}_{\widehat{K}_{\epsilon_*}} (\sigma_*, \varrho_*),
		\]
		we have
		\begin{enumerate}[(i)]
			\item $ W^{cs}_{loc}(K) \subset H^{-1} (W^{cs}_{loc}(K)) $;
			\item $ H $ in $ W^{cs}_{loc}(K) $ induces a (Lipschitz) map;
			\item (Characterization of $ W^{cs}_{loc}(K) $) $ z \in W^{cs}_{loc}(K) $ if and only if there is a $ (\epsilon_{*}, \epsilon_{*}, \sigma_{*}, \varrho_*) $-type forward orbit $ \{z_{k}\}_{k \geq 0} $ of $ H $ near $ K $ such that $ z_0 = z $.
		\end{enumerate}
		The last property means that the center-stable manifold of $ K $ is unique.

		\item \label{it:i3} (Parameter-dependent correspondences) Under Assumption (CS) in \autoref{sub:limited*} with (A3) (a) (ii) in \autoref{subsec:main}, if $ \epsilon = \epsilon_{*}, \chi(\epsilon_{*}), \eta $ are small, then for each $ \delta $, we have a unique $ h^{\delta}_0 \in \varSigma_{\mu, K_1, \epsilon_{*}, \sigma_{*}, \varrho_* } $ such that $ \widetilde{\varGamma}^{\delta}h^{\delta}_0 = h^{\delta}_0 $. In particular,, the properties (i)--(iii) in (b) hold for $ W^{cs, \delta}_{loc}(K) = \graph h^{\delta}_0 \cap X^{su}_{\widehat{K}_{\epsilon_*}} (\sigma_*, \varrho_*) $ and $ H^{\delta} $ instead of $ W^{cs}_{loc}(K) $ and $ H $, respectively. Moreover, if $ H^{\delta_1} \circ H^{\delta_2} = H^{\delta_2} \circ H^{\delta_1} $, then $ W^{cs, \delta_1}_{loc}(K) = W^{cs, \delta_2}_{loc}(K) $ (i.e., $ h^{\delta_1}_0 = h^{\delta_2}_0 $).

	\end{enumerate}
\end{rmk}

\begin{proof}[Proof of \autoref{thm:I}]
	We only consider the case where the correspondence $ \widehat{H} \approx (\widehat{F}^{cs}, \widehat{G}^{cs}) $ satisfies the (A$ ' $)($ \alpha $, $ \lambda_{u} $) (B)($ \beta; \beta', \lambda_{cs} $) condition in $ cs $-direction at $ K $. For a proof of the ($ \bullet 2 $) case that $ \widehat{H} \approx (\widehat{F}^{cs}, \widehat{G}^{cs}) $ satisfies the (A)($ \alpha $, $ \lambda_{u} $) (B)($ \beta; \beta', \lambda_{cs} $) condition, see \autoref{sec:tri}.
	The result now follows from \autoref{lem:existence}, \autoref{lem:locLip}, \autoref{lem:unique} and \autoref{lem:partial}.
\end{proof}

\chapter{Smoothness of a center-stable manifold: proof of \autoref{thm:smooth}}\label{sec:smooth}

\section{Lipschitz and smooth bump function} \label{sub:LipC1bm}

Since the bump function $ \Psi $ defined in \eqref{equ:cutoff} is generally not smooth, $ \widehat{h} = \varGamma(h) $ is also not smooth even if $ h $ is smooth. Suppose $ \Psi $ satisfies the following conditions:
\begin{enumerate}[(a)]
	\item $ \Psi \in C^{1}(X^s_{\widehat{\Sigma}} (\sigma_*), [0,1]) $,
	\[
	\Psi(\widehat{m}, \overline{x}^s) = \begin{cases}
	1, & \quad (\widehat{m}, \overline{x}^s) \in X^s_{\widehat{K}_{\eta_2}} (\eta_2), \\
	0, &\quad (\widehat{m}, \overline{x}^s) \in X^s_{\widehat{\Sigma}} (\sigma_*) \setminus X^s_{\widehat{K}_{\eta_1}} (\eta_1);
	\end{cases}
	\]
	\item $ \Psi $ is Lipschitz in the following sense: if $ \widehat{m}_0 \in \widehat{K} $, $ m_0 = \phi(\widehat{m}_0) $ and $ (\widehat{m}_i, \overline{x}^s_i) \in X^s_{\widehat{U}_{\widehat{m}_0}(\epsilon_*)} (\sigma_*) $, $ i = 1,2 $, then
	\[
	|\Psi(\widehat{m}_1, \overline{x}^s_1) - \Psi(\widehat{m}_2, \overline{x}^s_2)| \leq \frac{\varpi^*_1 C_1}{\eta_1 - \eta_2} \max\{ |\Pi^c_{m_0} (\phi(\widehat{m}_1) - \phi(\widehat{m}_2) )|, |\Pi^s_{m_0} (\overline{x}^s_1 - \overline{x}^s_2)| \},
	\]
	where $ C_1 \geq 1 $ (independent of $ \epsilon_{*}, \chi_{*} $) and $ \varpi^*_1 \to 1 $ as $ \epsilon_{*}, \chi_{*} \to 0 $.
\end{enumerate}

\begin{defi}[$ C^{1} \cap C^{0,1} $ bump function]\label{def:C1Lbump}
	Such a $ \Psi $ is called a \emph{$ C^{1} $ and $ C_1 $-Lipschitz bump function} in $ X^s_{\widehat{\Sigma}} $ near $ K $. If $ X^s_{m} = \{0\} $ for all $ m \in \Sigma $, then $ \Psi $ is also called a \emph{$ C^{1} $ and $ C_1 $-Lipschitz bump function} in $ \widehat{\Sigma} $ near $ K $. If the constant $ C_1 $ is not emphasized, we may simply call it a \emph{$ C^{1} \cap C^{0,1} $ bump function}.
\end{defi}
See \autoref{sub:bump} and particularly \autoref{cor:spaces} for some spaces where such $ \Psi $ exists.

\vspace{.5em}
\noindent{Assumptions}.
From now until before \autoref{sub:bump}, we make the following two assumptions.

Case (1). Assume (A1), (A2), (A3) (a) (b) and (A4) (i)--(iii) hold with $ \varsigma_0 \geq C_1 + 1 $.

Case (2). Assume (A1), (A2), (A3) (a$ ' $) (b) and (A4) (i)--(iii) hold with $ \varsigma_0 \geq 1 $. 
\vspace{.5em}

We need all the constants listed in \autoref{sub:preparation}. 

\begin{enumerate}[$\bullet$]
	\item In case (1), let $ \varsigma_1 > 2 $ such that
	\[
	\inf_{m \in K}\{ \beta(m) - (C_1(\varsigma_1 - 1) + 1) \beta'(u(m)) \} > 0,
	\]
	and take
	\[
	\varsigma = C_1(\varsigma_1 - 1) + 1, \quad \eta_2 = \frac{\varsigma'_1 - 2}{\varsigma'_1 - 1} \eta_1, \quad \varsigma'_1 = (\varsigma_1 + 2)/2.
	\]
	We use this new value $ \varsigma = C_1(\varsigma_1 - 1) + 1 $ instead of the original $ \varsigma $ in Observations \eqref{OI}--\eqref{OVII} in \autoref{sub:preparation}. Set
	\begin{equation}\label{equ:qqq}
	\vartheta_1(m) = ( 1 - (C_1(\varsigma_1 - 1) + 1)\alpha(m)\beta'(u(m)) )^{-1}.
	\end{equation}
	As (A4) (ii) holds, we can further assume (if $ \varsigma_1 $ is close to $ 2 $)
	\[
	\sup_{m\in K} \lambda_{cs}(m) \lambda_u(m) \vartheta_1(m) < 1 \quad \text{and} \quad \sup_{\widehat{m}_0 \in \widehat{K}} \frac{ \lambda_{cs}(\phi(\widehat{m}_0)) \lambda_{u}(\phi(\widehat{m}_0)) }{ 1 - \alpha(\phi(\widehat{m}_0)) \mu_1(\widehat{u}(\widehat{m}_0)) } < 1.
	\]
	\item In case (2), let $ \varsigma'_1 = 3 $, $ \eta_2 = (1/2) \eta_1 $. Furthermore, Observations \eqref{OI}--\eqref{OVII} in \autoref{sub:preparation} also hold with $ \varsigma > 1 $ but close to $ 1 $. 
	From (A4) (ii), we can assume (if $ \varsigma $ is close to $ 1 $ and $ \epsilon_{*}, \chi(\epsilon_{*}) $ are small)
	\[
	\sup_{\widehat{m}_0 \in \widehat{K}} \frac{ \lambda_{cs}(\phi(\widehat{m}_0)) \lambda_{u}(\phi(\widehat{m}_0)) }{ 1 - \alpha(\phi(\widehat{m}_0)) \mu_1(\widehat{u}(\widehat{m}_0)) } < 1.
	\]
	\item Let $ \eta $ satisfy $ \eta \leq \epsilon_{0,*} \eta_0 $ (where $ \epsilon_{0,*} = O_{\epsilon_{*}}(1) $ defined in \eqref{equ:o(1)}) and $ 0 < \eta_1 < \sigma^1_{*} $ close to $ \sigma^1_{*} $ (e.g., $ \eta_1 = (1 - \epsilon_{*}) \sigma^1_{*} $).
\end{enumerate}

Note that under (A4) (iii), $ X^s_{\widehat{K}_{\epsilon_{*}}}(\sigma_{*}) $ is a $ C^1 $ manifold. We write the local representations of this $ C^1 $ manifold as
\begin{equation}\label{equ:cslocal}
m + \overline{x}^s = m_0 + x' + g^{cs}_{\widehat{m}_0}(x'), \quad x' \in X^{c}_{m_0}(c_2\epsilon_{*}) \oplus X^{s}_{m_0}(e_2\sigma_{*}),
\end{equation}
where $ g^{cs}_{\widehat{m}_0}: X^{c}_{m_0}(c_2\epsilon_{*}) \oplus X^{s}_{m_0}(e_2\sigma_{*}) \to X^{u}_{m_0} $ is $ C^1 $, $ \widehat{m}_0 \in \widehat{K} $, and $ m_0 = \phi(\widehat{m}_0) $. Without loss of generality, we assume $ X^s_{\widehat{K}_{\epsilon_{*}}}(\sigma_{*}) \subset \bigcup_{\widehat{m}_0 \in \widehat{K}}\graph g^{cs}_{\widehat{m}_0} $.

Let $ h_0 $ be the center-stable manifold obtained in \autoref{lem:existence} (using the above map $ \Psi $ instead of the original one defined in \eqref{equ:cutoff}), and let $ f^0_{\widehat{m}_0} $ be the \emph{local representation} of $ \graph h_0 \cap X^{su}_{\widehat{K}_{\epsilon_*}} (\sigma_*, \varrho_*) $ at $ \widehat{m}_0 \in \widehat{K} $. Also, write $ W^{cs}_{loc}(K) = \graph h_0 $. The map $ f^0 $ can be naturally considered as a bundle map from $ X^{cs}_{\widehat{K}} (\sigma_{0}) $ to $ X^{u}_{\widehat{K}} (\varrho_{*}) $ over id, i.e.,
\begin{equation}\label{equ:f00}
f^0: X^{cs}_{\widehat{K}} (\sigma_{0}) \to X^{u}_{\widehat{K}} (\varrho_{*}), \quad (\widehat{m}_0, x) \mapsto (\widehat{m}_0, f^0_{\widehat{m}_0}(x)).
\end{equation}
Let $ \widetilde{h}_0 = \widetilde{\varGamma}(h_0) $ (see \eqref{equ:graph}). Its corresponding local representations are denoted by $ \widetilde{f}^0_{\widehat{m}_0} $, $ \widehat{m}_0 \in \widehat{K} $; That is, for $ \widehat{m} \in \widehat{U}_{\widehat{m}_0} (\sigma^1_{*}) $ and $ \overline{x}^s \in X^s_{m} (\sigma^1_{*}) $, where $ m = \phi(\widehat{m}) $ and $ m_0 = \phi(\widehat{m}_0) $, we have
\[
m + \overline{x}^s + \widetilde{h}_0(\widehat{m}, \overline{x}^s) = m_0 + \widetilde{x}^{cs} + \widetilde{f}^0_{\widehat{m}_0}(\widetilde{x}^{cs}), \quad \text{where}~ \widetilde{x}^{cs} \in X^{cs}_{m_0}(\sigma^c_{*}).
\]

\section{Local representations of the bump function}

For brevity, write 
\[ 
\phi_{m_0,\gamma} = \phi|_{\widehat{U}_{\widehat{m}_0}}: \widehat{U}_{\widehat{m}_0} \to U_{m_0,\gamma},  
\]
where $ \phi(\widehat{m}_0) = m_0 $ and $ \gamma \in \Lambda(m_0) $.
Let $ \Phi_{m_0,\gamma} $ be defined as in \eqref{equ:tub}, and set
\begin{multline*}
\overline{\Phi}^{-1}_{m_0,\gamma} = (\overline{\Phi}^{-c}_{m_0,\gamma}, \overline{\Phi}^{-s}_{m_0,\gamma}, \overline{\Phi}^{-u}_{m_0,\gamma}): (\widehat{x}^c, \widehat{x}^s, \widehat{x}^u)  \stackrel{\Phi^{-1}_{m_0,\gamma}}{\mapsto} (x^c_0, y^s, y^u) \mapsto (\widehat{\overline{m}}, \overline{x}^s, \overline{x}^u), \\
X^c_{m_0}(c_2\epsilon_*) \times X^s_{m_0}(e_2\sigma_*) \times X^s_{m_0}(e_2\varrho_*) \to X^c_{m_0}(c_2\epsilon_*) \times X^s_{m_0}(e_2\sigma_*) \times X^s_{m_0}(e_2\varrho_*) \to X^{h}_{\widehat{\Sigma}},
\end{multline*}
where $ \widehat{\overline{m}} = \phi^{-1}_{m_0,\gamma}(\overline{m}) $, and $ \overline{m}, \overline{x}^s, \overline{x}^u $ are determined by \eqref{equ:bundle}.

Consider the following $ C^1 $ maps:
\[
\begin{cases}
F^{0}_{\widehat{m}_0} (\widehat{x}^{c}, \widehat{x}^{s}, \widehat{x}^{u}) = (1 - \Psi(\widehat{m}, \overline{x}^s)) \Pi^{cs}_{m_0}(\overline{m} + \overline{x}^s - m_0) + \Psi(\widehat{m}, \overline{x}^s) (\widehat{x}^{c} + \widehat{x}^{s}), \\
G^{0}_{\widehat{m}_0} (\widehat{x}^{c}, \widehat{x}^{s}, \widehat{x}^{u}) = (1 - \Psi(\widehat{m}, \overline{x}^s)) \Pi^{u}_{m_0}(\overline{m} + \overline{x}^s - m_0) + \Psi(\widehat{m}, \overline{x}^s) \widehat{x}^{u},
\end{cases}
\]
where
\[
\widehat{m} = \overline{\Phi}^{-c}_{m_0,\gamma}(\widehat{x}^{c}, \widehat{x}^{s}, \widehat{x}^{u}), \quad \overline{x}^s = \overline{\Phi}^{-s}_{m_0,\gamma}(\widehat{x}^{c}, \widehat{x}^{s}, \widehat{x}^{u}), \quad \overline{m} = \phi(\widehat{m}), \quad m_0 = \phi_{m_0,\gamma}(\widehat{m}_0).
\]
In other words, if we write
\[
\begin{cases}
\overline{m} + \overline{x}^s + \overline{x}^u = m_0 + \widehat{x}^c + \widehat{x}^s + \widehat{x}^u,\\
\overline{m} + \overline{x}^s + \Psi(\widehat{m}, \overline{x}^s) \overline{x}^u = m_0 + {x}^c + {x}^s + {x}^u,
\end{cases}
\]
where $ \widehat{m} = \phi^{-1}_{m_0,\gamma}(\overline{m}) $, $ \widehat{x}^\kappa, x^{\kappa} \in X^{\kappa}_{m_0} $ for $ \kappa = s, c, u $, and $ (\overline{m}, \overline{x}^s, \overline{x}^u) \in X^{su}_{\widehat{K}_{\epsilon_{*}}}(\sigma_{*}, \varrho_{*}) $, then
\[
x^c + x^s = F^{0}_{\widehat{m}_0} (\widehat{x}^{c}, \widehat{x}^{s}, \widehat{x}^{u}), \quad x^{u} = G^{0}_{\widehat{m}_0} (\widehat{x}^{c}, \widehat{x}^{s}, \widehat{x}^{u}).
\]

\begin{lem}\label{lem:est000}
	Let $ (\widehat{x}^c, \widehat{x}^s, \widehat{x}^u) \in X^c_{m_0}(c_2\epsilon_*) \times X^s_{m_0}(e_2\sigma_*) \times X^s_{m_0}(e_2\varrho_*) $. Then the following estimates hold for $ F^{0}_{\widehat{m}_0} $ and $ G^{0}_{\widehat{m}_0} $:
	\begin{enumerate}[(1)]
		\item $ \sup_{\widehat{m}_0 \in \widehat{K}}\lip (F^{0}_{\widehat{m}_0} - \Pi^{cs}_{\phi(\widehat{m}_0)}) \to 0 $ as $ \epsilon_{*} \to 0 $, and so 
		\[
		\lip F^{0}_{\widehat{m}_0}(\widehat{x}^{c}, \widehat{x}^{s}, \cdot) \leq \varpi^*_0, \quad \lip F^{0}_{\widehat{m}_0}(\cdot, \widehat{x}^{u}) \leq \varpi^*_1.
		\]

		\item $ \lip G^{0}_{\widehat{m}_0}(\cdot, \widehat{x}^{u}) \leq \varpi^*_1C_1(\varsigma'_1 - 1) / \eta_1|\widehat{x}^{u}| $,
		$ \lip G^{0}_{\widehat{m}_0}(\widehat{x}^{c}, \widehat{x}^{s}, \cdot) \leq \varpi^*_1 $.
	\end{enumerate}
\end{lem}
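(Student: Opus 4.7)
The plan is to first reduce both $F^{0}_{\widehat{m}_0}$ and $G^{0}_{\widehat{m}_0}$ to algebraic combinations that isolate the ``non-trivial'' dependence on $\Psi$. Using the identity $\overline{m}+\overline{x}^s+\overline{x}^u=m_{0}+\widehat{x}^{c}+\widehat{x}^{s}+\widehat{x}^{u}$ (i.e.\ the definition of $\Phi_{m_0,\gamma}$ in \eqref{equ:bundle}), write $p=\Pi^{cs}_{m_0}(\overline{m}+\overline{x}^s-m_{0})$ and $q=\Pi^{u}_{m_0}(\overline{m}+\overline{x}^s-m_{0})$. Applying $\Pi^{cs}_{m_0}$ and $\Pi^{u}_{m_0}$ to the defining identity gives $\widehat{x}^{c}+\widehat{x}^{s}=p+\Pi^{cs}_{m_0}\overline{x}^{u}$ and $\widehat{x}^{u}=q+\Pi^{u}_{m_0}\overline{x}^{u}$, so
\[
F^{0}_{\widehat{m}_0}-\Pi^{cs}_{m_0}(\widehat{x}^{c}+\widehat{x}^{s}+\widehat{x}^{u})=(\Psi-1)\,\Pi^{cs}_{m_0}\overline{x}^{u}, \qquad G^{0}_{\widehat{m}_0}=q+\Psi\,\Pi^{u}_{m_0}\overline{x}^{u}=(1-\Psi)q+\Psi\,\widehat{x}^{u}.
\]
This is the core algebraic step; everything else is estimating each factor.

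Next I would feed in the quantitative control from the tubular construction (\autoref{lem:tub}, \autoref{lem:lip2}): the map $(\widehat{x}^{c},\widehat{x}^{s},\widehat{x}^{u})\mapsto(\overline{m},\overline{x}^{s},\overline{x}^{u})$ is close to the identity, with $\lip(\overline{\Phi}^{-1}_{m_0,\gamma}-I)=O(\chi_{*})$. In particular the quantities $\Pi^{cs}_{m_0}\overline{x}^{u}=\widehat{x}^{c}+\widehat{x}^{s}-p$ and $\Pi^{u}_{m_0}\overline{x}^{u}-\widehat{x}^{u}=-q$ are both of ``cross'' type (projecting a vector in $X^{u}_{\overline{m}}$ onto $X^{cs}_{m_0}$, respectively $\Pi^{u}_{m_0}$ applied to the $X^{c}_{m_0}$- and $X^{s}_{\overline{m}}$-parts), hence bounded in sup-norm by $\chi_{*}\cdot O(|\widehat{x}^{c}|+|\widehat{x}^{s}|+|\widehat{x}^{u}|)$ and bounded in partial Lipschitz constant by $\chi_{*}$ in each variable, using only \textbf{(H2) (H3)} and \eqref{equ:estimates}. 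Similarly the partial Lipschitz constants of $q$ are $O(\chi_{*})$, and those of $\Pi^{u}_{m_0}\overline{x}^{u}$ are $O(\chi_{*})$ in $\widehat{x}^{c},\widehat{x}^{s}$ and $1+O(\chi_{*})$ in $\widehat{x}^{u}$.

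For (1), the product rule applied to $(\Psi-1)\Pi^{cs}_{m_0}\overline{x}^{u}$ splits its Lipschitz norm into $\lip\Psi\cdot\sup|\Pi^{cs}_{m_0}\overline{x}^{u}|+\sup|\Psi-1|\cdot\lip(\Pi^{cs}_{m_0}\overline{x}^{u})$. The Lipschitz hypothesis on $\Psi$ gives $\lip\Psi\leq \varpi^{*}_{1}C_{1}/(\eta_{1}-\eta_{2})=\varpi^{*}_{1}C_{1}(\varsigma'_{1}-1)/\eta_{1}$, while $\sup|\Pi^{cs}_{m_0}\overline{x}^{u}|\leq\chi_{*}\varrho_{*}$ and $\lip(\Pi^{cs}_{m_0}\overline{x}^{u})=O(\chi_{*})$. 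By the \textbf{Choice of constants} (\autoref{sub:preparation}), $\varrho_{*}/\eta_{1}$ stays bounded as $\epsilon_{*},\chi(\epsilon_{*})\to 0$, so both terms are $O(\chi_{*})=\varpi^{*}_{0}$; this gives the two partial bounds once one notes that $\lip \Pi^{cs}_{m_0}(\widehat{x}^{c}+\widehat{x}^{s}+\widehat{x}^{u})=1$ in $(\widehat{x}^{c},\widehat{x}^{s})$ and $0$ in $\widehat{x}^{u}$.

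For (2), applying the product rule to $G^{0}_{\widehat{m}_0}=(1-\Psi)q+\Psi\widehat{x}^{u}$, the Lipschitz constant in $\widehat{x}^{u}$ (with $\widehat{x}^{c},\widehat{x}^{s}$ fixed) is dominated by $\lip_{\widehat{x}^u}\Psi\cdot|\widehat{x}^{u}-q|+(1-\Psi)\lip_{\widehat{x}^{u}}q+\Psi$; since $\lip_{\widehat{x}^{u}}\Psi\leq\chi_{*}\lip\Psi$ (because $\Psi$ only sees $\widehat{x}^{u}$ through $(\overline{m},\overline{x}^{s})$), and $\lip_{\widehat{x}^{u}}q=O(\chi_{*})$, the sum telescopes to $1+O(\chi_{*})=\varpi^{*}_{1}$. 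For the Lipschitz constant in $(\widehat{x}^{c},\widehat{x}^{s})$ with $\widehat{x}^{u}$ fixed, the dominant contribution comes from $\lip\Psi\cdot|\widehat{x}^{u}-q|$, and since $|q|=O(\chi_{*})(|\widehat{x}^{c}|+|\widehat{x}^{s}|)$ can be absorbed into the $\varpi^{*}_{1}$-correction, one obtains the stated bound $\varpi^{*}_{1}C_{1}(\varsigma'_{1}-1)/\eta_{1}\cdot|\widehat{x}^{u}|$. The only mildly delicate point is the book-keeping to show that the $\chi_{*}$-corrections coming from $\lip q$ and from $|\widehat{x}^{u}-q|-|\widehat{x}^{u}|$ can be consistently absorbed into the factor $\varpi^{*}_{1}$; apart from this, every estimate is a direct combination of the tubular estimates of \autoref{lem:lip2} and the defining Lipschitz bound on $\Psi$.
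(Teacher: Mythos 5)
Your proposal is correct and follows essentially the same route as the paper: isolate $F^{0}_{\widehat{m}_0}-(\widehat{x}^{c}+\widehat{x}^{s})=-(1-\Psi)\Pi^{cs}_{m_0}\overline{x}^{u}$ and $G^{0}_{\widehat{m}_0}=(1-\Psi)\Pi^{u}_{m_0}(\overline{m}+\overline{x}^s-m_0)+\Psi\widehat{x}^{u}$, then apply the product rule with $\lip\Psi\leq\varpi^*_1C_1(\varsigma'_1-1)/\eta_1$, the $O(\chi_*)$ tubular-coordinate estimates of \autoref{lem:lip2}, the support property of $\Psi$, and the boundedness of $\varrho_*/\eta_1$ from the choice of constants. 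The paper writes the same cross-terms in the explicit $\chi^{u}_{m_0,\gamma}$, $\varphi^{s}_{m_0}$ coordinates rather than via projections, but the estimates and the absorbed error terms are identical.
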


\begin{proof}
	The proof is straightforward. Observe that
	\begin{gather*}
	F^{0}_{\widehat{m}_0}(\widehat{x}^{c}, \widehat{x}^{s}, \widehat{x}^{u}) - \widehat{x}^{c} - \widehat{x}^{s} = - ( 1 - \Psi(\widehat{m}, \overline{x}^s) ) (   \chi^u_{m_0,\gamma} (x^c_0) + \varphi^{s}_{m_0}(\overline{m}) y^s  ), \\
	G^{0}_{\widehat{m}_0}(\widehat{x}^{c}, \widehat{x}^{s}, \widehat{x}^{u}) = ( 1 - \Psi(\widehat{m}, \overline{x}^s) ) (  \chi^u_{m_0,\gamma} (x^c_0) - \varphi^{s}_{m_0}(\overline{m}) y^s ) -  \Psi(\widehat{m}, \overline{x}^s) \widehat{x}^{u},
	\end{gather*}
	where 
	\[
	(x^c_0, y^s, y^u) = \Phi^{-1}_{m_0,\gamma} (\widehat{x}^{c}, \widehat{x}^{s}, \widehat{x}^{u}), \quad (\overline{m}, \overline{x}^s, \overline{x}^u) = \overline{\Phi}^{-1}_{m_0,\gamma}(\widehat{x}^{c}, \widehat{x}^{s}, \widehat{x}^{u}),
	\]
	and $ \chi^u_{m_0,\gamma} (\cdot), \varphi^{s}_{m_0}(\cdot) $ are defined in \autoref{sec:submanifold}.
	
	Now let $ (x^c_i, y^s_i, y^u_i) = \Phi^{-1}_{m_0,\gamma} (\widehat{x}^c_i, \widehat{x}^s_i, \widehat{x}^u_i) $, $ (\overline{m}_{i}, \overline{x}^s_{i}, \overline{x}^u_{i}) = \overline{\Phi}^{-1}_{m_0,\gamma}(\widehat{x}^{c}_{i}, \widehat{x}^{s}_{i}, \widehat{x}^{u}_{i}) $ for $ i = 1,2 $. Since $ \Psi|_{X^s_{\widehat{\Sigma}} (\sigma_*) \setminus X^s_{\widehat{K}_{\eta_1}} (\eta_1)} = 0 $, without loss of generality, assume $ x^c_1 \in X^{c}_{m_0} (c_2\eta_1) $ and $ y^s_1 \in X^{s}_{m_0}(e_2\eta_1) $. Then
	\begin{align*}
	&~ |(F^{0}_{\widehat{m}_0}(\widehat{x}^{c}_1, \widehat{x}^{s}_1, \widehat{x}^{u}_1) - \widehat{x}^{c}_1 - \widehat{x}^{s}_1) - (F^{0}_{\widehat{m}_0}(\widehat{x}^{c}_2, \widehat{x}^{s}_2, \widehat{x}^{u}_2) - \widehat{x}^{c}_2 - \widehat{x}^{s}_2)| \\
	\leq &~ \left(\varpi^*_1 C_1(\varsigma'_1 - 1) / \eta_1 \cdot \max\{ |\chi^u_{m_0,\gamma} (x^c_1) |, |\varphi^{s}_{m_0}(\overline{m}_1) y^s_1| \}  + \varpi^*_1 \chi_{*}\right) \max_{\kappa = s, c,u}\{|\widehat{x}^{\kappa}_1 - \widehat{x}^{\kappa}_2|\} \\
	\leq &~ \{\varpi^*_1 C_1(\varsigma'_1 - 1) \chi_{*} + \varpi^*_1 \chi_{*}\} \max_{\kappa = s, c,u}\{|\widehat{x}^{\kappa}_1 - \widehat{x}^{\kappa}_2|\},
	\end{align*}
	which also gives the estimates for $ \lip F^{0}_{\widehat{m}_0}(\widehat{x}^{c}, \widehat{x}^{s}, \cdot) $ and $ \lip F^{0}_{\widehat{m}_0}(\cdot, \widehat{x}^{u}) $.

	Furthermore, let $ \widehat{x}^u_1 = \widehat{x}^u_2 = \widehat{x}^u $. Then
	\begin{align*}
	&~ |G^{0}_{\widehat{m}_0}(\widehat{x}^{c}_1, \widehat{x}^{s}_1, \widehat{x}^{u}) - G^{0}_{\widehat{m}_0}(\widehat{x}^{c}_2, \widehat{x}^{s}_2, \widehat{x}^{u})| \\
	\leq &~ \left|( 1 - \Psi(\widehat{m}_1, \overline{x}^s_1) ) (  \chi^u_{m_0,\gamma} (x^c_1) - \varphi^{s}_{m_0}(\overline{m}_1) y^s_1 ) \right. \\
	&~ - \left. ( 1 - \Psi(\widehat{m}_2, \overline{x}^s_2) ) (  \chi^u_{m_0,\gamma} (x^c_2) - \varphi^{s}_{m_0}(\overline{m}_2) y^s_2 )\right| \\
	& \quad + \left| \Psi(\widehat{m}_1, \overline{x}^s_1) - \Psi(\widehat{m}_2, \overline{x}^s_2) \right| \cdot | \widehat{x}^{u} | \\
	\leq &~ \left|( \Psi(\widehat{m}_2, \overline{x}^s_2) - \Psi(\widehat{m}_1, \overline{x}^s_1) ) (  \chi^u_{m_0,\gamma} (x^c_1) - \varphi^{s}_{m_0}(\overline{m}_1) y^s_1 ) \right| \\
	& + \left| \chi^u_{m_0,\gamma} (x^c_1) - \chi^u_{m_0,\gamma} (x^c_2)  + \varphi^{s}_{m_0}(\overline{m}_1) y^s_1 -  \varphi^{s}_{m_0}(\overline{m}_2) y^s_2 \right|  \\
	& + \left| \Psi(\widehat{m}_1, \overline{x}^s_1) - \Psi(\widehat{m}_2, \overline{x}^s_2) \right| \cdot | \widehat{x}^{u} | \\
	\leq &~ \{ \varpi^*_0 + \varpi^*_1 C_1(\varsigma'_1 - 1) / \eta_1|\widehat{x}^{u}| \} \max_{\kappa = s, c}\{|\widehat{x}^{\kappa}_1 - \widehat{x}^{\kappa}_2|\}.
	\end{align*}
	Similar for $ \lip G^{0}_{\widehat{m}_0}(\widehat{x}^{c}, \widehat{x}^{s}, \cdot) $. The proof is complete.
\end{proof}

By the above lemma, the inverse $ (F^0_{\widehat{m}_0})^{-1} (\cdot, \cdot, x^{u}) $ exists. Set
\begin{equation}\label{equ:localcut-off}
F^1_{\widehat{m}_0} (\cdot, \cdot, x^{u}) = (F^0_{\widehat{m}_0})^{-1} (\cdot, \cdot, x^{u}), \quad G^1_{\widehat{m}_0} (\cdot, \cdot, {x}^{u})  = G^0_{\widehat{m}_0} (F^1_{\widehat{m}_0} (\cdot, \cdot, x^{u}), x^{u}).
\end{equation}

Note that by construction, $ \Psi(\widehat{m}, \overline{x}^s) \widetilde{h}_0(\widehat{m}, \overline{x}^s) = h_0(\widehat{m}, \overline{x}^s) $. For $ x^{cs} \in X^{cs}_{m_0}(e_0\eta'_1) $ (where $ e_0 = \min\{e_1, c_1\} $), we write
\[
m_0 + x^{cs} + f^0_{\widehat{m}_0} (x^{cs}) = m + \overline{x}^s + \Psi(\widehat{m}, \overline{x}^s) \widetilde{h}_0(\widehat{m}, \overline{x}^s),
\]
where $ m = \phi(\widehat{m}) $, and $ \eta'_1 $ is chosen close to $ \eta_1 $ such that
\[
\max\{ \eta_1, e^{-1}_0 \eta''_1 \}  < \eta'_1 < \sigma^1_{*} (< \sigma^c_{*}), \quad \text{with} ~ \eta''_1 =  \max\{ e_2, c_2 \} \eta_1.
\]
In particular, if $ |x^{cs}| > \eta''_1 $, then $ \Psi(\widehat{m}, \overline{x}^s) = 0 $.
Additionally, $ \widehat{m} \in \widehat{U}_{\widehat{m}_0}(\eta'_1) \subset \widehat{U}_{\widehat{m}_0}(\sigma^1_{*}) $ and $ \overline{x}^s \in X^{s}_{m} (\eta'_1) \subset X^{s}_{m}(\sigma^1_{*}) $. Thus, we can use $ F^1_{\widehat{m}_0} $ and $ G^1_{\widehat{m}_0} $ to relate $ f^0_{\widehat{m}_0} $ and $ \widetilde{f}^{0}_{\widehat{m}_0} $ as follows:
\begin{equation}\label{equ:extension}
\begin{cases}
F^1_{\widehat{m}_0} (x^{cs}, \widetilde{f}^0_{\widehat{m}_0}(\widetilde{x}^{cs})) = \widetilde{x}^{cs}, \\
G^1_{\widehat{m}_0} (x^{cs}, \widetilde{f}^0_{\widehat{m}_0}(\widetilde{x}^{cs})) = f^0_{\widehat{m}_0} (x^{cs}),
\end{cases}
\quad x^{cs} \in X^{cs}_{m_0}(e_0\eta'_1).
\end{equation}
Since $ \lip F^1_{\widehat{m}_0}(x^{cs}, \cdot) $ can be made small, we obtain $ \widetilde{x}^{cs} = \widetilde{x}_{\widehat{m}_0}(x^{cs}): X^{cs}_{m_0} (e_0\eta'_1) \to X^{cs}_{m_0} (\sigma^c_{*}) $. By the construction of $ \widetilde{h}_0 $, we have (see, e.g., \eqref{equ:local})
\begin{equation}\label{equ:local0}
\begin{cases}
\widehat{F}^{cs}_{m_0} ( x^{cs}, f^0_{\widehat{u}(\widehat{m}_0)} ( x_{\widehat{m}_0} (x^{cs}) ) ) = x_{\widehat{m}_0} (x^{cs}), \\
\widehat{G}^{cs}_{m_0} ( x^{cs}, f^0_{\widehat{u}(\widehat{m}_0)} ( x_{\widehat{m}_0} (x^{cs}) ) ) = \widetilde{f}^0_{\widehat{m}_0} (x^{cs}),
\end{cases}
\quad x^{cs} \in X^{cs}_{m_0}(\sigma^c_{*}).
\end{equation}

\section{Construction of the local pre-tangent bundle}\label{sub:pre}
Consider the following ``variant'' equations of the above equations \eqref{equ:extension} and \eqref{equ:local0}, i.e., for $x^{cs} \in X^{cs}_{m_0}(\sigma^c_{*})$
\begin{equation}\label{equ:vlocal}
\begin{cases}
D\widehat{F}^{cs}_{m_0} ( x^{cs}, f^0_{\widehat{u}(\widehat{m}_0)} ( x_{\widehat{m}_0} (x^{cs}) ) ) ( \id, K^{0}_{\widehat{u}(\widehat{m}_0)}(x_{\widehat{m}_0} (x^{cs})) R^{0}_{\widehat{m}_0} (x^{cs}) ) = R^{0}_{\widehat{m}_0} (x^{cs}), \\
D\widehat{G}^{cs}_{m_0} ( x^{cs}, f^0_{\widehat{u}(\widehat{m}_0)} ( x_{\widehat{m}_0} (x^{cs}) ) ) ( \id, K^{0}_{\widehat{u}(\widehat{m}_0)}(x_{\widehat{m}_0} (x^{cs})) R^{0}_{\widehat{m}_0} (x^{cs}) ) \triangleq \widetilde{K}^0_{\widehat{m}_0} (x^{cs}),
\end{cases}
\end{equation}
and for $x^{cs} \in X^{cs}_{m_0}(e_0\eta'_1)$,
\begin{equation}\label{equ:vex}
\begin{cases}
DF^1_{\widehat{m}_0} (x^{cs}, \widetilde{f}^0_{\widehat{m}_0}(\widetilde{x}_{\widehat{m}_0}(x^{cs}))) ( \id, \widetilde{K}^0_{\widehat{m}_0} (\widetilde{x}_{\widehat{m}_0}(x^{cs})) \widetilde{R}^{0}_{\widehat{m}_0}(x^{cs}) ) = \widetilde{R}^{0}_{\widehat{m}_0}(x^{cs}),\\
DG^1_{\widehat{m}_0} (x^{cs}, \widetilde{f}^0_{\widehat{m}_0}(\widetilde{x}_{\widehat{m}_0}(x^{cs}))) ( \id, \widetilde{K}^0_{\widehat{m}_0} (\widetilde{x}_{\widehat{m}_0}(x^{cs})) \widetilde{R}^{0}_{\widehat{m}_0}(x^{cs}) ) = K^0_{\widehat{m}_0} (x^{cs}),
\end{cases}
\end{equation}
where $ m_0 = \phi(\widehat{m}_0) $.

Define a metric space
\begin{multline*}
E_K = \{  K^0: \Upsilon^{cs} \to \Upsilon^{u} \text{ is a vector bundle map over } f^0: \\
|K^0_{\widehat{m}_0}(x)| \leq \mu_1(\widehat{m}_0), \forall x \in X^{cs}_{\phi(\widehat{m}_0)} (\sigma_0), K^0_{\widehat{m}_0}(\cdot) \text{ is continuous}, \widehat{m}_0 \in \widehat{K}  \},
\end{multline*}
with the metric
\[
d_{K} (K^0, K^{0}{'}) \triangleq \sup_{\widehat{m}_0 \in \widehat{K}} \sup_{x \in X^{cs}_{\phi(\widehat{m}_0)} (\sigma_0)} |K^0_{\widehat{m}_0}(x) - K^0_{\widehat{m}_0}{'}(x)|,
\]
where $ \Upsilon^{cs} $ (resp.\ $ \Upsilon^{u} $) is a vector bundle over $ X^{cs}_{\widehat{K}} (\sigma_{0}) $ (resp.\ $ X^{u}_{\widehat{K}} (\varrho_{*}) $) with fibers $ \Upsilon^{cs}_{(\widehat{m}_0, x)} = X^{cs}_{\phi(\widehat{m}_0)} $ (resp.\ $ \Upsilon^{u}_{(\widehat{m}_0, x)} = X^{u}_{\phi(\widehat{m}_0)} $), and $ f^0 $ is defined by \eqref{equ:f00}.
For $ K^0 \in E_{K} $, we write $ K^0_{\widehat{m}_0}(x) = K^{0}(\widehat{m}_0, x) $.

The following lemma is straightforward.
\begin{lem}
	The metric $ d_{K} $ is well defined and $ (E_{K}, d_{K}) \neq \emptyset $ is complete.
\end{lem}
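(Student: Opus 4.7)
The plan is to establish the three assertions in sequence: finiteness of $d_K$, non-emptiness of $E_K$, and completeness under $d_K$.

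First, I would verify that $d_K$ is well-defined as a metric. Every $K^0 \in E_K$ satisfies the fiberwise operator-norm bound $|K^0_{\widehat{m}_0}(x)| \leq \mu_1(\widehat{m}_0)$, and by observation \eqref{OVI} the function $\mu_1$ is uniformly bounded on $\widehat{K}$ (for example by $\widetilde{\beta} \circ \phi \leq \hat{\beta}$). Consequently, for any $K^0, K^{0\prime} \in E_K$ the pointwise triangle inequality on each fiber and taking the supremum yield $d_K(K^0, K^{0\prime}) \leq 2\hat{\beta} < \infty$. The remaining metric axioms (non-negativity, symmetry, triangle inequality, identity of indiscernibles) are immediate, being inherited from the corresponding properties of the operator norm on each $L(X^{cs}_{\phi(\widehat{m}_0)}, X^u_{\phi(\widehat{m}_0)})$ via the supremum. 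Non-emptiness is immediate by taking the zero section $K^0 \equiv 0$, which trivially satisfies all the defining conditions.

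Next, for completeness, I would take a $d_K$-Cauchy sequence $\{K^n\} \subset E_K$. The uniform-sup definition of $d_K$ implies that for every fixed pair $(\widehat{m}_0, x)$ with $\widehat{m}_0 \in \widehat{K}$ and $x \in X^{cs}_{\phi(\widehat{m}_0)}(\sigma_0)$, the sequence $\{K^n_{\widehat{m}_0}(x)\}$ is Cauchy in the Banach space $L(X^{cs}_{\phi(\widehat{m}_0)}, X^u_{\phi(\widehat{m}_0)})$; this target is indeed Banach because $X^u_{\phi(\widehat{m}_0)}$ is a closed subspace of the Banach space $X$. Let $K^\infty_{\widehat{m}_0}(x)$ denote its operator-norm limit. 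Then I would verify that $K^\infty \in E_K$: (i) each fiber map $K^\infty_{\widehat{m}_0}(x)$ is linear as a norm limit of linear operators; (ii) the uniform bound $|K^\infty_{\widehat{m}_0}(x)| \leq \mu_1(\widehat{m}_0)$ passes through the operator-norm limit; and (iii) continuity of $x \mapsto K^\infty_{\widehat{m}_0}(x)$ follows since the convergence $K^n_{\widehat{m}_0}(\cdot) \to K^\infty_{\widehat{m}_0}(\cdot)$ is uniform in $x$, and the uniform limit of continuous maps is continuous. It then follows from the Cauchy property itself that $d_K(K^n, K^\infty) \to 0$.

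There is no substantive obstacle here; the result is a routine application of the fact that a space of uniformly bounded sections valued in operator spaces over Banach fibers, equipped with the uniform norm, is complete, combined with the elementary observation that $\mu_1$ is uniformly bounded on $\widehat{K}$. The only mildly delicate point is to remember that $X^u_{\phi(\widehat{m}_0)}$ is closed in $X$ (so the space of bounded operators into it is Banach), which is built into the setup via the projections $\Pi^u_{m}$.
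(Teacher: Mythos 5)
Your proof is correct, and it is exactly the routine verification the paper has in mind: the paper states this lemma without proof (declaring it obvious), and your argument — uniform boundedness of $\mu_1$ on $\widehat{K}$ making $d_K$ finite, the zero bundle map giving non-emptiness, and sup-metric completeness of operator-valued sections into the Banach spaces $L(X^{cs}_{\phi(\widehat{m}_0)}, X^u_{\phi(\widehat{m}_0)})$ with the bound and fiberwise continuity passing to the uniform limit — is the standard argument intended.
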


Next, we employ the graph transform method to solve \eqref{equ:vlocal} and \eqref{equ:vex}, and then demonstrate that this solution is indeed the (fiber) derivative of $ f^0 $. The approach used here closely follows that in \cite{Che18a}. Note that $ (D\widehat{F}^{cs}_{m_0}( x^{cs}, x^u ), D\widehat{G}^{cs}_{m_0}( x^{cs}, x^u ) ) $ also satisfies (A$'$) ($ \alpha(m_0) $, $ \lambda_{u}(m_0) $) and (B) ($ \beta'(u(m_0)) $; $ \beta'(m_0) $, $ \lambda_{cs}(m_0) $) (see, e.g., \autoref{lem:c1}).

\emph{(Construction)}. Given $ K^0 \in E_{K} $, since $ \alpha(m_0)\mu_1(\widehat{m}_0) < 1 $, there is a unique 
\[
R^0_{\widehat{m}_0}(x^{cs}) \in L(X^{cs}_{m_0}, X^{cs}_{u(m_0)})
\]
satisfying the first equation in \eqref{equ:vlocal} with $ |R^0_{\widehat{m}_0}(x^{cs})| \leq \lambda_{cs}(m_0) $ by the (B) condition. Moreover, the map $ x^{cs} \mapsto R^0_{\widehat{m}_0}(x^{cs}) $ is continuous. Define $ \widetilde{K}^0_{\widehat{m}_0} (x^{cs}) $ as the second equation in \eqref{equ:vlocal}. By the (B) condition again, we have $ |\widetilde{K}^0_{\widehat{m}_0} (x^{cs})| \leq \beta'(m_0) $. Using $ \widetilde{K}^0_{\widehat{m}_0} (x^{cs}) $, there is a unique 
\[
\widetilde{R}^{0}_{\widehat{m}_0}(x^{cs}) \in L(X^{cs}_{m_0}, X^{cs}_{m_0})
\]
satisfying the first equation in \eqref{equ:vex} for $ x^{cs} \in X^{cs}_{m_0}(e_0\eta'_1) $, since 
\[
|D_2F^1_{\widehat{m}_0} (x^{cs}, \widetilde{f}^0_{\widehat{m}_0}(\widetilde{x}_{\widehat{m}_0}(x^{cs})))|
\]
can be made sufficiently small. Furthermore, the map $ x^{cs} \mapsto \widetilde{R}^{0}_{\widehat{m}_0}(x^{cs}) $ is continuous. We define $ \widehat{K}^{0}_{\widehat{m}_0}(x^{cs}) $ as the left-hand side of the second equation in \eqref{equ:vex}, which is continuous for $ x^{cs} \in X^{cs}_{m_0}(e_0\eta'_1) $, i.e.,
\[
\widehat{K}^{0}_{\widehat{m}_0}(x^{cs}) = DG^1_{\widehat{m}_0} (x^{cs}, \widetilde{f}^0_{\widehat{m}_0}(\widetilde{x}_{\widehat{m}_0}(x^{cs}))) ( \id, \widetilde{K}^0_{\widehat{m}_0} (\widetilde{x}_{\widehat{m}_0}(x^{cs})) \widetilde{R}^{0}_{\widehat{m}_0}(x^{cs}) ).
\]

\begin{lem}\label{lem:KK0}
	$ \widehat{K}^{0}_{\widehat{m}_0}(x^{cs}) $ can be continuously extended to all of $ X^{cs}_{m_0}(\sigma_{0}) $ by setting 
	\[
	\widehat{K}^{0}_{\widehat{m}_0}(x^{cs}) = Dg^{cs}_{\widehat{m}_0}(x^{cs}), \quad \text{for} \quad x^{cs} \in X^{cs}_{m_0}(\sigma_{0}) \setminus X^{cs}_{m_0}(e_0\eta'_1) ,
	\]
	where $ g^{cs}_{\widehat{m}_0} $ is defined by \eqref{equ:cslocal}. Moreover, $ |\widehat{K}^{0}_{\widehat{m}_0}(x^{cs})| \leq \mu_1(\widehat{m}_0) $, $ x^{cs} \in X^{cs}_{m_0}(\sigma_{0}) $.
\end{lem}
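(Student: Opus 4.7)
The argument has two parts: continuity of the gluing at $|x^{cs}| = e_0\eta'_1$, and the bound $|\widehat{K}^{0}_{\widehat{m}_0}| \le \mu_1(\widehat{m}_0)$ on the whole domain. My plan is to settle continuity by a direct chain-rule computation valid on the overlap strip where the cutoff $\Psi$ is inactive, and to obtain the bound by chaining linearised (B)-type estimates on the interior piece with Lemma~\ref{lem:lip2} on the exterior piece.

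The key to continuity is that the choice $\eta'_1 > e_0^{-1}\eta''_1$ (made just before \eqref{equ:extension}) guarantees $\Psi = 0$ on the overlap strip $\eta''_1 < |x^{cs}| < e_0\eta'_1$, so in this strip $f^{0}_{\widehat{m}_0}$ and $\widetilde{f}^{0}_{\widehat{m}_0}\circ\widetilde{x}_{\widehat{m}_0}$ both collapse to $g^{cs}_{\widehat{m}_0}$, and the formulas defining $F^{0}_{\widehat{m}_0}, G^{0}_{\widehat{m}_0}$ reduce to $\Pi^{cs}_{m_0}(\overline{m}+\overline{x}^s-m_0)$ and $\Pi^{u}_{m_0}(\overline{m}+\overline{x}^s-m_0)$. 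In view of \eqref{equ:cslocal} the latter satisfy the functional identity $G^{0}_{\widehat{m}_0} = g^{cs}_{\widehat{m}_0}\circ F^{0}_{\widehat{m}_0}$. Writing $G^{1}_{\widehat{m}_0} = G^{0}_{\widehat{m}_0}\circ(F^{1}_{\widehat{m}_0},\mathrm{pr}_u)$ and differentiating, I get
\begin{equation*}
\widehat{K}^{0}_{\widehat{m}_0}(x^{cs}) = Dg^{cs}_{\widehat{m}_0}(F^{0}_{\widehat{m}_0})\bigl(D_1F^{0}_{\widehat{m}_0}\widetilde{R}^{0}_{\widehat{m}_0} + D_2F^{0}_{\widehat{m}_0}\widetilde{K}^{0}_{\widehat{m}_0}\widetilde{R}^{0}_{\widehat{m}_0}\bigr).
\end{equation*}
The first equation of \eqref{equ:vex} combined with the identities $D_1F^{0}D_1F^{1} = \id$ and $D_1F^{0}D_2F^{1}+D_2F^{0} = 0$ (obtained by differentiating $F^{0}(F^{1}(\cdot,x^u),x^u) = \cdot$) forces the bracketed operator to equal $\id$, so $\widehat{K}^{0}_{\widehat{m}_0} = Dg^{cs}_{\widehat{m}_0}$ throughout the overlap; continuity across $|x^{cs}| = e_0\eta'_1$ is then automatic.

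For the bound, on the exterior piece $|x^{cs}| \ge e_0\eta'_1$, Lemma~\ref{lem:lip2} applied to $\Sigma$ (which is $\chi_*$-Lip in the $u$-direction) together with Corollary~\ref{lem:represent2} yields $\lip g^{cs}_{\widehat{m}_0} \le (1+\chi_*)\chi_* + \chi_*$, comfortably below $\mu_1(\widehat{m}_0) \approx (1+\chi_*)\varsigma\widetilde{\beta}'(u(m_0)) + \chi_*$. On the interior piece I chain four linearised estimates inherited from the (A$'$)(B) condition of \textbf{(A3)(a)} (cf.\ Lemma~\ref{lem:c1}): the fixed-point equation for $R^{0}_{\widehat{m}_0}$ in \eqref{equ:vlocal} is contractive because $\alpha(m_0)\mu_1(\widehat{u}(\widehat{m}_0)) < 1$ by observation \eqref{OVII}, giving $|R^{0}_{\widehat{m}_0}| \le \lambda_{cs}(m_0)$; the (B)-half then forces $|\widetilde{K}^{0}_{\widehat{m}_0}| \le \beta'(u(m_0))$; the fixed-point equation for $\widetilde{R}^{0}_{\widehat{m}_0}$ in \eqref{equ:vex} is contractive thanks to the smallness of $|D_2F^{1}|$ from Lemma~\ref{lem:est000}(1), producing $|\widetilde{R}^{0}_{\widehat{m}_0}|$ close to $1$; finally $\widehat{K}^{0} = DG^{0}(\widetilde{R}^{0},\widetilde{K}^{0}\widetilde{R}^{0})$, and Lemma~\ref{lem:est000}(2) together with $|\widetilde{f}^{0}_{\widehat{m}_0}| \le e_1\varrho_*$ yields
\begin{equation*}
|\widehat{K}^{0}_{\widehat{m}_0}(x^{cs})| \le \frac{\varpi^{*}_1 C_1(\varsigma'_1 - 1) e_1\varrho_*}{\eta_1(1-\varpi^{*}_0)} + \frac{\varpi^{*}_1\beta'(u(m_0))}{1-\varpi^{*}_0}.
\end{equation*}
The \textbf{Choice of constants} forces the first summand negligible, and the choice $\varsigma = C_1(\varsigma_1-1)+1$ (made possible precisely by imposing $\varsigma_0 \ge C_1+1$ in \autoref{thm:smooth}) leaves enough slack so that the right-hand side stays below $\mu_1(\widehat{m}_0)$.

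The hard part is the chain-rule reduction in the continuity step: the two ``variant'' linear fixed-point systems \eqref{equ:vlocal}--\eqref{equ:vex} were crafted so that in the $\Psi = 0$ regime they exactly reproduce $Dg^{cs}_{\widehat{m}_0}$, but extracting this requires the algebraic cancellation $D_1F^{0}D_2F^{1} + D_2F^{0} = 0$ to be applied in a non-trivial way inside a composition. Once this is secured the bound is bookkeeping, and the $\mu_1$-bound at $|x^{cs}| = e_0\eta'_1$ follows from either formula by continuity; the absorbed factor of $C_1$ on the right-hand side is precisely the quantitative trace left by the non-smooth cutoff $\Psi$, and explains why \textbf{(A4)(iv)} enters \autoref{thm:smooth} through the stronger angle condition $\varsigma_0 \ge C_1+1$.
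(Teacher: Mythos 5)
Your continuity step is fine and is essentially the paper's own argument in expanded form: on the strip $\eta''_1<|x^{cs}|<e_0\eta'_1$ the cutoff is inactive, so $G^1_{\widehat m_0}(x^{cs},x^u)=g^{cs}_{\widehat m_0}(x^{cs})$ independently of $x^u$; the paper simply reads off $DG^1_{\widehat m_0}=(Dg^{cs}_{\widehat m_0},0)$ and substitutes into the defining formula for $\widehat K^0_{\widehat m_0}$, whereas you recover the same identity via $G^0_{\widehat m_0}=g^{cs}_{\widehat m_0}\circ F^0_{\widehat m_0}$ and the inverse-function cancellations $D_1F^0D_1F^1=\id$, $D_1F^0D_2F^1+D_2F^0=0$ together with the first equation of \eqref{equ:vex}. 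Both routes are correct and rely on the same choice $\eta''_1<e_0\eta'_1$.

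The bound $|\widehat K^{0}_{\widehat m_0}(x^{cs})|\le\mu_1(\widehat m_0)$, however, has a genuine gap in \textbf{case (1)}. You estimate $|x^u|=|\widetilde f^0_{\widehat m_0}(\widetilde x_{\widehat m_0}(x^{cs}))|\le e_1\varrho_*$ (the \autoref{lem:estR} bound) and then assert that the resulting summand $\varpi^*_1C_1(\varsigma'_1-1)e_1\varrho_*/\eta_1$ is made negligible by the \textbf{Choice of constants}. That is true only in \textbf{case (2)}, where $\varrho_*/\sigma_*\to 0$. In \textbf{case (1)} one has $\sigma_*=2\hat\chi_*\epsilon_*$ and $\varrho_*=e^{-1}_1(\eta_0+\max\{2\hat\beta,1\}\hat\chi_*\epsilon_*)$, while $\eta_1\approx\sigma^1_*=O(\hat\chi_*\epsilon_*)$; hence $\varrho_*/\eta_1$ is bounded \emph{below} by a constant of order $\hat\beta\max\{\sup_m\lambda_{cs}(m),1\}$, so your first summand is a fixed constant of order $C_1\hat\beta$ that is in general \emph{not} dominated by $\mu_1(\widehat m_0)\approx\varsigma\widetilde\beta'(m_0)$ (only $\inf_m\widetilde\beta'(m)>0$ is available, and $\hat\beta\ge\sup_m\widetilde\beta(m)$ can be much larger). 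The paper's proof avoids this precisely by using, in case (1), the sharper estimate $|x^u|\le K_1\eta+\beta'(m_0)\sigma^c_*$ coming from $\lip\widetilde f^0_{\widehat m_0}\le\beta'(m_0)$ and $|\widetilde f^0_{\widehat m_0}(0)|\le K_1\eta$: since $\sigma^c_*/\eta_1=O(1)$ and $\eta/\eta_1=O_{\epsilon_*}(1)$, the cutoff then contributes a term proportional to $\beta'(m_0)$ itself, giving $P_{1,*}\approx\varpi^*_1(C_1(\varsigma'_1-1)+1)\beta'(m_0)$, which is absorbed into $\mu_1(\widehat m_0)\ge\varsigma\widetilde\beta'(m_0)$ exactly because $\varsigma=C_1(\varsigma_1-1)+1$ with $\varsigma'_1<\varsigma_1$ — this is where $\varsigma_0\ge C_1+1$ is really used. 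Your closing remark attributes the need for $\varsigma_0\ge C_1+1$ to an absorbed factor $C_1\beta'$, yet your displayed bound contains no such term (its main term is just $\varpi^*_1\beta'/(1-\varpi^*_0)$), so the accounting is internally inconsistent; a minor further slip is that the (B) condition gives $|\widetilde K^0_{\widehat m_0}(x^{cs})|\le\beta'(m_0)$, not $\beta'(u(m_0))$.
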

\begin{proof}
	The first statement holds because for $ \eta''_1 =  \max\{ e_2, c_2 \} \eta_1 $ (by the choice of $ \eta'_1 $ such that $ \eta''_1 < e_0 \eta'_1 $), we have $ f^0_{\widehat{m}_0} (x^{cs}) = g^0_{\widehat{m}_0} (x^{cs}) $ when $ |x^{cs}| > \eta''_1 $ (i.e., $ \Psi(\widehat{m}, \overline{x}^s) = 0 $). This implies that for any $ x^{u} $, $ G^0_{\widehat{m}_0} (x^{cs}, x^u) = g^0_{\widehat{m}_0} (x^{cs}) $. So, $ DG^0_{\widehat{m}_0} (x^{cs}, x^u) = (Dg^{cs}_{\widehat{m}_0}(x^{cs}), 0) $, yielding $ \widehat{K}^{0}_{\widehat{m}_0}(x^{cs}) = Dg^{cs}_{\widehat{m}_0}(x^{cs}) $ for $ |x^{cs}| > \eta''_1 $.

	Now consider the second statement.
	Let $ x^{u} = \widetilde{f}^0_{\widehat{m}_0}(\widetilde{x}_{\widehat{m}_0}(x^{cs})) $, $ \widetilde{x}^{cs} = \widetilde{x}_{\widehat{m}_0}(x^{cs}) $, and
	\[
	\tilde{x}^{cs}_2 = DF^1_{\widehat{m}_0} (x^{cs}, x^{u}) (\tilde{x}^{cs}_1, \tilde{x}^u_2), \quad \tilde{x}^{u}_1 = DG^1_{\widehat{m}_0} (x^{cs}, x^{u}) (\tilde{x}^{cs}_1, \tilde{x}^u_2), \quad x^{cs} \in X^{cs}_{m_0}(e_0\eta'_1).
	\]
	In case (1), since $ \lip \widetilde{f}^0_{\widehat{m}_0}(\cdot) \leq \beta'(m_0) $, we have $ |x^{u}| \leq K_1\eta + \beta'(m_0) \sigma^c_{*} $; in case (2), by \autoref{lem:estR}, we have $ |x^{u}| \leq e_1\varrho_{*} $.

	By \autoref{lem:est000}, if $ |\tilde{x}^u_2| \leq \beta'(m_0) |\tilde{x}^{cs}_2| $, then
	\[
	|\tilde{x}^{cs}_2| \leq \varpi^*_1|\tilde{x}^{cs}_1| + \varpi^*_0|\tilde{x}^u_2| \quad \Rightarrow \quad |\tilde{x}^{cs}_2| \leq \varpi^*_1|\tilde{x}^{cs}_1|,
	\]
	and
	\begin{align*}
	|\tilde{x}^{u}_1| & = |D_{x^{cs}} G^0_{\widehat{m}_0} (\widetilde{x}^{cs}, x^{u}) D_{x^{cs}} F^1_{\widehat{m}_0} (x^{cs}, x^{u}) \tilde{x}^{cs}_1 + D_{x^u} G^0_{\widehat{m}_0} (\widetilde{x}^{cs}, x^{u}) \tilde{x}^u_2| \\
	& \leq P_{1,*} |\tilde{x}^{cs}_1| \leq \mu_1(\widehat{m}_0) |\tilde{x}^{cs}_1|,
	\end{align*}
	provided that $ \epsilon_{*}, \chi(\epsilon_{*}) $ are small and $ \eta_1 $ is sufficiently close to $ \sigma^1_{*} $. The last inequality holds because $ \epsilon_{0,*}\eta_0 / \eta_1 \to 0 $, $ \varpi^*_1 \to 1 $ as $ \epsilon_{*}, \chi(\epsilon_{*}) \to 0 $, and $ \varsigma'_1 < \varsigma $ (in case (1)) or $ \varrho_{*} / \eta_1 \to 0 $ (in case (2)). Here,
	\[
	P_{1,*} = \begin{cases}
	\varpi^*_1K_1C_1(\varsigma'_1 - 1)\epsilon_{0,*}\eta_0 / \eta_1 + \varpi^*_1 (C_1(\varsigma'_1 - 1) + 1) \beta'(m_0), & \text{case (1)},\\
	\varpi^*_1(2 C_1e_1\varrho_{*} / \eta_1 + 1) \beta'(m_0), & \text{case (2)}.
	\end{cases}
	\]
	In particular, due to $ |\widetilde{K}^0_{\widehat{m}_0} (x^{cs})| \leq \beta'(m_0) $, one has $ |\widehat{K}^{0}_{\widehat{m}_0}(x^{cs})| \leq \mu_1(\widehat{m}_0) $ for all $ x^{cs} \in X^{cs}_{m_0}(e_0\eta'_1) $, and so for all $ x^{cs} \in X^{cs}_{m_0}(\sigma_{0}) $.
	The proof is complete.
\end{proof}

Define the \emph{graph transform} to be
\[
\varGamma_0: E_{K} \to E_{K}, \quad K^0 \mapsto \widehat{K}^0.
\]
\autoref{lem:KK0} shows that this graph transform is well defined.

\begin{lem}\label{lem:lipest}
	$ \lip \varGamma_0 \leq \varpi^*_1 \sup_{\widehat{m}_0 \in \widehat{K}} \frac{ \lambda_{cs}(\phi(\widehat{m}_0)) \lambda_{u}(\phi(\widehat{m}_0)) }{ 1 - \alpha(\phi(\widehat{m}_0)) \mu_1(\widehat{u}(\widehat{m}_0)) } < 1 $.
\end{lem}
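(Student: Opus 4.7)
The plan is to track how a perturbation $K^0 \mapsto K^{0\prime}$ propagates through the four steps of the construction of $\widehat{K}^0 = \varGamma_0(K^0)$ encoded in \eqref{equ:vlocal} and \eqref{equ:vex}. Denote by $R^0, \widetilde{K}^0, \widetilde{R}^0, \widehat{K}^0$ and their primed versions the intermediates arising from $K^0, K^{0\prime}$ respectively. The crucial observation is that the derivatives of $\widehat{F}^{cs}, \widehat{G}^{cs}, F^1, G^1$ appearing in these equations are all evaluated at base points $\bigl(x^{cs}, f^0_{\widehat u(\widehat m_0)}(x_{\widehat m_0}(x^{cs}))\bigr)$ or $\bigl(x^{cs}, \widetilde f^0_{\widehat m_0}(\widetilde x_{\widehat m_0}(x^{cs}))\bigr)$ that depend on the fixed point $f^0$ (hence on $x^{cs}$) but \emph{not} on $K^0$; this freezing eliminates one source of coupling.

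First I will subtract the two versions of the first line of \eqref{equ:vlocal} and rearrange to obtain
\[
(I - D_2\widehat{F}^{cs}_{m_0}\,K^{0\prime}_{\widehat u(\widehat m_0)})(R^0 - R^{0\prime}) = D_2\widehat{F}^{cs}_{m_0}\,(K^0_{\widehat u(\widehat m_0)} - K^{0\prime}_{\widehat u(\widehat m_0)})\,R^0 .
\]
Using $|D_2\widehat{F}^{cs}_{m_0}|\le\alpha(m_0)$ from (A$'$) condition, $|K^{0\prime}|\le\mu_1(\widehat u(\widehat m_0))$, $|R^0|\le\lambda_{cs}(m_0)$, together with $\alpha(m_0)\mu_1(\widehat u(\widehat m_0))<1$ (observation \eqref{OVII}), I get
\[
|R^0_{\widehat m_0}(x^{cs}) - R^{0\prime}_{\widehat m_0}(x^{cs})| \le \frac{\alpha(m_0)\lambda_{cs}(m_0)}{1-\alpha(m_0)\mu_1(\widehat u(\widehat m_0))}\, d_K(K^0,K^{0\prime}) .
\]
Substituting into the difference of the second lines of \eqref{equ:vlocal} and using $|D_2\widehat{G}^{cs}_{m_0}|\le\lambda_u(m_0)$ yields the central estimate
\[
|\widetilde K^0_{\widehat m_0}(x^{cs}) - \widetilde K^{0\prime}_{\widehat m_0}(x^{cs})| \le \frac{\lambda_u(m_0)\lambda_{cs}(m_0)}{1-\alpha(m_0)\mu_1(\widehat u(\widehat m_0))}\, d_K(K^0,K^{0\prime}) ,
\]
which is precisely the target contraction factor.

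Next I handle \eqref{equ:vex}. From \autoref{lem:est000} and the inverse-function identity $D_2F^1 = -(D_1F^0)^{-1}D_2F^0$ one has $|D_2F^1|\le\varpi^*_0$ and $|D_1F^1|\le\varpi^*_1$; similarly $|D_2G^0|\le\varpi^*_1$ while $|D_1G^0|\le\varpi^*_1 C_1(\varsigma'_1-1)\eta_1^{-1}|x^u|$ which is a bounded quantity by the choice of $\sigma_*,\varrho_*$ and $\eta_1$. Subtracting the first lines of \eqref{equ:vex} and inverting $(I-D_2F^1\,\widetilde K^{0\prime})$ gives $|\widetilde R^0 - \widetilde R^{0\prime}|\le \varpi^*_0\,|\widetilde K^0 - \widetilde K^{0\prime}|$. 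Because the base point is common to the $K^0$ and $K^{0\prime}$ versions, $D_1G^1$ drops out of the subtraction of the second lines; expanding $DG^1(\id,\widetilde K^0\widetilde R^0) = D_1G^1 + D_2G^1\,\widetilde K^0\widetilde R^0$ and using $D_2G^1 = D_1G^0\,D_2F^1 + D_2G^0$ together with $|\widetilde R^0|\le\varpi^*_1$, $|\widetilde K^{0\prime}|\le\beta'(m_0)$, leads to
\[
|\widehat K^0_{\widehat m_0}(x^{cs}) - \widehat K^{0\prime}_{\widehat m_0}(x^{cs})| \le \varpi^*_1\,|\widetilde K^0 - \widetilde K^{0\prime}| .
\]
Combining with the Step 3 bound and taking the supremum over $(\widehat m_0,x^{cs})$ yields $\lip\varGamma_0 \le \varpi^*_1\sup_{\widehat m_0}\lambda_{cs}\lambda_u/(1-\alpha\mu_1)$, which is strictly less than $1$ by the refined gap condition imposed just before \autoref{lem:KK0}.

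The main technical obstacle will be controlling $|D_2G^1|$ by a constant $\varpi^*_1$ close to $1$ rather than something growing like $C_1/\eta_1$: \autoref{lem:est000} only bounds $|D_1G^0|$ by the large factor $\varpi^*_1 C_1(\varsigma'_1-1)\eta_1^{-1}|x^u|$. The resolution is to use the smallness of $|x^u|$ compared with $\eta_1$ afforded by the constants chosen in \autoref{sub:preparation} (in case (1), $|x^u|\le K_1\eta + \beta'(m_0)\sigma^c_*$ with $\sigma^c_*/\eta_1$ bounded and $\eta/\eta_1\to 0$; in case (2), $|x^u|\le e_1\varrho_*$ with $\varrho_*/\eta_1\to 0$) together with the factor $|D_2F^1|\le\varpi^*_0$ multiplying $D_1G^0$, so that the product $|D_1G^0||D_2F^1|$ is $O_{\epsilon_*}(1)$ and is absorbed into $\varpi^*_1$.
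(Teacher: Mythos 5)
Your proposal is correct and follows essentially the same route as the paper's proof: subtract the two copies of \eqref{equ:vlocal} and \eqref{equ:vex}, use the (A$'$)/(B) bounds on $D\widehat{F}^{cs}_{m_0}, D\widehat{G}^{cs}_{m_0}$ together with the near-identity bounds on $DF^1_{\widehat{m}_0}, DG^1_{\widehat{m}_0}$ from \autoref{lem:est000} (the potentially large $D_1G^0$ contribution being harmless exactly as you say, since the base points are common to both copies and the remaining occurrence is multiplied by the small $|D_2F^1|\le\varpi^*_0$), and chain the two estimates with $\alpha(m_0)\mu_1(\widehat{u}(\widehat{m}_0))<1$. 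The only cosmetic differences are that you invert $(I - D_2\widehat{F}^{cs}K^{0\prime})$ to isolate $R^0-R^{0\prime}$ where the paper bounds $|K^0R^0-K^1R^1|$ self-referentially, and you should add the one-line remark that for $x^{cs}\in X^{cs}_{m_0}(\sigma_0)\setminus X^{cs}_{m_0}(e_0\eta'_1)$ both images equal $Dg^{cs}_{\widehat{m}_0}(x^{cs})$, so the supremum defining $\lip\varGamma_0$ is controlled by the inner region alone.
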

\begin{proof}
	Let $ \widehat{K}^0 = \varGamma_0 K^0 $, $ \widehat{K}^1 = \varGamma_0 K^1 $, and $ m_0 = \phi(\widehat{m}_0) $ for $ \widehat{m}_0 \in \widehat{K} $. Denote by $ R^0 $, $ R^1 $ the operators associated with the construction of $ \widehat{K}^0 $, $ \widehat{K}^1 $ in \eqref{equ:vlocal}, respectively, and similarly $ \widetilde{R}^0 $, $ \widetilde{R}^1 $ in \eqref{equ:vex}. Since $ \widehat{K}^0_{\widehat{m}_0}(x^{cs}) = \widehat{K}^1_{\widehat{m}_0}(x^{cs}) $ for $ x^{cs} \in X^{cs}_{m_0}(\sigma_{0}) \setminus X^{cs}_{m_0}(e_0\eta'_1) $, we only need to consider $ x^{cs} \in X^{cs}_{m_0}(e_0\eta'_1) $. From \eqref{equ:vlocal} and the (A$ ' $) condition, for $ x^{cs} \in X^{cs}_{m_0}(\sigma^c_{*}) $, we see
	\begin{align*}
	&~ |R^0_{\widehat{m}_0}(x^{cs}) - R^1_{\widehat{m}_0}(x^{cs})| \\
	\leq &~ \alpha(m_0)| K^{0}_{\widehat{u}(\widehat{m}_0)}(x_{\widehat{m}_0} (x^{cs})) R^0_{\widehat{m}_0}(x^{cs}) - K^{1}_{\widehat{u}(\widehat{m}_0)}(x_{\widehat{m}_0} (x^{cs})) R^1_{\widehat{m}_0}(x^{cs}) |,
\end{align*}
	and so
	\begin{align*}
	&~ | K^{0}_{\widehat{u}(\widehat{m}_0)}(x_{\widehat{m}_0} (x^{cs})) R^0_{\widehat{m}_0}(x^{cs}) - K^{1}_{\widehat{u}(\widehat{m}_0)}(x_{\widehat{m}_0} (x^{cs})) R^1_{\widehat{m}_0}(x^{cs}) | \\
	\leq &~ | K^{0}_{\widehat{u}(\widehat{m}_0)}(x_{\widehat{m}_0} (x^{cs})) R^0_{\widehat{m}_0}(x^{cs}) - K^{1}_{\widehat{u}(\widehat{m}_0)}(x_{\widehat{m}_0} (x^{cs})) R^0_{\widehat{m}_0}(x^{cs}) | \\
	&~ + \mu_1(\widehat{u}(\widehat{m}_0)) |R^0_{\widehat{m}_0}(x^{cs}) - R^1_{\widehat{m}_0}(x^{cs})| \\
	\leq & ~ \frac{1}{1 - \alpha(m_0) \mu_1(\widehat{u}(\widehat{m}_0))} | K^{0}_{\widehat{u}(\widehat{m}_0)}(x_{\widehat{m}_0} (x^{cs})) R^0_{\widehat{m}_0}(x^{cs}) - K^{1}_{\widehat{u}(\widehat{m}_0)}(x_{\widehat{m}_0} (x^{cs})) R^0_{\widehat{m}_0}(x^{cs}) |,
	\end{align*}
	yielding
	\begin{align*}
	&~ |\widetilde{K}^0_{\widehat{m}_0}(x^{cs}) - \widetilde{K}^1_{\widehat{m}_0}(x^{cs})| \\
	\leq &~ \lambda_{u} (m_0) | K^{0}_{\widehat{u}(\widehat{m}_0)}(x_{\widehat{m}_0} (x^{cs})) R^0_{\widehat{m}_0}(x^{cs}) - K^{1}_{\widehat{u}(\widehat{m}_0)}(x_{\widehat{m}_0} (x^{cs})) R^1_{\widehat{m}_0}(x^{cs}) | \\
	\leq &~ \frac{\lambda_{u} (m_0)}{1 - \alpha(m_0) \mu_1(\widehat{u}(\widehat{m}_0))} | K^{0}_{\widehat{u}(\widehat{m}_0)}(x_{\widehat{m}_0} (x^{cs})) R^0_{\widehat{m}_0}(x^{cs}) - K^{1}_{\widehat{u}(\widehat{m}_0)}(x_{\widehat{m}_0} (x^{cs})) R^0_{\widehat{m}_0}(x^{cs}) | \\
	\leq &~ \frac{\lambda_{cs}(m_0)\lambda_{u} (m_0)}{1 - \alpha(m_0) \mu_1(\widehat{u}(\widehat{m}_0))} | K^{0}_{\widehat{u}(\widehat{m}_0)}(x_{\widehat{m}_0} (x^{cs})) - K^{1}_{\widehat{u}(\widehat{m}_0)}(x_{\widehat{m}_0} (x^{cs})) |.
	\end{align*}
	A similar argument applied to \eqref{equ:vex} shows that for $ x^{cs} \in X^{cs}_{m_0}(e_0\eta'_1) $,
	\[
	|\widehat{K}^0_{\widehat{m}_0}(x^{cs}) - \widehat{K}^1_{\widehat{m}_0}(x^{cs})| \leq \varpi^*_1 | \widetilde{K}^{0}_{\widehat{u}(\widehat{m}_0)}(\widetilde{x}_{\widehat{m}_0} (x^{cs})) - \widetilde{K}^{1}_{\widehat{u}(\widehat{m}_0)}(\widetilde{x}_{\widehat{m}_0} (x^{cs})) |.
	\]
	By \autoref{lem:est000}, $ (DF^1_{\widehat{m}_0} (x^{cs}, x^{u}), DG^1_{\widehat{m}_0} (x^{cs}, x^{u})) $ satisfies the (A) ($ \varpi^*_0{'} $; $ \varpi^*_0 $, $ \varpi^*_1 $) and (B) ($ \beta'(m_0) $; $ \mu_1(\widehat{m}_0) $, $ \varpi^*_1 $) condition, where $ \varpi^*_0{'} < \varpi^*_0 $, when $ |x^{u}| \leq K_1\eta + \beta'(m_0) \sigma^c_{*} $ in case (1) or $ |x^{u}| \leq e_1\varrho_{*} $ in case (2), with $ \eta_1 $ sufficiently close to $ \sigma^1_{*} $ and $ \epsilon_{*}, \chi(\epsilon_{*}) $ small. Therefore, for $ x^{cs} \in X^{cs}_{m_0}(e_0\eta'_1) $, we get
	\begin{multline*}
		|\widehat{K}^0_{\widehat{m}_0}(x^{cs}) - \widehat{K}^1_{\widehat{m}_0}(x^{cs})| \\
		\leq \varpi^*_1 \frac{\lambda_{cs}(m_0)\lambda_{u} (m_0)}{1 - \alpha(m_0) \mu_1(\widehat{u}(\widehat{m}_0))} | K^{0}_{\widehat{u}(\widehat{m}_0)}(x_{\widehat{m}_0} (\widetilde{x}_{\widehat{m}_0} (x^{cs}))) - K^{1}_{\widehat{u}(\widehat{m}_0)}(x_{\widehat{m}_0} (\widetilde{x}_{\widehat{m}_0} (x^{cs}))) |,
	\end{multline*}
	and consequently $ \lip \varGamma_0 \leq \varpi^*_1 \sup_{\widehat{m}_0 \in \widehat{K}} \frac{ \lambda_{cs}(\phi(\widehat{m}_0)) \lambda_{u}(\phi(\widehat{m}_0)) }{ 1 - \alpha(\phi(\widehat{m}_0)) \mu_1(\widehat{u}(\widehat{m}_0)) } < 1 $. The proof is complete.
\end{proof}

By \autoref{lem:lipest}, there is a unique $ K^0 \in E_{K} $ such that $ \varGamma_0 K^0 = K^0 $, i.e., \eqref{equ:vlocal} and \eqref{equ:vex} hold.

\section{$ C^1 $ smoothness of the center-stable manifold and proof of \autoref{thm:smooth}}\label{sub:C1smooth}
We now show

\begin{lem}\label{lem:C1smooth}
	$ f^0_{\widehat{m}_0}(\cdot) \in C^1 $ and $ D_{x^{cs}} f^0_{\widehat{m}_0}(x^{cs}) = K^0_{\widehat{m}_0}(x^{cs}) $ for each $ \widehat{m}_0 \in \widehat{K} $ and $ x^{cs} \in X^{cs}_{\phi(\widehat{m}_0)} (\sigma_0) $.
\end{lem}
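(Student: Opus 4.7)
The plan is to apply the Hirsch--Pugh fiber contraction principle. First, I would introduce a parametrized version of the graph transform on the derivative level: for every $h \in \varSigma_{lip,\mu,K_1}$ with local representations $f^h_{\widehat{m}_0}$, define $\Gamma_0^h$ on an obvious analogue $E_K^h$ of $E_K$ (with $h$ replacing $h_0$) by the same construction as in \eqref{equ:vlocal}--\eqref{equ:vex}, using $f^h_{\widehat{m}_0}$ in place of $f^0_{\widehat{m}_0}$ and the corresponding extension maps $F^{1,h}_{\widehat{m}_0}, G^{1,h}_{\widehat{m}_0}$ built from $h$ as in \eqref{equ:localcut-off}. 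The proofs of \autoref{lem:KK0} and \autoref{lem:lipest} only use the Lipschitz bounds $\lip f^h_{\widehat{m}_0} \leq \mu_1(\widehat{m}_0)$ and the size bounds built into the definition of $\varSigma_{lip,\mu,K_1}$, hence they carry over verbatim and show that each $\Gamma_0^h$ is a well-defined contraction whose Lipschitz constant admits an upper bound $\lambda_0 < 1$ independent of $h$.

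Consider now the skew-product $T(h, K) = (\varGamma(h), \Gamma_0^h(K))$ on the bundle $\mathcal{E} = \bigsqcup_{h} \{h\} \times E_K^h$. Since $\varGamma$ is a contraction on $(\varSigma_{lip,\mu,K_1}, d_1)$ by \autoref{lem:contractive} and the fibers are uniformly contracted by $\Gamma_0^h$ at rate $\lambda_0$, the fiber contraction theorem yields a unique globally attracting fixed point, which, evaluated on the fiber over $h_0$, is precisely the pair $(h_0, K^0)$ produced in Section~6.3.

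Next I would produce a $C^1$ starting point: take $h^{(0)} \equiv 0$ on $X^s_{\widehat{\Sigma}}(\sigma_*)$, which lies in $\varSigma_{lip,\mu,K_1}$, is $C^1$, and has $Dh^{(0)} \equiv 0 \in E_K^{h^{(0)}}$. Under \textbf{(A4) (i) (iii)}, every ingredient in the construction of $\varGamma(h)$ is $C^1$: the maps $\widehat{F}^{cs}_m, \widehat{G}^{cs}_m$ are $C^1$, the tubular-neighborhood charts $\Phi_{m_0,\gamma}$ are $C^1$ (because $m\mapsto \Pi^\kappa_m$ is $C^1$), and the bump function $\Psi$ is $C^1$ by hypothesis. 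Applying the implicit function theorem to the defining equation $\widehat{F}^{cs}_{m_0}(x^{cs}, f^h_{\widehat{u}(\widehat{m}_0)}(x_{\widehat{m}_0}(x^{cs}))) = x_{\widehat{m}_0}(x^{cs})$, and tracking the truncation step through $F^{1,h}, G^{1,h}$, shows that $\varGamma$ preserves $C^1$-regularity and that, when $h \in C^1$ with $Dh = K$, we have $D(\varGamma(h)) = \Gamma_0^h(K)$. Hence the iterates $h^{(n)} = \varGamma^n(h^{(0)})$ are all $C^1$ with $Dh^{(n)} = K^{(n)}$, where $K^{(n+1)} = \Gamma_0^{h^{(n)}}(K^{(n)})$. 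By the fiber contraction theorem, $h^{(n)} \to h_0$ in $d_1$ and $K^{(n)} \to K^0$ in $d_K$. Localizing, the representations $f^{h^{(n)}}_{\widehat{m}_0}$ converge uniformly on $X^{cs}_{\phi(\widehat{m}_0)}(\sigma_0)$ to $f^0_{\widehat{m}_0}$, and their derivatives $D f^{h^{(n)}}_{\widehat{m}_0}$ converge uniformly to $K^0_{\widehat{m}_0}$. The classical theorem on uniform limits of $C^1$ functions then gives $f^0_{\widehat{m}_0} \in C^1$ with $D_{x^{cs}} f^0_{\widehat{m}_0} = K^0_{\widehat{m}_0}$, which is the claim.

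The main obstacle is not the fiber contraction machinery itself but the bookkeeping needed to verify the two supporting facts: (a) $\Gamma_0^h$ is well-defined for every $h \in \varSigma_{lip,\mu,K_1}$, not just for $h = h_0$, with contraction rate bounded uniformly in $h$; and (b) $\varGamma$ sends $C^1$ maps to $C^1$ maps with the explicit derivative formula $D(\varGamma(h)) = \Gamma_0^h(Dh)$. Item (a) requires rereading the proofs of \autoref{lem:KK0} and \autoref{lem:lipest} to confirm that only the quantitative bounds encoded in $\varSigma_{lip,\mu,K_1}$ are used; item (b) is where the distinction between merely Lipschitz and $C^{1}\cap C^{0,1}$ bump functions becomes essential, since the smoothness of $\widehat{h} = \Psi \cdot \widetilde{h}$ relies on the $C^1$ regularity of $\Psi$ provided by \textbf{(A4) (iv)}.
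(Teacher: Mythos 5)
Your route is genuinely different from the paper's: you set up a Hirsch--Pugh fiber contraction over the whole space $\varSigma_{lip,\mu,K_1}$, propagate derivatives along the iteration $h^{(n)}=\varGamma^{n}(0)$, and pass to the limit, whereas the paper never iterates at the derivative level over varying $h$. It constructs $K^0$ only as the fixed point of $\varGamma_0$ over the already-known fixed point $h_0$ (your $\Gamma_0^{h_0}$), and then proves differentiability directly: it estimates the defect $\mathcal{Q}(\widehat m_0,x',x)=f^0_{\widehat m_0}(x')-f^0_{\widehat m_0}(x)-K^0_{\widehat m_0}(x)(x'-x)$ via the invariance relations \eqref{equ:local0}, \eqref{equ:extension}, \eqref{equ:vlocal}, \eqref{equ:vex}, obtains in \autoref{slem:aa} the inequality relating the limsup of $\mathcal{Q}/|x'-x|$ at $\widehat m_0$ to that at $\widehat u(\widehat m_0)$ with factor $\theta_1<1$, and concludes $S\le\theta_1 S$ for the (finite, because everything is Lipschitz-bounded) global supremum $S$, hence $S=0$. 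This only uses \emph{pointwise} differentiability of $\widehat F^{cs}_m,\widehat G^{cs}_m$ and global bounds, which is exactly what the hypotheses \textbf{(A4) (i)--(iii)} provide.

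The genuine gap in your argument is the invocation of the fiber contraction theorem itself. That theorem needs, beyond the uniform fiber contraction rate, continuity of the fiber map in the base variable at the attracting fixed point: you must show $h\mapsto\Gamma_0^{h}(K^0)$ is continuous at $h_0$ with respect to $d_1$ in the source and the sup metric $d_K$ in the target (and your final appeal to the uniform-limit-of-$C^1$ theorem needs at least fiberwise-uniform convergence $Df^{h^{(n)}}_{\widehat m_0}\to K^0_{\widehat m_0}$, which the recursion couples along the whole $u$-orbit of $\widehat m_0$). Unwinding $\Gamma_0^{h}(K^0)$, this requires moduli of continuity for $D\widehat F^{cs}_{m}$, $D\widehat G^{cs}_{m}$ and for $x\mapsto K^0_{\widehat m}(x)$ that are \emph{uniform over $m\in K$} (or along infinite $u$-orbits): but \textbf{(A4) (i)} gives only $C^1$ for each fixed $m$, $K$ is not assumed compact, and $K^0\in E_K$ is only continuous on each fiber with no uniformity in $\widehat m$. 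This is not mere bookkeeping; it is precisely the extra hypothesis the paper isolates later (equicontinuity of $D\widehat F^{cs}_m(\cdot),D\widehat G^{cs}_m(\cdot)$, $m\in K$) in \autoref{lem:udiff} to get $C^{1,u}$, and it is deliberately avoided in the proof of \autoref{lem:C1smooth}. Your items (a) and (b) (uniform fiber contraction, and $C^1$-preservation of $\varGamma$ with $D(\varGamma(h))=\Gamma_0^{h}(Dh)$) are plausible and do carry over from \autoref{lem:KK0} and \autoref{lem:lipest}, but without the base-continuity (or compactness of $K$, or the equicontinuity of \autoref{lem:udiff}) the convergence $K^{(n)}\to K^0$ is unjustified under the stated assumptions, so the proposal as written does not prove the lemma in the paper's generality.
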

\begin{proof}
	For $ x^{cs} \in X^{cs}_{m_0}(\sigma_{0}) \setminus X^{cs}_{m_0}(e_0\eta'_1) $, we have $ f^0_{\widehat{m}_0} (x^{cs}) = g^{cs}_{\widehat{m}_0}(x^{cs}) $, and so
	\[
	D_{x^{cs}} f^0_{\widehat{m}_0}(x^{cs}) = D_{x^{cs}} g^{cs}_{\widehat{m}_0} (x^{cs}) = K^0_{\widehat{m}_0}(x^{cs}).
	\]
	Thus, we only need to consider $ x^{cs} \in X^{cs}_{m_0}(e_0\eta'_1) $. Take $ \widehat{m}_0 \in \widehat{K} $, $ m_0 = \phi(\widehat{m}_0) $, and set
	\[
	\mathcal{Q}(\widehat{m}_0, x', x) = f^0_{\widehat{m}_0} (x') - f^{0}_{\widehat{m}_0} (x) - K^0_{\widehat{m}_0} (x) (x' - x), \quad x', x \in X^{cs}_{m_0}(\sigma_{0}).
	\]
	As noted previously, $ \limsup_{x' \to x}\frac{|\mathcal{Q}(\widehat{m}_0, x', x)|}{|x' - x|} = 0 $ if $ x \in X^{cs}_{m_0}(\sigma_{0}) \setminus X^{cs}_{m_0}(e_0\eta'_1) $. Also, note that
	\[
	\sup_{\widehat{m}_0 \in \widehat{K}} \sup_{x \in X^{cs}_{\phi(\widehat{m}_0)}(\sigma_{0})} \limsup_{x' \to x} \frac{|\mathcal{Q}(\widehat{m}_0, x', x)|}{|x' - x|} < \infty.
	\]
	\begin{slem}\label{slem:aa}
		For $ x \in X^{cs}_{m_0}(e_0\eta'_1) $,
		\[
		\limsup_{x' \to x} \frac{|\mathcal{Q}(\widehat{m}_0, x', x)|}{|x' - x|} \leq \varpi^*_1 \frac{\lambda_{cs}(m_0)\lambda_{u} (m_0)}{1 - \alpha(m_0) \mu_1(\widehat{u}(\widehat{m}_0))} \sup_{x \in X^{cs}_{u(m_0)}(\sigma_{0})} \limsup_{x' \to x}  \frac{|\mathcal{Q}(\widehat{u}(\widehat{m}_0), x', x)|}{|x' - x|}.
		\]
	\end{slem}
	\begin{proof}
		We use the notation $ |h_{\widehat{m}_0}(x', x)| \leq o(1) $ to indicate that $ \limsup_{x' \to x}|h_{\widehat{m}_0}(x', x)| = 0 $. Set
		\[
		x^{u} = f^0_{\widehat{u}(\widehat{m}_0)} ( x_{\widehat{m}_0} (x) ).
		\]
		For $ x', x \in X^{cs}_{m_0}(\sigma^{c}_*) $, from \eqref{equ:local0} and \eqref{equ:vlocal}, we have
		\begin{align*}
			&~ \left|x_{\widehat{m}_0} (x') - x_{\widehat{m}_0} (x) - R^{0}_{\widehat{m}_0} (x) (x' - x)\right| \\
			= &~\left|  \widehat{F}^{cs}_{m_0} ( x', f^0_{\widehat{u}(\widehat{m}_0)} ( x_{\widehat{m}_0} (x') ) ) -  \widehat{F}^{cs}_{m_0} ( x, f^0_{\widehat{u}(\widehat{m}_0)} ( x_{\widehat{m}_0} (x) ) ) \right. \\
			& \left. \quad - D\widehat{F}^{cs}_{m_0} ( x, x^{u} ) ( x' - x, K^{0}_{\widehat{u}(\widehat{m}_0)}(x_{\widehat{m}_0} (x)) R^{0}_{\widehat{m}_0} (x) (x' - x) ) \right| \\
			= &~ \left|  \widehat{F}^{cs}_{m_0} ( x', f^0_{\widehat{u}(\widehat{m}_0)} ( x_{\widehat{m}_0} (x') ) ) -  \widehat{F}^{cs}_{m_0} ( x, f^0_{\widehat{u}(\widehat{m}_0)} ( x_{\widehat{m}_0} (x) ) ) \right. \\
			&  \quad -  D\widehat{F}^{cs}_{m_0} ( x, x^{u} ) (  x' - x, f^0_{\widehat{u}(\widehat{m}_0)} ( x_{\widehat{m}_0} (x') ) - f^0_{\widehat{u}(\widehat{m}_0)} ( x_{\widehat{m}_0} (x) ) )  \\
			& ~ \left. +  D_{x^u}\widehat{F}^{cs}_{m_0} ( x, x^{u} )  \left\{ f^0_{\widehat{u}(\widehat{m}_0)} ( x_{\widehat{m}_0} (x') )  f^0_{\widehat{u}(\widehat{m}_0)} ( x_{\widehat{m}_0} (x) ) - \right. \right. \\
			& \quad ~ \left. \left. - K^{0}_{\widehat{u}(\widehat{m}_0)}(x_{\widehat{m}_0} (x)) R^{0}_{\widehat{m}_0} (x) (x' - x)  \right\}   \right| \\
			\leq &~ o(1) |x' - x| + \alpha(m_0) \left|f^0_{\widehat{u}(\widehat{m}_0)} ( x_{\widehat{m}_0} (x') ) - f^0_{\widehat{u}(\widehat{m}_0)} ( x_{\widehat{m}_0} (x) ) \right. \\
			& \quad \left. - K^{0}_{\widehat{u}(\widehat{m}_0)}(x_{\widehat{m}_0} (x)) R^{0}_{\widehat{m}_0} (x) (x' - x)\right| \\
			\leq &~ o(1) |x' - x| + \alpha(m_0) \left|  K^{0}_{\widehat{u}(\widehat{m}_0)}(x_{\widehat{m}_0} (x)) \{x_{\widehat{m}_0} (x') - x_{\widehat{m}_0} (x') R^{0}_{\widehat{m}_0} (x) (x' - x)\}  \right| \\
			& \quad + \alpha(m_0) \left|  f^0_{\widehat{u}(\widehat{m}_0)} ( x_{\widehat{m}_0} (x') ) - f^0_{\widehat{u}(\widehat{m}_0)} ( x_{\widehat{m}_0} (x) ) - K^{0}_{\widehat{u}(\widehat{m}_0)}(x_{\widehat{m}_0} (x)) (x_{\widehat{m}_0} (x') - x_{\widehat{m}_0} (x'))  \right| \\
			\leq &~ o(1) |x' - x| + \alpha(m_0) \mu_1(\widehat{u}(\widehat{m}_0)) \left|x_{\widehat{m}_0} (x') - x_{\widehat{m}_0} (x) - R^{0}_{\widehat{m}_0} (x) (x' - x)\right| \\
			& \quad + \alpha(m_0) \left|\mathcal{Q}(\widehat{u}(\widehat{m}_0), x_{\widehat{m}_0} (x'), x_{\widehat{m}_0} (x))\right| \\
			\leq &~ o(1)|x' - x| + \frac{\alpha(m_0)}{1 - \alpha(m_0) \mu_1(\widehat{u}(\widehat{m}_0))} \left|\mathcal{Q}(\widehat{u}(\widehat{m}_0), x_{\widehat{m}_0} (x'), x_{\widehat{m}_0} (x))\right|.
		\end{align*}
		Similarly,
		\begin{align*}
			&~ \left|\widetilde{f}^0_{\widehat{m}_0} (x') - \widetilde{f}^0_{\widehat{m}_0} (x) - \widetilde{K}^0_{\widehat{m}_0} (x^{cs}) (x' - x)\right| \\
			= &~ \left|  \widehat{G}^{cs}_{m_0} ( x', f^0_{\widehat{u}(\widehat{m}_0)} ( x_{\widehat{m}_0} (x') ) ) -  \widehat{G}^{cs}_{m_0} ( x, f^0_{\widehat{u}(\widehat{m}_0)} ( x_{\widehat{m}_0} (x) ) ) \right. \\
			& \quad -  D\widehat{G}^{cs}_{m_0} ( x, x^{u} ) (  x' - x, f^0_{\widehat{u}(\widehat{m}_0)} ( x_{\widehat{m}_0} (x') ) - f^0_{\widehat{u}(\widehat{m}_0)} ( x_{\widehat{m}_0} (x) ) )  \\
			&~ \left. +  D_{x^u}\widehat{G}^{cs}_{m_0} ( x, x^{u} )  \left\{ f^0_{\widehat{u}(\widehat{m}_0)} ( x_{\widehat{m}_0} (x') ) - f^0_{\widehat{u}(\widehat{m}_0)} ( x_{\widehat{m}_0} (x) ) \right. \right. \\
			&\quad \left. \left. - K^{0}_{\widehat{u}(\widehat{m}_0)}(x_{\widehat{m}_0} (x)) R^{0}_{\widehat{m}_0} (x) (x' - x)  \right\}   \right| \\
			\leq &~ o(1)|x' - x| + \lambda_{u}(m_0) \mu_1(\widehat{u}(\widehat{m}_0)) \left|x_{\widehat{m}_0} (x') - x_{\widehat{m}_0} (x) - R^{0}_{\widehat{m}_0} (x) (x' - x)\right| \\
			& \quad + \lambda_{u}(m_0) \left|\mathcal{Q}(\widehat{u}(\widehat{m}_0), x_{\widehat{m}_0} (x'), x_{\widehat{m}_0} (x))\right|\\
			\leq &~ o(1)|x' - x| + \frac{\lambda_{u}(m_0)}{1 - \alpha(m_0)\mu_1(\widehat{u}(\widehat{m}_0))} \left|\mathcal{Q}(\widehat{u}(\widehat{m}_0), x_{\widehat{m}_0} (x'), x_{\widehat{m}_0} (x))\right|.
		\end{align*}
		Furthermore, the same argument shows that for $ x', x \in X^{cs}_{m_0}(e_0\eta'_1) $, from \eqref{equ:extension} and \eqref{equ:vex}, we get
		\begin{multline*}
			|\widetilde{x}_{\widehat{m}_0}(x') - \widetilde{x}_{\widehat{m}_0}(x) - \widetilde{R}^{0}_{\widehat{m}_0} (x) (x' - x)| \leq o(1)|x' - x| \\
			+ \varpi^*_0 |  \widetilde{f}^0_{\widehat{m}_0} ( \widetilde{x}_{\widehat{m}_0} (x') ) - \widetilde{f}^0_{\widehat{m}_0} ( \widetilde{x}_{\widehat{m}_0} (x) ) - \widetilde{K}^{0}_{\widehat{m}_0}(\widetilde{x}_{\widehat{m}_0} (x)) (\widetilde{x}_{\widehat{m}_0} (x') - \widetilde{x}_{\widehat{m}_0} (x'))  |,
		\end{multline*}
		and so
		\begin{multline*}
			|\mathcal{Q}(\widehat{m}_0, x', x)| = |f^0_{\widehat{m}_0} (x') - f^{0}_{\widehat{m}_0} (x') - K^0_{\widehat{m}_0} (x) (x' - x)| \leq o(1)|x' - x| \\
			+ \varpi^*_1 |  \widetilde{f}^0_{\widehat{m}_0} ( \widetilde{x}_{\widehat{m}_0} (x') ) - \widetilde{f}^0_{\widehat{m}_0} ( \widetilde{x}_{\widehat{m}_0} (x) ) - \widetilde{K}^{0}_{\widehat{m}_0}(\widetilde{x}_{\widehat{m}_0} (x)) (\widetilde{x}_{\widehat{m}_0} (x') - \widetilde{x}_{\widehat{m}_0} (x'))  |.
		\end{multline*}
		Therefore, for $ x', x \in X^{cs}_{m_0}(e_0\eta'_1) $, we obtain
		\begin{multline*}
			|\mathcal{Q}(\widehat{m}_0, x', x)| \\
			\leq o(1)|x' - x| + \varpi^*_1 \frac{\lambda_{u}(m_0)}{1 - \alpha(m_0)\mu_1(\widehat{u}(\widehat{m}_0))} |\mathcal{Q}(\widehat{u}(\widehat{m}_0), x_{\widehat{m}_0} (\widetilde{x}_{\widehat{m}_0} (x')), x_{\widehat{m}_0} (\widetilde{x}_{\widehat{m}_0} (x)))|.
		\end{multline*}
		Now for $ x \in X^{cs}_{m_0}(e_0\eta'_1) $, we see
		\begin{align*}
			\limsup_{x' \to x} & \frac{|\mathcal{Q}(\widehat{m}_0, x', x)|}{|x' - x|}
			\leq \varpi^*_1 \frac{\lambda_{u}(m_0)}{1 - \alpha(m_0)\mu_1(\widehat{u}(\widehat{m}_0))} \\
			& \cdot \limsup_{x' \to x} \frac{|\mathcal{Q}(\widehat{u}(\widehat{m}_0), x_{\widehat{m}_0} (\widetilde{x}_{\widehat{m}_0} (x')), x_{\widehat{m}_0} (\widetilde{x}_{\widehat{m}_0} (x)))|}{|x_{\widehat{m}_0} (\widetilde{x}_{\widehat{m}_0} (x')) - x_{\widehat{m}_0} (\widetilde{x}_{\widehat{m}_0} (x))|} \frac{|x_{\widehat{m}_0} (\widetilde{x}_{\widehat{m}_0} (x')) - x_{\widehat{m}_0} (\widetilde{x}_{\widehat{m}_0} (x))|}{|x' - x|} \\
			\leq & \varpi^*_1 \frac{\lambda_{cs}(m_0)\lambda_{u}(m_0)}{1 - \alpha(m_0)\mu_1(\widehat{u}(\widehat{m}_0))}  \limsup_{x' \to x} \frac{|\mathcal{Q}(\widehat{u}(\widehat{m}_0), x_{\widehat{m}_0} (\widetilde{x}_{\widehat{m}_0} (x')), x_{\widehat{m}_0} (\widetilde{x}_{\widehat{m}_0} (x)))|}{|x_{\widehat{m}_0} (\widetilde{x}_{\widehat{m}_0} (x')) - x_{\widehat{m}_0} (\widetilde{x}_{\widehat{m}_0} (x))|} \\
			\leq & \varpi^*_1 \frac{\lambda_{cs}(m_0)\lambda_{u}(m_0)}{1 - \alpha(m_0)\mu_1(\widehat{u}(\widehat{m}_0))} \sup_{x \in X^{cs}_{u(m_0)}(\sigma_{0})} \limsup_{x' \to x}  \frac{|\mathcal{Q}(\widehat{u}(\widehat{m}_0), x', x)|}{|x' - x|}.
		\end{align*}
		The proof is complete.
	\end{proof}
	Let $ \theta_1 = \varpi^*_1 \sup_{\widehat{m}_0 \in \widehat{K}} \frac{\lambda_{cs}(\phi(\widehat{m}_0)) \lambda_{u}(\phi(\widehat{m}_0)) }{ 1 - \alpha(\phi(\widehat{m}_0)) \mu_1(\widehat{u}(\widehat{m}_0)) } < 1 $. Then by \autoref{slem:aa}, we have
	\[
	\sup_{ \widehat{m}_0 \in \widehat{K} } \sup_{x \in X^{cs}_{m_0}(\sigma_{0})} \limsup_{x' \to x} \frac{|\mathcal{Q}(\widehat{m}_0, x', x)|}{|x' - x|}
	\leq \theta_1 \sup_{ \widehat{m}_0 \in \widehat{K} } \sup_{x \in X^{cs}_{m_0}(\sigma_{0})} \limsup_{x' \to x} \frac{|\mathcal{Q}(\widehat{m}_0, x', x)|}{|x' - x|} < \infty,
	\]
	which shows that $ \limsup_{x' \to x}\frac{|\mathcal{Q}(\widehat{m}_0, x', x)|}{|x' - x|} = 0 $, i.e., $ D f^0_{\widehat{m}_0}(x) = K^0_{\widehat{m}_0}(x) $. As $ x \mapsto K^0_{\widehat{m}_0}(x) $ is $ C^0 $, we have $ f^0_{\widehat{m}_0}(\cdot) \in C^1 $. The proof of \autoref{lem:C1smooth} is complete.
\end{proof}

Therefore, by \autoref{lem:C1smooth}, we conclude that $ W^{cs}_{loc}(K) \in C^1 $.

\begin{rmk}\label{rmk:diff}
	Even if such a $ C^1 $ bump function $ \Psi $ does not exist, we can show that $ f^0_{\widehat{m}_0}(\cdot) $ is differentiable at $ 0 $, with $ \widehat{K} \ni \widehat{m}_0 \mapsto Df^0_{\widehat{m}_0}(0) $ continuous, under the additional assumption $ \eta = 0 $.
	\begin{proof}
		Since $ \eta = 0 $, we know $ f^0_{\widehat{m}_0}(0) = 0 $. If $ Df^{0}_{\widehat{m}_0}(0) = K^0_{\widehat{m}_0} (0) $ exists, then it must satisfy
		\[\label{equ:zero} \tag{$ \ast $}
		\begin{cases}
		D\widehat{F}^{cs}_{m_0} ( 0, 0 ) ( \id, K^{0}_{\widehat{u}(\widehat{m}_0)}(0) R^{0}_{\widehat{m}_0} (0) ) = R^{0}_{\widehat{m}_0} (0), \\
		D\widehat{G}^{cs}_{m_0} ( 0, 0 ) ( \id, K^{0}_{\widehat{u}(\widehat{m}_0)}(0) R^{0}_{\widehat{m}_0} (0) ) = K^0_{\widehat{m}_0} (0),
		\end{cases}
		\]
		where $ m_0 = \phi(\widehat{m}_0) $. This equation has a unique solution $ K^0_{\widehat{m}_0} (0) \in L(X^{cs}_{m_0}, X^{u}_{m_0}) $ satisfying $ |K^0_{\widehat{m}_0} (0)|\leq \beta'(m_0) $ and $ \widehat{m}_0 \mapsto K^0_{\widehat{m}_0} (0) $ is $ C^0 $. This can be established using a similar but simpler approach as in \autoref{sub:pre}. Applying the same argument as in \autoref{lem:C1smooth} shows that $ \limsup_{x' \to 0}\frac{|\mathcal{Q}(\widehat{m}_0, x', 0)|}{|x'|} = 0 $, where
		\[
		\mathcal{Q}(\widehat{m}_0, x', 0) = f^0_{\widehat{m}_0} (x') - K^0_{\widehat{m}_0} (0) x'.
		\]
		See also \cite[Section 6.3]{Che18a}.
	\end{proof}
\end{rmk}

\begin{lem}
	Suppose $ \eta = 0 $ and $ X^{cs} $ is invariant under $ DH $ (i.e., $ D_{x^{cs}}\widehat{G}^{cs}_{m}( 0, 0, 0 ) = 0 $ for all $ m \in K $), then $ Df^{0}_{\widehat{m}_0} (0) = 0 $, i.e., $ T_{\widehat{m}_0} W^{cs}_{loc}(K) = X^{cs}_{\phi(\widehat{m}_0)} $, $ \widehat{m}_0 \in \widehat{K} $.
\end{lem}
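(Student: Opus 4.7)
\medskip

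The plan is to combine \autoref{rmk:diff} with the invariance of $\widehat{X}^{cs}$ under $DH$ at zero. By \autoref{rmk:diff} (which only needs $\eta=0$, not $C^1$ bump functions), the derivative $Df^0_{\widehat{m}_0}(0) = K^0_{\widehat{m}_0}(0)$ exists, is continuous in $\widehat{m}_0 \in \widehat{K}$, and is the unique solution of the system
\[
\begin{cases}
D\widehat{F}^{cs}_{m_0}(0,0)(\id, K^0_{\widehat{u}(\widehat{m}_0)}(0) R^0_{\widehat{m}_0}(0)) = R^0_{\widehat{m}_0}(0),\\
D\widehat{G}^{cs}_{m_0}(0,0)(\id, K^0_{\widehat{u}(\widehat{m}_0)}(0) R^0_{\widehat{m}_0}(0)) = K^0_{\widehat{m}_0}(0),
\end{cases}
\]
in the class of vector bundle maps with $\sup_{\widehat{m}_0} |K^0_{\widehat{m}_0}(0)| < \infty$ and $|K^0_{\widehat{m}_0}(0)| \leq \beta'(m_0)$.

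Next, I would simply verify that the zero section is a solution. Plug in the ansatz $K^0_{\widehat{m}_0}(0) \equiv 0$: the first equation reduces to $R^0_{\widehat{m}_0}(0) = D_{x^{cs}}\widehat{F}^{cs}_{m_0}(0,0)$, so $R^0_{\widehat{m}_0}(0)$ is well-defined with norm $\leq \lambda_{cs}(m_0)$, while the second equation becomes
\[
K^0_{\widehat{m}_0}(0) = D_{x^{cs}}\widehat{G}^{cs}_{m_0}(0,0,0) + D_{x^u}\widehat{G}^{cs}_{m_0}(0,0,0) \cdot 0 \cdot R^0_{\widehat{m}_0}(0) = 0,
\]
where the vanishing of the first term uses precisely the hypothesis $D_{x^{cs}}\widehat{G}^{cs}_{m}(0,0,0)=0$ for all $m \in K$. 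Thus the zero section satisfies the system.

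Finally, to conclude $K^0_{\widehat{m}_0}(0)=0$ it remains to invoke uniqueness. This is a direct restriction of the contraction argument in \autoref{lem:lipest}: the graph transform $\varGamma_0$ evaluated at $x=0$ depends only on the values $\{K^0_{\widehat{m}_0}(0)\}_{\widehat{m}_0 \in \widehat{K}}$ (since the first equation at $0$ gives $R^0_{\widehat{m}_0}(0)$ directly from $D_{x^{cs}}\widehat{F}^{cs}_{m_0}(0,0)$ and $K^0_{\widehat{u}(\widehat{m}_0)}(0)$, without involving the $\widetilde{R}, F^1, G^1$ machinery that came from the bump-function truncation). The same estimates as in \autoref{lem:lipest} yield a Lipschitz constant bounded by $\varpi^*_1 \sup_{\widehat{m}_0} \frac{\lambda_{cs}(m_0)\lambda_u(m_0)}{1-\alpha(m_0)\mu_1(\widehat{u}(\widehat{m}_0))} < 1$ on the Banach space $\ell^\infty(\widehat{K}, L(X^{cs},X^u))$ of bounded sections, so the fixed point is unique; together with Step~2 this forces $K^0_{\widehat{m}_0}(0)=0$.

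The only mild obstacle I anticipate is that \autoref{rmk:diff} is stated without assuming the $C^1 \cap C^{0,1}$ bump function, so the smooth local representations $F^1_{\widehat{m}_0}, G^1_{\widehat{m}_0}$ from \eqref{equ:localcut-off} are not available at $x \neq 0$; however this is irrelevant for the point $x=0$, where by \autoref{rmk:diff} the derivative $K^0_{\widehat{m}_0}(0)$ is characterized purely by the linearized equation $(\ast)$ involving only $D\widehat{F}^{cs}_{m_0}(0,0)$ and $D\widehat{G}^{cs}_{m_0}(0,0)$. The contraction/uniqueness at $x=0$ thus reduces to a one-point version of the $E_K$ argument and is immediate.
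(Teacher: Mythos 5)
Your proof is correct and follows essentially the same route as the paper: the paper's own argument is simply to note that the zero section solves the characterizing equation $(\ast)$ from \autoref{rmk:diff} once $D_{x^{cs}}\widehat{G}^{cs}_{m}(0,0,0)=0$, and to invoke the uniqueness of its solution (established there by the one-point contraction you sketch). Your extra verification of the plugged-in ansatz and the restricted contraction at $x=0$ just makes explicit what the paper leaves implicit.
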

\begin{proof}
	It suffices to show $ K^0_{\widehat{m}_0}(0) = 0 $ in equation \eqref{equ:zero}. Indeed, $ K^0_{\widehat{m}_0}(0) = 0 $ is the solution of \eqref{equ:zero} since $ D_{x^{cs}}\widehat{G}^{cs}_{m_0}( 0, 0, 0 ) = 0 $.
\end{proof}

\begin{proof}[Proof of \autoref{thm:smooth}]
	Consider the ($ \bullet 1 $) case: $ \widehat{H} \approx (\widehat{F}^{cs}, \widehat{G}^{cs}) $ satisfies the (A$ ' $)($ \alpha $, $ \lambda_{u} $) and (B)($ \beta; \beta', \lambda_{cs} $) condition in $ cs $-direction at $ K $. For the ($ \bullet 2 $) case, see \autoref{sec:tri}.
	The result now follows from \autoref{lem:C1smooth} and \autoref{rmk:diff}.
\end{proof}

\begin{lem}[$ C^{1,\alpha} $ smoothness]\label{lem:C1a}
	Suppose the following conditions hold, where $ \tilde{\eta} $ satisfies $ 0 < \tilde{\eta} \leq \eta_2 $, $ e_0 = \min\{ e_1, c_1 \} $, and $ X^{cs}_{m_0}(e_0\tilde{\eta})^{\complement} \triangleq X^{c}_{m_0}(c_2\epsilon_{*}) \oplus X^{s}_{m_0}(e_2\sigma_{*}) \setminus X^{cs}_{m_0} (e_0\tilde{\eta}) $.
	\begin{enumerate}[(a)]
		\item $ X^s_{\widehat{K}_{\epsilon_{*}}}(\sigma_{*}) \setminus X^s_{\widehat{K}_{\tilde{\eta}}}(\tilde{\eta}) \in C^{1, 1} $, i.e., for the local representations $ g^{cs}_{\widehat{m}_0}(\cdot) $ (see \eqref{equ:cslocal}), $ \widehat{m}_0 \in \widehat{K} $ of $ X^s_{\widehat{K}_{\epsilon_{*}}}(\sigma_{*}) $, they are $ C^{1,1} $ outside $ X^{cs}_{\phi(\widehat{m}_0)} (e_0\tilde{\eta}) $ uniformly for $ \widehat{m}_0 $, or more precisely,
		\[
		\sup_{\widehat{m}_0 \in \widehat{K}} \lip Dg^{cs}_{\widehat{m}_0}(\cdot)|_{X^{cs}_{\phi(\widehat{m}_0)}(e_0\tilde{\eta})^{\complement}} < \infty;
		\]

		\item $ \Psi \in C^{1,1}(X^s_{\widehat{K}_{\epsilon_{*}}}(\sigma_{*}) \setminus X^s_{\widehat{K}_{\tilde{\eta}}}(\tilde{\eta}), [0,1]) $, i.e., for the local representations
		\begin{equation}\label{equ:localbump}
		\widetilde{\Psi}_{\widehat{m}_0}(x^{cs}) = \Psi(\overline{\Phi}^{-c}_{m_0,\gamma}(x^{cs}, g^{cs}_{\widehat{m}_0}(x^{cs})), \overline{\Phi}^{-s}_{m_0,\gamma}(x^{cs}, g^{cs}_{\widehat{m}_0}(x^{cs}))), \quad x^{cs} \in X^{cs}_{m_0},
		\end{equation}
		where $ \widehat{m}_0 = \phi_{m_0, \gamma}^{-1}(m_0) $, it holds that $ \sup_{\widehat{m}_0 \in \widehat{K}} \lip D\widetilde{\Psi}_{\widehat{m}_0}(\cdot)|_{X^{cs}_{\phi(\widehat{m}_0)}(e_0\tilde{\eta})^{\complement}} < \infty $;

		\item $ \widehat{F}^{cs}_{m}(\cdot), \widehat{G}^{cs}_{m}(\cdot) $ are $ C^{1,\theta} $ uniformly for $ m \in K $, i.e., there is a constant $ C_0 > 0 $ such that for all $ m \in K $ and $ z_1, z_2 \in X^{cs}_{m} (r_0) \oplus X^{u}_{u(m)} (r_0) $, 
		\[
		|D\widehat{F}^{cs}_{m}(z_1) - D\widehat{F}^{cs}_{m}(z_2)| \leq C_0|z_1 - z_2|^{\theta}, \quad |D\widehat{G}^{cs}_{m}(z_1) - D\widehat{G}^{cs}_{m}(z_2)| \leq C_0|z_1 - z_2|^{\theta};
		\]

		\item (Spectral gap condition) $ \sup_{m \in K} \lambda^{\theta}_{cs}(m) \lambda_{cs}(m) \lambda_{u}(m) \vartheta_1(m) < 1 $, where $ \vartheta_1(\cdot) $ is defined by \eqref{equ:qqq}.
	\end{enumerate}
	Then $ W^{cs}_{loc}(K) \in C^{1,\theta} $, i.e., there is a constant $ C_1 > 0 $ such that for all $ \widehat{m}_0 \in \widehat{K} $ and $ x_1, x_2 \in X^{cs}_{\phi(\widehat{m}_0)} (\sigma_{0}) $, one has $ |Df^{0}_{\widehat{m}_0}(x_1) - Df^{0}_{\widehat{m}_0}(x_2)| \leq C_1|x_1 - x_2|^{\theta} $.
\end{lem}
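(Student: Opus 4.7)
The plan is to refine the fixed point argument of \autoref{sub:pre}--\autoref{sub:C1smooth} by restricting the graph transform $\varGamma_0$ to a smaller complete metric subspace of $E_K$ consisting of bundle maps whose fibers are uniformly $\theta$-H\"older. Since we already know $Df^0_{\widehat m_0}=K^0_{\widehat m_0}$ is the unique fixed point of $\varGamma_0$ in $E_K$, if we can check $\varGamma_0$ preserves the H\"older subspace, then $K^0$ itself lies in it, giving $f^0\in C^{1,\theta}$ uniform for $\widehat m_0\in\widehat K$.

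More precisely, for a constant $L_0>0$ to be chosen, set
\[
E_{K,\theta,L_0}=\Bigl\{K^0\in E_K:\ \sup_{\widehat m_0\in\widehat K}\sup_{\substack{x_1\neq x_2\\ x_i\in X^{cs}_{\phi(\widehat m_0)}(\sigma_0)}}\frac{|K^0_{\widehat m_0}(x_1)-K^0_{\widehat m_0}(x_2)|}{|x_1-x_2|^{\theta}}\leq L_0\Bigr\}.
\]
This is a closed subset of $(E_K,d_K)$, hence complete. I would then propagate the H\"older bound through the four implicit equations \eqref{equ:vlocal}--\eqref{equ:vex}, using at each stage (A$'$)(B) condition and the H\"older hypothesis (c) on $D\widehat F^{cs}_m,D\widehat G^{cs}_m$. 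Concretely: from the first line of \eqref{equ:vlocal}, a direct subtraction together with $\lip x_{\widehat m_0}(\cdot)\leq\lambda_{cs}(m_0)$ (\autoref{lem:first1}) produces
\[
|R^0_{\widehat m_0}(x_1)-R^0_{\widehat m_0}(x_2)|\leq \frac{C_0\,(1+\mu_1)\,|x_1-x_2|^{\theta}+\alpha(m_0)\lambda_{cs}(m_0)^{\theta}L_0|x_1-x_2|^{\theta}}{1-\alpha(m_0)\mu_1(\widehat u(\widehat m_0))};
\]
the second line of \eqref{equ:vlocal} similarly yields a H\"older bound on $\widetilde K^0_{\widehat m_0}$ with a leading factor $\lambda_u(m_0)\vartheta_1(m_0)\lambda_{cs}(m_0)^{\theta}L_0$ plus a $C_0$-driven remainder. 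The two equations in \eqref{equ:vex} contribute only zeroth-order perturbations of size $\varpi^*_0$, because by \autoref{lem:est000} the nonlinearities $F^1_{\widehat m_0},G^1_{\widehat m_0}$ are uniformly close to $(\Pi^{cs},0)$; their $C^{1,\theta}$ character outside the bump region is guaranteed by hypotheses (a) and (b), and inside the bump region $\widehat K^0=Dg^{cs}$ is H\"older by (a).

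Collecting the terms, the self-mapping inequality takes the schematic form
\[
\hol_\theta \widehat K^0_{\widehat m_0}\ \leq\ \theta_\theta \,L_0\ +\ \widetilde C\bigl(C_0,\,\hol_{0,1}g^{cs},\,\hol_{0,1}\widetilde\Psi\bigr),\qquad
\theta_\theta\triangleq \varpi^*_1\sup_{\widehat m_0\in\widehat K}\lambda_{cs}(\phi(\widehat m_0))^{\theta}\frac{\lambda_{cs}(\phi(\widehat m_0))\lambda_u(\phi(\widehat m_0))}{1-\alpha(\phi(\widehat m_0))\mu_1(\widehat u(\widehat m_0))}.
\]
Hypothesis (d) together with the choice of $\varsigma_1$ close to $2$ (hence $\vartheta_1$ close to the $\vartheta$ of \eqref{equ:qqq}) gives $\theta_\theta<1$ once $\epsilon_{*},\chi(\epsilon_{*})$ are small, so the constant $L_0=\widetilde C/(1-\theta_\theta)$ makes $\varGamma_0(E_{K,\theta,L_0})\subset E_{K,\theta,L_0}$. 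By the contraction already established in \autoref{lem:lipest}, the unique fixed point $K^0$ belongs to $E_{K,\theta,L_0}$; combined with \autoref{lem:C1smooth}, this yields $\sup_{\widehat m_0}\hol_\theta Df^0_{\widehat m_0}(\cdot)\leq L_0$.

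The main obstacle, as in \autoref{sub:C1smooth}, is bookkeeping at the interface where the cutoff $\Psi$ is active: inside $X^{cs}_{m_0}(e_0\eta''_1)$ we use the fixed-point description via \eqref{equ:vlocal}, outside $X^{cs}_{m_0}(e_0\eta'_1)$ we use $Dg^{cs}_{\widehat m_0}$, and the two agree on the overlap. One must verify that the H\"older bound supplied by (a)--(b) dominates the cross-terms arising from $D_1G^0_{\widehat m_0}$ in \autoref{lem:est000} (whose $C^0$ norm is of order $C_1|\widehat x^u|/\eta_1$); these cross terms stay uniformly bounded because $|\widehat x^u|\leq K_1\eta+\beta'(m_0)\sigma^c_{*}$ in case (1) and $|\widehat x^u|\leq e_1\varrho_{*}$ in case (2), and because the H\"older dependence $\widetilde x_{\widehat m_0}(x_1)-\widetilde x_{\widehat m_0}(x_2)$ is Lipschitz by the implicit function argument surrounding \eqref{equ:extension}. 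Once these book-keeping estimates are collected the self-mapping property follows, and the claim $W^{cs}_{loc}(K)\in C^{1,\theta}$ is established.
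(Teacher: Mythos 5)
Your overall strategy is the paper's: propagate a uniform $\theta$-H\"older bound on the fiber derivative through the variational equations \eqref{equ:vlocal}--\eqref{equ:vex} and use the spectral gap (d) to contract the H\"older seminorm with a factor $\theta_{\theta}<1$. Whether one packages this as invariance of a closed H\"older ball $E_{K,\theta,L_0}$ under $\varGamma_0$ together with uniqueness of the fixed point from \autoref{lem:lipest} (your version), or as a uniform inductive bound along the iterates $K^{n}=(\varGamma_0)^{n}0$ followed by passage to the $d_K$-limit (the paper's version), is immaterial; the required estimates on $R^0,\widetilde K^0$ are the same, and the extra operator-norm factors ($|R^0|\leq\lambda_{cs}$, etc.) that you drop in your displayed bound for $R^0$ only affect constants.

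The genuine gap is in the verification of the self-mapping property at the interface, which you defer to ``bookkeeping''. When $x_1\in X^{cs}_{\phi(\widehat m_0)}(e_0\eta'_1)$ but $x_2$ lies outside, neither the fixed-point estimate nor the $Dg^{cs}_{\widehat m_0}$ estimate applies to the pair directly; one must insert an intermediate point $x'_2=t_0x_1+(1-t_0)x_2$ on the segment, where the two descriptions agree, and combine the two H\"older bounds with weights $(1-t_0)^{\theta}$ and $t_0^{\theta}$. This is precisely where the paper's choice of constant \eqref{equ:c*}, namely a maximum over $t_0\in[0,1]$ of $\bigl(\widetilde C_*(1-t_0)^{\theta}+C'_1t_0^{\theta}\bigr)/\bigl(1-\rho(1-t_0)^{\theta}\bigr)$, enters: your candidate $L_0=\widetilde C/(1-\theta_{\theta})$ corresponds to the value at $t_0=0$ only, and in general it does not dominate the mixed case when $C'_1$ (coming from the Lipschitz bound of $Dg^{cs}_{\widehat m_0}$ outside the cutoff region, hypothesis (a)) is large; hence the claimed inclusion $\varGamma_0(E_{K,\theta,L_0})\subset E_{K,\theta,L_0}$ is not justified as stated. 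The fix is quantitative, not structural: enlarge $L_0$ to the maximum in \eqref{equ:c*} (finite since $\rho<1$), after which your closed-ball argument closes and yields the lemma exactly as in the paper. (Also note a small slip: it is outside the region where $\Psi$ is active, i.e.\ for $|x^{cs}|>\eta''_1$, that $\widehat K^0_{\widehat m_0}=Dg^{cs}_{\widehat m_0}$, not inside it.)
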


\begin{proof}
	Let $ \hol_{\theta} h $ denote the $ \theta $-H\"older constant of $ h $ when $ h $ is a map between two metric spaces.

	Let $ K^1 = 0 $. Then $ K^1 \in E_{K} $. Let $ K^{n+1} = (\varGamma_0)^{n} K^1 $ for $ n \geq 1 $. Take $ \widehat{m}_0 \in \widehat{K} $, $ m_0 = \phi(\widehat{m}_0) $. We have $ R^{n}_{\widehat{m}_0}(x^{cs}) \in L(X^{cs}_{m_0}, X^{cs}_{u(m_0)}) $ and $ \widetilde{R}^{0}_{\widehat{m}_0}(x^{cs}) \in L(X^{cs}_{m_0}, X^{cs}_{m_0}) $ such that the following equations hold: for $x^{cs} \in X^{cs}_{m_0}(\sigma^c_{*})$,
	\begin{equation*}\label{equ:vlocala} \tag{\S}
	\begin{cases}
	D\widehat{F}^{cs}_{m_0} ( x^{cs}, f^0_{\widehat{u}(\widehat{m}_0)} ( x_{\widehat{m}_0} (x^{cs}) ) ) ( \id, K^{n}_{\widehat{u}(\widehat{m}_0)}(x_{\widehat{m}_0} (x^{cs})) R^{n}_{\widehat{m}_0} (x^{cs}) ) = R^{n}_{\widehat{m}_0} (x^{cs}), \\
	D\widehat{G}^{cs}_{m_0} ( x^{cs}, f^0_{\widehat{u}(\widehat{m}_0)} ( x_{\widehat{m}_0} (x^{cs}) ) ) ( \id, K^{n}_{\widehat{u}(\widehat{m}_0)}(x_{\widehat{m}_0} (x^{cs})) R^{n}_{\widehat{m}_0} (x^{cs}) ) \triangleq \widetilde{K}^n_{\widehat{m}_0} (x^{cs}),
	\end{cases}
	\end{equation*}
	and for $x^{cs} \in X^{cs}_{m_0}(e_0\eta'_1)$,
	\begin{equation*}\label{equ:vexa} \tag{\S\S}
	\begin{cases}
	DF^1_{\widehat{m}_0} (x^{cs}, \widetilde{f}^0_{\widehat{m}_0}(\widetilde{x}_{\widehat{m}_0}(x^{cs}))) ( \id, \widetilde{K}^n_{\widehat{m}_0} (\widetilde{x}_{\widehat{m}_0}(x^{cs})) \widetilde{R}^{n}_{\widehat{m}_0}(x^{cs}) ) = \widetilde{R}^{n}_{\widehat{m}_0}(x^{cs}),\\
	DG^1_{\widehat{m}_0} (x^{cs}, \widetilde{f}^0_{\widehat{m}_0}(\widetilde{x}_{\widehat{m}_0}(x^{cs}))) ( \id, \widetilde{K}^n_{\widehat{m}_0} (\widetilde{x}_{\widehat{m}_0}(x^{cs})) \widetilde{R}^{n}_{\widehat{m}_0}(x^{cs}) ) \triangleq K^{n+1}_{\widehat{m}_0} (x^{cs}).
	\end{cases}
	\end{equation*}

	In what follows, the constant $ \widetilde{C} > 0 $ is independent of $ \widehat{m}_0 \in \widehat{K} $ and $ n \in \mathbb{N} $ but may vary from line to line. Note that $ DF^1_{\widehat{m}_0}(\cdot), DG^1_{\widehat{m}_0}(\cdot) \in C^{0,1} $ uniformly for $ \widehat{m}_0 $ by (a)(b).
	We use induction to show that $ \sup_{\widehat{m}_0 \in \widehat{K}} \hol_{\theta} K^{n}_{\widehat{m}_0}(\cdot) \leq C_* $, where $ C_* > 0 $ is independent of $ n $ and will be chosen later (see \eqref{equ:c*}). Clearly, $ \sup_{\widehat{m}_0 \in \widehat{K}} \hol_{\theta} K^{1}_{\widehat{m}_0}(\cdot) = 0 < C_* $. Assume the result holds for $ n $ and consider the case $ n + 1 $.
	For $ x_1, x_2 \in X^{cs}_{\widehat{m}_0} (\sigma^c_*) $, by \eqref{equ:vlocala}, we see
	\begin{align*}
	&~ |\widetilde{K}^n_{\widehat{m}_0}(x_1) - \widetilde{K}^n_{\widehat{m}_0}(x_2)| \\
	= &~ \left| D\widehat{G}^{cs}_{m_0} ( x_1, f^0_{\widehat{u}(\widehat{m}_0)} ( x_{\widehat{m}_0} (x_1) ) ) ( \id, K^{n}_{\widehat{u}(\widehat{m}_0)}(x_{\widehat{m}_0} (x_1)) R^{n}_{\widehat{m}_0} (x_1) ) \right. \\
	& \quad - \left. D\widehat{G}^{cs}_{m_0} ( x_2, f^0_{\widehat{u}(\widehat{m}_0)} ( x_{\widehat{m}_0} (x_2) ) ) ( \id, K^{n}_{\widehat{u}(\widehat{m}_0)}(x_{\widehat{m}_0} (x_2)) R^{n}_{\widehat{m}_0} (x_2) ) \right| \\
	\leq &~ \left| \left\{D\widehat{G}^{cs}_{m_0} ( x_1, f^0_{\widehat{u}(\widehat{m}_0)} ( x_{\widehat{m}_0} (x_1) ) ) - D\widehat{G}^{cs}_{m_0} ( x_2, f^0_{\widehat{u}(\widehat{m}_0)} ( x_{\widehat{m}_0} (x_2) ) )\right\} \right. \\
	& \quad \quad \quad \left. ( \id, K^{n}_{\widehat{u}(\widehat{m}_0)}(x_{\widehat{m}_0} (x_1)) R^{n}_{\widehat{m}_0} (x_1) ) \right| \\
	& \quad + \left| D_{x^{u}}\widehat{G}^{cs}_{m_0} ( x_2, f^0_{\widehat{u}(\widehat{m}_0)} ( x_{\widehat{m}_0} (x_2) ) ) \right. \\
	& \quad \quad \quad \left. \left\{ K^{n}_{\widehat{u}(\widehat{m}_0)}(x_{\widehat{m}_0} (x_1)) R^{n}_{\widehat{m}_0} (x_1) - K^{n}_{\widehat{u}(\widehat{m}_0)}(x_{\widehat{m}_0} (x_2)) R^{n}_{\widehat{m}_0} (x_2)\right\} \right| \\
	\leq &~ \widetilde{C}|x_1 - x_2|^{\theta} + \lambda_{u}(m_0) \left| K^{n}_{\widehat{u}(\widehat{m}_0)}(x_{\widehat{m}_0} (x_1)) R^{n}_{\widehat{m}_0} (x_1) - K^{n}_{\widehat{u}(\widehat{m}_0)}(x_{\widehat{m}_0} (x_2)) R^{n}_{\widehat{m}_0} (x_2) \right| \\
	\leq &~ \widetilde{C}|x_1 - x_2|^{\theta} + \lambda_{cs}(m_0) \lambda_{u}(m_0) \left| K^{n}_{\widehat{u}(\widehat{m}_0)}(x_{\widehat{m}_0} (x_1)) - K^{n}_{\widehat{u}(\widehat{m}_0)}(x_{\widehat{m}_0} (x_2)) \right| \\
	& \quad + \lambda_{u}(m_0) \mu_1(\widehat{u}(\widehat{m}_0)) \left| R^{n}_{\widehat{m}_0} (x_1) - R^{n}_{\widehat{m}_0} (x_2) \right|,
	\end{align*}
	and analogously,
	\begin{multline*}
	|R^n_{\widehat{m}_0}(x_1) - R^n_{\widehat{m}_0}(x_2)| \\
	 \leq \widetilde{C}|x_1 - x_2|^{\theta} + \alpha(m_0) \lambda_{cs}(m_0) \left| K^{n}_{\widehat{u}(\widehat{m}_0)}(x_{\widehat{m}_0} (x_1)) - K^{n}_{\widehat{u}(\widehat{m}_0)}(x_{\widehat{m}_0} (x_2)) \right| \\
	+ \alpha(m_0) \mu_1(\widehat{u}(\widehat{m}_0)) \left| R^{n}_{\widehat{m}_0} (x_1) - R^{n}_{\widehat{m}_0} (x_2) \right|,
	\end{multline*}
	which yields
	\begin{multline*}
	|\widetilde{K}^n_{\widehat{m}_0}(x_1) - \widetilde{K}^n_{\widehat{m}_0}(x_2)| \\
	\leq \widetilde{C}|x_1 - x_2|^{\theta} + \frac{\lambda_{cs}(m_0) \lambda_{u}(m_0)}{1 - \alpha(m_0) \mu_1(\widehat{u}(\widehat{m}_0))} \left| K^{n}_{\widehat{u}(\widehat{m}_0)}(x_{\widehat{m}_0} (x_1)) - K^{n}_{\widehat{u}(\widehat{m}_0)}(x_{\widehat{m}_0} (x_2)) \right|.
	\end{multline*}
	For $ x_1, x_2 \in X^{cs}_{\widehat{m}_0} (e_0\eta'_1) $, applying the same computation to \eqref{equ:vexa} gives
	\[
	|K^{n+1}_{\widehat{m}_0}(x_1) - K^{n+1}_{\widehat{m}_0}(x_2)| \leq \widetilde{C}|x_1 - x_2|^{\theta} + \varpi^*_1 \left| \widetilde{K}^{n}_{\widehat{u}(\widehat{m}_0)}(\widetilde{x}_{\widehat{m}_0} (x_1)) - \widetilde{K}^{n}_{\widehat{u}(\widehat{m}_0)}(\widetilde{x}_{\widehat{m}_0} (x_2)) \right|.
	\]
	Thus, for $ x_1, x_2 \in X^{cs}_{\widehat{m}_0} (e_0\eta'_1) $, we get
	\[
	|K^{n+1}_{\widehat{m}_0}(x_1) - K^{n+1}_{\widehat{m}_0}(x_2)| \leq \widetilde{C}_*|x_1 - x_2|^{\theta} \\
	+ \theta_1(m_0) \left| K^{n}_{\widehat{u}(\widehat{m}_0)}(\widehat{x}_{\widehat{m}_0}(x_1)) - K^{n}_{\widehat{u}(\widehat{m}_0)}(\widehat{x}_{\widehat{m}_0}(x_2)) \right|,
	\]
	where $ \widetilde{C}_* > 0 $ is a constant independent of $ \widehat{m}_0 \in \widehat{K} $ and $ n \in \mathbb{N} $, $ \widehat{x}_{\widehat{m}_0}(x) = x_{\widehat{m}_0} (\widetilde{x}_{\widehat{m}_0} (x)) $, and $ \theta_1(m_0) = \varpi^*_1 \frac{\lambda_{cs}(m_0) \lambda_{u}(m_0)}{1 - \alpha(m_0) \mu_1(\widehat{u}(\widehat{m}_0))} $. Note that
	\[
	\lip \widehat{x}_{\widehat{m}_0}(\cdot)|_{X^{cs}_{\phi(\widehat{m}_0)} (e_0\eta'_1)} \leq \varpi^*_1 \lambda_{cs}(m_0) \triangleq \nu(m_0).
	\]
	Take $ C_* > 0 $ such that
	\begin{equation}\label{equ:c*}
	C_* \geq \max_{t_0 \in [0,1]} \left\{ \frac{\widetilde{C}_*(1 - t_0)^{\theta} + C'_1t^{\theta}_0}{1 - \rho(1 - t_0)^{\theta}} \right\},
	\end{equation}
	where 
	\[
	C'_1 = \sup_{\widehat{m}_0 \in \widehat{K}} \lip Dg^{cs}_{\widehat{m}_0}(\cdot)|_{X^{cs}_{\phi(\widehat{m}_0)}(e_0\tilde{\eta})^{\complement}} \cdot (2\sigma_{0})^{1 - \theta}, \quad \text{and} \quad \rho = \sup_{m_0 \in K}(\nu(m_0))^{\theta} \theta_1(m_0) < 1.
	\]
	Then
	\[
	\sup_{x_1, x_2 \in X^{cs}_{\widehat{m}_0} (e_0\eta'_1)} \frac{|K^{n+1}_{\widehat{m}_0}(x_1) - K^{n+1}_{\widehat{m}_0}(x_2)|}{|x_1 - x_2|^{\theta}} \leq \widetilde{C}_{*} + (\nu(m_0))^{\theta} \theta_1(m_0) C_* \leq C_*.
	\]

	If $ x_1, x_2 \in X^{cs}_{\phi(\widehat{m}_0)} (\sigma_{0}) \setminus X^{cs}_{\phi(\widehat{m}_0)} (e_0\eta'_1) $, then $ K^{n+1}_{\widehat{m}_0}(x_i) = g^{cs}_{\widehat{m}_0}(x_i) $ for $ n \geq 1 $, and by $ X^s_{\widehat{K}_{\epsilon_{*}}}(\sigma_{*}) \setminus X^s_{\widehat{K}_{\tilde{\eta}}}(\tilde{\eta}) \in C^{1, 1} $, we know
	\[
	|K^{n+1}_{\widehat{m}_0}(x_1) -K^{n+1}_{\widehat{m}_0}(x_2) | \leq \sup_{\widehat{m}_0 \in \widehat{K}} \lip Dg^{cs}_{\widehat{m}_0}(\cdot)|_{X^{cs}_{\phi(\widehat{m}_0)}(e_0\tilde{\eta})^{\complement}} |x_1 - x_2| \leq C_* |x_1 - x_2|^{\theta}.
	\]
	If $ x_1 \in X^{cs}_{\phi(\widehat{m}_0)} (e_0\eta'_1) $ and $ x_2 \in X^{cs}_{\phi(\widehat{m}_0)} (\sigma_{0}) \setminus X^{cs}_{\phi(\widehat{m}_0)} (e_0\eta'_1) $, then there is $ 0 < t_0 < 1 $ such that for $ x'_2 = t_0x_1 + (1 - t_0)x_2 $, we have $ |x'_2| < e_0\eta'_1 $ and $ K^{n+1}_{\widehat{m}_0}(x'_2) = g^{cs}_{\widehat{m}_0}(x'_2) $, which gives
	\begin{align*}
	|K^{n+1}_{\widehat{m}_0}(x_1) -K^{n+1}_{\widehat{m}_0}(x_2) | & \leq |K^{n+1}_{\widehat{m}_0}(x_1) -K^{n+1}_{\widehat{m}_0}(x'_2)| + |Dg^0_{\widehat{m}_0}(x'_2) -Dg^0_{\widehat{m}_0}(x_2)| \\
	& \leq (\widetilde{C}_{*} + \rho C_*) |x_1 - x'_2|^{\theta} + C'_1 |x'_2 - x_2|^{\theta} \\
	& \leq (\widetilde{C}_{*} + \rho C_*)(1 - t_0)^{\theta}|x_1 - x_2|^{\theta} + C'_1t^{\theta}_0 |x_1 - x_2|^{\theta} \\
	& \leq C_* |x_1 - x_2|^{\theta}.
	\end{align*}
	As $ K^{n} \to K^0 $ in the metric $ d_{K} $, one gets $ \sup_{\widehat{m}_0 \in \widehat{K}} \hol_{\theta} K^0_{\widehat{m}_0}(\cdot) < \infty $. The proof is complete.
\end{proof}

\begin{lem}[Uniform differential]\label{lem:udiff}
	Let $ \tilde{\eta} $ and $ X^{cs}_{m_0}(e_0\tilde{\eta})^{\complement} $ be as given in \autoref{lem:C1a}.
	Suppose that $ X^s_{\widehat{K}_{\epsilon_{*}}}(\sigma_{*}) \setminus X^s_{\widehat{K}_{\tilde{\eta}}}(\tilde{\eta}) \in C^{1, u} $, i.e., $ Dg^{cs}_{\widehat{m}_0}(\cdot)|_{X^{cs}_{\phi(\widehat{m}_0)}(e_0\tilde{\eta})^{\complement}} $ (see \eqref{equ:cslocal}), $ \widehat{m} \in \widehat{K} $, are equicontinuous, and that $ \Psi \in C^{1,u} $ outside $ X^s_{\widehat{K}_{\tilde{\eta}}}(\tilde{\eta}) $, i.e., $ D\widetilde{\Psi}_{\widehat{m}_0}(\cdot)|_{X^{cs}_{\phi(\widehat{m}_0)}(e_0\tilde{\eta})^{\complement}} $ (see \eqref{equ:localbump}), $ \widehat{m} \in \widehat{K} $, are equicontinuous.
	If $ D\widehat{F}^{cs}_{m}(\cdot), D\widehat{G}^{cs}_{m}(\cdot) $, $ m \in K $, are equicontinuous, then $ W^{cs}_{loc}(K) \in C^{1, u} $, i.e., $ Df^{0}_{\widehat{m}_0}(\cdot) $, $ \widehat{m}_0 \in \widehat{K} $, are equicontinuous.
\end{lem}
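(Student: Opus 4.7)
The plan is to mimic the strategy of \autoref{lem:C1a}, but replacing H\"older norms by moduli of continuity. Specifically, we will work with the same graph transform $\varGamma_0: E_K \to E_K$ and the iterates $K^n = (\varGamma_0)^n K^1$ with $K^1 = 0$, which converge in $d_K$ to the fixed point $K^0 = Df^0$ by \autoref{lem:C1smooth}. Introduce the (joint) modulus of continuity
\[
\omega^n(t) = \sup_{\widehat{m}_0 \in \widehat{K}} \sup_{\substack{x_1, x_2 \in X^{cs}_{\phi(\widehat{m}_0)}(\sigma_0) \\ |x_1 - x_2| \leq t}} |K^n_{\widehat{m}_0}(x_1) - K^n_{\widehat{m}_0}(x_2)|,
\]
and analogously auxiliary moduli $\omega_{DF}, \omega_{DG}$ for $D\widehat{F}^{cs}_m, D\widehat{G}^{cs}_m$ (uniform in $m \in K$ by hypothesis), and $\omega_{Dg}, \omega_{D\Psi}$ for the local representations $Dg^{cs}_{\widehat{m}_0}$ and $D\widetilde{\Psi}_{\widehat{m}_0}$ restricted to the region outside $X^{cs}_{\phi(\widehat{m}_0)}(e_0 \tilde\eta)$ (which are uniformly equicontinuous by the two remaining hypotheses).

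Next, I would revisit the computation in \autoref{lem:C1a} that produced the estimate
\[
|K^{n+1}_{\widehat{m}_0}(x_1) - K^{n+1}_{\widehat{m}_0}(x_2)| \leq \widetilde{C}_* |x_1 - x_2|^\theta + \theta_1(m_0)\, |K^n_{\widehat{u}(\widehat{m}_0)}(\widehat{x}_{\widehat{m}_0}(x_1)) - K^n_{\widehat{u}(\widehat{m}_0)}(\widehat{x}_{\widehat{m}_0}(x_2))|,
\]
and replace every $|\cdot|^\theta$-bound arising from a Lipschitz bound on $D\widehat{F}^{cs}, D\widehat{G}^{cs}, DF^1, DG^1$ by the corresponding modulus-of-continuity bound. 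Because $DF^1_{\widehat{m}_0}, DG^1_{\widehat{m}_0}$ are assembled from $D\widehat{F}^{cs}, D\widehat{G}^{cs}$ via $\Psi$ and $Dg^{cs}$ (see \eqref{equ:localcut-off} and \autoref{lem:est000}), their joint modulus $\omega_{aux}(t)$ on the annular region $\{e_0 \tilde\eta \leq |x^{cs}| \leq e_0 \eta'_1\}$ can be controlled in terms of $\omega_{DF}, \omega_{DG}, \omega_{Dg}, \omega_{D\Psi}$; since $\widetilde x_{\widehat{m}_0}(\cdot)$ and $x_{\widehat{m}_0}(\cdot)$ are Lipschitz, the composition does not destroy the uniform modulus. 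This yields the recursion
\[
\omega^{n+1}(t) \leq \omega_{aux}(t) + \theta_1\, \omega^n(\nu t), \qquad \theta_1 < 1,\ \nu = \sup_{m_0 \in K}\varpi^*_1 \lambda_{cs}(m_0),
\]
valid for $x_1, x_2 \in X^{cs}_{\phi(\widehat{m}_0)}(e_0 \eta'_1)$; the two remaining cases (both points outside, or one point in and one out) are handled by the same convex-interpolation trick at a parameter $t_0 \in [0,1]$ that puts one endpoint on the boundary $|x'_2| = e_0 \eta'_1$, using directly the equicontinuity of $Dg^{cs}_{\widehat{m}_0}$.

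Iterating the recursion starting from $\omega^1 \equiv 0$ and using that $\omega^n$ is uniformly bounded (by $2 \sup_{\widehat{m}_0} \mu_1(\widehat{m}_0) < \infty$), I obtain
\[
\omega^n(t) \leq \sum_{j=0}^{n-2} \theta_1^j\, \omega_{aux}(\nu^j t).
\]
Passing to the limit $n \to \infty$ (possible since $K^n \to K^0$ uniformly, so the pointwise modulus passes to the limit) gives $\omega^0(t) \leq \omega_{aux}(t)/(1 - \theta_1)$. Since $\omega_{aux}(t) \to 0$ as $t \to 0$, we conclude that $Df^0_{\widehat{m}_0}(\cdot)$, $\widehat{m}_0 \in \widehat{K}$, are equicontinuous, i.e. $W^{cs}_{loc}(K) \in C^{1,u}$. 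The main obstacle is a bookkeeping one: packaging the equicontinuity hypotheses on $\Psi, g^{cs}, D\widehat{F}^{cs}, D\widehat{G}^{cs}$ into a single modulus $\omega_{aux}$ that survives the nonlinear composition with $\overline{\Phi}^{-1}_{m_0,\gamma}$, $\widetilde x_{\widehat{m}_0}$ and $x_{\widehat{m}_0}$ entering the definitions of $F^1, G^1$ and of the variational equations \eqref{equ:vlocala}--\eqref{equ:vexa}; the key point that makes this routine rather than delicate is that all these auxiliary maps are uniformly Lipschitz (even $C^1$) in the relevant region, so composition with them only dilates the argument of a modulus by a uniform constant.
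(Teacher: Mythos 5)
Your proposal is essentially sound and runs on the same estimates as the paper: the same variational equations \eqref{equ:vlocal}--\eqref{equ:vex}, the same contraction factor $\theta_1 = \varpi^*_1\sup_{\widehat m_0}\frac{\lambda_{cs}\lambda_u}{1-\alpha\mu_1} < 1$, and the same treatment of the region outside $X^{cs}_{\phi(\widehat m_0)}(e_0\eta'_1)$ via $Dg^{cs}_{\widehat m_0}$. The structural difference is that you iterate the graph transform ($K^n = (\varGamma_0)^n K^1$, a recursion for the moduli $\omega^n$, then pass to the limit), whereas the paper works directly with the fixed point $K^0 = Df^0$: it introduces the limit functional $\mathcal{L}_u = \lim_{r\to 0^+}\sup_{\widehat m_0}\sup_{|x_1-x_2|\le r}$, derives in one pass the inequality $\mathcal{L}_u|K^0_{\widehat m_0}(x_1)-K^0_{\widehat m_0}(x_2)| \le \theta_1\,\mathcal{L}_u|K^0_{\widehat m_0}(x_1)-K^0_{\widehat m_0}(x_2)| < \infty$, and concludes the quantity is zero by absorption. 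The fixed-point version is slightly cleaner here because the dilation of arguments by $\nu = \varpi^*_1\sup_m\lambda_{cs}(m)$ (coming from composing with $\widehat x_{\widehat m_0} = x_{\widehat m_0}\circ\widetilde x_{\widehat m_0}$) is harmless inside a limit $r\to 0^+$: a constant dilation does not change the limsup.

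One step of yours needs repair: the final bound $\omega^0(t) \le \omega_{aux}(t)/(1-\theta_1)$ does not follow from $\omega^n(t) \le \sum_{j}\theta_1^j\,\omega_{aux}(\nu^j t)$ unless $\nu \le 1$, and in this center-stable setting $\lambda_{cs}(m)$ is allowed to exceed $1$ (that is the whole point of the weak center rates), so $\nu > 1$ is possible and $\omega_{aux}(\nu^j t)$ need not be $\le \omega_{aux}(t)$. The conclusion still holds, but by the standard splitting: since $\omega_{aux}$ is bounded, first choose $J$ with $\sum_{j>J}\theta_1^j\sup\omega_{aux} < \epsilon/2$, then take $t$ small enough that $\sum_{j\le J}\theta_1^j\omega_{aux}(\nu^j t) < \epsilon/2$; this gives $\omega^0(t)\to 0$ as $t\to 0^+$, which is all that $C^{1,u}$ requires. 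With that adjustment (or by switching to the paper's direct argument on $K^0$) your proof goes through, and note that, as in the paper, no extra spectral gap beyond \textbf{(A4) (ii)} is needed -- unlike \autoref{lem:C1a}, where the factor $\nu^\theta$ forces the stronger condition.
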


\begin{proof}
	We need to show $ \mathcal{L}_{u} |K^0_{\widehat{m}_0}(x_1) - K^{0}_{\widehat{m}_0}(x_2)| = 0 $, where $ \mathcal{L}_{u} $ denotes the limit:
	\[
	\mathcal{L}_{u} \triangleq \lim_{r \to 0^+} \sup_{\widehat{m}_0 \in \widehat{K}} \sup_{ \substack{|x_1 - x_2 | \leq r \\ x_1, x_2 \in X^{cs}_{\phi(\widehat{m}_0)} (\sigma_{0}) } }.
	\]
	The proof is very similar to that of \autoref{lem:C1a}, and we provide the details for the reader's convenience. We use the notation $ |h_{\widehat{m}_0}(x_1) - h_{\widehat{m}_0}(x_2)| \leq o_{u}(1) $ if $ \mathcal{L}_{u}|h_{\widehat{m}_0}(x_1) - h_{\widehat{m}_0}(x_2)| = 0 $. Note that $ DF^1_{\widehat{m}_0}(\cdot), DG^1_{\widehat{m}_0}(\cdot) $, $ \widehat{m}_0 \in \widehat{K} $, are equicontinuous by the lemma's conditions.
	For $ x_1, x_2 \in X^{cs}_{\widehat{m}_0} (\sigma^c_*) $, by \eqref{equ:vlocal}, we see
	\begin{align*}
	&~ |\widetilde{K}^0_{\widehat{m}_0}(x_1) - \widetilde{K}^0_{\widehat{m}_0}(x_2)| \\
	= &~ \left| D\widehat{G}^{cs}_{m_0} ( x_1, f^0_{\widehat{u}(\widehat{m}_0)} ( x_{\widehat{m}_0} (x_1) ) ) ( \id, K^{0}_{\widehat{u}(\widehat{m}_0)}(x_{\widehat{m}_0} (x_1)) R^{0}_{\widehat{m}_0} (x_1) ) \right. \\
	& \quad - \left. D\widehat{G}^{cs}_{m_0} ( x_2, f^0_{\widehat{u}(\widehat{m}_0)} ( x_{\widehat{m}_0} (x_2) ) ) ( \id, K^{0}_{\widehat{u}(\widehat{m}_0)}(x_{\widehat{m}_0} (x_2)) R^{0}_{\widehat{m}_0} (x_2) ) \right| \\
	\leq &~ \left| \left\{D\widehat{G}^{cs}_{m_0} ( x_1, f^0_{\widehat{u}(\widehat{m}_0)} ( x_{\widehat{m}_0} (x_1) ) ) - D\widehat{G}^{cs}_{m_0} ( x_2, f^0_{\widehat{u}(\widehat{m}_0)} ( x_{\widehat{m}_0} (x_2) ) )\right\} \right. \\
	& \quad \quad \quad \left. ( \id, K^{0}_{\widehat{u}(\widehat{m}_0)}(x_{\widehat{m}_0} (x_1)) R^{0}_{\widehat{m}_0} (x_1) ) \right| \\
	& \quad + \left| D_{x^{u}}\widehat{G}^{cs}_{m_0} ( x_2, f^0_{\widehat{u}(\widehat{m}_0)} ( x_{\widehat{m}_0} (x_2) ) ) \right. \\
	& \quad \quad \quad \left. \left\{ K^{0}_{\widehat{u}(\widehat{m}_0)}(x_{\widehat{m}_0} (x_1)) R^{0}_{\widehat{m}_0} (x_1) - K^{0}_{\widehat{u}(\widehat{m}_0)}(x_{\widehat{m}_0} (x_2)) R^{0}_{\widehat{m}_0} (x_2)\right\} \right| \\
	\leq &~ o_{u}(1) + \lambda_{u}(m_0) \left| K^{0}_{\widehat{u}(\widehat{m}_0)}(x_{\widehat{m}_0} (x_1)) R^{0}_{\widehat{m}_0} (x_1) - K^{0}_{\widehat{u}(\widehat{m}_0)}(x_{\widehat{m}_0} (x_2)) R^{0}_{\widehat{m}_0} (x_2) \right| \\
	\leq &~ o_{u}(1) + \lambda_{cs}(m_0) \lambda_{u}(m_0) \left| K^{0}_{\widehat{u}(\widehat{m}_0)}(x_{\widehat{m}_0} (x_1)) - K^{0}_{\widehat{u}(\widehat{m}_0)}(x_{\widehat{m}_0} (x_2)) \right| \\
	& \quad + \lambda_{u}(m_0) \mu_1(\widehat{u}(\widehat{m}_0)) \left| R^{0}_{\widehat{m}_0} (x_1) - R^{0}_{\widehat{m}_0} (x_2) \right|,
	\end{align*}
	and
	\begin{multline*}
	|R^0_{\widehat{m}_0}(x_1) - R^0_{\widehat{m}_0}(x_2)| \leq o_{u}(1) + \alpha(m_0) \lambda_{cs}(m_0) \left| K^{0}_{\widehat{u}(\widehat{m}_0)}(x_{\widehat{m}_0} (x_1)) - K^{0}_{\widehat{u}(\widehat{m}_0)}(x_{\widehat{m}_0} (x_2)) \right| \\
	+ \alpha(m_0) \mu_1(\widehat{u}(\widehat{m}_0)) \left| R^{0}_{\widehat{m}_0} (x_1) - R^{0}_{\widehat{m}_0} (x_2) \right|,
	\end{multline*}
	which yields
	\begin{multline*}
	|\widetilde{K}^0_{\widehat{m}_0}(x_1) - \widetilde{K}^0_{\widehat{m}_0}(x_2)| \\
	\leq o_{u}(1) + \frac{\lambda_{cs}(m_0) \lambda_{u}(m_0)}{1 - \alpha(m_0) \mu_1(\widehat{u}(\widehat{m}_0))} \left| K^{0}_{\widehat{u}(\widehat{m}_0)}(x_{\widehat{m}_0} (x_1)) - K^{0}_{\widehat{u}(\widehat{m}_0)}(x_{\widehat{m}_0} (x_2)) \right|.
	\end{multline*}
	For $ x_1, x_2 \in X^{cs}_{\widehat{m}_0} (e_0\eta'_1) $, from \eqref{equ:vex} we also get
	\[
	|K^0_{\widehat{m}_0}(x_1) - K^0_{\widehat{m}_0}(x_2)| \leq o_{u}(1) + \varpi^*_1 \left| \widetilde{K}^{0}_{\widehat{u}(\widehat{m}_0)}(\widetilde{x}_{\widehat{m}_0} (x_1)) - \widetilde{K}^{0}_{\widehat{u}(\widehat{m}_0)}(\widetilde{x}_{\widehat{m}_0} (x_2)) \right|.
	\]
	Thus, for $ x_1, x_2 \in X^{cs}_{\widehat{m}_0} (e_0\eta'_1) $, we get
	\[
	|K^0_{\widehat{m}_0}(x_1) - K^0_{\widehat{m}_0}(x_2)| \leq o_{u}(1) \\
	+ \theta_1 \left| K^{0}_{\widehat{u}(\widehat{m}_0)}(\widehat{x}_{\widehat{m}_0}(x_1)) - K^{0}_{\widehat{u}(\widehat{m}_0)}(\widehat{x}_{\widehat{m}_0}(x_2)) \right|,
	\]
	where $ \widehat{x}_{\widehat{m}_0}(x) = x_{\widehat{m}_0} (\widetilde{x}_{\widehat{m}_0} (x)) $ and $ \theta_1 = \varpi^*_1 \sup_{\widehat{m}_0 \in \widehat{K}} \frac{ \lambda_{cs}(\phi(\widehat{m}_0)) \lambda_{u}(\phi(\widehat{m}_0)) }{ 1 - \alpha(\phi(\widehat{m}_0)) \mu_1(\widehat{u}(\widehat{m}_0)) } < 1 $. Finally, we obtain
	\begin{align*}
	\mathcal{L}_{u}|K^0_{\widehat{m}_0}(x_1) - K^0_{\widehat{m}_0}(x_2)| \leq \theta_1 \mathcal{L}_{u}|K^0_{\widehat{m}_0}(x_1) - K^0_{\widehat{m}_0}(x_2)| < \infty,
	\end{align*}
	yielding $ \mathcal{L}_{u}|K^0_{\widehat{m}_0}(x_1) - K^0_{\widehat{m}_0}(x_2)| = 0 $. The proof is complete.
\end{proof}

\section{Higher order smoothness of the center-stable manifold}\label{sub:highersmooth}
\begin{lem}[$ C^{k} $ smoothness]\label{lem:high}
	Assume the following conditions hold, where $ \tilde{\eta}, X^{cs}_{m_0}(e_0\tilde{\eta})^{\complement} $ are as given in \autoref{lem:C1a}, and $ k \in \mathbb{N} $.
	\begin{enumerate}[(a)]
		\item $ X^s_{\widehat{K}_{\epsilon_{*}}}(\sigma_{*}) \setminus X^s_{\widehat{K}_{\tilde{\eta}}}(\tilde{\eta}) \in C^{k-1, 1} \cap C^{k} $, i.e., for the local representations $ g^{cs}_{\widehat{m}_0}(\cdot) $ (see \eqref{equ:cslocal}) of $X^s_{\widehat{K}_{\epsilon_{*}}}(\sigma_{*})$ at $ \widehat{m}_0 \in \widehat{K} $, $ g^{cs}_{\widehat{m}_0}(\cdot)|_{X^{cs}_{\phi(\widehat{m}_0)}(e_0\tilde{\eta})^{\complement}} \in C^k$ and
		\[
		\sup_{\widehat{m}_0 \in \widehat{K}} \lip D^{i}g^{cs}_{\widehat{m}_0}(\cdot)|_{X^{cs}_{\phi(\widehat{m}_0)}(e_0\tilde{\eta})^{\complement}} < \infty, \quad i = 1, 2, \ldots, k-1;
		\]

		\item $ \Psi \in C^{k-1,1} \cap C^{k} $ outside $ X^s_{\widehat{K}_{\tilde{\eta}}}(\tilde{\eta}) $, i.e., for the local representations $ \widetilde{\Psi}_{\widehat{m}_0}(\cdot) $ (see \eqref{equ:localbump}), $ \widehat{m}_0 \in \widehat{K} $, $ \widetilde{\Psi}_{\widehat{m}_0}(\cdot)|_{X^{cs}_{\phi(\widehat{m}_0)}(e_0\tilde{\eta})^{\complement}} \in C^{k} $ and $ \sup_{\widehat{m}_0 \in \widehat{K}} \lip D^i\widetilde{\Psi}_{\widehat{m}_0}(\cdot)|_{X^{cs}_{\phi(\widehat{m}_0)}(e_0\tilde{\eta})^{\complement}} < \infty $, $ i = 1, 2, \ldots, k-1 $;

		\item $ \widehat{F}^{cs}_{m}(\cdot), \widehat{G}^{cs}_{m}(\cdot) $ are $ C^{k-1,1} \cap C^{k} $ uniformly for $ m \in K $, specifically
		\[
		\max\left\{\sup_{m \in {K}} \lip D^{i}\widehat{F}^{cs}_{m}(\cdot), \sup_{m \in {K}} \lip D^{i}\widehat{G}^{cs}_{m}(\cdot): i = 1, 2, \ldots, k-1\right\} < \infty;
		\]

		\item (Spectral gap condition) $ \sup_{m \in K} \lambda^{k}_{cs}(m) \lambda_{u}(m) \vartheta_1(m) < 1 $, where $ \vartheta_1(\cdot) $ is defined by \eqref{equ:qqq}.
	\end{enumerate}
	Then $ W^{cs}_{loc}(K) \in C^{k-1,1} \cap C^{k} $, i.e., $ f^{0}_{\widehat{m}_0}(\cdot) \in C^{k} $ and $ \sup_{\widehat{m}_0 \in \widehat{K}} \lip D^{i}f^{0}_{\widehat{m}_0}(\cdot) < \infty $, $ i = 1, 2, \ldots, k-1 $.
\end{lem}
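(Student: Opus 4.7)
The plan is induction on $k$, using the formally differentiated equations \eqref{equ:local0} and \eqref{equ:extension} to set up a graph-transform fixed point for $D^{k}f^{0}$ on the bundle $L^{(k)}(X^{cs},X^{u})$ of continuous $k$-multilinear maps, exactly as was done in \autoref{sub:pre} for the first derivative. The base case $k=1$ is \autoref{lem:C1a} with $\theta=1$, and lemmas \ref{lem:C1smooth}, \ref{lem:C1a}, \ref{lem:udiff} will serve as templates for the existence, Lipschitz, and continuity arguments respectively; the inductive hypothesis supplies that $f^{0}_{\widehat{m}_{0}}\in C^{k-1,1}$ with derivatives bounded uniformly in $\widehat{m}_{0}\in\widehat{K}$, together with uniform bounds on all lower derivatives of $\widetilde{f}^{0}_{\widehat{m}_{0}}, x_{\widehat{m}_{0}},\widetilde{x}_{\widehat{m}_{0}}$ and of the local trivialisations $F^{0}_{\widehat m_{0}}, G^{0}_{\widehat m_{0}}, F^{1}_{\widehat m_{0}}, G^{1}_{\widehat m_{0}}$ from \eqref{equ:localcut-off}.

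First I would differentiate the four defining relations $k-1$ more times by Faà di Bruno. Writing the formal $k$-th derivative $D^{k}f^{0}_{\widehat{m}_{0}}(x^{cs})$ as $K^{(k)}_{\widehat{m}_{0}}(x^{cs})\in L^{(k)}(X^{cs}_{m_{0}},X^{u}_{m_{0}})$, the equation decouples into a linear relation for an auxiliary $R^{(k)}_{\widehat{m}_{0}}\in L^{(k)}(X^{cs}_{m_{0}},X^{cs}_{u(m_{0})})$ followed by a linear relation for $K^{(k)}_{\widehat{m}_{0}}$ itself, of the schematic shape
\begin{align*}
R^{(k)}_{\widehat{m}_{0}} &= D_{x^{cs}}\widehat F^{cs}(\,\cdot\,)(\mathrm{diag}) + D_{x^{u}}\widehat F^{cs}(\,\cdot\,)\bigl[K^{(k)}_{\widehat u(\widehat m_{0})}\circ (R^{0})^{\otimes k}\bigr] + \Phi^{F}_{k},\\
\widetilde K^{(k)}_{\widehat{m}_{0}} &= D_{x^{u}}\widehat G^{cs}(\,\cdot\,)\bigl[K^{(k)}_{\widehat u(\widehat m_{0})}\circ (R^{0})^{\otimes k}\bigr] + \Phi^{G}_{k},
\end{align*}
and a dual pair involving $DF^{1},DG^{1}$ passing from $\widetilde K^{(k)}$ to $K^{(k)}_{\widehat{m}_{0}}$; here $\Phi^{F}_{k},\Phi^{G}_{k}$ collect all Faà di Bruno remainders that involve only derivatives of $f^{0},\widetilde f^{0},x,\widetilde x$ of order $\leq k-1$, which by the inductive hypothesis are bounded and Lipschitz uniformly in $\widehat{m}_{0}$. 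Inverting the diagonal terms using \textbf{(A3) (a)} (as in \autoref{lem:KK0}) gives a well-defined graph transform
\[
\varGamma_{k}:E^{(k)}_{K}\to E^{(k)}_{K},\qquad E^{(k)}_{K}=\{K^{(k)}:\,\|K^{(k)}_{\widehat m_{0}}(x)\|\leq C_{k},\,\text{continuous}\},
\]
where $C_{k}$ is chosen large so that, outside $X^{cs}_{m_{0}}(e_{0}\eta_{1}')$, the section $K^{(k)}$ coincides with $D^{k}g^{cs}_{\widehat{m}_{0}}$ (bounded by assumption (a)) and inside, $C_{k}$ absorbs the inhomogeneity $\Phi^{F,G}_{k}$. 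The crucial contraction estimate, mimicking \autoref{lem:lipest}, is
\[
\mathrm{Lip}\,\varGamma_{k}\;\leq\;\varpi_{1}^{*}\sup_{m\in K}\lambda_{cs}(m)^{k}\lambda_{u}(m)\,\vartheta_{1}(m)\;<\;1,
\]
where the $k$-th power of $\lambda_{cs}$ appears because the unknown $K^{(k)}$ enters composed with $k$ copies of $R^{0}\sim\lambda_{cs}$. This is exactly the spectral gap in hypothesis (d), so $\varGamma_{k}$ has a unique fixed point $K^{(k)}\in E^{(k)}_{K}$.

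Next I would identify $K^{(k)}$ with $D^{k}f^{0}$. Define the telescoping remainder
\[
\mathcal Q_{k}(\widehat{m}_{0},x',x)=D^{k-1}f^{0}_{\widehat{m}_{0}}(x')-D^{k-1}f^{0}_{\widehat{m}_{0}}(x)-K^{(k)}_{\widehat{m}_{0}}(x)(x'-x),
\]
and derive, following the computation in \autoref{slem:aa} of \autoref{lem:C1smooth}, the self-referential inequality
\[
\limsup_{x'\to x}\frac{|\mathcal Q_{k}(\widehat{m}_{0},x',x)|}{|x'-x|}\;\leq\;\theta_{k}\cdot \sup_{\widehat m_{0},x}\limsup_{x'\to x}\frac{|\mathcal Q_{k}(\widehat u(\widehat m_{0}),x',x)|}{|x'-x|},\quad \theta_{k}=\sup_{m}\lambda_{cs}(m)^{k}\lambda_{u}(m)\vartheta_{1}(m)<1,
\]
after checking the supremum is finite (which uses the inductive $C^{k-1,1}$ bound on $f^{0}$, $\widetilde f^{0}$ and the uniform Lipschitz bounds on $D^{k-1}g^{cs}$ and $D^{k-1}\widetilde\Psi$ on the truncation annulus). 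Thus $D^{k-1}f^{0}_{\widehat{m}_{0}}$ is differentiable everywhere with derivative $K^{(k)}_{\widehat{m}_{0}}$, giving $f^{0}\in C^{k}$. The uniform Lipschitz bound on $D^{k-1}f^{0}$ (i.e.\ the $C^{k-1,1}$ part of the conclusion) follows by an iterative scheme parallel to the proof of \autoref{lem:C1a}: start with $K^{(k),1}_{\widehat{m}_{0}}$ taken to be $D^{k}g^{cs}_{\widehat{m}_{0}}$ on the whole fibre (bounded Lipschitz by (a)), set $K^{(k),n+1}=\varGamma_{k}K^{(k),n}$, and show inductively $\sup_{\widehat m_{0}}\mathrm{Lip}\,K^{(k),n}_{\widehat{m}_{0}}\leq \widehat C_{*}$ via the same geometric-series argument; the matching across the interface $\partial X^{cs}_{m_{0}}(e_{0}\eta_{1}')$ is handled by interpolating through an intermediate point exactly as in the proof of \autoref{lem:C1a}.

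The main obstacle I anticipate is purely combinatorial: correctly isolating the \emph{principal} term $D_{x^{u}}\widehat G^{cs}\cdot K^{(k)}_{\widehat u(\widehat m_{0})}\circ(R^{0})^{\otimes k}$ from the Faà di Bruno expansion of $D^{k}[\widehat G^{cs}(x^{cs},f^{0}\circ x)]$ and verifying that every other summand in $\Phi^{F,G}_{k}$ depends only on lower-order derivatives---that is, checking rigorously that the unknown $D^{k}f^{0}$ appears exactly once per equation and always in the slot to which the linearised stable graph transform $R^{0}\otimes\cdots\otimes R^{0}$ is applied. Once this is done, all the analytic ingredients (the contraction on $E^{(k)}_{K}$, the Taylor-remainder self-reference, the truncation-interface gluing) are direct adaptations of the $k=1$ arguments already in \autoref{sub:pre}--\autoref{sub:C1smooth}, and the spectral gap (d) is exactly tight to close the contraction.
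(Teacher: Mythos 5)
Your proposal follows essentially the same route as the paper: differentiate the invariance relations \eqref{equ:local0} and \eqref{equ:extension} $k$ times via Fa\`a di Bruno, isolate the principal terms $K^{[k]}\circ(R^{0})^{k}$ and $K^{0}\circ R^{[k]}$, run a graph transform on bounded continuous sections of $L^{k}(X^{cs},X^{u})$ whose contraction constant is $\varpi^{*}_{1}\sup_{m}\lambda^{k}_{cs}(m)\lambda_{u}(m)\vartheta_{1}(m)<1$ by hypothesis (d), identify the fixed point with $D^{k}f^{0}$ through the remainder $\mathcal{Q}$ argument of \autoref{lem:C1smooth}, and obtain the Lipschitz bound on $D^{k-1}f^{0}$ by the iteration-plus-interface-gluing scheme of \autoref{lem:C1a}. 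The one adjustment needed is ordering: the uniform Lipschitz bound on $D^{k-1}f^{0}$ cannot be part of the inductive hypothesis (the lemma at level $k-1$ only gives $C^{k-2,1}\cap C^{k-1}$, and this bound requires the level-$k$ gap $\sup_{m}\lambda^{k}_{cs}(m)\lambda_{u}(m)\vartheta_{1}(m)<1$), so it must be established first within the case-$k$ argument --- exactly the paper's step (I) --- since the identification of $K^{[k]}$ with $D^{k}f^{0}$ uses it; your iterative scheme proves precisely this and is independent of the later steps, so it suffices to move it to the front.
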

\begin{proof}
	For brevity, we use the notation 
	\[
	L^{k}_{h}(Z_1, Z_2) \triangleq L_{h}(\underbrace{Z_1 \times \cdots \times Z_1}_{k}, Z_2),
	\]
	where $ Z_i $ are vector bundles over $ M_{i} $, $ i = 1,2 $, and $ h: M_1 \to M_2 $ is a map. Here, $ L_{h}(Z_1, Z_2) $ denotes the space of all vector bundle maps from $ Z_1 $ to $ Z_2 $ over $ h $.

	First, note that by (a) and (b), $ DF^1_{\widehat{m}_0}(\cdot), DG^1_{\widehat{m}_0}(\cdot) \in C^{k-1,1} \cap C^{k} $ outside $ X^{cs}_{\phi(\widehat{m}_0)} (e_0\tilde{\eta}) $ uniformly for $ \widehat{m}_0 \in \widehat{K} $. Moreover, we have $ \sup_{m \in K} \lambda^{i}_{cs}(m) \lambda_{cs}(m) \lambda_{u}(m) \vartheta_1(m) < 1 $, $ i = 1,2,\ldots,k-1 $. The proof proceeds by induction and follows essentially the same approach as \autoref{lem:C1smooth} and \autoref{lem:C1a}. We provide a sketch below.

	As before, if $ D^{k}f^{0}_{\widehat{m}_0}(x^{cs}) = K^{[k]}_{\widehat{m}_0}(x^{cs}) $ exists, then it must satisfy the following ``variant equations'' obtained by taking the $ k $th order derivative of \eqref{equ:local0} and \eqref{equ:extension} with respect to $ x^{cs} $:
	\[\tag{\dag}\label{equ:vv1}
	\begin{cases}
	\begin{split}
	W^k_{1,\widehat{m}_0}(x^{cs}) + D_{x^{u}}\widehat{F}_{m_0}( x^{cs}, x^{u} )  & \left\{   K^{[k]}_{\widehat{u}(\widehat{m}_0)}  (x_{\widehat{m}_0}(x^{cs})) (R^0_{\widehat{m}_0})^k(x^{cs}) \right.  \\
	&\quad ~ \left. + K^0_{\widehat{u}(\widehat{m}_0)} (x_{\widehat{m}_0}(x^{cs})) R^{[k]}_{\widehat{m}_0}(x^{cs})  \right\} = R^{[k]}_{\widehat{m}_0}(x^{cs}),
	\end{split}\\
	\begin{split}
	W^k_{1,\widehat{m}_0}(x^{cs}) + D_{x^{u}}\widehat{G}_{m_0}( x^{cs}, x^{u} ) & \left\{ K^{[k]}_{\widehat{u}(\widehat{m}_0)} \right.  (x_{\widehat{m}_0}(x^{cs})) (R^0_{\widehat{m}_0})^k(x^{cs}) \\
	&\quad ~ \left. + K^0_{\widehat{u}(\widehat{m}_0)} (x_{\widehat{m}_0}(x^{cs})) R^{[k]}_{\widehat{m}_0}(x^{cs})  \right\} \triangleq \widetilde{K}^{[k]}_{\widehat{m}_0}(x^{cs}),
	\end{split}
	\end{cases}
	\]
	where $ x^{cs} \in X^{cs}_{m_0}(\sigma^c_{*}) $, $ x^{u} = f^0_{\widehat{u}(\widehat{m}_0)} ( x_{\widehat{m}_0} (x^{cs}) ) $, $ \widehat{m}_0 \in \widehat{K} $, and $ m_0 = \phi(\widehat{m}_0) $; and
	\[\tag{\ddag}\label{equ:vv2}
	\begin{cases}
	\begin{split}
	\widetilde{W}^k_{1,\widehat{m}_0}(x^{cs}) + D_{\widetilde{x}^{u}}{F}^{1}_{\widehat{m}_0}( x^{cs}, \widetilde{x}^{u} ) & \left\{ \widetilde{K}^{[k]}_{\widehat{m}_0} \right.  (\widetilde{x}_{\widehat{m}_0}(x^{cs})) (\widetilde{R}^0_{\widehat{m}_0})^k(x^{cs})  \\
	&\quad ~ \left. + \widetilde{K}^0_{\widehat{m}_0} (\widetilde{x}_{\widehat{m}_0}(x^{cs})) \widetilde{R}^{[k]}_{\widehat{m}_0}(x^{cs})  \right\} = \widetilde{R}^{[k]}_{\widehat{m}_0}(x^{cs}),
	\end{split}\\
	\begin{split}
	\widetilde{W}^k_{1,\widehat{m}_0}(x^{cs}) + D_{\widetilde{x}^{u}}{G}^1_{\widehat{m}_0}( x^{cs}, \widetilde{x}^{u} ) & \left\{ \widetilde{K}^{[k]}_{\widehat{m}_0} \right.  (\widetilde{x}_{\widehat{m}_0}(x^{cs})) (\widetilde{R}^0_{\widehat{m}_0})^k(x^{cs}) \\
	&\quad ~ \left. + \widetilde{K}^0_{\widehat{m}_0} (\widetilde{x}_{\widehat{m}_0}(x^{cs})) \widetilde{R}^{[k]}_{\widehat{m}_0}(x^{cs})  \right\} = K^{[k]}_{\widehat{m}_0}(x^{cs}),
	\end{split}
	\end{cases}
	\]
	where $ x^{cs} \in X^{cs}_{m_0}(e_0\eta'_1) $, $ \widetilde{x}^{u} = \widetilde{f}^0_{\widehat{m}_0} ( \widetilde{x}_{\widehat{m}_0} (x^{cs}) ) $. Here,
	\begin{enumerate}[(1)]
		\item $ (R^0_m)^k(x) = (R^0_m(x), \cdots, R^0_m(x)) $ ($ k $ components), $ (\widetilde{R}^0_m)^k(x) = (\widetilde{R}^0_m(x), \cdots, \widetilde{R}^0_m(x)) $ ($ k $ components), $ R^{[k]}_{\widehat{m}_0}(x^{cs}) \in L^{k}(X^{cs}_{m_0}, X^{cs}_{u(m_0)}) $, $ \widetilde{R}^{[k]}_{\widehat{m}_0}(x^{cs}) \in L^{k}(X^{cs}_{m_0}, X^{cs}_{m_0}) $, and $ K^{[k]} \in L^{k}_{f^0} (\Upsilon^{cs}, \Upsilon^{u}) $ (we write $ K^{[k]}_{\widehat{m}_0}(x^{cs}) = K^{[k]}(\widehat{m}_0, x^{cs}) $);

		\item $ W^k_{i,{\widehat{m}_0}} $, $ i = 1,2 $, consist of finite sums of terms that can be explicitly calculated with the help of the Fa\`{a} di Bruno formula (see e.g. \cite{FdlLM06}); the non-constant factors include $ D^j \widehat{F}_{m_0} $, $ D^j \widehat{G}_{m_0} $ ($ 1 \leq j \leq k $), $ D^j f^{0}_{\widehat{m}_0} $ ($ 1 \leq j < k $), and $ D^j x_{\widehat{m}_0} $ ($ 1 \leq j < k $);

		\item Similar for $ \widetilde{W}^k_{i,{\widehat{m}_0}} $, $ i = 1,2 $.
	\end{enumerate}
	The proof proceeds in the following steps. We already know that the lemma holds for $ k = 1 $ by \autoref{lem:C1smooth}. Now assume it holds for $ k - 1 $ and consider the case $ k \geq 2 $.

	(I) The first step is to show $ \sup_{\widehat{m}_0} \lip K^{[k-1]}_{\widehat{m}_0}(\cdot) < \infty $, which follows essentially the same argument as in \autoref{lem:C1a}.

	(II) Following the same procedure as in \autoref{sub:pre}, we construct a graph transform and find a unique $ K^{[k]} \in L^{k}_{f^0} (\Upsilon^{cs}, \Upsilon^{u}) $ satisfying \eqref{equ:vv1} and \eqref{equ:vv2}, such that 
	\[
	\sup_{\widehat{m}_0} \sup_{x^{cs}} |K^{[k]}_{\widehat{m}_0}(x^{cs})| < \infty,
	\]
	and $ x^{cs} \mapsto K^{[k]}_{\widehat{m}_0}(x^{cs}) $ is continuous. Note the following:
	\begin{enumerate}[(i)]
		\item By induction, the terms $ W^k_{i,\widehat{m}_0} $ and $ \widetilde{W}^k_{i,{\widehat{m}_0}} $, $ i = 1,2 $, are bounded uniformly for $ \widehat{m}_0 \in \widehat{K} $.
		\item This graph transform is Lipschitz with Lipschitz constant less than 
		\[
		\varpi^*_1 \sup_{m \in K} \lambda^{k}_{cs}(m) \lambda_{u}(m) \vartheta_1(m) < 1 
		\]
		in the supremum metric on $ E^{[k]}_{K, C_1} $, where
		\[
		E^{[k]}_{K, C_1} = \{ K^{[k]} \in L^{k,c}_{f^0} (\Upsilon^{cs}, \Upsilon^{u}): \sup_{(\widehat{m}_0, x^{cs})} |K^{[k]}_{\widehat{m}_0}(x^{cs})| \leq C_1, K^{[k]}_{\widehat{m}_0}(\cdot) \in C^0,  \widehat{m}_0 \in \widehat{K} \},
		\]
		for some suitable constant $ C_1 > 0 $.
		\item In fact, the case $ k \geq 2 $ is simpler than $ k = 1 $ due to the separation of the term 
		\[
		K^{[k]}_{\widehat{u}(\widehat{m}_0)} (x_{\widehat{m}_0}(x^{cs})) (R^0_{\widehat{m}_0})^k(x^{cs}) + K^0_{\widehat{u}(\widehat{m}_0)} (x_{\widehat{m}_0}(x^{cs})) R^{[k]}_{\widehat{m}_0}(x^{cs}) ,
		\] 
		so only estimates of $ |D\widehat{F}_{m_0}|, |D\widehat{G}_{m_0}| $ are needed (i.e., the (B) condition is not used here).
	\end{enumerate}

	(III) Finally, using an analogous argument to that for \autoref{lem:C1smooth}, we obtain $ D^{k}f^{0}_{\widehat{m}_0}(x^{cs}) = K^{[k]}_{\widehat{m}_0}(x^{cs}) $ by showing that
	\begin{multline*}
		\limsup_{x' \to x} \frac{|\mathcal{Q}^{[k]}(\widehat{m}_0, x', x)|}{|x' - x|} \\
		\leq \varpi^*_1 \frac{\lambda^{k}_{cs}(m_0)\lambda_{u} (m_0)}{1 - \alpha(m_0) \mu_1(\widehat{u}(\widehat{m}_0))} \sup_{x \in X^{cs}_{u(m_0)}(\sigma_{0})} \limsup_{x' \to x}  \frac{|\mathcal{Q}^{[k]}(\widehat{u}(\widehat{m}_0), x', x)|}{|x' - x|},
	\end{multline*}
	
	where
	\[
	\mathcal{Q}^{[k]}(\widehat{m}_0, x', x) = D^{k-1}f^0_{\widehat{m}_0} (x') - D^{k-1}f^{0}_{\widehat{m}_0} (x) - K^{[k]}_{\widehat{m}_0} (x) (x' - x), \quad x', x \in X^{cs}_{m_0}(\sigma_{0}).
	\]
	This estimate can be established through a computation similar to that of \autoref{slem:aa}, combined with the induction hypothesis for $ k - 1 $. Note that the Lipschitz continuity of $ D^{k-1}f^{0}_{\widehat{m}_0}(\cdot) = K^{[k-1]}_{\widehat{m}_0}(\cdot) $ is essential for obtaining this bound.
\end{proof}

\begin{rmk}
	Under \autoref{lem:high} (a) (b) with (c) replaced by that $ \widehat{F}^{cs}_{m}(\cdot), \widehat{G}^{cs}_{m}(\cdot) $ are $ C^{k - 1, r} $ uniform $ m \in K $ and $ \sup_{m \in K} \lambda^{k - 1 + r}_{cs}(m) \lambda_{u}(m) \vartheta_1(m) < 1 $, then $ W^{cs}_{loc}(K) \in C^{k-1, r} $ (see e.g. \autoref{lem:C1a}), where $ r \in (0,1] $.
\end{rmk}

\begin{rmk}[Limited case]\label{rmk:inflowingSmooth}
	\begin{enumerate}[(a)]
		\item Under assumption ($ \star\star $) in \autoref{sub:limited}, using a $ C^1 $ and $ C_1 $-Lipschitz bump function $ \Psi $ in $ \widehat{\Sigma} $ near $ K $ (i.e., $ \Psi $ satisfies (a) (b) in \autoref{sub:LipC1bm} with $ X^s_{m} = \{0\} $ for all $ m \in \Sigma $), and assuming the same conditions as in \autoref{sub:LipC1bm} hold, we can also choose $ W^{cs}_{loc}(K) \in C^1 $. Moreover, in \autoref{lem:C1a}, \autoref{lem:udiff}, and \autoref{lem:high}, the corresponding conditions (a) and (b) are replaced as follows:
		\begin{itemize}
			\item In (a), let $ X^{c}_{m_0}(e_0\tilde{\eta})^{\complement} \triangleq X^{c}_{m_0}(c_2\epsilon_{*}) \oplus X^{s}_{m_0}(e_2\sigma_{*}) \setminus X^{c}_{\widehat{m}_0} (e_0\tilde{\eta}) $, and replace $ X^s_{\widehat{K}_{\epsilon_{*}}}(\sigma_{*}) \setminus X^s_{\widehat{K}_{\tilde{\eta}}}(\tilde{\eta}) $ by $ X^s_{\widehat{K}_{\epsilon_{*}}}(\sigma_{*}) \setminus X^s_{\widehat{K}_{\tilde{\eta}}}(\sigma_{*}) $;
			\item In (b), let $ X^{c}_{m_0}(e_0\tilde{\eta})^{\complement} \triangleq X^{c}_{m_0}(c_2\epsilon_{*}) \setminus X^{c}_{\widehat{m}_0} (e_0\tilde{\eta}) $, and replace $ X^s_{\widehat{K}_{\epsilon_{*}}}(\sigma_{*}) \setminus X^s_{\widehat{K}_{\tilde{\eta}}}(\tilde{\eta}) $ by $ \widehat{K}_{\epsilon_{*}} \setminus \widehat{K}_{\tilde{\eta}} $.
		\end{itemize}
		Note that in this case, \eqref{equ:local0} holds for all $ x^{cs} = (x^c, x^s) \in X^{c}_{m_0}(\sigma^c_{*}) \oplus X^{s}_{m_0}(e_2\sigma_{*}) $, \eqref{equ:vlocal} is considered for all $ x^{cs} = (x^c, x^s) \in X^{c}_{m_0}(\sigma^c_{*}) \oplus X^{s}_{m_0}(e_2\sigma_{*}) $, and \eqref{equ:vex} is considered for all $ x^{cs} = (x^c, x^s) \in X^{c}_{m_0}(e_0\eta'_1) \oplus X^{s}_{m_0}(e_2\sigma_{*}) $. The proofs are barely changed.

	\item (Strictly inflowing) Under assumption ($ \bullet\bullet $) in \autoref{sub:limited0}, together with (A1)--(A3) and (A4) (i)(ii) with only $ \varsigma_0 \geq 1 $ (in (A3) (a)), if $ \epsilon_{*}, \chi(\epsilon_{*}), \eta $ are small, then the center-stable manifold $ W^{cs}_{loc} (K) $ given in \autoref{rmk:inflowing} \eqref{it:i2} is $ C^1 $. Note that in this case, the regularity results are very similar to those for normal hyperbolicity studied in \cite{Che18b}; no smooth truncation and no smoothness condition on the space $ \Sigma $ are required. In \autoref{lem:C1a}, \autoref{lem:udiff}, and \autoref{lem:high}, conditions (a)(b) are no longer needed.

	The proofs are analogous to the unlimited case, where \eqref{equ:local0} holds for all $ x^{cs} = (x^c, x^s) \in X^{c}_{m_0}(c_2\epsilon_{*}) \oplus X^{s}_{m_0}(e_2\sigma_{*}) $ with $ \widetilde{f}^0_{\widehat{m}_0} = f^0_{\widehat{m}_0} $, \eqref{equ:vlocal} is considered for all $ x^{cs} = (x^c, x^s) \in X^{c}_{m_0}(c_2\epsilon_{*}) \oplus X^{s}_{m_0}(e_2\sigma_{*}) $, and \eqref{equ:vex} is not needed. These results are also direct consequences of \cite[Section 6.3]{Che18a}.

	\item \label{it:CS3} Under Assumption (CS) in \autoref{sub:limited*} with (A2) (a)(ii) and (A4) (i)(ii) in \autoref{subsec:main} (for $ H^{\delta} $), the manifold $ W^{cs, \delta}_{loc} (K) $ given in \autoref{rmk:inflowing} \eqref{it:i3} is $ C^1 $. Note also that in \autoref{lem:C1a}, \autoref{lem:udiff}, and \autoref{lem:high}, conditions (a)(b) are not needed. All the regularity results are also direct consequences of \cite[Section 6.3]{Che18a}.
	\end{enumerate}
\end{rmk}

\section{Existence of $ C^{0,1} \cap C^{1} $ bump functions: brief discussion} \label{sub:bump}

Throughout this section, we make the following assumption.

\vspace{.5em}
\noindent{Assumption}. $ \Sigma \in C^1 $ and $ m \mapsto \Pi^{\kappa}_{m} $ ($ \kappa = s, c, u $) are $ C^1 $ (in the immersed topology of $ \Sigma $).
\vspace{.5em}

As before, let $ \epsilon_{*}, \chi(\epsilon_{*}) $ be sufficiently small such that \autoref{lem:lip2} holds. Let $ \chi_{*} < 1/16 $ and take $ c_i, e_i $, $ i = 1,2 $, as in \autoref{lem:lip2}. Note that $ \chi_{*} \to 0 $ and $ c_i, e_i \to 1 $ as $ \epsilon_{*}, \chi(\epsilon_{*}) \to 0 $. Take $ \sigma_{*} = 2\chi_{*} \epsilon_{*} $. In the following, we focus on the existence of $ C^{0,1} \cap C^{1} $ bump functions in $ X^s_{\widehat{K}_{\epsilon_{*}}}(\sigma_{*}) $; the existence in $ \widehat{\Sigma} $ is simpler by taking $ \Pi^{s}_{m} = 0 $ for all $ m $ (see \autoref{exa:case2}).

Without loss of generality, assume $ \widehat{\Sigma} = \widehat{K}_{2\epsilon_{*}} $. For brevity, write
\[
\Sigma^s \triangleq X^s_{\widehat{K}_{\epsilon_{*}}}(\sigma_{*}).
\]
By our assumption, we know $ \Sigma^s \in C^1 $, and so the local representations $ g^{cs}_{\widehat{m}_0}(\cdot) $ (see \eqref{equ:cslocal}) are $ C^{1} $ for all $ \widehat{m}_0 \in \widehat{K} $.
Note that $ \Sigma $ with $ K_{\epsilon_{*}} $ also satisfies (H1)--(H4) in \autoref{subsec:main} for small $ \epsilon_{*} $. So we can assume that $ g^{cs}_{\widehat{m}} : X^{cs}_{\phi(\widehat{m})}(2\epsilon_{*}) \to X^{u}_{\phi(\widehat{m})} $ can be defined for $ \widehat{m} \in \widehat{K}_{\epsilon_{*}} $, i.e.,
\begin{equation}\label{equ:cslocal00}
\phi(\widehat{m}') + \overline{x}^s{'} = \phi(\widehat{m}) + x^{cs} + g^{cs}_{\widehat{m}}(x^{cs}),
\end{equation}
where $ \widehat{m}' \in \widehat{U}_{\widehat{m}} $, $ \overline{x}^{s}{'} \in X^{s}_{\phi(\widehat{m}')} $, and $ x^{cs} \in X^{cs}_{\phi(\widehat{m})} $. Moreover, $ \Sigma^s \subset \bigcup_{\widehat{m} \in \widehat{K}_{\epsilon_{*}}}\graph g^{cs}_{\widehat{m}} $, and $ g^{cs}_{\widehat{m}}(\cdot) \in C^1 $ for $ \widehat{m} \in \widehat{K}_{\epsilon_{*}} $. For $ (\widehat{m}, \overline{x}^s) \in \Sigma^s $, set
\[
A_{(\widehat{m}, \overline{x}^s)} \triangleq Dg^{cs}_{\widehat{m}}(0, \overline{x}^s) \in L(X^{cs}_{\phi(\widehat{m})}, X^{u}_{\phi(\widehat{m})}),
\]
and
\[
\mathbb{X}^{cs} = \bigsqcup_{(\widehat{m}, \overline{x}^s) \in \Sigma^s} \graph A_{(\widehat{m}, \overline{x}^s)} \subset \mathbb{G}(X).
\]
As before, the norm of $ X^{cs}_{\phi(\widehat{m})} $ is given by $ |x^{cs}| = \max\{ |x^s|, |x^c| \} $, where $ x^{cs} = (x^c , x^s) \in X^{c}_{\phi(\widehat{m})} \oplus X^{s}_{\phi(\widehat{m})} $.

\begin{lem}\label{lem:csP}
	$ T\Sigma^s = \mathbb{X}^{cs} $, and there is a Finsler structure on $ \mathbb{X}^{cs} $ such that $ \Sigma^s $ is a Finsler manifold (in the sense of Palais; cf. \cite{Pal66} or \cite[Appendix D.2]{Che18a}). Furthermore, if $ d_{F} $ denotes the Finsler metric on each component of $ \Sigma^s $, then for $ \widehat{m}_0 \in \widehat{K} $ and $ (\widehat{m}_i, \overline{x}^s_i) \in X^s_{\widehat{U}_{\widehat{m}_0}(\epsilon_*)} (\sigma_*) $, $ i = 1,2 $, with $ m_j = \phi(\widehat{m}_j) $, $ j = 0,1,2 $, we have
	\begin{multline*}
	(1 + \chi_{*})^{-1} \max\{ |\Pi^c_{m_0} ({m}_1 - {m}_2) |, |\Pi^s_{m_0} (\overline{x}^s_1 - \overline{x}^s_2)| \} \\
	\leq d_{F}((\widehat{m}_1, \overline{x}^s_1), (\widehat{m}_2, \overline{x}^s_2)) \leq (1 + \chi_{*}) \max\{ |\Pi^c_{m_0} ({m}_1 - {m}_2) |, |\Pi^s_{m_0} (\overline{x}^s_1 - \overline{x}^s_2)| \}.
	\end{multline*}
\end{lem}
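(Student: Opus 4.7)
The plan is to establish the two claims in the natural order: first identify the tangent bundle via the local charts $g^{cs}_{\widehat{m}}$, then define a Finsler norm on each tangent fiber by transporting the max norm from $X^{cs}_{\phi(\widehat{m})}$, and finally use the uniform smallness of $\lip(\Phi_{m_0,\gamma}-I)$ from \autoref{lem:lip2} together with the Lipschitz estimates on the projections from \textbf{(H2)} to compare $d_F$ with the max norm of the projected differences.

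For the tangent bundle identification, I would fix $(\widehat{m},\overline{x}^s)\in\Sigma^s$ and observe that, by \eqref{equ:cslocal00} with $\widehat{m}'=\widehat{m}$ and $\overline{x}^s{}'=\overline{x}^s$, the parameter value in the chart $g^{cs}_{\widehat{m}}$ corresponding to $(\widehat{m},\overline{x}^s)$ is precisely $x^{cs}=(0,\overline{x}^s)$, and that $g^{cs}_{\widehat{m}}(0,\overline{x}^s)=0$. Since the parameterization $x^{cs}\mapsto\phi(\widehat{m})+x^{cs}+g^{cs}_{\widehat{m}}(x^{cs})$ is $C^1$ (by $\Sigma\in C^1$ together with the $C^1$ dependence of $m\mapsto\Pi^\kappa_m$), its differential at $(0,\overline{x}^s)$ sends $y^{cs}\in X^{cs}_{\phi(\widehat{m})}$ to $y^{cs}+A_{(\widehat{m},\overline{x}^s)}y^{cs}$; this gives $T_{(\widehat{m},\overline{x}^s)}\Sigma^s=\graph A_{(\widehat{m},\overline{x}^s)}$, which is exactly the fiber of $\mathbb{X}^{cs}$.

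For the Finsler structure, on each fiber $\graph A_{(\widehat{m},\overline{x}^s)}$ I define $\|y^{cs}+A_{(\widehat{m},\overline{x}^s)}y^{cs}\|_{(\widehat{m},\overline{x}^s)}\triangleq|y^{cs}|$, the max norm on $X^{cs}_{\phi(\widehat{m})}$. The continuity of $(\widehat{m},\overline{x}^s)\mapsto A_{(\widehat{m},\overline{x}^s)}$ (inherited from the $C^1$ character of $g^{cs}_{\widehat{m}}$ and the continuous dependence of the projections) together with the fiberwise continuity of $|\cdot|$ yields Palais's axioms; this produces the Finsler metric $d_F$ on each component of $\Sigma^s$ as the infimum of lengths of piecewise $C^1$ paths measured by $\|\cdot\|$.

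The main work, and the anticipated obstacle, is the bilipschitz estimate. For the upper bound, I would take $\widehat{m}_i\in\widehat{U}_{\widehat{m}_0}(\epsilon_*)$ and represent both points $(\widehat{m}_i,\overline{x}^s_i)$ by parameters $x^{cs}_i\in X^{cs}_{\phi(\widehat{m}_0)}$ in the chart at $\widehat{m}_0$; the straight-line path in parameter space transports to a $C^1$ path in $\Sigma^s$, and differentiating it gives a tangent vector whose Finsler norm equals $|D\Phi^{cs}(t)\cdot(x^{cs}_1-x^{cs}_2)|$ read in the max norm at the endpoint's chart. The key point is that the change-of-chart maps between the $\widehat{m}_0$-chart and the endpoint's own chart are close to the identity, quantitatively controlled by $\chi_*$ via \autoref{lem:lip2}; integrating gives $d_F\leq(1+\chi_*)|x^{cs}_1-x^{cs}_2|$, which by \eqref{equ:estimates} is bounded by $(1+\chi_*)\max\{|\Pi^c_{m_0}(m_1-m_2)|,|\Pi^s_{m_0}(\overline{x}^s_1-\overline{x}^s_2)|\}$ (absorbing constants into $\chi_*$). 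For the lower bound, I would apply the projections $\Pi^c_{m_0}$ and $\Pi^s_{m_0}$ to any admissible path; the Lipschitz bounds from \textbf{(H2)} and the uniform closeness of $A_{(\widehat{m},\overline{x}^s)}$ to $0$ (again since $g^{cs}_{\widehat{m}}(0,\cdot)$ has small differential, being tangent to a piece of $\Sigma^s$ which is $\chi_*$-close to $X^s$-direction in the chart) show that each projection decreases length by at most a factor $1+\chi_*$, yielding $(1+\chi_*)^{-1}\max\{\ldots\}\leq d_F$. Care is needed to ensure these estimates are uniform in $\widehat{m}_0$ and independent of the particular path, and to handle the fact that $\widehat{m}_i$ may lie in distinct local charts; using $\widehat{m}_i\in\widehat{U}_{\widehat{m}_0}(\epsilon_*)$ and the uniform bounds built into \textbf{(H1)–(H4)} makes all constants uniform, and the same smallness of $\chi_*$ that made \autoref{lem:lip2} work gives the stated two-sided bounds.
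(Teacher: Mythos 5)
Your overall route is the same as the paper's: identify $T\Sigma^s$ with $\mathbb{X}^{cs}$ through the $C^1$ graphs $g^{cs}_{\widehat{m}}$ (the paper computes the Whitney tangent quotient directly, which is the same computation as differentiating the parameterization, using $\lip g^{cs}_{\widehat{m}}\leq\chi_*$ from \autoref{lem:lip2}), put the norm $|y^{cs}|$ on the fiber $\graph A_{(\widehat{m},\overline{x}^s)}$, and then compare $d_F$ with the chart coordinates at $\widehat{m}_0$. The paper finishes by checking that the chart $\psi_{\widehat{m}_0}:(\phi(\widehat{m}'),\overline{x}^s{}')\mapsto x^{cs}$ of \eqref{equ:cslocal00} satisfies $|D\psi_{\widehat{m}_0}|\leq 1+\chi_*$ and $|D\psi_{\widehat{m}_0}^{-1}|\leq 1+\chi_*$ and then quoting \cite[Lemma 2.4]{JS11}, which converts these two-sided derivative bounds into the two-sided comparison of $d_F$ with the chart-norm distance; your upper bound via the straight-line path in the $\widehat{m}_0$-chart is exactly the upper half of that standard argument.

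The gap is in your lower bound. You claim that for \emph{any} admissible path joining the two points, applying $\Pi^c_{m_0}$ and $\Pi^s_{m_0}$ decreases Finsler length by at most a factor $1+\chi_*$. The pointwise comparison $|\Pi^{c}_{m_0}v|\leq(1+\chi_*)|v|_F$ (and likewise for $\Pi^s_{m_0}$) uses $|A_{(\widehat{m},\overline{x}^s)}|\leq\chi_*$ together with $|\Pi^\kappa_{\phi(\widehat{m})}-\Pi^\kappa_{m_0}|\leq L|\phi(\widehat{m})-m_0|$, and the latter is available from \textbf{(H2)} only while the base point of the path stays in the \emph{same plaque} $\widehat{U}_{\widehat{m}_0}$: since $\Sigma$ is merely immersed, a competitor path in the component of $\Sigma^s$ may leave $X^s_{\widehat{U}_{\widehat{m}_0}(\epsilon_*)}(\sigma_*)$ — even while remaining ambiently close to $m_0$, by passing onto another sheet — and there the comparison factor degrades to one of order $\widetilde{M}$, not $1+\chi_*$. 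Nor can such paths be dismissed by a crude length count: the endpoints are allowed to be separated by a distance comparable to the chart radius $\epsilon_*$, so "a path leaving the chart is longer than the claimed upper bound" is not automatic. What is missing is precisely the localization step supplied by the cited \cite[Lemma 2.4]{JS11} (or an explicit argument that near-minimizing paths between points of $X^s_{\widehat{U}_{\widehat{m}_0}(\epsilon_*)}(\sigma_*)$ stay inside the larger chart on $\widehat{U}_{\widehat{m}_0}(2\epsilon_*)$, using that Finsler length dominates ambient displacement and the appropriate ratio between where the endpoints live and the chart size). Once paths are confined to the chart, your projection estimate (or equivalently the bound $|D\psi_{\widehat{m}_0}^{-1}|\leq 1+\chi_*$) together with the cross-term control of \eqref{equ:estimates} does yield the stated lower bound; without that step the claimed constant $(1+\chi_*)^{-1}$ is not justified. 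As a minor point, your verification of Palais's axioms should also include the small computation (done explicitly in the paper) that nearby fiber norms are $(1\pm\varepsilon)$-equivalent, which again rests on \textbf{(H2)} and the smallness of $A$ rather than on continuity alone.
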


\begin{proof}
	The proof is straightforward. First, from \autoref{lem:lip2} (2), we know $ \lip g^{cs}_{\widehat{m}}(\cdot) \leq \chi_{*} $, and in particular
	\[
	|A_{(\widehat{m}, \overline{x}^s)}| \leq \chi_{*}, \quad \mathbb{X}^{cs}_{(\widehat{m}, \overline{x}^s)} \oplus X^u_{\phi(\widehat{m})} = X.
	\]
	Set $ \widetilde{\Pi}^{cs}_{(\widehat{m}, \overline{x}^s)} = \Pi_{X^u_{\phi(\widehat{m})}} (\mathbb{X}^{cs}_{(\widehat{m}, \overline{x}^s)}) $. For $ (\widehat{m}, \overline{x}^s) \in \Sigma^s $, by \eqref{equ:cslocal00}, we get
	\begin{align*}
	&~ \frac{|\phi(\widehat{m}') + \overline{x}^s{'} - \phi(\widehat{m}) - \overline{x}^s - \widetilde{\Pi}^{cs}_{(\widehat{m}, \overline{x}^s)}(\phi(\widehat{m}') + \overline{x}^s{'} - \phi(\widehat{m}) - \overline{x}^s) |}{|\phi(\widehat{m}') + \overline{x}^s{'} - \phi(\widehat{m}) - \overline{x}^s|} \\
	= &~ \frac{|x^{cs} + g^{cs}_{\widehat{m}}(x^{cs}) - \overline{x}^{s} - g^{cs}_{\widehat{m}}(0, \overline{x}^s) - (x^{cs} - \overline{x}^{s}) - Dg^{cs}_{\widehat{m}}(0, \overline{x}^s)(x^{cs} - \overline{x}^{s})|}{|x^{cs} + g^{cs}_{\widehat{m}}(x^{cs}) - \overline{x}^{s} - g^{cs}_{\widehat{m}}(0, \overline{x}^s)|} \\
	\leq &~ (1 - \chi_{*})^{-1} \frac{ |g^{cs}_{\widehat{m}}(x^{cs}) - g^{cs}_{\widehat{m}}(0, \overline{x}^s) - Dg^{cs}_{\widehat{m}}(0, \overline{x}^s)(x^{cs} - \overline{x}^{s}) |}{|x^{cs} - \overline{x}^{s}|} \to 0,
	\end{align*}
	as $ x^{cs} \to \overline{x}^s $ ($ \Leftrightarrow \phi(\widehat{m}') + \overline{x}^s{'} \to \phi(\widehat{m}) + \overline{x}^s $). This shows that $ T_{(\widehat{m}, \overline{x}^s)} \Sigma^s = \mathbb{X}^{cs}_{(\widehat{m}, \overline{x}^s)} $.

	Define a Finsler structure on $ \mathbb{X}^{cs} $ by setting $ |v|_{(\widehat{m}, \overline{x}^s)} = |x| $, where $ v = (x, A_{(\widehat{m}, \overline{x}^s)}x) \in T_{(\widehat{m}, \overline{x}^s)} \Sigma^s = \mathbb{X}^{cs}_{(\widehat{m}, \overline{x}^s)} $. This makes $ \Sigma^s $ a Finsler manifold in the sense of \emph{Palais} (see \cite{Pal66}). Indeed, for any $ \varepsilon > 0 $, there is $ \delta > 0 $ such that for any $ x^{cs}, \overline{x}^s \in X^{cs}_{\phi(\widehat{m})} $ with $ |x^{cs} - \overline{x}^s| \leq \delta $, if we write
	\[
	y + Dg^{cs}_{\widehat{m}} (x^{cs})y = v + A_{(\widehat{m}', \overline{x}^s{'})}v, \quad y \in X^{cs}_{\phi(\widehat{m})}, v \in X^{cs}_{\phi(\widehat{m}')},
	\]
	where $ (\widehat{m}', \overline{x}^s{'}) $ is defined by \eqref{equ:cslocal00}, then
	\[
	|y| \leq 1 \Rightarrow 1 - \varepsilon \leq |v| \leq 1 + \varepsilon;
	\]
	here, note that
	\[
	v = \Pi^{cs}_{\phi(\widehat{m}')} (y + Dg^{cs}_{\widehat{m}} (x^{cs})y) = y + (\Pi^{cs}_{\phi(\widehat{m}')} - \Pi^{cs}_{\phi(\widehat{m})})y + (\Pi^{cs}_{\phi(\widehat{m}')} - \Pi^{cs}_{\phi(\widehat{m})}) Dg^{cs}_{\widehat{m}} (x^{cs})y.
	\]
	(In fact, $ \mathbb{X}^{cs} $ is $ C^{0,1} $-uniform in the sense of \cite{Che18a}.) In particular, for $ \widehat{m} \in \widehat{K} $, the $ C^1 $ local chart $ \psi_{\widehat{m}} $ at $ \widehat{m} $ ($ = (\widehat{m}, 0) $) defined by
	\[
	\psi_{\widehat{m}} : X^s_{\widehat{U}_{\widehat{m}}(\epsilon_*)} (\sigma_*) \to X^{cs}_{\phi(\widehat{m})}, \quad (\phi(\widehat{m}'), \overline{x}^s{'}) \mapsto x^{cs},
	\]
	where $ x^{cs} $ is given by \eqref{equ:cslocal00}, satisfies
	\[
	|D\psi_{\widehat{m}} (\phi(\widehat{m}'), \overline{x}^s{'})| \leq 1 + \chi_{*}, \quad |D\psi^{-1}_{\widehat{m}} (\psi_{\widehat{m}} (\phi(\widehat{m}'), \overline{x}^s{'}))| \leq 1 + \chi_{*}.
	\]
	This easily yields the estimate give in the lemma (see e.g. \cite[Lemma 2.4]{JS11}). The proof is complete.
\end{proof}

The existence of a $ C^{0,1} \cap C^1 $ bump function $ \Psi $ satisfying (a) and (b) in \autoref{sub:LipC1bm} relies on the following regular extension property of $ \mathbb{X}^{cs} $; see also \cite{JS13, JS11}.

\begin{defi}\label{def:property*}
	A vector bundle $ \mathbb{X} $ over $ \mathcal{M} $, where each fiber $ \mathbb{X}_{m} $ is endowed with a norm $ |\cdot|_{m} $ ($ m \in \mathcal{M} $), satisfies the \emph{$ C_1 $-uniform property ($ *^{k} $)} if there is a constant $ C_1 \geq 1 $ (independent of $ m \in \mathcal{M} $) such that for any $ m \in \mathcal{M} $, any Lipschitz function $ f_{m}: \mathbb{X}_{m} \to \mathbb{R} $, and any $ \varepsilon > 0 $, there is a $ C^{0,1} \cap C^{k} $ function $ K_{m}: \mathbb{X}_{m} \to \mathbb{R} $ satisfying
	\[
	|f_{m}(x) - K_{m}(x)| < \varepsilon \quad \forall x \in \mathbb{X}_{m}, \quad \text{and}~ \lip K_{m}(\cdot) \leq C_1 \lip f_{m}(\cdot).
	\]
	If $ \mathcal{M} = \{ m \} $, we say $ \mathbb{X}_{m} $ satisfies the \emph{$ C_1 $-property ($ *^{k} $)}. Let $ X_0 $ be a Banach space and $ \mathcal{M} $ be the set of all equivalent norms $ |\cdot|_\gamma $ of $ X_0 $, with $ \mathbb{X}_{|\cdot|_\gamma} = X_0 $ endowed with norm $ |\cdot|_\gamma $. If $ \mathbb{X} $ satisfies the $ C_1 $-uniform property ($ *^{k} $), then we say $ X_0 $ admits the \emph{$ C_1 $-uniform property ($ *^{k} $)}.
\end{defi}

\begin{exa}\label{exa:property*}
	\begin{enumerate}[(1)]
		\item $ \mathbb{R}^{n} $ satisfies the $ 1 $-uniform property ($ *^{k} $) for any $ n \in \mathbb{N} $. In fact, $ K_{m}(\cdot) $ can be constructed using convolutions.

		\item A Hilbert space endowed with any Hilbert norm admits the $ 1 $-property ($ *^{1} $) (see e.g. \cite{LL86}). Moreover, one can require $ K_{m}(\cdot) \in C^{1,1} $.

		\item For any set $ \Gamma $, $ c_0(\Gamma) $ with the usual sup norm satisfies the $ 1 $-property ($ *^{\infty} $) (see e.g. \cite[Chapter 7, Theorem 74]{HJ14}).

		\item If a separable Banach space $ X_0 $ admits a $ C^{k} \cap C^{0,1} $ bump function, then $ X_0 $ fulfills the $ (3+\epsilon) $-uniform property ($ *^{k} $) for any $ \epsilon > 0 $. This fact was announced in \cite[Remark 3.2 (3)]{JS11} for a universal constant $ C_1 \leq 602 $, that is, $ X_0 $ satisfies the $ C_1 $-property ($ *^{k} $) with $ C_1 \leq 602 $, independent of the choice of equivalent norm on $ X_0 $. We conjecture that this constant can be taken as $ 1 $, but currently only have a proof for $ C_1 = 3 + \epsilon $ (see \autoref{app:separable}).

		\item Assume for each $ \mathbb{X}_{m} $, there is a bi-Lipschitz map $ \Phi_{m} : \mathbb{X}_{m} \to \Phi_{m}(\mathbb{X}_{m}) \subset c_0(\Gamma) $ with $ \lip \Phi^{\pm 1}_{m} \leq \sqrt{C_1} $ and $ e^{*}_{\gamma} \circ \Phi_{m} \in C^{k} $ for all $ \gamma \in \Gamma $. Then $ \mathbb{X} $ satisfies the $ C_1 $-uniform property ($ *^k $); see \cite[Theorem 7]{HJ10}.
	\end{enumerate}
\end{exa}

In general, aside from the cases mentioned in \autoref{exa:property*}, we do not know any results concerning the uniform property $ (*^k) $ for non-separable Banach spaces.

\begin{thm}\label{thm:general}
	Let $ M $ be a $ C^k $ Finsler manifold in the sense of Palais (cf. \cite{Pal66}) with Finsler metric $ d_{F} $ in each component of $ M $ (possibly with boundary). If $ TM $ satisfies the $ C_1 $-uniform property ($ *^k $) (see \autoref{def:property*}), then for any $ C'_1 > C_1 $, any Lipschitz function $ f: M \to \mathbb{R} $, and any $ \varepsilon > 0 $, there is a $ C^{0,1} \cap C^{k} $ function $ g: M \to \mathbb{R} $ such that
	\[
	|f(m) - g(m)| < \varepsilon, \quad  \forall m \in M, \quad \text{and} \quad \lip g(\cdot) \leq C'_1 \lip f(\cdot).
	\]
\end{thm}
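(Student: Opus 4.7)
The plan is to build $g$ locally through the model-space approximations provided by property $(*^k)$, and glue the pieces together via a $C^k$ partition of unity, using the standard ``subtract $f$ from the partition-of-unity error'' trick to keep the Lipschitz constant almost equal to $C_1 L$ where $L=\lip f$. More precisely, write $L=\lip f$ and fix $\delta>0$ with $C_1(1+\delta)^2 + \delta < C'_1$. Since $M$ is a Finsler manifold in the sense of Palais, for each $m\in M$ we can pick a $C^k$ chart $\psi_m:U_m\to \psi_m(U_m)\subset \mathbb{X}_m=T_mM$ which is $(1+\delta)$-bi-Lipschitz from $(U_m,d_F)$ to $(\psi_m(U_m),|\cdot|_m)$. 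Working component by component (each component is paracompact since it is metrizable), refine $\{U_m\}$ to a locally finite open cover $\{V_i\}_{i\in I}$ with associated chart $\psi_i=\psi_{m_i}|_{V_i}$, and let $N$ be a uniform bound on the intersection multiplicity (shrinking further if needed so that each $V_i$ meets only finitely many $V_j$).

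Next, I need a $C^{0,1}\cap C^k$ partition of unity $\{\rho_i\}$ subordinate to $\{V_i\}$. Such a partition exists because the model spaces $\mathbb{X}_m$ have property $(*^k)$ and hence admit $C^{0,1}\cap C^k$ bump functions (apply property $(*^k)$ to a Lipschitz bump constructed from the norm), which transfer through the charts $\psi_i$ to give local $C^{0,1}\cap C^k$ bumps on $M$; paracompactness of each component then produces the desired partition of unity. Let $L_i=\lip \rho_i$ (finite by the $C^{0,1}$ property). For each $i$, extend $f\circ\psi_i^{-1}$ from $\psi_i(V_i)$ to a Lipschitz function $\tilde f_i:\mathbb{X}_{m_i}\to \mathbb{R}$ with $\lip \tilde f_i\le (1+\delta)L$ by McShane extension, and apply $C_1$-uniform property $(*^k)$ to produce $h_i:\mathbb{X}_{m_i}\to\mathbb{R}$ of class $C^{0,1}\cap C^k$ such that $\lip h_i\le C_1(1+\delta)L$ and $|h_i-\tilde f_i|<\eta_i$ uniformly, where the numbers $\eta_i>0$ are chosen in advance so that $\eta_i<\varepsilon/2^i$ and $N\,L_i\,\eta_i<\delta L/2^i$. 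Define
\begin{equation*}
g(x)=\sum_{i\in I}\rho_i(x)\,h_i(\psi_i(x)),\qquad x\in M,
\end{equation*}
which is $C^{0,1}\cap C^k$ by local finiteness and by the regularity of each summand.

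For the approximation estimate, since $\sum_i\rho_i=1$, one has $|g(x)-f(x)|\le \sum_i\rho_i(x)\,|h_i\circ\psi_i(x)-f(x)|\le \sum_i\rho_i(x)\eta_i<\varepsilon$. For the Lipschitz estimate, I will use the key identity
\begin{equation*}
g(x)-g(y)=\sum_i \rho_i(x)\bigl(h_i\circ\psi_i(x)-h_i\circ\psi_i(y)\bigr)+\sum_i \bigl(\rho_i(x)-\rho_i(y)\bigr)\bigl(h_i\circ\psi_i(y)-f(y)\bigr),
\end{equation*}
valid because $\sum_i(\rho_i(x)-\rho_i(y))f(y)=0$. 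For $x,y$ in a common $V_i$, the first sum contributes at most $\max_i \lip(h_i\circ\psi_i)\cdot d_F(x,y)\le C_1(1+\delta)^2 L\,d_F(x,y)$ using the bi-Lipschitz bound on $\psi_i$, while the second contributes at most $\sum_{i:\,V_i\cap\{x,y\}\ne\emptyset}L_i\,\eta_i\,d_F(x,y)\le \delta L\,d_F(x,y)$ by the choice of $\eta_i$ and local finiteness. Hence $\lip g\le (C_1(1+\delta)^2+\delta)L<C'_1 L$, as required, and a standard chaining argument along a path in each component extends the estimate to non-close $x,y$.

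The main obstacle I foresee is the choice of partition of unity with the correct quantitative control, namely producing $\{\rho_i\}$ of class $C^{0,1}\cap C^k$ with Lipschitz constants $L_i$ that do not blow up faster than we can compensate by the $\eta_i$ chosen from property $(*^k)$. This is handled by first fixing the cover $\{V_i\}$ and its $\rho_i$ (which determines the $L_i$ once and for all), and only then selecting the $\eta_i$ in the application of $(*^k)$; the freedom in the approximation parameter is what makes the ``error-against-$f$'' trick work. A secondary technical point is the construction of $C^{0,1}\cap C^k$ bumps on $M$ itself from the fiberwise $(*^k)$ hypothesis, but this transfers cleanly through the Palais charts since those charts are bi-Lipschitz with distortion close to $1$.
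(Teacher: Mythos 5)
Your strategy is essentially the one the paper uses in Appendix B (following Jim\'enez-Sevilla and S\'anchez-Gonz\'alez): Palais charts with distortion close to $1$, a $C^{k}\cap C^{0,1}$ partition of unity manufactured from property $(*^k)$, local approximants of $f$ in each chart produced by $(*^k)$ (after a McShane-type extension, which the paper leaves implicit), and the key identity that uses $\sum_i D\rho_i=0$ to charge the partition-derivative term against $h_i\circ\psi_i-f$ rather than against $h_i\circ\psi_i$ itself. The approximation and Lipschitz estimates you write are the same as the paper's, phrased with difference quotients instead of $\sup|Dg|$.

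There is, however, one step that fails as written. You assume a locally finite refinement with a \emph{uniform} bound $N$ on the intersection multiplicity and you index it countably (your choices $\eta_i<\varepsilon/2^i$ and $NL_i\eta_i<\delta L/2^i$). Neither is available in general: a cover with uniformly bounded multiplicity would force finite covering dimension, which infinite-dimensional Banach manifolds do not have, and a component of a Palais Finsler manifold modeled on a non-separable space (e.g.\ $M=X$ itself) need not be Lindel\"of, so the index set cannot be taken countable. The bound $N$ is in fact unnecessary (for a countable index set, $\sum_i L_i\eta_i\le\delta L$ already controls your second sum), so the genuine missing ingredient is how to distribute summable weights over a possibly uncountable family. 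The paper resolves exactly this with M.~E.~Rudin's refinement lemma: a refinement indexed by $\mathbb{N}\times M$ such that for each fixed level $n$ the sets $W_{n,m}$ are pairwise disjoint and, near any point, at most one set of each level is met, while $d_H(V_{n,m},W_{n,m}^{\complement})>0$ (this positive separation is also what you need, and do not mention, in order for property $(*^k)$ to yield Lipschitz bump functions with prescribed $1$-set and $0$-set). With that structure the weight $2^{-n}$ depends only on the level, the pointwise sums converge with no countability of the $M$-part, and your argument then goes through and coincides with the paper's.
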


\begin{proof}
	Since $ M $ is a $ C^k $ Finsler manifold in the sense of Palais, for any $ K > 1 $, $ M $ is a $ C^1 $ Finsler $ K $-weak-uniform manifold in the sense of \cite{JS11}. One can then apply \cite[Theorem 3.4]{JS11}, although our theorem setting is slightly more general: in that theorem, $ M $ is modeled on a Banach space $ X $ admitting the $ C_1 $-uniform property ($ *^k $). See also \autoref{app:general} for a sketch of the proof.
\end{proof}

We say a norm is $ C^{k} $ if it is $ C^{k} $ on $ X \setminus \{ 0 \} $, and it is $ C^{k-1,1} $ if it is $ C^{k-1,1} $ on $ X(r) \setminus X(r_1) $ ($ 0 < r_1 < r $). In the following, we consider the existence of a $ C^{0,1} \cap C^1 $ bump function $ \Psi $ satisfying (a)--(b) in \autoref{sub:LipC1bm}, focusing on two cases: the special case $ \Sigma = K $ (see \autoref{exa:case1}) and the general case $ K \subset \Sigma $ (see \autoref{exa:case2}).

\begin{exa}[$ \Sigma = K $]\label{exa:case1}
	If $ \Sigma = K $ (see e.g. \cite{CLY00, NS12, KNS15, JLZ17}), it suffices to consider $ (\widehat{m}, \overline{x}^s) \mapsto |\overline{x}^s| $. There exist at least the following approaches to construct a $ C^{0,1} \cap C^1 $ bump function $ \Psi $:
	\begin{enumerate}[(a)]
		\item If the norm of $ X $ is $ C^1 $, then the function defined in \eqref{equ:cutoff}, i.e., $ \Psi (\widehat{m}, \overline{x}) = \ell(|\overline{x}|) $, already fulfills (a)--(b) in \autoref{sub:LipC1bm} with $ C_1 $ sufficiently close to 1. Examples include the Hilbert norm in Hilbert spaces and the usual norms in $ L^p(\Omega) $ ($ 1 < p < \infty $) or $ W^{k, p}(\Omega) $ ($ \Omega $ open in $ \mathbb{R}^n $).

		\item If $ X^* $ is separable, then the original norm of $ X $ can be approximated by a $ C^1 $ norm (see e.g. \cite[Chapter 7, Theorem 103]{HJ14}), and thus case (a) applies.

		\item If $ X^* $ is weakly compactly generated (e.g., $ X $ is reflexive), then $ X $ admits a $ C^1 $ norm (see e.g. \cite{HJ14}). Therefore, such $ \Psi $ always exists (though the constant $ C_1 $ may not be close to $ 1 $).

		\item The above cases do not apply to $ X = C[0,1] $ or $ L^1(\Omega) $, since $ C[0,1] $ and $ L^1(\Omega) $ do not admit $ C^1 $ norms. If $ \mathbb{X}^{cs} = T\Sigma^s $ satisfies the $ C_0 $-uniform property ($ *^1 $), then using \autoref{thm:general} and \autoref{lem:csP}, one can obtain such $ \Psi $ with $ C_1 $ sufficiently close to $ C_0 $ by approximating $ (\widehat{m}, \overline{x}^s) \mapsto |\overline{x}^s| $ and composing with $ \ell(\cdot) $. In particular, if $ X^{cs}_{m} $ ($ m \in K $) are all separable Banach spaces admitting $ C^1 $ bump functions (i.e., the duals of $ X^{cs}_{m} $ are separable), then $ C_1 $ can be taken sufficiently close to $ 3 $; if $ X^{cs}_{m} $ ($ m \in K $) are all finite-dimensional, then $ C_1 $ can be close to $ 1 $.
	\end{enumerate}

	In the above cases, we have $ \widehat{\psi} \in C^1(\Sigma^s, [0,1]) $ such that
	\[
	\widehat{\psi} (\widehat{m}, \overline{x}^s) = \begin{cases}
	1, & \quad (\widehat{m}, \overline{x}^s) \in X^s_{\widehat{\Sigma}} (\eta_2),\\
	0, & \quad (\widehat{m}, \overline{x}^s) \in X^s_{\widehat{\Sigma}} (\sigma_*) \setminus X^s_{\widehat{\Sigma}} (\eta_1),
	\end{cases}
	\]
	where $ 0 < \eta_2 < \eta_1 < \sigma_{*} $, and if $ \widehat{m}_1, \widehat{m}_2 \in \widehat{U}_{\widehat{m}_0}(\epsilon_{*}) $ and $ \widehat{m}_0 \in \widehat{K} $, then
	\[
	|\widehat{\psi} (\widehat{m}_1, \overline{x}^s_1) - \widehat{\psi} (\widehat{m}_2, \overline{x}^s_2) | \leq \frac{C_1}{\eta_1 - \eta_2} \max\{ |\Pi^c_{\phi(\widehat{m}_0)}(\phi(\widehat{m}_1) - \phi(\widehat{m}_2))|, |\Pi^s_{m_0} (\overline{x}^s_1 - \overline{x}^s_2)| \}.
	\]
	Here, $ C_1 $ can be close to $ 1 $ in cases (a) (b), and $ C_1 \to C_0 $ as $ \epsilon_{*}, \chi(\epsilon_*) \to 0 $ in case (d).
\end{exa}

\begin{exa}[$ K \subset \Sigma $]\label{exa:case2}
	Consider the general case $ K \subset \Sigma $ (see e.g. \cite{BC16, CLY00a}). Here, we focus on approximating $ \widehat{d}(\cdot, \widehat{K}) $ (see \eqref{equ:dP}), i.e., the existence of $ C^{0,1} \cap C^{1} $ bump functions in $ \widehat{\Sigma} $.
	
	If $ T\widehat{\Sigma} \approx X^{c}_{\widehat{\Sigma}} $ (with Finsler structure given by $ |x|_{m} = |x| $, $ x \in X^{c}_{m} $) satisfies the $ C_0 $-uniform property ($ *^1 $), then using \autoref{thm:general} and \autoref{lem:csP} to approximate $ \widehat{d}(\cdot, \widehat{K}) $, one obtains $ \widehat{\varphi} \in C^1(\widehat{\Sigma}, [0,1]) $ (by composing with a suitable bump function of $ \mathbb{R} $) such that 
	\[
	\widehat{\varphi} (\widehat{m}) = \begin{cases}
	1, & \quad \widehat{m} \in \widehat{K}_{\eta_2},\\
	0, & \quad \widehat{m} \in \widehat{\Sigma} \setminus \widehat{K}_{\eta_1},
	\end{cases}
	\]
	and if $ \widehat{m}_1, \widehat{m}_2 \in \widehat{U}_{\widehat{m}_0}(\epsilon_{*}) $ and $ \widehat{m}_0 \in \widehat{K} $, then
	\[
	|\widehat{\varphi} (\widehat{m}_1) - \widehat{\varphi} (\widehat{m}_2) | \leq \frac{C_1}{\eta_1 - \eta_2} |\Pi^c_{\phi(\widehat{m}_0)}(\phi(\widehat{m}_1) - \phi(\widehat{m}_2))|,
	\]
	where $ C_1 \to C_0 $ as $ \epsilon_{*}, \chi(\epsilon_*) \to 0 $ and $ 0 < \eta_2 < \eta_1 $.

	In particular, if $ \widehat{\Sigma} $ is finite-dimensional, or it is a Riemannian manifold with Riemannian metric $ \approx d_{F}|_{\Sigma} $ ($ d_{F} $ is the Finsler metric given in \autoref{lem:csP}) (for example, when $ X $ is a Hilbert space), then $ C_1 $ can be taken close to $ 1 $. Alternatively, if $ X^{c}_{m} $ ($ m \in \Sigma $) are all separable Banach spaces admitting $ C^1 $ bump functions (i.e., the duals of $ X^{c}_{m} $ are separable), then the constant $ C_1 $ can be taken sufficiently close to $ 3 $. It is worth noting that such functions $ \widehat{\varphi} $ in Riemannian manifolds, first introduced as \emph{uniformly bumpable} functions in \cite{AFL05}, have proven to be important and were further investigated in \cite{JS11}.
\end{exa}

Combining \autoref{exa:case1} and \autoref{exa:case2} (together with \autoref{exa:property*}), we have the following statements.
\begin{cor}\label{cor:spaces}
	Consider the following cases:
	\begin{enumerate}[(a)]
		\item \label{0a} $ X $ is a Hilbert space;
		\item the original norm of $ X $ is $ C^1 $ or $ X^* $ is separable, with finite-dimensional $ \Sigma $ or $ K = \Sigma $;
		\item \label{0c} $ X^{cs}_{m} $, $ m \in \Sigma $, are all finite-dimensional;
		\item $ X^* $ is weakly compactly generated, and one of the following holds: $ K = \Sigma $, or $ (X^c_{m})^* $ ($ m \in \Sigma $) are separable, or $ X^{c}_{\widehat{\Sigma}} $ satisfies the $ C_0 $-uniform property ($ *^1 $) (see \autoref{def:property*});
		\item $ X^* $ is separable;
		\item there is a bi-Lipschitz map $ \Phi : X \to \Phi(X) \subset c_0(\Gamma) $ with $ e^{*}_{\gamma} \circ \Phi \in C^{1} $ for all $ \gamma \in \Gamma $ (where $ \Gamma $ is a set);
		\item $ (X^{cs}_{m})^* $, $ m \in \Sigma $, are separable;
		\item $ X^{cs}_{\widehat{\Sigma}} $ satisfies the $ C_0 $-uniform property ($ *^1 $) (see \autoref{def:property*}).
	\end{enumerate}
	Then there is a function $ \Psi $ satisfying (a)--(b) in \autoref{sub:LipC1bm}. Moreover, if one of cases \eqref{0a}--\eqref{0c} holds, then the constant $ C_1 $ can be taken sufficiently close to $ 1 $.
\end{cor}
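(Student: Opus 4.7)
The plan is to assemble $\Psi$ from the two building blocks already prepared in Examples \ref{exa:case1} and \ref{exa:case2}. Specifically, take a bump function $\widehat{\varphi}\in C^{0,1}\cap C^{1}(\widehat{\Sigma},[0,1])$ localizing near $\widehat{K}$ (provided by Example \ref{exa:case2}) and a bump function $\widehat{\psi}\in C^{0,1}\cap C^{1}(\Sigma^{s},[0,1])$ localizing near the zero section in the $s$-direction (provided by Example \ref{exa:case1}), and set
\[
\Psi(\widehat{m},\overline{x}^{s})=\widehat{\varphi}(\widehat{m})\,\widehat{\psi}(\widehat{m},\overline{x}^{s}).
\]
Adjusting the support parameters $\eta_{1},\eta_{2}$ of $\widehat{\varphi}$ and $\widehat{\psi}$ consistently, this $\Psi$ satisfies condition \textbf{(a)} of \autoref{def:C1Lbump}, and its Lipschitz estimate in \textbf{(b)} follows by splitting $|\Psi(\widehat{m}_{1},\overline{x}^{s}_{1})-\Psi(\widehat{m}_{2},\overline{x}^{s}_{2})|$ in the standard product fashion and inserting the two component estimates from Examples \ref{exa:case1}--\ref{exa:case2}, with resulting Lipschitz constant essentially the product of the two component constants. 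If $K=\Sigma$ we simply take $\widehat{\varphi}\equiv 1$; if $X^{s}_{m}=\{0\}$ we take $\widehat{\psi}\equiv 1$. Thus the problem reduces to producing \emph{one} component at a time, and each component is produced by \autoref{thm:general} applied to the Finsler manifold $\Sigma^{s}$ (with tangent bundle $\mathbb{X}^{cs}$, via \autoref{lem:csP}) or $\widehat{\Sigma}$ (with tangent bundle modelled on $X^{c}_{\widehat{\Sigma}}$), once we verify the corresponding $C_{0}$-uniform property $(*^{1})$.

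The verification is then a case check using \autoref{exa:property*}. In case (a), fibers of both $\mathbb{X}^{cs}$ and $X^{c}_{\widehat{\Sigma}}$ are closed subspaces of a Hilbert space, hence Hilbert, hence satisfy $1$-uniform property $(*^{1})$ by \autoref{exa:property*}(2); the resulting $C_{1}$ can be chosen arbitrarily close to $1$. In case (b), if the norm of $X$ is $C^{1}$ or $X^{*}$ is separable, then $X$ carries a $C^{1}$ equivalent norm, so by \autoref{exa:case1}(a)--(b) the $s$-direction bump $\widehat{\psi}$ is produced with constant near $1$; if in addition $K=\Sigma$ we are done, while if $\Sigma$ is finite-dimensional then $X^{c}_{\widehat{\Sigma}}$ has finite-dimensional fibers, so \autoref{exa:property*}(1) yields $\widehat{\varphi}$ with constant near $1$. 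In case (c), finite-dimensionality of $X^{cs}_{m}$ makes both bundles have finite-dimensional fibers, so \autoref{exa:property*}(1) gives constants near $1$ in both directions.

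Cases (d)--(h) drop the requirement that $C_{1}$ be near $1$. If $X^{*}$ is weakly compactly generated, $X$ admits a $C^{1}$ equivalent norm (\autoref{exa:case1}(c)), which handles the $s$-direction; the additional clauses in (d) then supply the required property $(*^{1})$ on the $c$-direction either trivially ($K=\Sigma$), via \autoref{exa:property*}(4) applied fiberwise (when $(X^{c}_{m})^{*}$ are separable), or by hypothesis. If $X^{*}$ is separable (case (e)), then both $X^{*}$ and all $(X^{c}_{m})^{*},(X^{cs}_{m})^{*}$ are separable, so \autoref{exa:property*}(4) gives $(*^{1})$ with a universal constant on both bundles. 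Case (f) follows directly from \autoref{exa:property*}(5) applied to $X$ (which induces the corresponding map on subspaces). Cases (g) and (h) handle the $s$-direction via \autoref{exa:property*}(4) applied to $(X^{cs}_{m})^{*}$ separable, or by assumption; in either situation we take $\widehat{\varphi}\equiv 1$ in the trivial subcase $K=\Sigma$, and otherwise the hypothesis on $X^{cs}_{\widehat{\Sigma}}$ dominates and also controls its $c$-subbundle $X^{c}_{\widehat{\Sigma}}$.

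The only non-routine step is confirming that the component estimates from Examples \ref{exa:case1}--\ref{exa:case2} can be concatenated into the metric estimate in \autoref{def:C1Lbump} with the Finsler/Euclidean metric on $\Sigma^{s}$ replaced by $\max\{|\Pi^{c}_{m_{0}}(\phi(\widehat{m}_{1})-\phi(\widehat{m}_{2}))|,|\Pi^{s}_{m_{0}}(\overline{x}^{s}_{1}-\overline{x}^{s}_{2})|\}$; this is exactly what \autoref{lem:csP} arranges, since $d_{F}$ is equivalent to this max within a single local chart with ratio $1+\chi_{*}$. The only genuine obstacle is the constant-tracking in cases (a)--(c): because we are taking a product of two bump functions, we must ensure that each factor has Lipschitz constant near $1$ (not only bounded), and that the product estimate does not amplify the constants beyond $1+O(\chi_{*})$. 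This is where the separate treatments $\widehat{\varphi}\equiv 1$ when $K=\Sigma$, and $\widehat{\psi}\equiv 1$ when $X^{s}_{m}=\{0\}$, are used to avoid multiplying two nontrivial constants in the sharpest cases.
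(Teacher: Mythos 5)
Your reduction — build the base-direction bump $\widehat{\varphi}$ (\autoref{exa:case2}) and the fiber-direction bump $\widehat{\psi}$ (\autoref{exa:case1}) via \autoref{thm:general} and \autoref{lem:csP}, then verify the uniform property $(*^1)$ case by case through \autoref{exa:property*} — is exactly the paper's route, and that part is fine. The gap is in the gluing step. With $\Psi=\widehat{\varphi}\,\widehat{\psi}$, condition \textbf{(a)} of \autoref{def:C1Lbump} holds, but the Lipschitz bound you get is the \emph{sum} (not, as you write, the product) of the two component constants: since $0\le\widehat{\varphi},\widehat{\psi}\le 1$,
\[
|\widehat{\varphi}_1\widehat{\psi}_1-\widehat{\varphi}_2\widehat{\psi}_2|\le|\widehat{\varphi}_1-\widehat{\varphi}_2|+|\widehat{\psi}_1-\widehat{\psi}_2|\le\frac{2\varpi^*_1C_1}{\eta_1-\eta_2}\max\bigl\{|\Pi^c_{m_0}(\phi(\widehat{m}_1)-\phi(\widehat{m}_2))|,\,|\Pi^s_{m_0}(\overline{x}^s_1-\overline{x}^s_2)|\bigr\},
\]
and the factor $2$ is genuinely attained near points where both $\widehat{d}(\widehat{m},\widehat{K})$ and $|\overline{x}^s|$ lie in the transition zone $[\eta_2,\eta_1]$, since both differentials are active simultaneously along a diagonal displacement. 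So your construction proves the existence part with constant about $2C_1$, but not the ``Moreover'' clause: cases (a) and (c) (and the finite-dimensional-$\Sigma$ subcase of (b)) do \emph{not} assume $K=\Sigma$ or $X^s_m=\{0\}$, so your workaround of setting one factor $\equiv 1$ is unavailable there, and $C_1$ close to $1$ is not obtained. This is not cosmetic: the constant enters the hypotheses downstream, e.g.\ \autoref{thm:smooth} requires $\varsigma_0\ge C_1+1$ while the standing assumption in \textbf{case (1)} is only $\varsigma_0\ge 2$.

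The paper avoids the loss by gluing with an $\ell^p$-type combination, $\Psi=2^{-1/p}\bigl(\widehat{\varphi}^{\,p}+\widehat{\psi}^{\,p}\bigr)^{1/p}$ for $p$ large; equivalently, one can approximate the single Lipschitz function $(\widehat{m},\overline{x}^s)\mapsto\max\{\widehat{d}(\widehat{m},\widehat{K}),|\overline{x}^s|\}$ on the Finsler manifold $\Sigma^s$ by \autoref{thm:general} and compose with the scalar cutoff $\ell$ as in \eqref{equ:cutoff}. The point is that $(a,b)\mapsto(a^p+b^p)^{1/p}$ is $2^{1/p}$-Lipschitz with respect to $\max\{|\Delta a|,|\Delta b|\}$, so the combined constant exceeds the larger of the two component constants only by the factor $2^{1/p}\to 1$; this is what yields $C_1$ arbitrarily close to $1$ in cases (a)--(c). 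If you replace your product by such a combination, the rest of your case analysis goes through; for cases (g)--(h) it is also cleaner to approximate $\max\{\widehat{d}(\cdot,\widehat{K}),|\overline{x}^s|\}$ directly on $\Sigma^s$ (where the hypothesis on $X^{cs}_{\widehat{\Sigma}}$ applies as stated) rather than passing to the subbundle $X^c_{\widehat{\Sigma}}$, since property $(*^1)$ does not restrict to subspaces without an extra projection argument.
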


\begin{proof}
	All cases have been discussed in \autoref{exa:case1} and \autoref{exa:case2}, which imply the existence of $ \widehat{\psi} $ and $ \widehat{\varphi} $ mentioned in those examples. Note that for any $ \epsilon > 0 $, if $ p $ is sufficiently large, then
	\[
	(1+\epsilon)^{-1} \max\{ a, b \} \leq (a^{p} + b^{p})^{1/p} \leq (1+\epsilon)\max\{ a, b \}, \quad  0 \leq a, b \leq 2.
	\]
	Thus, define $ \Psi(\widehat{m}, \overline{x}^s) = 2^{-1/p} (\widehat{\varphi}(\widehat{m})^{p} + \widehat{\psi}(\widehat{m}, \overline{x}^s)^{p})^{1/p} $, and consequently $ \Psi $ satisfies (a) (b) in \autoref{sub:LipC1bm}. The proof is complete.
\end{proof}

Next, we consider under what conditions the function $ \Psi $ satisfying (a)--(b) in \autoref{sub:LipC1bm} can be $ C^{k-1,1} \cap C^{k} $. In the existing literature, we cannot find a general theory to address this problem. So we concentrate only on some special cases.

\begin{rmk}\label{rmk:highL}
	(I). ($ C^k $ + compactness) Consider the following two cases.
	\begin{enumerate}[(a)]
		\item Assume $ \Sigma^s \in C^k $, $ \Psi \in C^{k} $, and $ \widehat{K} $ is compact in $ \widehat{\Sigma} $. Then $ \Psi \in C^{k-1,1} \cap C^{k} $ in $ X^{s}_{\widehat{K}_{\epsilon}}(\epsilon) $ for small $ \epsilon > 0 $.

		(i) A special case where $ \Sigma^s \in C^k $ is when $ \widehat{\Sigma} \in C^k $ and $ \widehat{m} \mapsto \Pi^{s}_{\phi(\widehat{m})} $ is $ C^k $. A priori, the map $ \widehat{m} \mapsto \Pi^{s}_{\phi(\widehat{m})} $ might be only $ C^0 $, but in some cases this can be achieved by $ C^k $ approximation of $ \widehat{m} \mapsto \Pi^{s}_{\phi(\widehat{m})} $; for example, when $ \widehat{\Sigma} $ is a separable and finite-dimensional manifold (see e.g. \cite[Theorem 6.9]{BLZ08} or \autoref{sub:AE} for more general results). (ii) $ \Psi \in C^{k} $ can be obtained by assuming that $ X^{cs}_{\widehat{\Sigma}} $ satisfies the $ C_0 $-uniform property ($ *^k $). So $ \Sigma^s \in C^k $ and $ \Psi \in C^{k} $ can be obtained if $ X^{cs}_{\widehat{\Sigma}} $ satisfies the $ C_0 $-uniform property ($ *^k $) and $ \widehat{\Sigma} \in C^k $ (when $ \widehat{K} $ is compact); see \autoref{sub:AE}. Here, we note that if $ \widehat{\Sigma} $ is embedded, then $ \widehat{K} $ is compact in $ \widehat{\Sigma} $ if and only if $ K $ is compact in $ X $.

		\item In many cases, one can, without loss of generality, assume $ X^s_{\widehat{\Sigma}} = 0 $. In case (a), $ \widehat{\Sigma} \in C^k $ sometimes cannot be satisfied, especially when we need to construct $ \Sigma $ from $ K $ (see e.g. \cite{BC16, CLY00a} or \autoref{thm:whitney} for general results). Usually, we only know that $ \widehat{\Sigma} \setminus \widehat{K} \in C^{k} $. Since we only need $ \Psi \in C^{k-1,1} \cap C^{k} $ in $ \widehat{\Sigma} \setminus \widehat{K}_{\tilde{\eta}} $, a natural way for $ C^k $ to imply $ C^{k-1, 1} $ is that $ \widehat{K} $ is bounded in $ \widehat{\Sigma} $ and $ \widehat{\Sigma} $ is locally compact (i.e., $ \widehat{\Sigma} $ is finite-dimensional).
		This situation was also studied in \cite{BC16, CLY00a} in the finite-dimensional setting with $ K $ compact. However, it also frequently occurs in infinite-dimensional dynamical systems. For instance, (i) $ K $ is an isolated equilibrium and the essential spectrum bound of the linearized dynamic at $ K $ is strictly smaller than $ 1 $ (see e.g. \cite{DPL88, MR09a}); (ii) $ K $ is a (non-trivial) periodic orbit of some dynamical system with some compactness (see e.g. \cite{SS99, HR13}).
	\end{enumerate}

	(II). ($ K = \Sigma $) A special case where $ \Psi \in C^{k-1,1} \cap C^{k} $ is when $ K = \Sigma \in C^{k-1,1} \cap C^{k} $, $ \widehat{m} \mapsto \Pi^{s}_{\phi(\widehat{m})} $ is $ C^{k-1,1} \cap C^{k} $, and $ X $ is a Hilbert space (or the original norm of $ X $ is $ C^{k-1,1} \cap C^{k} $, e.g., $ X = L^p(\Omega) $ or $ W^{k,p}(\Omega) $ where $ p $ is even or $ p \geq k $ is odd). Although from an abstract viewpoint this is very restrictive, for applications in some practical problems, this condition is somewhat favorable; see also \cite{NS12, KNS15, JLZ17}.

	(III). In general, one needs to consider $ \widehat{\psi} $ and $ \widehat{\varphi} $ from \autoref{exa:case1} and \autoref{exa:case2} separately on a case-by-case basis. We have no idea how to address the existence of a $ C^{k-1,1} \cap C^k $ bump function with (a)--(b) in \autoref{sub:LipC1bm} in general Finsler manifolds (and even in finite-dimensional but non-precompact Finsler manifolds).
\end{rmk}

\begin{rmk}
	(a) We note that the existence of a $ C^{k-1,1} \cap C^{k} $ bump function $ \Psi $ is not necessary, and what we actually need is the following \emph{blip map}, introduced in \cite{BR17} for a Banach space, i.e., a $ C^{k-1,1} \cap C^{k} $ map $ \Psi_b: \Sigma^s \to \Sigma^s $ such that $ \Psi_b(x) = x $ if $ x \in X^s_{\widehat{K}_{\eta_2}} (\eta_2) $, $ \Psi_b(\Sigma^s) \subset X^s_{\widehat{K}_{\eta_1}} (\eta_1) $, and the Lipschitz property (b) in \autoref{sub:LipC1bm} holds. In the construction of the graph transform $ \varGamma $, one can use $ \Psi_b $ instead of $ \Psi $, i.e., for $ \widetilde{h} = \widetilde{\varGamma}(h) $, define $ \widehat{h} = \widetilde{h} \circ \Psi_b $ (which is well-defined); now $ \varGamma: h \mapsto \widehat{h} $.

	(b) Consider the special situation $ K = \Sigma $. Then the existence of such $ \Psi $ implies the existence of such $ \Psi_b $, but not vice versa.

	(c) For a Banach space $ B $, a blip map is a global \emph{b}ounded \emph{l}ocal \emph{id}entity at zero $ C^{k,\alpha} $ map $ b_{\varepsilon}: B \to B $ where $ b_{\varepsilon}(x) = x $ if $ x \in B(\varepsilon) $ (see \cite{BR17}). Any Banach space admitting a $ C^{k,\alpha} $ bump function possesses such a blip map, and for $ B = C[0,1] $ (which does not contain $ C^1 $ bump functions), it has natural blip maps defined by, for example, $ b_{\varepsilon}(x)(t) = \ell(x(t)) x(t) $, where $ \ell(\cdot) $ is a suitable bump function on $ \mathbb{R} $.

	(d) Thus, if $ \Sigma $ is a complemented subspace of $ C[0,1] $, $ K = \Sigma(\epsilon) $, and $ X^s_{\Sigma} = 0 $, then we always have such $ \Psi_b $ but not $ \Psi $ (if the dual of $ \Sigma $ is not separable). However, we do not know any results about the existence of blip maps in Finsler manifolds as in (a); note that one way to construct such blip maps is using $ C^{k-1,1} \cap C^{k} $ smooth partitions of unity, but this implies the existence of $ C^{k-1,1} \cap C^{k} $ bump functions.
\end{rmk}

\chapter{Invariant case: proof of \autoref{thm:invariant}} \label{sec:invariant}

Assume the conditions in \autoref{thm:invariant} hold.

We consider the case where $ \widehat{H} \approx (\widehat{F}^{cs}, \widehat{G}^{cs}) $ satisfies the (A$ ' $)($ \alpha $, $ \lambda_{u} $) (B)($ \beta; \beta', \lambda_{cs} $) condition in $ cs $-direction at $ K $. The ($ \bullet 2 $) case can be proved using a similar argument as in \autoref{sec:tri}.

All the constants defined in \autoref{sub:preparation} are used here, but now $ \eta = 0 $ and $ \Sigma = K $.

For the existence results, the construction of the graph transform is the same as in \autoref{sub:unlimited}. Let us define a metric space (see also \eqref{equ:space0}):
\begin{multline*}
\varSigma_{\mu, 0, \epsilon_{*}, \sigma_{*}, \varrho_* } = \{ h: X^s_{\widehat{\Sigma}} (\sigma_*) \to \overline{X^u_{\widehat{\Sigma}} (\varrho_*)} \text{ is a bundle map over } \id: \\
h(\widehat{m}, 0) = 0, \widehat{m} \in \widehat{\Sigma},
\graph h \cap X^{su}_{\widehat{K}_{\epsilon_*}} (\sigma_*, \varrho_*) \text{ is $ \mu $-Lip in the $ u $-direction}\},
\end{multline*}
and its metric
\[
d_{2}(h_1, h_2) = \sup\{ |h_1(\widehat{m},\overline{x}^s) - h_2(\widehat{m},\overline{x}^s)| / |\overline{x}^s|: (\widehat{m},\overline{x}^s) \in X^s_{\widehat{\Sigma}} (\sigma_*) \}.
\]
Since $ h(\widehat{m}, 0) = 0 $ for all $ \widehat{m} \in \widehat{\Sigma} $ and $ \lip h(\widehat{m}, \cdot) \leq \mu_{1}(\widehat{m}) $ (with $ \sup_{\widehat{m}}\mu_{1}(\widehat{m}) < \infty $), the metric $ d_{2} $ is well defined and $ \varSigma_{\mu, 0, \epsilon_{*}, \sigma_{*}, \varrho_* } $ is complete under $ d_{2} $.

For $ h \in \varSigma_{\mu, 0, \epsilon_{*}, \sigma_{*}, \varrho_* } $, let $ f_{\widehat{m}_0} $ be its local representation of $ \graph h \cap X^{su}_{\widehat{\Sigma}} (\sigma_*, \varrho_*) $ at $ \widehat{m}_0 $. Define $ \widetilde{f}_{\widehat{m}} (\cdot) $ as in \eqref{equ:local}.

\begin{enumerate}[$ \bullet $]
	\item Note that since $ \eta = 0 $, we have $ \widetilde{f}_{\widehat{m}} (0) = 0 $.

	\item \autoref{lem:locLip} also holds since the proof relies only on the (A$ ' $) (B) condition of $ H(m_0 + \cdot) - u({m}_2) \sim (\widetilde{F}^{cs}, \widetilde{G}^{cs}) $.

	\item \autoref{lem:belong} holds with $ \eta = 0 $; here take $ K_1 = K'_1 = 0 $. Now we can define the \emph{graph transform} as
	\[
	\varGamma: \varSigma_{\mu, 0, \epsilon_{*}, \sigma_{*}, \varrho_* } \to \varSigma_{\mu, 0, \epsilon_{*}, \sigma_{*}, \varrho_* }, h \mapsto \widehat{h} = \Psi \cdot \widetilde{h}.
	\]
	Note also that $ \widetilde{h} (\widehat{m}, \overline{x}^s) = \widetilde{f}_{\widehat{m}}(0, \overline{x}^s) $ for all $ (\widehat{m}, \overline{x}^s) \in X^s_{\widehat{\Sigma}} (\sigma^1_*) $ due to $ \Sigma = K $.

	\item Finally, let us show $ \lip \varGamma \leq \varpi^*_1 \sup_{\widehat{m}_0 \in \widehat{\Sigma}} \frac{ \lambda_{cs}(\phi(\widehat{m}_0)) \lambda_{u}(\phi(\widehat{m}_0)) }{ 1 - \alpha(\phi(\widehat{m}_0)) \mu_1(\widehat{u}(\widehat{m}_0)) } < 1 $ in the metric $ d_{2} $, where $ \varpi_1^* \to 1 $ as $ \epsilon_{*}, \chi(\epsilon_{*}) \to 0 $.
	\begin{proof}
		We continue the proof in \autoref{lem:contractive}. Since $ \Sigma = K $, we can let $ \widehat{m}_0 = \widehat{m} $, so the equation \eqref{equ:abc} can be rewritten as
		\[
		\begin{cases}
		m + \overline{x}^{s} + \widetilde{h}^{1}(\widehat{m}, \overline{x}^{s}) = m + \overline{x}^{s} + \widetilde{f}^1_{\widehat{m}}(0, \overline{x}^{s}), \\
		m + \overline{x}^{s} + \widetilde{h}^{2}(\widehat{m}, \overline{x}^{s}) = m + x^{cs}_2 + \widetilde{f}^2_{\widehat{m}}(x^{cs}_2),
		\end{cases}
		\]
		i.e., now $ x^{cs}_{1} = (0, \overline{x}^{s}) $.
		We have shown in \autoref{lem:contractive} that
		\begin{equation}\label{equ:mid}
		|\widetilde{h}^1(\widehat{m}, \overline{x}^s) - \widetilde{h}^2(\widehat{m}, \overline{x}^s)| \leq \frac{\varpi^*_1\lambda_{u}(m)}{1 - \alpha(m) \mu_1(\widehat{m}_1)} |{h}^1(\widehat{\overline{m}}_1, \overline{x}^s_1) - {h}^2(\widehat{\overline{m}}_1, \overline{x}^s_1)|.
		\end{equation}
		Since
		\[
		m_1 + \widetilde{x}^1 + f^{1}_{\widehat{m}_1}(\widetilde{x}^1) = \overline{m}_1 + \overline{x}^{s}_1 + h^{1}(\widehat{\overline{m}}_1, \overline{x}^{s}_1),
		\]
		where $ \widetilde{x}^1 = x^{1}_{\widehat{m}}(0, \overline{x}^s) $ is defined in \eqref{equ:zz}, from \eqref{equ:estimates} (in \autoref{lem:lip2}), we get
		\[
		|\overline{x}^s_1| \leq \varpi^*_1 |x^{1}_{\widehat{m}}(0, \overline{x}^s)| \leq \varpi^*_1 \lambda_{cs}(m) |\overline{x}^s|.
		\]
		Combining this with \eqref{equ:mid}, we obtain $ \lip \varGamma \leq \varpi^*_1 \sup_{\widehat{m}_0 \in \widehat{\Sigma}} \frac{ \lambda_{cs}(\phi(\widehat{m}_0)) \lambda_{u}(\phi(\widehat{m}_0)) }{ 1 - \alpha(\phi(\widehat{m}_0)) \mu_1(\widehat{u}(\widehat{m}_0)) } < 1 $, which completes the proof.
	\end{proof}
\end{enumerate}

We have thus established the existence results, i.e., the existence of $ W^{cs}_{loc}(\Sigma) $ satisfying the representation in \autoref{thm:I} \eqref{it:h0}, with $ \Sigma \subset W^{cs}_{loc}(\Sigma) $ and local invariance under $ H $. For the regularity results, these have already been proved in \autoref{sec:smooth} (see \autoref{lem:C1smooth} and \autoref{rmk:diff}), and the proofs given there require no modification.

Now consider the partial characterization given in \autoref{thm:invariant} \eqref{it:invPartial} (see also \cite[Section 4.4]{Che18a} for a similar argument).

Let $ \{z_{k} = (\widehat{m}_{k}, x^{s}_{k}, x^{u}_{k})\}_{k \geq 0} \subset X^s_{\widehat{\Sigma}}(\varepsilon_0) \oplus X^u_{\widehat{\Sigma}} (\varrho_{*}) $ be given as in \autoref{thm:invariant} \eqref{it:invPartial}. Then
\begin{equation*}
m_{k} + x^{s}_{k} + x^{u}_{k} = u(m_{k-1}) + \hat{x}^{cs}_{k} + \hat{x}^{u}_{k} \in H(m_{k-1} + x^{s}_{k-1} + x^{u}_{k-1}),
\end{equation*}
where $ m_{j} = \phi(\widehat{m}_{j}) $, $ \widehat{m}_{k} \in \widehat{U}_{\widehat{u}(\widehat{m}_{k-1})} (\epsilon_{*}) $ and $ \hat{x}^{\kappa}_{k} \in X^{\kappa}_{u(m_{k-1})} $, $ \kappa = cs, u $.

Since
\[
\beta'(u(\phi(\widehat{m}))) + \hat{\chi} < \widetilde{\beta}_{0}(\widehat{m}) < \beta(\phi(\widehat{m})) - \hat{\chi},
\]
we can assume $ \widetilde{\beta}_{0}(\widehat{m}_{k}) (1 + \chi_{*}) + \chi_{*} \leq \widetilde{\beta}(\widehat{m}_{k}) $ (by making $ \epsilon_{*}, \chi(\epsilon_{*}) $ small).

First, note that if $ |x^{u}_{k}| \leq \widetilde{\beta}_{0}(\widehat{m}_{k-1}) |x^{s}_{k}| $ for all $ k \geq 0 $, then by \autoref{lem:lip2},
\[
|\hat{x}^{u}_{k}| \leq (\widetilde{\beta}_{0}(\widehat{m}_{k-1}) (1 + \chi_{*}) + \chi_{*}) |\hat{x}^{cs}_{k}| \leq \widetilde{\beta}(\widehat{m}_{k-1}) |\hat{x}^{cs}_{k}|,
\]
which, by the (A$ ' $) (B) condition of $ \widehat{H}_{m_{k-1}} = H(m_{k-1} + \cdot) - u(m_{k-1}) $, yields
\[
|\hat{x}^{cs}_{k}| \leq \lambda_{cs}(m_{k-1}) |x^{s}_{k-1}| \Rightarrow |x^{s}_{k}| \leq \widetilde{\lambda}_{cs}(m_{k-1}) |x^{s}_{k-1}|.
\]
In particular,
\[
|x^{s}_{k}| \leq \widetilde{\lambda}_{cs}(m_{k-1}) |x^{s}_{k-1}| \leq \widetilde{\lambda}_{cs}(m_{k-1}) \widetilde{\lambda}_{cs}(m_{k-2}) \cdots \widetilde{\lambda}_{cs}(m_{0}) |x^{s}_{0}|,
\]
and so $ \sup_{k}\{\varepsilon_{s}(\widehat{m}_0) \varepsilon_{s}(\widehat{m}_1) \cdots \varepsilon_{s}(\widehat{m}_{k-1})\}^{-1} (|x^{s}_{k}| + |x^{u}_{k}|) < \infty $.

Therefore, it suffices to show that if 
\[
\sup_{k}\{\varepsilon_{s}(\widehat{m}_0) \varepsilon_{s}(\widehat{m}_1) \cdots \varepsilon_{s}(\widehat{m}_{k-1})\}^{-1} (|x^{s}_{k}| + |x^{u}_{k}|) < \infty,
\]
then $ x^{u}_{k} = h_0(\widehat{m}_{k}, x^{s}_{k}) $, where $ h_0 $ is given in \autoref{thm:invariant} (i.e., \autoref{thm:I} \eqref{it:h0}).
From the proof of \eqref{equ:mid} (see also \autoref{lem:partial}), and noting that $ (\widehat{m}_{k-1}, \overline{x}^s_{k-1}) \in X^s_{\widehat{\Sigma}}(\varepsilon_0) $, we have
\[
|h(\widehat{m}_{k-1}, x^s_{k-1}) - x^{u}_{k-1}| \leq \frac{\varpi^*_1\lambda_{u}(m_{k-1})}{1 - \alpha(m_{k-1}) \mu_1(\widehat{m}_k)} |h(\widehat{m}_k, x^s_k) - x^{u}_{k}|.
\]
Set
\[
\widehat{\lambda}^{(k)}_{u} (m_{0}) = \frac{\varpi^*_1\lambda_{u}(m_{0})}{1 - \alpha(m_{0}) \mu_1(\widehat{m}_1)} \frac{\varpi^*_1\lambda_{u}(m_{1})}{1 - \alpha(m_{1}) \mu_1(\widehat{m}_2)} \cdots \frac{\varpi^*_1\lambda_{u}(m_{k-1})}{1 - \alpha(m_{k-1}) \mu_1(\widehat{m}_k)},
\]
and $ \varepsilon^{(k)}_{s}(\widehat{m}_0) = \varepsilon_{s}(\widehat{m}_0) \varepsilon_{s}(\widehat{m}_1) \cdots \varepsilon_{s}(\widehat{m}_{k-1}) $.
Since
\[
0 < \varepsilon_{s}(\widehat{m}) < \lambda^{-1}_{u} (\phi(\widehat{m})) \vartheta(\phi(\widehat{m})) - \hat{\chi},
\]
we can assume $ \sup_{k} \frac{\varpi^*_1\varepsilon_{s}(\widehat{m}_{k-1})\lambda_{u}(m_{k-1})}{1 - \alpha(m_{k-1}) \mu_1(\widehat{m}_k)} < 1 $ (if $ \xi_1 $ and $ \epsilon_{*}, \chi_{*} $ are small). Then we get
\[
|h(\widehat{m}_{0}, x^s_{0}) - x^{u}_{0}| \leq \widehat{\lambda}^{(k)}_{u} (m_{0}) |h(\widehat{m}_k, x^s_k) - x^{u}_{k}| \leq \widetilde{C} \widehat{\lambda}^{(k)}_{u} (m_{0}) \varepsilon^{(k)}_{s}(\widehat{m}_0)  \to 0 \quad \text{as } k \to \infty,
\]
where $ \widetilde{C} $ is a constant independent of $ k $. This completes the proof of \autoref{thm:invariant}. \qed

\chapter{Trichotomy case: proof of \autoref{thm:tri0}} \label{sec:tri}

Assume the conditions in \autoref{thm:tri0} hold. (The following arguments in fact provide a proof of the ($ \bullet 1 $) case in \autoref{thm:I}, i.e., $ \widehat{H} \approx (\widehat{F}^{cs}, \widehat{G}^{cs}) $ satisfies the (A) ($ \alpha $, $ \lambda_{u} $) (B) ($ \beta; \beta', \lambda_{cs} $) condition in $ cs $-direction at $ K $.)

Let $ \widetilde{\iota}(\cdot) $ be given as in \autoref{lem:qq} (by applying to $ \widehat{H} \approx (\widehat{F}^{\kappa}, \widehat{G}^{\kappa}) $ with the (A) ($ {\alpha}_{\kappa_2} $; ${\alpha}_{\kappa_2}' $, $ {\lambda}_{\kappa_2} $) (B) ($ {\beta}_{\kappa_1} $; $ {\beta}_{\kappa_1}' $, $ {\lambda}_{\kappa_1} $) condition), where $ \iota $ stands for $ \alpha_{\kappa_2} $, $ \alpha'_{\kappa_2} $, $ \beta_{\kappa_1} $, $ \beta'_{\kappa_1} $, $ \lambda_{\kappa_1} $, $ \lambda_{\kappa_2} $, and $ \kappa_1 = cs $, $ \kappa_2 = u $, $ \kappa = cs $ (or $ \kappa_1 = s $, $ \kappa_2 = cu $, $ \kappa = cu $). Note also that (A) ($ {\alpha}_{u} $; ${\alpha}_{u}' $, $ {\lambda}_{u} $) condition implies (A$ ' $) (${\alpha}_{u}' $, $ {\lambda}_{u} $) condition. Use $ {\alpha}_{u}', \beta_{cs}, \beta'_{cs}, $ instead of $ \alpha,\beta, \beta' $ and $ \widetilde{\alpha}_{u}' $ instead of $ \widetilde{\alpha} $ in Observations \eqref{OI}--\eqref{OVII}. We take all the constants defined in \autoref{sub:preparation}.

Since $ \sup_m \lambda_{u}(m) (1 - \alpha'_{u}(m) \beta'_{cs}(u(m)))^{-1} < 1 $ is not assumed, in general, the graph transform $ \varGamma $ (defined in \autoref{sub:graph}) does not map $ \varSigma_{lip, \mu, K_1} $ into $ \varSigma_{lip, \mu, K_1} $. Note that if $ \eta = 0 $, then it is clear that $ \varGamma \varSigma_{lip, \mu, 0} \subset \varSigma_{lip, \mu, 0} $.

As $ \sup_m \lambda_{u}(m) < 1 $, choose $ n \in \mathbb{N} $ large such that
\[
(\sup_m \lambda_{u}(m))^n  < 1 - 2\sup_{m}\alpha'_{u}(m) \beta'_{cs}(u(m)).
\]
Then there is a constant $ K_1 \geq 1 $ such that $ \varGamma^{n} \varSigma_{lip, \mu, K_1} \subset \varSigma_{lip, \mu, K_1} $; moreover, $ \lip \varGamma^{n} < 1 $.
\begin{proof}
	For simplicity, let $ n = 2 $. We consider only the case (1) and leave the case (2) to the readers.
	Take $ \widehat{m}_0 \in \widehat{K} $.
	Let $ \widehat{I}_{\widehat{m}_0} $ be the correspondence determined by $ F^1_{\widehat{m}_0} $ and $ G^1_{\widehat{m}_0} $, so that $ \widehat{I}_{\widehat{m}_0} \sim (F^1_{\widehat{m}_0}, G^1_{\widehat{m}_0}) $.
	Since $ \lip F^1_{\widehat{m}_0}(x^{cs}, \cdot) $ is small, it is well defined that $ \overline{H}_{\widehat{m}_0} \triangleq \widehat{I}_{\widehat{m}_0} \circ \widehat{H}_{\widehat{m}_0} \sim (\widetilde{F}^{(1)}_{\widehat{m}_0}, \widetilde{G}^{(1)}_{\widehat{m}_0}) $, and so $ \overline{H}_{\widehat{u}(\widehat{m}_0)} \circ \overline{H}_{\widehat{m}_0}  \sim (\widetilde{F}^{(2)}_{\widehat{m}_0}, \widetilde{G}^{(2)}_{\widehat{m}_0}) $ with
	\[
	\widetilde{F}^{(i)}_{\widehat{m}_0}: X^{cs}_{m_0} (r_0) \times X^{u}_{u^{(i)}(m_0)} (r_0) \to X^{cs}_{u^{(i)}(m_0)}, ~
	\widetilde{G}^{(i)}_{\widehat{m}_0}: X^{cs}_{m_0} (r_0) \times X^{u}_{u^{(i)}(m_0)} (r_0) \to X^{u}_{m_0}, 
	\]
	for $i = 1,2$, where $ m_0 = \phi(\widehat{m}_0) $ and $ r_0 $ is given in Observation \eqref{OI}; moreover, $ |\widetilde{F}^{(2)}_{\widehat{m}_0} (0, 0)| \leq K_{00}\eta $, $ |\widetilde{G}^{(2)}_{\widehat{m}_0} (0, 0)| \leq K_{00}\eta $ for some fixed constant $ K_{00} > 1 $ (see e.g. \autoref{lem:first2}) if $ \eta $ is small.

	Let
	\[
	\overline{\lambda}'_{u} = \sup_{\widehat{m} \in \widehat{K}}\frac{\lambda_{u}(\phi(\widehat{u}(\widehat{m})))\lambda_{u}(\phi(\widehat{m}))}{1 - \alpha(\phi(\widehat{u}(\widehat{m}))) \mu_1(\widehat{u}^2(\widehat{m}))}, ~\quad K_1 = \frac{ \overline{\lambda}'_u\hat{\beta} + 1 }{1 - \overline{\lambda}'_u} K_{00}.
	\]
	For $ h^{(2)} \in \varSigma_{lip, \mu, K_1} $, write $ \varGamma^{i} h^{(2)} = h^{(2 - i)} $, $ i = 1,2 $. Let $ f^{(i)}_{\widehat{m}_0} $ be the local representation of $ \graph h^{(i)} \cap X^{su}_{\widehat{K}_{\epsilon_{*}}} (\sigma_*, \varrho_*) $ at $ \widehat{m}_0 \in \widehat{K} $. Note that for $ |x^{cs}| > \eta''_1 $, $ f^{(i)}_{\widehat{m}_0} (x^{cs}) = g^0_{\widehat{m}_0} (x^{cs}) $, where $ g^0_{\widehat{m}_0} $ is defined by \eqref{equ:cslocal}.
	Then for $ i = 1,2 $,
	\begin{equation}\label{equ:localaa}
	\begin{cases}
	\widetilde{F}^{(i)}_{\widehat{m}_0} ( x^{cs}, f^{(i)}_{\widehat{u}^{i}(\widehat{m}_0)} ( x^{(i)}_{\widehat{m}_0} (x^{cs}) ) ) = x^{(i)}_{\widehat{m}_0} (x^{cs}), \\
	\widetilde{G}^{(i)}_{\widehat{m}_0} ( x^{cs}, f^{(i)}_{\widehat{u}^{i}(\widehat{m}_0)} ( x^{(i)}_{\widehat{m}_0} (x^{cs}) ) ) = f^{(0)}_{\widehat{m}_0} (x^{cs}),
	\end{cases}
	x^{cs} \in X^{cs}_{\phi(\widehat{m}_0)}(e_0\eta'_1).
	\end{equation}

	We first need to show $ |f^{(0)}_{\widehat{m}_0} (0)| \leq K_1 \eta $.

	Let $ K_{00}\eta \leq \epsilon_{0,*} \eta_0 $ (where $ \epsilon_{0,*} = O_{\epsilon_{*}}(1) $ is defined in \eqref{equ:o(1)}) be small such that \eqref{equ:small000} holds; also, \eqref{equ:small000}  holds when $ K_1, K_2 = K_2(K_1) $ are replaced by $ K'_1 = \overline{\lambda}_u ( \hat{\beta} + K_1 ) + 1, K_2(K'_1) $.

	Note that by \autoref{lem:belong} (1), $ h^{(1)} \in \varSigma_{\mu, K'_1, \epsilon_{*}, \sigma_{*}, \varrho_* } $ and $ \varGamma h^{(1)} = h^{(0)} \in \varSigma_{\mu, K''_1, \epsilon_{*}, \sigma_{*}, \varrho_* } $, and so
	\[
	|f^{(1)}_{\widehat{m}_0} (0)| \leq K'_1 \eta, ~\quad |f^{(0)}_{\widehat{m}_0} (0)| \leq K''_1 \eta,
	\]
	where $ K'_1 = \overline{\lambda}'_u ( \hat{\beta} + K_1 ) + 1 $ and $ K''_1 = \overline{\lambda}'_u ( \hat{\beta} + K'_1 ) + 1 $.

	Due to \autoref{lem:est000}, by taking $ \epsilon_{0,*} $ (i.e., $ \epsilon_{*} $) small, we can assume that $ \overline{H}_{\widehat{u}(\widehat{m}_0)} \circ \overline{H}_{\widehat{m}_0}  \sim (\widetilde{F}^{(2)}_{\widehat{m}_0}, \widetilde{G}^{(2)}_{\widehat{m}_0}) $ satisfies the (A$ ' $) $( \widetilde{\alpha}'_{u}(m_1)$, $\widetilde{\lambda}_u(m_0) \widetilde{\lambda}_u(m_1) )$ (B) $( \widetilde{\beta}_{cs}(m_1)$; $\mu_1(\widehat{m}_0)$, $\widetilde{\lambda}_{cs}(m_0) \widetilde{\lambda}_{cs}(m_1) )$ condition, where $ m_0 = \phi(\widehat{m}_0) $ and $ m_1 = u(m_0) $. Now applying the same argument as in \autoref{lem:first2} to \eqref{equ:localaa} (for $ i = 2 $), we obtain that $ |f^{(0)}_{\widehat{m}_0} (0)| \leq K_1 \eta $ as $ \overline{\lambda}'_{u} < 1 $. This means that $ \varGamma^{2} \varSigma_{lip, \mu, K_1} \subset \varSigma_{lip, \mu, K_1} $.

	Furthermore, by the same argument given in \autoref{lem:contractive} (but in this case considering
	\[
	x^{i}_{\widehat{m}_0}(x^{cs}_i) + f^{i}_{\widehat{u}^2(\widehat{m}_0)}(x^{i}_{\widehat{m}_0}(x^{cs}_i)) \in \overline{H}_{\widehat{u}(\widehat{m}_0)} \circ \overline{H}_{\widehat{m}_0}(x^{cs}_i + {f}^i_{\widehat{m}_0}(x^{cs}_i)),
	\]
	instead of \eqref{equ:zz}), we have $ \lip \varGamma^2 < 1 $.
\end{proof}

In particular, we have a unique $ h^{cs}_0 $ belonging to $ \varSigma_{lip, \mu, K_1} $ such that $ \varGamma^{n} h^{cs}_0 = h^{cs}_0 $. Let us show $ \varGamma h^{cs}_0 = h^{cs}_0 $. Write $ \varGamma h^{cs}_0 = h'_0 $. By \autoref{lem:belong} (1), $ h'_0 \in \varSigma_{lip, \mu, K'_1} $. Without loss of generality, let $ K'_1 \geq K_1 $. Since we also have $ \varGamma^{n} \varSigma_{lip, \mu, K'_1} \subset \varSigma_{lip, \mu, K'_1} $ and $ \varGamma^{n} h'_0 = h'_0 $, and noting that $ h^{cs}_0 \in \varSigma_{lip, \mu, K'_1} $ and $ \varGamma^{n} $ has only one fixed point in $ \varSigma_{lip, \mu, K'_1} $, we see that $ h'_0 = h_0 $, i.e., $ \varGamma h^{cs}_0 = h^{cs}_0 $.

Now we have a center-stable manifold $ W^{cs}_{loc}(K) = \graph h^{cs}_0 \subset X^s_{\widehat{\Sigma}} (\sigma_{*}) \oplus X^u_{\widehat{\Sigma}} (\varrho_{*}) $ of $ K $, where
\[
\graph h^{cs}_0 \triangleq \{ (\widehat{m}, x^s, h_0(\widehat{m}, x^s)) \triangleq \phi(\widehat{m}) + x^s + h^{cs}_0(\widehat{m}, x^s): \widehat{m} \in \widehat{\Sigma}, x^s \in X^{s}_{\phi(\widehat{m})}(\sigma_{*}) \},
\]
with $ \graph h^{cs}_0 $ being $ \mu'_{cs} $-Lip in $ u $-direction near $ K $ (see \autoref{def:lip}) and $ \mu'_{cs}(\cdot) \approx \beta_{cs}(\cdot) $; moreover, $ \Omega_{cs} \subset H^{-1} W^{cs}_{loc}(K) $, where
\[
\Omega_{cs} = \graph h^{cs}_0|_{X^{s}_{\widehat{K}_{\varepsilon_{0}}} (\varepsilon_{0})},
\]
and $ \varepsilon_{0} > 0 $ is small as given in \autoref{lem:existence}. Also, by \autoref{lem:locLip}, $ \Omega_{cs} $ is $ \mu_{cs} $-Lip in $ u $-direction near $ K $ (see \autoref{def:lip}) with $ \mu_{cs}(\cdot) \approx \beta'_{cs}(\cdot) $.

To construct a center-unstable manifold of $ K $, consider the dual correspondences $ \widetilde{H}^{cu}_m $ (see \autoref{defi:dual}) of $ H^{cu}_{m} \triangleq H(m+ \cdot) - u(m): \widehat{X}^{s}_m(r) \times \widehat{X}^{cu}_{m} (r_1) \to \widehat{X}^{s}_{u(m)}(r_2) \times \widehat{X}^{cu}_{u(m)} (r) $, $ m \in K $; the ``center-stable direction'' results for $ \widetilde{H}^{cu}_m $ will give us the desired results. That is, we have $ h^{cu}_0 : X^u_{\widehat{\Sigma}} (\sigma_{*}) \to X^s_{\widehat{\Sigma}} (\varrho_{*}) $ satisfying \autoref{thm:tri0} \eqref{it:tri1} and $ W^{cu}_{loc}(K) = \graph h^{cu}_0 \subset X^s_{\widehat{\Sigma}} (\varrho_{*}) \oplus X^u_{\widehat{\Sigma}} (\sigma_{*}) $ with $ \graph h^{cu}_0 $ being $ \mu'_{cu} $-Lip in $ s $-direction near $ K $ (see \autoref{def:lip}), where $ \mu'_{cu}(\cdot) \approx \alpha_{cu}(\cdot) $; furthermore, $ \Omega_{cu} \subset H W^{cu}_{loc}(K) $, where
\[
\Omega_{cu} = \graph h^{cu}_0|_{X^{u}_{\widehat{K}_{\varepsilon_{0}}} (\varepsilon_{0})},
\]
which is $ \mu_{cu} $-Lip in $ s $-direction near $ K $ (see \autoref{def:lip}) with $ \mu_{cu}(\cdot) \approx \alpha'_{cu}(\cdot) $.
Now we have
\begin{equation}\label{equ:c00}
\max_{\kappa = s, u} \left\{ \sup_{\widehat{m} \in \widehat{K}}|h^{c\kappa}_0(\widehat{m}, 0)| \right\} \leq K'_0 \eta,
\end{equation}
for some constant $ K'_0 > 0 $.

As $ \sup_m \alpha_{cu}(m) \beta_{cs}(m) < 1 $ (in (B3) (a) (i)), we can assume $ \sup_{m} \mu'_{cs}(m)\mu'_{cu}(m) < 1 $. Assertion: for $ \widehat{m} \in \widehat{K}_{\eta_1} $, the equation
\[
\begin{cases}
x^u = h^{cs}_0(\widehat{m}, x^s), \\
x^s = h^{cu}_0(\widehat{m}, x^u),
\end{cases}
\]
has a unique solution $ x^u = x^u(\widehat{m}) $, $ x^s = x^s(\widehat{m}) $ if $ \eta $ is further reduced. Note also that by construction, if $ \widehat{m} \in \widehat{\Sigma} \setminus \widehat{K}_{\eta_1} $, then $ h^{cs}_{0}(\widehat{m}, \cdot) = h^{cu}_{0}(\widehat{m}, \cdot) = 0 $.
\begin{proof}[Proof of the assertion]
	Letting 
	\[
	\nu_1 = K'_0 \eta + \sup_{m}\mu'_{cs}(m) K'_0 \eta \quad \text{and} \quad \nu_0 = (1 - \sup_m\mu'_{cs}(m)\mu'_{cu}(m))^{-1} \nu_1,
	\]
	we choose $ \eta $ to satisfy $ K'_0 \eta + \sup_{m}\mu'_{cu}(m) \nu_0 \leq \sigma_{*} $ in order to solve $ x^u = h^{cs}_0(\widehat{m}, h^{cu}_0(\widehat{m}, x^u)) $ in $ X^{u}_{\phi(\widehat{m})} (\nu_0) $.
\end{proof}

Define $ h^c_0(\widehat{m}) = x^s(\widehat{m}) + x^u(\widehat{m}) $. This gives $ \Sigma^c = W^{cs}_{loc}(K) \cap W^{cu}_{loc}(K) = \graph h^{c}_0 $ satisfying \autoref{thm:tri0} \eqref{it:tri1} and $ \Omega_c \triangleq \Omega_{cs} \cap \Omega_{cu} \subset H^{\pm1} \Sigma^c $. Also, note that $ \sup_{\widehat{m}}|h^{c}_{0}(\widehat{m})| \leq K''_0  \eta $ for some $ K''_0 > 0 $.

Conclusion (2) in \autoref{thm:tri0} follows from \autoref{lem:unique}. Conclusions (2) (3) in \autoref{thm:tri0} can be proved in the same way as in \autoref{sec:smooth}, but in this case we need to consider equation \eqref{equ:localaa}.

Therefore, we complete the proof of \autoref{thm:tri0}. \qed

\begin{rmk}
	The above argument in fact implies more, which we discuss below.
	\begin{asparaenum}
		\item We say $ \widehat{H} \approx (\widehat{F}^{\kappa}, \widehat{G}^{\kappa}) $ satisfies the \emph{(A$ '_1 $) ($ \alpha; \lambda_1; c $)} (resp. \emph{(A$ _1 $)($ \alpha; \alpha_1, \lambda_1; c $)}) \emph{condition in $ \kappa $-direction at $ K $}, if for all $ m^{\rho} \in K $ and $ n, \rho \in \mathbb{N} $,
		\[
		\widehat{H}^{(n)}_{m} \sim (\widehat{F}^{(n)}_{m}, \widehat{G}^{(n)}_{m}): \widehat{X}^{\kappa}_{m}(r_{n}) \oplus \widehat{X}^{\kappa_1}_{m}(r_{n,1}) \to \widehat{X}^{\kappa}_{u^{n}(m)}(r_{n,2}) \oplus \widehat{X}^{\kappa_1}_{u^{n}(m)} (r_{n}),
		\]
		where $ r_{n}, r_{n,i} > 0 $ ($ i = 1, 2 $) are independent of $ m \in K $, and $ m = u^{\rho}(m^{\rho}) $,
		such that $ (\widehat{F}^{(n)}_{m}, \widehat{G}^{(n)}_{m}) $ satisfies the (A$ ' $)$(\alpha(m^{\rho})$, $c(m^{\rho}) {\lambda}^n_1(m^{\rho}))$ (resp. (A) $(\alpha(m^{\rho})$; $ \alpha'(m^{\rho}) $, $c(m^{\rho}) {\lambda}^n_1(m^{\rho}))$) condition (see \autoref{defAB}).
		
		Similarly, \emph{(B$ '_1 $) ($ \beta; \lambda_2; c $)} (resp. \emph{(B$ _1 $) ($ \beta; \beta_1, \lambda_2; c $)}) \emph{condition in $ \kappa $-direction at $ K $} can be defined.
		
		\item Let us replace (A3) (a) (i) (ii) by the following (A3$ ' $) (a) (i) (ii).
		
			(A3$ ' $) (a) $ \widehat{H} \approx (\widehat{F}^{cs}, \widehat{G}^{cs}) $ satisfies the (A$ '_1 $)($ \alpha $; $ \lambda_{u} $; $ c $) (B$ _1 $)($ \beta $; $ \beta', \lambda_{cs} $; $ c $) condition in $ cs $-direction at $ K $ (see \autoref{defi:ABk}) with $ \sup_{m} c(m) < \infty $. Moreover,
			
			\noindent (i) (Angle condition) $ \sup_m \alpha(m) \beta'(m) < 1/(2\varsigma_0) $, $ \inf_m \{\beta(m) - \varsigma_0\beta'(m)\} > 0 $;
			
			\noindent (ii) (Spectral condition) $ \sup_m \lambda_{u}(m) < 1 $.
		
		Then all the results in \autoref{thm:I}, case (1), hold. In addition, if we replace the spectral gap condition $ \sup_{m\in K} \lambda_{cs}(m) \lambda_u(m) \vartheta(m) < 1 $ (in (A4) (ii)) by $ \sup_{m\in K} \lambda_{cs}(m) \lambda_u(m) < 1 $, then the results in \autoref{thm:smooth}, case (1), still hold.
		
		In fact, in this case $ \overline{H}_{\widehat{m}_0} \triangleq \widehat{I}_{\widehat{m}_0} \circ \widehat{H}_{\widehat{m}_0} $ satisfies the (A$ ' $) ($ \widetilde{\alpha}(m^{\rho}) $, $ c(m^{\rho})\widetilde{\lambda}^{n}_{u}(m^{\rho}) $) (B) ($ \widetilde{\beta}(m^{\rho}) $; $ \widetilde{\beta}'(m^{\rho}) $, $ c(m^{\rho})\widetilde{\lambda}^{n}_{cs}(m^{\rho}) $) condition, where $ m_0 = u^{\rho}(m^{\rho}) = \phi(\widehat{m}_0) \in K $. First choose $ n \in \mathbb{N} $ large such that
		\[
		\hat{c}\{\sup_m \widetilde{\lambda}_{u}(m)\}^n  < 1 - \sup_{m}\widetilde{\alpha}(m) \widetilde{\beta}'(m),
		\]
		where $ \hat{c} = \sup_{m} c(m) $. Now the graph transform $ \varGamma $ (defined in \autoref{sub:graph}) satisfies $ \varGamma^{n} \varSigma_{lip, \mu, K_1} \subset \varSigma_{lip, \mu, K_1} $ and $ \lip \varGamma^{n} < 1 $.
		
		Note that if we consider $ \widehat{H}^{(n)}_{m} $ for large $ n $ instead of $ \widehat{H}_{m} $, then we can assume that $ \inf_{m}\{\beta(m) - \beta'(m) \} $ is large (see e.g. \autoref{lem:a4} \eqref{it:ab1}); but in this case, what we obtain is the invariance under $ H^{n} $ (not for $ H $).
		
		\item Similarly, if (A3$ ' $) holds with $ \varsigma_0 \geq 1 $ and \eqref{equ:est} holds with $ 0 < \gamma^{*}_{u} < 1 $ and small $ \gamma_0 > 0 $, then the results in \autoref{thm:smooth}, case (2), are still true.
		
		\item The same remark can be made for \autoref{thm:invariant} and \autoref{thm:tri0}.
	\end{asparaenum}
\end{rmk}

\chapter{Invariant manifolds of approximately invariant sets} \label{sec:whitney}

In this chapter, we study the existence and smoothness of invariant manifolds for approximately invariant sets, which were also obtained by Chow, Liu and Yi for ODEs in $ \mathbb{R}^n $ (see \cite{CLY00a}), and by Bonatti and Crovisier for diffeomorphisms on smooth Riemannian manifolds (see \cite{BC16}). The main tool is the geometric version of the Whitney extension theorem in infinite dimensions.

\vspace{.5em}
\noindent{\textbf{Notations.}}
Throughout this chapter, we use the following notations:

\begin{enumerate}[$ \bullet $]
	\item $ \supp f = \overline{\{ x : f(x) \neq 0 \}} $ if $ f: M \to Y $ where $ M $ is a topological space;

	\item when $ Y $ is a linear space, for $ A_1 \subset Y $, the convex hull of $ A_1 $ is defined by
	\[
	\mathrm{co} (A_1) = \left\{ \sum_{i=1}^{n} a_i x_i: x_i \in A_1, a_i \in [0,1], \sum_{i=1}^{n} a_i = 1, n \in \mathbb{N} \right\};
	\]

	\item for $ W \subset M $, write $ W^{\complement} = M \setminus W = \{ x \in M: x \notin W \} $;

	\item $ V \Subset W $ if $ \overline{V} \cap \overline{W^{\complement}} = \emptyset $ where $ V, W $ are subsets of a topological space;

	\item $ O_{\epsilon}(A) = \{ x \in M: d(x, A) < \epsilon \} $ if $ A $ is a subset of a metric space $ M $ with metric $ d $;

	\item $ \mathbb{B}_{\epsilon}(x) = \mathbb{B}_{\epsilon}(\{x\}) $, and we also write $ \mathbb{B}_{\epsilon} = \mathbb{B}_{\epsilon}(0) $.
\end{enumerate}

\section{Smooth analysis in Banach spaces: background}

Let us review some results about approximation and extension problems in the vector-valued setting. The first is Dugundji's generalization of the Tietze extension theorem for continuous vector-valued functions; the following is a special case of this well known result (see \cite[Theorem 6.1]{Dug66}).
\begin{thm}[Dugundji extension theorem]\label{thm:dug}
	 Let $ M $ be a metric space and $ Y $ a normed space. Assume $ A \subset M $ is closed and $ f: A \to Y $ is continuous. Then there is a continuous function $ g : M \to Y $ such that $ g|_{A} = f $ and $ g(M) \subset \mathrm{co} f(A) $.
\end{thm}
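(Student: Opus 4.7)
The plan is to extend $f$ to $M\setminus A$ by a canonical partition-of-unity construction, using a locally finite open cover of $M\setminus A$ whose members shrink as they approach $A$, together with carefully chosen ``base points'' in $A$ associated to each cover element. Since $M\setminus A$ is an open subset of a metric space, it is itself metrizable and therefore paracompact, which provides the necessary partition of unity.

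First I would set $r(x)=d(x,A)/3$ for $x\in M\setminus A$ and consider the open cover $\{\mathbb{B}_{r(x)}(x):x\in M\setminus A\}$ of $M\setminus A$. By paracompactness I extract a locally finite open refinement $\{V_\alpha\}_{\alpha\in\Lambda}$, so that each $V_\alpha$ is contained in some $\mathbb{B}_{r(y_\alpha)}(y_\alpha)$ with $y_\alpha\in M\setminus A$. For each $\alpha$ I choose a point $a_\alpha\in A$ satisfying $d(y_\alpha,a_\alpha)\le 2\,d(y_\alpha,A)$, and take a continuous partition of unity $\{\varphi_\alpha\}$ subordinate to $\{V_\alpha\}$. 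I then define
\[
g(x)=\begin{cases} f(x), & x\in A,\\ \sum_{\alpha\in\Lambda}\varphi_\alpha(x)\,f(a_\alpha), & x\in M\setminus A.\end{cases}
\]
Continuity of $g$ on $M\setminus A$ and the inclusion $g(M)\subset\mathrm{co}\,f(A)$ are immediate, since locally only finitely many $\varphi_\alpha$ are nonzero and the coefficients are nonnegative summing to $1$.

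The main obstacle, and the only nontrivial point, is continuity at boundary points of $A$. Fix $x_0\in A$ and take any $x\in M\setminus A$ with $d(x,x_0)$ small. For every $\alpha$ with $\varphi_\alpha(x)>0$ one has $x\in V_\alpha\subset\mathbb{B}_{r(y_\alpha)}(y_\alpha)$, which gives $d(y_\alpha,A)\le d(y_\alpha,x_0)\le d(y_\alpha,x)+d(x,x_0)\le r(y_\alpha)+d(x,x_0)$, and hence $d(y_\alpha,A)\le \tfrac{3}{2}d(x,x_0)$. Combining with $d(a_\alpha,y_\alpha)\le 2\,d(y_\alpha,A)$, a routine triangle-inequality computation yields a universal bound $d(a_\alpha,x_0)\le C\,d(x,x_0)$ independent of $\alpha$. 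Consequently all the relevant $a_\alpha$ lie in a small neighborhood of $x_0$ in $A$, and continuity of $f$ at $x_0$ together with $\sum_\alpha\varphi_\alpha(x)=1$ forces $g(x)\to f(x_0)=g(x_0)$.

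I expect the geometry of the cover, i.e.\ producing the estimate $d(a_\alpha,x_0)\lesssim d(x,x_0)$ uniformly in $\alpha$, to be the one step that requires genuine care; everything else (existence of the locally finite refinement, existence of the subordinate partition of unity, convexity of the image) is a direct appeal to the paracompactness of metric spaces and to the definition of $g$.
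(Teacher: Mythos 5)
Your proof is correct: the canonical cover of $M\setminus A$ by balls $\mathbb{B}_{d(x,A)/3}(x)$, a locally finite refinement with subordinate partition of unity, the choice of near-points $a_\alpha\in A$ with $d(y_\alpha,a_\alpha)\le 2\,d(y_\alpha,A)$, and the triangle-inequality estimate $d(a_\alpha,x_0)\le \tfrac92\,d(x,x_0)$ give exactly the continuity at $\partial A$ and the inclusion $g(M)\subset\mathrm{co}\,f(A)$. This is the standard Dugundji argument; the paper itself does not reprove the theorem but cites \cite[Theorem 6.1]{Dug66}, whose proof is essentially the one you gave.
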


Next, consider the smooth approximation of continuous functions, which relies on the existence of smooth partitions of unity.

Let $ M $ be a $ C^{k} $ manifold locally modeled on a Banach space $ X $, where $ k \in \mathbb{N} $ or $ k = \infty $. A \emph{$ C^{k} $ partition of unity} on $ M $ is a collection $ \{ (V_\gamma, \phi_{\gamma}) \} $ such that
\begin{enumerate}[(i)]
	\item $ \{V_\gamma\} $ is a \emph{locally finite open covering} of $ M $, i.e., $ V_\gamma $ is open, $ \bigcup_{\gamma} V_\gamma = M $, and for any $ m \in M $, there is a neighborhood $ U $ of $ m $ such that $ U \cap V_\gamma = \emptyset $ except for finitely many $ \gamma $;
	\item $ \phi_{\gamma} \in C^{k}(M, \mathbb{R}_+) $ and $ \supp \phi_{\gamma} \subset U_{\gamma} $ for all $ \gamma $;
	\item $ \sum_{\gamma} \phi_{\gamma} (m) = 1 $ for all $ m \in M $.
\end{enumerate}
We say $ M $ admits \emph{$ C^{k} $ partitions of unity} if for any $ C^{k} $ atlas $ \{(U_{\alpha}, \varphi_{\alpha})\} $ of $ M $, if there is a $ C^{k} $ partition of unity $ \{ (V_\gamma, \phi_{\gamma}) \} $ such that for each $ \gamma $, $ V_{\gamma} \subset U_{\alpha(\gamma)} $ for some $ \alpha(\gamma) $; sometimes we also say $ \{ (V_\gamma, \phi_{\gamma}) \} $ is \emph{subordinate} to the open cover $ \{ U_{\alpha} \} $. We collect some classical facts about manifolds admitting $ C^k $ partitions of unity; see \cite[Section 5.5]{AMR88} and \cite[Section 7.5]{HJ14}.
\begin{thm}
	\begin{enumerate}[(1)]
		\item (R. Palais) For a paracompact $ C^{k} $ manifold $ M $ locally modeled on a Banach space $ X $, $ M $ admits $ C^k $ partitions of unity if and only if $ X $ admits $ C^k $ partitions of unity.

		\item (H. Toru\'{n}czyk) If $ X $ is weakly compactly generated (e.g., $ X $ is separable or reflexive) and $ X $ admits a $ C^{k} $ bump function, then $ X $ admits $ C^k $ partitions of unity. In particular, (R. Bonic and J. Frampton) if $ X^* $ is separable, then $ X $ admits $ C^1 $ partitions of unity; (H. Toru\'{n}czyk) if $ X $ is a Hilbert space, then $ X $ admits $ C^\infty $ partitions of unity.
	\end{enumerate}
\end{thm}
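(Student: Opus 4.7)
Both parts of the theorem reduce to the interplay between \emph{local} smoothness (a $C^k$ bump function) and \emph{global} bookkeeping (paracompactness plus local finiteness). For (1), the plan is to transfer partitions of unity through the chart structure in both directions, the nontrivial direction being the passage from the model $X$ to a general $M$. For (2), the plan is to use the WCG hypothesis to reduce the problem to a countable book-keeping in each tangent direction, then to combine the supplied $C^k$ bump function with this countable structure via a Toru\'{n}czyk-type construction.

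\textbf{Proof of (1).} The ``$\Leftarrow$'' direction is the substantial one. Given an open cover $\{U_\alpha\}$ of $M$, I would use paracompactness of $M$ together with refinement by chart neighborhoods to produce a locally finite open refinement $\{V_\beta\}$ with charts $\varphi_\beta: W_\beta \to X$ satisfying $\overline{V_\beta} \subset W_\beta$ and $\varphi_\beta(V_\beta) \Subset \varphi_\beta(W_\beta)$. A $C^k$ bump function on $X$ (obtained from the assumed partition of unity on $X$ by picking a single member subordinate to a small ball and composing with a smooth cut-off on $\mathbb{R}$) pulls back through $\varphi_\beta$ to a function $\psi_\beta \in C^k(M,\mathbb{R}_+)$ with $\psi_\beta > 0$ on $V_\beta$ and $\supp \psi_\beta \subset W_\beta$. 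Local finiteness of $\{V_\beta\}$ makes the sum $\sum_\gamma \psi_\gamma$ locally a finite sum of $C^k$ functions, and normalizing gives $\phi_\beta = \psi_\beta / \sum_\gamma \psi_\gamma$, a $C^k$ partition of unity subordinate to $\{U_\alpha\}$. The ``$\Rightarrow$'' direction is easier: any chart on $M$ exhibits an open subset $V \subset X$ on which $C^k$ bump functions exist; translating and rescaling using the vector-space structure of $X$ produces $C^k$ bump functions with arbitrarily small support centered at every point of $X$, and paracompactness of $X$ combined with the same refinement argument yields $C^k$ partitions of unity on $X$.

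\textbf{Proof of (2).} For the WCG case I would follow the classical Amir--Lindenstrauss/Toru\'{n}czyk route. Starting from a weakly compact total set, extract a biorthogonal M-basis $(e_\gamma,e_\gamma^*) \subset X \times X^*$ and the bounded linear injection $T: X \to c_0(\Gamma)$, $Tx = (e_\gamma^*(x))_{\gamma \in \Gamma}$, which has the crucial property that at every $x \in X$ only countably many coordinates are non-negligible on a neighborhood. Given an open cover of $X$, one refines it into sets of the form $\{x : |e_{\gamma_i}^*(x - x_0)| < \varepsilon_i,\ i = 1,\dots,n\}$; on each such set one builds an associated $C^k$ function by composing finite products of the supplied $C^k$ bump $\psi$ with the functionals $e_\gamma^*$. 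The WCG/countability feature ensures local finiteness of the resulting family after the standard Toru\'{n}czyk normalization. The consequences then follow: if $X^*$ is separable, Kadec's renorming provides a $C^1$ (Fr\'echet-differentiable) equivalent norm and hence a $C^1$ bump, while $X$ is automatically WCG (being separable); if $X$ is a Hilbert space, $\|\cdot\|^2$ is real-analytic so $C^\infty$ bumps exist, and reflexive spaces are WCG.

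\textbf{Expected main obstacle.} Part (1) is conceptually routine once one accepts the interplay between paracompactness and the bump function pulled back through charts; the only delicate point is keeping track of closures $\overline{V_\beta} \subset W_\beta$ so that pullbacks extend by zero in a $C^k$ way. The genuine difficulty lies in (2): manufacturing a locally \emph{finite} $C^k$ sum on a nonseparable WCG space requires that the biorthogonal system actually witnesses countable local structure at every point. This is exactly Toru\'{n}czyk's theorem and its proof is the technical core; I would cite it rather than reproving it, since attempting to redo the construction in a few lines would obscure the geometric message of the paper.
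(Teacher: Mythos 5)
The paper does not prove this theorem at all: it is quoted as a collection of classical facts with pointers to \cite[Section 5.5]{AMR88} and \cite[Section 7.5]{HJ14}, so the only meaningful comparison is with those standard proofs. Your treatment of (2) — citing Toru\'{n}czyk's construction rather than redoing it — is therefore consistent with what the paper itself does, and your reductions (separable or reflexive $\Rightarrow$ WCG, $X^*$ separable $\Rightarrow$ Fr\'echet-smooth renorming, Hilbert $\Rightarrow$ $C^\infty$ bump) are correct.

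Part (1), however, as you wrote it has a genuine gap: in both directions you silently use the implication ``$C^k$ bump function $+$ paracompactness $\Rightarrow$ $C^k$ partitions of unity''. In the ``$\Leftarrow$'' direction you pull back a \emph{single} bump of $X$ through $\varphi_\beta$ and claim it is positive on all of $V_\beta$ with support in $W_\beta$; but $V_\beta$ is an arbitrary open member of a locally finite refinement, not the preimage of a ball, and a bump supported in a ball cannot be positive on such a set. (If you instead force the $V_\beta$ to be preimages of balls, you lose control of local finiteness: paracompactness gives a locally finite refinement by \emph{some} open sets, not by prescribed basis elements, and enlarging supports beyond the refinement members destroys local finiteness of the supports.) What the construction really needs is a $C^k$ Urysohn-type function for each pair $\overline{V_\beta}\subset W_\beta$, i.e.\ $\psi_\beta>0$ on $V_\beta$, $\supp\psi_\beta\subset W_\beta$; this follows from the \emph{full} hypothesis that $X$ admits $C^k$ partitions of unity (take a partition subordinate to $\{\varphi_\beta(W_\beta),\,X\setminus\overline{\varphi_\beta(V_\beta)}\}$ and sum the members assigned to the first set), but it does not follow from a bump function alone. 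The same defect appears in your ``$\Rightarrow$'' direction, where you transfer only bumps to $X$ and then invoke ``the same refinement argument''; one must instead transfer partitions of unity from the chart image to translated and dilated copies covering $X$ and glue with Urysohn functions obtained as above. The issue is not cosmetic: if bump $+$ paracompactness implied partitions of unity, then applying your argument to $M=X$ with the identity chart would make the WCG hypothesis in part (2) redundant and would settle the well-known open problem of whether a $C^1$ bump forces $C^1$ partitions of unity (see \cite[Chapter 7]{HJ14}). The fix is available to you in the ``$\Leftarrow$'' direction, since you do assume partitions of unity on $X$ — use them to manufacture the subordinate functions $\psi_\beta$, not merely a bump — and in the ``$\Rightarrow$'' direction argue via restriction of partitions of unity on $M$ to a chart domain, transport to an open subset of $X$, and then the translation--dilation--gluing step.
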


The following is a well known result concerning the $ C^{k} $ approximation of continuous vector-valued functions; here, we give a slightly generalized version for our purpose.

\begin{thm}\label{thm:C1app0}
	Let $ M $ be a $ C^k $ manifold admitting $ C^k $ partitions of unity and $ Y $ a normed space. If
	\begin{enumerate}[(a)]
		\item $ f: M \to Y $ is continuous and there are two open subsets $ V \Subset \widetilde{V} \subset M $ such that $ f $ is $ C^k $ in $ \widetilde{V} $,
		\item $ O \Subset \widetilde{O} \subset M $ where $ O, \widetilde{O} $ are open, 
		\item $ \delta(\cdot): M \to (0, \infty) $ is any continuous function;
	\end{enumerate}
	then there is a continuous map $ g: M \to Y $ such that
	
	\begin{enumerate}[(1)]
		\item $ g $ is $ C^k $ in $ \widetilde{V} \cup O $;
		
		\item $ g|_{V \cup \widetilde{O}^{\complement}} = f|_{V \cup \widetilde{O}^{\complement}} $;
		
		\item $ |g(x) - f(x)| < \delta(x) $ for all $ x \in M $; 
		
		\item $ g(M) \subset \mathrm{co} (f(M)) $.
	\end{enumerate}
\end{thm}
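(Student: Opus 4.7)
The plan is to construct $g$ as a convex combination $g = \lambda f + (1-\lambda)\tilde g$, where $\tilde g$ is a global $C^k$ approximation of $f$ obtained from a partition of unity, and $\lambda \in C^k(M,[0,1])$ is a cutoff that equals $1$ on the region where $g$ must coincide with $f$ and vanishes on a neighborhood of the region where $f$ is only continuous. This naturally forces me to use two $C^k$ bumps, one tailored to the pair $V \Subset \widetilde V$ and one to $O \Subset \widetilde O$, and to combine them so that all four sets are respected simultaneously.

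First I will build $\tilde g$ by the classical recipe. For each $x \in M$ pick an open neighborhood $U_x$ on which $|f(\cdot)-f(x)| < \delta(x)/2$ and $\delta(\cdot) > \delta(x)/2$, using continuity of $f$ and $\delta$. Paracompactness (implicit in the assumption that $M$ admits $C^k$ partitions of unity) yields a locally finite open refinement $\{V_\gamma\}$ of $\{U_x\}$ with $V_\gamma \subset U_{x_\gamma}$, and a subordinate $C^k$ partition of unity $\{\phi_\gamma\}$ exists by hypothesis. Setting $\tilde g(x) = \sum_\gamma \phi_\gamma(x)\,f(x_\gamma)$, local finiteness gives $\tilde g \in C^k(M,Y)$; the convex structure gives $\tilde g(M) \subset \mathrm{co}(f(M))$; and the standard pointwise estimate (valid because $\phi_\gamma(y) \ne 0$ forces $y \in V_\gamma \subset U_{x_\gamma}$, so $|f(x_\gamma)-f(y)| < \delta(x_\gamma)/2 < \delta(y)$) gives $|\tilde g - f| < \delta$.

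Next, exploiting $V \Subset \widetilde V$ and $O \Subset \widetilde O$, I will produce $\psi_V,\psi_O \in C^k(M,[0,1])$ with $\psi_V \equiv 1$ on an open neighborhood of $\overline V$ and $\mathrm{supp}\,\psi_V \subset \widetilde V$, and analogously for $\psi_O$; both are standard consequences of $C^k$ partitions of unity on $M$ applied to the open cover by $\widetilde V$ and the complement of an intermediate closed set squeezed between $\overline V$ and $\widetilde V$ (such an intermediate set exists because $\Subset$ implies the closures of $V$ and $\widetilde V^{\complement}$ are disjoint, and $M$ is normal). I then set
$$\lambda \;=\; \psi_V + (1-\psi_V)(1-\psi_O) \;\in\; C^k(M,[0,1]),$$
and define $g = \lambda f + (1-\lambda)\tilde g$.

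Verifying the four conclusions will be mostly routine. Properties (3) and (4) follow immediately from $|g-f| = (1-\lambda)|\tilde g - f| \le |\tilde g - f| < \delta$ and the convex-combination structure. For (2): on $V$ we have $\psi_V = 1$, hence $\lambda = 1$ and $g = f$; on $\widetilde O^{\complement}$ we have $\psi_O = 0$ (since $\mathrm{supp}\,\psi_O \subset \widetilde O$), hence $\lambda = \psi_V + (1-\psi_V) = 1$ and again $g = f$. Property (1) is the only delicate step and is where I expect the one real subtlety: on $\widetilde V$ the functions $f,\tilde g,\psi_V,\psi_O$ are all $C^k$, so $g$ is $C^k$ there; for $x \in O\setminus \widetilde V$, one has $x \notin \mathrm{supp}\,\psi_V$ and $x$ lies in the open neighborhood of $\overline O$ on which $\psi_O\equiv 1$, so on a sufficiently small open neighborhood of $x$ one has simultaneously $\psi_V\equiv 0$ and $\psi_O\equiv 1$, making $\lambda\equiv 0$ and $g\equiv \tilde g$ locally. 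The main obstacle—mild but essential—is precisely that the factor multiplying the non-smooth $f$ must vanish on a whole neighborhood of each $x \in O\setminus \widetilde V$ rather than merely at $x$; this is exactly why the hypothesis $O \Subset \widetilde O$ (giving $\psi_O\equiv 1$ on an open set containing $\overline O$, not just on $O$) cannot be weakened to $O \subset \widetilde O$.
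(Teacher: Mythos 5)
Your proof is correct and essentially identical to the paper's: after relabeling $ \psi_O = 1-\theta_2 $, $ \psi_V = \theta_1 $, your blend $ g = \lambda f + (1-\lambda)\tilde g $ with $ \lambda = \psi_V + (1-\psi_V)(1-\psi_O) $ is exactly the paper's $ g = (1-\theta_2)(\theta_1 f + (1-\theta_1) f_1) + \theta_2 f $, and your $ \tilde g $ is the paper's $ f_1 $ built by the same partition-of-unity recipe. Your extra care in requiring $ \mathrm{supp}\,\psi_V \subset \widetilde{V} $ (so that the coefficient of the merely continuous $ f $ vanishes on a whole neighborhood of each point of $ O \setminus \widetilde{V} $) is precisely what makes conclusion (1) work at boundary points of $ \widetilde{V} $ lying in $ O $, and matches what the bump functions cited by the paper actually provide.
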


\begin{proof}
	The proof is straightforward. For each $ x $, since $ \delta(x) > 0 $ is continuous at $ x $, there exists an open neighborhood $ U_{x} $ of $ x $ such that $ \delta(x) / 2 < \delta(y) $ for all $ y \in U_{x} $; in addition, as $ f $ is continuous, we can further assume $ |f(y) - f(x)| < \delta(x) / 2 $ and so $ |f(y) - f(x)| < \delta(y) $ for all $ y \in U_{x} $. As $ \{ U_{x} \}_{x \in M} $ is an open covering of $ M $, by the assumption on $ M $, there is a $ C^k $ partition of unity $ \{ (V_{\gamma}, \phi_{\gamma}) \} $ subordinate to this open covering. For each $ \gamma $, choose one $ x_{\gamma} $ such that $ V_{\gamma} \subset U_{x_{\gamma}} $. Define
	\[
	f_{1}(y) = \sum_{\gamma} \phi_{\gamma}(y) f(x_{\gamma}), \quad y \in M.
	\]
	Then $ f_{1} \in C^k $, $ f_1(M) \subset \mathrm{co} (f(M)) $, and
	\[
	|f_1(y) - f(y)| \leq \sum_{\gamma} \phi_{\gamma}(y)|f(x_{\gamma}) - f(y)| < \sum_{\gamma} \phi_{\gamma}(y) \delta(y) = \delta(y).
	\]
	Let $ \theta_i: M \to [0,1] $, $ i = 1,2 $, be $ C^k $ functions such that $ \theta_1(V) \equiv 1 $, $ \theta_1(\widetilde{V}^{\complement}) \equiv 0 $, and $ \theta_2(O) \equiv 0 $, $ \theta_2(\widetilde{O}^{\complement}) \equiv 1 $; see e.g. \cite[Proposition 5.5.8]{AMR88}. Define
	\[
	g(x) = (1 - \theta_2(x))( \theta_1(x)f(x) + (1 - \theta_1(x)) f_1(x) ) + \theta_2(x) f(x), \quad x \in M,
	\]
	which gives the desired properties (1)--(4). The proof is complete.
\end{proof}

Finally, let us focus on the $ C^1 $ extension problem in infinite dimensions, i.e., the Whitney extension problem in Banach spaces. The Whitney extension theorem in $ \mathbb{R}^n $ (see \cite{Whi34}) with later generalizations due to G. Glaeser, E. Bierstone, P. D. Milman, C. Fefferman, etc., is a celebrated result in mathematical analysis. However, such Whitney extension theorem generally fails for $ C^3 $ (or $ C^{2,1} $) functions even in separable Hilbert spaces (see \cite{Wel73}). A positive result for $ C^1 $ functions (with bounded derivatives) was recently obtained by M. Jim\'enez-Sevilla and L. S\'anchez-Gonz\'alez \cite{JS13} based on techniques due to D. Azagra, R. Fry and L. Keener \cite{AFK10}. This was achieved in \cite{JS13} by introducing \emph{property ($ * $)} of a pair of Banach spaces (see also \autoref{def:property*} for a special case); this property is also demonstrated to be a \emph{necessary} condition.
\begin{defi}\label{def:p*}
	The pair of Banach spaces $ (X, Z) $ is said to have \emph{$ C_* $-property ($ * $)} (or for short \emph{property ($ * $)}) if there is a constant $ C_* \geq 1 $ (depending only on $ X $ and $ Z $) such that for each closed subset $ A \subset X $, each Lipschitz function $ f: A \to Z $, and each $ \varepsilon > 0 $, there is a $ C^1 $ smooth and Lipschitz function $ g: X \to Z $ such that $ |f(x) - g(x)| < \varepsilon $ for all $ x \in A $ and $ \lip g \leq C_* \lip f $.
\end{defi}

It is straightforward to see that if $ (X, Z) $ has $ C_* $-property ($ * $), then $ X $  satisfies the $ C_* $-property ($ *^1 $) in the sense of \autoref{def:property*}; see also \cite[Remark 1.3 (4)]{JS13}.

\begin{exa}\label{exa:p*}
	We list some examples of $ (X, Z) $ having property ($ * $) that are taken from \cite[Section 2]{JS13}.
	\begin{enumerate}[(a)]
		\item Let $ X $ be finite-dimensional and $ Z $ a Banach space. Then $ (X, Z) $ has $ C_* $-property ($ * $) with $ C_* $ depending only on the dimension of $ X $.
		\item Let $ X, Z $ be Hilbert spaces with $ X $ separable. Then $ (X, Z) $ has $ C_* $-property ($ * $) for some fixed $ C_* > 0 $ independent of $ X, Z $. In fact, we can choose $ C_* = 2 + \epsilon $ for any small $ \epsilon > 0 $ (see \autoref{exa:propertyA} and \cite[Example 2.2]{JS13}).
		\item The pairs $ (L_2, L_{p}) $ ($ 1 < p < 2 $) and $ (L_q, L_2) $ ($ 2 < q < \infty $) have $ C_* $-property ($ * $) with $ C_* $ depending only on $ p $ and $ q $, respectively.
		\item Let $ X, Z $ be Banach spaces such that $ X^* $ is separable and $ Z $ is an absolute Lipschitz retract (e.g., $ Z = C(K) $ for some compact metric space $ K $ or a complemented subspace of $ C(K) $). Then $ (X, Z) $ has property ($ * $).
		\item Let $ X, Z $ be Banach spaces such that $ X $ satisfies the property ($ *^1 $) (see \autoref{def:property*}) and $ Z $ is finite-dimensional. Then $ (X, Z) $ has property ($ * $). In particular, if $ X $ is a Hilbert space, then $ (X, \mathbb{R}^n) $ has property ($ * $).
		\item If $ (X, Z) $ has property ($ * $), $ X_1 $ is a subspace of $ X $, and $ Z_1 $ is complemented in $ Z $, then $ (X_1, Z_1) $ also has property ($ * $).
	\end{enumerate}
\end{exa}

The following version of the Whitney extension theorem in Banach spaces is the main result of \cite{JS13} (see Theorems 3.1 and 3.2 therein).

\begin{thm}[M. Jim\'enez-Sevilla and L. S\'anchez-Gonz\'alez]\label{thm:JS}
	Let $ (X, Z) $ be a pair of Banach spaces having $ C_* $-property ($ * $), $ A \subset X $ a closed subset of $ X $, and a function $ f: A \to Z $. Suppose $ f $ is $ C^1 $ at $ A $ in the sense of Whitney, that is, there is a continuous map $ \mathcal{D}: A \to L(X, Z) $ such that for each $ x \in A $ and each $ \varepsilon > 0 $, there is $ r > 0 $ such that
	\[
	|f(y) - f(z) - \mathcal{D}(x)(y - z)| \leq \varepsilon|y - z|, \quad \forall y, z \in A \cap \mathbb{B}_{r}(x).
	\]
	Then there is a $ C^1 $ function $ g: X \to Z $ such that $ g|_{A} = f $ and $ Dg(x) = \mathcal{D}(x) $ for all $ x \in A $.

	Furthermore, if $ C = \sup_{x \in A}\{ |\mathcal{D}(x)| \} < \infty $ and $ f $ is Lipschitz, then we can choose $ g $ such that it additionally satisfies $ \lip g \leq (1 + C_*)(C + \lip f) $.
\end{thm}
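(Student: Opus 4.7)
The plan is to follow the classical Whitney construction, but replace the role of smooth cutoffs on $\mathbb{R}^n$ by the quantitative smooth approximation of Lipschitz functions furnished by $C_*$-property $(*)$. Since $U \triangleq X \setminus A$ is open, I would first build a \emph{Whitney-type covering} of $U$ by open balls $B_i = \mathbb{B}_{r_i}(x_i)$ with centers $x_i \in U$ and radii $r_i$ comparable to $d(x_i, A)$ (say, $r_i = \tfrac{1}{10} d(x_i, A)$), locally finite on $U$ and with bounded overlap. For each $i$ I would select a nearest point $a_i \in A$ to $x_i$ (or, if $A$ fails to admit nearest points, an almost-nearest point) and form the affine polynomial
\[
P_i(x) = f(a_i) + \mathcal{D}(a_i)(x - a_i),
\]
so that $|P_i(x) - f(a)| + |DP_i(x) - \mathcal{D}(a)| \to 0$ whenever $x \to a \in A$ along points with $a_i \to a$, by the Whitney condition on $(f, \mathcal{D})$ and continuity of $\mathcal{D}$.

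Next I would assemble a $C^1$ partition of unity $\{\phi_i\}$ subordinate to $\{B_i\}$. In a general Banach space one cannot invoke $C^1$ partitions of unity directly, but $C_*$-property $(*)$ gives exactly the right tool: the Lipschitz ``tent'' functions $\psi_i(x) = \max\{0, r_i - |x - x_i|\}$ (or $d(x, B_i^c)$) can be smoothly approximated up to any prescribed error by $C^1$-Lipschitz functions $\tilde{\psi}_i$ with Lipschitz constant $\leq C_* \mathrm{Lip}\, \psi_i$, and the approximation errors can be made summable relative to $d(\cdot, A)$ on the overlap structure so that $\Phi = \sum_i \tilde{\psi}_i$ is strictly positive on $U$, $C^1$, and $\phi_i = \tilde{\psi}_i / \Phi$ yields the desired partition. (This is essentially the AFK device from \cite{AFK10}, extended to the vector-valued setting in \cite{JS13}.) With this in hand, define
\[
g(x) = \begin{cases} \sum_i \phi_i(x)\, P_i(x), & x \in U,\\ f(x), & x \in A.\end{cases}
\]

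The verification then proceeds in three steps. First, $g \in C^1(U, Z)$ is immediate from the local finiteness of $\{B_i\}$ and smoothness of each $\phi_i, P_i$. Second, continuity and differentiability at points $a \in A$: using the key identity
\[
g(x) - f(a) - \mathcal{D}(a)(x - a) = \sum_i \phi_i(x) \bigl\{ [f(a_i) - f(a) - \mathcal{D}(a)(a_i - a)] + [\mathcal{D}(a_i) - \mathcal{D}(a)](x - a_i) \bigr\},
\]
and that $\phi_i(x) \neq 0$ forces $|x - a_i| \lesssim d(x, A) \leq |x - a|$, both bracketed terms are $o(|x - a|)$ by the Whitney hypothesis and the continuity of $\mathcal{D}$, giving $Dg(a) = \mathcal{D}(a)$. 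Continuity of $Dg$ at $a$ follows analogously. Third, for the Lipschitz estimate when $C, \mathrm{Lip}\, f < \infty$: on $U$, $|Dg(x)| \leq \sup_i |DP_i(x)| + \sup_i |D\phi_i(x)| \cdot \mathrm{diam}(P_i\text{-values on overlap}) \leq C + C_* \cdot \mathrm{Lip}\, f$ after a bounded-overlap estimate using that nearby $a_i$'s satisfy $|f(a_i) - f(a_j)| \leq \mathrm{Lip}\, f \cdot |a_i - a_j|$; together with the trivial bound on $A$ this gives $\mathrm{Lip}\, g \leq (1 + C_*)(C + \mathrm{Lip}\, f)$.

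The main obstacle, and the whole reason property $(*)$ enters, is step two of the construction: producing the $C^1$ partition of unity on $U$ with derivatives controlled by $1/d(\cdot, A)$ in a way that is compatible with the Whitney scaling. In a space without $C^1$ smooth partitions of unity (e.g. $C[0,1]$), one cannot simply convolve or bump; the delicate point is to perform the smooth-Lipschitz approximation of each $\psi_i$ with an error budget $\varepsilon_i$ small enough that summability and strict positivity of $\Phi$ survive, while the Lipschitz loss $C_*$ stays a universal constant. This is precisely the content that property $(*)$ is designed to deliver, and is where the proof of \cite{JS13} does its real work.
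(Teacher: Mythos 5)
First, note that the paper does not prove this statement at all: Theorem~\ref{thm:JS} is imported verbatim from \cite{JS13} (Theorems 3.1 and 3.2 there), so the only meaningful comparison is with that cited proof, and your sketch does not follow it — nor does it close on its own terms. The central gap is your step one/two: a Whitney-type covering of $U = X\setminus A$ by balls $\mathbb{B}_{r_i}(x_i)$ with $r_i \approx d(x_i,A)$ that is locally finite \emph{with bounded overlap}, together with a $C^1$ partition of unity $\{\phi_i\}$ satisfying the Whitney-scaled bound $\sum_i|D\phi_i(x)| \le c/d(x,A)$. In an infinite-dimensional Banach space no such bounded-multiplicity ball covering exists (the classical construction is a volume/doubling argument; infinite-dimensional spaces have infinite Nagata-type dimension), and property ($*$) does not manufacture one. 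Moreover, property ($*$) produces global $C^1$ Lipschitz approximations $\tilde\psi_i$ of the tent functions with \emph{no control on their supports}; an infinite sum $\Phi=\sum_i\tilde\psi_i$ of such functions with merely summable errors need not be $C^1$, and the known repair (pass to Rudin-type refinements with positive separation and compose with real cut-offs, as in Appendix~B of this paper) yields partition functions whose Lipschitz constants are \emph{not} comparable to $1/d(\cdot,A)$. Your pointwise identity does give $Dg(a)=\mathcal{D}(a)$ for $a\in A$ without derivative bounds on the $\phi_i$, but continuity of $Dg$ at $A$ (hence $g\in C^1(X,Z)$) and the Lipschitz estimate both require exactly the bounds $\sum_i|D\phi_i|\le c/d(\cdot,A)$ and bounded overlap that are unavailable, so the argument fails at the point you yourself identify as ``where the real work is.''

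A second, softer symptom that the route is wrong is the constant: a partition-of-unity-over-Whitney-balls argument produces Lipschitz constants depending on the overlap multiplicity (dimension-dependent in $\mathbb{R}^n$), whereas the theorem asserts the clean bound $\operatorname{Lip} g \le (1+C_*)(C+\operatorname{Lip} f)$. The proof in \cite{JS13}, following \cite{AFK10}, is structurally different: it works directly with Lipschitz extensions of $f$ and of the first-order Taylor error, applies the smooth-and-Lipschitz approximation of property ($*$) to these globally defined Lipschitz functions (the factor $C_*$ enters once, which is where the stated constant comes from), and corrects iteratively near $A$ using the Whitney data, rather than assembling local affine models over a geometric covering. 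If you want to salvage your outline you would have to either restrict to settings where Whitney-scaled smooth partitions exist (essentially finite dimensions), or switch to the approximation--correction scheme of \cite{AFK10,JS13}.
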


The general Whitney extension theorem in Banach spaces for $ C^2 $ (or $ C^{1,1} $) functions remains an open problem. In the following \autoref{sub:AE} and \autoref{sub:geometric}, we will present some generalizations of the above results suited for our purpose. In particular, a geometric version of the Whitney extension theorem in Banach spaces (see \autoref{thm:whitney}) based on \autoref{thm:JS} was originally obtained by \cite{CLY00a, BC16} in the finite-dimensional setting; see also \cite[Corollary 3.3]{OW92} for a similar result.

\section{Approximation and extension between two manifolds: preparation}\label{sub:AE}

In this section, we make the following assumption.

\vspace{.5em}
\noindent{{Assumption.}}
Assume $ M $ is a $ C^1 $ paracompact manifold admitting $ C^1 $ partitions of unity and modeled on a Banach space $ X $. Let $ N $ be any $ C^1 $ (boundaryless) Banach manifold with a (compatible) metric $ d $. Here, note that $ M $ admits a metric.
\vspace{.5em}

A $ C^1 $ local chart $ (V, \varphi) $ of $ N $ is said to be $ C^{0,1} $ with respect to $ d $ if for $ \varphi: V \to \varphi(V) \subset Y_{V} $ (where $ Y_{V} $ is a Banach space with norm $ |\cdot| $), there is a constant $ C_{\varphi} > 0 $ satisfying
\[\label{equ:lip0}\tag{$ \divideontimes $}
d(\varphi^{-1}(x), \varphi^{-1}(y)) \leq C_{\varphi}|x - y|, \quad x,y \in \varphi(V).
\]
Note that such a local chart exists if $ (V, \varphi) $ is a local chart at $ m \in V $ with $ V $ \emph{small} (i.e., $ V = \varphi^{-1}(\mathbb{B}_{\epsilon}) $ where $ \varphi(m) = 0 $ and $ \epsilon $ is small).

\begin{thm}[$ C^0 $ approximation by $ C^1 $ maps]\label{thm:C1app}
	Let $ f: M \to N $ and $ \delta: M \to (0,\infty) $ be continuous. Assume $ \{(V_i, \varphi_i)\} $ is a collection of $ C^{1} \cap C^{0,1} $ local charts of $ N $ (with $ \varphi_i(V_i) $ convex) such that $ f(M) \subset \bigcup_{i = 1}^{\infty} V_i $. Then there is a $ C^1 $ map $ g: M \to N $ such that $ d(g(x), f(x)) < \delta(x) $ for all $ x \in M $. If, in addition, $ f $ is $ C^1 $ in a neighborhood of a closed subset $ K \subset M $, then we can further choose $ g $ such that $ g|_{K} = f|_{K} $.
\end{thm}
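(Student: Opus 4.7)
The plan is to reduce the problem to $C^1$ approximation of Banach-space-valued maps (already available via \autoref{thm:C1app0}) by working in the convex charts $(V_i,\varphi_i)$, and then patch the local approximations into a global $C^1$ map using a $C^1$ partition of unity together with the convex combination available inside each $\varphi_i(V_i)$. The $C^{0,1}$ condition \eqref{equ:lip0} is what converts norm-level estimates inside $Y_{V_i}$ into metric-level estimates of $d$ on $N$, and convexity of $\varphi_i(V_i)$ is what guarantees that convex combinations performed inside a chart land back in the chart.

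Concretely, I would first build a locally finite open refinement $\{W_\alpha\}_{\alpha\in A}$ of the open cover $\{f^{-1}(V_i)\}_{i}$ of $M$, together with an assignment $\alpha\mapsto i(\alpha)$ such that $\overline{W_\alpha}\subset f^{-1}(V_{i(\alpha)})$ (shrinking each member a little using paracompactness; the standard ``shrinking lemma'' for paracompact Hausdorff spaces applies since $M$ admits $C^1$ partitions of unity). Next choose a $C^1$ partition of unity $\{\phi_\alpha\}$ subordinate to $\{W_\alpha\}$, and enumerate $A=\{1,2,\ldots\}$ (or well-order it if $A$ is uncountable; local finiteness will make the construction still make sense pointwise). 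Choose continuous functions $\delta_\alpha:M\to(0,\infty)$ with $\sum_\alpha \delta_\alpha(x) < \delta(x)$ for every $x$ (possible since the cover is locally finite), and small enough that on $W_\alpha$ any $d$-perturbation of size $\delta_\alpha$ around $f$ stays well inside $V_{i(\alpha)}$ and can be transported into $\varphi_{i(\alpha)}$-coordinates.

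Then construct $g$ inductively. Set $g_0=f$. Given $g_{n-1}$ which is continuous, coincides with $f$ outside $\bigcup_{k<n}W_k$, is $C^1$ on a neighborhood of the set where modification has already happened, and satisfies $d(g_{n-1},f)<\sum_{k<n}\delta_k$, apply \autoref{thm:C1app0} (with $Y=Y_{V_{i(n)}}$) to the continuous map $\varphi_{i(n)}\circ g_{n-1}:W_n\to \varphi_{i(n)}(V_{i(n)})$ to produce a $C^1$ approximation $\tilde f_n$ on $W_n$ whose image still lies in the convex set $\varphi_{i(n)}(V_{i(n)})$; moreover I can make $\tilde f_n$ agree with $\varphi_{i(n)}\circ g_{n-1}$ where the latter is already $C^1$ (this is the ``$V\cup\widetilde O^{\complement}$'' clause of \autoref{thm:C1app0}). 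Define
\[
g_n(x)=
\begin{cases}
\varphi_{i(n)}^{-1}\!\bigl((1-\phi_n(x))\,\varphi_{i(n)}(g_{n-1}(x))+\phi_n(x)\,\tilde f_n(x)\bigr), & x\in W_n,\\
g_{n-1}(x), & x\notin\supp\phi_n.
\end{cases}
\]
The convex combination lies in $\varphi_{i(n)}(V_{i(n)})$ by convexity, so $g_n$ is well defined; by \eqref{equ:lip0} and by choosing $\tilde f_n$ close enough to $\varphi_{i(n)}\circ g_{n-1}$ the new error satisfies $d(g_n,g_{n-1})<\delta_n$. Local finiteness of $\{W_\alpha\}$ means that near any point of $M$ only finitely many of the modifications act, so $g(x):=\lim_n g_n(x)$ is in fact a finite composition there; hence $g$ is continuous, and $C^1$ because on a neighborhood of each point the local expression is a finite composition of $C^1$ maps (the $\varphi_{i(n)}$ are $C^1$ diffeomorphisms onto their images and the $\phi_n$ are $C^1$). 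The inequality $d(g,f)<\sum_n\delta_n<\delta$ holds pointwise.

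For the last assertion, when $f$ is already $C^1$ on an open neighborhood $U$ of $K$, use the ``$V$'' clause of \autoref{thm:C1app0} to choose each $\tilde f_n$ so that $\tilde f_n=\varphi_{i(n)}\circ g_{n-1}$ on an open set $V_n\Subset U\cap W_n$ whose union covers $K$; this forces the inductive modification to be trivial on $K$, and so $g|_K=f|_K$. The main technical obstacle I anticipate is the bookkeeping at the passage $g_{n-1}\to g_n$: I must ensure (a) $g_{n-1}(W_n)\subset V_{i(n)}$ at every stage, which is delivered by pushing the tolerances $\delta_n$ below the ``safe margin'' of $V_{i(n)}$ computed from \eqref{equ:lip0}, and (b) that the iterative $C^1$ modifications do not destroy $C^1$ smoothness already achieved on overlaps with earlier $W_k$; here the invariance-on-$V$ feature of \autoref{thm:C1app0} is crucial, allowing the approximation $\tilde f_n$ to coincide exactly with the already-smoothed map on the region where smoothness has been established.
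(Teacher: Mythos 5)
Your global patching step has a genuine gap. In the update $g_n=\varphi_{i(n)}^{-1}\bigl((1-\phi_n)\,\varphi_{i(n)}(g_{n-1})+\phi_n\,\tilde f_n\bigr)$ the old map $g_{n-1}$ survives with weight $1-\phi_n\in(0,1)$, so unwinding the recursion at a point $x$ touched by the indices $n_1<\cdots<n_k$ shows that the original, merely continuous, $f$ enters the final formula with the $C^1$ coefficient $\prod_{j}\bigl(1-\phi_{n_j}\bigr)$. The normalization $\sum_j\phi_{n_j}=1$ does not force this product to vanish (two weights equal to $1/2$ give $1/4$), and the sets $\{\phi_\alpha=1\}$ cannot cover $M$ unless a single chart already suffices; so at a generic point one can write, in the chart, $g=P\cdot f+S$ with $P,S$ of class $C^1$ near that point and $P\neq 0$, whence $g\in C^1$ would force $f\in C^1$ there --- false in general (take $f$ nowhere differentiable). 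Your sentence ``the local expression is a finite composition of $C^1$ maps'' overlooks that $g_0=f$ is one of the ingredients of that composition; the agree-where-already-smooth clause of \autoref{thm:C1app0} only prevents destroying smoothness already present, it never creates smoothness on the set where $0<\phi_n<1$.

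The repair is to abandon the convex blend and do exact cut-and-paste, which is how the paper argues: inside a chart, \autoref{thm:C1app0} is applied so that the new map is $C^1$ on (previously smooth region)$\,\cup\,$(new region $O$) and agrees \emph{exactly} with the old map on $V\cup\widetilde O^{\complement}$, and the global map is then defined by cases with no intermediate weights (this is \autoref{lem:ext}); the smoothing itself happens inside \autoref{thm:C1app0}, where the partition-of-unity sum involves only the constant values $f(x_\gamma)$ (hence is automatically $C^1$) and the transition bumps $\theta_1,\theta_2$ are supported where smoothness is not yet required. The paper then iterates over an exhaustion $O^*_n$ of $M$, keeping exact equality with the previous stage on $K\cup O^*_{n-1}$ and maintaining the invariance property $f^{-1}(V_i)\subset g^{-1}(V_i)$ for all $i$ at every stage so that the later charts remain applicable on their full domains; this last point also covers your bookkeeping worry (a), which in your scheme would in any case need a pointwise bound such as $\sum_\alpha\delta_\alpha(x)<d\bigl(f(x),N\setminus V_{i(\alpha)}\bigr)$ on the relevant sets rather than merely ``small tolerances'', since no positive lower bound for that distance is available without compactness.
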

\begin{proof}
	First, we prove the following local extension lemma.
	\begin{lem}\label{lem:ext}
		Take a $ C^{1} \cap C^{0,1} $ local chart $ (V, \varphi) $ of $ N $ with $ \varphi(V) $ convex. Let $ \widehat{O}_1 \Subset \widetilde{O}_1 \subset M $ and $ \widehat{O}_2 \Subset f^{-1}_0(V) $ with $ \widehat{O}_1, \widetilde{O}_1, \widehat{O}_2 $ open. Let $ f_0: M \to N $ be $ C^0 $ such that $ f_0 $ is $ C^1 $ in $ \widetilde{O}_1 $. Then there is a $ C^0 $ map $ \widetilde{f}: M \to N $ such that
		\begin{enumerate}[(i)]
			\item $ d(\widetilde{f}(x), f_0(x)) < \delta(x) $ for all $ x \in M $;
			\item $ \widetilde{f}|_{\widehat{O}_1 \cup (f^{-1}_0(V))^{\complement}} = f_0|_{\widehat{O}_1 \cup (f^{-1}_0(V))^{\complement}} $;
			\item $ \widetilde{f} $ is $ C^1 $ in $ \widetilde{O}_1 \cup \widehat{O}_2 $; and
			\item $ f^{-1}_0(V) \subset \widetilde{f}^{-1}(V) $.
		\end{enumerate}
	\end{lem}
	\begin{proof}
		Let $ U_1 = \widehat{O}_2 \setminus \widetilde{O}_1 $. Then $ \overline{U_1} \cap \overline{\widehat{O}_1} = \emptyset $, and hence there is an open set $ \widetilde{O}_2 \Subset f^{-1}_0(V) $ such that $ U_1 \Subset \widetilde{O}_2 \subset f^{-1}_0(V) $ and $ \widetilde{O}_2 \cap \widetilde{O}_1 = \emptyset $.

		Consider $ \varphi \circ f_0: f^{-1}_0(V) \to \varphi(V) \subset Y_{V} $ where $ Y_{V} $ is a Banach space. By \autoref{thm:C1app0}, there is a continuous function $ \widehat{f}: f^{-1}_0(V) \to Y_{V} $ such that
		\begin{enumerate}[(i$ ' $)]
			\item $ \widehat{f} $ is $ C^1 $ in $ (f^{-1}_0(V) \cap \widetilde{O}_1) \cup U_1 $;
			\item $ \widehat{f}|_{f^{-1}_0(V)\setminus\widetilde{O}_2} = \varphi \circ f_0|_{f^{-1}_0(V)\setminus\widetilde{O}_2} $;
			\item $ |\widehat{f}(x) - \varphi \circ f_0(x)| < \delta'(x) $ for all $ x \in f^{-1}_0(V) $, where $ \delta'(\cdot) $ is continuous such that $ 0 < \delta'(x) <  \delta(x) / C_{\varphi} $ and $ C_\varphi $ is defined by \eqref{equ:lip0}; and
			\item $ \widehat{f}(f^{-1}_0(V)) \subset \mathrm{co} (\varphi (V)) = \varphi (V) $ (as $ \varphi (V) $ is convex).
		\end{enumerate}

		Define
		\[
		\widetilde{f}(x) = \begin{cases}
		\varphi^{-1}\circ \widehat{f} (x), & x \in f^{-1}_0(V), \\
		f_0(x), & x \notin f^{-1}_0(V).
		\end{cases}
		\]
		By (ii$ ' $) and (iv$ ' $), $ \widetilde{f} $ is well defined and continuous, and satisfies (ii). By the choice of $ \delta'(\cdot) $, $ \widetilde{f} $ satisfies (i). (iv) is obviously satisfied by $ \widetilde{f} $ due to (iv$ ' $).

		If $ x \in \widehat{O}_1 $, then $ x \in \widetilde{O}^{\complement}_2 $, and so $ \widehat{f}(x) = \varphi \circ f_0(x) $, i.e., (ii) holds.
		Let $ x \in \widetilde{O}_1 $. If $ x \in f^{-1}_0(V) \cap \widetilde{O}_1 $, then by (i$ ' $), $ \widetilde{f} $ is $ C^1 $ at $ x $; if $ x \in \widetilde{O}_1 \setminus f^{-1}_0(V) $, then $ \widetilde{f}(x) = f_0(x) $.
		Let $ x \in \widehat{O}_2 $. Then $ x \in \{\widehat{O}_2 \setminus \widetilde{O}_1\} \cup \{\widehat{O}_2 \cap \widetilde{O}_1\} \subset U_1 \cup \{f^{-1}_{0}(V) \cap \widetilde{O}_1\} $. So by (i$ ' $), $ \widetilde{f} $ is $ C^1 $ at $ x $. This establishes (iii) and completes the proof.
	\end{proof}

	Now we turn to the proof of \autoref{thm:C1app}. Assume $ f $ is $ C^1 $ in a neighborhood of a closed subset $ K \subset M $.
	Set $ W_i = f^{-1}(V_i) $. Take $ O_{i,1}  \Subset \cdots \Subset O_{i,n} \Subset W_{i} $ such that $ \bigcup_{i = 1}^{\infty} O_{i,n} = W_{i} $; for instance, let $ M $ be equipped with a metric $ d_{M} $ and $ O_{i, n} = \{ x \in W_{i} : \mathbb{B}_{1/2^{n}}(x) \subset W_{i} \} $ (as $ W_{i} $ is open, $ O_{i, n} \neq \emptyset $ for large $ n $, and so without loss of generality, assume $ O_{i, n} \neq \emptyset $ for all $ n $; also note that $ \inf\{ d_{M}(x, y): x \in O_{i,n}, y \in O^{\complement}_{i, n + 1} \} \geq 1/2^{n+1} $).
	Let
	\[
	O^*_1 = O_{1,1}, ~O^*_2 = O_{1,2} \cup O_{2,2}, \cdots, ~ O^*_{n} = O_{1,n} \cup O_{2,n} \cup O_{3,n} \cup \cdots \cup O_{n,n}.
	\]
	Note that $ O^*_{n-1} \subset O^{*}_{n} $ and $ \bigcup_{i = 1}^{\infty} O^*_{i} = M $.

	For $ g_0 \in C^0(M, N) $, we say $ g_0 $ satisfies property ($ \boxtimes $) if $ W_{i} \subset g^{-1}_0(V_{i}) $ for all $ i $. We say $ g_0 $ is $ C^1 $ in $ \overline{O} $ if it is $ C^1 $ in an open set $ O' $ satisfying $ O \Subset O' $.

	For $ f $ and $ (V_1, \varphi_1) $, by \autoref{lem:ext}, we have $ f_1 \in C^0(M, N) $ such that 
	
	\begin{enumerate}[({i}1)]
		\item $ f_1 $ satisfies property ($ \boxtimes $); 
		
		\item $ f_1 $ is $ C^1 $ in $ \overline{O^*_1} $;
		
		\item  $ f_1|_{K} = f|_{K} $;
		
		\item $ d(f_1(x), f(x)) < \delta(x) $ for all $ x \in M $.
	\end{enumerate}
	Here, to verify (i1), note that by the construction of $ f_1 $, it satisfies $ W_1 \subset f^{-1}_1(V_1) $ and $ f_{1}|_{W_1^{\complement}} = f|_{W_1^{\complement}} $.

	Similarly, for $ f_1 $ and $ (V_1, \varphi_1) $, by \autoref{lem:ext}, we have $ f_{1,1} \in C^0(M, N) $ such that 
	\begin{enumerate}[({i}1)]
		\item $ f_{1,1} $ satisfies property ($ \boxtimes $); 
		
		\item $ f_{1,1} $ is $ C^1 $ in $ \overline{O^*_1 \cup O_{1,2}} $;
		
		\item $ f_{1,1}|_{K \cup O^*_1} = f_1|_{K \cup O^*_1} $;
		
		\item $ d(f_{1,1}(x), f_1(x)) < \delta(x) - d(f_1(x), f(x)) $ for all $ x \in M $, and so $ d(f_{1,1}(x), f(x)) < \delta(x) $.
	\end{enumerate}

	For $ f_{1,1} $ and $ (V_2, \varphi_2) $, by \autoref{lem:ext}, we have $ f_{2} \in C^0(M, N) $ such that 
	\begin{enumerate}[({i}1)]
		\item $ f_2 $ satisfies property ($ \boxtimes $);
		
		\item $ f_2 $ is $ C^1 $ in $ \overline{O^*_1 \cup O_{1,2} \cup O_{2,2}} = \overline{O^*_2} $;
		
		\item $ f_{2}|_{K \cup O^*_1} = f_{1,1}|_{K \cup O^*_1} $ and hence $ f_{2}|_{K \cup O^*_1} = f_{1}|_{K \cup O^*_1} $;
		
		\item $ d(f_{2}(x), f(x)) < \delta(x)  $ for all $ x \in M $.
	\end{enumerate}

	Inductively, for $ f_{n} $ and $ (V_{1}, \varphi_1) $, by \autoref{lem:ext}, we obtain $ f_{n, 1} \in C^0(M, N) $ 
	\begin{enumerate}[({i}1)]
	\item $ f_{n, 1} $ satisfies property ($ \boxtimes $);
	
	\item $ f_{n, 1} $ is $ C^1 $ in $ \overline{O^*_n \cup O_{1,n+1}} $;
	
	\item $ f_{n,1}|_{K \cup O^*_{n}} = f_{n}|_{K \cup O^*_{n}} $;
	
	\item $ d(f_{n,1}(x), f(x)) < \delta(x)  $ for all $ x \in M $.
	\end{enumerate}	
	Now we can construct $ f_{n, i} $ from $ f_{n,i-1} $ and $ (V_{i}, \varphi_i) $, $ i = 2,3,\ldots,n $. For $ f_{n,n} $ and $ (V_{n+1}, \varphi_{n+1}) $, we further obtain $ f_{n+1} \in C^0(M, N) $ such that
	\begin{enumerate}[({i}1)]
		\item $ f_{n+1} $ satisfies property ($ \boxtimes $);
		
		\item $ f_{n+1} $ is $ C^1 $ in $ \overline{O^*_{n+1}} $;
		
		\item $ f_{n + 1}|_{K \cup O^*_{n}} = f_{n}|_{K \cup O^*_{n}} $;
		
		\item $ d(f_{n+1}(x), f(x)) < \delta(x)  $ for all $ x \in M $.
	\end{enumerate}	

	Therefore, from the construction of $ f_{n} $ and its properties, we can define $ g(x) = f_{n}(x) $ if $ x \in O^*_{n} $. By (i3), $ g $ is well defined and $ g|_{K} = f|_{K} $; by (i2), $ g $ is $ C^1 $ in $ O_{n}^* $; by (i4), $ d(g(x), f(x)) < \delta(x) $. The proof is complete.
\end{proof}
\begin{rmk}\label{rmk:addcr}
	If $ M $ admits $ C^r $ partitions of unity, then we can choose $ g \in C^r $; in fact, if, in addition, $ f $ is $ C^1 $ in a neighborhood of a closed subset $ K \subset M $, then we can choose $ \widetilde{g} \in C^r (M \setminus K, N) \cap C^{1}(M, N) $ such that $ \widetilde{g}|_{K} = f|_{K} $ and $ D\widetilde{g}|_{K} = Df|_{K} $.
\end{rmk}
\begin{proof}
	We only consider the latter statement. Assume $ f $ is $ C^1 $ in $ O_0 $, where $ O_0 $ is open such that $ K \subset O_0 $.
	First, let $ N = Y $ where $ Y $ is a Banach space. Choose open sets $ K_{n+1} \Subset K_{n} \Subset O_0 $ ($ n = 0, 1, 2, \ldots $) such that $ K = \bigcap_{n \geq 0} K_{n} $ and let $ \Omega_{n} = M \setminus \overline{K_{n-1}} $. Now we can choose $ g_n \in C^{k}(\Omega_{n+1}, Y) \cap C^1(X, Y) $ such that $ g_{n}|_{K_{n+1}} = f|_{K_{n+1}} $ and $ g_{n}|_{\Omega_{n-1}} = g_{n-1}|_{\Omega_{n-1}} $. Define $ \widetilde{g} (x) = g_{n}(x) $ if $ x \in \Omega_{n} \cup K $. This is well defined and $ \widetilde{g}(x) = g(x) $, $ D\widetilde{g}(x) = Dg(x) $ if $ x \in K $; moreover, if $ x \in M \setminus K $, there is $ n $ such that $ x \in \Omega_{n} $ and hence $ \widetilde{g} $ is $ C^k $ at $ x $. Now consider the general case where $ N $ is an arbitrary manifold; we argue similarly to the proof of \autoref{thm:C1app} by applying the above construction for the case $ N = Y $.
\end{proof}

Here are some consequences. The special case where $ M $ is compact was also discussed in \cite[Theorem 6.9]{BLZ08}.
\begin{cor}\label{cor:C1app}
	Let $ f \in C^0(M, N) $. Assume one of the following conditions holds: 
	\begin{enumerate}[(a)]
		\item $ f(M) $ is separable (a special case is $ M $ or $ N $ is separable);
		\item $ f(M) $ is relatively compact or $ \sigma $-compact.
	\end{enumerate}
	Then the conclusion in \autoref{thm:C1app} holds.

	In particular, if $ K \subset M $ is relatively compact, then there is a small $ \epsilon > 0 $ such that $ f $ in $ O_{\epsilon}(K) $ can be approximated by functions in $ C^{1}(O_{\epsilon}(K), N) $.
\end{cor}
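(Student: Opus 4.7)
The plan is to reduce Corollary \ref{cor:C1app} to Theorem \ref{thm:C1app} by exhibiting a countable family of $C^{1} \cap C^{0,1}$ local charts $\{(V_{i},\varphi_{i})\}$ with $\varphi_{i}(V_{i})$ convex that covers $f(M)$. The key observation, recorded in the excerpt immediately after \eqref{equ:lip0}, is that any sufficiently small local chart $(V_{y},\varphi_{y})$ at a point $y\in N$ of the form $V_{y}=\varphi_{y}^{-1}(\mathbb{B}_{\epsilon})$ (with $\varphi_{y}(y)=0$ and $\epsilon>0$ small) is automatically $C^{1}\cap C^{0,1}$ with respect to the chosen metric $d$ on $N$, and its image $\mathbb{B}_{\epsilon}$ is obviously convex. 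Consequently, the family $\{(V_{y},\varphi_{y})\}_{y\in f(M)}$ is an open cover of $f(M)$ by admissible charts.

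First I would handle case (a). Since $N$ is a $C^{1}$ Banach manifold endowed with a compatible metric $d$, the subspace $f(M)\subset N$ is a metric space. If $f(M)$ is separable, it is second countable, hence Lindel\"of. Applied to the open cover $\{V_{y}\cap f(M)\}_{y\in f(M)}$ of $f(M)$, this yields a countable subcollection $\{(V_{i},\varphi_{i})\}_{i\geq 1}$ with $f(M)\subset\bigcup_{i\geq 1}V_{i}$. Then Theorem \ref{thm:C1app} applies directly and produces the desired $g$. In case (b), if $f(M)$ is relatively compact then $\overline{f(M)}$ is compact, so finitely many charts from the family above cover $f(M)$; if $f(M)$ is $\sigma$-compact, write $f(M)=\bigcup_{n\geq 1}C_{n}$ with each $C_{n}$ compact, extract a finite subcover of each $C_{n}$, and take the countable union. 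In either subcase we again reach the hypothesis of Theorem \ref{thm:C1app}.

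For the final assertion, assume $K\subset M$ is relatively compact. Then $\overline{K}$ is compact, so $f(\overline{K})$ is a compact subset of $N$. Cover $f(\overline{K})$ by finitely many $C^{1}\cap C^{0,1}$ charts $\{(V_{1},\varphi_{1}),\dots,(V_{n},\varphi_{n})\}$ with convex $\varphi_{i}(V_{i})$, chosen as above. The set $W\triangleq f^{-1}\bigl(\bigcup_{i=1}^{n}V_{i}\bigr)$ is open in $M$ and contains $\overline{K}$; since $\overline{K}$ is compact, there exists $\epsilon>0$ with $O_{\epsilon}(K)\subset W$, and hence $f(O_{\epsilon}(K))\subset\bigcup_{i=1}^{n}V_{i}$. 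Noting that the open submanifold $O_{\epsilon}(K)\subset M$ inherits $C^{1}$ partitions of unity from $M$, Theorem \ref{thm:C1app} applied to $f|_{O_{\epsilon}(K)}$ and this finite family of charts yields the required $C^{1}$ approximation on $O_{\epsilon}(K)$.

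The argument is essentially a Lindel\"of / $\sigma$-compactness reduction, so no genuine obstacle arises; the only subtle point is making sure from the start that the local charts one picks are of the \emph{small} form $V_{y}=\varphi_{y}^{-1}(\mathbb{B}_{\epsilon})$ so that both the $C^{0,1}$ condition \eqref{equ:lip0} and convexity of $\varphi_{y}(V_{y})$ come for free, allowing Theorem \ref{thm:C1app} to be invoked with no further modification.
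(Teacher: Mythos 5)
Your proposal is correct and follows essentially the same route as the paper: both reduce to Theorem \ref{thm:C1app} by covering $f(M)$ with countably (or finitely) many small charts $V_y=\varphi_y^{-1}(\mathbb{B}_\epsilon)$, which are automatically $C^1\cap C^{0,1}$ with convex image. Your use of the Lindel\"of property in case (a) is just a cleaner phrasing of the paper's countable-dense-set argument, and your explicit treatment of the ``in particular'' statement (compactness of $f(\overline{K})$, finite chart cover, and $O_\epsilon(K)\subset f^{-1}(\bigcup_i V_i)$ by a distance argument) fills in a step the paper leaves implicit.
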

\begin{proof}
	If $ f(M) $ is separable, i.e., $ \overline{f(M)} = \overline{\{x_{i}\}} $, then for every $ x_{i} $, we choose a $ C^{1} $ local chart $ \varphi_{i} $ such that $ \varphi_{i}(x_i) = 0 $. Let $ V_{i} = \varphi^{-1}_{i} (\mathbb{B}_{\epsilon}) $ for sufficiently small $ \epsilon $ (depending on $ x_i $) such that $ \varphi_{i} $ is $ C^{0,1} $ in $ V_{i} $ (as $ \varphi_{i} $ is $ C^1 $). Then $ f(M) \subset \bigcup_{i = 1}^{\infty} V_{i} $.

	For every $ x \in \overline{f(M)} $, one can find a $ C^{0,1} \cap C^1 $ local chart $ (V_{x}, \varphi_{x}) $ such that $ \varphi_{x}(x) = 0 $ and $ \varphi_{x}(V_{x}) = \mathbb{B}_{\epsilon_x} $. If $ \overline{f(M)} $ is compact, then there are $ x_{1}, x_2, \ldots, x_n $ such that $ f(M) \subset \bigcup_{i=1}^{n}V_{x_{i}} $. If $ \overline{f(M)} $ is $ \sigma $-compact, i.e., $ \overline{f(M)} = \bigcup_{n = 1}^{\infty} M_{n} $ where $ M_{n} $ ($ 1 \leq n < \infty $) are compact, then for each $ M_n $, we have $ M_n \subset \bigcup_{i=1}^{s_{n}}V_{x_{i, n}} $ and so $ f(M) \subset \bigcup_{n = 1}^{\infty}\bigcup_{i=1}^{s_{n}}V_{x_{i, n}} $. The proof is complete.
\end{proof}

Finally, let us consider the continuous extension of a function with the range taken in a manifold.

\begin{thm}[$ C^0 $ extension]\label{thm:extc0}
	Let $ M $ be a metric space and $ N $ a $ C^0 $ Banach manifold. Assume $ A \subset M $ is closed and $ f: A \to N $ is continuous with $ f(A) $ relatively compact. Then there are an open set $ \Omega $ and a continuous function $ \widetilde{f}: \Omega \to N $ such that $ A \subset \Omega $ and $ \widetilde{f}|_{A} = f|_{A} $. If, in addition, $ A $ is compact, then we can take $ \Omega = O_{\epsilon}(A) $ for some small $ \epsilon > 0 $.
\end{thm}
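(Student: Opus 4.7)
The plan is to reduce, via local charts on $N$ near the compact set $K = \overline{f(A)}$, to the vector-valued Dugundji extension theorem (\autoref{thm:dug}). Since $K$ is compact in $N$, cover $K$ by finitely many chart domains $(V_1, \varphi_1), \ldots, (V_n, \varphi_n)$ with $\varphi_i : V_i \to Y_i$ a homeomorphism onto an open convex subset of a Banach space $Y_i$, and shrink to open $W_i \Subset V_i$ still covering $K$. I will treat the compact case first and then bootstrap to the general statement.

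For $A$ compact, proceed by induction on $n$. The base case $n = 1$ is essentially Dugundji: with $f(A) \subset V_1$, extend $g := \varphi_1 \circ f : A \to Y_1$ to continuous $\widetilde{g} : M \to Y_1$ by \autoref{thm:dug}; since $\widetilde{g}(A) = g(A)$ is compact inside the open set $\varphi_1(V_1)$, continuity of $\widetilde{g}$ together with compactness of $A$ gives an $\epsilon > 0$ with $O_\epsilon(A) \subset \widetilde{g}^{-1}(\varphi_1(V_1))$, and $\widetilde{f} := \varphi_1^{-1} \circ \widetilde{g}$ on this set works. For the inductive step, pick $W_1' \Subset W_1$ with $\{W_1', W_2, \ldots, W_n\}$ still covering $K$, and set $B_1 := A \cap f^{-1}(\overline{W_1})$ and $B_2 := A \cap f^{-1}(K \setminus W_1')$, both closed in $M$ with $B_1 \cup B_2 = A$. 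The induction hypothesis gives a continuous extension $\widetilde{f}_2 : \Omega_2 \to N$ of $f|_{B_2}$ using the remaining $n-1$ charts, and the base case gives $\widetilde{f}_1 : \Omega_1 \to V_1$ extending $f|_{B_1}$. Gluing is performed inside $V_1$: shrink $\Omega_2$ until $\widetilde{f}_2(\Omega_2 \cap \Omega_1)$ lies in $V_1$ (possible because $\widetilde{f}_2$ is continuous and close to $f$ near the closed set $B_1 \cap B_2 \subset f^{-1}(V_1)$), then transfer both pieces to $Y_1$ through $\varphi_1$ and interpolate linearly with a continuous cutoff $\eta : M \to [0, 1]$ equal to $1$ near $B_1$ and $0$ away from a neighborhood of $B_1$, finally pulling back through $\varphi_1^{-1}$. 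By the Lebesgue number lemma applied to the cover $\{f^{-1}(V_i)\}$ of compact $A$, the neighborhood produced contains $O_\epsilon(A)$ for some $\epsilon > 0$.

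For general closed $A$ with $f(A)$ relatively compact, the same compact set $K = \overline{f(A)}$ admits the chart cover. For each $a \in A$, continuity of $f$ at $a$ yields $r_a > 0$ with $f(B_M(a, r_a) \cap A) \subset V_{i(a)}$; paracompactness of $M$ (every metric space is paracompact) gives a locally finite open refinement $\{U_\beta\}$ of $\{B_M(a, r_a/2)\}_{a \in A}$ together with a continuous partition of unity $\{\psi_\beta\}$. On each $U_\beta$ the preceding compact-case construction applies and lands in a single chart, and the local extensions are merged inside that chart using $\{\psi_\beta\}$. The resulting $\widetilde{f}$ is continuous on $\Omega := \bigcup_a B_M(a, r_a/2)$, which is open and contains $A$.

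The principal obstacle is the gluing, since $N$ carries no global linear structure for convex combinations. The resolution, in every gluing step, is to arrange that the two extensions being combined on a given overlap both take values in a \emph{common} chart $V_i$; the combination is then executed via $\varphi_i$ in the Banach space $Y_i$. Verifying this common-chart condition — in particular showing that the intermediate extension $\widetilde{f}_2$ from the induction hypothesis can be shrunk to take values near $V_1$ in a neighborhood of $B_1 \cap B_2$ — is the technical heart of the argument and may necessitate strengthening the inductive hypothesis to include quantitative control on the image of the extension near the decomposition boundary.
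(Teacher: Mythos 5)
Your overall strategy (cover the compact set $\overline{f(A)}$ by finitely many charts, extend chart-by-chart via \autoref{thm:dug}, and glue inside a single chart with a cutoff) is the same in spirit as the paper's, but as written there are two genuine gaps. First, the gluing step of your induction is exactly the point that needs a real argument, and your formula does not yet give it: with $\eta=1$ near $B_1$ and $\widetilde f(x)=\varphi_1^{-1}\bigl(\eta(x)\varphi_1(\widetilde f_1(x))+(1-\eta(x))\varphi_1(\widetilde f_2(x))\bigr)$, a point $a\in B_2\setminus B_1$ can lie arbitrarily close to $B_1$ (take $f(a)$ just outside $\overline{W_1}$), and there $\eta(a)=1$ forces $\widetilde f(a)=\widetilde f_1(a)$, which is only an extension of $f|_{B_1}$ and need not equal $f(a)$; moreover on parts of $\Omega_1\cap\Omega_2$ that are far from $B_1\cap B_2$ (these exist, since $\Omega_1\cap\Omega_2$ is not contained in a neighborhood of $B_1\cap B_2$) the value $\widetilde f_2(x)$ need not lie in $V_1$, so the convex combination is undefined. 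Fixing this requires what the paper's sublemma does: Dugundji-extend $f$ (not just the previous partial extension) from a slightly enlarged closed piece of $A$ into the chart, choose the cutoff so that wherever it is strictly between $0$ and $1$ both interpolands lie in the chart \emph{and} agree with $f$ on $A$, and separate the cases $A_0=\overline{O_{1,0}}\cap\overline{O_{2,0}}\cap A$ empty/nonempty. You correctly identify this as the technical heart, but identifying it is not the same as closing it.

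Second, your treatment of the general case (closed $A$, $f(A)$ only relatively compact) does not work as stated: merging infinitely many locally finite local extensions with a partition of unity $\{\psi_\beta\}$ has no meaning in $N$, because overlapping pieces are attached to different charts and there is no single chart (nor any linear or convexity structure) in which to form the sum $\sum_\beta\psi_\beta(x)\,\varphi_{?}(\widetilde f_\beta(x))$ consistently and continuously in $x$. This detour is also unnecessary: the only finiteness the argument needs comes from compactness of $\overline{f(A)}$, not of $A$. The paper runs a single finite induction over the charts covering $\overline{f(A)}$, maintaining one partial extension $(g,U)$ with $g=f$ on $U\cap A$ and enlarging $U$ one chart-preimage at a time via the sublemma; this handles general closed $A$ directly, and the statement about $\Omega=O_\epsilon(A)$ for compact $A$ is then a trivial afterthought rather than the main case. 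I would advise restructuring your proof along these lines: drop the partition-of-unity bootstrap, formulate and prove the one-chart gluing lemma with the agreement-on-$A$ and range-in-chart conditions made explicit, and then the finite induction over charts gives the theorem for all closed $A$ at once.
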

\begin{proof}
	Let $ \mathcal{P} $ denote the set such that $ (g, U) \in \mathcal{P} $ if and only if $ g: M \to N $ is $ C^0 $ in a neighborhood of $ U \cap A $, $ U $ is an open subset of $ M $ and $ g(x) = f(x) $ for all $ x \in U \cap A $. We first show the following.
	\begin{slem}\label{slem:localc0}
		For $ (f_1, U) \in \mathcal{P} $, a $ C^0 $ local chart $ (V, \varphi) $ of $ N $ with $ \varphi(V) $ convex, $ O_2 \cap A \Subset f^{-1}(V) $ and $ O_1 \Subset U $ with $ O_2, O_{1} $ open, there is $ (\widetilde{f}, \widetilde{U}) \in \mathcal{P} $ such that $ O_1 \cup O_2 \Subset \widetilde{U} $.
	\end{slem}
	\begin{proof}
		To simplify our writing, in the following, denote by $ O^{\rhd} (\Omega) $ an open set such that $ \Omega \Subset O^{\rhd} (\Omega) $.

		Let $ O_{1,0}, O_{2,0} $ be open such that $ O_{1} \Subset O_{1,0} \Subset U $, $ O_2 \Subset O_{2,0} $ and $ O_{2,0} \cap A \Subset f^{-1}(V) $; here, note that $ f^{-1}(V) \cap A $ is open in $ A $.
		Set $ A_0 = \overline{O_{1,0}} \cap \overline{O_{2,0}} \cap A $. If $ A_0 = \emptyset $, then there are two open sets $ O^{\rhd}(\overline{O_{i,0}} \cap A) $ ($ i = 1,2 $) such that
		\[\label{equ:vv}\tag{$ \circledast $}
		O^{\rhd}(\overline{O_{1,0}} \cap A) \cap O^{\rhd}(\overline{O_{2,0}} \cap A) = \emptyset, ~ O^{\rhd}(\overline{O_{1,0}} \cap A) \Subset U.
		\]
		Note that $ f: \overline{O_{2,0}} \cap A \to V $ and so $ \varphi \circ f: \overline{O_{2,0}} \cap A \to \varphi(V) \subset Y_{V} $ for some Banach space $ Y_{V} $. By the Dugundji extension theorem (see \autoref{thm:dug}), there is a $ C^0 $ function $ f'_2: M \to Y_{V} $ such that
		\[
		f'_2|_{\overline{O_{2,0}} \cap A} = \varphi \circ f|_{\overline{O_{2,0}} \cap A}, ~f'_2(M) \subset \mathrm{co} (\varphi \circ f (\overline{O_{2,0}} \cap A)) \subset \varphi(V) \quad \text{(as $ \varphi(V) $ is convex)}.
		\]
		Let $ \widetilde{f}(x) $ be equal to $ f_1(x) $ if $ x \in U \setminus O^{\rhd}(\overline{O_{2,0}} \cap A) $ and $ (\varphi^{-1} \circ f'_2)(x) $ otherwise. By \eqref{equ:vv}, $ \widetilde{f} $ is $ C^0 $ in $ O^{\rhd}(\overline{O_{1,0}} \cap A) \cup O^{\rhd}(\overline{O_{2,0}} \cap A) $. Let $ \widetilde{U} = O_{1,0} \cup O_{2,0} $. Then $ (\widetilde{f}, \widetilde{U}) \in \mathcal{P} $ and $ O_1 \cup O_2 \Subset \widetilde{U} $.

		Now assume $ A_0 \neq \emptyset $. Since $ (f_1, U) \in \mathcal{P} $, we can assume $ f_1 $ is $ C^0 $ in $ O^{\rhd} (U \cap A) $ and thus $ O^{\rhd} (A_0) $, where $ O^{\rhd} (U \cap A), O^{\rhd} (A_0) $ are two open sets such that $ O^{\rhd} (A_0) \subset O^{\rhd} (U \cap A) $; furthermore, we can let $ O^{\rhd}(A_0) \Subset U $ and $ O^{\rhd}(A_0) \Subset f^{-1}_1(V) $.

		Since $ \{(\overline{O_{2,0}} \cap A) \setminus O^{\rhd}(A_0)\} \cap \{\overline{O_{1,0}} \cap A\} = \emptyset $, we can take two open sets $ O^{\rhd}(\overline{O_{i,0}} \cap A) $ ($ i = 1,2 $) such that
		\[
		\{O^{\rhd}(\overline{O_{2,0}} \cap A) \setminus O^{\rhd}(A_0)\} \cap O^{\rhd}(\overline{O_{1,0}} \cap A) = \emptyset, \quad O^{\rhd}(\overline{O_{1,0}} \cap A) \Subset U;
		\]
		in particular, $ O^{\rhd}(\overline{O_{1,0}} \cap A) \cap O^{\rhd}(\overline{O_{2,0}} \cap A) \subset O^{\rhd}(A_0) $.
		As $ f_1(\overline{O^{\rhd}(A_0)}) \subset V $, by the Dugundji extension theorem (\autoref{thm:dug}), we have a $ C^0 $ map $ \widetilde{f}_1: M \to V $ such that $ \widetilde{f}_1|_{\overline{O^{\rhd}(A_0)}} = f_1|_{\overline{O^{\rhd}(A_0)}} $. Take an open set $ O^{\rhd}(O_{2,0}) $ such that $ O^{\rhd}(O_{2,0} \cap A) \subset O^{\rhd}(O_{2,0}) $ and $ f(\overline{O^{\rhd}(O_{2,0})} \cap A) \subset V $ (due to $ O_{2,0} \cap A \Subset f^{-1}(V) $).
		Again applying the Dugundji extension theorem (\autoref{thm:dug}), we obtain $ \widetilde{f}_2 : M \to V $ such that $ \widetilde{f}_2|_{\overline{O^{\rhd}(O_{2,0})} \cap A} = f|_{\overline{O^{\rhd}(O_{2,0})} \cap A} $.

		Take further two open sets $ O^{\rhd}_i(\overline{O_{2,0}} \cap A) $ ($ i = 1,2 $) such that $ \overline{O_{2,0}} \cap A \Subset O^{\rhd}_1(\overline{O_{2,0}} \cap A) \Subset O^{\rhd}_2(\overline{O_{2,0}} \cap A) \Subset O^{\rhd}(O_{2,0} \cap A) $; let $ \theta: M \to [0,1] $ be a $ C^0 $ function such that $ \theta|_{O^{\rhd}_1(\overline{O_{2,0}} \cap A)} = 1 $ and $ \theta|_{(O^{\rhd}_2(\overline{O_{2,0}} \cap A))^{\complement}} = 0 $. Define
		\[
		\widehat{f}(x) = \varphi^{-1}((1 - \theta(x)) \varphi \circ \widetilde{f}_1(x) + \theta(x) \varphi \circ \widetilde{f}_2(x)), ~ x \in M,
		\]
		and
		\[
		\widetilde{f}(x) = \begin{cases}
		f_1(x), & x \in O^{\rhd}(\overline{O_{1,0}} \cap A) \setminus O^{\rhd}_2(\overline{O_{2,0}} \cap A), \\
		\widehat{f}(x), & \text{otherwise}.
		\end{cases}
		\]
		Let $ \widetilde{U} = O_{1,0} \cup O_{2,0} $. Finally, we show $ (\widetilde{f}, \widetilde{U}) \in \mathcal{P} $.

		(1) $ \widetilde{f}|_{(O_{1,0} \cup O_{2,0}) \cap A} = f|_{(O_{1,0} \cup O_{2,0}) \cap A} $. To see this, let $ x \in (O_{1,0} \cup O_{2,0}) \cap A $.
		\begin{enumerate}[$ \bullet $]
			\item If $ x \in O_{2,0} \cap A $, then $ \widetilde{f}(x) = \widehat{f}(x) = \widetilde{f}_2(x) = f(x) $; else let $ x \in (O_{1,0} \cap A) \setminus O_{2,0} $.
			\item If $ x \in (O_{1,0} \cap A) \setminus O^{\rhd}_2(\overline{O_{2,0}} \cap A) $, then $ \widetilde{f}(x) = f_1(x) = f(x) $.
			\item Otherwise, $ x \in (O_{1,0} \cap A) \cap O^{\rhd}_2(\overline{O_{2,0}} \cap A) \subset O^{\rhd}(A_0) $ and so $ x \in O^{\rhd}(O_{2,0}) \cap A $; in this case we have $ \widetilde{f}_2(x) = f(x) $ and $ \widetilde{f}_1(x) = f_1(x) = f(x) $, which yields $ \widehat{f}(x) = f(x) $ and then $ \widetilde{f}(x) = f(x) $.
		\end{enumerate}

		(2) $ \widetilde{f} $ is $ C^0 $ in a neighborhood of $ (O_{1,0} \cup O_{2,0}) \cap A $, for instance, in $ O^{\rhd}(\overline{O_{1,0}} \cap A) \cup O^{\rhd}(\overline{O_{2,0}} \cap A) $. If $ x \in \{O^{\rhd}(\overline{O_{1,0}} \cap A) \setminus O^{\rhd}_2(\overline{O_{2,0}} \cap A)\} \cap O^{\rhd}(\overline{O}_{2,0} \cap A) $, then $ x \in O^{\rhd}(A_0) $ and so $ \widehat{f}(x) = \widetilde{f}_1(x) = f_1(x) $ by the construction of $ \widehat{f} $ and $ \widetilde{f}_1 $, which shows $ \widetilde{f} $ is $ C^0 $ at $ x $. This completes the proof of the \autoref{slem:localc0}.
	\end{proof}
	Since $ \overline{f(A)} $ is compact, there exist $ C^0 $ local charts $ \{(V_{i}, \varphi_{i})\} $ of $ N $ such that $ \varphi_{i}(V_{i}) = \mathbb{B}_{\varepsilon_i} $ and $ f(A) \subset \bigcup_{i = 1}^{n} V_{i, 0} $, where $ V_{i, 0} = \varphi^{-1}_{i} (\mathbb{B}_{\varepsilon_i / 4}) $ and $ \varphi^{-1}_{i}(0) \in f(A) $.
	
	Since $ f: A \to N $ is $ C^0 $, there are open sets $ O_i $ and $ U_1 $ such that $ O_{i} \cap A = f^{-1}(V_{i,0}) \cap A $ and $ U_1 \cap A = f^{-1} \circ \varphi^{-1}_{1} (\mathbb{B}_{\varepsilon_1 / 2}) \cap A $.
	
	Applying the Dugundji extension theorem (\autoref{thm:dug}), we get a $ C^0 $ function $ f_1: M \to V_{1} $ such that $ f_1|_{\overline{U_1} \cap A} = f|_{\overline{U_1} \cap A} $. Then $ (f_1, U_1) \in \mathcal{P} $ and $ O_{1} \Subset U_{1} $. By \autoref{slem:localc0}, we have $ (f_2, U_2) \in \mathcal{P} $ such that $ O_{1} \cup O_{2} \Subset U_2 $. Proceeding inductively, we obtain $ (f_{n}, U_{n}) \in \mathcal{P} $ with $ \bigcup_{i = 1}^{n} O_{i} \Subset U_{n} $. Note that $ A \subset \bigcup_{i = 1}^{n} O_{i} $. We see that $ f_{n}|_{A} = f|_{A} $ and $ \widetilde{f} \triangleq f_{n} $ is $ C^0 $ in some neighborhood of $ A $, completing the proof of \autoref{thm:extc0}.
\end{proof}

\begin{rmk}
	(a) In general, one cannot expect that in \autoref{thm:extc0}, $ \widetilde{f} $ is $ C^0 $ on all of $ M $. For example, let $ N = (0,2) \cup (3, 5) $, $ M = (-2,2) $, and $ A = \{ 0,1 \} $; define $ f: A \to N $ such that $ f(0) = 1 $ and $ f(1) = 4 $. Then there is no $ C^0 $ function $ \widetilde{f}: M \to N $ such that $ \widetilde{f}|_{A} = f|_{A} $, since $ M $ is connected but $ \widetilde{f}(M) $ is not.

	(b) In fact, in \autoref{thm:extc0}, what we need is $ f(A) \subset \bigcup_{i = 1}^{n} V_{i} $, where $ (V_i, \varphi_{i}) $ ($ 1 \leq i \leq n < \infty $) are $ C^0 $ local charts of $ N $. It remains unknown whether the theorem holds when $ f(A) \subset \bigcup_{i = 1}^{\infty} V_{i} $.
\end{rmk}

\section{A geometric version of the Whitney extension theorem}\label{sub:geometric}

Assume $ X $ is a Banach space. Let $ U \subset X $, $ m \in U $, and $ X^c \in \mathbb{G}(X) $.
Write $ U(\epsilon) = U \cap \mathbb{B}_\epsilon(m) $. Take a projection $ \Pi^c \in \overline{\Pi}(X) $ such that $ X^c = R(\Pi^c) $. Consider the following conditions:
\begin{enumerate}[($ \bullet $a)]
	\item For any $ \epsilon > 0 $, there exists $ \chi(\epsilon, m) > 0 $ such that
	\[
	\sup\left\{ \frac{|m_1 - m_2 - \Pi^c(m_1 - m_2)|}{|m_1 - m_2|} : m_1 \neq m_2 \in U(\epsilon) \right\} \leq \chi_{U}(\epsilon, m) < 1;
	\]

	\item There exist $ \delta_0(m), \epsilon_m > 0 $ such that
	\[
	X^c(\delta_0(m)) \subset \Pi^c (U(\epsilon_m) - m).
	\]
\end{enumerate}
\begin{defi}\label{def:tangent}
	If ($ \bullet $a) holds with $ \chi_{U}(\epsilon, m) \to 0 $ as $ \epsilon \to 0^+ $, then we say $ X^c $ is a \emph{pre-tangent space} (in the sense of Whitney) of $ U $ at $ m $, denoted by $ T_m U \subset X^c $; if, in addition, ($ \bullet $b) holds, then $ X^c $ is called a \emph{tangent space} of $ U $ at $ m $, denoted by $ T_m U = X^c $, and we say $ U $ is \emph{differentiable} at $ m $ (in the sense of Whitney).

	If $ \phi: \widehat{\Sigma} \to X $ is a $ C^0 $ map where $ \widehat{\Sigma} $ is a topological space, then we also define $ T_{\widehat{m}} \widehat{\Sigma} = T_{\phi(\widehat{m})} \phi(\widehat{U}_{\widehat{m}}) $, where $ \widehat{U}_{\widehat{m}} $ is the component of $ \widehat{\Sigma} $ containing $ \widehat{m} $.
\end{defi}

Note that the (pre-)tangent space $ X^c $ in the above definition does not depend on the choice of the projection $ \Pi^c $. We write $ TU \subset X^{c} $ (resp. $ TU = X^{c} $) if $ T_{m} U \subset X^{c}_{m} $ (resp. $ T_{m} U = X^{c}_{m} $) for all $ m \in U $, where $ X^{c}_{m} \in \mathbb{G}(X) $, $ m \in U $.

\begin{thm}\label{thm:whitney}
	Let $ K \subset X $ be compact and $ \{ \Pi^{c}_{m} \}_{m \in K} \subset \overline{\Pi}(X) $. Set $ \Pi^{h}_{m} = I - \Pi^c_{m} $ and $ X^{\kappa}_{m} \triangleq R(\Pi^{\kappa}_{m}) $, $ \kappa = c,h $. Assume the following conditions hold:
	\begin{enumerate}[(a)]
		\item The map $ m \mapsto \Pi^{c}_{m} $ is continuous and $ T_{m} K \subset X^{c}_{m} $ for all $ m \in K $;

		\item For each $ m \in K $, the pair $ (X^c_{m}, X^{h}_{m}) $ has property (*) (see \autoref{def:p*} and \autoref{exa:p*}).
	\end{enumerate}
	Then the following statements hold.
	\begin{enumerate}[(1)]
		\item There is a $ C^1 $ submanifold $ \Sigma $ of $ X $ such that $ K \subset \Sigma $ and $ T_m \Sigma = X^c_{m} $ for all $ m \in K $. Moreover, there are projections $ \widetilde{\Pi}^c_{m} $, $ m \in \Sigma $, such that $ R(\widetilde{\Pi}^c_{m}) = T_m \Sigma $ and $ m \mapsto \widetilde{\Pi}^c_{m} $ is continuous; in addition, if $ m \in K $ then $ \widetilde{\Pi}^c_{m} = \Pi^c_{m} $.
		\item \label{it:w2} For any $ \epsilon > 0 $, there are projections $ \widehat{\Pi}^c_{m} $ such that $ m \mapsto \widehat{\Pi}^c_{m}: \Sigma \to L(X) $ is $ C^1 $ and $ \sup_{m \in \Sigma}|\widehat{\Pi}^c_{m} - \widetilde{\Pi}^c_{m}| \leq \epsilon $. In particular, there are $ r > 0 $ and $ C_2 > 0 $ such that for any $ m \in K $,
		\[
		|\widehat{\Pi}^c_{m_1} - \widehat{\Pi}^c_{m_2}| \leq C_2 |m_1 - m_2|, \quad m_1, m_2 \in \Sigma \cap \mathbb{B}_{r}(m).
		\]
	\end{enumerate}
\end{thm}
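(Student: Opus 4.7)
The plan is to build $\Sigma$ as a $C^1$ graph, first locally near each $m_0 \in K$ via the Whitney extension theorem of Jim\'enez-Sevilla and S\'anchez-Gonz\'alez (\autoref{thm:JS}), then to patch the local pieces into a single $C^1$ submanifold using compactness of $K$ and continuity of $m \mapsto \Pi^c_m$. The second assertion about $C^1$ approximation of $\widetilde{\Pi}^c$ will follow from \autoref{cor:C1app} once $\Sigma$ is in hand.

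First I would set up uniform local coordinates. By compactness of $K$, continuity of $m \mapsto \Pi^c_m$, and the perturbation estimates in \autoref{lem:gram2}, there is a uniform $r_0 > 0$ such that for every $m_0 \in K$ and every $m \in K \cap \mathbb{B}_{r_0}(m_0)$ one has $X = X^c_m \oplus X^h_{m_0}$ with projection norms bounded uniformly. Combined with $T_m K \subset X^c_m$ (\autoref{def:tangent}), this gives that $\Pi^c_{m_0} : K \cap \mathbb{B}_{r_0}(m_0) \to X^c_{m_0}$ is injective with small Lipschitz inverse; hence $K$ near $m_0$ is the graph of a Lipschitz function $f_{m_0} : A_{m_0} \to X^h_{m_0}$, where $A_{m_0} = \Pi^c_{m_0}(K \cap \mathbb{B}_{r_0}(m_0)) - \Pi^c_{m_0}(m_0)$ is closed in $X^c_{m_0}$ and $\lip f_{m_0}$ can be taken arbitrarily small by shrinking $r_0$.

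Next I would verify that $f_{m_0}$ is $C^1$ on $A_{m_0}$ in the sense of Whitney, with candidate derivative $\mathcal{D}_{m_0}(x) \triangleq \varphi^{c,h}_{m_0}(X^c_m) \in L(X^c_{m_0}, X^h_{m_0})$, where $\varphi^{c,h}_{m_0}$ is the Grassmann chart of \eqref{equ:localGrass} and $m$ is the unique point of $K$ with $\Pi^c_{m_0}(m - m_0) = x$. Continuity of $x \mapsto \mathcal{D}_{m_0}(x)$ follows from the continuity of $m \mapsto \Pi^c_m$ via \autoref{lem:granlip}, while the Whitney-type estimate $|f_{m_0}(y) - f_{m_0}(z) - \mathcal{D}_{m_0}(x)(y-z)| = o(|y-z|)$ (uniformly for $y, z \in A_{m_0} \cap \mathbb{B}_r(x)$) is exactly the content of $T_m K \subset X^c_m$ rephrased in the coordinates $(X^c_{m_0}, X^h_{m_0})$. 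By hypothesis (b), the pair $(X^c_{m_0}, X^h_{m_0})$ has property (*), so \autoref{thm:JS} applies and yields a Lipschitz $C^1$ extension $g_{m_0} : X^c_{m_0} \to X^h_{m_0}$ with $g_{m_0}|_{A_{m_0}} = f_{m_0}$ and $Dg_{m_0}(x) = \mathcal{D}_{m_0}(x)$ on $A_{m_0}$. Then $\Sigma_{m_0} \triangleq m_0 + \graph g_{m_0}$ is a local $C^1$ submanifold containing $K \cap \mathbb{B}_{r_0}(m_0)$ with $T_m \Sigma_{m_0} = X^c_m$ for such $m$.

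The main obstacle, and the third step, is to assemble the local pieces into a single $C^1$ manifold $\Sigma$, since distinct $g_{m_i}$ need not agree off $K$. Using compactness, select a finite cover $\{\mathbb{B}_{r_0/4}(m_i)\}_{i=1}^N$ of $K$ and proceed inductively: suppose a $C^1$ submanifold $\Sigma^{(k)}$ containing a neighborhood of $K \cap \bigcup_{i \leq k}\mathbb{B}_{r_0/4}(m_i)$ has been built with $T_m\Sigma^{(k)} = X^c_m$ along $K$. In the chart determined by $X = X^c_{m_{k+1}} \oplus X^h_{m_{k+1}}$, both $\Sigma^{(k)}$ and $\Sigma_{m_{k+1}}$ are $C^1$ graphs with derivatives equal on $A_{m_{k+1}} \cap \Pi^c_{m_{k+1}}(\Sigma^{(k)} \cap K)$; so their defining functions differ by a $C^1$ map vanishing to first order on this closed set, and a $C^1$ bump on $X^c_{m_{k+1}}$ (available because $X^c_{m_{k+1}}$ has property $(*^1)$ by \autoref{def:property*}) blends them into a single $C^1$ graph $\Sigma^{(k+1)}$. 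Here one must also control the blend so that the resulting graph still satisfies $T_m \Sigma^{(k+1)} = X^c_m$ for $m \in K$; this is routine because the two graphs agree to first order on $K$. After $N$ steps one obtains $\Sigma$; the projection $\widetilde{\Pi}^c_m \triangleq \Pi_{X^h_{m_0}}(T_m\Sigma)$, blended via a subordinate partition of unity in $L(X)$ using the retraction of \autoref{lem:Piret}, is continuous on $\Sigma$ and equal to $\Pi^c_m$ on $K$. Finally, conclusion (\ref{it:w2}) follows by applying \autoref{cor:C1app} to the continuous map $\widetilde{\Pi}^c : \Sigma \to \overline{\Pi}(X)$ (a $C^\infty$ Banach manifold by \autoref{lem:proj}): the image of any relatively compact neighborhood of $K$ in $\Sigma$ lies in a compact subset, so an $\epsilon$-close $C^1$ approximation $\widehat{\Pi}^c$ exists and its restriction to such a neighborhood is automatically Lipschitz.
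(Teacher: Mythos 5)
Your overall route is the paper's: verify Whitney $C^1$ data for the local graph of $K$ over $X^c_{m_0}\oplus X^h_{m_0}$, extend by \autoref{thm:JS} using property (*), glue finitely many local pieces by a tangency-preserving convex blend, and get (2) from \autoref{cor:C1app}. The genuine gap is in the gluing step: you assert that, in the chart of $X^c_{m_{k+1}}\oplus X^h_{m_{k+1}}$, \emph{both} $\Sigma_{m_{k+1}}$ and the inductively built $\Sigma^{(k)}$ are $C^1$ graphs. For $\Sigma_{m_{k+1}}$ this holds by construction, but for $\Sigma^{(k)}$ it is exactly the hard point and is not automatic: your induction hypothesis only records that $\Sigma^{(k)}$ contains a neighborhood of part of $K$ and is tangent to $X^c_m$ along $K$; away from $K$ its tangent planes may tilt, and a vertical fiber $x+X^h_{m_{k+1}}$ may meet $\Sigma^{(k)}$ in several points (sheets attached to different portions of $K$, or pieces of $\Sigma^{(k)}$ that merely pass through the chart ball), so there need not be any single-valued defining function to blend. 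The paper's proof invests \autoref{slem:rep} inside \autoref{lem:con} precisely here: one first replaces $\Sigma^{(k)}$ by $O_{\epsilon}(K\cap\overline{\widetilde W})\cap\Sigma^{(k)}$ for small $\epsilon$, then uses compactness of the relevant part of $K$, the one-point-per-fiber property of $K$ (a consequence of the uniform pre-tangency estimate), and auxiliary open sets disjoint from $K$ and from $\Sigma_\epsilon$ to show that the intersection with the chart region is a single $C^1$ graph; the nested open sets also guarantee that after blending the new manifold still contains a neighborhood of the enlarged part of $K$. Your sketch has no shrinking mechanism and no single-valuedness argument, so the induction step as stated does not go through (the blend itself, and the fact that it preserves $T_m\Sigma^{(k+1)}=X^c_m$ because the two graphs agree to first order over $K$, is fine and matches the paper).

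A second, smaller but concrete flaw: your construction of $\widetilde\Pi^c$ does not satisfy the required identity $\widetilde\Pi^c_m=\Pi^c_m$ on $K$. At $m\in K$ each local candidate $\Pi_{X^h_{m_i}}(T_m\Sigma)$ is the projection onto $X^c_m$ along $X^h_{m_i}$, and a partition-of-unity average of projections with common range $X^c_m$ is again a projection onto $X^c_m$ but with a kernel that is in general not $X^h_m$; the retraction of \autoref{lem:Piret} fixes projections, so it does not repair the kernel. The paper instead extends $m\mapsto X^h_m$ continuously from the compact $K$ to a neighborhood in $\Sigma$ (\autoref{thm:extc0}) and sets $\widetilde\Pi^c_m=\Pi_{\widetilde X^h_m}(T_m\Sigma)$, which does restrict to $\Pi^c_m$ on $K$; you should do the same. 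Finally, in (2) the phrase ``automatically Lipschitz'' needs justification: a $C^1$ map is not Lipschitz per se, and the paper obtains the estimate by bounding $|D\widehat\Pi^c|$ on a neighborhood of the compact $K$ and comparing the intrinsic Finsler metric of $\Sigma$ with the ambient norm on small balls $\Sigma\cap\mathbb{B}_r(m)$, $m\in K$; this is routine but should be said.
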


\begin{proof}
	We will adapt some arguments originally due to \cite{CLY00a, BC16}.
	Let
	\[
	C_{1,*} = \sup_{m \in K} \{|\Pi^c_{m}|, |\Pi^h_{m}|\} < \infty.
	\]
	Since $ T_{m} K \subset X^{c}_{m} $ and $ K $ is compact, for each small $ 0 < \varepsilon < 1/2 $, there is $ r = r(\varepsilon) > 0 $ such that for any $ m \in K $ and $ m_1, m_2 \in K \cap \mathbb{B}_{r}(m) $, we have
	\[
	|m_1 - m_2 - \Pi^c_{m}(m_1 - m_2)| \leq \varepsilon|m_1 - m_2| < 1/2 |m_1 - m_2|.
	\]
	Set $ \Omega_{m_0}(r) \triangleq \Pi^c_{m_0}(\overline{\mathbb{B}_{r}(m_0)} \cap K - m_0) \subset X^{c}_{m_0} $, which is a compact set. For each $ m \in \overline{\mathbb{B}_{r}(m_0)} \cap K $, there is unique $ x^c_0 \in \Omega_{m_0}(r) $ and $ \omega_{m_0}(x^c_0) \in X^h_{m_0} $ such that
	\[
	m = m_0 + x^c_0 + \omega_{m_0}(x^c_0).
	\]
	In particular, we obtain
	\[
	|\omega_{m_0}(x^c_1) - \omega_{m_0}(x^c_2)| \leq \frac{\varepsilon}{1 - \varepsilon}|x^c_1 - x^c_2| \leq 2\varepsilon |x^c_1 - x^c_2|, \quad \forall x^c_1, x^c_2 \in \Omega_{m_0}(r).
	\]
	Let
	\[
	A_{m_0}(x^c_0) = (\id|_{X^h_{m_0}} - \Pi^h_{m_0}\Pi^c_{m}\Pi^h_{m_0})^{-1} \circ \Pi^h_{m_0} \Pi^c_{m} \Pi^c_{m_0},
	\]
	where $ m = m_0 + x^c_0 + \omega_{m_0}(x^c_0) $. One can see that $ (\id|_{X^c_{m_0}} + A_{m_0}(x^c_0)) X^{c}_{m_0} = X^c_{m} $. Fix $ r_1 = r(1/4) $.
	
	\begin{slem}\label{slem:ab}
		There are a constant $ C \geq 1 $ (independent of $ \varepsilon \leq 1/4 $ and $ m_0 $) and $ 0 < 2\tilde{r} < r = r(\varepsilon) \leq r_1 $ such that for each $ x^c_0 \in \Omega_{m_0}(r_1) $ and $ x^c_1, x^c_2 \in \Omega_{m_0}(r_1) $ with $ |x^c_i - x^c_0| < \tilde{r} $ ($ i = 1,2 $), the following inequality holds:
		\[
		|\omega_{m_0}(x^c_1) - \omega_{m_0}(x^c_2) - A_{m_0}(x^c_0)(x^c_1 - x^c_2)| \leq C \varepsilon |x^c_1 - x^c_2|.
		\]
	\end{slem}
	\begin{proof}
		Since $ m \mapsto \Pi^c_{m} $ is $C^0$, we may assume that $ |\Pi^c_{m} - \Pi^c_{m_0}| < 1/(4C_{1,*}) $ for all $ m \in K \cap \mathbb{B}_{r_1}(m_0) $ (by further reducing $ r_1 $ if necessary). Consequently, the operator $ \id|_{X^h_{m_0}} - \Pi^h_{m_0}\Pi^c_{m}\Pi^h_{m_0} $ is invertible, and its inverse satisfies $ |(\id|_{X^h_{m_0}} - \Pi^h_{m_0}\Pi^c_{m}\Pi^h_{m_0})^{-1}| \leq 2 $ for each $ m \in K \cap \mathbb{B}_{r_1}(m_0) $.
		
		Write $ m_1 = m_0 + x^c_0 + \omega_{m_0}(x^c_0) $, and
		\[
		m_0 + x^c_i + \omega_{m_0}(x^c_i) = m_1 + \tilde{x}^c_i + \tilde{x}^h_i, \quad i = 1,2,
		\]
		where $ \tilde{x}^\kappa_i \in X^\kappa_{m_1} $ for $ \kappa = c,h $. Then
		\[
		|\tilde{x}^c_i| = |\Pi^c_{m_1}(x^c_i - x^c_0) + \Pi^c_{m_1} (\omega_{m_0}(x^c_i) - \omega_{m_0}(x^c_0))| \leq 2|x^c_i - x^c_0| \leq 2\tilde{r} < r.
		\]
		This gives that $ \tilde{x}^h_i = \omega_{m_1}(\tilde{x}^c_i) $ and $ \tilde{x}^c_i \in \Omega_{m_1}(r) $. Similarly, $ |\tilde{x}^c_1 - \tilde{x}^c_2| \leq 2|x^c_1 - x^c_2| $. Since
		\[
		x^c_1 - x^c_2 + \omega_{m_0}(x^c_1) - \omega_{m_0}(x^c_2) = \tilde{x}^c_1 - \tilde{x}^c_2 + \omega_{m_1}(\tilde{x}^c_1) - \omega_{m_1}(\tilde{x}^c_2),
		\]
		one gets
		\[
		\Pi^h_{m_0}(\tilde{x}^c_1 - \tilde{x}^c_2) - \Pi^h_{m_0}\Pi^c_{m_1} \Pi^h_{m_0} (\omega_{m_0}(x^c_1) - \omega_{m_0}(x^c_2)) - \Pi^h_{m_0} \Pi^c_{m_1} \Pi^c_{m_0} (x^c_1 - x^c_2) = 0,
		\]
		which yields
		\begin{align*}
		&~ |\omega_{m_0}(x^c_1) - \omega_{m_0}(x^c_2) - A_{m_0}(x^c_0)(x^c_1 - x^c_2)| \\
		= &~ |\Pi^h_{m_0}(\tilde{x}^c_1 - \tilde{x}^c_2) + \Pi^h_{m_0} (\omega_{m_1}(\tilde{x}^c_1) - \omega_{m_1}(\tilde{x}^c_2)) - A_{m_0}(x^c_0)(x^c_1 - x^c_2)| \\
		\leq &~ |(\id|_{X^h_{m_0}} - \Pi^h_{m_0}\Pi^c_{m_1}\Pi^h_{m_0})^{-1}||\Pi^h_{m_0} (\omega_{m_1}(\tilde{x}^c_1) - \omega_{m_1}(\tilde{x}^c_2))| \\
		\leq &~ 6\varepsilon|\tilde{x}^c_1- \tilde{x}^c_2| \leq 12 \varepsilon|x^c_1 - x^c_2|.
		\end{align*}
		The proof of \autoref{slem:ab} is complete.
	\end{proof}

	By the above sublemma and the infinite-dimensional Whitney extension theorem due to M. Jim\'enez-Sevilla and L. S\'anchez-Gonz\'alez (see \autoref{thm:JS}), we have a $ C^1 $ map $ \widetilde{\omega}_{m_0}: X^c_{m_0} \to X^{h}_{m_0} $ such that
	\[
	\widetilde{\omega}_{m_0}|_{\Omega_{m_0}(r_1)} = \omega_{m_0}|_{\Omega_{m_0}(r_1)}, \quad D\widetilde{\omega}_{m_0}(x^c_0) = A_{m_0}(x^c_0), \quad x^c_0 \in \Omega_{m_0}(r_1).
	\]
	In particular, $ T_{m} \graph \widetilde{\omega}_{m_0} = X^c_{m} $, $ m \in \overline{\mathbb{B}_{r_1}(m_0)} \cap K $, where
	\[
	\graph \widetilde{\omega}_{m_0} = \{ m_0 + x^c_0 + \widetilde{\omega}_{m_0}(x^c_0): x^c_0 \in X^c_{m_0} \}.
	\]
	As $ x^c_0 \mapsto D\widetilde{\omega}_{m_0}(x^c_0) $ is $ C^0 $, we see there is $ \delta_{m_0} > 0 $ such that
	\[
	|D\widetilde{\omega}_{m_0}(x^c_0)| = |D\widetilde{\omega}_{m_0}(x^c_0) - D\widetilde{\omega}_{m_0}(0)| < 1 / (4C_{1,*}), \quad \forall x^c_0 \in X^c_{m_0}(\delta_{m_0}).
	\]
	
	By the compactness of $ K $, we obtain the following:

	There are $ r > 0 $, points $ m_1, m_2, \ldots, m_s \in K $, and $ C^1 $ maps $ \widetilde{\omega}_{m_i}: X^c_{m_i} \to X^h_{m_i} $ ($ i = 1,2,\ldots,s $) such that
	\begin{enumerate}[(i)]
		\item For $ U_i = \mathbb{B}_{r}(m_i) $,  $ K \subset \bigcup_{i = 1}^{s} \mathbb{B}_{r/2}(m_i) \subset \bigcup_{i = 1}^{s} U_i $;
		\item \label{it:rep} For $ \Sigma_i \triangleq \graph \widetilde{\omega}_{m_i} $,  $ U_i \cap K \subset \Sigma_i $ and $ T_{m}\Sigma_i = X^c_{m} $ for all $ m \in U_i \cap K $;
		\item $ \widehat{d}(T_{m}\Sigma_i, X^c_{m_i}) < 1/(2C_{1,*}) $ (by \autoref{lem:granlip}) for all $ m \in U_i $.
	\end{enumerate}

	Define $ \mathcal{P} $ to be the set of pairs $ (W, \Sigma) $ where $ W $ is an open subset of $ X $, $ \Sigma $ is a $ C^1 $ submanifold of $ X $ satisfying $ K \cap W \subset \Sigma $, and $ T_{m} \Sigma = X^c_{m} $ for all $ m \in K \cap W $.
	
	\begin{lem}\label{lem:con}
		Let $ (U_i, \Sigma_i) $ be given. For any $ (W, \Sigma) \in \mathcal{P} $ and open sets $ \widetilde{U} \Subset U_i $ and $ \widetilde{W} \Subset W $, there is $ (W', \Sigma') \in \mathcal{P} $ such that $ \widetilde{U} \cup \widetilde{W} \Subset W' $.
	\end{lem}
	\begin{proof}
		Set $ \Sigma_{\epsilon} \triangleq O_{\epsilon}(K \cap \overline{\widetilde{W}}) \cap \Sigma $ and $ K_0 = \overline{\widetilde{U}} \cap K \cap \overline{\widetilde{W}} $ (compact).

		First, consider the case where $ K_0 = \emptyset $. Then there is $ \varepsilon > 0 $ such that 
		\[
		O_{\varepsilon}(\overline{\widetilde{U}}) \cap O_{\varepsilon}(K \cap \overline{\widetilde{W}}) = \emptyset \quad \text{and} \quad O_{\varepsilon}(\overline{\widetilde{U}} \cap K) \cap O_{\varepsilon}(\overline{\widetilde{W}}) = \emptyset.
		\]
		Let $ \varepsilon' > 0 $ be sufficiently small such that
		\[
		O_{\varepsilon'}(\widetilde{U}) \cap K \subset O_{\varepsilon}(\overline{\widetilde{U}} \cap K) \quad \text{and} \quad O_{\varepsilon'}(\widetilde{W}) \cap K \subset O_{\varepsilon}(\overline{\widetilde{W}} \cap K).
		\]
		Set
		\[
		\Sigma' = \{O_{\varepsilon}(\overline{\widetilde{U}} \cap K) \cap O_{\varepsilon'}(\widetilde{U}) \cap \Sigma_i\} \cup \{O_{\varepsilon}(K \cap \overline{\widetilde{W}}) \cap O_{\varepsilon'}(\widetilde{W}) \cap \Sigma\}, \quad W' = O_{\varepsilon'}(\widetilde{U}) \cup O_{\varepsilon'}(\widetilde{W}).
		\]
		Then $ (W', \Sigma') \in \mathcal{P} $ and $ \widetilde{U} \cup \widetilde{W} \Subset W' $.

		Next assume $ K_0 \neq \emptyset $. Without loss of generality, we can let $ U_i = m_i + X^c_{m_i}(r) \oplus X^h_{m_i}(r) $. For brevity, we identify $ m_i + X^c_{m_i}(r) \oplus X^h_{m_i}(r) $ with $ X^c_{m_i}(r) \times X^h_{m_i}(r) $.
		
		\begin{slem}\label{slem:rep}
			For any open set $ \widetilde{U} \Subset U_{i} $ and sufficiently small $ \epsilon > 0 $, the intersection $ \widetilde{U} \cap \Sigma_{\epsilon} $ can be represented as a $ C^1 $ graph. That is, there are an open subset $ \Omega_{0,i} \subset X^c_{m_i} $ and a $ C^1 $ map $ \phi_{i}: \Omega_{0,i} \to X^h_{m_i} $ such that $ \widetilde{U} \cap \Sigma_{\epsilon} = \graph \phi_{i}|_{\Omega_{0,i}} $.
		\end{slem}
		\begin{proof}
			For each $ m \in K_0 $, we have $ m \in \Sigma \cap \Sigma_i $. Since $ T_m \Sigma = X^c_m = T_m\Sigma_i $ and $ \widehat{d}(T_{m}\Sigma_i, X^c_{m_i}) < 1/(2C_{1,*}) $ (which implies $ T_{m}\Sigma \oplus X^h_{m_i} = X $ by \autoref{lem:gram2}), there is an open set $ O_{m} \subset \Sigma $ such that $ m \in O_{m} \subset \Sigma \cap W $ and $ O_{m} \subset U_i $. Moreover, $ O_{m} = \graph \phi_{m}|_{\widetilde{\Omega}_{m}} $, where $ \phi_{m}: \widetilde{\Omega}_{m} \subset X^c_{m_i} \to X^h_{m_i} $ is $ C^1 $ and $ \widetilde{\Omega}_{m} $ is open in $ X^c_{m_i} $; here,
			\[
			\graph \phi_{m}|_{\widetilde{\Omega}_{m}} = \{ m_i + x^c_0 + \phi_{m}(x^c_0): x^c_0 \in \widetilde{\Omega}_{m} \}.
			\]
			By \eqref{it:rep}, for each $ m \in K_0 $, there is $ x^c_0 = x^c(m) \in X^c_{m_i} $ such that $ m = m_i + x^c_0 + \widetilde{\omega}_{i}(x^c_0) $.
			
			Observe that $ \{\{x^c(m)\} \times \overline{X^h_{m_i}(r)} \setminus O_m\} \cap K = \emptyset $, since $ U_i \cap K \cap \{x\} \times \overline{X^h_{m_i}(r)} $ contains at most one point. Therefore, there are an open set $ V_m \subset X $ and a small $ \varepsilon(m) > 0 $ such that 
			\[
			\{\{x^c(m)\} \times \overline{X^h_{m_i}(r)}\} \setminus O_m \subset V_m, V_{m} \cap K = \emptyset, \quad \text{and} \quad V_{m} \cap \Sigma_{\varepsilon(m)} = \emptyset.
			\]
			Choose an open set $ O_{c,m} \subset X^c_{m_i} $ such that $ x^c(m) \in O_{c,m} $ and $ O_{c,m} \times \overline{X^h_{m_i}(r)} \subset O_{m} \cup V_{m} $. Then $ \{O_{c,m} \times \overline{X^h_{m_i}(r)}\} \cap \Sigma_{\varepsilon(m)} \subset O_m $, which implies that this intersection is a $ C^1 $ graph: $ \{O_{c,m} \times \overline{X^h_{m_i}(r)}\} \cap \Sigma_{\varepsilon(m)} = \graph \phi_{m}|_{\Omega_{c,m}} $ for some open subset $ \Omega_{c,m} \subset \widetilde{\Omega}_{m} $.

			By the compactness of $ K_0 $, there are an open set $ O_{i} \subset X^c_{m_i} $, $ \varepsilon_i > 0 $, and a finite set $ \{m_{i_j}\} \subset K_0 $ such that
			\begin{enumerate}[(p1)]
				\item $ O_i = \bigcup_{m_{i_j}} O_{c,m_{i_{j}}} $ and $ K_0 \subset O_{i} \times \overline{X^h_{m_i}(r)} $;
				\item $ \{O_{i} \times \overline{X^h_{m_i}(r)}\} \cap \Sigma_{\varepsilon_i} \subset U_{i} $ is a $ C^1 $ graph, i.e., $ \{O_{i} \times \overline{X^h_{m_i}(r)}\} \cap \Sigma_{\varepsilon_i} = \graph \phi_i $ for some $ C^1 $ map $ \phi_{i}: \Omega_{i} \subset X^c_{m_i} \to X^h_{m_i} $ with $ \Omega_{i} $ open in $ X^c_{m_i} $.
			\end{enumerate}
			To show (p2), note that $ \{O_{i} \times \overline{X^h_{m_i}(r)}\} \cap \Sigma_{\varepsilon_i} = \bigcup_{m_{i_j}} \graph \phi_{m_{i_j}}|_{\Omega_{c,m_{i_j}}} $. For any $ m_1, m_2 \in K_0 $, we have
			\[
			\phi_{m_{1}}|_{\Omega_{c,m_{1}}\cap \Omega_{c,m_{2}}}  = \phi_{m_{2}}|_{ \Omega_{c,m_{1}} \cap \Omega_{c,m_{2}} },
			\]
			which ensures the existence of a well defined $ C^1 $ map $ \phi_{i} $ on $ \Omega_i = \bigcup_{m_{i_j}} \Omega_{c,m_{i_j}} $ such that $ \phi_{i}(x) = \phi_{m_{i_j}}(x) $ for $ x \in \Omega_{c,m_{i_j}} $.

			Since $ K_0 \subset O_{i} \times \overline{X^h_{m_i}(r)} $ (see (p1)), we get
			\[
			K \cap \overline{\widetilde{W}} \cap (\overline{\widetilde{U}} \setminus \{O_{i} \times \overline{X^h_{m_i}(r)}\} ) \subset K_0 \setminus \{O_{i} \times \overline{X^h_{m_i}(r)}\} = \emptyset.
			\]
			Thus, there is $ 0 < \varepsilon_{0,i} < \varepsilon_i $ such that $ \Sigma_{\varepsilon_{0,i}} \cap (\overline{\widetilde{U}} \setminus \{O_{i} \times \overline{X^h_{m_i}(r)}\} ) = \emptyset $, and hence
			\[
			\widetilde{U} \cap \Sigma_{\varepsilon_{0,i}} \subset (\{O_{i} \times \overline{X^h_{m_i}(r)}\} \cup \{\overline{\widetilde{U}} \setminus \{O_{i} \times \overline{X^h_{m_i}(r)}\}\}) \cap \Sigma_{\varepsilon_{0,i}} \subset \{O_{i} \times \overline{X^h_{m_i}(r)}\}.
			\]
			This implies that $ \widetilde{U} \cap \Sigma_{\varepsilon_{0,i}} = \graph \phi_{i}|_{\Omega_{0,i}} $ for some open subset $ \Omega_{0,i} \subset \Omega_i $.
			
			This completes the proof of \autoref{slem:rep}.
		\end{proof}

		Let $ \widetilde{U} \Subset X^c_{m_i}(\delta) \times X^h_{m_i}(\delta) $, where $ \delta $ is sufficiently close to but smaller than $ r $. Write $ U_{i,\eta} = X^c_{m_i}(\eta) \times X^h_{m_i}(\eta) $. Take $ \delta < \eta' < \eta < r $ and a $ C^1 $ map $ \theta_i : X^c_{m_i} \to [0,1] $ such that $ \theta_i(x) = 1 $ if $ |x| \leq \eta' $ and $ \theta_i(x) = 0 $ if $ |x| \geq \eta $. By \autoref{slem:rep}, for small $ \epsilon > 0 $, $ U_{i,\eta} \cap \Sigma_{\epsilon} $ can be represented as the graph of a $ C^1 $ map $ \phi_{i}: \Omega_{0,i} \to X^h_{m_i} $. Set
		\[
		\widehat{\phi}_i(x) = \theta_i(x) \widetilde{\omega}_i(x) + (1 - \theta_i(x))\phi_{i}(x), \quad x \in \Omega_{0,i},
		\]
		and
		\[
		\Sigma' = \{\Sigma_{\epsilon} \setminus U_{i,\eta}\} \cup \graph \widehat{\phi}_{i}|_{\Omega_{0,i}} \cup \{\graph \widetilde{\omega}_i \cap U_{i,\eta'}\}.
		\]
		By construction, $ \Sigma' $ is a $ C^1 $ submanifold.
		
		Choose $ \epsilon' > 0 $ sufficiently small so that $ O_{\epsilon'}(\widetilde{W}) \cap K \subset O_{\epsilon}(\overline{\widetilde{W}} \cap K) $, which implies $ O_{\epsilon'}(\widetilde{W}) \cap K \subset \Sigma_{\epsilon} $. Let $ W' = O_{\epsilon'}(\widetilde{W}) \cup U_{i,\delta} $, so that $ \widetilde{W} \cup \widetilde{U} \Subset W' $.
		
		Now consider $ m \in W' \cap K = \{O_{\epsilon'}(\widetilde{W})\cap K\} \cup \{U_{i,\delta} \cap K\} $:
		\begin{enumerate}[$ \bullet $]
			\item If $ m \in U_{i,\eta'} \cap K $ ($ \subset \graph \widetilde{\omega}_i $), then $ m \in \graph \widetilde{\omega}_i \cap U_{i,\eta'} \subset \Sigma' $. So $ T_{m} \Sigma' = T_{m} \graph \widetilde{\omega}_i = X^c_{m} $.

			\item If $ m \notin U_{i,\eta'} \cap K $, then $ m \in (O_{\epsilon'}(\widetilde{W}) \cap K) \setminus U_{i,\eta'} $ (by the choice of $\eta'$), so $ m \in \Sigma_{\epsilon} $. If $ m \in \Sigma_{\epsilon} \setminus U_{i,\eta} $, then $ m \in \Sigma' $ and $ T_{m}\Sigma' = T_{m}\Sigma_{\epsilon} = X^c_{m} $. If $ m \in \Sigma_{\epsilon} \cap U_{i,\eta} $ (i.e., $ m \in \graph \phi_{i}|_{\Omega_{0,i}} $), then $ T_{m}\graph \phi_{i} = X^c_{m} $. Moreover, since $ m \in U_{i,\eta} \cap K $, we have $ m \in \graph \widetilde{\omega}_{i} \cap U_{i,\eta} $ and $ T_{m} \graph \widetilde{\omega}_{i} = X^c_{m} $. Therefore, $ m \in \graph \widehat{\phi}_{i}|_{\Omega_{0,i}} $, i.e., $ m \in \Sigma' $, and $ T_{m} \Sigma' = T_{m}\graph \widehat{\phi}_{i} = X^c_{m} $.
		\end{enumerate}
		This shows that $ (W', \Sigma') \in \mathcal{P} $, completing the proof of \autoref{lem:con}.
	\end{proof}

	\emph{Proof of \autoref{thm:whitney} (continued).} Starting with $ (W_1, \Sigma'_1) \triangleq (U_1, \Sigma_1) \in \mathcal{P} $ and applying \autoref{lem:con} with $ (U_2, \Sigma_2) $, we have $ (W_2, \Sigma'_2) \in \mathcal{P} $ such that $ \widetilde{W}_2 \triangleq \mathbb{B}_{r/2}(m_1) \cup \mathbb{B}_{r/2}(m_2) \Subset W_2 $. Repeating this process for $ (W_2, \Sigma'_2) $ and $ (U_3, \Sigma_3) $ yields $ (W_3, \Sigma'_3) \in \mathcal{P} $ with $ \widetilde{W}_3 \triangleq \widetilde{W}_2 \cup \mathbb{B}_{r/2}(m_3) \Subset W_3 $. After finitely many steps, we have $ (W, \Sigma) \triangleq (W_s, \Sigma'_{s}) \in \mathcal{P} $ such that $ K \subset \bigcup_{i = 1}^{s} \mathbb{B}_{r/2}(m_i) \subset W_{s} = W $. In particular, $ T_m \Sigma = X^c_{m} $, $ m \in K $.

	Next, we establish the existence of the projections $ \widetilde{\Pi}^c_{m} $. Since $ \Sigma $ is a $ C^1 $ submanifold, the map $ \Sigma \to \mathbb{G}(X): m \mapsto T_{m}\Sigma $ is continuous.
	Since the map $ K \to \mathbb{G}(X): m \mapsto X^h_{m} $ is continuous and $ K $ is compact, by \autoref{thm:extc0}, we have a continuous map $ O_{\epsilon}(K) \cap \Sigma \to \mathbb{G}(X): m \mapsto \widetilde{X}^h_{m} $ such that $ \widetilde{X}^h_{m} = X^h_{m} $, $ m \in K $, for some small $ \epsilon > 0 $. Using the compactness of $ K $, by further reducing $ \epsilon $ if necessary, we may assume that for $ m \in \mathbb{B}_{m_0}(\epsilon) \cap \Sigma $ with $ m_0 \in K $, we have $ \widehat{d}(\widetilde{X}^h_{m}, X^h_{m_0}) < 1/(4C_{1,*}) $ and $ \widehat{d}(T_{m}\Sigma, T_{m_0}\Sigma) < 1/(2C_{1,*}) $. This gives that $ T_{m}\Sigma \oplus X^h_{m_0} = X $ (by \autoref{lem:gram2}). Moreover, by \autoref{lem:gram2} \eqref{gram2b}, we see $ \alpha(T_{m}\Sigma, X^h_{m_0}) \geq 1/(2C_{1,*}) $, and so
	\[
	\alpha(X^h_{m_0}, T_{m}\Sigma) \geq \alpha(T_{m}\Sigma, X^h_{m_0}) / 2 \geq 1/(4C_{1,*}) > \widehat{d}(\widetilde{X}^h_{m}, X^h_{m_0}),
	\]
	which again implies that $ T_{m}\Sigma \oplus \widetilde{X}^h_{m} = X $ (by \autoref{lem:gram2}).
	Let us define its corresponding projection $ \widetilde{\Pi}^c_{m} $ such that $ R(\widetilde{\Pi}^c_{m}) = T_{m}\Sigma $ and $ R(\id - \widetilde{\Pi}^c_{m}) = \widetilde{X}^h_{m} $. Because $ m \mapsto T_{m}\Sigma $ and $ m \mapsto \widetilde{X}^h_{m} $ are continuous, we get $ m \mapsto \widetilde{\Pi}^c_{m} $ is continuous at $ O_{\epsilon}(K) \cap \Sigma $. Use $ O_{\epsilon}(K) \cap \Sigma $ instead of $ \Sigma $, completing the proof of conclusion (1).

	Finally, we consider conclusion (2). There is a natural \emph{Finsler structure} on $ T\Sigma $ given by $ |x|_{m} = |x| $, $ x \in T_{m}\Sigma $, $ m \in \Sigma $, which induces the natural \emph{Finsler metric} on $ \Sigma $, denoted by $ d_{\Sigma} $. The metric $ d_{\Sigma} $ satisfies that for any $ \zeta > 0 $, there is a small $ \delta_{\zeta} > 0 $ such that for any $ m \in K $,
	\[
	(1 - \zeta)|m_1 - m_2| \leq d_{\Sigma}(m_1, m_2) \leq (1 + \zeta)|m_1 - m_2|, ~ m_1, m_2 \in \Sigma \cap \mathbb{B}_{\delta_{\zeta}}(m).
	\]
	Note that by \autoref{lem:proj}, we know $ \overline{\Pi}(X) $ is a $ C^\infty $ submanifold of $ L(X) $.
	Since $ K $ is compact, by applying \autoref{cor:C1app} to the map $ m \mapsto \widetilde{\Pi}^c_{m}: \Sigma \to \overline{\Pi}(X) $, we know for any small $ \epsilon_{0,1} > 0 $, there is a small $ \epsilon_{0,2} > 0 $ such that there are projections $ \widehat{\Pi}^c_{m} $ satisfying $ m \mapsto \widehat{\Pi}^c_{m}: \Sigma \cap O_{\epsilon_{0,2}}(K) \to \overline{\Pi}(X) \subset L(X) $ is $ C^1 $ and $ |\widehat{\Pi}^c_{m} - \widetilde{\Pi}^c_{m}| \leq \epsilon_{0,1} $ for all $ m \in \Sigma \cap O_{\epsilon_{0,2}}(K) $. Also, due to the compactness of $ K $, we can take $ \epsilon_{0,2} $ smaller such that $ |D_{m} \widehat{\Pi}^c_{m}| \leq C_3' $ for all $ m \in \Sigma \cap O_{\epsilon_{0,2}}(K) $ and some constant $ C_3 > 0 $, which implies the Lipschitz continuity of $ \widehat{\Pi}^c_{m} $ in $ \Sigma \cap O_{\epsilon_{0,2}}(K) $ with respect to the metric $ d_{\Sigma} $ (see e.g. \cite[Proposition 2.3]{JS11}), and thus in each $ \Sigma \cap \mathbb{B}_{\delta_{\zeta}}(m) $ for $ m \in K $ (with respect to the norm $ |\cdot| $ of $ X $). Taking $ O_{\epsilon_{0,2}}(K) \cap \Sigma $ instead of $ \Sigma $, we complete the proof of \autoref{thm:whitney}.
\end{proof}

\begin{rmk}\label{rmk:Cr}
	If each $ X^c_{m} $ admits $ C^r $ partitions of unity, then in \autoref{thm:whitney}, we can further choose $ \Sigma $ such that $ \Sigma \setminus K \in C^r $. Also, the projections $ \widehat{\Pi}^c_{m} $ can be chosen such that $ m \mapsto \widehat{\Pi}^c_{m} $ is $ C^r $ in $ \Sigma \setminus K $.
\end{rmk}
\begin{proof}
	In the proof of \autoref{thm:whitney}, by \autoref{rmk:addcr}, we can choose $ \widetilde{\omega}_{m_0} \in C^r(X^c_{m_0} \setminus \Omega_{m_0}(r_1), X^h_{m_0}) $. The set $ \mathcal{P} $ is now chosen such that $ (W, \Sigma) \in \mathcal{P} $ if and only if $ W $ is open, $ \Sigma \in C^1 $ and $ \Sigma \setminus K \in C^{r} $, such that $ K \cap W \subset \Sigma $ and $ T_{m} \Sigma = X^c_{m} $, $ m \in K \cap W $. And so in \autoref{slem:rep}, we further get $ \phi_i \in C^r(\Omega_{0,i} \setminus K, X^h_{m_i}) $. In particular, $ \widehat{\phi}_i \in C^r(\Omega_{0,i} \setminus K, X^h_{m_i}) $ since we can let $ \theta_i \in C^r $, and hence $ \Sigma' \setminus K \in C^r $. Once again applying \autoref{rmk:addcr}, we have projections $ \widehat{\Pi}^c_{m} $ with an additional property: $ m \mapsto \widehat{\Pi}^c_{m} $ is $ C^r $ in $ \Sigma \setminus K $.
	This gives the desired result.
\end{proof}

\section{Proof of \autoref{cor:compact}} \label{sub:geo}

\
\

\begin{proof}[Proof of \autoref{thm:geo}]
	By \autoref{thm:whitney}, we know there is a $ C^1 $ submanifold $ \Sigma $ of $ X $ such that $ K \subset \Sigma $ and $ T_{m} \Sigma = X^{c}_{m} $, $ m \in K $. Since $ \Sigma $ is $ C^{1} $, it is also a $ C^{0,1} $ submanifold of $ X $ (see also \cite{Pal66}), i.e. (H1) holds with $ \{ \Pi^c_{m} \} $ and $ \{U_{m}(\epsilon_{m}) = \Sigma \cap \mathbb{B}_{\epsilon_{m}}(m)\} $ (here note that $ \Sigma $ is embedding and so $ \Lambda(m) = \{ m \} $). Since $ K $ is compact, by a standard compactness argument, (H3)--(H4) are satisfied; see also \cite[Lemma 4.4]{BLZ98}. In order to verify (H2), we need to extend and then approximate $ \{ \Pi^{\kappa}_{m} \} $ by $ C^1 $ ones. Let $ \varepsilon > 0 $ be sufficiently small. Applying the same argument as in the proof of \autoref{thm:whitney} \eqref{it:w2}, we obtain projections $ \{ \widetilde{\Pi}^{\kappa}_{m}: m \in \Sigma, \kappa = s, c, u \} $ such that $ \sup_{m \in K}|\widetilde{\Pi}^{\kappa}_{m} - \Pi^{\kappa}_{m}| \leq \varepsilon $, and the map $ m \mapsto \widetilde{\Pi}^{\kappa}_{m}: \Sigma \to L(X) $ is $ C^1 $, $ \kappa = s, c, u $. Now we have (H2) holds for $ \{\widetilde{\Pi}^{\kappa}_{m}: m \in \Sigma\} $ (if necessary, replacing $ \Sigma $ by $ \Sigma \cap O_{\epsilon'}(K) $ with $ \epsilon' $ small), and hence $ (\Sigma, K, \{ \widetilde{\Pi}^{\kappa}_{m} \}_{\kappa = s, c, u}, \{ \Sigma \cap \mathbb{B}_{\epsilon}(m) \}) $ satisfies (H1)--(H4) in \autoref{sub:setup}.
\end{proof}

\begin{proof}[Proof of \autoref{cor:compact}]
	From \autoref{thm:geo}, we see that $ (\Sigma, K, \{ \widetilde{\Pi}^{\kappa}_{m} \}_{\kappa = s, c, u}, \{ \Sigma \cap \mathbb{B}_{\epsilon}(m) \}) $ satisfies (H1)--(H4).
	Now conclusions (1) (2) are consequences of \autoref{thm:tri0} \eqref{it:tri1} \eqref{it:tri04}. Consider item (3) in \autoref{thm:geo}. Since $ H $ satisfies the \emph{strong $ s $-contraction} and \emph{strong $ u $-expansion}, one needs to show (B4) (iv$ ' $) holds, i.e., there is a bump function $ \Psi $ satisfying (a)--(b) in \autoref{sub:LipC1bm} (in this case $ X^s_{m} = \{0\} $ for all $ m \in \Sigma $). Also, since $ K $ is compact, there are finitely many $ C^1 $ local charts with their domains covering $ K $ in $ \Sigma $, and so by \autoref{exa:case2}, it suffices to know that $ X^c_{m} $ satisfies property ($ *^1 $) (without uniformity). But this follows from (Aa1) (ii); that the pair $ (X^c_{m}, X^{h}_{m}) $ has property (*) (see \autoref{def:p*}) obviously implies that $ (X^c_{m}, \mathbb{R}) $ has property (*), i.e., $ X^c_{m} $ satisfies the property ($ *^1 $) (see \autoref{def:property*}). Thus, conclusion (3) (i) is a consequence of \autoref{thm:tri0} \eqref{it:tri0smooth}.

	Let us consider item (3) (ii). Since in this case $ X^c_{m} $ is finite-dimensional, it admits $ C^{\infty} $ partitions of unity. By \autoref{rmk:Cr}, we can assume $ \Sigma \setminus K \in C^{\infty} $ and the maps $ \Sigma \setminus K \to L(X), m \mapsto \widetilde{\Pi}^{\kappa}_{m} $ ($ \kappa = s, c, u $) are $ C^{\infty} $; also $ \Psi \in C^{\infty}(\Sigma \setminus K, [0,1]) $. As $ K $ is compact and $ \Sigma $ is finite-dimensional, we see \autoref{lem:high} holds (see also \autoref{rmk:highL} (I) (b)), which means conclusion (3) (ii) holds. The proof is complete.
\end{proof}

\chapter{Invariant manifolds for $ C^1 $ maps}\label{sec:C1map}

\section{Proof of \autoref{cor:mapT} and \autoref{cor:mapG}}

Since $ u: K \to K $ is $ C^0 $, we know (B2) holds with any small $ \xi_1 > 0 $. Note that $ \mathfrak{A}(\epsilon') \to 0 $ (defined in (IV$ ' $)) as $ \epsilon' \to 0 $ since $ K $ is compact. From \autoref{lem:mapAB} with $ \sup_{m \in K}|\widehat{\Pi}^\kappa_m - \Pi^\kappa_m| \leq \xi_2 $, $ \kappa = s, c, u $, we have (B2) (a$ ' $) (b) hold. Now the conclusions of \autoref{cor:mapT} follow from \autoref{cor:compact}. Here, note that if $ H $ is $ C^{k} $ in $ \mathbb{B}_{\epsilon'} (K) $, which implies $ C^{k-1,1} $ in $ \mathbb{B}_{\epsilon'} (K) $ (when $ \epsilon' $ is small), then for
\[
\widehat{H}_{m} = H(m + \cdot) - u(m) \sim (F^{cs}_{m}, G^{cs}_{m}) : \widehat{X}^{cs}_{m}(r) \oplus \widehat{X}^{u}_{m}(r_1) \to \widehat{X}^{cs}_{u(m)}(r_2) \oplus \widehat{X}^{u}_{u(m)}(r)
\]
we have $ F^{cs}_{m}(\cdot), G^{cs}_{m}(\cdot) \in C^{k} $ and thus $ F^{cs}_{m}(\cdot), G^{cs}_{m}(\cdot) \in C^{k-1,1} $ uniformly for $ m \in K $ since $ K $ is compact.

\autoref{cor:mapG} is a consequence of \autoref{lem:mapAB} and \autoref{thm:tri0}. The proof is complete. \qed

\section{Dynamical characterization of pre-tangent spaces}\label{sub:dyntan}

Let us give a characterization of $ T_{m} K \subset \widehat{X}^{c}_{m} $ (see \autoref{def:tangent}) based on the dynamical property of $ H $.

\vspace{.5em}
\noindent{{Assumptions}}.
\begin{enumerate}[(i)]
	\item Let $ K \subset X $ be compact.
	\item Suppose $ H \in C^{1}(\mathbb{B}_{\epsilon'} (K), X) $ is an invertible map admitting (uniformly) \emph{partial hyperbolicity} at $ K $ (i.e., (IV$ ' $) (a$ ' $) in \autoref{sub:maps} holds, with the additional conditions that $ u = H|_{K} $ (i.e., $ \eta_* = 0 $), $ H(K) = K $, $ \xi_0 = 0 $, and that $ m \mapsto \widehat{\Pi}^{\kappa}_{m} $ is continuous, $ \kappa = s, c, u $).
\end{enumerate}
\vspace{.5em}

Under the above assumptions, there are \emph{strong stable} and \emph{strong unstable laminations} of $ K $, whose leaves through $ m \in K $ are denoted by $ W^{ss}(m) $, $ W^{uu}(m) $ (called the \emph{strong stable} and \emph{strong unstable manifolds} of $ m $, respectively), such that (i) $ W^{\kappa \kappa}(m) \in C^1 $ and $ T_{m}W^{\kappa\kappa}(m) = \widehat{X}^{c\kappa}_{m} $, $ \kappa = s, u $, and (ii) $ H(W^{ss}(m)) \subset W^{ss}(H(m)) $, $ H^{-1}(W^{ss}(m)) \subset W^{ss}(H^{-1}(m)) $; for details, see e.g. \cite{HPS77} or \cite[Section 7.2]{Che18a}.

\begin{lem}[See {\cite[Proposition 3.10]{BC16}}]\label{lem:bc}
	If for all $ m \in K $, $ W^{ss}(m) \cap K = \{m\} $ and $ W^{uu}(m) \cap K = \{m\} $, then $ T_{m} K \subset \widehat{X}^{c}_{m} $ for all $ m \in K $.
\end{lem}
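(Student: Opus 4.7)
\medskip

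The plan is to argue by contradiction: assume there is a point $m \in K$ where $T_m K \not\subset \widehat{X}^c_m$, and produce a point in some $W^{ss}(p) \setminus \{p\}$ or $W^{uu}(p) \setminus \{p\}$ that lies in $K$. By the definition of pre-tangent space in \autoref{def:tangent}, the failure of $T_m K \subset \widehat{X}^c_m$ means there exist $\epsilon > 0$ and a sequence $m_k \in K \setminus \{m\}$ with $m_k \to m$ such that
\[
|\widehat{\Pi}^s_m(m_k - m)| + |\widehat{\Pi}^u_m(m_k - m)| \geq \epsilon |m_k - m|.
\]
Passing to a subsequence, we may assume one of the two hyperbolic components dominates; after possibly replacing $H$ by $H^{-1}$ (which swaps the roles of $\widehat{X}^s$ and $\widehat{X}^u$ and the laminations $W^{ss}$ and $W^{uu}$) we suppose the unstable component is at least $(\epsilon/2)|m_k - m|$.

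Next I would set up the standard invariant cone machinery attached to the partially hyperbolic splitting $X = \widehat{X}^s_m \oplus \widehat{X}^c_m \oplus \widehat{X}^u_m$ over $K$. For a small $\alpha > 0$ define the unstable cone $C^u_\alpha(m) = \{v: |\widehat{\Pi}^{cs}_m v| \leq \alpha |\widehat{\Pi}^u_m v|\}$. By \textbf{(IV$'$) (a$'$)} with $\xi_0 = 0$, $\eta_* = 0$, and the uniform continuity of $DH$ on a compact neighborhood of $K$, there exist $\alpha_0 > 0$, $\mu > 1$ and a neighborhood $U$ of $K$ such that for all $m' \in K$ and all $m'' \in K$ with $m''$ close enough to $m'$, writing $w = m'' - m'$,
\begin{enumerate}[(i)]
\item if $w \in C^u_{\alpha_0}(m')$, then $H(m'') - H(m') \in C^u_{\alpha_0/2}(H(m'))$;
\item $|H(m'') - H(m')| \geq \mu \, |m'' - m'|$ whenever $w \in C^u_{\alpha_0}(m')$;
\item the $\widehat{X}^{cs}$-component dissipates exponentially faster than the $\widehat{X}^u$-component grows, because $\lambda'_{cs} \lambda'_u < 1$ (from \textbf{(IV$'$) (a$'$) (iii)}).
\end{enumerate}
Item (iii) is the crucial point: an initial vector with a definite unstable component falls into $C^u_{\alpha_0}$ after a bounded number of iterates $n_0 = n_0(\epsilon)$, independent of $k$. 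From then on, iterating $H$ expands the norm by at least $\mu$ per step while keeping the difference inside the unstable cone along the orbit.

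With the cone in hand, I would define $n_k$ to be the first integer $n \geq n_0$ for which $|H^n(m_k) - H^n(m)| \geq \delta$, for a fixed $\delta$ smaller than the size of the local product neighborhoods around $K$. By (ii) such $n_k$ exist, $n_k \to \infty$, and $|H^{n_k}(m_k) - H^{n_k}(m)| \in [\delta, \mu\delta]$. Using the compactness of $K$, extract a subsequence so that $p_k := H^{n_k}(m) \to p \in K$ and $q_k := H^{n_k}(m_k) \to q \in K$; note $|q - p| \geq \delta > 0$. The final step is to show that $q \in W^{uu}(p)$, which contradicts the hypothesis $W^{uu}(p) \cap K = \{p\}$. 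For this, fix $j \geq 0$ and consider $H^{-j}(q_k) = H^{n_k - j}(m_k)$ and $H^{-j}(p_k) = H^{n_k - j}(m)$. Because the vector $q_k - p_k$ lies in $C^u_{\alpha_0}(p_k)$, backward iteration in the unstable cone contracts by a factor of at most $\lambda'_u$ per step, so $|H^{-j}(q_k) - H^{-j}(p_k)| \leq C (\lambda'_u)^j \, |q_k - p_k| \leq C \mu \delta \, (\lambda'_u)^j$ with $C$ independent of $j$ and $k$. Letting $k \to \infty$ gives $|H^{-j}(q) - H^{-j}(p)| \leq C' \delta (\lambda'_u)^j$ for all $j \geq 0$, which by the characterization of the strong unstable manifold (contraction at rate $\lambda'_u$ under $H^{-1}$) implies $q \in W^{uu}(p)$, yielding the desired contradiction.

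The main obstacle I anticipate is making items (i)--(iii) of the cone argument quantitatively uniform along varying base points in $K$, since the splitting $\widehat{X}^{s,c,u}$ is only assumed continuous and the linearization at distinct orbit points is not the same. What saves us is compactness of $K$, the uniform continuity of $DH$ near $K$, and the strict inequality $\lambda'_{cs} \lambda'_u < 1$ (plus its dual), which together let us choose uniform constants $\alpha_0, \mu, n_0$ valid along the whole forward orbit used above. Once those uniform constants are in place the hitting time argument and the backward-contraction estimate are routine and the identification of the limit with a point on the strong unstable manifold is automatic.
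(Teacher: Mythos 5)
Your argument is correct in substance and is essentially the paper's proof run in the opposite time direction: the paper denies $T_{m}K \subset \widehat{X}^{cu}_{m}$, places the difference of two nearby points of $K$ in a fat stable cone, iterates \emph{backward} (which expands such differences) until a definite separation $\varepsilon/8$ is reached, and then uses compactness of $K$ together with the forward-cone characterization of $W^{ss}$ to produce two distinct points of $K$, one lying on the strong stable manifold of the other; your forward/unstable-cone version, with the hitting time $n_k$ and the backward-contraction characterization of $W^{uu}$, is the mirror image of this (it is precisely the ``similarly'' half of the paper's argument), and your fat-to-thin cone step plays the role of the paper's sublemma, which handles fat cones by applying the (A)(B)-condition machinery to the composed map $H^{n}$. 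One small correction: the negation of $T_{m}K \subset \widehat{X}^{c}_{m}$ does not yield a sequence $m_k \to m$ whose difference \emph{from the fixed point} $m$ has a definite hyperbolic component — \autoref{def:tangent} quantifies over pairs $m^1_k \neq m^2_k$ of points of $K$ near $m$ — but this is harmless, since your argument never uses that one member of the pair is $m$ itself; simply run it on the pairs $m^1_k, m^2_k$ (measuring components in the splitting at, say, $m^1_k$), exactly as the paper does.
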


The proof is the same as \cite[Proposition 3.10]{BC16}, and we give a sketch here.

\begin{proof}
	First note the following characterizations of $ W^{ss}(m) $; similar for $ W^{uu}(m) $.

	\begin{slem}\label{slem:stable-m}
		The following characterizations of strong stable manifolds are equivalent:
		\begin{enumerate}[(i)]
		\item $ m' \in W^{ss}(m) $.
		\item For some (or equivalently, for all) $ n \in \mathbb{N} $, $ H^{n}(m') \in W^{ss}(H^{n}(m)) $.
		\item There are $ \varepsilon > 0 $ and $ N \in \mathbb{N} $ such that for all $ n \geq N $,
		\begin{itemize}
		\item $ H^{n}(m') \in \mathbb{B}_{\varepsilon} (H^{n}(m)) $
		\item $ |H^{n}(m') - H^{n}(m)| \leq C_1 \widetilde{\lambda}^{(n)}_{s}(m) $
		\end{itemize}
		where $ \widetilde{\lambda}_{s}(m) = \lambda_{s}'(m) + \varsigma_* < \gamma_{s} < 1 $, $ C_1 > 0 $ is a constant, and $ \varsigma_* > 0 $ is small.
		\item For any $ \beta_0 > 0 $, there are $ \varepsilon > 0 $ and $ N \in \mathbb{N} $ such that for all $ n \geq N $,
		\begin{itemize}
		\item $ H^{n}(m') \in \mathbb{B}_{\varepsilon} (H^{n}(m)) $
		\item In the decomposition $ H^{n}(m') = H^{n}(m) + x^s_{n} + x^{cu}_{n} $ with $ x^{\kappa}_{n} \in X^{\kappa}_{H^{n}(m)} $, we have $ |x^{cu}_{n}| \leq \beta_0|x^s_{n}| $
		\end{itemize}
		(which implies $ |H^{n}(m') - H^{n}(m)| \leq \widetilde{\lambda}^{(n)}_{s}(m) |m' - m| $ by the (B) condition).
		\end{enumerate}
		Here, $\widetilde{\lambda}^{(n)}_{s}(m)$ is defined as:
		\[
		\widetilde{\lambda}^{(n)}_{s}(m) = \widetilde{\lambda}_{s}(m) \widetilde{\lambda}_{s}(H(m)) \cdots \widetilde{\lambda}_{s}(H^{n-1}(m)).
		\]
	\end{slem}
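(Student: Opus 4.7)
The plan is to exploit three ingredients: the dynamical invariance $H(W^{ss}(m)) \subset W^{ss}(H(m))$ (and its inverse), the fact that each leaf $W^{ss}(m)$ is locally a $C^1$ graph over $\widehat{X}^s_m$ tangent to $\widehat{X}^s_m$ at $m$, and the (A$ ' $) (B) condition provided by partial hyperbolicity together with the compactness of $K$.

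First, I would handle the equivalence between the first two characterizations, which is essentially tautological: iterating the two invariance inclusions of the strong stable lamination gives $m' \in W^{ss}(m) \Leftrightarrow H(m') \in W^{ss}(H(m))$, and induction yields the statement for every $n$; the ``for some $n$'' version follows by applying $H^{-1}$ the same number of times. This step is a routine bookkeeping exercise.

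Next, for $(1) \Rightarrow (3)$ and $(1) \Rightarrow (4)$, I would use the local graph representation $W^{ss}(m) = \graph h_m$ with $h_m : \widehat{X}^s_m(r) \to \widehat{X}^{cu}_m$, $h_m(0) = 0$, $Dh_m(0) = 0$. Continuity of $\widehat{\Pi}^{\kappa}_{\cdot}$ and compactness of $K$ make the family $\{h_m\}_{m \in K}$ uniformly $C^1$-small near $0$, so writing $m' - m = x^s + h_m(x^s)$ and pushing forward by $H$ using the (B) condition in the $cs$-direction (together with $\xi_0 = 0$ so that $\widehat{X}^s$ is genuinely $DH$-invariant at $K$) gives $|H(m') - H(m)|_s \leq \widetilde{\lambda}_s(m)|m' - m|_s$ and a $cu$-component dominated by $\beta_0$ times the $s$-component for any prescribed $\beta_0$ once $|m'-m|$ is small enough. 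Iterating and invoking the product telescoping defines $\widetilde{\lambda}_s^{(n)}(m)$ with the constant $C_1$ absorbing the factor between $|m'-m|$ and its projection.

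The main obstacle, and the step requiring the most care, will be the reverse implications $(3) \Rightarrow (1)$ and $(4) \Rightarrow (1)$. Here the idea is to use the (A$ ' $) condition in the dual form: if $H^n(m')$ stays within $\mathbb{B}_\varepsilon(H^n(m))$ and the $cu$-component of $H^n(m') - H^n(m)$ were \emph{not} controlled by the $s$-component (i.e.\ $m' \notin W^{ss}(m)$), then the (A$ ' $) condition applied backwards to the pair $(H^n(m), H^n(m'))$ would force $|H^n(m') - H^n(m)|$ to be at least comparable to the $cu$-component, which grows (or at worst contracts at rate $\widetilde{\lambda}_{cu}$ strictly larger than $\widetilde{\lambda}_s$), contradicting the assumed decay rate $C_1 \widetilde{\lambda}_s^{(n)}(m)$ or the ratio bound $|x^{cu}_n| \leq \beta_0 |x^s_n|$ taken with $\beta_0$ smaller than the angle threshold in the (A$ ' $) condition. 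Concretely, I would invert the graph transform characterization: given the estimates in (3) or (4), define $x^s_n \triangleq \widehat{\Pi}^s_{H^n(m)}(H^n(m') - H^n(m))$, argue that the sequence $H^n(m')$ must eventually satisfy the cone condition defining $W^{ss}(H^N(m))$, and then pull back by $H^{-N}$ using the invariance of the lamination to conclude $m' \in W^{ss}(m)$. The delicate point is propagating the non-degeneracy $\beta_0 < \alpha'_u(m)^{-1}$ (or the analogous threshold) uniformly in $m \in K$, which is where compactness of $K$ and continuity of the splitting enter decisively.
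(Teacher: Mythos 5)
Your first two steps are consistent with the paper: the equivalence via invariance of the lamination is routine, and the forward implications (membership implies the decay rate and the cone condition for every $\beta_0$) follow from the graph representation and the (B) condition; the paper itself treats all of these as "well known", citing \cite{HPS77} and \cite{Che18a}, and only proves one bullet point. The genuine gap is exactly in that bullet point, i.e. your reverse direction. You propose to run the one-step (A$'$)/(B) condition and explicitly restrict to "$\beta_0$ smaller than the angle threshold", but the entire content of the fourth characterization — and the way it is used in the very next lemma, where $\beta_0$ arises from a failure of tangency of $K$ to $\widehat{X}^{cu}_{m}$ and cannot be assumed small — is that a cone condition with an \emph{arbitrary, possibly large} fixed $\beta_0$ along the forward orbit already forces the decay rate $\widetilde{\lambda}^{(n)}_{s}(m)$ and membership in $W^{ss}(m)$. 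For a wide cone the single-step (A)(B) condition yields nothing, and "propagating the non-degeneracy uniformly by compactness" does not address this.

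The paper's mechanism, missing from your plan, is a wide-to-narrow cone self-improvement through high iterates: fix $n$ so large that $(\sup_m\lambda'_s(m)\lambda'_{cu}(m))^{-n/2} > 2\beta_0$ (possible because $\sup_m\lambda'_s\lambda'_{cu}<1$), check that the generating map of $H^{n}(m+\cdot)-H^{n}(m)$ on a small ball has small off-diagonal Lipschitz constants (here $\xi_0=0$ and the smallness of $\mathfrak{A}(\epsilon'')$, i.e. compactness of $K$ and continuity of $DH$, enter), and then apply the refined clause of \autoref{lem:a4}~(a) (when $\alpha\beta<\lambda_s\lambda_u$ one gets an (A)($c^{-1}\alpha;\alpha,\lambda_u$) (B)($c^{-1}\beta;\beta,\lambda_s$) condition with $c=\lambda_s\lambda_u$) to conclude that the $n$-step map satisfies a (B)($\beta_0$; $\beta'$, $\cdot$) condition with $\beta'$ of order $(\sup_m\lambda'_s\lambda'_{cu})^{n/2}$, hence small. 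Only then does the hypothesis $|x^{cu}_{n}|\le\beta_0|x^{s}_{n}|$ improve to $|x^{cu}_{k}|\le\beta'|x^{s}_{k}|$ for all $k$, after which the (B) condition gives the contraction estimate and membership in the leaf. The same iterate trick is also what makes your sketch of the decay-rate implication rigorous (leaving a narrow stable cone only places you in a dual cone with large parameter, which the one-step (A) condition does not cover). Without this step your argument establishes the characterization only for small $\beta_0$, which is neither what the sublemma asserts nor what its application (and the paper's warning that this holds only under uniform partial hyperbolicity at a compact $K$) requires.
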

	\begin{proof}
		We only show that (i) implies (iv).
		The other characterizations are well known (see e.g. \cite[Theorem 5.1]{HPS77} or \cite[Section 4.4]{Che18a}).
		Here, we should mention that this characterization only holds for $ H $ admitting (uniformly) partial hyperbolicity at $ K $, not for the general setting in \cite[Section 7.2]{Che18a}. Without loss of generality, let $ N = 0 $. We need to show that $ |x^{cu}_{n}| \leq \beta_0|x^s_{n}| $ implies $ |x^{cu}_{n}| \leq \beta|x^s_{n}| $ for small $ \beta > 0 $, then by the (B) condition (using \autoref{lem:mapAB}), this yields $ |H^{n}(m') - H^{n}(m)| \leq \widetilde{\lambda}^{(n)}_{s}(m) |m' - m| $.

		More precisely, for any $ \beta_0 > 0 $, if $ \epsilon'' $ is small and $ n \in \mathbb{N} $ is large, then
		\[
		H^{n}(m+\cdot) - H^{n}(m): X^{s}_{m}(\epsilon'') \oplus X^{cu}_{m}(\epsilon') \to X^{s}_{H^{n}(m)}(\epsilon') \oplus X^{cu}_{H^{n}(m)} (\epsilon''),
		\]
		satisfies (A) ($ \alpha $, $ \lambda^{(n)}_{cu}(m) $) (B) ($ \beta_0 $; $ \beta $, $ \lambda^{(n)}_{s}(m) $) with $ \alpha, \beta $ small and $ \lambda^{(n)}_{cu}(m), \lambda^{(n)}_{s}(m) $ close to $ 2(\lambda'_{cu})^{(n)}(m) $, $ 2(\lambda'_{s})^{(n)}(m) $.

		In fact, due to $ \sup_{m}\lambda'_s(m) \lambda'_{cu}(m) < 1 $, let $ n $ be large such that 
		\[
		(\sup_{m}\lambda'_s(m) \lambda'_{cu}(m))^{-n/2} > 2\beta_0.
		\]
		Note that there is a small $ \epsilon'' > 0 $ such that $ H^{n}(m+\cdot) - H^{n}(m) \sim (F^{(n)}_{m}, G^{(n)}_{m}) $ satisfies for all $x_1, x'_1 \in X^{s}_{m}(\epsilon''),~ y_2, y'_2 \in X^{cu}_{H^{n}(m)} (\epsilon'')$,
		\begin{gather*}
		|F^{(n)}_{m}(x_1, y_2)-F^{(n)}_{m}(x'_1, y'_2)| \leq \max\{ 2\lambda^{(n)}_s(m) |x_1 - x'_1|,~ \alpha' |y_2 - y'_2| \}, \\
		|G^{(n)}_{m}(x_1, y_2)-G^{(n)}_{m}(x'_1, y'_2)| \leq \max\{ \beta' |x_1 - x'_1|,~ 2\lambda^{(n)}_u(m) |y_2 - y'_2| \},
		\end{gather*}
		where $ \alpha', \beta' $ are small (since $ \xi_0 = 0 $ and $ \epsilon'', \mathfrak{A}(\epsilon'') $ can be small).
		
		Since $ (\sup_{m}\lambda_s(m)\lambda_u(m))^{(n/2)} $ is small, we can let $ \beta' \geq (\sup_{m}\lambda_s(m)\lambda_u(m))^{n/2} $. Now applying \autoref{lem:a4} \eqref{it:ab0}, we get $ |x^{cu}_{0}| \leq \beta'|x^s_{0}| $ and then (by replacing $ m $ with $ H^{k}(m) $) $ |x^{cu}_{k}| \leq \beta'|x^s_{k}| $ for all $ k \geq 0 $. The proof is complete.
	\end{proof}

	We first show $ T_{m} K \subset \widehat{X}^{cu}_{m} $ for all $ m \in K $. Otherwise, there are a constant $ \beta_0 > 0 $, $ m_0 \in K $ and $ m^1_{n}, m^2_{n} \in K $ ($ n \in \mathbb{N} $) such that $ m^1_{n} \neq m^2_{n} $, $ m^1_{n}, m^2_{n} \to m_0 $ and
	\[
	|\widehat{\Pi}^{cu}_{m^1_{n}} (m^1_{n} - m^2_{n})| \leq \beta_0 |\widehat{\Pi}^{s}_{m^1_{n}} (m^1_{n} - m^2_{n})|.
	\]
	Without loss of generality, $ m^{2}_{n} \in \mathbb{B}_{\varepsilon/4}(m^{1}_{n}) $ for all $ n $, where $ \varepsilon $ is given in \autoref{slem:stable-m}.

	By the (B) condition, if for all $ 0 \leq j \leq k $, $ H^{-j}(m^{2}_{n}) \in \mathbb{B}_{\varepsilon}(H^{-j}(m^{1}_{n})) $, then $ |x^{cu}_{n, j}| \leq \beta_0 |x^{s}_{n, j}| $ and $ |H^{-k}(m^{1}_{n}) - H^{-k}(m^{2}_{n})| \geq \gamma^{-k}_{s}|m^{1}_{n} - m^{2}_{n}| $, where $ H^{-j}(m^{2}_{n}) = H^{-j}(m^{1}_{n}) + x^{s}_{n, j} + x^{cu}_{n, j} $ and $ x^{\kappa}_{n, j} \in X^{\kappa}_{u^{-j}(m^{1}_{n})} $.

	As $ m^1_{n}, m^2_{n} \to m_0 $, for any $ n > 0 $, we can choose $ k_{n} > 0 $ such that 
	\[
	H^{-j}(m^{2}_{n}) \in \mathbb{B}_{\varepsilon/2}(H^{-j}(m^{1}_{n})), 0 \leq j \leq k_{n}, \quad \text{and}~ |H^{-k_{n}}(m^{1}_{n}) - H^{-k_{n}}(m^{2}_{n})| \geq \varepsilon / 8,
	\]
	since $ |m^1_{n} - m^{2}_{n}| \neq 0 $ and $ \gamma_{s} < 1 $ by the (B) condition. Note that $ k_{n} \to \infty $ as $ n \to \infty $. Since $ K $ is compact, without loss of generality, assume $ H^{-k_{n}}(m^{i}_{n}) \to m_i \in K $ as $ n \to \infty $. Then $ H^{k}(m_{2}) \in \mathbb{B}_{\varepsilon}(H^{k}(m_1)) $ and $ |\widehat{\Pi}^{cu}_{H^{k}(m_1)} (H^{k}(m_{1}) - H^{k}(m_{2}))| \leq \beta_0 |\widehat{\Pi}^{s}_{H^{k}(m_1)} (H^{k}(m_{1}) - H^{k}(m_{2}))| $ for all $ k \in \mathbb{N} $, which yields $ m_2 \in W^{ss}(m_1) $ and so $ m_1 = m_2 $ by the assumption. However, by our construction, $ |m_1 - m_2| \geq \varepsilon / 8 $, a contradiction.

	Similarly, we have $ T_{m} K \subset \widehat{X}^{cs}_{m} $ for all $ m \in K $ and so $ T_{m} K \subset \widehat{X}^{c}_{m} $, completing the proof of \autoref{lem:bc}.
\end{proof}

Now we can state the following corollary which generalizes the corresponding finite-dimensional result due to Bonatti and Crovisier \cite{BC16}.

\begin{thm}\label{thm:eqInvariant}
	Under assumptions (i)--(ii) and the following:
	(iii) for each $ m \in K $, the pair $ (\widehat{X}^c_{m}, \widehat{X}^{h}_{m}) $ has property (*) (see \autoref{def:p*} and \autoref{exa:p*}; for examples (a) $ X $ is a Hilbert space with $ X^c_{m} $ separable for each $ m \in K $, or (b) $ X^c_{m} $ is finite-dimensional for each $ m \in K $), there is a $ C^1 $ submanifold $ \Sigma^c $ of $ X $ such that it contains $ K $ in its interior, $ T_{m}\Sigma^c = \widehat{X}^{c}_{m} $ for all $ m \in K $, and it is locally invariant under $ H $ if and only if for all $ m \in K $, $ W^{ss}(m) \cap K = \{m\} $ and $ W^{uu}(m) \cap K = \{m\} $.
\end{thm}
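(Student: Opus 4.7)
My plan is to prove the two directions separately, with the forward implication being a short cone-intersection argument and the reverse implication obtained by chaining together the geometric Whitney extension theorem with the trichotomy result for $C^1$ maps.

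For the \emph{only if} direction, assume a $C^1$ locally invariant submanifold $\Sigma^c \supset K$ with $T_m\Sigma^c = \widehat X^c_m$ on $K$ exists, and suppose for contradiction that there is $m \in K$ and $m' \in W^{ss}(m)\cap K$ with $m' \neq m$. Because $H(K)=K$ and $H$ is invertible, the iterates $m_n := H^n(m)$ and $m_n' := H^n(m')$ lie in $K \subset \Sigma^c$ for every $n \geq 0$, while the dynamical characterization of $W^{ss}$ gives $|m_n' - m_n|\to 0$ geometrically. By compactness of $K$, the $C^1$ graph description of $\Sigma^c$ near $m_n$ is uniform in $n$: writing $m_n' - m_n = x^s_n + x^c_n + x^u_n$ in the splitting at $m_n$, the condition $T_{m_n}\Sigma^c = \widehat X^c_{m_n}$ gives $|x^s_n|+|x^u_n| \leq \varepsilon|x^c_n|$ for all sufficiently large $n$, while $T_{m_n} W^{ss}(m_n) = \widehat X^s_{m_n}$ gives $|x^c_n|+|x^u_n| \leq \varepsilon|x^s_n|$. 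For any $\varepsilon < 1$ the two cone conditions force $x^s_n = x^c_n = x^u_n = 0$, hence $m_n' = m_n$, contradicting injectivity of $H^n$. The statement $W^{uu}(m) \cap K = \{m\}$ follows by running the same argument for $H^{-1}$, whose partial hyperbolicity at $K$ interchanges the roles of $\widehat X^s$ and $\widehat X^u$.

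For the \emph{if} direction, suppose $W^{ss}(m) \cap K = W^{uu}(m) \cap K = \{m\}$ for every $m \in K$. The lemma preceding this theorem then yields $T_m K \subset \widehat X^c_m$ for every $m \in K$. Combined with the continuity of $m \mapsto \widehat\Pi^c_m$ (part of assumption (ii)) and property (*) (assumption (iii)), \autoref{thm:geo} produces a $C^1$ submanifold $\Sigma \supset K$ of $X$ with $T_m\Sigma = \widehat X^c_m$ on $K$, together with $C^1$ projections $\widetilde \Pi^\kappa_m$ on $\Sigma$ that are $\varepsilon$-close to $\widehat \Pi^\kappa_m$ on $K$, such that $(\Sigma, K, \{\widetilde \Pi^c_m\}, \{\Sigma \cap \mathbb{B}_\epsilon(m)\})$ satisfies the geometric assumptions (H1)--(H4) of \autoref{sub:setup}. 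With $\Sigma$ in place I would invoke \autoref{cor:mapT}: the present setting fits into (IV$'$)(a$'$) of \autoref{sub:maps} with $\eta_* = 0$ (since $u = H|_K$ and $H(K) = K$), $\xi_0 = 0$ (since the splitting is $DH$-invariant), and $\xi_2$ arbitrarily small. Conclusion (1) of \autoref{cor:mapT}, which is precisely the tangent-space equality $T_m\Sigma^c = \widehat X^c_m$ in the invariant-splitting case $\eta_* = \xi_0 = 0$, then delivers a locally invariant $C^1$ center manifold $\Sigma^c$ with the required tangency along $K$, and $K$ lies inside the open subset $\Omega_c$ of $\Sigma^c$, hence in its interior.

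The main obstacle is concentrated in the \emph{if} direction, specifically in the step invoking \autoref{thm:geo}: extending the merely continuous field $m \mapsto \widehat X^c_m$ over the compact pre-tangent set $K$ to a genuine $C^1$ submanifold passing through $K$ requires the infinite-dimensional geometric Whitney extension \autoref{thm:whitney}, which in turn depends on property (*) of Jim\'enez-Sevilla and S\'anchez-Gonz\'alez. This is why assumption (iii) enters the statement and why the finite-dimensional analogue of Bonatti--Crovisier does not transfer to Banach spaces without further hypotheses. Once $\Sigma$ and its extended splitting have been produced, the locally invariant center manifold $\Sigma^c$ is supplied by the trichotomy machinery already developed, and the \emph{only if} direction is the elementary cone argument sketched above.
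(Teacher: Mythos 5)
Your proof is correct and follows essentially the route the paper intends: the ``if'' direction chains the preceding lemma ($W^{ss},W^{uu}$ trivially intersecting $K$ implies $T_mK\subset\widehat X^c_m$) with \autoref{thm:geo} and \autoref{cor:mapT} (with $\eta_*=\xi_0=0$), which is exactly why the paper states the theorem without a separate proof, and your ``only if'' cone argument is the standard converse. The only point to make explicit is that the cone estimate along $W^{ss}(m_n)$ should be taken from the uniform Lipschitz-graph description of the local strong stable leaves (uniform over the compact invariant set $K$), rather than from pointwise tangency at $m_n$ alone; with a fixed small leaf constant $\beta_0$ and the $\varepsilon$-cone from $T\Sigma^c|_K=\widehat X^c$ the contradiction goes through unchanged.
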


See \cite{BC16} for more deep dynamical consequences of the above result in the finite-dimensional setting. The invertibility of $ H $ on $\mathbb{B}_{\epsilon'} (K)$ is not necessary; in fact, what we need is that $ H: K \to K $ is invertible. Moreover, the above result also holds for $ H $ being a correspondence with $ C^1 $ generating map, which is left to the readers.

\section{Dependence on external parameters}

Now we study the dependence on external parameters in a special setting. Under the context in \autoref{cor:mapT}, consider $ \mathbb{H} \to C^{1}(\mathbb{B}_{\epsilon'} (K), X), \lambda \mapsto H_{\lambda} $ with $ H_{0} = H $. Suppose $ \lambda \mapsto H_{\lambda} $ is continuous with respect to the $ C^1 $-topology. If $ |\lambda| $ is small, then the conclusions in \autoref{cor:mapT} also hold when $ H $ is replaced by $ H_{\lambda} $. Moreover, we can choose the corresponding local center manifold of $ K $ for $ H_{\lambda} $, denoted by $ \Sigma^c_{\lambda} $, such that $ \Sigma^c_{\lambda} \to \Sigma^c, T\Sigma^c_{\lambda} \to T\Sigma^c $ as $ \lambda \to 0 $; similar for local center-(un)stable manifolds. These assertions can be established using arguments similar to those in \autoref{sub:C1smooth} and \autoref{sub:highersmooth}.

Next, we give a sufficient condition such that $ \lambda \mapsto \Sigma^c_{\lambda} $ is $ C^1 $ with respect to the $ C^0 $-topology: Let $ \eta_* = 0 $ (for $ H_0 = H $) and assume $ (\lambda, x) \mapsto H_{\lambda}(x) $ is $ C^{1,1} $; furthermore, (i) $ X $ is a Hilbert space with $ X^c_{m} $ separable for each $ m \in K $ and $ \mathbb{H} $ is a separable Hilbert, or (ii) $ X^c_{m} $ is finite-dimensional for each $ m \in K $ and $ \mathbb{H} = \mathbb{R}^{n} $.

To establish this result, consider $ \widetilde{H}(x, \lambda) = (H_{\lambda}(x), \lambda): X \times \mathbb{H} \to X \times \mathbb{H} $, $ \widetilde{K} = K \times \mathbb{H}(\epsilon'') $, and $ \widetilde{\Sigma} = \Sigma \times \mathbb{H}(2\epsilon'') $ where $ \Sigma $ is given by \autoref{thm:geo} and $ \epsilon''>0 $ is small; see also \cite[Section 6.3]{BLZ08} and \cite[Remark 3.13]{Che18b}.
Now the assumptions in \autoref{thm:tri0} are satisfied for $ \widetilde{H} $ and $ \widetilde{K} $, and so we obtain, in a neighborhood of $ \widetilde{K} $, a $ C^{1,u} $ locally invariant center manifold $ \widetilde{\Sigma}^{c} $ of $ \widetilde{K} $ for $ \widetilde{H} $ (see also \autoref{lem:udiff}). Due to the special form of $ \widetilde{H} $, we can express $ \widetilde{\Sigma}^{c} $ as $ \widetilde{\Sigma}^{c} = (\Sigma^c_{\lambda}, \lambda) $. Consequently, $ \Sigma^c_{\lambda} $ is the desired $ C^1 $ center manifold of $ K $ for $ H_{\lambda} $, and $ \lambda \mapsto \Sigma^c_{\lambda} $ is $ C^1 $ (in fact $ C^{1,u} $) with respect to the $ C^0 $-topology.

\begin{appendices}
	\setcounter{equation}{0}
	\renewcommand{\theequation}{\Alph{section}.\arabic{equation}}

\chapter{Appendix. Miscellaneous}

\subsection{} \label{app:mapAB}

\begin{proof}[Proof of \autoref{lem:mapAB}]
	This result was in fact already proved in \cite[Lemma 3.13]{Che18a}. We give a sketch for item (a) here. For $ (x, y) \in \widehat{X}^{cs}_m \oplus \widehat{X}^u_m $, write
	\[
	(f_{m}(x, y), g_{m}(x, y)) = (\widehat{\Pi}^{cs}_{u(m)}\{H(m + x + y) - u(m)\}, \widehat{\Pi}^{u}_{u(m)}\{H(m + x + y) - u(m)\}).
	\]
	Let $ C_1 = \sup \{|\widehat{\Pi}^{\kappa}_{m} |: m \in K, \kappa = s, c, u \} $.
	It is not hard to see that if $ \mathfrak{A}(\epsilon') \leq C^{-1}_1 / 4 $ and $ \eta_* \leq \epsilon' C^{-1}_1 / 8 $, then for $ r = \epsilon' /2 $ and $ |x| \leq r $,
	\[
	G_{m}(x, z) \triangleq g^{-1}_{m}(x, \cdot)|_{X^{u}_{u(m)}(r)} ~\text{exists and}~ g_{m}(x, G_{m}(x, z)) = z, ~ z \in X^{u}_{u(m)}(r);
	\]
	for details, see \cite[Lemma 3.13]{Che18a}.
	Let $ F_{m}(x, z) = f_{m}(x, G_{m}(x, z)) $. Now there are $ r_1, r_2 > 0 $ such that (i)
	\begin{enumerate}[(i)] 
		\item  $\widehat{H}_{m} = H(m + \cdot) - u(m) \sim (F_{m}, G_{m}) : \widehat{X}^{cs}_{m}(r) \oplus \widehat{X}^{u}_{m}(r_1) \to \widehat{X}^{cs}_{u(m)}(r_2) \oplus \widehat{X}^{u}_{u(m)}(r)$, 
		
		\item $ |G_{m}(0, 0)| \leq 2C_1 \eta_* $,
		
		\item for $ |x|, |y| \leq r $,
		\[
		\lip G_{m}(\cdot, y) \leq 2(\epsilon + \xi_0), \quad \lip G_{m}(x, \cdot) \leq (\epsilon + \lambda'_{u}(m))(1 - \epsilon)^{-1},
		\]
		where $ \epsilon = C_1\mathfrak{A}(\epsilon') \leq 1/4 $;
	\end{enumerate}
	
	This also yields
	\begin{gather*}
	\lip F_{m}(\cdot, y) \leq \epsilon + \lambda'_{cs}(m) + 4(\epsilon + \xi_0)^2, \quad 
	\lip F_{m}(x, \cdot) \leq 5(\epsilon + \xi_0),
	\end{gather*}
	and $ |F_{m}(0, 0)| \leq (2C_1 + \epsilon + \xi_0) \eta_* $; in addition, for $ \gamma_0 = 2( C_1\mathfrak{A}(\epsilon') + \xi_0) $,
	\[
	|G_{m}(x, 0)| \leq |G_{m}(x, 0) - G_{m}(0, 0)| + |G_{m}(0, 0)| \leq \gamma_0|x| + |G_{m}(0, 0)|.
	\]
	Now by \autoref{lem:a4} \eqref{it:ab1}, there is a small $ 1 > \omega_0 > 0 $ such that if $ C_1\mathfrak{A}(\epsilon') + \xi_0 \leq \omega_0 / 6 $, then the following (1) and (2) hold; note that in this case, $ \widehat{H}_{m} \sim (F_{m}, G_{m}) $ satisfies the (A$ ' $) ($ \omega_0, (\omega_0 + \lambda'_{u}(m))(1 - \omega_0)^{-1} $) (B$ ' $) ($ \omega_0, \omega_0 + \lambda'_{cs}(m) $) condition.
	\begin{enumerate}[(1)]
		\item There are constants $ \alpha > 0 $ (small), $ 0 < c < 1 $ (depending on $ \sup_{m}\lambda'_{cs}(m)\lambda'_{u}(m) $ with $ c \to 0 $ as $ \sup_{m}\lambda'_{cs}(m)\lambda'_{u}(m) \to 0 $), and
		\[
		\lambda_{cs}(m) = (\omega_0 + \lambda'_{cs}(m)) (1 - \alpha\omega_0)^{-1}, ~\lambda_{u}(m) = (\omega_0 + \lambda'_{u}(m))(1 - \omega_0)^{-1} (1 - \alpha\omega_0)^{-1},
		\]
		such that $ \widehat{H}_{m} \sim (F_{m}, G_{m}) $ satisfies the (A) ($ \alpha; c\alpha, \lambda_{u}(m) $) (B) ($ \alpha; c\alpha, \lambda_{cs}(m) $) condition;
		\item except (A3) (a) (iii), we have (A3) (a$ ' $) (b) (i) hold with
		\[
		\eta = 2C_1\eta_*, \quad \varsigma_0 = 1,\quad \gamma_0 = 2( C_1\mathfrak{A}(\epsilon') + \xi_0).
		\]
		Note also that we can take $ \varsigma_0 $ large if $ \sup_{m}\lambda'_{cs}(m)\lambda'_{u}(m) $ is small.
	\end{enumerate}

	Conclusion (a) (i) follows from \autoref{rmk:conexp}. Under the condition in (a) (ii), it suffices to show that $ m \mapsto \lambda'_{cs}(m) $ and $ m \mapsto \lambda'_{u}(m) $ are $ \xi' $-almost uniformly continuous at $ K $ (in the immersed topology), where $ \xi' $ is small if $ \xi_*, \mathfrak{A} (\epsilon') $ are small. Only consider $ m \mapsto \lambda'_{cs}(m) $.
	Note that
	\begin{align*}
	\sup_{m \in K}\|DH(m)\widehat{\Pi}^{cs}_{m}\| & \leq \sup_{m \in K}\|\widehat{\Pi}^{cs}_{u(m)}DH(m)\widehat{\Pi}^{cs}_{m}\| + \sup_{m \in K}\|\widehat{\Pi}^{u}_{u(m)}DH(m)\widehat{\Pi}^{cs}_{m}\| \\
	& \leq \sup_{m \in K}\lambda'_{cs}(m) + \xi_0 < \widetilde{C}_0 < \infty,
	\end{align*}
	and similarly $ \sup_{m \in K}\|\widehat{\Pi}^{cs}_{u(m)}DH(m)\| \leq \widetilde{C}_0 $.
	By the almost uniform continuity of $ u $, $ \widehat{\Pi}^{cs}_{m}$, $ DH(m) $ at $ K $, if $ m_1, m_2 \in U_{m, \gamma}(\epsilon) \cap K $, then
	\begin{align*}
	&~ |\widehat{\Pi}^{cs}_{u(m_1)} DH(m_1)\widehat{\Pi}^{cs}_{m_1} - \widehat{\Pi}^{cs}_{u(m_2)} DH(m_2)\widehat{\Pi}^{cs}_{m_2}| \\
	\leq &~ |\widehat{\Pi}^{cs}_{u(m_1)} - \widehat{\Pi}^{cs}_{u(m_2)}|| DH(m_1)\widehat{\Pi}^{cs}_{m_1}| + | \widehat{\Pi}^{cs}_{u(m_2)} || DH(m_1) -  DH(m_2) | | \widehat{\Pi}^{cs}_{m_1} | \\
	& \quad + |\widehat{\Pi}^{cs}_{u(m_2)} DH(m_2)||\widehat{\Pi}^{cs}_{m_1} - \widehat{\Pi}^{cs}_{m_2}| \\
	\leq &~ \xi_1\xi_* \widetilde{C}_0 + C^2_1 \mathfrak{A}(\epsilon') + \xi_* \widetilde{C}_0 \triangleq \xi' , ~\text{as}~ \epsilon \to 0.
	\end{align*}
	The proof is complete.
\end{proof}

\subsection{} \label{app:GramBasic}
\begin{proof}[Proof of \autoref{lem:gram1}]
	(1). Set $ \Pi_{X_2}(X_1) = \Pi_0 $. If $ X_1 \oplus X_2 $ is closed, then by the closed graph theorem, $ |\Pi_0| < \infty $. We have
	\[
	|\Pi_0 x| = \inf_{y \in X_2} |\Pi_0(x - y)| \leq |\Pi_0| \inf_{y \in X_2} |\Pi_0 x - y|,
	\]
	and so $ \alpha(X_1, X_2) \geq |\Pi_0|^{-1} > 0 $. For any $ \epsilon > 0 $, take $ z \in X_1 \oplus X_2 $ such that $ (1 - \epsilon)|\Pi_0| \leq |\Pi_0z| $ and $ |z| = 1 $. Then
	\[
	\inf_{y \in X_2}\left| \frac{\Pi_0z}{|\Pi_0z|} - y \right| = \frac{1}{|\Pi_0z|} \inf_{y \in X_2} |\Pi_0z - y| \leq \frac{1}{|\Pi_0z|} |z| \leq \frac{1}{(1 - \epsilon)|\Pi_0|}.
	\]
	We see that $ \alpha(X_1, X_2) = |\Pi_0|^{-1} $.

	Assume $ \alpha(X_1, X_2) > 0 $, i.e.,
	\[
	|x_1 - x_2| \geq \alpha(X_1, X_2)|x_1|, ~ \forall x_i \in X_i \tag{$ \ast $}. \label{equ:110}
	\]
	Then we have (i) $ X_1 \cap X_2 = \{ 0 \} $ (since for $ x \in X_1 \cap X_2 $, $ 0 = |x - x| \geq \alpha(X_1, X_2)|x| $); and (ii) $  X_1 \oplus X_2 $ is closed. We show (ii) as follows. If $ x^i_n \in X_i $, $ i = 1, 2 $, $ n = 1,2,\ldots $, such that $ x^1_n + x^2_n \to z \in X $, then $ \{ x^1_n + x^2_n \} $ is a Cauchy sequence. By \eqref{equ:110}, one gets that $ \{ x^1_n \} $ is also a Cauchy sequence, and so is $ \{ x^2_n \} $. Since $ X_i $, $ i = 1,2 $ are closed, we obtain $ z \in X_1 \oplus X_2 $, which yields (ii).

	(2). This is a restatement of the Riesz lemma \cite{Meg98}, and (3) is trivial.

	(4). Let $ x_i \in X_i $, $ i = 2,3 $, where $ |x_3| = 1 $. Then
	\[
	\inf_{x_1 \in \mathbb{S}_{X_1}} |x_1 - x_2| \leq \inf_{x_1 \in \mathbb{S}_{X_1}} (|x_1 - x_3| + |x_2 - x_3|) \leq d(X_3, X_1) + |x_2 - x_3|.
	\]
	So (4) follows.

	(5). Since for $ x_3 \in X_3 $,
	\begin{align*}
	|\Pi_{X_3}(X_2)|_{X_1}| & \triangleq \sup_{x_1 \in \mathbb{S}_{X_1} } | \Pi_{X_3}(X_2) x_1 |  = \sup_{x_1 \in \mathbb{S}_{X_1} } | \Pi_{X_3}(X_2) (x_1 - x_3) | \\
	& \leq |\Pi_{X_3}(X_2)| \sup_{x_1 \in \mathbb{S}_{X_1}} |x_1 - x_3|,
	\end{align*}
	we have $ |\Pi_{X_3}(X_2)|_{X_1}| \leq |\Pi_{X_3}(X_2)| \delta(X_1, X_3) \leq |\Pi_{X_3}(X_2)| d(X_1, X_3) $.

	(6). Since for $ x \in R(\Pi_1) $ with $ |x| = 1 $,
	\[
	d(x, R(\Pi_2)) \leq |\Pi_1 x - \Pi_2 \Pi_1 x| \leq |\Pi_1 - \Pi_2|,
	\]
	this gives $ \delta(R(\Pi_1), R(\Pi_2)) \leq |\Pi_1 - \Pi_2| $ and then $ \widehat{d}(R(\Pi_1), R(\Pi_2)) \leq 2 |\Pi_1 - \Pi_2| $.
	The proof is complete.
\end{proof}

\begin{proof}[Proof of \autoref{lem:gram2}]
	By \autoref{lem:gram1} (4), we have 
	\[
	\alpha(X_3, X_1) \geq \alpha(X_1, X_2) - d(X_3, X_1) > 0;
	\]
	and by \autoref{lem:gram1} (3), we get
	\[
	\delta(X, X_3 \oplus X_2) = \delta(X_1 \oplus X_2, X_3 \oplus X_2) \leq |\Pi_{X_2}(X_1) | \delta (X_1, X_3) < 1.
	\]
	Hence, $ X_3 \oplus X_2 = X $. Finally,
	\begin{align*}
	|\Pi_{X_2}(X_1) - \Pi_{X_2}(X_3)| = & \sup_{|x| = 1} |(\Pi_{X_2}(X_1) - \Pi_{X_2}(X_3))x| \\
	= & \sup_{|x| = 1} |(\Pi_{X_2}(X_1) - \Pi_{X_2}(X_3))\Pi_{X_2}(X_1)x| \\
	= & \sup_{|x| = 1} |(\Pi_{X_1}(X_2) - \Pi_{X_3}(X_2))\Pi_{X_2}(X_1)x| \\
	\leq & \sup_{|x| = 1} |\Pi_{X_3}(X_2)\Pi_{X_2}(X_1)x| \leq |\Pi_{X_2}(X_1)| \cdot |\Pi_{X_3}(X_2)|_{X_1}| \\
	\leq & |\Pi_{X_2}(X_1)| \cdot |\Pi_{X_3}(X_2)| \delta(X_1, X_3) \\
	\leq & |\Pi_{X_2}(X_1)| (1+|\Pi_{X_2}(X_3)|) \delta(X_1, X_3),
	\end{align*}
	and combining with the estimate of $ |\Pi_{X_2}(X_3)| $ given in (2), we get the estimate for $ |\Pi_{X_2}(X_1) - \Pi_{X_2}(X_3)| $. The proof is complete.
\end{proof}

\subsection{} \label{app:separable}

In this appendix, we show the following fact.
\begin{lem}\label{lem:ap0}
	A separable Banach space $ X $ admitting a $ C^{k} \cap C^{0,1} $ bump function fulfills $ (3 + \epsilon) $-uniform property ($ *^{k} $) (see \autoref{def:property*}) for any $ \epsilon > 0 $.
\end{lem}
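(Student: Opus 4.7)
The plan is to combine the existence of a $C^{k}\cap C^{0,1}$ bump function on $X$ with the separability of $X$ in order to build, for every equivalent norm $|\cdot|_\gamma$, $C^{k}\cap C^{0,1}$ partitions of unity of controlled ``combined Lipschitz norm'', and then define the approximant as a weighted sum of sampled values of $f$. A naive estimate yields a factor proportional to (multiplicity of the cover)$\times$(Lipschitz constant of the bump), and the real work is pushing this down to $3+\varepsilon$ uniformly in $\gamma$.

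First I would verify that, for any equivalent norm $|\cdot|_\gamma$ and any $\delta>0$, $X$ admits a ``good'' $C^{k}\cap C^{0,1}$ bump function of unit radius in $|\cdot|_\gamma$ with Lipschitz constant at most $1+\delta$. This is the step where the $C^{k}\cap C^{0,1}$ bump function given for the original norm has to be reshaped via $|\cdot|_\gamma$; without an inner product or smooth norm at our disposal, the reshaping itself requires approximation tools (the H\'ajek--Johanis $C^k$ smoothing machinery), and the $1+\delta$ loss is unavoidable at this stage. Given such bumps of arbitrary radius $r$ and center $x_0$ in the norm $|\cdot|_\gamma$, I would then, using separability, construct a countable locally finite cover $\{B_\gamma(x_n,r)\}$ of $X$ of uniformly bounded multiplicity, together with a subordinate $C^{k}\cap C^{0,1}$ partition of unity $\{\phi_n\}$ whose pointwise ``derivative sum'' satisfies
\[
\sum_n |D_\gamma\phi_n(x)| \leq \frac{M(1+\delta)}{r}, \quad \forall x \in X,
\]
for a constant $M$ depending only on the multiplicity of the cover.

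Next I would define $g(x)=\sum_n \phi_n(x)\,f(x_n)$. Since $\sum_n \phi_n\equiv 1$, one has the identity
\[
Dg(x) = \sum_n D\phi_n(x)\,\bigl(f(x_n)-f(x)\bigr),
\]
and only finitely many $n$ with $\phi_n(x)\neq 0$ contribute; for these, $|x-x_n|_\gamma < r$, so $|f(x_n)-f(x)| \leq \lip f \cdot r$. Combined with the partition-of-unity derivative bound this gives $\lip_\gamma g \leq M(1+\delta)\lip f$, while the sup-norm error $\|f-g\|_\infty \leq \lip f\cdot r$ can be made smaller than the prescribed $\varepsilon$ by choosing $r$ small. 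It remains to show that the construction can be arranged with $M\leq 3$.

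The hard part is precisely this last point: producing the partition of unity with $M\leq 3$ uniformly in $\gamma$. The natural approach is to decouple the ``thickness'' of the cover (how many balls $B_\gamma(x_n,r)$ a given point lies in) from the Lipschitz growth of the partition, and to exploit separability by realizing the cover of $X$ as an increasing sequence of finite coverings of bounded subsets. A careful arrangement of centers along a countable dense set, together with a ``telescoping'' rewriting of $\sum_n D\phi_n(x)\,(f(x_n)-f(x))$ that replaces the multiplicity constant by the effective number of \emph{overlapping gradients} (rather than the covering multiplicity), appears to allow $M=3+\delta'$. The gap between $3+\varepsilon$ and the conjectured optimal $1+\varepsilon$ reflects the fact that no inner-product / orthogonality structure is available, and that the bump-function reshaping in the first step inevitably forces at least a factor arising from the triangle inequality applied twice.
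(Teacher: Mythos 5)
There is a genuine gap, and it sits exactly where you locate it yourself: the claim that the partition of unity can be arranged with $\sum_n |D_\gamma\phi_n(x)|\leq (3+\delta')/r$ uniformly in the equivalent norm is never substantiated, and it is the entire content of the lemma. In an infinite-dimensional Banach space the standard constructions of $C^{k}\cap C^{0,1}$ partitions of unity subordinate to a cover by $r$-balls (iterated products $\psi_{n}(1-h_1)\cdots(1-h_{n-1})$ over a Rudin-type refinement) do not come with a pointwise derivative-sum bound of the form $M/r$ with a small universal $M$; the local Lipschitz constants degrade with the combinatorics of the cover, and no ``telescoping rewriting'' of $\sum_n D\phi_n(x)(f(x_n)-f(x))$ that replaces the covering multiplicity by the number $3$ is known or proved here. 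Your first step is also problematic: a $C^{k}$ bump of unit radius with Lipschitz constant $1+\delta$ \emph{in an arbitrary equivalent norm} $|\cdot|_\gamma$ amounts to a $C^k$, $(1+\delta)$-Lipschitz approximation of the function $x\mapsto |x|_\gamma$ near the unit sphere, i.e.\ essentially a $(1+\delta)$-version of the very property $(*^k)$ you are trying to establish; appealing to the H\'ajek--Johanis smoothing machinery at that point is circular, since that machinery produces precisely the unspecified constant $C$ whose value you are trying to pin down at $3+\epsilon$.

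The paper's proof avoids partitions of unity entirely and gets the constant from a different mechanism. Using separability and the fact that $X$ then admits an equivalent $C^1$ norm, it builds functions $\psi_j$ from smoothed distance functions and smooth minima, and assembles them into a map $\Phi:X\to c_0(\mathbb{Z})$, $\Phi(x)_{\rho(n,j)}=t^n\psi_j(x/t^n)$, which is bi-Lipschitz onto its image with $C^1$ coordinates and with $\lip\Phi\cdot\lip\Phi^{-1}$ arbitrarily close to $3$ (the sup-norm structure of $c_0$ is what absorbs the overlap issue that defeats the partition-of-unity estimate). The transfer theorem of H\'ajek and Johanis \cite{HJ10} then yields, for any Lipschitz $f:X\to\mathbb{R}$, a $C^1$ Lipschitz approximant with constant at most $\lip\Phi\cdot\lip\Phi^{-1}$; uniformity over all equivalent norms comes from $(1+\epsilon)$-approximating any equivalent norm by a $C^1$ norm (possible since $X^*$ is separable), and the $C^k$ case follows by a second approximation of $g$ together with $Dg$. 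If you want to salvage your approach, you would have to either prove the derivative-sum bound with constant $3$ for a concrete partition of unity (which I do not believe is available), or reroute through a coordinatewise-smooth bi-Lipschitz embedding into $c_0$ as in \cite{HJ10,HJ14}, at which point you have reproduced the paper's argument.
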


Such a space $ X $ is introduced in order to study $ C^k $-smooth approximations of Lipschitz mappings preserving the Lipschitz condition (see \cite[Chapter 7]{HJ14}), i.e., there is a constant $ C \geq 1 $ such that for any Lipschitz function $ f: X \to \mathbb{R} $ and any $ \varepsilon > 0 $, there is a $ C^{k} $-smooth and Lipschitz function $ g: X \to \mathbb{R} $ such that
\[\tag{$ \clubsuit $} \label{equ:up}
|f(x) - g(x)| < \varepsilon, ~\text{and}~ \lip g \leq C \lip f.
\]
This result was established in works such as \cite{AFK10, HJ10}, where it was demonstrated that such a constant $ C $ exists, though it may depend on the particular Banach space $ X $.
In \cite[Remark 3.2 (3)]{JS11}, the authors noticed that $ C $ can be universal and less than $ 602 $. We will show that the construction given in \cite{AFK10, HJ10} (see also \cite{HJ14}) in fact implies that $ C \leq 3 + \epsilon $ (for any $ \epsilon > 0 $) by a careful examination of each step in their proofs. The details are given in the following for the convenience of the readers.

\begin{proof}[Proof of \autoref{lem:ap0}]
	(Step 1). First consider the case $ k = 1 $. Note that a well known result (see \cite{HJ14}) says that if $ X $ is separable, then $ X $ has a $ C^1 $ bump function if and only if $ X $ has an equivalent $ C^1 $ norm (if and only if $ X^* $ is separable). Take a $ C^1 $ norm $ |\cdot| $ of $ X $. Let $ 0 < \delta < r < 1 $ and $ 0 < \eta < \sigma \leq 1 $. Choose $ \theta_i \in C^{\infty}(\mathbb{R}_+, [0,1]) $, $ i = 1,2,3 $, such that $ \lip \theta_1 \leq \frac{\sigma}{1 - r} $, $ \lip \theta_2 \leq \frac{\sigma}{r - \delta} $, $ \lip \theta_3 \leq \frac{1}{\sigma - \eta} $ and
	\[
	\theta_1(t) = \begin{cases}
	\sigma, ~t \leq r,\\
	0,~t \geq 1,
	\end{cases}
	\theta_2(t) = \begin{cases}
	0, ~t \leq \delta,\\
	\sigma,~t \geq r,
	\end{cases}
	\theta_3(t) = \begin{cases}
	0, ~t \leq \eta,\\
	1,~t \geq \sigma.
	\end{cases}
	\]
	Since $ X $ is separable, we can take $ \{ x_j \}_{j \geq 1} $ such that $ \overline{\{ x_j \}_{j \geq 1}} = X $; without loss of generality, assume $ x_1 = 0 $. Set
	\[
	f_j(x) = \theta_1(|x - x_j|), ~g_j(x) = \theta_2(|x - x_j|), ~j \geq 1.
	\]
	Furthermore, take $ \varphi_{j} \in C^{\infty}(\mathbb{R}^j) $ such that $ \lip \varphi_{j} \leq 1 $ (with respect to the max norm) and
	\[
	\min\{ \omega_1, \ldots, \omega_j \} \leq \varphi_{j}(\omega) \leq \min\{ \omega_1, \ldots, \omega_j \} + \eta, \quad \omega = (\omega_1, \ldots, \omega_j) \in \mathbb{R}^j.
	\]
	Define
	\[
	\psi_j(x) = \theta_3( \varphi_{j}(g_1(x), \ldots, g_{j-1}(x), f_{j}(x)) ), \quad j = 1,2,\ldots.
	\]
	It is straightforward to verify the following properties about $ \{ \psi_j \} $, where $ \mathbb{B}_{\varrho} (x) = \{ y \in X: |y - x| < \varrho \} $.
	\begin{enumerate}[(1)]
		\item For any $ x \in X $, if $ x \in \mathbb{B}_{\delta} (x_{k}) $, then $ \psi_{j}(x) = 0 $ for all $ j > k $.
		\item If $ n $ is the smallest index such that $ x \in \mathbb{B}_{r} (x_{n}) $, then $ \psi_{n}(x) = 1 $.
		\item If $ |x - x_j| \geq 1 $, then $ \psi_{j} (x) = 0 $.
		\item $ \lip \psi_j \leq \frac{1}{\sigma - \eta} \max\{ \frac{\sigma}{1-r}, \frac{\sigma}{r-\delta} \} $ (with respect to $ |\cdot| $), $ \psi_{j} \in C^1(X) $, and $ 0 \leq \psi_{j} \leq 1 $ for all $ j \geq 1 $.
	\end{enumerate}

	(Step 2). Choose any bijection $ \rho: \mathbb{Z} \times \{ j \in \mathbb{N}: j\geq 2 \} \to \mathbb{Z} $. Let $ \beta > 1 + r $ be arbitrary and $ t = t_{\beta} = \frac{\beta}{1 + r} > 1 $. Note that $ \bigcup_{m \in \mathbb{Z}} [ (1+r)t^m, \beta t^m ] = \mathbb{R}_+ \setminus \{ 0 \} $. Define $ \Phi: X \to \mathbb{R}^{\mathbb{Z}} $ by $ \Phi(x)_{\rho(n,j)} = t^{n} \psi_{j}(\frac{x}{t^n}) $. Then we have the following.
	\begin{enumerate}[(a)]
		\item $ \Phi(X) \subset c_0(\mathbb{Z}) $. That is, $ t^{n} \psi_{j}(\frac{x}{t^n}) \to 0 $ as $ |n| \to \infty $ or $ j \to \infty $. Note that 
		\[
		\sup_{j \geq 2}|t^{n} \psi_{j}(\frac{x}{t^n})| \leq t^{n} \to 0, \quad \text{as} \quad n \to -\infty.
		\]
		Fix $ x $ and choose $ N = N(x) $ such that $ |x| \leq t^{n} \delta $ for all $ n \geq N $. Then $ \frac{x}{t^{n}} \in \mathbb{B}_{\delta} (x_1) $ and so $ t^{n} \psi_{j}(\frac{x}{t^n}) = 0 $ if $ j \geq 2 $ by (1). Finally, note that $ \{ j \geq 2: \psi_{j}(\frac{x}{t^n}) \neq 0: n = 0,1,\ldots,N \} $ is finite by (1).

		\item $ \lip \Phi \leq \frac{1}{\sigma - \eta} \max\{ \frac{\sigma}{1-r}, \frac{\sigma}{r-\delta} \} $ (with respect to the usual sup norm in $ c_0(\mathbb{Z}) $).

		\item $ \Phi: X \to \Phi(X) $ is bi-Lipschitz with $ \lip \Phi^{-1} \leq \beta $. Let $ x, y \in X $ and $ x \neq y $. Choose $ m \in \mathbb{Z} $ such that $ (1+r) t^m \leq |x-y| \leq \beta t^m $. Without loss of generality, $ |\frac{x}{t^m}| \geq \frac{1+r}{2} $ ($ > r $). Let $ k $ be the smallest one such that $ \frac{x}{t^m} \in \mathbb{B}_{r}(x_{k}) $. Then $ k \geq 2 $. By (2), $ \psi_{k}(\frac{x}{t^m}) = 1 $. Since
		\[
		|\frac{y}{t^m} - \frac{x_{k}}{t^m}| \geq |\frac{y}{t^m} - \frac{x}{t^m}| - |\frac{x}{t^m} - \frac{x_{k}}{t^m}| \geq 1+r - r \geq 1,
		\]
		by (3), one gets $ \psi_{k}(\frac{y}{t^m}) = 0 $. Thus,
		\[
		|\Phi(x) - \Phi(y)|_{\infty} \geq |t^m\psi_{k}(\frac{x}{t^m}) - t^m\psi_{k}(\frac{y}{t^m})| = t^m > \frac{1}{\beta} |x - y|.
		\]
	\end{enumerate}

	(Step 3). By \cite[Theorem 7]{HJ10} (see also \cite[Chapter 7, Theorem 79]{HJ14}), we know that for any Lipschitz function $ f: X \to \mathbb{R} $ and $ \varepsilon > 0 $, there is $ g \in C^{1}(X) $ such that \eqref{equ:up} holds with
	\[
	C \leq \lip \Phi \lip \Phi^{-1} \leq \frac{1}{\sigma - \eta} \max\left\{ \frac{\sigma}{1-r}, \frac{\sigma}{r-\delta} \right\} \beta.
	\]
	Choose $ \sigma = 1 $, $ r = 1/2 $, $ \eta, \delta \to 0 $ and $ \beta \to 1 + r $; we know that $ C $ can be taken sufficiently close to $ 3 $, independent of the choice of $ C^1 $ norms in $ X $.

	(Step 4). Let $ |\cdot|_{\sim} $ be any equivalent norm in $ X $. By \cite[Chapter 7, Theorem 103]{HJ14}, for any $ \epsilon > 0 $, we can take a $ C^1 $ norm $ |\cdot| $ in $ X $ such that $ (1+\epsilon)^{-1} |x| \leq |x|_{\sim} \leq (1+\epsilon) |x| $ for all $ |x|_{\sim} \leq 1 $. This shows that \eqref{equ:up} also holds for any $ C > 3 $ sufficiently close to $ 3 $.

	(Step 5). Consider the general case $ k $. First, for any Lipschitz function $ f: X \to \mathbb{R} $ and $ \varepsilon > 0 $, choose $ g \in C^{1}(X) $ such that \eqref{equ:up} holds. By \cite[Corollary 19]{HJ10}, there is $ g' \in C^{k}(X) $ such that $ |g(x) - g'(x)| \leq \varepsilon $ and $ |Dg(x) - Dg'(x)| \leq \varepsilon $. So we have $ |f(x) - g'(x)| \leq 2 \varepsilon $ and $ \lip g' \leq (C + \varepsilon) \lip f $. That is, the constant $ C $ can be made arbitrarily close to $ 3 $. The proof is complete.
\end{proof}

The main constructions in (Step 1) and (Step 2) come from \cite[Chapter 7, Theorem 64 (due to R. Fry), Theorem 63 (due to P. H\'{a}jek and M. Johanis)]{HJ14}, where a slight difference of the construction $ \Phi $ is that we do not use bump functions. (Step 4) and (Step 5) follow from \cite[Remark 3.2 (3)]{JS11}.

\chapter{Appendix. Approximation by $ C^{1} \cap C^{0,1} $ maps between two manifolds} \label{app:general}

Here we give a sketch of the proof of \autoref{thm:general} due to \cite{JS11}.

\begin{proof}[Proof of \autoref{thm:general}]
	Let 
	\[
	U_{m} (\varrho) = \{ m' ~\text{in the component of $ M $ containing}~ m: d_{F}(m', m) < \varrho\}. 
	\]
	For any two sets $ A, B $ in a metric space, let 
	\[
	d_{H} (A, B) = \inf \{ d(x, y): x \in A, y \in B \}.
	\]
	
	(I). Since $ M $ is a $ C^k $ Finsler manifold in the sense of Palais, for any $ K > 1 $ and every $ m \in M $, there is a $ C^k $ local chart $ \varphi_{m}: U_{m} (r_{m}) \to T_{m} M $ such that $ D\varphi_{m}(m) = \id $ and for every $ m' \in U_{m} (r_{m}) $
	\[
	|D\varphi_{m}(m')| \leq K, ~|D\varphi^{-1}_{m}(\varphi_{m}(m'))| \leq K,
	\]
	where $ |D\varphi_{m}(m')| = \sup\{ |D\varphi_{m}(m')x|_{m}: |x|_{m'} \leq 1 \} $ and $ |\cdot|_{m} $ is the norm of $ T_{m} M $ induced by the Finsler structure of $ TM $; in particular (if $ r_{m} $ is smaller),
	\[
	K^{-1} d_{F}(m', m'') \leq |\varphi_{m}(m') - \varphi_{m}(m'')| \leq K d_{F}(m', m''), \quad m', m'' \in U_{m} (r_{m}).
	\]
	Here, when $ m \in \partial M \neq \emptyset $, see \cite[Definition 7.2.2]{AMR88} for the meaning of $ C^{k} $ local chart $ \varphi_{m} $.

	(II). ($ C^{k} \cap C^{0,1} $ partition of unity) Now there is $ \{ \phi_{i, m} \}_{(i, m)\in \mathbb{N} \times M} $ such that
	\begin{enumerate}[(1)]
		\item $ \phi_{j} \in C^{k} \cap C^{0,1} $, $ j \in \mathbb{N} \times M $;
		\item $ \sum_{j \in \mathbb{N} \times M} \phi_{j}(m) = 1 $ for $ \forall m \in M $;
		\item $ \{ \mathrm{supp}(\phi_{j}) \}_{j \in \mathbb{N} \times M} $ is locally finite subordinated to $ \{ U_{m}(r_{m}) \}_{m \in M} $. Here $ \mathrm{supp}(\phi_{j}) $ denotes the closure of $ \{ m' \in M: \phi_{j}(m') \neq 0 \} $.
	\end{enumerate}

	The construction of $ \{ \phi_{i, m} \}_{(i, m)\in \mathbb{N} \times M} $ depends on very good open refinements of 
	\[
	\{U_{m} (r_{m})\}_{m \in M}
	\]
	due to M. E. Rudin (see also \cite[Lemma 8 in Chapter 7]{HJ14}). Usually, the general open refinement cannot be used to obtain $ \phi_{j} \in C^{0,1} $, though $ \phi_{j} $ can be locally Lipschitz (see e.g. \cite[Theorem 1.6]{Pal66}). The following property (ii) is the key. Note that each component of $ M $ is a metric space. So there are open refinements $ \{ V_{i, m} \}_{(i, m)\in \mathbb{N} \times M} $ and $ \{ W_{i, m} \}_{(i, m)\in \mathbb{N} \times M} $ such that
	\begin{enumerate}[(i)]
		\item $ V_{i, m} \subset W_{i, m} \subset U_{m}(r_{m}) $;
		\item
		$ d_{H} (V_{i, m}, W^{\complement}_{i, m}) \geq C_{i, m} $ for all $ i \in \mathbb{N} $ and $ m \in M $ where $ C_{i, m} > 0 $
		\footnote{In \cite[Lemma 8 in Chapter 7]{HJ14} (or \cite[Lemma 3.3]{HJ14}), $ C_{i, m} = 1 / 2^{i+1} $ for all $ i $, but this is not always true; for instance if $ r_{m} < 1/4 $, then there is no $ V_{0,m}, W_{0,m} \subset U_{m}(r_{m}) $ such that $ d_{H} (V_{0, m}, W^{\complement}_{0, m}) \geq 1/2 $. However, from the proof given in \cite[Lemma 8 in Chapter 7]{HJ14}, for each $ m \in M $, there is $ i_{m} \in \mathbb{N} $ such that $ C_{i, m} = 1 / 2^{i+1} $ if $ i \geq i_{m} $. Now, we can take $ V_{i + i_{m},m} $ and $  W_{i + i_{m},m} $ instead of $ V_{i,m} $ and $ W_{i,m} $; what we really need is that $ C_{i, m} > 0 $.},
		$ W^{\complement}_{i, m} = \mathcal{M}_{m} \setminus W_{i, m} $ and $ \mathcal{M}_{m} $ is the component of $ M $ containing $ m $;
		\item $ W_{i, m} \cap W_{i, m'} = \emptyset $ if $ m \neq m' $;
		\item for every $ m_0 \in M $, there is $ U_{m_0}(s_{m_0}) $ ($ s_{m_0} > 0 $) such that for at most one $ (i,m) \in \mathbb{N} \times M $ we have $ W_{i,m} \cap U_{m_0}(s_{m_0}) \neq \emptyset $.
	\end{enumerate}

	For each $ i, m $, since $ d_{H} (V_{i, m}, W^{\complement}_{i, m}) \geq C_{i, m} > 0 $ and $ X_{m} $ satisfies $ C_1 $-property ($ *^k $), there is a $ C^k $ function $ \psi_{i, m}: M \to [0,1] $ such that $ \psi_{i, m}(V_{i, m}) = 1 $, $ \psi_{i, m}(W^{\complement}_{i, m}) = 0 $ and $ \lip \psi_{i, m}(\cdot) \leq \widetilde{C}_{i, m} $ for some constant $ \widetilde{C}_{i, m} > 0 $; see also \cite[Lemma 3.6]{JS11}. Define
	\begin{gather*}
	h_{i} (m') = \sum_{m \in M} \psi_{i, m}(m') ~(\leq 1), ~\text{which is well defined by (iii) (iv)},\\
	\phi_{1, m} = \psi_{1, m}, \quad \phi_{n, m} = \psi_{n, m} (1 - h_1) \cdots (1 - h_{n-1}), ~n \geq 2.
	\end{gather*}
	Then $ \{ \phi_{i, m} \}_{(i, m)\in \mathbb{N} \times M} $ satisfies conditions (1)--(3).

	(III). For any given $ \varepsilon > 0 $, since $ X_{m} $ satisfies $ C_1 $-property ($ *^k $), there is a $ C^{k} $ function $ g_{i, m} : X_{m} \to \mathbb{R} $ such that
	\[
	|f(\varphi^{-1}_{m}(x)) - g_{i, m}(x)| \leq \frac{\varepsilon \min\{ 1, \lip f \}}{2^{i+2} (\lip\phi_{i, m} + 1)}, ~ x \in \varphi_{m}(U_{m}(r_{m})),
	\]
	and $ \lip g_{i, m}(\cdot) \leq C_1 \lip f(\varphi^{-1}_{m}(\cdot)) \leq C_1 K \lip f $. Define
	\[
	g(m) = \sum_{i \in \mathbb{N}, m' \in M} \phi_{i, m'} (m) g_{i, m'} (\varphi_{m'}(m)), ~ m \in M.
	\]
	Now one can see that $ g \in C^{k} $,
	\[
	|g(m) - f(m) | = \left|\sum_{i \in \mathbb{N}, m' \in M} \phi_{i, m'} (m) \left\{ g_{i, m'} (\varphi_{m'}(m)) - f( \varphi^{-1}_{m'}( \varphi_{m'} (m) ) ) \right\}\right| \leq \varepsilon,
	\]
	and
	\begin{multline*}
	\lip g \leq \sup_{m \in M} |Dg(m)| = \sup_{m \in M} \left| \sum_{i \in \mathbb{N}, m' \in M} \left\{  \phi_{i, m'} (m) D g_{i, m'} (\varphi_{m'}(m)) \right. \right. \\
	\left. \left.  + ~  D\phi_{i, m'} (m) \left( g_{i, m'} (\varphi_{m'}(m)) - f( \varphi^{-1}_{m'}( \varphi_{m'} (m) ) ) \right) \right\} \right| \leq (C_1K^2 + \varepsilon) \lip f.
	\end{multline*}
\end{proof}

Below, let us give a vector-valued version of \autoref{thm:general}.

\begin{defi}
	For two Banach spaces $ X, Z $, we say the pair $ (X, Z) $ satisfies \emph{$ C_1 $-property (A)} if for every $ \varepsilon>0 $ and every Lipschitz map $ f: X \to Z $, there is a $ C^1 $ and Lipschitz map $ g: X \to Z $ such that $ |f(x) - g(x)| < \varepsilon $ and $ \lip g \leq C_1 \lip f $. (See also \cite{JS13}.)
	We say $ X $ satisfies \emph{strong $ C_1 $-property (A)} if for every Banach space $ Z $, $ (X, Z) $ satisfies \emph{$ C_1 $-property (A)}.
\end{defi}

It is easy to see that if $ (X, Z) $ satisfies $ C_1 $-property (A), then $ X $ satisfies $ C_1 $-property ($ *^1 $) in the sense of \autoref{def:property*}.

\begin{exa}\label{exa:propertyA}
	\begin{enumerate}[(a)]
		\item If $ X $ is finite-dimensional, then $ X $ satisfies strong $ 1 $-property (A).

		\item Let $ X $ be a Banach space with an unconditional Schauder basis $ (x_{n}) $ which admits a $ C^{1} $ and Lipschitz bump function $ \varphi: X \to [0,1] $ such that $ \varphi|_{\mathbb{B}_{r}} = 1 $, $ \varphi|_{\mathbb{B}^{\complement}_1} = 0 $ where $ 0 < r < 1 $. Then there is a constant $ C > 0 $ only depending on the \emph{unconditional constant} of $ (x_{n}) $ (see e.g. \cite[Definition 4.2.28]{Meg98}) and $ \lip \varphi $ such that for any Banach space $ Z $, $ (X, Z) $ satisfies $ C_1 $-property (A) and so $ X $ admits strong $ C_1 $-property (A). In particular, if $ X $ is a separable Hilbert space, then $ C_1 $ can be taken as $ 2 + \varepsilon $ for any small $ \varepsilon > 0 $.
	\end{enumerate}

\end{exa}

\begin{proof}
	(a) This can be rapidly seen by standard convolution techniques.

	(b) The result was first announced by R. Fry (see also \cite[Corollary 87 in Chapter 7]{HJ14} or \cite[Theorem H]{HJ10}). That the constant $ C > 0 $ depends only on the unconditional constant of $ (x_{n}) $ and $ \lip \varphi $ follows from the details in the proofs of \cite[Lemma 83, Theorem 84 in Chapter 7]{HJ14} (due to N. Moulis).
	More precisely, write $ K_{b} $, $ K_{ub} $, and $ K_{u} $ as the basis constant, unconditional basis constant, and unconditional constant of $ (x_{n}) $, respectively; see \cite[Definition 4.2.28]{Meg98}. Define the bounded multiplier unconditional $ (x_{n}) $ norm of $ X $ as
	\[
	\left|\sum \alpha_{n} x_{n}\right|_{bmu} = \sup\left\{\left|\sum \beta_{n} \alpha_{n} x_{n}\right|: (\beta_{n}) \in l_{\infty}, \sup_{n}|\beta_{n}| \leq 1 \right\}.
	\]
	Then by the definition of the unconditional constant $ K_{u} $ of $ (x_{n}) $, we have $ |x| \leq |x|_{bmu} \leq K_{u}|x| $; in addition, if $ X $ is renormed with $ |\cdot|_{bmu} $, then $ K_{u} = K_{ub} = K_{b} = 1 $ (see \cite[Proposition 4.2.31]{Meg98}).

	Let $ X $ be equipped with $ |\cdot|_{bmu} $. Now, in the proof of \cite[Lemma 83 in Chapter 7]{HJ14}, we have $ K = \lip \Psi \leq 1 + \lip \varphi $ (in the new norm $ |\cdot|_{bmu} $, the Lipschitz constant of $ \Psi $ can be calculated more precisely); also, in \cite[Theorem 84 in Chapter 7]{HJ14}, the constant $ C $ can be taken as $ K $. Since \cite[Theorem H]{HJ10} is a consequence of Lemma 69 and Theorem 84 in \cite[Chapter 7]{HJ14}, we see $ C_1 $ can be taken as $ K(1 + \varepsilon) $ for the norm $ |\cdot|_{bmu} $ and so $ K_{u}(1 + \lip\varphi)(1 + \varepsilon) $ for the original norm $ |\cdot| $ of $ X $, where $ \varepsilon > 0 $ is any small constant.
\end{proof}

We refer readers to \cite{JS13, HJ10} and \cite[Chapter 7]{HJ14} for more details about property (A). The approximation of Lipschitz maps by smooth and Lipschitz maps between two general Banach spaces remains an open problem, and related results are much less developed; see also \cite[Problem 110, p. 464]{HJ14}.

Let $ N $ be a $ C^1 $ Banach manifold with a (compatible) metric $ d $. Assume there is a $ C^1 \cap C^{0,1} $ embedding $ j : N \to Y $, where $ Y $ is a Banach space, and a $ C^1 \cap C^{0,1} $ retraction $ r: \mathbb{B}_{\rho}(N) \subset Y \to N $ for some $ \rho > 0 $ (i.e., $ r|_{N} = \id $). See also \autoref{sub:setup} and \autoref{sub:granTwo} for some examples. Let $ M $ be a $ C^1 $ Finsler manifold (possibly with boundary) in the sense of Palais (cf. \cite{Pal66}) endowed with the Finsler metric in each of its components.

\begin{thm}\label{thm:LipManifold}
	Let $ M, N $ be given as above. Let $ f: M \to N $ be $ C^{0,1} $. Assume for any $ m \in M $, $ T_{m}M $ satisfies strong $ C_1 $-property (A) (see \autoref{exa:propertyA}). Then there is a constant $ \widetilde{C}' > 0 $ such that for any $ \varepsilon > 0 $, there is a $ C^1 \cap C^{0,1} $ function $ g: M \to N $ such that $ d(f(x), g(x)) < \varepsilon $ for all $ x \in M $ and $ \lip g \leq \widetilde{C}'(1 + \varepsilon) \lip f $; here $ \widetilde{C}' = C_1 \lip r \lip j $.
	In addition, if $ f $ is $ C^1 $ at $ V $ with $ V $ satisfying $ d_{H}(V, M \setminus V_1) > 0 $ for some $ V_1 $ such that $ V \subset V_1 \subset M $, then $ g $ can be chosen such that $ g|_{V} = f|_{V} $.
\end{thm}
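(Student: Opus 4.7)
The plan is to reduce to the vector-valued approximation theorem via the given embedding and retraction. First I would form the $C^{0,1}$ map $\tilde f = j \circ f : M \to Y$, which satisfies $\lip \tilde f \leq \lip j \cdot \lip f$. Next I would invoke a vector-valued analog of Theorem \ref{thm:general}: since each tangent space $T_m M$ has strong $C_1$-property (A), the same proof of Theorem \ref{thm:general} in Appendix B goes through with the local functions $g_{i,m}$ at Step (III) produced by property (A) rather than property ($*^k$). This yields, for any prescribed continuous $\delta:M \to (0,\infty)$ and any small $\eta > 0$, a $C^1 \cap C^{0,1}$ map $\tilde g : M \to Y$ with $|\tilde g(m) - \tilde f(m)| < \delta(m)$ for every $m$ and $\lip \tilde g \leq C_1(1+\eta)\lip \tilde f$.

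Choosing $\delta$ small enough that $\tilde g(M) \subset \mathbb{B}_\rho(N)$ and that $\lip r \cdot \sup_m \delta(m) < \varepsilon$, I would then set $g \triangleq r \circ \tilde g : M \to N$. This $g$ is $C^1 \cap C^{0,1}$ as a composition of $C^1 \cap C^{0,1}$ maps. The approximation bound follows from
\[
d(g(m), f(m)) = d(r(\tilde g(m)), r(j(f(m)))) \leq \lip r \cdot |\tilde g(m) - j(f(m))| < \lip r \cdot \delta(m) < \varepsilon,
\]
and the Lipschitz estimate from $\lip g \leq \lip r \cdot \lip \tilde g \leq \lip r \cdot C_1(1+\eta) \lip j \cdot \lip f = \widetilde C'(1+\eta)\lip f$, giving the stated bound upon choosing $\eta \leq \varepsilon$.

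For the last assertion, assume $f$ is $C^1$ on an open $V_1 \supset V$ with $d_H(V, M\setminus V_1) > 0$. Using that $T_mM$ satisfies $C_1$-property $(*^1)$ (a consequence of strong $C_1$-property (A) applied to $Z = \mathbb{R}$) together with Theorem \ref{thm:general}, I would construct a $C^1 \cap C^{0,1}$ bump $\theta : M \to [0,1]$ with $\theta|_{V} = 0$, $\theta|_{M\setminus V_1} = 1$, and bounded $\lip \theta$. Define the convex combination $G \triangleq (1-\theta) \tilde f + \theta \tilde g : M \to Y$. On $V_1$ both $\tilde f$ and $\tilde g$ are $C^1$ (and $\theta$ is $C^1$), so $G$ is $C^1$ on $V_1$; outside $V_1$, $G = \tilde g$ is $C^1$; hence $G$ is $C^1\cap C^{0,1}$ on $M$. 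Moreover $G|_V = \tilde f|_V$, and at each $m$ one has
\[
|DG(m)| \leq (1-\theta(m))|D\tilde f(m)| + \theta(m)|D\tilde g(m)| + \lip \theta \cdot |\tilde g(m) - \tilde f(m)| \leq C_1(1+\eta)\lip \tilde f + \lip \theta \cdot \delta(m),
\]
since $\lip \tilde f \leq C_1 \lip \tilde f$. Setting $g = r \circ G$ and choosing $\delta$ sufficiently small (relative to $\lip\theta$) gives $g|_V = r \circ j \circ f|_V = f|_V$ together with the desired Lipschitz and approximation bounds.

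The main technical obstacle is the extension of Theorem \ref{thm:general} from scalar to vector-valued targets — i.e.\ verifying that Rudin's refined partition of unity construction together with the local approximations $g_{i,m}$ (now valued in $Y$) still produces a $C^1\cap C^{0,1}$ global map with the sharp Lipschitz constant $C_1(1+\eta)\lip \tilde f$. This is where strong $C_1$-property (A) (as opposed to property $(*^1)$) is essential, and the standard trick of controlling $|Dg(m)|$ through $\phi_{i,m'}(m) Dg_{i,m'} + D\phi_{i,m'}(m)(g_{i,m'}-\tilde f)$ as in the proof of Theorem \ref{thm:general} carries the correct constant. Once this vector-valued version is in hand, the rest is the straightforward ``embed, smooth, retract'' argument outlined above.
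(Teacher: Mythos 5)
Your proposal is correct and follows essentially the same route as the paper: embed via $j$, apply the vector-valued analogue of Theorem \ref{thm:general} (using strong $C_1$-property (A) of the tangent spaces to produce the local approximations), then retract via $r$, with the same bookkeeping giving $\widetilde{C}' = C_1\lip r\lip j$. Your bump-function argument for the final assertion is exactly the construction the paper leaves implicit with its remark to ``use a suitable bump function,'' so nothing essential differs.
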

\begin{proof}
	Let $ j: N \to Y $ be the $ C^1 \cap C^{0,1} $ embedding and $ r: \mathbb{B}_{\rho}(N) \subset Y \to N $ the $ C^1 \cap C^{0,1} $ retraction.

	First, note that for any given $ \varepsilon_1 > 0 $ and a Lipschitz map $ f_0: M \to Y $, the same argument as in \autoref{thm:general} shows that there is a $ C^1 \cap C^{0,1} $ map $ g_0: M \to Y $ such that $ |f_0(x) - g_0(x)| < \varepsilon_1 $ for all $ x \in M $ and $ \lip g_0 \leq C_1(1+\varepsilon_1) \lip f_0 $.

	In particular, for $ f_0 = j \circ f: M \to Y $ and $ 0 < \varepsilon_1 < \min \{ \varepsilon, (\lip r)^{-1} \varepsilon, \rho \} $, we have such a map $ g_0: M \to Y $. Note that $ g_0(M) \subset \mathbb{B}_{\rho}(N) $ as $ f_0(M) = N $. Set $ g = r \circ g_0 $. Then $ g: M \to N $,
	\[
	\lip g \leq \lip r \cdot C_1(1 + \varepsilon_1) \lip f_0 \leq \widetilde{C}'(1 + \varepsilon) \lip f,
	\]
	where $ \widetilde{C}' = C_1 \lip r \lip j $, and
	\[
	d(f(x), g(x)) = d(r\circ f_0(x), r \circ g_0(x)) \leq \lip r |f_0(x) - g_0(x)| < \varepsilon.
	\]
	For the last conclusion, one can use suitable bump function to get the desired $ g $ (see e.g. \autoref{thm:C1app0}). This completes the proof.
\end{proof}

Here, we do not try to give a more general version of \autoref{thm:LipManifold} (e.g. the size of tubular neighborhoods of $ N $ can be non-uniform or $ M $ is a Finsler manifold in the sense of Neeb-Upmeier (see e.g. \cite[Appendix D.2]{Che18a} or \cite{JS11})).

\begin{cor}\label{cor:appLip}
	Let $ X, \Sigma, K, \{ \Pi^{\kappa}_{m} \} $ ($ \kappa = s, c, u $) be given as in \autoref{sub:setup} (I), and assume that (H1)--(H4) (in \autoref{sub:setup}) hold with $ \chi(\epsilon) $ small if $ \epsilon $ is small. Suppose (i) $ \Sigma \in C^1 $, and (ii) $ X $ is a separable Hilbert space or $ \Sigma $ is finite-dimensional. Then there is a small $ \epsilon_{*} > 0 $ such that for any small $ \varepsilon_0 > 0 $, there are projections $ \{ \widetilde{\Pi}^{\kappa}_{m} \} $, $ \kappa = s, c, u $, such that
	\begin{enumerate}[(1)]
		\item $ m \mapsto \widetilde{\Pi}^{\kappa}_{m} $ is $ C^1 $ in $ U_{m_0, \gamma}(\epsilon_*) $ ($ m_0 \in K, \gamma \in \phi^{-1}(m_0) $),
		\item $ \{ \widetilde{\Pi}^{\kappa}_{m} \} $ satisfies (H2) (in \autoref{sub:setup}), and
		\item $ |\Pi^{\kappa}_{m} - \widetilde{\Pi}^{\kappa}_{m}|< \varepsilon_0 $ for all $ m \in U_{m_0, \gamma}(\epsilon_*) $ and $ m_0 \in K, \gamma \in \phi^{-1}(m_0) $.
	\end{enumerate}
\end{cor}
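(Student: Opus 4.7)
The strategy is to view the triple $m \mapsto (\Pi^s_m, \Pi^c_m, \Pi^u_m)$ as a locally Lipschitz map from the $C^1$ Finsler manifold $\Sigma$ into a suitable $C^\infty$ submanifold of $L(X)^3$, and then invoke the $C^1 \cap C^{0,1}$ smoothing machinery (Theorem \ref{thm:LipManifold}) to replace it by a $C^1 \cap C^{0,1}$ one that still takes values in projections summing to the identity.

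First I would set up the target manifold. Let
\[
\mathcal{T}(X) = \{(P^s, P^c, P^u) \in \overline{\Pi}(X)^3 : P^s + P^c + P^u = I, \ P^i P^j = 0 \text{ for } i \ne j\}.
\]
By an argument parallel to Lemmas \ref{lem:proj} and \ref{lem:Piret} -- parametrizing each $P^\kappa$ locally by the two off-diagonal blocks of its matrix relative to a fixed splitting $X = X^s_0 \oplus X^c_0 \oplus X^u_0$ and enforcing the algebraic relations -- $\mathcal{T}(X)$ is a $C^\infty$ closed submanifold of $L(X)^3$, and for every bounded subset $\mathcal{K} \subset \mathcal{T}(X)$ there exist $\rho, \varepsilon > 0$ and a $C^\infty \cap C^{0,1}$ retraction $r: \mathbb{B}_\rho(\mathcal{K}) \subset L(X)^3 \to O_\varepsilon(\mathcal{K}) \cap \mathcal{T}(X)$.

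Next I would set up the source manifold. Under hypothesis (ii), each tangent space $T_m\Sigma = X^c_m$ satisfies strong $C_1$-property (A) in the sense of Appendix \ref{app:general}: for a separable Hilbert $X$ this is Example \ref{exa:propertyA}(b), and for finite-dimensional $\Sigma$ it is Example \ref{exa:propertyA}(a). Equipping $\Sigma$ with the natural Finsler structure on $T\Sigma = \{X^c_m\}_{m \in \Sigma}$ induced by $|\cdot|_X$ (cf.\ Lemma \ref{lem:csP}), $\Sigma$ becomes a $C^1$ Finsler manifold in the sense of Palais whose Finsler metric $d_F$ is comparable to $|\cdot|$ on each $U_{m_0,\gamma}(\epsilon_*)$ up to a factor $1 + O_{\epsilon_*}(1)$ as $\chi(\epsilon_*) \to 0$. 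Now define $\Phi(m) = (\Pi^s_m, \Pi^c_m, \Pi^u_m)$; by \textbf{(H2)(i)(ii)} $\Phi(K) \subset \mathcal{K}_0 \triangleq \{(P^s, P^c, P^u) \in \mathcal{T}(X) : |P^\kappa| \leq \widetilde{M}\}$ (bounded) and, locally on $U_{m_0,\gamma}(\epsilon_1)$, $\Phi$ is Lipschitz with constant at most $3L$ with respect to the $|\cdot|$-metric and hence with respect to $d_F$ after adjusting by the $(1+O_{\epsilon_*}(1))$ factor.

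I would then apply Theorem \ref{thm:LipManifold} with source $M = \Sigma$ and target $N = O_\varepsilon(\mathcal{K}_0) \cap \mathcal{T}(X)$, using the retraction $r$ from Step 1 to furnish the required $C^1 \cap C^{0,1}$ embedding-plus-retraction data. For any prescribed $\varepsilon_0 > 0$ (and $\epsilon_*$ small enough that the Finsler/ambient-norm comparison error is harmless) this produces a $C^1 \cap C^{0,1}$ map $\widetilde{\Phi} = (\widetilde{\Pi}^s, \widetilde{\Pi}^c, \widetilde{\Pi}^u): \Sigma \to \mathcal{T}(X)$ with $\sup_m |\widetilde{\Phi}(m) - \Phi(m)| < \varepsilon_0$ and local Lipschitz constant $\leq \widetilde{C}' L$ where $\widetilde{C}'$ depends only on the constants in strong $C_1$-property (A) for $X^c_m$, on $\lip r$, and on the Finsler distortion. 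Conclusion (3) then holds directly, (1) by construction, and (2) because the triple sums to $I$ and $|\widetilde{\Pi}^\kappa_m| \leq \widetilde{M} + \varepsilon_0$ uniformly while the Lipschitz constant is uniform in $m_0$.

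The hard part will be the Step-1 verification that $\mathcal{T}(X)$ admits a uniform smooth Lipschitz retraction on bounded subsets: one must check that the natural chart
$(Q^s, Q^c) \in L(X^s_0, X^c_0 \oplus X^u_0) \times L(X^c_0, X^s_0 \oplus X^u_0) \mapsto (P^s, P^c, I - P^s - P^c) \in \mathcal{T}(X)$
is a diffeomorphism with Lipschitz bounds depending only on the norm of the base point, and that the resulting retraction respects the algebraic constraints $P^iP^j = 0$. Given the explicit formulas and the computation used in Lemma \ref{lem:proj} this is routine but the bookkeeping is the main place where care is needed; everything else reduces to the already-developed approximation theorems in Appendix \ref{app:general}.
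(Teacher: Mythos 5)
Your overall strategy — smooth the Lipschitz projection-valued map via the $C^1\cap C^{0,1}$ approximation machinery, using strong property (A) of the tangent spaces (Example \ref{exa:propertyA}) and the Finsler comparison of Lemma \ref{lem:csP} — is exactly the paper's route. Where you diverge is the target: the paper simply applies Theorem \ref{thm:LipManifold} three times, once for each $\kappa$, to the map $\widehat{m}\mapsto\Pi^{\kappa}_{\phi(\widehat{m})}$ with values in a bounded neighborhood of $\{\Pi^{\kappa}_m\}$ inside $\overline{\Pi}(X)$, for which the $C^{\infty}\cap C^{0,1}$ retraction is already supplied by Lemma \ref{lem:Piret}. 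This suffices because conclusion (2) only asks that the new family satisfy \textbf{(H2)} (uniform bound and uniform local Lipschitz constant); neither $\widetilde{\Pi}^{s}_m+\widetilde{\Pi}^{c}_m+\widetilde{\Pi}^{u}_m=\id$ nor mutual annihilation is demanded, and by (3) the sum is within $3\varepsilon_0$ of the identity, which is all later arguments need. Your insistence on landing in the constrained manifold $\mathcal{T}(X)$ therefore proves a stronger statement than required, at the price of a nontrivial new ingredient.

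That ingredient is the genuine gap: the claim that $\mathcal{T}(X)$ is a $C^{\infty}$ closed submanifold of $L(X)^3$ admitting uniform smooth Lipschitz retractions near bounded sets is not in the paper and is not quite "routine bookkeeping" as you sketch it. In particular your proposed chart $(Q^s,Q^c)\in L(X^s_0,X^c_0\oplus X^u_0)\times L(X^c_0,X^s_0\oplus X^u_0)\mapsto(P^s,P^c,I-P^s-P^c)$ is mis-counted: a triple in $\mathcal{T}(X)$ is equivalent to a splitting $X=X^s\oplus X^c\oplus X^u$, and near a base splitting this requires three graph parameters (one in $L(X^{\kappa}_0,\ \text{complement})$ for each $\kappa$), since $P^s$ depends on $X^u$ through its kernel $X^c\oplus X^u$; two blocks do not determine the triple, and "enforcing the algebraic relations" is precisely what the chart must encode. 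The analogue of Lemmas \ref{lem:proj}, \ref{lem:Pitangent} and \ref{lem:Piret} for $\mathcal{T}(X)$ can indeed be carried out along those lines, but it must actually be written; alternatively, drop $\mathcal{T}(X)$ altogether and argue componentwise into $\overline{\Pi}(X)$ as the paper does, after which your Step 2 and the application of Theorem \ref{thm:LipManifold} go through verbatim and all three conclusions follow.
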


In fact, we can let $ m \mapsto \widetilde{\Pi}^{\kappa}_{m} $ be $ C^\infty $ in $ U_{m_0, \gamma}(\epsilon_*) $ ($ m_0 \in K $, $ \gamma \in \phi^{-1}(m_0) $).

\begin{proof}
	Without loss of generality, take $ \widehat{\Sigma} = \widehat{K}_{\epsilon_*} $ and $ \Sigma = K_{\epsilon_*} $ for small $ \epsilon_* > 0 $. By (H2), we have $ \sup_{m \in \Sigma} |\Pi^{\kappa}_{m}| < \infty $. By (ii), we see that for each $ \widehat{m} \in \widehat{\Sigma} $, $ T_{\widehat{m}} \widehat{\Sigma} $ satisfies the strong $ C_1 $-property (A) (see \autoref{exa:propertyA}). $ \widehat{\Sigma} $ can be considered as a Finsler manifold in the sense of Palais, and the Finsler metric in each component of $ \widehat{\Sigma} $ is locally almost the same as in $ X $; see also \autoref{lem:csP}. By \autoref{lem:Piret}, any small neighborhood of $ \{ \Pi^{\kappa}_{m} \} $ in $ \overline{\Pi}(X) $, denoted by $ N $, satisfies the condition in \autoref{thm:LipManifold}. Thus, the conclusion follows from \autoref{thm:LipManifold} by applying it to the map $ \widehat{\Sigma} \to N \subset L(X): \widehat{m} \mapsto \Pi^{\kappa}_{\phi(\widehat{m})} $.
\end{proof}

\chapter{Appendix. Invariant manifolds in the invariant and trichotomy case} \label{app:im}

Consider the parallel results as \autoref{thm:invariant} for the trichotomy case.
In this case, we take $ X^c_{m} \oplus X^{c_0}_{m} = X^{c_1}_{m} $ and $ X^{s_1}_{m} \subset X^{s}_{m} $, $ X^{u_1}_{m} \subset X^{u}_{m} $ with $ X^{c_1}_{m} \oplus X^{s_1}_{m} \oplus X^{u_1}_{m} = X $ (the corresponding projections also satisfy (H2) in \autoref{sub:setup}); similarly for $ \widehat{X}^{\kappa_0}_{m} $, $ \kappa_0 = s_1, u_1, c_1, c_0 $. In (B3) (a) and (B4) (i) (ii) (in \autoref{sub:tri}), $ c, s, u $ are changed as $ c_1, s_1, u_1 $, respectively. Let $ \Sigma = K $, $ \eta = 0 $, and replace the spectral condition in (B3) (a) (ii) by the spectral gap condition in (B4) (ii) (in \autoref{sub:tri}).

There are $ 0 < c_{*} < 1 $ and small positive constants $ \xi_{1,*}, \xi_{2,*}, \epsilon_{*} $, and $ \chi_{0,*}, \gamma_{0,*} = O_{\epsilon_{*}}(1) $, $ \eta_* = o(\epsilon_{*}) $ such that if
\begin{enumerate}[$ \bullet $]
	\item Case (1): (B1), (B2), (B3) (a) (b) hold with the constants satisfying $ \xi_{i} \leq \xi_{i,*} $ ($ i = 1,2 $), $ \chi(\epsilon) \leq \chi_{0,*} $, $ \epsilon \leq \epsilon_{*} $, $ \eta \leq \eta_* $,

	\item Case (2): \emph{or} (B1), (B2), (B3) (a$ ' $) (b) hold with the constants satisfying $ \xi_{i} \leq \xi_{i,*} $ ($ i = 1,2 $), $ \chi(\epsilon) \leq \chi_{0,*} $, $ \epsilon \leq \epsilon_{*} $, $ \eta \leq \eta_* $, $ \gamma_0 \leq \gamma_{0,*} $,
\end{enumerate}
then there exist $ \varepsilon = O(\epsilon_*) $ and $ \sigma, \varrho = o(\epsilon_*) $ with $ \varepsilon_0 = c_{*}\sigma $ such that the following statements hold.

We have $ W^{cs}_{loc}(\Sigma) \subset X^{c_0s_1}_{\widehat{\Sigma}}(\sigma) \oplus X^{u_1}_{\widehat{\Sigma}} (\varrho) $, $ W^{cu}_{loc}(\Sigma) \subset X^{c_0u_1}_{\widehat{\Sigma}}(\sigma) \oplus X^{s_1}_{\widehat{\Sigma}} (\varrho) $, $ \Sigma^c = W^{cs}_{loc}(\Sigma) \cap W^{cu}_{loc}(\Sigma) $ called a \emph{local center-stable manifold}, a \emph{local center-unstable manifold}, and a \emph{local center manifold} of $ \Sigma $ satisfying the following properties.

\begin{enumerate}[(1)]
	\item (Representation) $ W^{cs}_{loc}(\Sigma), W^{cu}_{loc}(\Sigma), \Sigma^c $ can be represented as graphs of Lipschitz maps.
	That is, there are maps $ h^{\kappa}_0 $, $ \kappa = cs, cu, c $, such that for $ \widehat{m} \in \widehat{\Sigma} $,
	\begin{gather*}
	h^{cs}_0(\widehat{m}, \cdot): X^{c_0s_1}_{\phi(\widehat{m})} (\sigma) \to X^{u_1}_{\phi(\widehat{m})}(\varrho), \\
	h^{cu}_0(\widehat{m}, \cdot): X^{c_0u_1}_{\phi(\widehat{m})} (\sigma) \to X^{s_1}_{\phi(\widehat{m})}(\varrho), \\
	h^{c}_0 (\widehat{m}, \cdot): X^{c_0}_{\phi(\widehat{m})}(\sigma) \to X^{s_1}_{\phi(\widehat{m})}(\varrho) \oplus X^{u_1}_{\phi(\widehat{m})}(\varrho),
	\end{gather*}
	and $ W^{cs}_{loc}(\Sigma) = \graph h^{cs}_0 $, $ W^{cu}_{loc}(\Sigma) = \graph h^{cu}_0 $, $ \Sigma^{c} = \graph h^{c}_0 $;
	there are functions $ \mu_{\kappa}(\cdot) $, $ \kappa = cs, cu, c $, such that $ \mu_{cs}(m) = (1+\chi_{*}) \beta'_{c_1s_1}(m) + \chi_{*} $, $ \mu_{cu}(m) = (1+\chi_{*}) \alpha'_{c_1u_1}(m) + \chi_{*} $ and $ \mu_{c} = \max\{ \mu_{cs}, \mu_{cu} \} $, with $ \chi_{*} = O_{\epsilon_{*}}(1) $, and for every $ m \in \Sigma $, we have
	\begin{multline*}
		|\Pi^{u_1}_{m}( h^{cs}_0(\widehat{m}_1, x^{c_0s_1}_1) - h^{cs}_0(\widehat{m}_2, x^{c_0s_1}_2)  )| \\
		 \leq \mu_{cs}(m) \max \{| \Pi^c_{m}(  \phi(\widehat{m}_1) - \phi(\widehat{m}_2)  ) |, | \Pi^{c_0s_1}_{m}( x^{c_0s_1}_1 - x^{c_0s_1}_2 ) |\},
	\end{multline*}
	\begin{multline*}
	|\Pi^{s_1}_{m}( h^{cu}_0(\widehat{m}_1, x^{c_0u_1}_1) - h^{cu}_0(\widehat{m}_2, x^{c_0u_1}_2)  )| \\
	\leq \mu_{cu}(m) \max \{| \Pi^c_{m}(  \phi(\widehat{m}_1) - \phi(\widehat{m}_2)  ) |, | \Pi^{c_0u_1}_{m}( x^{c_0u_1}_1 - x^{c_0u_1}_2 ) |\},
	\end{multline*}
	\begin{multline*}
	\max\{|\Pi^{u_1}_{m}( h^{c}_0(\widehat{m}_1, x^{c_0}_1) - h^{c}_0(\widehat{m}_2, x^{c_0}_2))|, |\Pi^{s_1}_{m}( h^{c}_0(\widehat{m}_1, x^{c_0}_1) - h^{c}_0(\widehat{m}_2, x^{c_0}_2) )|\} \\
	\leq \mu_{c}(m) \max\{ | \Pi^c_{m}(  \phi(\widehat{m}_1) - \phi(\widehat{m}_2)  ) |, |\Pi^{c_0}_{m}(x^{c_0}_1 - x^{c_0}_2)| \} ,
	\end{multline*}
	where $ \widehat{m}_i \in \widehat{U}_{\widehat{m}}(\varepsilon_0) $, $ x^{\kappa_0}_i \in X^{\kappa_0}_{\phi(\widehat{m}_i)} (\varepsilon_0) $ ($ \kappa_0 = c_0s_1, c_0u_1, c_0 $), $ i = 1,2 $, $ \widehat{m} \in \phi^{-1}(m) $.

	\item (Local invariance) For
	\[
	\Omega_{cs} = W^{cs}_{loc}(\Sigma) \cap \{X^{c_0s_1}_{\widehat{\Sigma}}(\varepsilon_0) \oplus X^{u_1}_{\widehat{\Sigma}} (\varrho)\}, ~\Omega_{cu} = W^{cu}_{loc}(\Sigma) \cap \{X^{c_0u_1}_{\widehat{\Sigma}}(\varepsilon_0) \oplus X^{s_1}_{\widehat{\Sigma}} (\varrho)\},
	\]
	and $ \Omega_{c} = \Omega_{cs} \cap \Omega_{cu} $,
	we have $ \Omega_{cs} \subset H^{-1}W^{cs}_{loc}(\Sigma) $, $ \Omega_{cu} \subset HW^{cu}_{loc}(\Sigma) $ and $ \Omega_{c} \subset H^{\pm 1}\Sigma^{c} $; moreover, $ H: \Omega_{cs} \to W^{cs}_{loc}(\Sigma) $, $ H^{-1}: \Omega_{cu} \to W^{cu}_{loc}(\Sigma) $ induce Lipschitz maps and $ H: \Omega_{c} \to \Sigma^{c} $ induces a bi-Lipschitz map.

	\item \label{it:inv} (Partial characterization) 
	
	\noindent(i) $ \Sigma \subset \Sigma^c $.

	\noindent(ii) If $ \{z_{k} = (\widehat{m}_{k}, x^{c_0s_1}_{k}, x^{u_1}_{k})\}_{k \geq 0} \subset X^{c_0s_1}_{\widehat{\Sigma}}(\varepsilon_0) \oplus X^{u_1}_{\widehat{\Sigma}} (\varrho) $ satisfies
	\begin{enumerate}[$ \bullet $]
		\item $ z_{k} \in H(z_{k-1}) $ with $ \widehat{m}_{k} \in \widehat{U}_{\widehat{u}(\widehat{m}_{k-1})} (\varepsilon) $ for all $ k \geq 0 $; and
		\item $ |x^{u_1}_{k}| \leq \widetilde{\beta}_{0}(\widehat{m}_{k-1}) |x^{c_0s_1}_{k}| $ for all $ k \geq 1 $ or 
		\[
		\sup_{k}\{\varepsilon_{s}(\widehat{m}_0) \varepsilon_{s}(\widehat{m}_1) \cdots \varepsilon_{s}(\widehat{m}_{k-1})\}^{-1} (|x^{c_0s_1}_{k}| + |x^{u_1}_{k}|) < \infty,
		\]
	\end{enumerate}
	then $ \{z_{k}\}_{k \geq 0} \subset W^{cs}_{loc}(\Sigma) $ and $ |x^{c_0s_1}_{k+1}| \leq (\lambda_{c_1s_1}(\phi(\widehat{m}_{k})) + \hat{\chi}) |x^{c_0s_1}_{k}| $.

	\noindent(iii) If $ \{z_{-k} = (\widehat{m}_{-k}, x^{s_1}_{-k}, x^{c_0u_1}_{-k})\}_{k \geq 0} \subset X^{s_1}_{\widehat{\Sigma}}(\varrho) \oplus X^{c_0u_1}_{\widehat{\Sigma}} (\varepsilon_0) $ satisfies
	\begin{enumerate}[$ \bullet $]
		\item $ z_{-k} \in H^{-1}(z_{-k+1}) $ with $ \widehat{m}_{-k} \in \widehat{U}_{\widehat{u}^{-1}(\widehat{m}_{-k+1})} (\varepsilon) $ for all $ k \geq 0 $; and
		\item $ |x^{s_1}_{-k}| \leq \widetilde{\alpha}_{0}(\widehat{m}_{-k+1}) |x^{c_0u_1}_{-k}| $ for all $ k \geq 1 $ or 
		\[
		\sup_{k}\{\varepsilon_{u}(\widehat{m}_0) \varepsilon_{u}(\widehat{m}_{-1}) \cdots \varepsilon_{u}(\widehat{m}_{-k+1})\}^{-1} (|x^{c_0u_1}_{-k}| + |x^{s_1}_{-k}|) < \infty,
		\]
	\end{enumerate}
	then $ \{z_{-k}\}_{k \geq 0} \subset W^{cu}_{loc}(\Sigma) $ and $ |x^{c_0u_1}_{-k}| \leq (\lambda_{c_1u_1}(\phi(\widehat{m}_{-k})) + \hat{\chi}) |x^{c_0u_1}_{-k+1}| $.

	\noindent(iv) If $ \{z_{k} = (\widehat{m}_{k}, x^{c_0}_{k}, x^{s_1}_{k}, x^{u_1}_{k})\}_{k \in \mathbb{Z}} \subset X^{c_0}_{\widehat{\Sigma}}(\varepsilon_0) \oplus X^{s_1}_{\widehat{\Sigma}} (\varepsilon_0) \oplus X^{u_1}_{\widehat{\Sigma}} (\varepsilon_0) $ satisfies that $ \{z_{k}\}_{k \geq 0} $ and $ \{z_{-k}\}_{k \geq 0} $ fulfill (i) and (ii), respectively, then $ \{z_{k}\}_{k \in \mathbb{N}} \subset \Sigma^{c} $.

	Here, $ \widetilde{\beta}_{0}(\cdot), \widetilde{\alpha}_{0}(\cdot), \varepsilon_{s}(\cdot), \varepsilon_{u}(\cdot): \widehat{\Sigma} \to \mathbb{R}_+ $ satisfy for all $ \widehat{m} \in \widehat{\Sigma} $,
	\begin{gather*}
	\beta'_{c_1s_1}(u(\phi(\widehat{m}))) + \hat{\chi} < \widetilde{\beta}_{0}(\widehat{m}) < \beta_{c_1s_1}(\phi(\widehat{m})) - \hat{\chi}, ~ 0 < \varepsilon_{s}(\widehat{m}) < \lambda^{-1}_{u_1} (\phi(\widehat{m})) - \hat{\chi}, \\
	\alpha'_{c_1u_1}(u^{-1}(\phi(\widehat{m}))) + \hat{\chi} < \widetilde{\alpha}_{0}(\widehat{m}) < \alpha_{c_1u_1}(\phi(\widehat{m})) - \hat{\chi}, ~ 0 < \varepsilon_{u}(\widehat{m}) < \lambda^{-1}_{s_1} (\phi(\widehat{m})) - \hat{\chi},
	\end{gather*}
	where $ \hat{\chi} > 0 $ is some small constant depending on $ \xi_1, \epsilon_{*} $.

	\item If (B4) (i) holds, then $ W^{cs}_{loc}(\Sigma), W^{cu}_{loc}(\Sigma), \Sigma^c $ are differentiable at $ \widehat{m} \in \widehat{\Sigma} $ in the sense of Whitney (see \autoref{def:tangent}) with $ \widehat{\Sigma} \to \mathbb{G}(X): \widehat{m} \mapsto T_{\widehat{m}} W^{cs}_{loc}(\Sigma) $, $ T_{\widehat{m}} W^{cs}_{loc}(\Sigma) $, $ T_{\widehat{m}} \Sigma^c $ continuous; if, in addition, $ \widehat{X}^{c_1s_1}, \widehat{X}^{c_1u_1} $ are invariant under $ DH $ (meaning that 
	\[
	D_{x^{c_1s_1}}\widehat{G}^{c_1s_1}_{m}( 0, 0, 0 ) = 0, \quad \text{and} \quad D_{x^{c_1u_1}}\widehat{F}^{c_1u_1}_{m}( 0, 0, 0 ) = 0
	\]
	for all $ m \in \Sigma $), then (see \autoref{def:tangent} for the meaning)
	\[
	T_{\widehat{m}} W^{cs}_{loc}(\Sigma) = \widehat{X}^{c_1s_1}_{\phi(\widehat{m})},
	~T_{\widehat{m}} W^{cu}_{loc}(\Sigma) = \widehat{X}^{c_1u_1}_{\phi(\widehat{m})},
	~T_{\widehat{m}} \Sigma^c = \widehat{X}^{c_1}_{\phi(\widehat{m})}, ~\widehat{m} \in \widehat{\Sigma}.
	\]

	\item Assume the following assumptions hold:
	\begin{enumerate}[(i)]
		\item (B4) (i);
		\item $ \Sigma \in C^1 $ and $ m \mapsto \Pi^{\kappa}_{m} $ is $ C^1 $ (in the immersed topology), $ \kappa = s_1, c_1, u_1, c_0 $;
		\item There exist \emph{$ C^{1} $ and $ C_1 $-Lipschitz bump functions} in $ X^{c_0s_1}_{\widehat{\Sigma}} $ and in $ X^{c_0u_1}_{\widehat{\Sigma}} $, respectively, where $ C_1 \geq 1 $ (see \autoref{def:C1Lbump} and \autoref{exa:case1}), \emph{or}
		\item[(iii$ ' $)] $ H $ satisfies the \emph{strong $ s_1 $-contraction} and \emph{strong $ u_1 $-expansion} (see assumption ($ \star\star $) in \autoref{sub:limited} (e.g., $ \sup_{m} \alpha'_{c_1u_1} (m) $ and $ \sup_{m} \beta'_{c_1s_1}(m) $ are sufficiently small) and there exist \emph{$ C^{1} $ and $ C_1 $-Lipschitz bump functions} in $ X^{c_0}_{\widehat{\Sigma}} $ (see \autoref{def:C1Lbump}), where $ C_1 \geq 1 $.
	\end{enumerate}
	Then we also can choose $ W^{cs}_{loc}(\Sigma), W^{cu}_{loc}(\Sigma), \Sigma^c $ such that they are $ C^1 $ immersed submanifolds of $ X $.
\end{enumerate}

\begin{rmk}
	\begin{enumerate}[(a)]
		\item If $ \sup_{m}\lambda_{s_1}(m) < 1, \sup_{m}\lambda_{u_1}(m) < 1 $ in item \eqref{it:inv}, then we take $ \varepsilon_{s}(\cdot), \varepsilon_{u}(\cdot) \equiv 1 $; this means e.g. if in item \eqref{it:inv}, $ \{z_{k} = (\widehat{m}_{k}, x^{c_0}_{k}, x^{s_1}_{k}, x^{u_1}_{k})\}_{k \in \mathbb{Z}} \subset X^{c_0}_{\widehat{\Sigma}}(\varepsilon_0) \oplus X^{s_1}_{\widehat{\Sigma}} (\varepsilon_0) \oplus X^{u_1}_{\widehat{\Sigma}} (\varepsilon_0) $ satisfies $ z_{k} \in H(z_{k-1}) $ with $ \widehat{m}_{k} \in \widehat{U}_{\widehat{u}(\widehat{m}_{k-1})} (\varepsilon) $ and $ \widehat{m}_{k} \in \widehat{U}_{\widehat{u}^{-1}(\widehat{m}_{k-1})} (\varepsilon) $ for all $ k \in \mathbb{Z} $, then $ \{z_{k}\}_{k \in \mathbb{Z}} \subset \Sigma^{c} $.

		In particular, we see that the \emph{strong-stable lamination} (resp. \emph{strong-unstable lamination}) of $ \Sigma $ for $ H $, whose leaves are (uniformly) Lipschitz immersed submanifolds of $ X $ locally modeled on $ X^{s_1}_{m} $ (resp. $ X^{u_1}_{m} $) ($ m \in \Sigma $), belongs to $ W^{cs}_{loc}(\Sigma) $ (resp. $ W^{cu}_{loc}(\Sigma) $). However, it should be noted that, in general, the strong-stable lamination is not open in $ W^{cs}_{loc}(\Sigma) $. For a detailed discussion of the existence and regularity of strong-(un)stable laminations, we refer to \cite{HPS77} or \cite[Section 7.2]{Che18a}.

		\item If (i) $ c_1 = c $ (and so $ s_1 = s, u_1 = u $), (ii) $ \sup_{m}\lambda_{s_1}(m) < 1 $, $ \sup_{m}\lambda_{u_1}(m) < 1 $, and (iii) $ H $ satisfies the \emph{strong $ s $-contraction} and \emph{strong $ u $-expansion} (see assumption ($ \star\star $) in \autoref{sub:limited}, e.g., when $ \sup_{m} \alpha'_{c_1u_1} (m) $ and $ \sup_{m} \beta'_{c_1s_1}(m) $ are sufficiently small), then $ \Sigma $ is a normally hyperbolic invariant manifold of $ H $ (in the sense of \cite{Che18b}) and also $ \Sigma = \Sigma^c $, $ W^{cs}_{loc}(\Sigma) \subset H^{-1}W^{cs}_{loc}(\Sigma) $, $ W^{cu}_{loc}(\Sigma) \subset HW^{cu}_{loc}(\Sigma) $ (i.e., we can take $ \varepsilon_{0} = \varepsilon $). Moreover, in this case, if (B4) (i) holds, then $ \Sigma \in C^1 $ (as well as $ W^{cs}_{loc}(\Sigma), W^{cu}_{loc}(\Sigma) \in C^1 $).
	\end{enumerate}
\end{rmk}

\end{appendices}

\begin{bibdiv}
\begin{biblist}

\bib{AFK10}{article}{
      author={Azagra, D.},
      author={Fry, R.},
      author={Keener, L.},
       title={Smooth extensions of functions on separable {B}anach spaces},
        date={2010},
        ISSN={0025-5831},
     journal={Math. Ann.},
      volume={347},
      number={2},
       pages={285\ndash 297},
         url={https://doi.org/10.1007/s00208-009-0441-6},
      review={\MR{2606938}},
}

\bib{AFL05}{article}{
      author={Azagra, Daniel},
      author={Ferrera, Juan},
      author={L\'{o}pez-Mesas, Fernando},
       title={Nonsmooth analysis and {H}amilton-{J}acobi equations on
  {R}iemannian manifolds},
        date={2005},
        ISSN={0022-1236},
     journal={J. Funct. Anal.},
      volume={220},
      number={2},
       pages={304\ndash 361},
         url={https://doi.org/10.1016/j.jfa.2004.10.008},
      review={\MR{2119282}},
}

\bib{Aki93}{book}{
      author={Akin, Ethan},
       title={The general topology of dynamical systems},
      series={Graduate Studies in Mathematics},
   publisher={American Mathematical Society, Providence, RI},
        date={1993},
      volume={1},
        ISBN={0-8218-3800-8},
      review={\MR{1219737}},
}

\bib{AMR88}{book}{
      author={Abraham, R.},
      author={Marsden, J.~E.},
      author={Ratiu, T.},
       title={Manifolds, tensor analysis, and applications},
     edition={Second},
      series={Applied Mathematical Sciences},
   publisher={Springer-Verlag, New York},
        date={1988},
      volume={75},
        ISBN={0-387-96790-7},
         url={https://doi.org/10.1007/978-1-4612-1029-0},
      review={\MR{960687}},
}

\bib{Arn63}{article}{
      author={Arnol{\cprime}d, V.~I.},
       title={Proof of a theorem of {A}. {N}. {K}olmogorov on the preservation
  of conditionally periodic motions under a small perturbation of the
  {H}amiltonian},
        date={1963},
        ISSN={0042-1316},
     journal={Uspehi Mat. Nauk},
      volume={18},
      number={5 (113)},
       pages={13\ndash 40},
      review={\MR{0163025}},
}

\bib{Arn64}{article}{
      author={Arnol{\cprime}d, V.~I.},
       title={Instability of dynamical systems with many degrees of freedom},
        date={1964},
        ISSN={0002-3264},
     journal={Dokl. Akad. Nauk SSSR},
      volume={156},
       pages={9\ndash 12},
      review={\MR{0163026}},
}

\bib{BC16}{article}{
      author={Bonatti, Christian},
      author={Crovisier, Sylvain},
       title={Center manifolds for partially hyperbolic sets without strong
  unstable connections},
        date={2016},
        ISSN={1474-7480},
     journal={J. Inst. Math. Jussieu},
      volume={15},
      number={4},
       pages={785\ndash 828},
         url={https://doi.org/10.1017/S1474748015000055},
      review={\MR{3569077}},
}

\bib{BCJ21}{article}{
      author={Beck, Margaret},
      author={Cox, Graham},
      author={Jones, Christopher},
      author={Latushkin, Yuri},
      author={Sukhtayev, Alim},
       title={A dynamical approach to semilinear elliptic equations},
        date={2021},
        ISSN={0294-1449,1873-1430},
     journal={Ann. Inst. H. Poincar\'{e} C Anal. Non Lin\'{e}aire},
      volume={38},
      number={2},
       pages={421\ndash 450},
         url={https://doi.org/10.1016/j.anihpc.2020.08.001},
      review={\MR{4211992}},
}

\bib{Bec12}{article}{
      author={Beceanu, Marius},
       title={A critical center-stable manifold for {S}chr\"{o}dinger's
  equation in three dimensions},
        date={2012},
        ISSN={0010-3640},
     journal={Comm. Pure Appl. Math.},
      volume={65},
      number={4},
       pages={431\ndash 507},
         url={https://doi.org/10.1002/cpa.21387},
      review={\MR{2877342}},
}

\bib{BJ89}{incollection}{
      author={Bates, Peter~W.},
      author={Jones, Christopher K. R.~T.},
       title={Invariant manifolds for semilinear partial differential
  equations},
        date={1989},
   booktitle={Dynamics reported, {V}ol.\ 2},
      series={Dynam. Report. Ser. Dynam. Systems Appl.},
      volume={2},
   publisher={Wiley, Chichester},
       pages={1\ndash 38},
      review={\MR{1000974}},
}

\bib{BLZ08}{article}{
      author={Bates, Peter~W.},
      author={Lu, Kening},
      author={Zeng, Chongchun},
       title={Approximately invariant manifolds and global dynamics of spike
  states},
        date={2008},
        ISSN={0020-9910},
     journal={Invent. Math.},
      volume={174},
      number={2},
       pages={355\ndash 433},
         url={http://dx.doi.org/10.1007/s00222-008-0141-y},
      review={\MR{2439610}},
}

\bib{BLZ98}{article}{
      author={Bates, Peter~W.},
      author={Lu, Kening},
      author={Zeng, Chongchun},
       title={Existence and persistence of invariant manifolds for semiflows in
  {B}anach space},
        date={1998},
        ISSN={0065-9266},
     journal={Mem. Amer. Math. Soc.},
      volume={135},
      number={645},
       pages={viii+129},
         url={http://dx.doi.org/10.1090/memo/0645},
      review={\MR{1445489}},
}

\bib{BLZ99}{article}{
      author={Bates, Peter~W.},
      author={Lu, Kening},
      author={Zeng, Chongchun},
       title={Persistence of overflowing manifolds for semiflow},
        date={1999},
        ISSN={0010-3640},
     journal={Comm. Pure Appl. Math.},
      volume={52},
      number={8},
       pages={983\ndash 1046},
  url={http://dx.doi.org/10.1002/(SICI)1097-0312(199908)52:8<983::AID-CPA4>3.3.CO;2-F},
      review={\MR{1686965}},
}

\bib{BR17}{incollection}{
      author={Belitskii, Genrich},
      author={Rayskin, Victoria},
       title={A new method of extension of local maps of {B}anach spaces.
  {A}pplications and examples},
        date={2019},
   booktitle={Functional analysis and geometry: {S}elim {G}rigorievich {K}rein
  centennial},
      series={Contemp. Math.},
      volume={733},
   publisher={Amer. Math. Soc., Providence, RI},
       pages={61\ndash 72},
         url={https://doi.org/10.1090/conm/733/14733},
      review={\MR{3985267}},
}

\bib{BY17}{article}{
      author={Blumenthal, Alex},
      author={Young, Lai-Sang},
       title={Entropy, volume growth and {SRB} measures for {B}anach space
  mappings},
        date={2017},
        ISSN={0020-9910},
     journal={Invent. Math.},
      volume={207},
      number={2},
       pages={833\ndash 893},
         url={https://doi.org/10.1007/s00222-016-0678-0},
      review={\MR{3595937}},
}

\bib{CCdlL19}{article}{
      author={Calleja, Renato~C.},
      author={Celletti, Alessandra},
      author={de~la Llave, Rafael},
       title={Existence of whiskered {KAM} tori of conformally symplectic
  systems},
        date={2020},
        ISSN={0951-7715,1361-6544},
     journal={Nonlinearity},
      volume={33},
      number={1},
       pages={538\ndash 597},
         url={https://doi.org/10.1088/1361-6544/ab4c80},
      review={\MR{4039781}},
}

\bib{CdlL19}{article}{
      author={Cheng, Hongyu},
      author={de~la Llave, Rafael},
       title={Stable manifolds to bounded solutions in possibly ill-posed
  {PDE}s},
        date={2020},
        ISSN={0022-0396},
     journal={J. Differential Equations},
      volume={268},
      number={8},
       pages={4830\ndash 4899},
         url={https://doi.org/10.1016/j.jde.2019.10.042},
      review={\MR{4066033}},
}

\bib{Cha04}{article}{
      author={Chaperon, Marc},
       title={Stable manifolds and the {P}erron-{I}rwin method},
        date={2004},
        ISSN={0143-3857},
     journal={Ergodic Theory Dynam. Systems},
      volume={24},
      number={5},
       pages={1359\ndash 1394},
         url={http://dx.doi.org/10.1017/S0143385703000701},
      review={\MR{2104589}},
}

\bib{Cha08}{article}{
      author={Chaperon, Marc},
       title={The {L}ipschitzian core of some invariant manifold theorems},
        date={2008},
        ISSN={0143-3857},
     journal={Ergodic Theory Dynam. Systems},
      volume={28},
      number={5},
       pages={1419\ndash 1441},
         url={http://dx.doi.org/10.1017/S0143385707000910},
      review={\MR{2449535}},
}

\bib{Che18b}{manual}{
      author={Chen, Deliang},
       title={{I}nvariant manifolds of approximately normally hyperbolic
  manifolds in {B}anach spaces},
        date={2019},
        note={submitted},
}

\bib{Che18c}{article}{
      author={Chen, Deliang},
       title={{T}he exponential dichotomy and invariant manifolds for some
  classes of differential equations},
        date={2019-03},
     journal={arXiv e-prints},
      eprint={1903.08040},
}

\bib{Che18a}{article}{
      author={Chen, Deliang},
       title={Existence and regularity of invariant graphs for cocycles in
  bundles: partial hyperbolicity case},
        date={2020},
        ISSN={0012-3862},
     journal={Dissertationes Math.},
      volume={555},
       pages={1\ndash 176},
         url={https://doi.org/10.4064/dm799-4-2020},
      review={\MR{4170699}},
}

\bib{CL88}{article}{
      author={Chow, Shui-Nee},
      author={Lu, Kening},
       title={Invariant manifolds for flows in {B}anach spaces},
        date={1988},
        ISSN={0022-0396},
     journal={J. Differential Equations},
      volume={74},
      number={2},
       pages={285\ndash 317},
         url={http://dx.doi.org/10.1016/0022-0396(88)90007-1},
      review={\MR{952900}},
}

\bib{CL99}{book}{
      author={Chicone, Carmen},
      author={Latushkin, Yuri},
       title={Evolution semigroups in dynamical systems and differential
  equations},
      series={Mathematical Surveys and Monographs},
   publisher={American Mathematical Society, Providence, RI},
        date={1999},
      volume={70},
        ISBN={0-8218-1185-1},
         url={http://dx.doi.org/10.1090/surv/070},
      review={\MR{1707332}},
}

\bib{CLY00a}{article}{
      author={Chow, Shui-Nee},
      author={Liu, Weishi},
      author={Yi, Yingfei},
       title={Center manifolds for invariant sets},
        date={2000},
        ISSN={0022-0396},
     journal={J. Differential Equations},
      volume={168},
      number={2},
       pages={355\ndash 385},
         url={http://dx.doi.org/10.1006/jdeq.2000.3890},
        note={Special issue in celebration of Jack K. Hale's 70th birthday,
  Part 2 (Atlanta, GA/Lisbon, 1998)},
      review={\MR{1808454}},
}

\bib{CLY00}{article}{
      author={Chow, Shui-Nee},
      author={Liu, Weishi},
      author={Yi, Yingfei},
       title={Center manifolds for smooth invariant manifolds},
        date={2000},
        ISSN={0002-9947},
     journal={Trans. Amer. Math. Soc.},
      volume={352},
      number={11},
       pages={5179\ndash 5211},
         url={http://dx.doi.org/10.1090/S0002-9947-00-02443-0},
      review={\MR{1650077}},
}

\bib{dlLla09}{article}{
      author={de~la Llave, Rafael},
       title={A smooth center manifold theorem which applies to some ill-posed
  partial differential equations with unbounded nonlinearities},
        date={2009},
        ISSN={1040-7294},
     journal={J. Dynam. Differential Equations},
      volume={21},
      number={3},
       pages={371\ndash 415},
         url={http://dx.doi.org/10.1007/s10884-009-9140-y},
      review={\MR{2538946}},
}

\bib{dlLS19}{article}{
      author={de~la Llave, Rafael},
      author={Sire, Yannick},
       title={An a posteriori {KAM} theorem for whiskered tori in {H}amiltonian
  partial differential equations with applications to some ill-posed
  equations},
        date={2019},
        ISSN={0003-9527},
     journal={Arch. Ration. Mech. Anal.},
      volume={231},
      number={2},
       pages={971\ndash 1044},
         url={https://doi.org/10.1007/s00205-018-1293-6},
      review={\MR{3900818}},
}

\bib{dlLW95}{article}{
      author={de~la Llave, Rafael},
      author={Wayne, C.~Eugene},
       title={On {I}rwin's proof of the pseudostable manifold theorem},
        date={1995},
        ISSN={0025-5874},
     journal={Math. Z.},
      volume={219},
      number={2},
       pages={301\ndash 321},
         url={http://dx.doi.org/10.1007/BF02572367},
      review={\MR{1337223}},
}

\bib{DPL88}{article}{
      author={Da~Prato, G.},
      author={Lunardi, A.},
       title={Stability, instability and center manifold theorem for fully
  nonlinear autonomous parabolic equations in {B}anach space},
        date={1988},
        ISSN={0003-9527},
     journal={Arch. Rational Mech. Anal.},
      volume={101},
      number={2},
       pages={115\ndash 141},
         url={https://doi.org/10.1007/BF00251457},
      review={\MR{921935}},
}

\bib{Dug66}{book}{
      author={Dugundji, James},
       title={Topology},
   publisher={Allyn and Bacon, Inc., Boston, Mass.},
        date={1966},
      review={\MR{0193606}},
}

\bib{ElB12}{article}{
      author={ElBialy, Mohamed~Sami},
       title={Stable and unstable manifolds for hyperbolic bi-semigroups},
        date={2012},
        ISSN={0022-1236},
     journal={J. Funct. Anal.},
      volume={262},
      number={5},
       pages={2516\ndash 2560},
         url={http://dx.doi.org/10.1016/j.jfa.2011.11.031},
      review={\MR{2876413}},
}

\bib{Eld13}{book}{
      author={Eldering, Jaap},
       title={Normally hyperbolic invariant manifolds},
      series={Atlantis Studies in Dynamical Systems},
   publisher={Atlantis Press, Paris},
        date={2013},
      volume={2},
        ISBN={978-94-6239-002-7; 978-94-6239-003-4},
         url={https://doi.org/10.2991/978-94-6239-003-4},
        note={The noncompact case},
      review={\MR{3098498}},
}

\bib{EW91}{article}{
      author={Eckmann, J.-P.},
      author={Wayne, C.~E.},
       title={Propagating fronts and the center manifold theorem},
        date={1991},
        ISSN={0010-3616},
     journal={Comm. Math. Phys.},
      volume={136},
      number={2},
       pages={285\ndash 307},
         url={http://projecteuclid.org/euclid.cmp/1104202352},
      review={\MR{1096117}},
}

\bib{FdlLM06}{article}{
      author={Fontich, Ernest},
      author={de~la Llave, Rafael},
      author={Mart{\'{\i}}n, Pau},
       title={Invariant pre-foliations for non-resonant non-uniformly
  hyperbolic systems},
        date={2006},
        ISSN={0002-9947},
     journal={Trans. Amer. Math. Soc.},
      volume={358},
      number={3},
       pages={1317\ndash 1345 (electronic)},
         url={http://dx.doi.org/10.1090/S0002-9947-05-03840-7},
      review={\MR{2187655}},
}

\bib{FdlLS09}{article}{
      author={Fontich, Ernest},
      author={de~la Llave, Rafael},
      author={Sire, Yannick},
       title={Construction of invariant whiskered tori by a parameterization
  method. {I}. {M}aps and flows in finite dimensions},
        date={2009},
        ISSN={0022-0396},
     journal={J. Differential Equations},
      volume={246},
      number={8},
       pages={3136\ndash 3213},
         url={https://doi.org/10.1016/j.jde.2009.01.037},
      review={\MR{2507954}},
}

\bib{FdlLS15}{article}{
      author={Fontich, Ernest},
      author={de~la Llave, Rafael},
      author={Sire, Yannick},
       title={Construction of invariant whiskered tori by a parameterization
  method. {P}art {II}: {Q}uasi-periodic and almost periodic breathers in
  coupled map lattices},
        date={2015},
        ISSN={0022-0396},
     journal={J. Differential Equations},
      volume={259},
      number={6},
       pages={2180\ndash 2279},
         url={https://doi.org/10.1016/j.jde.2015.03.034},
      review={\MR{3353644}},
}

\bib{Fen72}{article}{
      author={Fenichel, Neil},
       title={Persistence and smoothness of invariant manifolds for flows},
        date={1971/72},
        ISSN={0022-2518},
     journal={Indiana Univ. Math. J.},
      volume={21},
       pages={193\ndash 226},
         url={https://doi.org/10.1512/iumj.1971.21.21017},
      review={\MR{287106}},
}

\bib{Fen79}{article}{
      author={Fenichel, Neil},
       title={Geometric singular perturbation theory for ordinary differential
  equations},
        date={1979},
        ISSN={0022-0396},
     journal={J. Differential Equations},
      volume={31},
      number={1},
       pages={53\ndash 98},
         url={https://doi.org/10.1016/0022-0396(79)90152-9},
      review={\MR{524817}},
}

\bib{FM00}{article}{
      author={Fontich, E.},
      author={Mart\'{\i}n, P.},
       title={Differentiable invariant manifolds for partially hyperbolic tori
  and a lambda lemma},
        date={2000},
        ISSN={0951-7715},
     journal={Nonlinearity},
      volume={13},
      number={5},
       pages={1561\ndash 1593},
         url={https://doi.org/10.1088/0951-7715/13/5/309},
      review={\MR{1781809}},
}

\bib{Gal93}{article}{
      author={Gallay, Th.},
       title={A center-stable manifold theorem for differential equations in
  {B}anach spaces},
        date={1993},
        ISSN={0010-3616},
     journal={Comm. Math. Phys.},
      volume={152},
      number={2},
       pages={249\ndash 268},
         url={http://projecteuclid.org/euclid.cmp/1104252409},
      review={\MR{1210168}},
}

\bib{Gou07}{article}{
      author={Gourmelon, Nikolaz},
       title={Adapted metrics for dominated splittings},
        date={2007},
        ISSN={0143-3857},
     journal={Ergodic Theory Dynam. Systems},
      volume={27},
      number={6},
       pages={1839\ndash 1849},
         url={https://doi.org/10.1017/S0143385707000272},
      review={\MR{2371598}},
}

\bib{Hen81}{book}{
      author={Henry, Daniel},
       title={Geometric theory of semilinear parabolic equations},
      series={Lecture Notes in Mathematics},
   publisher={Springer-Verlag, Berlin-New York},
        date={1981},
      volume={840},
        ISBN={3-540-10557-3},
      review={\MR{610244}},
}

\bib{HJ10}{article}{
      author={H\'ajek, Petr},
      author={Johanis, Michal},
       title={Smooth approximations},
        date={2010},
        ISSN={0022-1236},
     journal={J. Funct. Anal.},
      volume={259},
      number={3},
       pages={561\ndash 582},
         url={https://doi.org/10.1016/j.jfa.2010.04.020},
      review={\MR{2644097}},
}

\bib{HJ14}{book}{
      author={H\'{a}jek, Petr},
      author={Johanis, Michal},
       title={Smooth analysis in {B}anach spaces},
      series={De Gruyter Series in Nonlinear Analysis and Applications},
   publisher={De Gruyter, Berlin},
        date={2014},
      volume={19},
        ISBN={978-3-11-025898-1; 978-3-11-039199-2},
         url={https://doi.org/10.1515/9783110258998},
      review={\MR{3244144}},
}

\bib{HPS77}{book}{
      author={Hirsch, M.~W.},
      author={Pugh, C.~C.},
      author={Shub, M.},
       title={Invariant manifolds},
      series={Lecture Notes in Mathematics, Vol. 583},
   publisher={Springer-Verlag, Berlin-New York},
        date={1977},
      review={\MR{0501173}},
}

\bib{HR13}{incollection}{
      author={Hale, Jack~K.},
      author={Raugel, Genevi\`eve},
       title={Persistence of periodic orbits for perturbed dissipative
  dynamical systems},
        date={2013},
   booktitle={Infinite dimensional dynamical systems},
      series={Fields Inst. Commun.},
      volume={64},
   publisher={Springer, New York},
       pages={1\ndash 55},
         url={https://doi.org/10.1007/978-1-4614-4523-4_1},
      review={\MR{2986930}},
}

\bib{Irw80}{article}{
      author={Irwin, M.~C.},
       title={A new proof of the pseudostable manifold theorem},
        date={1980},
        ISSN={0024-6107},
     journal={J. London Math. Soc. (2)},
      volume={21},
      number={3},
       pages={557\ndash 566},
         url={https://doi.org/10.1112/jlms/s2-21.3.557},
      review={\MR{577730}},
}

\bib{JLZ17}{article}{
      author={Jin, Jiayin},
      author={Lin, Zhiwu},
      author={Zeng, Chongchun},
       title={Invariant manifolds of traveling waves of the 3{D}
  {G}ross-{P}itaevskii equation in the energy space},
        date={2018},
        ISSN={0010-3616},
     journal={Comm. Math. Phys.},
      volume={364},
      number={3},
       pages={981\ndash 1039},
         url={https://doi.org/10.1007/s00220-018-3189-6},
      review={\MR{3875821}},
}

\bib{Jon95}{incollection}{
      author={Jones, Christopher K. R.~T.},
       title={Geometric singular perturbation theory},
        date={1995},
   booktitle={Dynamical systems ({M}ontecatini {T}erme, 1994)},
      series={Lecture Notes in Math.},
      volume={1609},
   publisher={Springer, Berlin},
       pages={44\ndash 118},
         url={http://dx.doi.org/10.1007/BFb0095239},
      review={\MR{1374108}},
}

\bib{JS11}{article}{
      author={Jim\'enez-Sevilla, M.},
      author={S\'anchez-Gonz\'alez, L.},
       title={On some problems on smooth approximation and smooth extension of
  {L}ipschitz functions on {B}anach-{F}insler manifolds},
        date={2011},
        ISSN={0362-546X},
     journal={Nonlinear Anal.},
      volume={74},
      number={11},
       pages={3487\ndash 3500},
         url={https://doi.org/10.1016/j.na.2011.03.004},
      review={\MR{2803076}},
}

\bib{JS13}{article}{
      author={Jim\'enez-Sevilla, M.},
      author={S\'anchez-Gonz\'alez, L.},
       title={On smooth extensions of vector-valued functions defined on closed
  subsets of {B}anach spaces},
        date={2013},
        ISSN={0025-5831},
     journal={Math. Ann.},
      volume={355},
      number={4},
       pages={1201\ndash 1219},
         url={https://doi.org/10.1007/s00208-012-0814-0},
      review={\MR{3037013}},
}

\bib{Kal08}{article}{
      author={Kalton, N.~J.},
       title={The complemented subspace problem revisited},
        date={2008},
        ISSN={0039-3223},
     journal={Studia Math.},
      volume={188},
      number={3},
       pages={223\ndash 257},
         url={https://doi.org/10.4064/sm188-3-2},
      review={\MR{2429822}},
}

\bib{KNS15}{article}{
      author={Krieger, Joachim},
      author={Nakanishi, Kenji},
      author={Schlag, Wilhelm},
       title={Center-stable manifold of the ground state in the energy space
  for the critical wave equation},
        date={2015},
        ISSN={0025-5831},
     journal={Math. Ann.},
      volume={361},
      number={1-2},
       pages={1\ndash 50},
         url={https://doi.org/10.1007/s00208-014-1059-x},
      review={\MR{3302610}},
}

\bib{LL10}{article}{
      author={Lian, Zeng},
      author={Lu, Kening},
       title={Lyapunov exponents and invariant manifolds for random dynamical
  systems in a {B}anach space},
        date={2010},
        ISSN={0065-9266},
     journal={Mem. Amer. Math. Soc.},
      volume={206},
      number={967},
       pages={vi+106},
         url={https://doi.org/10.1090/S0065-9266-10-00574-0},
      review={\MR{2674952}},
}

\bib{LL86}{article}{
      author={Lasry, J.-M.},
      author={Lions, P.-L.},
       title={A remark on regularization in {H}ilbert spaces},
        date={1986},
        ISSN={0021-2172},
     journal={Israel J. Math.},
      volume={55},
      number={3},
       pages={257\ndash 266},
         url={https://doi.org/10.1007/BF02765025},
      review={\MR{876394}},
}

\bib{LLSY16}{article}{
      author={Li, Michael~Y.},
      author={Liu, Weishi},
      author={Shan, Chunhua},
      author={Yi, Yingfei},
       title={Turning points and relaxation oscillation cycles in simple
  epidemic models},
        date={2016},
        ISSN={0036-1399},
     journal={SIAM J. Appl. Math.},
      volume={76},
      number={2},
       pages={663\ndash 687},
         url={https://doi.org/10.1137/15M1038785},
      review={\MR{3477765}},
}

\bib{LP08}{article}{
      author={Latushkin, Yuri},
      author={Pogan, Alin},
       title={The dichotomy theorem for evolution bi-families},
        date={2008},
        ISSN={0022-0396},
     journal={J. Differential Equations},
      volume={245},
      number={8},
       pages={2267\ndash 2306},
         url={http://dx.doi.org/10.1016/j.jde.2008.01.023},
      review={\MR{2446192}},
}

\bib{LY11}{article}{
      author={Lian, Zeng},
      author={Young, Lai-Sang},
       title={Lyapunov exponents, periodic orbits and horseshoes for mappings
  of {H}ilbert spaces},
        date={2011},
        ISSN={1424-0637},
     journal={Ann. Henri Poincar\'e},
      volume={12},
      number={6},
       pages={1081\ndash 1108},
         url={https://doi.org/10.1007/s00023-011-0100-9},
      review={\MR{2823209}},
}

\bib{LYZ13}{article}{
      author={Lian, Zeng},
      author={Young, Lai-Sang},
      author={Zeng, Chongchun},
       title={Absolute continuity of stable foliations for systems on {B}anach
  spaces},
        date={2013},
        ISSN={0022-0396},
     journal={J. Differential Equations},
      volume={254},
      number={1},
       pages={283\ndash 308},
         url={https://doi.org/10.1016/j.jde.2012.08.021},
      review={\MR{2983052}},
}

\bib{LZ17}{article}{
      author={Lin, Zhiwu},
      author={Zeng, Chongchun},
       title={Instability, index theorem, and exponential trichotomy for linear
  {H}amiltonian {PDE}s},
        date={2022},
        ISSN={0065-9266,1947-6221},
     journal={Mem. Amer. Math. Soc.},
      volume={275},
      number={1347},
       pages={v+136},
         url={https://doi.org/10.1090/memo/1347},
      review={\MR{4352468}},
}

\bib{Meg98}{book}{
      author={Megginson, Robert~E.},
       title={An introduction to {B}anach space theory},
      series={Graduate Texts in Mathematics},
   publisher={Springer-Verlag, New York},
        date={1998},
      volume={183},
        ISBN={0-387-98431-3},
         url={https://doi.org/10.1007/978-1-4612-0603-3},
      review={\MR{1650235}},
}

\bib{Mic56}{article}{
      author={Michael, Ernest},
       title={Continuous selections. {I}},
        date={1956},
        ISSN={0003-486X},
     journal={Ann. of Math. (2)},
      volume={63},
       pages={361\ndash 382},
         url={https://doi.org/10.2307/1969615},
      review={\MR{0077107}},
}

\bib{MS88}{article}{
      author={Mallet-Paret, John},
      author={Sell, George~R.},
       title={Inertial manifolds for reaction diffusion equations in higher
  space dimensions},
        date={1988},
        ISSN={0894-0347},
     journal={J. Amer. Math. Soc.},
      volume={1},
      number={4},
       pages={805\ndash 866},
         url={http://dx.doi.org/10.2307/1990993},
      review={\MR{943276}},
}

\bib{MR09a}{article}{
      author={Magal, Pierre},
      author={Ruan, Shigui},
       title={Center manifolds for semilinear equations with non-dense domain
  and applications to {H}opf bifurcation in age structured models},
        date={2009},
        ISSN={0065-9266},
     journal={Mem. Amer. Math. Soc.},
      volume={202},
      number={951},
       pages={vi+71},
         url={https://doi.org/10.1090/S0065-9266-09-00568-7},
      review={\MR{2559965}},
}

\bib{NS12}{article}{
      author={Nakanishi, K.},
      author={Schlag, W.},
       title={Invariant manifolds around soliton manifolds for the nonlinear
  {K}lein-{G}ordon equation},
        date={2012},
        ISSN={0036-1410},
     journal={SIAM J. Math. Anal.},
      volume={44},
      number={2},
       pages={1175\ndash 1210},
         url={https://doi.org/10.1137/11082720X},
      review={\MR{2914265}},
}

\bib{OW92}{article}{
      author={O'Farrell, A.~G.},
      author={Watson, R.~O.},
       title={The tangent stars of a set, and extensions of smooth functions},
        date={1992},
        ISSN={0075-4102},
     journal={J. Reine Angew. Math.},
      volume={430},
       pages={109\ndash 137},
      review={\MR{1172910}},
}

\bib{Pal66}{article}{
      author={Palais, Richard~S.},
       title={Lusternik-{S}chnirelman theory on {B}anach manifolds},
        date={1966},
        ISSN={0040-9383},
     journal={Topology},
      volume={5},
       pages={115\ndash 132},
         url={https://doi.org/10.1016/0040-9383(66)90013-9},
      review={\MR{0259955}},
}

\bib{Paz83}{book}{
      author={Pazy, A.},
       title={Semigroups of linear operators and applications to partial
  differential equations},
      series={Applied Mathematical Sciences},
   publisher={Springer-Verlag, New York},
        date={1983},
      volume={44},
        ISBN={0-387-90845-5},
         url={https://doi.org/10.1007/978-1-4612-5561-1},
      review={\MR{710486}},
}

\bib{PS01}{article}{
      author={Pliss, Victor~A.},
      author={Sell, George~R.},
       title={Perturbations of normally hyperbolic manifolds with applications
  to the {N}avier-{S}tokes equations},
        date={2001},
        ISSN={0022-0396},
     journal={J. Differential Equations},
      volume={169},
      number={2},
       pages={396\ndash 492},
         url={https://doi.org/10.1006/jdeq.2000.3905},
        note={Special issue in celebration of Jack K. Hale's 70th birthday,
  Part 4 (Atlanta, GA/Lisbon, 1998)},
      review={\MR{1808472}},
}

\bib{Sch09}{article}{
      author={Schlag, W.},
       title={Stable manifolds for an orbitally unstable nonlinear
  {S}chr\"odinger equation},
        date={2009},
        ISSN={0003-486X},
     journal={Ann. of Math. (2)},
      volume={169},
      number={1},
       pages={139\ndash 227},
         url={https://doi.org/10.4007/annals.2009.169.139},
      review={\MR{2480603}},
}

\bib{SS99}{article}{
      author={Sandstede, B.},
      author={Scheel, A.},
       title={Essential instability of pulses and bifurcations to modulated
  travelling waves},
        date={1999},
        ISSN={0308-2105},
     journal={Proc. Roy. Soc. Edinburgh Sect. A},
      volume={129},
      number={6},
       pages={1263\ndash 1290},
         url={https://doi.org/10.1017/S0308210500019387},
      review={\MR{1728529}},
}

\bib{Wel73}{article}{
      author={Wells, John~C.},
       title={Differentiable functions on {B}anach spaces with {L}ipschitz
  derivatives},
        date={1973},
        ISSN={0022-040X},
     journal={J. Differential Geometry},
      volume={8},
       pages={135\ndash 152},
         url={http://projecteuclid.org/euclid.jdg/1214431488},
      review={\MR{0370640}},
}

\bib{Whi34}{article}{
      author={Whitney, Hassler},
       title={Analytic extensions of differentiable functions defined in closed
  sets},
        date={1934},
        ISSN={0002-9947},
     journal={Trans. Amer. Math. Soc.},
      volume={36},
      number={1},
       pages={63\ndash 89},
         url={https://doi.org/10.2307/1989708},
      review={\MR{1501735}},
}

\bib{Wig94}{book}{
      author={Wiggins, Stephen},
       title={Normally hyperbolic invariant manifolds in dynamical systems},
      series={Applied Mathematical Sciences},
   publisher={Springer-Verlag, New York},
        date={1994},
      volume={105},
        ISBN={0-387-94205-X},
         url={http://dx.doi.org/10.1007/978-1-4612-4312-0},
        note={With the assistance of Gy{\"o}rgy Haller and Igor Mezi{\'c}},
      review={\MR{1278264}},
}

\bib{Yan09}{article}{
      author={{Yang}, Dennis~Guang},
       title={{An Invariant Manifold Theory for ODEs and Its Applications}},
        date={2009Sep},
     journal={arXiv e-prints},
       pages={arXiv:0909.1103},
      eprint={0909.1103},
}

\bib{Zel14}{article}{
      author={Zelik, Sergey},
       title={Inertial manifolds and finite-dimensional reduction for
  dissipative {PDE}s},
        date={2014},
        ISSN={0308-2105},
     journal={Proc. Roy. Soc. Edinburgh Sect. A},
      volume={144},
      number={6},
       pages={1245\ndash 1327},
         url={http://dx.doi.org/10.1017/S0308210513000073},
      review={\MR{3283067}},
}

\bib{Zen00}{article}{
      author={Zeng, Chongchun},
       title={Homoclinic orbits for a perturbed nonlinear {S}chr\"odinger
  equation},
        date={2000},
        ISSN={0010-3640},
     journal={Comm. Pure Appl. Math.},
      volume={53},
      number={10},
       pages={1222\ndash 1283},
  url={https://doi.org/10.1002/1097-0312(200010)53:10<1222::AID-CPA2>3.3.CO;2-6},
      review={\MR{1768814}},
}

\end{biblist}
\end{bibdiv}

\end{document}